\documentclass[11pt]{amsart}
\usepackage{amsmath,amssymb,amsthm,enumitem,mathrsfs,mathtools}
\usepackage[hmargin=25mm,vmargin=30mm]{geometry}
\usepackage{tikz}
\usetikzlibrary{cd}

\numberwithin{equation}{section}

\theoremstyle{plain}
\newtheorem{thm}{Theorem}[section]
\newtheorem*{thm*}{Theorem}
\newtheorem{lem}[thm]{Lemma}
\newtheorem{prop}[thm]{Proposition}

\theoremstyle{remark}
\newtheorem{rem}[thm]{Remark}

\usepackage[hidelinks]{hyperref}
\newcommand{\mat}[4]{\begin{pmatrix} #1 & #2 \\ #3 & #4 \end{pmatrix}}
\newcommand{\smat}[4]{\left(\begin{smallmatrix} #1 & #2 \\ #3 & #4 \end{smallmatrix}\right)}

\newcommand\Ad{\operatorname{Ad}}
\newcommand\AJ{\operatorname{AJ}}
\newcommand\CH{\operatorname{CH}}
\newcommand\diag{\operatorname{diag}}
\newcommand\Fil{\operatorname{Fil}}
\newcommand\Hom{\operatorname{Hom}}

\renewcommand\Im{\operatorname{Im}}
\newcommand\Ind{\operatorname{Ind}}
\newcommand\N{\operatorname{N}}
\newcommand\ord{\operatorname{ord}}
\newcommand\pr{\operatorname{pr}}
\renewcommand\Re{\operatorname{Re}}

\newcommand\Res{\operatorname{Res}}
\newcommand\supp{\operatorname{supp}}
\newcommand\tr{\operatorname{tr}}
\newcommand\vol{\operatorname{vol}}

\newcommand\dR{\mathrm{dR}}
\newcommand\fin{\mathrm{fin}}
\newcommand\GL{\mathrm{GL}}
\newcommand\GO{\mathrm{GO}}
\newcommand\GSO{\mathrm{GSO}}
\newcommand\GSp{\mathrm{GSp}}
\newcommand\GU{\mathrm{GU}}
\newcommand\HDS{\mathrm{HDS}}
\newcommand\id{\mathrm{id}}
\newcommand\M{\mathrm{M}}
\renewcommand\O{\mathrm{O}}
\newcommand\SL{\mathrm{SL}}
\newcommand\SO{\mathrm{SO}}
\newcommand\Sp{\mathrm{Sp}}
\newcommand\St{\mathrm{St}}
\newcommand\std{\mathrm{std}}
\newcommand\Sym{\mathrm{Sym}}
\newcommand\Tam{\mathrm{Tam}}

\newcommand\U{\mathrm{U}}

\newcommand\A{\mathbb{A}}
\newcommand\C{\mathbb{C}}
\newcommand\Q{\mathbb{Q}}
\newcommand\R{\mathbb{R}}
\newcommand\Z{\mathbb{Z}}

\newcommand\bG{\mathbf{G}}
\newcommand\bH{\mathbf{H}}
\newcommand\bI{\mathbf{I}}

\newcommand\bK{\mathbf{K}}

\newcommand\bP{\mathbf{P}}

\newcommand\bT{\mathbf{T}}

\newcommand\bV{\mathbf{V}}
\newcommand\bW{\mathbf{W}}

\newcommand\bff{\mathbf{f}}
\newcommand\bi{\mathbf{i}}
\newcommand\bj{\mathbf{j}}
\newcommand\bmm{\mathbf{m}}
\newcommand\bv{\mathbf{v}}

\newcommand\cE{\mathcal{E}}
\newcommand\cF{\mathcal{F}}

\newcommand\cH{\mathcal{H}}
\newcommand\cI{\mathcal{I}}

\newcommand\cK{\mathcal{K}}
\newcommand\cL{\mathcal{L}}
\newcommand\cM{\mathcal{M}}
\newcommand\cN{\mathcal{N}}
\newcommand\cO{\mathcal{O}}
\newcommand\cP{\mathcal{P}}
\newcommand\cQ{\mathcal{Q}}

\newcommand\cS{\mathcal{S}}

\newcommand\cU{\mathcal{U}}

\newcommand\cW{\mathcal{W}}

\newcommand\fH{\mathfrak{H}}

\newcommand\ff{\mathfrak{f}}
\newcommand\fg{\mathfrak{g}}
\newcommand\fk{\mathfrak{k}}
\newcommand\fl{\mathfrak{l}}
\newcommand\fo{\mathfrak{o}}
\newcommand\fp{\mathfrak{p}}
\newcommand\fq{\mathfrak{q}}
\newcommand\ft{\mathfrak{t}}
\newcommand\fu{\mathfrak{u}}

\newcommand\Qbar{\overline{\Q}}

\title{Cycles for Rankin-Selberg $L$-functions, I: automorphic periods}
\author{Atsushi Ichino}
\address{Department of Mathematics, Kyoto University, Kitashirakawa Oiwake-cho, Sakyo-ku, Kyoto 606-8502, Japan}
\email{ichino@math.kyoto-u.ac.jp}
\author{Kartik Prasanna}
\address{Department of Mathematics, University of Michigan, 2074 East Hall, 530 Church Street, Ann Arbor, MI 48109-1043, USA}
\email{kartikp@umich.edu}

\begin{document}

\begin{abstract}
In this paper, we establish an explicit formula for automorphic periods which will be used in a sequel to study special values of $p$-adic Rankin-Selberg $L$-functions. 
Our motivation is to extend the Bertolini-Darmon-Prasanna formula to the case where the archimedean local sign is opposite to that in the original setting. 
To this end, we prove a formula for the $\GU(1,1)$-period of a theta lift from $\GSO(2)$ to $\GSp_4$, which is adapted to $p$-adic interpolation. 
We also introduce a $p$-depletion Hecke operator for this theta lift, which will play a crucial role in relating the automorphic period to the image of the $p$-adic Abel-Jacobi map.
\end{abstract}

\maketitle
\tableofcontents

\section{Introduction}

\subsection{Motivation}

The purpose of this paper is to establish an explicit formula for automorphic periods which will serve as a key input in a sequel \cite{cycles2}, where we study special values of $p$-adic Rankin-Selberg $L$-functions. We begin by explaining the arithmetic motivation.

Let $f$ be a normalized cuspidal Hecke eigenform of even weight $k$.
Let $K$ be an imaginary quadratic field, and let $\chi$ be an algebraic Hecke character of $K$ of even weight $l$.
For simplicity, we assume that $f$ has trivial central character and $\chi$ is trivial on $\A_\Q^\times$.
We consider the degree $4$ Rankin-Selberg $L$-function $L(s,f,\chi)$, normalized by the functional equation
\[
 L(s,f,\chi) = \varepsilon(s,f,\chi) L(1-s,f,\chi).
\]
Then a guiding problem is to understand the arithmetic nature of $L(s,f,\chi)$ at the center of symmetry $s = \frac{1}{2}$.
The discussion naturally divides according to the global sign
\[
 \varepsilon(f,\chi) = \varepsilon(\tfrac{1}{2},f,\chi),
\]
as illustrated by the formulas of Gross-Zagier \cite{gz} and Waldspurger \cite{wald85}.

First assume that $\varepsilon(f,\chi) = +1$.
In this case, the central value $L(\frac{1}{2},f,\chi)$ should be generically nonzero and can be described in terms of automorphic periods.
Under this assumption, there exists a unique quaternion algebra $B$ over $\Q$ such that
\[
 \varepsilon_v(B) = \varepsilon_v(f, \chi)
\]
for all places $v$ of $\Q$.
Here $\varepsilon_v(B)$ is given by
\[
 \varepsilon_v(B) = 
 \begin{cases}
  +1 & \text{if $B_v$ is split;} \\
  -1 & \text{if $B_v$ is ramified}
 \end{cases}
\]
and $\varepsilon_v(f, \chi)$ denotes the local sign, so that 
\[
 \varepsilon(f, \chi) = \prod_v \varepsilon_v(f, \chi).
\]
For this $B$, $K$ embeds into $B$ and $f$ admits a Jacquet-Langlands transfer $f^B$ (viewed as an automorphic form on $B^\times(\A_\Q)$).
Then Waldspurger's formula says that 
\[
 L(\tfrac{1}{2},f,\chi) \doteq \cP(f^B,\chi)^2,
\]
where $\doteq$ denotes equality up to explicit nonzero factors and $\cP(f^B,\chi)$ is the toric period given by
\[
 \cP(f^B,\chi) = \int_{\A_\Q^\times K^\times \backslash \A_K^\times} f^B(x) \chi(x) \, dx.
\]

Next assume that $\varepsilon(f,\chi) = -1$.
In this case, the functional equation forces $L(\frac{1}{2},f,\chi) = 0$ and the Bloch-Beilinson conjecture suggests that this vanishing should be reflected in homologically trivial cycles on a variety realizing the rank $4$ motive associated to $L(s,f,\chi)$.
The geometric nature of these cycles is sensitive to the archimedean local sign
\[
 \varepsilon_\infty(f,\chi) = 
 \begin{cases}
  +1 & \text{if $k \le l$}; \\
  -1 & \text{if $k > l$}.
 \end{cases}
\]
We first recall the case $\varepsilon_\infty(f,\chi) = -1$.
In the simplest case $k=2$ and $l=0$, the Gross-Zagier formula says that 
\[
 L'(\tfrac{1}{2}, f, \chi) \doteq \langle P_{f,\chi}, P_{f,\chi} \rangle_{\mathrm{NT}}
\]
under the Heegner condition, where $P_{f,\chi}$ is the $(f,\chi)$-component of a Heegner divisor on a modular curve and $\langle \cdot, \cdot \rangle_{\mathrm{NT}}$ is the N\'eron-Tate height pairing.
Note that the Heegner condition implies that $\varepsilon_v(f,\chi) = +1$ for all finite places $v$ of $\Q$.
This formula has been extended to higher weight cases, and more generally to the setting of totally real number fields, by Zhang \cite{zhang97, zhang01} and Yuan-Zhang-Zhang \cite{yzz}.
Moreover, its $p$-adic counterparts have been developed by Perrin-Riou \cite{perrin-riou}, Nekov\'{a}\v{r} \cite{nekovar}, Bertolini-Darmon-Prasanna \cite{bdp}, and Liu-Zhang-Zhang \cite{lzz}.

By contrast, the case $\varepsilon_\infty(f,\chi) = +1$ remains largely unexplored in the context of Gross-Zagier type formulas. 
This is the case addressed in this paper and its sequel.

\subsection{The BDP setting: $\varepsilon(f,\chi) = -1$, $\varepsilon_\infty(f,\chi) = -1$}

Our starting point is the Bertolini-Darmon-Prasanna (BDP) formula \cite{bdp}, which we recall in more detail.
For simplicity, we assume that $k=2$, $l=0$, and $\varepsilon_v(f,\chi) = +1$ for all $v < \infty$.
Fix an odd rational prime $p$ which is split in $K$ and does not divide the level of $f$.
Let $\chi_{2n+2}$ vary in a $p$-adic family of algebraic Hecke characters of $K$, where $\chi_{2n+2}$ has weight $2n+2$ and specializes to $\chi$ at $n=-1$.
Let $\cL_p(f,\cdot)$ be the $p$-adic $L$-function interpolating the algebraic parts of the central values $L(\tfrac{1}{2},f,\chi_{2n+2})$ for $n \ge 0$.
Then the BDP formula says that 
\[
 \cL_p(f,\chi) \doteq (\AJ_p(\Xi_\chi)(\omega_f))^2,
\]
where
\begin{itemize}
\item $\Xi$ is a cycle on a modular curve $X$ coming from the embedding $K^\times \hookrightarrow \GL_2(\Q)$;
\item $\Xi_\chi \in \CH^1(X)_0$ is the $\chi$-component of $\Xi$;
\item $\AJ_p : \CH^1(X)_0 \rightarrow \Fil^1 H^1_{\dR}(X/\Q_p)^\vee$ is the $p$-adic Abel-Jacobi map;
\item $\omega_f \in \Fil^1 H^1_{\dR}(X/\Q_p)$ is the differential form associated to $f$.
\end{itemize}
We briefly recall the idea of the proof since it also explains the difficulty of the extension considered below.
\begin{itemize}
\item 
The first important input is Waldspurger's formula
\[
 L(\tfrac{1}{2},f,\chi_{2n+2}) \doteq \cP(\partial_\infty^n f^B, \chi_{2n+2})^2
\]
for $n \ge 0$, where $\partial_\infty$ is the Shimura-Maass differential operator, which raises the weight by $2$ and produces nearly holomorphic modular forms.
Note that $\varepsilon_\infty(f,\chi_{2n+2}) = +1$ for $n \ge 0$ and hence $B$ is \emph{indefinite}; in fact, we have $B = \M_2(\Q)$ and $f^B = f$.
\item
Another ingredient is an interpretation of the complex period $\cP$ as the $p$-adic period $\cP_p$:
\begin{equation}
\label{eqn:complexpadicequal1}
 \cP(\partial_\infty^n f, \chi_{2n+2})
 \doteq \cP_p(\partial_p^n f, \chi_{2n+2}),
\end{equation}
where $\partial_p$ is the Atkin-Serre operator on the space of $p$-adic modular forms given on $q$-expansions by 
\[
 \partial_p f = \sum_m m a_f(m) q^m
\]
for $f=\sum_m a_f(m)q^m$. The equality in \eqref{eqn:complexpadicequal1} is up to appropriate powers of certain canonical complex and $p$-adic CM periods $\Omega$ and $\Omega_p$, respectively. 

\item
The final ingredient is a description of the $p$-adic Abel-Jacobi map in terms of Coleman's theory of $p$-adic integration \cite{coleman}:
\[
 \AJ_p(\Xi_\chi)(\omega_f) \doteq 
 \lim_{n \rightarrow -1} \cP_p(\partial_p^n f, \chi_{2n+2}),
\]
where the right-hand side is understood as the $p$-adic limit as $n$ tends $p$-adically to $-1$.
Strictly speaking, this process requires the $p$-depleted modular form $f|(V U - U V)$ in place of $f$, which has the $q$-expansion
\[
 f|(V U - U V) = \sum_{p \nmid m} a_f(m) q^m
\]
and admits a Coleman primitive
\[
 \lim_{n \rightarrow -1} \partial_p^n f = \sum_{p \nmid m} m^{-1} a_f(m) q^m.
\]
\end{itemize}
Combining these, we obtain
\begin{align*}
 \cL_p(f,\chi) 
 & \doteq \lim_{n \rightarrow -1} L(\tfrac{1}{2},f,\chi_{2n+2}) \\
 & \doteq \lim_{n \rightarrow -1} \cP_\infty(\partial_\infty^n f, \chi_{2n+2})^2 \\
 & \doteq \lim_{n \rightarrow -1} \cP_p(\partial_p^n f, \chi_{2n+2})^2 \\
 & \doteq \AJ_p(\Xi_\chi)(\omega_f)^2.
\end{align*}

\subsection{Our setting: $\varepsilon(f,\chi) = -1$, $\varepsilon_\infty(f,\chi) = +1$}

We continue to assume that $\varepsilon(f,\chi) = -1$.
As explained in the previous subsection, the basic idea is to switch the archimedean local sign to make the global sign $+1$ and apply Waldspurger's formula.
In the BDP setting, where $\varepsilon_\infty(f,\chi) = -1$, one keeps $f$ fixed and varies $\chi$, whereas in our setting, where $\varepsilon_\infty(f,\chi) = +1$, we keep $\chi$ fixed and vary $f$.

We explain this for $k = l = 2$, which is the setting considered in this work.
We further assume that $f$ is ordinary at $p$.
Let $f_{2n+4}$ vary in a Hida family, where $f_{2n+4}$ has weight $2n+4$ and specializes to $f$ at $n = -1$.
Let $\cL_p(\cdot, \chi)$ be the $p$-adic $L$-function interpolating the algebraic parts of the central values $L(\tfrac{1}{2},f_{2n+4},\chi)$ for $n \ge 0$.
Our goal is to understand $\cL_p(f,\chi)$ and a first attempt is to apply Waldspurger's formula:
\[
 L(\tfrac{1}{2}, f_{2n+4}, \chi) \doteq \cP(f_{2n+4}^B,\chi)^2
\]
for $n \ge 0$.
However, since $\varepsilon_\infty(f_{2n+4},\chi) = -1$ for $n \ge 0$, $B$ is \emph{definite}.
Thus the associated Shimura variety is a finite set, leaving no room for non-trivial geometric cycles. 
This shows that the BDP argument cannot be carried over directly to our setting.

Therefore we are led to seek a setting in which the relevant motive is realized on a Shimura variety attached to a larger group.
In fact, such a setting is provided by a Shimura variety $X$ attached to $\GSp_4$ together with the theta lift $\theta(\chi)$ of $\chi$ to $\GSp_4$.
In addition, a relevant cycle is constructed from a Shimura variety $Y$ attached to $\GU(1,1)$ (for an auxiliary imaginary quadratic field $E$) and regarded as a subvariety of $X$ via the natural embedding $\GU(1,1) \hookrightarrow \GSp_4$.
Then, in this paper and its sequel, we aim to establish an extension of the BDP formula
\[
 \cL_p(f,\chi) \doteq \AJ_p(\Xi)(\omega_{\theta(\chi)} \wedge \eta_f)^2,
\]
where
\begin{itemize}
\item $\Xi \in \CH^3(X \times Y)_0$ is a cycle on $X \times Y$ coming from the diagonal embedding $Y \hookrightarrow X \times Y$;
\item $\AJ_p : \CH^3(X \times Y)_0 \rightarrow \Fil^2 H^3_{\dR}(X \times Y / \Q_p)^\vee$ is the $p$-adic Abel-Jacobi map;
\item $\omega_{\theta(\chi)} \in \Fil^2 H^2_{\dR}(X/\Q_p)$ is the differential form associated to $\theta(\chi)$;
\item $\eta_f \in H^1_{\dR}(Y/\Q_p)$ is the unit root vector associated to $f$.
\end{itemize} 
Our strategy toward this formula is as follows.
\begin{enumerate}
\item
\label{outline-1}
We begin by recalling Waldspurger's formula in a slightly more precise form:
\[
 \frac{L(\frac{1}{2}, f, \chi)}{L(1, f, \Ad)} 
 \doteq \frac{\cP(f^B, \chi)^2}{\| f^B \|^2},
\]
where $L(s,f,\Ad)$ is the degree $3$ adjoint $L$-function and $\| \cdot \|$ denotes the Petersson norm.
For $n \ge 0$, we use the seesaw diagram
\[
\begin{tikzcd}
 B^\times \arrow[r, dashed] & \GU(2) \arrow[rd,dash] & \GSp_4 \arrow[ld,dash] & \\
 K^\times \arrow[r, dashed] & \GSO(2) \arrow[u,dash] & \GU(1,1) \arrow[u,dash] & \GL_2 \times E^\times \arrow[l, dashed]
\end{tikzcd}
\]
and rewrite the toric period as a $\GU(1,1)$-period:
\[
 \cP(\theta(f_{2n+4}^\mu), \chi)
 = \cP(\partial_\infty^n \theta(\chi), f_{2n+4}^\mu).
\]
Here $\mu$ is a fixed auxiliary class group character of $E$, $f_{2n+4}^\mu = f_{2n+4} \otimes \mu$ is regarded as an automorphic form on $\GU(1,1)(\A_\Q)$, $\theta(f_{2n+4}^\mu)$ is the theta lift of $\theta(f_{2n+4}^\mu)$ to $\GU(2)$, and $\partial_\infty$ is an archimedean differential operator for $\GSp_4$ described in the next subsection.
Also, the Rallis inner product formula gives
\[
 \frac{\| \theta(f_{2n+4}^\mu) \|^2}{\| f_{2n+4}^\mu \|^2}
 \doteq L(\tfrac{1}{2}, f_{2n+4}, \mu),
\]
while the Rankin-Selberg method gives
\[
 \| f_{2n+4}^\mu \|^2 \doteq L(1, f_{2n+4}, \Ad).
\]
Combining these, we obtain
\begin{align*}
 \cP(\partial_\infty^n \theta(\chi), f_{2n+4}^\mu)^2 
 & = \cP(\theta(f_{2n+4}^\mu), \chi)^2 \\
 & \doteq \frac{L(\tfrac{1}{2}, f_{2n+4}, \chi)}{L(1, f_{2n+4}, \Ad)}
 \cdot \| \theta(f_{2n+4}^\mu) \|^2 \\
 & \doteq \frac{L(\tfrac{1}{2}, f_{2n+4}, \chi)}{L(1, f_{2n+4}, \Ad)}
 \cdot L(\tfrac{1}{2}, f_{2n+4}, \mu) \cdot \| f_{2n+4}^\mu \|^2 \\
 & \doteq L(\tfrac{1}{2}, f_{2n+4}, \chi)
 \cdot L(\tfrac{1}{2}, f_{2n+4}, \mu).
\end{align*}
\item
\label{outline-2}
As in the BDP setting, the complex period can be interpreted as a $p$-adic period:
\[
 \cP(\partial_\infty^n \theta(\chi), f_{2n+4}^\mu)
 \doteq \cP_p(\partial_p^n \theta(\chi), f_{2n+4}^\mu),
\]
where $\partial_p$ is the $p$-adic differential operator corresponding to $\partial_\infty$. 
\item
\label{outline-3}
In place of Coleman's theory used in the BDP setting, we use its extension due to Besser \cite{besser}.
Together with an additional argument, this leads to a description of the $p$-adic Abel-Jacobi map:
\[
 \AJ_p(\Xi)(\omega_{\theta(\chi)} \wedge \eta_f) 
 \doteq \lim_{n \rightarrow -1} \cP_p(\partial_p^n \theta(\chi), f_{2n+4}^\mu).
\]
Again, this process requires a $p$-depleted modular form, but a suitable theory of $p$-depletions was not available for Siegel modular forms of higher degree.
To address this difficulty, we introduce a $p$-adic Hecke operator $\tau_p$ which annihilates the cohomology class of $\omega_{\theta(\chi)}$ and has the following effect on the $q$-expansion of the particular form $\theta(\chi)$: if
\[
 \theta(\chi) = \sum_T a(T) q^T, 
\]
where $T$ runs over positive semi-definite $2 \times 2$ symmetric matrices, then $\tau_p \theta(\chi)$ has the $q$-expansion
\[
 \tau_p \theta(\chi) = \sum_{p \nmid \det(T)} a(T) q^T
\]
and admits a primitive $\lim_{n \rightarrow -1} \partial_p^n \theta(\chi)$.
\item
Finally, we deal with the auxiliary factor involving $\mu$.
Since $\mu$ has weight $0$, we have
\[
 \lim_{n \rightarrow -1} L(\tfrac{1}{2}, f_{2n+4}, \mu)
 \doteq L(\tfrac{1}{2}, f, \mu).
\]
By \cite[Theorem 1.7]{bt}, we can choose $E$ and $\mu$ so that the right-hand side is nonzero.
Hence this central value can be absorbed into the nonzero factors.
\end{enumerate}
Combining these, we obtain
\begin{align*}
 \cL_p(f,\chi) 
 & \doteq \lim_{n \rightarrow -1} L(\tfrac{1}{2},f_{2n+4},\chi) \\
 & \doteq \lim_{n \rightarrow -1} \cP(\partial_\infty^n \theta(\chi), f_{2n+4}^\mu)^2 \\
 & \doteq \lim_{n \rightarrow -1} \cP_p(\partial_p^n \theta(\chi), f_{2n+4}^\mu)^2 \\
 & \doteq \AJ_p(\Xi)(\omega_{\theta(\chi)} \wedge \eta_f)^2.
\end{align*}

The purpose of this paper is to provide the automorphic input for the argument.
More precisely, we establish \eqref{outline-1} and its analogue for the $p$-depleted modular form, and construct the $p$-adic Hecke operator $\tau_p$ in \eqref{outline-3}.
The remaining parts, namely, \eqref{outline-2} and the $p$-adic Abel-Jacobi interpretation in \eqref{outline-3}, will be treated in the sequel.

\subsection{Main results}

Now we turn to the results established in this paper and explain its organization.
We work in a minimally ramified setting: in addition to the weight condition $k=l=2$, we assume that $f$ has square-free level and $\chi$ is unramified at all finite places.
In \S\S \ref{s:modular_forms}--\ref{s:theta}, we set up the notation and recall the preliminaries.
In particular, in \S \ref{ss:diff-classical}, we introduce a differential operator $\partial_\infty$ which maps Siegel modular forms of degree $2$ and weight $\Sym^2 \otimes \det^n$ to those of weight $\Sym^2 \otimes \det^{n+1}$.

Next we give a precise definition of the theta lift $\theta(\chi)$.
This requires suitable choices of Schwartz functions in the Weil representation, which we make in \S\S \ref{s:schwartz_forms}--\ref{s:choices-schwartz}.
In particular, we construct a variant of the Kudla-Millson Schwartz forms \cite{km1} for the theta lift from $\GSO(2)$ to $\GSp_4$.
With these choices, in \S \ref{s:main_construction}, we obtain a Siegel modular form $\theta(\chi)$ of degree $2$ and weight $\Sym^2 \otimes \det$ (so that $\partial_\infty^n \theta(\chi)$ has weight $\Sym^2 \otimes \det^{n+1}$).
We also introduce a $p$-adic Hecke operator $\tau_p$ in \S \ref{ss:p-depletion} and establish the following properties of $\theta(\chi)$ (Theorems \ref{t:fourier-theta} and \ref{t:fourier-f-flat}), which provide the automorphic input for \eqref{outline-3} in the outline of the previous subsection.

\begin{thm}
\label{thm1}
If $\theta(\chi)$ has the $q$-expansion
\[
 \theta(\chi) = \sum_T a(T) q^T, 
\]
then $a(T)$ is algebraic and $p$-integral for all $T$, and $\tau_p \theta(\chi)$ has the $q$-expansion
\[
 \tau_p \theta(\chi) = \sum_{p \nmid \det(T)} a(T) q^T.
\]
\end{thm}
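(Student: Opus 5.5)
We will compute the Fourier expansion of $\theta(\chi)$ directly from the theta integral, using the explicit Schwartz functions chosen in \S\S\ref{s:schwartz_forms}--\ref{s:choices-schwartz}. Write $\theta(\chi)(g) = \int_{\GSO(2)} \theta_\varphi(g,h)\chi(h)\,dh$, where $\varphi = \varphi_\infty \otimes \varphi_\fin$ is the variant of the Kudla-Millson form at infinity and a characteristic function of a lattice $L^2 \subset V^2$, $V = K$ with the norm form, at the finite places. Unfolding gives
\[
 a(T) = \sum_{x \in L^2,\ \tfrac12(x,x) = T} \ \sum_{[t] \in \mathrm{Cl}} \chi(t)\, P_\infty(t^{-1}x),
\]
where $P_\infty$ is the polynomial part of $\varphi_\infty$ (valued in $\Sym^2$). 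Since $\chi$ is unramified, the inner sum is over the class group of $K$. The archimedean computation from the earlier sections expresses $P_\infty(x)$ for $x = (x_1,x_2) \in K^2$ as a $\Sym^2$-valued expression in $x_1, x_2$ and their conjugates, twisted by the infinity type of $\chi$ (weight $2$). From this formula, $a(T)$ is a finite sum of products of values $\chi(\mathfrak a)$ with algebraic integers in $K$. It is therefore algebraic and lies in the ring generated by the values of $\chi$ (after normalizing $\chi$ by its CM period if necessary).

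For $p$-integrality, the only denominators come from the volume normalization $\vol(\mathrm{Cl})$ or $h_K$, and from the constant in the archimedean polynomial. We will use the normalization fixed in \S\ref{s:main_construction}, which is designed to clear these. We will check that $p$, being odd and split in $K$, introduces no extra denominators. For instance, the $\chi(\mathfrak a)$ are $p$-units once we fix an embedding $\Qbar \hookrightarrow \Qbar_p$ compatible with the choice of $\Omega_p$.

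For the second claim, recall that $\tau_p$ is defined in \S\ref{ss:p-depletion} as a combination of local Hecke operators at $p$ acting through the Weil representation. The key point is that, for a theta lift, a Hecke operator at $p$ can be moved to the Schwartz function: $\tau_p\theta_\varphi = \theta_{\tau_p^\vee \varphi}$, with $\varphi_p$ replaced by $\varphi_p' = \omega(\tau_p)\varphi_p$. We will compute $\varphi_p'$ explicitly on $V_p^2 = (\Q_p \oplus \Q_p)^2$, using that $p$ splits, so that $V_p$ is the hyperbolic plane. The aim is to show that $\varphi_p' = \varphi_p \cdot \mathbf 1_{\{x : p \nmid \det T(x)\}}$. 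Here $\det T(x) = \tfrac14\det((x_i,x_j))$ equals, up to a unit, $\N(x_1\bar x_2 - \bar x_1 x_2)$-type expression. In split coordinates $x_i = (a_i,b_i)$ it is $(a_1b_2 - a_2b_1)^2$-like, namely the product of the two determinants $\det(a)\det(b)$. So the condition becomes that the $2\times 2$ matrices $(a_1,a_2)$ and $(b_1,b_2)$ in $\M_2(\Z_p)$ are invertible. This cutoff is an inclusion-exclusion over sublattices, $\mathbf 1_{\GL_2(\Z_p)} = \mathbf 1_{\M_2(\Z_p)} - (\text{sum over index-}p\text{ sublattices}) + \cdots$, on each factor. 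That matches the shape of a $p$-depletion operator built from $U_p$- and $V_p$-type Siegel operators, in the same way that $VU - UV$ cuts out $p \nmid m$ in the elliptic case. Once $\varphi_p'$ is identified, substituting it into the unfolded formula gives $a'(T) = a(T)$ if $p \nmid \det T$ and $0$ otherwise, because the restriction depends only on $T$.

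The main obstacle is this local identity: we must verify that the specific $\tau_p$ acts on $\mathbf 1_{L_p^2}$ to produce exactly the characteristic function of $\{\det(a)\det(b) \in \Z_p^\times\}$. This includes the cross terms from the $\GSp_4$ Hecke operators, which translate under the Weil representation into both dilations and partial Fourier transforms. I would first compute the action of the generators $T(p)$ and $T_1(p^2)$ (or the $U$- and $V$-type operators defining $\tau_p$) on $\mathbf 1_{L_p^2}$ via the mixed model. Then I would check the identity orbit-by-orbit under $\GL_2(\Z_p)\times\GL_2(\Z_p)$, using elementary divisors of $a$ and $b$.
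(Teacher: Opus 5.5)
The $p$-integrality step fails as written. You claim the values $\chi(\mathfrak a)$ are $p$-units for a suitable embedding, and this is false. By Lemma \ref{l:chi-values-integral}, $\alpha=\chi_p(p,1)\in p\cO^\times$, and $\chi_p(1,p)=\alpha^{-1}$ because $\chi|_{\A^\times}=1$. No choice of embedding or CM period changes this, and none is needed for algebraicity: the coefficients lie in $\Q(\chi)^3$ directly.

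Nor are there class-number or volume denominators to clear. The paper uses $\theta^{\std}$ with $\vol(\fo_{K_v}^\times)=1$, and the normalization of $\phi$ makes the archimedean factor exactly $t$. Here $t$ is the $\A^\times$-variable of $\cS(\A_K^2\times\A^\times)$, which your unfolded formula drops; the Fourier index is $T=tq(x)$.

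What actually has to be controlled is how $\chi$ at the primes above $p$ interacts with the $p$-adic size of the archimedean vector. In the paper, unfolding over all of $\A_K^\times$ (Lemma \ref{l:W_T}) gives the $p$-factor $\sum_{i=-m_2-m_0}^{m_1}\alpha^{2i+m_0}$ (Lemma \ref{l:fourier-split-1}). This sum in general contains negative powers down to $\alpha^{-2m_2-m_0}$. Integrality holds only because, in $\Q_p$ via $i_p$, every entry of $t(\bar x_1^2,\bar x_1\bar x_2,\bar x_2^2)$ has valuation at least $m_0+2m_2$, where $m_0=\ord_p t$ and $m_2=\min\{\ord_p\bar x_1,\ord_p\bar x_2\}$.

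Your class-group formulation can be rescued, but only with a step you are missing. Choose the class representatives $c$ with $c_p\in\fo_{K_p}^\times$, so that $\chi(c)\in\cO^\times$ by Lemma \ref{l:chi-values-integral}(1). Then $\varphi_p=\mathbb{I}_{\fo_{K_p}^2\times\Z_p^\times}$ forces every contributing $(x,t)$ to satisfy $x\in\fo_{K_p}^2$ and $t\in\Z_p^\times$, so each term $\chi(c)\,t\,\bar x_j\bar x_k$ is $p$-integral.

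For the depletion, your plan is sound, and it is a genuinely different and shorter route than the paper's. However, you have misidentified $\tau_p$. It equals $I_4-p^{-1}\sum_a\boldsymbol m(a^{-1})+p^{-1}\boldsymbol m(p^{-1}I_2)$ and contains only Levi elements, which act by $\varphi(x,t)\mapsto|\det a|\,\varphi(xa,t)$. So there are no $T(p)$, $T_1(p^2)$, partial Fourier transforms or cross terms. Also, right translation gives $\tau_p\theta_\varphi(\chi)=\theta_{\Omega_\psi(\tau_p)\varphi}(\chi)$, with no contragredient.

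Write $x$ as $\smat{x_{11}}{x_{21}}{x_{12}}{x_{22}}$. Then $\tau_p\varphi_p(x,t)=\varphi_p(x,t)-\sum_a\varphi_p(xa^{-1},t)+p\,\varphi_p(p^{-1}x,t)$. Count the index-$p$ sublattices of $\Z_p^2$ containing the row span of $x$: according as $x \bmod p$ has rank $2$, $1$ or $0$, you get $1$, $1-1$ or $1-(p+1)+p$. Hence $\tau_p\varphi_p=\mathbb{I}_{\GL_2(\Z_p)}\otimes\mathbb{I}_{\Z_p^\times}$.

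Moreover $\det T=-\tfrac14(t\det x)^2$, the square of a single $2\times 2$ determinant, not a product of two determinants. Since $\nu(h)t\det(h^{-1}x)=t\det x$, the cutoff is constant along $(h^{-1}x,\nu(h)t)$. This gives $\cW^\flat_{x,t}(1)=\mathbb{I}_{\Z_p^\times}(t\det x)\,\cW_{x,t}(1)$, which is Proposition \ref{p:fourier-p-depletion}.

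The paper instead applies $\tau_p$ on the Whittaker side to the closed formula $w(x,t)=\sum_i\alpha^{2i+m_0}$. It then verifies the cancellation after putting $x$ into a normal form under $\GL_2(\Z_p)$ (Lemmas \ref{l:tilde_w} and \ref{l:flat_w}). Your version avoids that bookkeeping and does not use the explicit value of $\cW_{x,t}(1)$. The same pointwise formula for $\tau_p\varphi_p$ appears inside the proof of Lemma \ref{l:varphi-hat-flat}.
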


The proof of Theorem \ref{thm1} is given in \S \ref{s:local_fourier}.
Since the relevant Fourier coefficients are decomposable, they can be computed locally.
In particular, Proposition \ref{p:fourier-p-depletion} plays a crucial role in this computation: it ensures that $\tau_p$ has the desired effect on the Fourier coefficients.

In \S \ref{s:main_formula}, we state the following formulas for the $\GU(1,1)$-periods (Theorems \ref{t:period-L-value-1} and \ref{t:period-L-value-2}), which constitute \eqref{outline-1} in the outline of the previous subsection.

\begin{thm}
\label{thm2}
For $n \ge 0$, we have
\begin{align*}
 \cP(\partial_\infty^n \theta(\chi), f_{2n+4}^\mu)^2 
 & = L(\tfrac{1}{2}, f_{2n+4}, \chi) \cdot L(\tfrac{1}{2}, f_{2n+4}, \mu), \\
 \cP(\partial_\infty^n \tau_p \theta(\chi), f_{2n+4}^\mu)^2
 & = \cE_\infty(f_{2n+4}, \chi) \cdot \cE_p(f_{2n+4}, \chi) \cdot L(\tfrac{1}{2}, f_{2n+4}, \chi) \\
 & \times \cE_\infty(f_{2n+4}, \mu) \cdot \cE_p(f_{2n+4}, \mu) \cdot L(\tfrac{1}{2}, f_{2n+4}, \mu)
\end{align*}
up to explicit nonzero factors, 
where $\cE_\infty$ and $\cE_p$ denote the modified Euler factors at $\infty$ and $p$, respectively, as in \cite{coates89}.
\end{thm}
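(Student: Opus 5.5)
We will prove both identities in Theorem~\ref{thm2} by a seesaw argument, and then evaluate the resulting local integrals explicitly. The seesaw pairs $(\GSO(2),\GSp_4)$ with $(\GU(2),\GU(1,1))$, where $\GU(2)=B^\times$ for the definite quaternion algebra $B$.

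\emph{Step 1: unfold the theta integral.} Write $\partial_\infty^n\theta(\chi)$ as the theta lift $\theta(\varphi_n,\chi)$. Here $\varphi_n=\varphi_{n,\infty}\otimes\varphi_\fin$ is the Schwartz function obtained by applying the differential operator of \S\ref{ss:diff-classical} to the Kudla--Millson type form at infinity; this is possible because $\partial_\infty$ commutes with the Weil representation action. Likewise, $\tau_p\theta(\chi)=\theta(\varphi',\chi)$, where $\varphi'$ differs from $\varphi$ only at $p$ and is obtained by letting the Hecke operator $\tau_p$ act through the Weil representation. Restrict to $\GU(1,1)$, integrate against $f^\mu_{2n+4}$, and interchange the integrals. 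The $\GSp_4$-theta kernel restricted to $\GU(1,1)$ is the $\GU(2)$-theta kernel for the doubled space $V\otimes E$. This gives
\[
\cP(\theta(\varphi,\chi),f^\mu)=\cP(\theta(\varphi,f^\mu),\chi),
\]
which is a toric period on $B^\times$ of a form in the Jacquet--Langlands transfer of $f_{2n+4}\otimes\mu$.

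\emph{Step 2: apply the period formulas.} We apply Ichino's triple-product/Waldspurger formula in the precise form of \eqref{outline-1} to the toric period. This writes $\cP^2$ as
\[
\frac{L(\tfrac12,f,\chi)}{L(1,f,\Ad)}\cdot\|\theta(\varphi,f^\mu)\|^2\cdot\prod_v\alpha_v,
\]
where $\alpha_v$ are normalized local toric integrals of matrix coefficients. Next, the Rallis inner product formula (Gan--Qiu--Takeda for $\GU(1,1)\to\GU(2)$) gives
\[
\|\theta(\varphi,f^\mu)\|^2=L(\tfrac12,f,\mu)\,\|f^\mu\|^2\prod_v Z_v,
\]
with local doubling zeta integrals $Z_v$. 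Finally, $\|f^\mu\|^2\doteq L(1,f,\Ad)$ by Rankin--Selberg. The adjoint $L$-values cancel, and the identities reduce to computing $\prod_v\alpha_vZ_v$.

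\emph{Step 3: local computations, which are the main obstacle.} At unramified places, both $\alpha_v$ and $Z_v$ are $1$ for the standard choices. At the primes dividing the square-free level and the discriminants, they are explicit nonzero constants; these are computed using newforms and the choices of \S\S\ref{s:schwartz_forms}--\ref{s:choices-schwartz}. At infinity, we must compute $\alpha_\infty$ and $Z_\infty$ for the $n$-th derivative of the Kudla--Millson form, paired with the lowest weight vector of weight $2n+4$. We plan to do this by reducing to Gaussian integrals via explicit polynomial Fock-model formulas, and by verifying that the result is nonzero. At $p$, in the depleted case, we replace $\varphi_p$ by $\tau_p\varphi_p$. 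Because the local data are decomposable, this factors into separate contributions to $\alpha_p$ and $Z_p$. We compute these using Proposition~\ref{p:fourier-p-depletion} together with the ordinary $p$-stabilized vector of $f$. The expected outcome is that $\alpha_p$ produces $\cE_p(f,\chi)$ and $Z_p$ produces $\cE_p(f,\mu)$; the factors $\cE_\infty$ arise similarly. This matching with Coates' modified Euler factors, through explicit Iwahori-level matrix coefficient integrals, is where we expect the real work.
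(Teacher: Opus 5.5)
The gap is in Step 2, and it becomes decisive at $p$. Waldspurger's formula and the Rallis inner product formula together determine $|\cP|^2$, not $\cP^2$. The theorem is about the square, which is what gets interpolated $p$-adically. Knowing $|\cP^H(\theta_{\varphi}(f_0^\mu),\chi)|^2$ says nothing about the phase $\cP/\overline{\cP}$, so even the first identity does not follow from your Steps 1--3.

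The paper supplies this separately in \S\ref{s:complex_conjugation}. From $\overline{f_0^\mu(g)}=f_0^\mu(\delta g^c\delta)$, $\overline{\chi(h)}=\chi(h_0hh_0)$ and Lemma~\ref{l:weil-conjugation}, one gets $\overline{\cP^H(\theta_{\varphi}(f_0^\mu),\chi)}=\cP^H(\theta_{\varphi'}(f_0^\mu),\chi)$ with $\varphi'=\Omega_\psi(h_0)\bar{\varphi}^c$. A place-by-place check then gives $\varphi'=(-1)^n\Omega_\psi(h_1)\varphi$ for an explicit $h_1\in\bH(\A_f)$; the archimedean sign is read off from the explicit Laguerre form of $\varphi_n^\sharp$ in Proposition~\ref{p:phi-sharp}. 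This yields $\cP=(-1)^{n+1}C_{\chi,\mu}\overline{\cP}$. That relation is the source of the sign and of the unitary factors $\chi_q(\cdot)$, $\mu_q(\cdot)$, $\xi_q(q)$ in the constant $C$. A smaller point: by Lemma~\ref{l:weil-seesaw}, the seesaw pairs $\theta(\chi)$ with $f^{\mu'}$, where $\mu'=\mu\cdot(\xi_{K/\Q}\circ\N_{E/\Q})$, not with $f^{\mu}$.

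For the depleted identity, your Step 3 at $p$ would not produce the stated answer. The target contains $\alpha^4$, with $\alpha$ the Hecke root, and
$$\cE_p(\pi_K\times\chi)=(1-\xi_p(p)\chi_p(p,1)p^{-1/2})^2(1-\xi_p(p)\chi_p(1,p)p^{-1/2})^2.$$
These are squares of generally non-real numbers, since $\xi_p(p)=\beta p^{-(k-1)/2}$. Running Waldspurger and Rallis on the depleted data can only give absolute squares of such factors.

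Computing your $\alpha_p$ and $Z_p$ directly would also require two things you do not address: identifying the local theta lift of $\tau_p\varphi_p\otimes\tilde{f}_p$ inside $\pi_p^B$, and comparing $\|f_\alpha^\mu\|$ with $\|f_0^\mu\|$. Proposition~\ref{p:fourier-p-depletion} concerns Fourier coefficients of $\theta(\chi)$ and says nothing about these toric and doubling integrals. Nor is there any reason for the toric factor to yield the $\chi$-Euler factor and $Z_p$ the $\mu$-Euler factor separately.

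The paper instead makes a \emph{linear} comparison. By the local seesaw, Howe duality and Tunnell--Saito, $\Hom_{\bG_p\times H_p}(\Omega_{\psi,p}\otimes(\pi_p\boxtimes\mu_p)\otimes\chi_p,\C)$ is one-dimensional. So the global period, viewed as a functional of the $p$-components, is a multiple of any explicit nonzero $\cQ$. The paper builds $\cQ$ from Waldspurger's torus model of $\pi_p\boxtimes\mu_p$ and the partial Fourier transform model of the Weil representation. Computing $\cQ(\varphi^\flat,\tilde{w}_0)/\cQ(\varphi_0,w_0)$ (Lemma~\ref{l:unique-hom-p-depletion}) gives
$$\cP^{\bG}(\theta_{\varphi^\flat}(\chi)^{\std},f_\alpha^{\mu'})=\varepsilon_{\fin}(\pi)C_p\,\cP^{\bG}(\theta_{\varphi}(\chi)^{\std},f_0^{\mu'}).$$
Squaring this and invoking the first identity produces exactly $C_p^2$, hence $\alpha^4$ and the squared Euler factors.
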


The rest of this paper is devoted to the proof of Theorem \ref{thm2}.
In \S \ref{s:main_seesaw}, we first reduce the computation of $\cP(\partial_\infty^n \tau_p \theta(\chi), f_{2n+4}^\mu)$ to that of $\cP(\partial_\infty^n \theta(\chi), f_{2n+4}^\mu)$, which amounts to studying the effect of $\tau_p$ on the period.
By the uniqueness of the relevant functionals, this effect can be evaluated locally and the explicit computation is carried out in \S \ref{s:local_functionals}, where the key formula is given in Lemma \ref{l:unique-hom-p-depletion}.
Then we use a seesaw identity to reduce the computation of the $\GU(1,1)$-period $\cP(\partial_\infty^n \theta(\chi), f_{2n+4}^\mu)$ to that of the toric period $\cP(\theta(f_{2n+4}^\mu), \chi)$.
Since Waldspurger's formula computes $|\cP(\theta(f_{2n+4}^\mu), \chi)|^2$ rather than $\cP(\theta(f_{2n+4}^\mu), \chi)^2$, we need to compare $\cP(\theta(f_{2n+4}^\mu), \chi)$ with its complex conjugate.
After making this comparison in \S \ref{s:complex_conjugation}, we apply Waldspurger's formula in \S \ref{s:waldspurger} to compute the toric period explicitly.
It remains to compute the Petersson norm $\| \theta(f_{2n+4}^\mu) \|^2$ of the theta lift.
We establish an explicit formula for this norm in \S\S \ref{s:rallis}--\ref{s:local_zeta} by applying the Rallis inner product formula. 
Here the main technical difficulty is the computation of the archimedean local zeta integral, which is given in Proposition \ref{p:rallis-real}.
Putting everything together, we complete the proof of Theorem \ref{thm2} in \S \ref{s:proof}.

\subsection{Acknowledgments}

A.I. was partially supported by JSPS KAKENHI Grant Number 19H01781.
K.P. was partially supported by NSF grants DMS 2001293 and DMS 2301507.

\section{Siegel modular forms}
\label{s:modular_forms}

In this section, we introduce notation for Siegel modular forms.

\subsection{Basic notation}

Let $\GSp_{2n}$ and $\Sp_{2n}$ be the symplectic similitude group and the symplectic group (regarded as group schemes over $\Z$) given by  
\[
 \GSp_{2n} = \left\{ g \in \GL_{2n} \; \middle| \; {}^t g \mat{}{I_n}{-I_n}{} g = \nu(g) \cdot \mat{}{I_n}{-I_n}{} \right\}
\]
and $\Sp_{2n} = \ker \nu$, respectively, where $\nu: \GSp_{2n} \rightarrow \mathbb{G}_m$ is the similitude character.
Put
\[
 \boldsymbol{m}(a) = \begin{pmatrix} a & \\ & {}^t a^{-1} \end{pmatrix}, \quad 
 \boldsymbol{n}(b) = \begin{pmatrix} I_n & b \\ & I_n \end{pmatrix}
\]
for $a \in \GL_n$ and $b \in \Sym_n$, where $\Sym_n = \{ b \in \M_n \mid {}^t b = b \}$.
Define a subgroup $\GSp_{2n}(\R)^+$ of $\GSp_{2n}(\R)$ by 
\[
 \GSp_{2n}(\R)^+ = \{ g \in \GSp_{2n}(\R) \mid \nu(g) > 0 \}.
\]
Let $\fH_n$ be the Siegel upper half-space given by
\[
 \fH_n = \{ z \in \Sym_n(\C) \mid \Im(z) > 0 \}.
\]
Then $\GSp_{2n}(\R)^+$ acts on $\fH_n$ transitively by
\[
 gz = (az + b) (cz + d)^{-1}
\]
for $g = \smat{a}{b}{c}{d} \in \GSp_{2n}(\R)^+$ and $z \in \fH_n$, where $a,b,c,d$ are $n \times n$ matrices.
We denote by
\[
 j : \GSp_{2n}(\R)^+ \times \fH_n \rightarrow \GL_n(\C)
\]
the factor of automorphy given by $j(g,z) = cz + d$, so that 
\[
 j(g_1 g_2, z) = j(g_1, g_2 z) j(g_2, z).
\]

Following \cite[\S 13.11]{shimura-book-arith}, we recall the definition of nearly holomorphic modular forms.
Let $(\rho,V_\rho)$ be an algebraic representation of $\GL_n(\C)$, $r$ a non-negative integer, and $\Gamma$ a congruence subgroup of $\Sp_{2n}(\Q)$.
We denote by $N_\rho^r(\Gamma)$ the space of smooth functions $f$ on $\fH_n$ valued in $V_\rho$ such that
\begin{itemize}
\item 
$f(\gamma z) = \rho(j(\gamma, z)) f(z)$ for all $\gamma \in \Gamma$ and $z \in \fH_n$;
\item
$f$ is a finite sum of the form
\[
 f(z) = \sum_k P_k(\Im(z)^{-1}) f_k(z)
\]
for $z \in \fH_n$, where $P_k$ is a polynomial of degree at most $r$ in the entries of $\Im(z)^{-1}$ and $f_k$ is a holomorphic function on $\fH_n$ valued in $V_\rho$;
\item
$f$ satisfies the cusp condition (see \cite[\S 13.12]{shimura-book-arith}), which is required only when $n=1$.
\end{itemize}
Write $M_\rho(\Gamma) = N_\rho^0(\Gamma)$, which is the space of holomorphic modular forms of weight $\rho$ and level $\Gamma$.

We also recall the Fourier expansion of modular forms.
For simplicity, we restrict ourselves to the holomorphic case.
Let $\cL$ be a lattice in $\Sym_n(\Q)$ such that $\{ \boldsymbol{n}(x) \mid x \in \cL \}$ is contained in $\Gamma$.
Let $\cL^\vee$ be the dual lattice of $\cL$ given by 
\[
 \cL^\vee = \{ T \in \Sym_n(\Q) \mid \text{$\tr(Tx) \in \Z$ for all $x \in \cL$} \}.
\]
Then every $f \in M_\rho(\Gamma)$ has a Fourier expansion of the form
\[
 f(z) = \sum_T a_f(T) e^{2 \pi \sqrt{-1} \tr(Tz)}
\]
for $z \in \fH_n$, where $T$ runs over positive semi-definite symmetric matrices in $\cL^\vee$ and $a_f(T) \in V_\rho$ is the $T$-th Fourier coefficient of $f$.

\subsection{Differential operators}
\label{ss:diff-classical}

In this subsection, we assume that $n=2$ and define differential operators which will be used later.
We use the following coordinates on $\fH_2$.
For $z \in \fH_2$, we write
\[
 z = x + \sqrt{-1} y
\]
with $x, y \in \Sym_2(\R)$ such that $y>0$.
We also write
\[
 z = \mat{z_1}{z_2}{z_2}{z_3}, \quad
 x = \mat{x_1}{x_2}{x_2}{x_3}, \quad
 y = \mat{y_1}{y_2}{y_2}{y_3} 
\]
with $z_1, z_2, z_3 \in \C$ and $x_1, x_2, x_3, y_1, y_2, y_3 \in \R$, so that
\[
 \frac{\partial}{\partial z_i} = \frac{1}{2} \left( \frac{\partial}{\partial x_i} - \sqrt{-1} \frac{\partial}{\partial y_i} \right).
\]
Put
\[
 \Delta = \det(y) = y_1 y_3 - y_2^2 \in \R_+^\times.
\]
For any non-negative integer $m$, we define differential operators $\delta_{m,1}, \delta_{m,2}, \delta_{m,3}$ on $\fH_2$ by
\begin{align*}
 \delta_{m,1} & = \frac{1}{2 \pi \sqrt{-1}} \frac{\partial}{\partial z_1} - \frac{m}{4 \pi} \frac{y_3}{\Delta}, \\
 \delta_{m,2} & = \frac{1}{2 \pi \sqrt{-1}} \frac{\partial}{\partial z_2} + \frac{m}{2 \pi} \frac{y_2}{\Delta}, \\
 \delta_{m,3} & = \frac{1}{2 \pi \sqrt{-1}} \frac{\partial}{\partial z_3} - \frac{m}{4 \pi} \frac{y_1}{\Delta}.
\end{align*}

We consider a family of irreducible representations $\{ (\rho_m, V_m) \}_{m \ge 0}$ of $\GL_2(\C)$ given by
\[
 \rho_m = \Sym^2 \otimes {\det}^m.
\]
Take a basis
\[
 \{ v_{m,i} \mid 0 \le i \le 2 \}
\]
of $V_m$ such that the associated action of $\M_2(\C)$ is given by
\begin{align*}
 d \rho_m \mat{1}{0}{0}{0} v_{m,i} & = (m+2-i) v_{m,i}, & 
 d \rho_m \mat{0}{0}{0}{1} v_{m,i} & = (m+i) v_{m,i}, \\
 d \rho_m \mat{0}{1}{0}{0} v_{m,i} & = (3-i) v_{m,i-1}, & 
 d \rho_m \mat{0}{0}{1}{0} v_{m,i} & = (i+1) v_{m,i+1},
\end{align*}
where we interpret $v_{m,i} = 0$ if $i=-1$ or $3$.
We identify $V_m$ with $\C^3$ via this basis.
Let $(\rho_m^\vee, V_m^\vee)$ be the contragredient representation of $(\rho_m,V_m)$ and take the dual basis
\[
 \{ v^*_{m,i} \mid 0 \le i \le 2 \}
\]
of $V_m^\vee$ such that $\langle v_{m,i}, v_{m,j}^* \rangle = \delta_{ij}$, where $\langle \cdot, \cdot \rangle : V_m \times V_m^\vee \rightarrow \C$ is the natural pairing.
For a smooth $V_m$-valued function $f$ on $\fH_2$, we define a smooth $V_{m+1}$-valued function $\delta_m f$ on $\fH_2$ by
\begin{align*}
 \langle \delta_m f, v_{m+1,0}^* \rangle & = 
 \langle \delta_{m,2} f, v_{m,0}^* \rangle - 2 \langle \delta_{m,1} f, v_{m,1}^* \rangle, \\
 \langle \delta_m f, v_{m+1,1}^* \rangle & = 
 \langle \delta_{m,3} f, v_{m,0}^* \rangle - \langle \delta_{m,1} f, v_{m,2}^* \rangle, \\
 \langle \delta_m f, v_{m+1,2}^* \rangle & = 
 2 \langle \delta_{m,3} f, v_{m,1}^* \rangle - \langle \delta_{m,2} f, v_{m,2}^* \rangle.
\end{align*}
Under the above identifications $V_{m+1} \simeq \C^3$ and $V_m \simeq \C^3$, we may write this as
\[
 \delta_m f = 
 \begin{pmatrix}
  \delta_{m,2} & -2 \delta_{m,1} & 0 \\
  \delta_{m,3} & 0 & - \delta_{m,1} \\
  0 & 2 \delta_{m,3} & -\delta_{m,2}
 \end{pmatrix} 
 f.
\]
This differential operator $\delta_m$ satisfies the following basic property for nearly holomorphic modular forms.

\begin{prop}
\label{p:delta_mf}
If $f \in N^r_{\rho_m}(\Gamma)$, then we have $\delta_m f \in N^{r+1}_{\rho_{m+1}}(\Gamma)$.
\end{prop}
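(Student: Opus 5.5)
The proof has to establish three things for $\delta_m f$: the near-holomorphy condition with degree at most $r+1$, the transformation law for weight $\rho_{m+1}$, and the cusp condition. The cusp condition is not needed, since $n=2$.

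\textbf{Near-holomorphy.} This is the easy part. Write $f=\sum_k P_k(y^{-1}) f_k$ with $\deg P_k\le r$ and $f_k$ holomorphic. Then $\partial/\partial z_i$ acts on $f_k$ holomorphically. On the entries of $y^{-1}$ it acts through $\partial/\partial z_i = -\tfrac{\sqrt{-1}}{2}\partial/\partial y_i$ on functions of $y$. Since $\partial (y^{-1}) = -y^{-1}(\partial y) y^{-1}$, differentiating a polynomial of degree $\le r$ in the entries of $y^{-1}$ gives a polynomial of degree $\le r+1$. The correction terms $y_3/\Delta$, $y_2/\Delta$, $y_1/\Delta$ are, up to sign, exactly the entries of $y^{-1}$. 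So multiplying by them also raises the degree by at most one. Hence each component of $\delta_m f$ has the required form with degree $\le r+1$.

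\textbf{Transformation law: reduction.} This is the main point. I would recast $\delta_m$ intrinsically, in the style of Shimura's operator $D_\rho$ (\cite[\S 13]{shimura-book-arith}) or Maass's operator. Introduce the symmetric-matrix-valued operator $\partial = \left(\tfrac{1+\delta_{ij}}{2}\partial/\partial z_{ij}\right)$ and set
\[
\tilde\delta_m f = \tfrac{1}{2\pi\sqrt{-1}}\bigl(\partial + \text{correction}\bigr) f,
\]
where the correction is $m\cdot\tfrac{\sqrt{-1}}{2}\,(\Im z)^{-1}$ times the weight. This packages $\delta_{m,1},\delta_{m,2},\delta_{m,3}$ into a $\Sym_2$-valued operator.

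The standard facts I would use are:
\begin{itemize}
\item $\Im(\gamma z) = {}^t j(\gamma,\bar z)^{-1}\,\Im(z)\, j(\gamma,z)^{-1}$;
\item the chain rule gives $\partial_{\gamma z} = {}^t j(\gamma,z)\,\partial_z\, j(\gamma,z)$ acting on scalar functions, up to the usual symmetrization.
\end{itemize}
The inhomogeneous term arising from differentiating $\rho_m(j(\gamma,z))$ involves $c\,j(\gamma,z)^{-1}$. The correction term with $(\Im z)^{-1}$ is designed to cancel it, via the identity
\[
j(\gamma,z)^{-1} c = \tfrac{\sqrt{-1}}{2}\bigl(\Im(\gamma z)\text{-term}\bigr) - \ldots,
\]
that is, the standard computation showing $\Im(\gamma z)^{-1}$ transforms with an affine term $2\sqrt{-1}\,{}^t\!c\, j$-type. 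With this cancellation, $\tilde\delta_m f$ transforms under $\rho_m\otimes\Sym^2$.

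\textbf{Transformation law: projection.} The final step is to check that the explicit matrix in the definition of $\delta_m$ is the $\GL_2$-equivariant projection
\[
\Sym^2\otimes\det^m\otimes\Sym^2 \rightarrow \Sym^2\otimes\det^{m+1}
\]
(the $\Sym^2\otimes\Sym^2\supset \Sym^2\otimes\det$ component). I would verify this using the given action of $d\rho_m$: check that the map commutes with the four elementary matrices $E_{11},E_{22},E_{12},E_{21}$. The weights are easy. For the raising and lowering operators it is a finite computation, which I expect to be the main obstacle. The trickiest part is pinning down normalizations, namely the identification of $V_2\simeq\Sym_2$ compatible with the factor $2$'s and the signs in the matrix.

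One could alternatively avoid this bookkeeping by an infinitesimal argument: realize $\delta_m$ as a $\fp^+$-raising operator on the lifted function on $\Sp_4(\R)$, where equivariance is automatic. I would fall back to this if the direct check became unwieldy.
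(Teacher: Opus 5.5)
Your near-holomorphy argument is the one the paper uses: it computes $\partial Y_j/\partial z_i$ for the entries $Y_j$ of $\Im(z)^{-1}$, which is your identity $\partial(y^{-1})=-y^{-1}(\partial y)y^{-1}$. You are also right that no cusp condition is needed. For the transformation law, the paper takes what you call the fallback. It lifts $f$ to $\phi(g)=\rho_m(j(g,z_0))^{-1}f(gz_0)$ on $\Sp_4(\R)$, and Proposition~\ref{p:delta-f} shows that $\delta_m f$ corresponds to $-\frac{1}{4\pi}\mathtt{D}\phi$, where $\mathtt{D}$ is built from right derivatives by $\mathtt{X}_{2,0},\mathtt{X}_{1,1},\mathtt{X}_{0,2}\in\fp^+$. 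Left $\Gamma$-invariance of $\mathtt{D}\phi$ is then automatic, and its $K_{G_1}$-type is checked in Lemma~\ref{l:D.phi}. So only half of the equivariance is automatic; the identification with $\mathtt{D}$ is the real work, and the paper reuses it in Lemma~\ref{l:adelic_period}.

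Your main, classical route has a gap at its key step. The operator you package is $(\delta_{m,1},\delta_{m,2},\delta_{m,3})$. In the direction $X\in\Sym_2$ its correction is only the scalar $-\frac{\sqrt{-1}}{2}m\tr(\Im(z)^{-1}X)$ (note the sign). Differentiating $f(\gamma z)=\rho_m(j)f(z)$, with $j=j(\gamma,z)$, gives
\[
 (\partial f)(\gamma z)[X]=\rho_m(j)\,(\partial f)(z)[{}^tjXj]+d\rho_m(c\,{}^tj\,X)\,f(\gamma z).
\]
Here $c\,{}^tj$ is symmetric and $d\rho_m(A)=d\Sym^2(A)+m\tr(A)$. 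Using $c\,{}^tj=\frac{1}{2\sqrt{-1}}(j\,\Im(z)^{-1}\,{}^tj-\Im(\gamma z)^{-1})$, your correction cancels only the $m\tr$-part of the anomaly. So $\tilde\delta_m f$ does \emph{not} transform under $\rho_m\otimes\Sym^2$. For example, at $m=0$ there is no correction at all, and the holomorphic derivative of a $\Sym^2$-valued form is not modular of weight $\Sym^2\otimes\Sym^2$. If you instead use Shimura's full correction $-\frac{\sqrt{-1}}{2}d\rho_m(\Im(z)^{-1}X)$, the tensor does transform, but then its projection is not visibly $\delta_m$.

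In either version the missing fact is the same: for every symmetric $A$ and $v\in V_m$, the term $X\mapsto d\Sym^2(AX)v$ is annihilated by the matrix defining $\delta_m$. This is a short direct check; for $A=\smat{a}{b}{b}{d}$ each of the three rows cancels. It does not follow from equivariance of the projection, which is all you planned to verify.

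Conceptually, this is why $\delta_{m,i}$ needs only the $m$-dependent scalar correction:
\begin{itemize}
\item With $\Delta=\det(y)$ as in \S\ref{ss:diff-classical}, one has $\delta_{m,i}=\Delta^{-m}\circ\delta_{0,i}\circ\Delta^{m}$.
\item Identify $\sum_i h_iv_{0,i}$ with $\smat{h_0}{h_1}{h_1}{h_2}$. Then $\delta_0h$ is a constant times the coefficients of $\partial\,\tr(h\,dz)$ in $dz_1\wedge dz_2$, $dz_1\wedge dz_3$, $dz_2\wedge dz_3$.
\item $\Delta^mf$ transforms by $\overline{\det j}^{\,-m}\Sym^2(j)$, and $\partial\,\overline{\det j}=0$.
\end{itemize}
Since $\partial$ commutes with $\gamma^*$, these facts together with your equivariance check give the transformation law. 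A further small slip: the chain rule reads $\partial_{\gamma z}=j\,\partial_z\,{}^tj$, not ${}^tj\,\partial_z\,j$.

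Repaired this way, your route is self-contained and explains the shape of $\delta_m$. The paper's route costs more computation but yields the identification with $\mathtt{D}$ that the rest of the paper needs.
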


The proof of this proposition will be given in the next section.

\section{From classical to adelic}

In this section, we relate classical modular forms to adelic automorphic forms, clarifying how the classical objects fit into the adelic setting.

\subsection{Automorphic forms}
\label{ss:autom-adelic}

Let $\A$ and $\A_f$ be the rings of adeles and finite adeles of $\Q$, respectively.
Let $\hat{\Z}$ be the profinite completion of $\Z$, so that $\hat{\Z} = \prod_p \Z_p \subset \A_f$.
Let $\psi$ be the standard additive character of $\A/\Q$, so that $\psi_\infty(x) = e^{2 \pi \sqrt{-1} x}$ for $x \in \R$.
Put $G = \GSp_{2n}(\R)$ and $G_1 = \Sp_{2n}(\R)$.
Let $K_G$ and $K_{G_1}$ be maximal compact modulo center subgroups of $G$ and $G_1$, respectively, given by
\begin{align*}
 K_G & = \left\{ z \cdot \mat{I_n}{}{}{\epsilon I_n} \; \middle| \; z \in \R^\times, \; \epsilon = \pm 1 \right\} \cdot K_{G_1}, \\
 K_{G_1} & = \left\{ \mat{a}{b}{-b}{a} \; \middle| \; a + \sqrt{-1} b \in \U(n) \right\}.
\end{align*}
Note that $K_G^0 = Z_G \cdot K_{G_1}$, where $Z_G$ is the center of $G$.
Let $\fg_0$, $\fg_{1,0}$, and $\fk_0$ be the Lie algebras of $G$, $G_1$, and $K_{G_1}$, respectively.
Then we have a Cartan decomposition $\fg_{1,0} = \fk_0 \oplus \fp_0$ with
\begin{align*}
 \fk_0 & = \left\{ \mat{A}{B}{-B}{A} \; \middle| \;
 A \in \mathrm{Alt}_n(\R), \; B \in \Sym_n(\R) \right\}, \\
 \fp_0 & = \left\{ \mat{A}{B}{B}{-A} \; \middle| \;
 A, B \in \Sym_n(\R) \right\},
\end{align*}
where $\Sym_n = \{ X \in \M_n \mid {}^t X = X \}$ and $\mathrm{Alt}_n = \{ X \in \M_n \mid {}^t X = - X \}$.
Let $\fg$, $\fg_1$, $\fk$, and $\fp$ be the complexifications of $\fg_0$, $\fg_{1,0}$, $\fk_0$, and $\fp_0$, respectively.
Let $\fp^+$ and $\fp^-$ be the subalgebras of $\fg_1$ given by 
\[
 \fp^\pm = \left\{ \mat{A}{\pm \sqrt{-1} A}{\pm \sqrt{-1} A}{-A} \; \middle| \;
 A \in \Sym_n(\C) \right\},
\]
so that $\fp = \fp^+ \oplus \fp^-$.
Note that $\fp^+$ and $\fp^-$ are commutative.

Let $(\rho,V_\rho)$ be an algebraic representation of $\GL_n(\C)$, $r$ a non-negative integer, and $\cK$ an open compact subgroup of $\GSp_{2n}(\A_f)$ such that $\nu(\cK) = \hat{\Z}^\times$.
We denote by $\cN_\rho^r(\cK)$ the space of automorphic forms $\phi$ on $\GSp_{2n}(\A)$ valued in $V_\rho$ such that 
\begin{itemize}
\item
$X \phi = 0$ for all $X \in \Sym^{r+1}(\fp^-)$;
\item 
$\phi(gzk) = \rho(a - \sqrt{-1} b)^{-1} \phi(g)$ for all $g \in \GSp_{2n}(\A)$, $z \in \R_+^\times$, and $k = \smat{a}{b}{-b}{a} \in K_{G_1}$;
\item
$\phi(g k) = \phi(g)$ for all $g \in \GSp_{2n}(\A)$ and $k \in \cK$.
\end{itemize}
Put $\Gamma = \Sp_{2n}(\Q) \cap \cK$.
Since $\GSp_{2n}(\A) = \GSp_{2n}(\Q) \GSp_{2n}(\R)^+ \cK$ by the strong approximation theorem and $\GSp_{2n}(\Q) \cap \GSp_{2n}(\R)^+ \cK \subset \Sp_{2n}(\Q)$, we have an isomorphism
\[
 \cN_\rho^r(\cK) \simeq N_\rho^r(\Gamma)
\]
sending $\phi \in \cN_\rho^r(\cK)$ to $f \in N_\rho^r(\Gamma)$ given by
\[
 f(z) = \rho(j(\nu(g)^{-1/2} \cdot g, z_0)) \phi(g)
\]
for $z \in \fH_n$, where $g \in \GSp_{2n}(\R)^+$ is any element such that $g z_0 = z$ with $z_0 = \sqrt{-1} I_n$.

Write $\cM_\rho(\cK) = \cN_\rho^0(\cK)$, so that $\cM_\rho(\cK) \simeq M_\rho(\Gamma)$.
For $f \in M_\rho(\Gamma)$, let $\phi \in \cM_\rho(\cK)$ be the corresponding adelic automorphic form.
Then for $T \in \Sym_n(\Q)$, the $T$-th Fourier coefficient $a_f(T)$ of $f$ is given by 
\begin{equation}
\label{eq:a_f(T)}
 a_f(T) = e^{-2 \pi \sqrt{-1} \tr(T z)} \rho({}^t a^{-1}) W_T(\boldsymbol{n}(b) \boldsymbol{m}(a)) 
\end{equation}
with 
\[
 W_T(g) = \int_{\Sym_n(\Q) \backslash \Sym_n(\A)} \phi(\boldsymbol{n}(x) g) \overline{\psi(\tr(Tx))} \, dx, 
\]
where $z \in \fH_n$ is arbitrary, $a \in \GL_n(\R)$ and $b \in \Sym_n(\R)$ are chosen so that $z = b + \sqrt{-1} a {}^t a$, and $dx$ is the Tamagawa measure on $\Sym_n(\A)$.

\subsection{Differential operators}
\label{ss:diff-adelic}

In this subsection, we assume that $n=2$ and relate the differential operator $\delta_m$ to the action of $\fp^+$.
Take the following basis of $\fg_{1,0}$:
\begin{align*}
 \mathtt{A}_1 & = \mathtt{E}_{11} - \mathtt{E}_{33}, &
 \mathtt{A}_2 & = \mathtt{E}_{12} - \mathtt{E}_{43}, &
 \mathtt{A}_3 & = \mathtt{E}_{21} - \mathtt{E}_{34}, &
 \mathtt{A}_4 & = \mathtt{E}_{22} - \mathtt{E}_{44}, \\
 \mathtt{B}_1 & = \mathtt{E}_{13}, &
 \mathtt{B}_2 & = \mathtt{E}_{14} + \mathtt{E}_{23}, &
 \mathtt{B}_3 & = \mathtt{E}_{24}, \\
 \mathtt{C}_1 & = \mathtt{E}_{31}, &
 \mathtt{C}_2 & = \mathtt{E}_{32} + \mathtt{E}_{41}, &
 \mathtt{C}_3 & = \mathtt{E}_{42},
\end{align*}
where $\mathtt{E}_{ij}$ denotes the $4 \times 4$ matrix with one at the $(i,j)$-th entry and zero elsewhere.
Take bases
\[
 \mathtt{H}_1, \mathtt{H}_2, \mathtt{X}, \mathtt{Y}, \quad
 \mathtt{X}_{2,0}, \mathtt{X}_{1,1}, \mathtt{X}_{0,2}, \quad
 \mathtt{X}_{-2,0}, \mathtt{X}_{-1,-1}, \mathtt{X}_{0,-2}
\]
of $\fk$, $\fp^+$, $\fp^-$, respectively, given by
\begin{align*}
 \mathtt{H}_1 & = 
 \begin{pmatrix}
  0 & 0 & - \sqrt{-1} & 0 \\
  0 & 0 & 0 & 0 \\
  \sqrt{-1} & 0 & 0 & 0 \\
  0 & 0 & 0 & 0
 \end{pmatrix}, &
 \mathtt{H}_2 & = 
 \begin{pmatrix}
  0 & 0 & 0 & 0 \\
  0 & 0 & 0 & - \sqrt{-1} \\
  0 & 0 & 0 & 0 \\
  0 & \sqrt{-1} & 0 & 0 
 \end{pmatrix}, \\
 \mathtt{X} & = \frac{1}{2} \cdot 
 \begin{pmatrix}
  0 & 1 & 0 & - \sqrt{-1} \\
  -1 & 0 & - \sqrt{-1} & 0 \\
  0 & \sqrt{-1} & 0 & 1 \\
  \sqrt{-1} & 0 & -1 & 0
 \end{pmatrix}, & 
 \mathtt{Y} & = \frac{1}{2} \cdot 
 \begin{pmatrix}
  0 & -1 & 0 & - \sqrt{-1} \\
  1 & 0 & - \sqrt{-1} & 0 \\
  0 & \sqrt{-1} & 0 & -1 \\
  \sqrt{-1} & 0 & 1 & 0
 \end{pmatrix}, \\
 \mathtt{X}_{2,0} & = \frac{1}{2} \cdot 
 \begin{pmatrix}
  1 & 0 & \sqrt{-1} & 0 \\
  0 & 0 & 0 & 0 \\
  \sqrt{-1} & 0 & -1 & 0 \\
  0 & 0 & 0 & 0 \\
 \end{pmatrix}, &
 \mathtt{X}_{-2,0} & = \frac{1}{2} \cdot 
 \begin{pmatrix}
  1 & 0 & - \sqrt{-1} & 0 \\
  0 & 0 & 0 & 0 \\
  - \sqrt{-1} & 0 & -1 & 0 \\
  0 & 0 & 0 & 0 \\
 \end{pmatrix}, \\
 \mathtt{X}_{1,1} & = \frac{1}{2} \cdot 
 \begin{pmatrix}
  0 & 1 & 0 & \sqrt{-1} \\
  1 & 0 & \sqrt{-1} & 0 \\
  0 & \sqrt{-1} & 0 & -1 \\
  \sqrt{-1} & 0 & -1 & 0 
 \end{pmatrix}, &
 \mathtt{X}_{-1,-1} & = \frac{1}{2} \cdot 
 \begin{pmatrix}
  0 & 1 & 0 & - \sqrt{-1} \\
  1 & 0 & - \sqrt{-1} & 0 \\
  0 & - \sqrt{-1} & 0 & -1 \\
  - \sqrt{-1} & 0 & -1 & 0 
 \end{pmatrix}, \\
 \mathtt{X}_{0,2} & = \frac{1}{2} \cdot 
 \begin{pmatrix}
  0 & 0 & 0 & 0 \\
  0 & 1 & 0 & \sqrt{-1} \\
  0 & 0 & 0 & 0 \\
  0 & \sqrt{-1} & 0 & -1
 \end{pmatrix}, &
 \mathtt{X}_{0,-2} & = \frac{1}{2} \cdot 
 \begin{pmatrix}
  0 & 0 & 0 & 0 \\
  0 & 1 & 0 & - \sqrt{-1} \\
  0 & 0 & 0 & 0 \\
  0 & - \sqrt{-1} & 0 & -1
 \end{pmatrix}.
\end{align*}
Note that
\begin{align*}
 [\mathtt{H}_1, \mathtt{X}_{2,0}] & = 2 \mathtt{X}_{2,0}, & 
 [\mathtt{H}_1, \mathtt{X}_{1,1}] & = \mathtt{X}_{1,1}, & 
 [\mathtt{H}_1, \mathtt{X}_{0,2}] & = 0, \\
 [\mathtt{H}_2, \mathtt{X}_{2,0}] & = 0, & 
 [\mathtt{H}_2, \mathtt{X}_{1,1}] & = \mathtt{X}_{1,1}, & 
 [\mathtt{H}_2, \mathtt{X}_{0,2}] & = 2 \mathtt{X}_{0,2}, \\ 
 [\mathtt{X}, \mathtt{X}_{2,0}] & = 0, & 
 [\mathtt{X}, \mathtt{X}_{1,1}] & = 2 \mathtt{X}_{2,0}, & 
 [\mathtt{X}, \mathtt{X}_{0,2}] & = \mathtt{X}_{1,1}, \\
 [\mathtt{Y}, \mathtt{X}_{2,0}] & = \mathtt{X}_{1,1}, &
 [\mathtt{Y}, \mathtt{X}_{1,1}] & = 2 \mathtt{X}_{0,2}, &
 [\mathtt{Y}, \mathtt{X}_{0,2}] & = 0.
\end{align*}

Fix $m \ge 0$.
Let $(\rho_m, V_m)$ be the irreducible representation of $\GL_2(\C)$ as in \S \ref{ss:diff-classical}.
Define an irreducible representation $\varrho_m$ of $K_{G_1}$ on $V_m^\vee$ by 
\[
 \varrho_m(k) = \rho_m^\vee(a - \sqrt{-1} b)
\]
for $k = \smat{a}{b}{-b}{a} \in K_{G_1}$.
Then the associated action of $\fk$ is given by the following.

\begin{lem}
\label{l:varrho_m}
We have
\begin{align*}
 d \varrho_m(\mathtt{H}_1) v_{m,i}^* & = (m+2-i) v_{m,i}^*, & 
 d \varrho_m(\mathtt{H}_2) v_{m,i}^* & = (m+i) v_{m,i}^*, \\
 d \varrho_m(\mathtt{X}) v_{m,i}^* & = i v_{m,i-1}^*, & 
 d \varrho_m(\mathtt{Y}) v_{m,i}^* & = (2-i) v_{m,i+1}^*.
\end{align*}
\end{lem}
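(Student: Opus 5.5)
The plan is a direct computation: first identify the complexified differential of $k \mapsto a - \sqrt{-1} b$, then transfer the known action of $d\rho_m$ from \S\ref{ss:diff-classical} to the contragredient. The map $\kappa : K_{G_1} \to \U(2)$, $\smat{a}{b}{-b}{a} \mapsto a - \sqrt{-1} b$, is a group isomorphism. Its differential, extended $\C$-linearly to $\fk$, sends $\smat{A}{B}{-B}{A}$ (with $A, B \in \M_2(\C)$ of the shapes allowed by $\fk$) to $A - \sqrt{-1} B \in \M_2(\C) = \mathfrak{gl}_2(\C)$. Since $\rho_m^\vee(g) = {}^t\rho_m(g^{-1})$ with respect to dual bases, we have
\[
 d\varrho_m(Z) = -{}^t\bigl(d\rho_m(A - \sqrt{-1} B)\bigr) = {}^t\bigl(d\rho_m(-(A - \sqrt{-1} B))\bigr)
\]
for $Z = \smat{A}{B}{-B}{A} \in \fk$. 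Here the transpose is taken with respect to the bases $\{v_{m,i}\}$ and $\{v_{m,i}^*\}$. Concretely, if $d\rho_m(M) v_{m,j} = \sum_i c_{ij} v_{m,i}$, then ${}^t d\rho_m(M)\, v_{m,i}^* = \sum_j c_{ij} v_{m,j}^*$.

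Next, I will read off $A$ and $B$ for each of $\mathtt{H}_1, \mathtt{H}_2, \mathtt{X}, \mathtt{Y}$ and compute $-(A - \sqrt{-1} B)$.

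For $\mathtt{H}_1$, we have $A = 0$ and $B = -\sqrt{-1}\,\mathtt{e}_{11}$, where $\mathtt{e}_{ij}$ denotes the $2\times 2$ matrix units. Hence $-(A - \sqrt{-1} B) = \mathtt{e}_{11}$, and similarly $\mathtt{H}_2$ gives $\mathtt{e}_{22}$.

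For $\mathtt{X}$, we have $A = \frac12(\mathtt{e}_{12} - \mathtt{e}_{21})$ and $B = -\frac{\sqrt{-1}}{2}(\mathtt{e}_{12} + \mathtt{e}_{21})$. Then $A - \sqrt{-1}B = -\mathtt{e}_{21}$, so $-(A - \sqrt{-1} B) = \mathtt{e}_{21}$.

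For $\mathtt{Y}$, we have $A = \frac12(\mathtt{e}_{21} - \mathtt{e}_{12})$ with the same $B$. This gives $-(A - \sqrt{-1} B) = \mathtt{e}_{12}$.

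It then remains to apply the formulas for $d\rho_m$ from \S\ref{ss:diff-classical} and transpose.

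The diagonal elements $\mathtt{e}_{11}, \mathtt{e}_{22}$ act on $v_{m,i}$ by the scalars $m+2-i$ and $m+i$. These scalars pass unchanged to $v_{m,i}^*$, which gives the first two formulas.

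Since $d\rho_m(\mathtt{e}_{21}) v_{m,j} = (j+1) v_{m,j+1}$, the coefficient $c_{ij}$ is nonzero only for $i = j+1$, where it equals $i$. Transposing gives $v_{m,i}^* \mapsto i\, v_{m,i-1}^*$, which is the formula for $\mathtt{X}$.

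Likewise, $d\rho_m(\mathtt{e}_{12}) v_{m,j} = (3-j) v_{m,j-1}$ has $c_{ij} = 3-j = 2-i$ for $j = i+1$. This gives $v_{m,i}^* \mapsto (2-i) v_{m,i+1}^*$, the formula for $\mathtt{Y}$.

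The only real obstacle is sign and convention bookkeeping. Three points need care: that $\kappa$ uses $a - \sqrt{-1} b$ rather than $a + \sqrt{-1} b$; that the contragredient introduces the minus sign together with the transpose; and that the complexified entries of $\mathtt{X}$ and $\mathtt{Y}$ are decomposed correctly into the blocks $A$ and $B$. I will double-check the last point by verifying that $\mathtt{X}$ and $\mathtt{Y}$ indeed have the block form $\smat{A}{B}{-B}{A}$. As a consistency check, the resulting formulas should be compatible with the bracket relations $[\mathtt{H}_1, \mathtt{X}] = \mathtt{X}$ and $[\mathtt{H}_2, \mathtt{X}] = -\mathtt{X}$: $\mathtt{X}$ lowers the index $i$ by one, which changes the $\mathtt{H}_1$-eigenvalue by $+1$ and the $\mathtt{H}_2$-eigenvalue by $-1$.
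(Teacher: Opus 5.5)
Your proposal is correct and is essentially the paper's proof. The paper uses $\iota(k)=a+\sqrt{-1}b$ together with unitarity to write $a-\sqrt{-1}b={}^t\iota(k)^{-1}$, so its $-{}^t\iota(Z)$ is exactly your $A-\sqrt{-1}B$. Like you, it then transfers the $d\rho_m$ formulas to the dual basis; it computes $d\varrho_m^\vee$ on $v_{m,i}$ first and dualizes, whereas you work with $d\varrho_m$ on $v_{m,i}^*$ directly.
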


\begin{proof}
Let $\iota: K_{G_1} \rightarrow \U(2)$ be an isomorphism given by 
\[
 \iota \mat{a}{b}{-b}{a} = a + \sqrt{-1} b.
\]
We also denote by $\iota : \fk \rightarrow \M_2(\C)$ the induced isomorphism.
Note that
\[
 \iota(\mathtt{H}_1) = \mat{1}{0}{0}{0}, \quad
 \iota(\mathtt{H}_2) = \mat{0}{0}{0}{1}, \quad
 \iota(\mathtt{X}) = \mat{0}{1}{0}{0}, \quad
 \iota(\mathtt{Y}) = \mat{0}{0}{1}{0}.
\]
Since $\varrho^\vee_m(k) = \rho_m({}^t \iota(k)^{-1})$ for $k \in K_{G_1}$, we have
\[
 d \varrho_m^\vee(X) = - d \rho_m({}^t \iota(X))
\]
for $X \in \fk$.
In particular, we have
\begin{align*}
 d \varrho_m^\vee(\mathtt{H}_1) v_{m,i} & = -(m+2-i) v_{m,i}, &
 d \varrho_m^\vee(\mathtt{H}_2) v_{m,i} & = -(m+i) v_{m,i}, \\
 d \varrho_m^\vee(\mathtt{X}) v_{m,i} & = -(i+1) v_{m,i+1}, &
 d \varrho_m^\vee(\mathtt{Y}) v_{m,i} & = -(3-i) v_{m,i-1}.
\end{align*}
This yields the assertion.
\end{proof}

Let $\phi$ be a smooth $V_m$-valued function on $G_1$ such that
\[
 \phi(gk) = \varrho_m^\vee(k)^{-1} \phi(g)
\]
for all $g \in G_1$ and $k \in K_{G_1}$.
In particular, we have
\[
 \langle XY \phi, v^* \rangle = \langle X \phi, d \varrho_m(Y) v^* \rangle
\]
for all $X \in \fg_1$, $Y \in \fk$, and $v^* \in V_m^\vee$.
Define a smooth $V_{m+1}$-valued function $\mathtt{D} \phi$ on $G_1$ by
\begin{align*}
 \langle \mathtt{D} \phi, v_{m+1,0}^* \rangle & = 
 \langle \mathtt{X}_{1,1} \phi, v_{m,0}^* \rangle 
 - 2 \langle \mathtt{X}_{2,0} \phi, v_{m,1}^* \rangle, \\
 \langle \mathtt{D} \phi, v_{m+1,1}^* \rangle & = 
 \langle \mathtt{X}_{0,2} \phi, v_{m,0}^* \rangle 
 - \langle \mathtt{X}_{2,0} \phi, v_{m,2}^* \rangle, \\
 \langle \mathtt{D} \phi, v_{m+1,2}^* \rangle & = 
 2 \langle \mathtt{X}_{0,2} \phi, v_{m,1}^* \rangle
 - \langle \mathtt{X}_{1,1} \phi, v_{m,2}^* \rangle.
\end{align*}
Under the identifications $V_{m+1} \simeq \C^3$ and $V_m \simeq \C^3$ as in \S \ref{ss:diff-classical}, we may write this as
\begin{equation}
\label{eq:D.phi}
 \mathtt{D} \phi = 
 \begin{pmatrix}
  \mathtt{X}_{1,1} & -2 \mathtt{X}_{2,0} & 0 \\
  \mathtt{X}_{0,2} & 0 &  -\mathtt{X}_{2,0} \\
  0 & 2 \mathtt{X}_{0,2} & - \mathtt{X}_{1,1}
 \end{pmatrix}
 \phi.
\end{equation}

\begin{lem}
\label{l:D.phi}
We have
\[
 \mathtt{D} \phi(gk) = \varrho_{m+1}^\vee(k)^{-1} \mathtt{D} \phi(g)
\]
for all $g \in G_1$ and $k \in K_{G_1}$.
\end{lem}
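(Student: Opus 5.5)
The plan is to reduce the statement to an infinitesimal identity and then check that identity directly from the bracket relations already recorded.

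\emph{Reduction to the Lie algebra.} The group $K_{G_1} \simeq \U(2)$ is connected, so it is generated by elements $\exp(tZ)$ with $Z \in \fk_0$. Fix $g$ and $Z \in \fk_0$, and consider
\[
F(t) = \varrho_{m+1}^\vee(\exp tZ)\, \mathtt{D}\phi(g \exp tZ).
\]
Two facts drive the argument.
\begin{itemize}
\item Right translation by $K_{G_1}$ commutes with left-invariant differentiation only up to $\Ad$. Since $\mathtt{D}$ is built from left-invariant vector fields, differentiating $F$ produces terms of the form $Z\,\mathtt{X}_{a,b}\phi$.
\item The hypothesis $\phi(gk) = \varrho_m^\vee(k)^{-1}\phi(g)$ differentiates to $Z\phi = -d\varrho_m^\vee(Z)\phi$. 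Equivalently, $\langle Z\phi, v^*\rangle = \langle \phi, d\varrho_m(Z)v^*\rangle$, which is the identity noted before \eqref{eq:D.phi}.
\end{itemize}
Hence $F'(t) = 0$ for all $t$, and so the lemma follows, once we prove
\[
\langle Z\,\mathtt{D}\phi, v^*_{m+1,i}\rangle = \langle \mathtt{D}\phi, d\varrho_{m+1}(Z) v^*_{m+1,i}\rangle \qquad (\ast)
\]
for all $Z$ in the complex basis $\mathtt{H}_1,\mathtt{H}_2,\mathtt{X},\mathtt{Y}$ of $\fk$ and for $i=0,1,2$. Here $\langle \mathtt{D}\phi, w^*\rangle$ means the pairing with $V_{m+1}^\vee$. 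The identity is complex-linear in $Z$, so checking it on this basis suffices. The identity extends to all $t$ (not only $t=0$) because the hypothesis on $\phi$ applies equally at $g\exp(tZ)$.

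\emph{The computation.} Each entry of $\mathtt{D}\phi$ is a sum of terms $\langle \mathtt{X}_{a,b}\phi, v^*_{m,j}\rangle$. For such a term we write
\[
Z\,\mathtt{X}_{a,b}\phi = \mathtt{X}_{a,b} Z\phi + [Z,\mathtt{X}_{a,b}]\phi.
\]
This gives
\[
\langle Z\,\mathtt{X}_{a,b}\phi, v^*_{m,j}\rangle = \langle \mathtt{X}_{a,b}\phi, d\varrho_m(Z)v^*_{m,j}\rangle + \langle [Z,\mathtt{X}_{a,b}]\phi, v^*_{m,j}\rangle.
\]
We then insert the bracket table for $[\mathtt{H}_i,\cdot]$, $[\mathtt{X},\cdot]$, $[\mathtt{Y},\cdot]$ on $\fp^+$ and the formulas of Lemma \ref{l:varrho_m}.

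For $Z = \mathtt{H}_1, \mathtt{H}_2$ this is a weight count. The entry for $v^*_{m+1,0}$ involves $(\mathtt{X}_{1,1},v^*_{m,0})$ and $(\mathtt{X}_{2,0},v^*_{m,1})$. Both terms have $\mathtt{H}_1$-weight $1 + (m+2) = 2 + (m+1) = m+3$, which equals the weight of $v^*_{m+1,0}$. The same check works for the other entries and for $\mathtt{H}_2$.

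For $Z = \mathtt{X}$ and $Z = \mathtt{Y}$ one must check genuine coefficient identities. For example, $\mathtt{X}$ applied to the $v^*_{m+1,0}$-entry should give $0$, since $d\varrho_{m+1}(\mathtt{X}) v^*_{m+1,0} = 0$. Using $[\mathtt{X},\mathtt{X}_{1,1}] = 2\mathtt{X}_{2,0}$, $[\mathtt{X},\mathtt{X}_{2,0}] = 0$ and $d\varrho_m(\mathtt{X})v^*_{m,1} = v^*_{m,0}$, the result is
\[
2\langle \mathtt{X}_{2,0}\phi, v^*_{m,0}\rangle - 2\langle \mathtt{X}_{2,0}\phi, v^*_{m,0}\rangle = 0,
\]
as required. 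The remaining five cases ($\mathtt{X}$ on $i=1,2$ and $\mathtt{Y}$ on $i=0,1,2$) are identical in nature, with the target entries being $i\,\langle \mathtt{D}\phi, v^*_{m+1,i-1}\rangle$ and $(2-i)\,\langle \mathtt{D}\phi, v^*_{m+1,i+1}\rangle$ respectively.

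\emph{Main obstacle.} The only real work is the bookkeeping of these $\mathtt{X}$/$\mathtt{Y}$ coefficient identities. As a conceptual cross-check, I would note what $\mathtt{D}$ is structurally. The map $\fp^+ \otimes V_m \to V_{m+1}$, $\mathtt{X}_{a,b} \otimes v \mapsto (\text{entries of } \eqref{eq:D.phi})$, should be the $K_{G_1}$-equivariant projection of $\Sym^2\otimes\det$ tensored with $\Sym^2\otimes\det^m$ onto its $\Sym^2\otimes\det^{m+1}$ constituent, with $\fp^+$ carrying $\Ad$. Under this interpretation, $(\ast)$ is exactly the equivariance of that projection. That observation explains why the explicit check closes up, and it guides the signs if a discrepancy appears.
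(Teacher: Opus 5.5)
Your proposal is correct and matches the paper's proof. The paper likewise verifies $\langle Z\,\mathtt{D}\phi, v^*\rangle = \langle \mathtt{D}\phi, d\varrho_{m+1}(Z)v^*\rangle$ for $Z = \mathtt{H}_1, \mathtt{H}_2, \mathtt{X}, \mathtt{Y}$ by commuting $Z$ past $\mathtt{D}$ with the bracket table and Lemma~\ref{l:varrho_m}, packaged as $3 \times 3$ matrix identities, then passes to the group; your $F'(t)=0$ argument just makes that last step explicit. One harmless slip in your heuristic aside: $\fp^+ \simeq \Sym^2$ (weights $(2,0),(1,1),(0,2)$), not $\Sym^2 \otimes \det$, since otherwise the $\Sym^2 \otimes \det^{m+1}$ constituent would not exist, but this plays no role in the proof.
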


\begin{proof}
We regard $\mathtt{D}$ as the $3 \times 3$ matrix on the right-hand side of \eqref{eq:D.phi}.
Since
\begin{align*}
 \mathtt{H}_1 \mathtt{D} & = \mathtt{D} \mathtt{H}_1 + 
 \begin{pmatrix}
  \mathtt{X}_{1,1} & - 4 \mathtt{X}_{2,0} & 0 \\
  0 & 0 & - 2 \mathtt{X}_{2,0} \\
  0 & 0 & - \mathtt{X}_{1,1}  
 \end{pmatrix}, & 
 \mathtt{H}_2 \mathtt{D} & = \mathtt{D} \mathtt{H}_2 + 
 \begin{pmatrix}
  \mathtt{X}_{1,1} & 0 & 0 \\
  2 \mathtt{X}_{0,2} & 0 & 0 \\
  0 & 4 \mathtt{X}_{0,2} & - \mathtt{X}_{1,1}  
 \end{pmatrix}, \\
 \mathtt{X} \mathtt{D} & = \mathtt{D} \mathtt{X} + 
 \begin{pmatrix}
  2 \mathtt{X}_{2,0} & 0 & 0 \\
  \mathtt{X}_{1,1} & 0 & 0 \\
  0 & 2 \mathtt{X}_{1,1} & - 2 \mathtt{X}_{2,0}
 \end{pmatrix}, & 
 \mathtt{Y} \mathtt{D} & = \mathtt{D} \mathtt{Y} + 
 \begin{pmatrix}
  2 \mathtt{X}_{0,2} & - 2 \mathtt{X}_{1,1} & 0 \\
  0 & 0 & - \mathtt{X}_{1,1} \\
  0 & 0 & - 2 \mathtt{X}_{0,2}  
 \end{pmatrix},  
\end{align*}
we have
\begin{align*}
 \mathtt{H}_1 \mathtt{D} \phi & = \left( \mathtt{D}
 \begin{pmatrix}
  m+2 & 0 & 0 \\
  0 & m+1 & 0 \\
  0 & 0 & m
 \end{pmatrix} + 
 \begin{pmatrix}
  \mathtt{X}_{1,1} & - 4 \mathtt{X}_{2,0} & 0 \\
  0 & 0 & - 2 \mathtt{X}_{2,0} \\
  0 & 0 & - \mathtt{X}_{1,1}
 \end{pmatrix}
 \right) \phi \\
 & =
 \begin{pmatrix}
  m+3 & 0 & 0 \\
  0 & m+2 & 0 \\
  0 & 0 & m+1
 \end{pmatrix}
 \mathtt{D} \phi, \\
 \mathtt{H}_2 \mathtt{D} \phi & = \left( \mathtt{D}
 \begin{pmatrix}
  m & 0 & 0 \\
  0 & m+1 & 0 \\
  0 & 0 & m+2
 \end{pmatrix} + 
 \begin{pmatrix}
  \mathtt{X}_{1,1} & 0 & 0 \\
  2 \mathtt{X}_{0,2} & 0 & 0 \\
  0 & 4 \mathtt{X}_{0,2} & - \mathtt{X}_{1,1}  
 \end{pmatrix}
 \right) \phi \\
 & = 
 \begin{pmatrix}
  m+1 & 0 & 0 \\
  0 & m+2 & 0 \\
  0 & 0 & m+3
 \end{pmatrix}
 \mathtt{D} \phi, \\
 \mathtt{X} \mathtt{D} \phi & = \left( \mathtt{D}
 \begin{pmatrix}
  0 & 0 & 0 \\
  1 & 0 & 0 \\
  0 & 2 & 0
 \end{pmatrix} + 
 \begin{pmatrix}
  2 \mathtt{X}_{2,0} & 0 & 0 \\
  \mathtt{X}_{1,1} & 0 & 0 \\
  0 & 2 \mathtt{X}_{1,1} & - 2 \mathtt{X}_{2,0}
 \end{pmatrix}
 \right) \phi \\
 & = 
 \begin{pmatrix}
  0 & 0 & 0 \\
  1 & 0 & 0 \\
  0 & 2 & 0
 \end{pmatrix}
 \mathtt{D} \phi, \\
 \mathtt{Y} \mathtt{D} \phi & = \left( \mathtt{D}
 \begin{pmatrix}
  0 & 2 & 0 \\
  0 & 0 & 1 \\
  0 & 0 & 0
 \end{pmatrix} + 
 \begin{pmatrix}
  2 \mathtt{X}_{0,2} & - 2 \mathtt{X}_{1,1} & 0 \\
  0 & 0 & - \mathtt{X}_{1,1} \\
  0 & 0 & - 2 \mathtt{X}_{0,2}  
 \end{pmatrix}  
 \right) \phi \\
 & = 
 \begin{pmatrix}
  0 & 2 & 0 \\
  0 & 0 & 1 \\
  0 & 0 & 0
 \end{pmatrix}
 \mathtt{D} \phi.
\end{align*}
Hence we have
\[
 \langle X \mathtt{D} \phi, v^* \rangle =  \langle \mathtt{D} \phi, d \varrho_{m+1}(X) v^* \rangle
\]
for all $X \in \fk$ and $v^* \in V_{m+1}^\vee$.
This implies the assertion.
\end{proof}

For a smooth $V_m$-valued function $f$ on $\fH_2$, we define a smooth $V_m$-valued function $\phi$ on $G_1$ by
\[
 \phi(g) = \rho_m(j(g,z_0))^{-1} f(g z_0),
\]
where $z_0 = \sqrt{-1} I_2 \in \fH_2$.
Then we have
\[
 \phi(g k) = \varrho_m^\vee(k)^{-1} \phi(g)
\]
for all $g \in G_1$ and $k \in K_{G_1}$.
Conversely, if a smooth $V_m$-valued function $\phi$ on $G_1$ satisfies the above condition, then we may define a smooth $V_m$-valued function $f$ on $\fH_2$ by
\[
 f(z) = \rho_m(j(g,z_0)) \phi(g),
\]
where $g \in G_1$ is any element such that $g z_0 = z$.

\begin{prop}
\label{p:delta-f}
If $f$ corresponds to $\phi$, then $\delta_m f$ corresponds to $-\frac{1}{4 \pi} \mathtt{D} \phi$.
\end{prop}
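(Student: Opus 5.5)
Both sides are first-order differential operators compatible with the relevant equivariance: $\delta_m f$ is a function on $\fH_2$, and $\mathtt{D}\phi$ transforms under $K_{G_1}$ by $\varrho_{m+1}^\vee$ (Lemma \ref{l:D.phi}). So the function $\psi$ on $G_1$ attached to $\delta_m f$, namely $\psi(g) = \rho_{m+1}(j(g,z_0))^{-1}(\delta_m f)(gz_0)$, and $-\frac{1}{4\pi}\mathtt{D}\phi$ have the same right $K_{G_1}$-equivariance. Hence it suffices to compare them on a set of representatives for $G_1/K_{G_1}$. We take $g = \boldsymbol{n}(x)\boldsymbol{m}(a)$ with $x \in \Sym_2(\R)$ and $a \in \GL_2(\R)$, so that $gz_0 = x + \sqrt{-1}\,a{}^ta = z$ and $j(g,z_0) = {}^ta^{-1}$. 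We could even take $a$ upper triangular, but a general $a$ is convenient.

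The plan has three steps.
\begin{enumerate}
\item Compute the action of the complexified Lie algebra elements $\mathtt{X}_{2,0},\mathtt{X}_{1,1},\mathtt{X}_{0,2}$, viewed as left-invariant differential operators, on functions of the form $\phi(g)=\rho_m({}^ta)f(x+\sqrt{-1}a{}^ta)$ at $g=\boldsymbol{n}(x)\boldsymbol{m}(a)$. For this we write each $\mathtt{X}_{i,j}\in\fp^+$ as an element of $\fp_0\otimes\C$ and decompose it via the Iwasawa decomposition $\fg_1=\mathfrak{n}\oplus\mathfrak{a}\oplus\fk$. Here $\mathfrak{n}$ consists of the $\mathtt{B}_i$, and $\mathfrak{a}$, or more generally the Levi part $\fl$, is spanned by the $\mathtt{A}_i$. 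A typical identity is $\mathtt{X}_{2,0} = \frac12\mathtt{A}_1 + \sqrt{-1}\,\mathtt{B}_1 - \frac{1}{2}\mathtt{H}_1$ (to be checked), and similarly for $\mathtt{X}_{1,1}$, $\mathtt{X}_{0,2}$ using $\mathtt{A}_2+\mathtt{A}_3$, $\mathtt{B}_2$, $\mathtt{X}$, $\mathtt{Y}$, and $\mathtt{A}_4$, $\mathtt{B}_3$, $\mathtt{H}_2$.
\item Evaluate each piece, moving it past $\boldsymbol{m}(a)$ by conjugation. The $\mathfrak{n}$-part differentiates $f$ in $x$. Combined with the $\fl$-part, which differentiates in $y=a{}^ta$ and also hits the factor $\rho_m({}^ta)$, the holomorphic derivatives $\partial/\partial z_i$ recombine. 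The $\fk$-part acts through $d\varrho_m$ by Lemma \ref{l:varrho_m}. The contributions of $d\rho_m$ from the Levi part together with $d\varrho_m$ from the $\fk$-part produce the non-holomorphic terms.
\item To simplify, we use the $K_{G_1}$-equivariance already established, so it suffices to verify the identity at the single point $g$ with $a$ fixed and arbitrary. Better still, since both sides are $G_1$-equivariant in the appropriate sense, we may verify it at $g=\boldsymbol{n}(x)\boldsymbol{m}(a)$ for all $x,a$ using the chain rule directly. After pairing with $v^*_{m+1,i}$, the matrix \eqref{eq:D.phi} should reproduce the matrix defining $\delta_m$, with the factor $-\frac{1}{4\pi}$. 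This factor arises as follows: $\frac{1}{2\pi\sqrt{-1}}\partial_{z}$ corresponds to $\frac{\sqrt{-1}}{2}\cdot\frac{1}{2\pi\sqrt{-1}}$ times the $\mathfrak{n}$-derivative up to sign, giving $-\frac1{4\pi}$ after the normalization $\frac12$ in the $\mathtt{X}_{i,j}$. The terms $m y_3/\Delta$, $m y_2/\Delta$, $m y_1/\Delta$ come from the entries of $y^{-1}=(a{}^ta)^{-1}$ produced by $d\rho_m$ of the Levi derivative of $\rho_m({}^ta)$.
\end{enumerate}

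A cleaner route, which we try first, is to reduce to $x=0$ and $a=I_2$ by left translation. Both $\delta_m$ and $\mathtt{D}$ commute with the action of $G_1$: $\mathtt{D}$ because left-invariant operators commute with left translation, and $\delta_m$ by the standard Shimura-type covariance, which is essentially the content needed for Proposition \ref{p:delta_mf}. Granting this, it is enough to compare at $g=1$, $z=z_0$, where $y=I_2$, $\Delta=1$, $y_2=0$. The computation then becomes short: $\mathtt{X}_{i,j}$ at the identity equals $\frac{\sqrt{-1}}{?}$ times a combination of $\partial_{z_k}$ plus $\fk$-terms, which act via $d\varrho_m$. However, the covariance of $\delta_m$ is not yet proved (it is what we want to deduce). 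So the honest approach is the direct computation at a general point $\boldsymbol{n}(x)\boldsymbol{m}(a)$.

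The main obstacle is the bookkeeping in step 2. We must correctly track how $\Ad(\boldsymbol{m}(a)^{-1})$ transforms $\mathtt{B}_i$ and $\mathtt{A}_i$, and how the derivative of $\rho_m({}^ta)$ combines with the $\fk$-components to yield exactly the terms $-\frac{m}{4\pi}y_3/\Delta$, $+\frac{m}{2\pi}y_2/\Delta$, $-\frac{m}{4\pi}y_1/\Delta$ in the pattern of the matrix defining $\delta_m$. In particular, the $\Sym^2$-part of $d\rho_m$, as opposed to its $\det^m$-part, must cancel between the Levi and $\fk$ contributions, leaving only the scalar terms in $m$. We expect this cancellation to be exactly the computation already carried out in the proof of Lemma \ref{l:D.phi}, which we will mimic.
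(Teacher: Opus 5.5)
Your plan is correct and is essentially the paper's proof: there too Lemma \ref{l:D.phi} reduces the claim to the coset representatives $g_z=\boldsymbol{n}(x)\boldsymbol{m}(a)$ (with $a$ upper triangular), where the identity is checked by direct computation. The only difference is bookkeeping: the paper writes $\phi(g)=\rho_m(j(g,z_0))^{-1}f(gz_0)$ and differentiates the right $K_{G_1}$-invariant factor $f(gz_0)$ and the automorphy factor separately (Lemmas \ref{l:modform-varphi}--\ref{l:modform-X-phi}), rather than splitting $\mathtt{X}_{i,j}$ into Iwasawa components and using the $K_{G_1}$-type of $\phi$. When you carry it out, note that $\mathtt{X}_{2,0}=\frac12\mathtt{A}_1+\sqrt{-1}\,\mathtt{B}_1+\frac12\mathtt{H}_1$ (plus sign), and that the non-$m$ constant terms do not cancel inside each $\mathtt{X}_{i,j}\phi$ (compare Lemma \ref{l:modform-X-phi}); they cancel only after forming the combinations that define $\mathtt{D}$.
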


To prove this proposition, we need some preparation.
Define a smooth $V_m$-valued function $\varphi$ on $G_1$ by
\[
 \varphi(g) = f(g z_0).
\]
For $z \in \fH_2$, put
\[
 g_z =
 \boldsymbol{n} \mat{x_1}{x_2}{x_2}{x_3}
 \boldsymbol{m} \mat{1}{b}{}{1}
 \boldsymbol{m} \mat{a_1}{}{}{a_2}
 \in G_1,
\]
where $a_1, a_2 \in \R_+^\times$ and $b \in \R$ are the unique elements such that
\[
 y_1 = a_1^2 + a_2^2 b^2, \quad
 y_2 = a_2^2 b, \quad
 y_3 = a_2^2.
\]
Note that $g_z z_0 = z$.

\begin{lem}
\label{l:modform-varphi}
We have
\begin{align*}
 \mathtt{A}_1 \varphi(g_z) & =
 2 \Delta y_3^{-1} \frac{\partial f}{\partial y_1}(z), \\
 \mathtt{A}_2 \varphi(g_z) = \mathtt{A}_3 \varphi(g_z) & =
 \sqrt{\Delta} \left( 2 y_2 y_3^{-1} \frac{\partial f}{\partial y_1} + \frac{\partial f}{\partial y_2} \right)(z), \\
 \mathtt{A}_4 \varphi(g_z) & =
 2 \left( y_2^2 y_3^{-1} \frac{\partial f}{\partial y_1}
 + y_2 \frac{\partial f}{\partial y_2}
 + y_3 \frac{\partial f}{\partial y_3} \right)(z), \\
 \mathtt{B}_1 \varphi(g_z) = \mathtt{C}_1 \varphi(g_z) & =
 \Delta y_3^{-1} \frac{\partial f}{\partial x_1}(z), \\
 \mathtt{B}_2 \varphi(g_z) = \mathtt{C}_2 \varphi(g_z) & =
 \sqrt{\Delta} \left( 2 y_2 y_3^{-1} \frac{\partial f}{\partial x_1} + \frac{\partial f}{\partial x_2} \right)(z), \\
 \mathtt{B}_3 \varphi(g_z) = \mathtt{C}_3 \varphi(g_z) & =
 \left( y_2^2 y_3^{-1} \frac{\partial f}{\partial x_1}
 + y_2 \frac{\partial f}{\partial x_2}
 + y_3 \frac{\partial f}{\partial x_3} \right)(z).
\end{align*}
\end{lem}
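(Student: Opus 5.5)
The plan is to compute each Lie algebra derivative directly from the definition
\[
 X\varphi(g) = \frac{d}{dt}\Big|_{t=0} f(g\exp(tX) z_0).
\]
The key observation is that $g_z\exp(tX)z_0 = g_z(\exp(tX)z_0)$, and $g_z$ acts on $\fH_2$ by an explicit affine-linear map. For $z' = \sqrt{-1}I_2 + w$ we have
\[
 g_z z' = x + \alpha\, z'\, {}^t\alpha,
 \qquad
 \alpha = \mat{1}{b}{}{1}\mat{a_1}{}{}{a_2} = \mat{a_1}{ba_2}{0}{a_2},
\]
so that $\alpha\,{}^t\alpha = y$. Therefore
\[
 X\varphi(g_z) = df_z\bigl(\alpha\,\dot w\,{}^t\alpha\bigr),
 \qquad
 \dot w = \frac{d}{dt}\Big|_{t=0}\exp(tX)z_0 \in \Sym_2(\C),
\]
where $df_z$ is the real differential, with real part acting via $\partial/\partial x_i$ and imaginary part via $\partial/\partial y_i$.

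The steps, in order, are as follows.

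First, I compute $\dot w$ for each basis element. For $X = \smat{A}{B}{C}{-{}^tA}$ one has
\[
 \frac{d}{dt}\Big|_{t=0}\exp(tX)z_0 = Az_0 + B - z_0 C z_0 - z_0\,{}^tA.
\]
With $z_0=\sqrt{-1}I$ this becomes $\sqrt{-1}(A+{}^tA) + B + C$. Explicitly:
\begin{itemize}
\item $\mathtt{A}_1$ gives $\dot w = 2\sqrt{-1}E_{11}$.
\item $\mathtt{A}_2$ and $\mathtt{A}_3$ both give $\sqrt{-1}(E_{12}+E_{21})$.
\item $\mathtt{A}_4$ gives $2\sqrt{-1}E_{22}$.
\item $\mathtt{B}_1$ and $\mathtt{C}_1$ give $E_{11}$.
\item $\mathtt{B}_2$ and $\mathtt{C}_2$ give $E_{12}+E_{21}$.
\item $\mathtt{B}_3$ and $\mathtt{C}_3$ give $E_{22}$.
\end{itemize}
This already explains the coincidences $\mathtt{A}_2\varphi=\mathtt{A}_3\varphi$ and $\mathtt{B}_i\varphi=\mathtt{C}_i\varphi$ asserted in the lemma.

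Second, I conjugate by $\alpha$. Writing $\alpha e_1 = (a_1,0)$ and $\alpha e_2 = (ba_2,a_2)$, we get
\begin{align*}
 \alpha E_{11}{}^t\alpha &= a_1^2E_{11}, \\
 \alpha(E_{12}+E_{21}){}^t\alpha &= a_1a_2\bigl(2b\,E_{11} + (E_{12}+E_{21})\bigr), \\
 \alpha E_{22}{}^t\alpha &= a_2^2\bigl(b^2E_{11} + b(E_{12}+E_{21}) + E_{22}\bigr).
\end{align*}
A symmetric matrix variation $s_1E_{11}+s_2(E_{12}+E_{21})+s_3E_{22}$ contributes $\sum_i s_i\,\partial/\partial(\cdot)_i$. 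Real variations go through $\partial_{x_i}$ and imaginary variations through $\partial_{y_i}$, so the factor $\sqrt{-1}$ in the $\mathtt{A}$-cases turns $\partial_x$ into $\partial_y$.

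Third, I re-express the coefficients in terms of $y$ using
\[
 a_2^2 = y_3,\qquad b = y_2y_3^{-1},\qquad a_1^2 = \Delta y_3^{-1},\qquad a_1a_2 = \sqrt{\Delta}.
\]
For example, $\mathtt{B}_3$ yields
\[
 y_2^2y_3^{-1}\partial_{x_1} + y_2\,\partial_{x_2} + y_3\,\partial_{x_3},
\]
and $\mathtt{A}_2$ yields $\sqrt{\Delta}(2y_2y_3^{-1}\partial_{y_1}+\partial_{y_2})$. The remaining cases follow the same pattern.

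The only delicate points are bookkeeping ones. One is the sign and convention for the derivative of the Möbius action at $z_0$. The other is making sure that $g_z$ acts affinely, so that no chain-rule term appears beyond conjugation by $\alpha$. Both follow from $g z = (az+b)(cz+d)^{-1}$ applied to the block-upper-triangular element $g_z$.
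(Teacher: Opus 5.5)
Your proposal is correct and is essentially the paper's direct computation. The coincidences $\mathtt{A}_2\varphi=\mathtt{A}_3\varphi$ and $\mathtt{B}_i\varphi=\mathtt{C}_i\varphi$, which you read off from equal tangent vectors at $z_0$, are exactly what the paper obtains from $\fk_0\varphi=0$, since $\mathtt{A}_2-\mathtt{A}_3$ and $\mathtt{B}_i-\mathtt{C}_i$ lie in $\fk_0$. One small slip: because the lower-right block is $-{}^tA$, the general derivative is $Az_0+B-z_0Cz_0+z_0\,{}^tA$, not $-z_0\,{}^tA$; your evaluated form $\sqrt{-1}(A+{}^tA)+B+C$ is already correct.
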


\begin{proof}
Since $X \varphi = 0$ for $X \in \fk_0$, we have
\[
 (\mathtt{A}_2 - \mathtt{A}_3) \varphi = 
 (\mathtt{B}_1 - \mathtt{C}_1) \varphi = 
 (\mathtt{B}_2 - \mathtt{C}_2) \varphi = 
 (\mathtt{B}_3 - \mathtt{C}_3) \varphi = 0.
\]
From this and a direct computation, we can deduce the assertion.
\end{proof}

Put
\[
 j_z = j(g_z, z_0) = \mat{1}{}{-b}{1} \mat{a_1^{-1}}{}{}{a_2^{-1}}.
\]
We also denote by $j : \fg_{1,0} \times \fH_2 \rightarrow \M_2(\C)$ the map induced by the factor of automorphy.

\begin{lem}
\label{l:modform-j}
We have
\begin{align*}
 j(\Ad (g_z) \mathtt{A}_1, z) & = \Ad(j_z) \mat{-1}{0}{0}{0}, &
 j(\Ad (g_z) \mathtt{A}_2, z) & = \Ad(j_z) \mat{0}{0}{-1}{0}, \\
 j(\Ad (g_z) \mathtt{A}_3, z) & = \Ad(j_z) \mat{0}{-1}{0}{0}, &
 j(\Ad (g_z) \mathtt{A}_4, z) & = \Ad(j_z) \mat{0}{0}{0}{-1}, \\
 j(\Ad (g_z) \mathtt{B}_1, z) & = 0, &
 j(\Ad (g_z) \mathtt{B}_2, z) & = 0, \\
 j(\Ad (g_z) \mathtt{B}_3, z) & = 0, \\
 j(\Ad (g_z) \mathtt{C}_1, z) & = \Ad(j_z) \mat{\sqrt{-1}}{0}{0}{0}, &
 j(\Ad (g_z) \mathtt{C}_2, z) & = \Ad(j_z) \mat{0}{\sqrt{-1}}{\sqrt{-1}}{0}, \\
 j(\Ad (g_z) \mathtt{C}_3, z) & = \Ad(j_z) \mat{0}{0}{0}{\sqrt{-1}}.
\end{align*}
\end{lem}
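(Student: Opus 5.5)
The formula $j(\Ad(g_z)X, z)$ is the derivative at $t=0$ of $j(g_z \exp(tX) g_z^{-1}, z)$. I would compute it by the cocycle relation rather than by expanding $\Ad(g_z)X$ directly. Writing $h_t = g_z \exp(tX) g_z^{-1}$ and using $g_z z_0 = z$, we have
\[
 j(h_t, z) = j(g_z, \exp(tX) z_0)\, j(\exp(tX), z_0)\, j(g_z^{-1}, z).
\]
Since $j(g_z^{-1}, z) = j(g_z, z_0)^{-1} = j_z^{-1}$, differentiating at $t=0$ gives
\[
 j(\Ad(g_z)X, z) = \frac{d}{dt}\Big|_{t=0} j(g_z, \exp(tX)z_0)\, j_z^{-1} + j_z\, j(X, z_0)\, j_z^{-1}.
\]
Here $j(X,z_0) = c_X z_0 + d_X = \sqrt{-1}\, c_X + d_X$ for $X = \smat{a_X}{b_X}{c_X}{d_X}$.

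The first term vanishes for every $X$. Indeed, $g_z$ lies in the Siegel parabolic: its lower-left block is zero and its lower-right block is ${}^t\!(\cdots)^{-1}$, which is independent of the point. So $j(g_z, w) = j_z$ is constant in $w$, and its derivative along $\exp(tX)z_0$ is zero. Hence
\[
 j(\Ad(g_z)X, z) = \Ad(j_z)\, (\sqrt{-1}\, c_X + d_X).
\]

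It then remains to read off the blocks $c_X$ and $d_X$ for each basis element.
\begin{itemize}
 \item For $\mathtt{A}_1 = \mathtt{E}_{11} - \mathtt{E}_{33}$: $c = 0$ and $d = -\mathtt{E}_{11}$ (as a $2\times2$ matrix), giving $\smat{-1}{0}{0}{0}$.
 \item For $\mathtt{A}_2 = \mathtt{E}_{12} - \mathtt{E}_{43}$: the lower-right block is $-\mathtt{E}_{21}$, giving $\smat{0}{0}{-1}{0}$.
 \item $\mathtt{A}_3$ and $\mathtt{A}_4$ are handled the same way.
 \item The $\mathtt{B}_i$ lie in the upper-right block, so $c = d = 0$ and the value is $0$.
 \item For $\mathtt{C}_1, \mathtt{C}_2, \mathtt{C}_3$: $d = 0$ and $c$ equals $\mathtt{E}_{11}$, $\mathtt{E}_{12} + \mathtt{E}_{21}$, and $\mathtt{E}_{22}$ respectively, giving $\sqrt{-1}$ times these.
\end{itemize}
This matches all the entries in the statement.

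There is no real obstacle here. The only point needing care is the bookkeeping of the $4\times4$ indices into the $2\times2$ blocks; for instance, $\mathtt{E}_{43}$ sits at position $(2,1)$ of the lower-right block. One should also note that $j(\cdot, z)$ on $\fg_{1,0}$ is by definition the derivative of $j(\exp(tX), z)$ at $t=0$, which is what justifies the differentiation step above.
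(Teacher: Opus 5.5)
Your proof is correct and matches the paper's approach: the paper only says the lemma follows from a direct computation, and your cocycle reduction to $j(\Ad(g_z)X,z)=\Ad(j_z)\,j(X,z_0)=\Ad(j_z)(\sqrt{-1}\,c_X+d_X)$ is such a computation, organized efficiently. It uses that $g_z$ has vanishing lower-left block, so $j(g_z,\cdot)\equiv j_z$; the same cocycle splitting is used in the paper's proof of Lemma~\ref{l:modform-X-phi}, and your block identifications for all ten basis elements check out.
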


\begin{proof}
The lemma follows from a direct computation.
\end{proof}

\begin{lem}
\label{l:modform-X-phi}
We have
\begin{align*}
 \langle \mathtt{X}_{2,0} \phi(g_z), v_{m,i}^* \rangle & = 
 \Delta y_3^{-1} \left\langle \left( \sqrt{-1} \frac{\partial f}{\partial x_1} + \frac{\partial f}{\partial y_1} \right)(z), \rho_m^\vee(j_z) v_{m,i}^* \right\rangle \\
 & + (m+2-i) \langle f(z), \rho_m^\vee(j_z) v_{m,i}^* \rangle, \\
 \langle \mathtt{X}_{1,1} \phi(g_z), v_{m,i}^* \rangle & =
 2 \sqrt{\Delta} y_2 y_3^{-1} \left\langle \left( \sqrt{-1} \frac{\partial f}{\partial x_1} + \frac{\partial f}{\partial y_1} \right)(z), \rho_m^\vee(j_z) v_{m,i}^* \right\rangle \\
 & + \sqrt{\Delta} \left\langle \left( \sqrt{-1} \frac{\partial f}{\partial x_2} + \frac{\partial f}{\partial y_2} \right)(z), \rho_m^\vee(j_z) v_{m,i}^* \right\rangle \\
 & + i \langle f(z), \rho_m^\vee(j_z) v_{m,i-1}^* \rangle \\
 & - (i-2) \langle f(z), \rho_m^\vee(j_z) v_{m,i+1}^* \rangle, \\
 \langle \mathtt{X}_{0,2} \phi(g_z), v_{m,i}^* \rangle & = 
 y_2^2 y_3^{-1} \left\langle \left( \sqrt{-1} \frac{\partial f}{\partial x_1} + \frac{\partial f}{\partial y_1} \right)(z), \rho_m^\vee(j_z) v_{m,i}^* \right\rangle \\
 & + y_2 \left\langle \left( \sqrt{-1} \frac{\partial f}{\partial x_2} + \frac{\partial f}{\partial y_2} \right)(z), \rho_m^\vee(j_z) v_{m,i}^* \right\rangle \\
 & + y_3 \left\langle \left( \sqrt{-1} \frac{\partial f}{\partial x_3} + \frac{\partial f}{\partial y_3} \right)(z), \rho_m^\vee(j_z) v_{m,i}^* \right\rangle \\
 & + (m+i) \langle f(z), \rho_m^\vee(j_z) v_{m,i}^* \rangle.
\end{align*}
\end{lem}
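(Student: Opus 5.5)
The plan is to write $\phi = \rho_m(j(\cdot,z_0))^{-1}\varphi$, i.e. $\phi(g) = \rho_m(j(g,z_0))^{-1}\varphi(g)$, and to differentiate this product along each $\mathtt{X}_{\alpha,\beta}\in\fp^+$ at $g=g_z$ via the Leibniz rule. For $X\in\fg_{1,0}$ the derivative of $g\mapsto \rho_m(j(g e^{tX},z_0))^{-1}$ at $t=0$ is governed by the cocycle relation $j(g e^{tX}, z_0) = j(g e^{tX} g^{-1}, z)\, j(g,z_0)$ with $z = g z_0$. Hence
\[
 \frac{d}{dt}\Big|_{t=0} \rho_m(j(g_z e^{tX},z_0))^{-1} = -\rho_m(j_z)^{-1}\, d\rho_m\bigl(j(\Ad(g_z)X, z)\bigr)\,\rho_m(j_z)\cdot \rho_m(j_z)^{-1}.
 \]
Consequently
\[
 X\phi(g_z) = \rho_m(j_z)^{-1}\Bigl( X\varphi(g_z) - d\rho_m\bigl(j(\Ad(g_z)X,z)\bigr) f(z)\Bigr),
 \]
and I extend this complex-linearly to $X \in \fg_1$. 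Pairing with $v_{m,i}^*$ turns $\rho_m(j_z)^{-1}$ into $\rho_m^\vee(j_z)$ acting on $v^*_{m,i}$, which is the shape appearing in the statement.

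Next I expand each $\mathtt{X}_{\alpha,\beta}$ in the real basis. Reading off the matrices gives
\[
 \mathtt{X}_{2,0} = \tfrac12\bigl(\mathtt{A}_1 + \sqrt{-1}(\mathtt{B}_1+\mathtt{C}_1)\bigr),\qquad \mathtt{X}_{1,1} = \tfrac12\bigl(\mathtt{A}_2+\mathtt{A}_3+\sqrt{-1}(\mathtt{B}_2+\mathtt{C}_2)\bigr),
 \]
\[
 \mathtt{X}_{0,2} = \tfrac12\bigl(\mathtt{A}_4+\sqrt{-1}(\mathtt{B}_3+\mathtt{C}_3)\bigr).
 \]
For the $\varphi$-term I substitute Lemma \ref{l:modform-varphi}. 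For instance, for $\mathtt{X}_{2,0}$ one gets $\tfrac12\bigl(2\Delta y_3^{-1}\partial_{y_1}f + 2\sqrt{-1}\Delta y_3^{-1}\partial_{x_1} f\bigr)$, which is exactly the first term claimed. The analogous substitutions for $\mathtt{X}_{1,1}$ and $\mathtt{X}_{0,2}$ produce the combinations $\sqrt{-1}\partial_{x_i}+\partial_{y_i}$ with the stated coefficients $2\sqrt{\Delta}y_2y_3^{-1}$, $\sqrt{\Delta}$, $y_2^2y_3^{-1}$, $y_2$, $y_3$.

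For the $j$-term I use Lemma \ref{l:modform-j}:
\[
 j(\Ad(g_z)\mathtt{X}_{\alpha,\beta},z) = \Ad(j_z) M_{\alpha,\beta}.
 \]
The $\mathtt{B}$'s contribute nothing, and the $\mathtt{A}$ and $\mathtt{C}$ contributions combine with coefficient $\tfrac12(-1+\sqrt{-1}\cdot\sqrt{-1}) = -1$, giving
\[
 M_{2,0} = -\smat{1}{0}{0}{0},\qquad M_{1,1} = -\smat{0}{1}{1}{0},\qquad M_{0,2} = -\smat{0}{0}{0}{1}.
 \]
Then
\[
 -\rho_m(j_z)^{-1}d\rho_m(\Ad(j_z)M)f = -d\rho_m(M)\rho_m(j_z)^{-1}f,
 \]
and pairing with $v_{m,i}^*$ gives $-\langle \rho_m(j_z)^{-1} f, {}^t d\rho_m(M) v^*_{m,i}\rangle$. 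This equals $\langle f(z), \rho_m^\vee(j_z)(\cdots)\rangle$ after transposing the action in the basis $\{v_{m,i}\}$ of \S\ref{ss:diff-classical}. Here I would use the dual formulas: $d\rho_m(\mathtt{E}_{11})$ is diagonal with entries $m+2-i$; $d\rho_m(\mathtt{E}_{12})$ sends $v_{m,j}$ to $(3-j)v_{m,j-1}$, so its transpose sends $v^*_{m,i}$ to $(2-i)v^*_{m,i+1}$; and $d\rho_m(\mathtt{E}_{21})$ sends $v_{m,j}$ to $(j+1)v_{m,j+1}$, so its transpose sends $v^*_{m,i}$ to $i\, v^*_{m,i-1}$. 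These yield the terms $(m+2-i)$, $(m+i)$, and $i\,v^*_{m,i-1} - (i-2)v^*_{m,i+1}$.

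The main obstacle is bookkeeping of signs and of left- versus right-action conventions in the derivative of the factor of automorphy. In particular, I must be careful whether $\rho_m^\vee(j_z)$ should be applied as written, i.e. whether $\langle\rho_m(j_z)^{-1}w, v^*\rangle = \langle w, \rho_m^\vee(j_z)v^*\rangle$. This holds since $\rho^\vee(g)=\rho({}^tg^{-1})$ in the dual basis. I would verify the overall sign on the $\mathtt{X}_{2,0}$ case, where the outcome $+(m+2-i)$ serves as a check, before doing the other two.
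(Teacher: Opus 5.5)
Your proposal is correct and follows essentially the same route as the paper. You differentiate $\phi(g)=\rho_m(j(g,z_0))^{-1}\varphi(g)$ using the cocycle relation $j(g_z e^{tX},z_0)=j(g_z e^{tX}g_z^{-1},z)\,j_z$, expand $\mathtt{X}_{2,0},\mathtt{X}_{1,1},\mathtt{X}_{0,2}$ in the $\mathtt{A},\mathtt{B},\mathtt{C}$ basis, and substitute Lemmas \ref{l:modform-varphi} and \ref{l:modform-j}; the paper writes the $j$-term as $d\rho_m^\vee(\cdot)$ acting on the dual vector, which is equivalent to your transposed $-d\rho_m(M)$.

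One small correction. The identity $\langle \rho_m(j_z)^{-1}w, v^*\rangle=\langle w,\rho_m^\vee(j_z)v^*\rangle$ is just the defining property of the contragredient. Your stated reason, ``$\rho^\vee(g)=\rho({}^tg^{-1})$ in the dual basis,'' is not literally true as a matrix identity here: the dual-basis matrix of $\rho_m^\vee(g)$ is the inverse transpose of the matrix of $\rho_m(g)$, and that differs from the matrix of $\rho_m({}^tg^{-1})$ because $\{v_{m,i}\}$ is not orthonormal.
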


\begin{proof}
We have
\begin{align*}
 \langle \phi(g \exp(tX)), v^* \rangle & = 
 \langle \varphi(g \exp(tX)), \rho_m^\vee(j(g \exp(tX), z_0)) v^* \rangle \\
 & = \langle \varphi(g \exp(tX)), \rho_m^\vee(j(g \exp(tX) g^{-1}, g z_0)) \rho_m^\vee(j(g, z_0)) v^* \rangle,
\end{align*}
so that 
\[
 \langle X \phi(g_z), v^* \rangle
 = \langle X \varphi(g_z), \rho_m^\vee(j_z) v^* \rangle
 + \langle f(z), d \rho_m^\vee(j(\Ad(g_z) X, z)) \rho_m^\vee(j_z) v^* \rangle
\]
for $X \in \fg_{1,0}$, $z \in \fH_2$, and $v^* \in V_m^\vee$.
Hence, noting that
\begin{align*}
 \mathtt{X}_{2,0} & = \frac{1}{2} (\mathtt{A}_1 + \sqrt{-1} \mathtt{B}_1 + \sqrt{-1} \mathtt{C}_1), \\ 
 \mathtt{X}_{1,1} & = \frac{1}{2} (\mathtt{A}_2 + \mathtt{A}_3 + \sqrt{-1} \mathtt{B}_2 + \sqrt{-1} \mathtt{C}_2), \\
 \mathtt{X}_{0,2} & = \frac{1}{2} (\mathtt{A}_4 + \sqrt{-1} \mathtt{B}_3 + \sqrt{-1} \mathtt{C}_3),
\end{align*}
we can deduce the assertion from Lemmas \ref{l:modform-varphi} and \ref{l:modform-j}.
\end{proof}

Now we prove Proposition \ref{p:delta-f}.
Put
\begin{align*}
 h_1 & = 2 \sqrt{-1} \Delta y_3^{-1} \frac{\partial f}{\partial z_1}, \\
 h_2 & = 2 \sqrt{-1} \sqrt{\Delta} \left( 2 y_2 y_3^{-1} \frac{\partial f}{\partial z_1} + \frac{\partial f}{\partial z_2} \right), \\
 h_3 & = 2 \sqrt{-1} \left( y_2^2 y_3^{-1} \frac{\partial f}{\partial z_1} + y_2 \frac{\partial f}{\partial z_2} + y_3 \frac{\partial f}{\partial z_3} \right).
\end{align*}
Then by Lemma \ref{l:modform-X-phi}, we have
\begin{align*}
 \langle \mathtt{D} \phi(g_z), v_{m+1,0}^* \rangle & = 
 \langle h_2(z), \rho_m^\vee(j_z) v_{m,0}^* \rangle
 - 2 \langle h_1(z), \rho_m^\vee(j_z) v_{m,1}^* \rangle \\
 & - 2m \langle f(z), \rho_m^\vee(j_z) v_{m,1}^* \rangle, \\
 \langle \mathtt{D} \phi(g_z), v_{m+1,1}^* \rangle & =
 \langle h_3(z), \rho_m^\vee(j_z) v_{m,0}^* \rangle
 - \langle h_1(z), \rho_m^\vee(j_z) v_{m,2}^* \rangle \\
 & + m \langle f(z), \rho_m^\vee(j_z) v_{m,0}^* \rangle
 - m \langle f(z), \rho_m^\vee(j_z) v_{m,2}^* \rangle, \\
 \langle \mathtt{D} \phi(g_z), v_{m+1,2}^* \rangle & = 
 2 \langle h_3(z), \rho_m^\vee(j_z) v_{m,1}^* \rangle
 - \langle h_2(z), \rho_m^\vee(j_z) v_{m,2}^* \rangle \\
 & + 2m \langle f(z), \rho_m^\vee(j_z) v_{m,1}^* \rangle.
\end{align*}
We may write this as
\[
 \mathtt{D} \phi(g_z) = (A_1 P_m h_1 + A_2 P_m h_2 + A_3 P_m h_3 + m B P_m f)(z)
\]
with
\[
 A_1 = 
 \begin{pmatrix}
  0 & -2 & 0 \\
  0 & 0 & -1 \\
  0 & 0 & 0
 \end{pmatrix}, \quad
 A_2 = 
 \begin{pmatrix}
  1 & 0 & 0 \\
  0 & 0 & 0 \\
  0 & 0 & -1
 \end{pmatrix}, \quad
 A_3 = 
 \begin{pmatrix}
  0 & 0 & 0 \\
  1 & 0 & 0 \\
  0 & 2 & 0
 \end{pmatrix}, \quad
 B = A_1 + A_3, 
\]
and
\[
 P_m = \rho_m(j_z)^{-1} = a_1^m a_2^m 
 \begin{pmatrix}
  a_1^2 & & \\
  & a_1 a_2 & \\
  & & a_2^2
 \end{pmatrix}
 \begin{pmatrix}
  1 & & \\
  b & 1 & \\
  b^2 & 2b & 1
 \end{pmatrix}.
\]
Since
\begin{align*}
 P_{m+1}^{-1} A_1 P_m & = a_1^{-2}
 \begin{pmatrix}
  -2b & -2 & 0 \\
  b^2 & 0 & -1 \\
  0 & 2b^2 & 2b
 \end{pmatrix}
 = \Delta^{-1} y_3
 \begin{pmatrix}
  -2 y_2 y_3^{-1} & -2 & 0 \\
  y_2^2 y_3^{-2} & 0 & -1 \\
  0 & 2 y_2^2 y_3^{-2} & 2 y_2 y_3^{-1}
 \end{pmatrix}, \\
 P_{m+1}^{-1} A_2 P_m & = a_1^{-1} a_2^{-1}
 \begin{pmatrix}
  1 & 0 & 0 \\
  -b & 0 & 0 \\
  0 & -2b & -1
 \end{pmatrix}
 = \sqrt{\Delta}^{-1}
 \begin{pmatrix}
  1 & 0 & 0 \\
  - y_2 y_3^{-1} & 0 & 0 \\
  0 & -2 y_2 y_3^{-1} & -1
 \end{pmatrix}, \\
 P_{m+1}^{-1} A_3 P_m & = a_2^{-2}
 \begin{pmatrix}
  0 & 0 & 0 \\
  1 & 0 & 0 \\
  0 & 2 & 0
 \end{pmatrix}
 = y_3^{-1}
 \begin{pmatrix}
  0 & 0 & 0 \\
  1 & 0 & 0 \\
  0 & 2 & 0
 \end{pmatrix},
\end{align*}
we have
\begin{align*}
 P_{m+1}^{-1} (A_1 P_m h_1 + A_2 P_m h_2 + A_3 P_m h_3)
 & = 2 \sqrt{-1} \left( A_1 \frac{\partial f}{\partial z_1} + A_2 \frac{\partial f}{\partial z_2} + A_3 \frac{\partial f}{\partial z_3} \right), \\
 P_{m+1}^{-1} B P_m & = \Delta^{-1} (y_3 A_1 - 2 y_2 A_2 + y_1 A_3).
\end{align*}
Hence we have
\[
 \rho_{m+1}(j_z) \mathtt{D} \phi(g_z) 
 = - 4 \pi (A_1 \delta_{m,1} f + A_2 \delta_{m,2} f + A_3 \delta_{m,3} f)(z).
\]
This completes the proof of Proposition \ref{p:delta-f}.

Finally, we prove Proposition \ref{p:delta_mf}.
The proof consists of the following two parts.
\begin{itemize}
\item 
Let $f$ be a smooth $V_m$-valued function on $\fH_2$ such that
\[
 f(\gamma z) = \rho_m(j(\gamma,z)) f(z)
\]
for all $\gamma \in \Gamma$ and $z \in \fH_2$.
Then the corresponding function $\phi$ on $G_1$ satisfies
\[
 \phi(\gamma g) = \phi(g)
\]
for all $\gamma \in \Gamma$ and $g \in G_1$.
Hence, by definition, we have
\[
 \texttt{D} \phi(\gamma g) = \texttt{D} \phi(g)
\]
for all $\gamma \in \Gamma$ and $g \in G_1$.
From this and Proposition \ref{p:delta-f}, we deduce that 
\[
 \delta_m f(\gamma z) = \rho_{m+1}(j(\gamma,z)) \delta_m f(z)
\]
for all $\gamma \in \Gamma$ and $z \in \fH_2$.
\item
For $z \in \fH_2$, we write 
\[
 \Im(z)^{-1} = \mat{Y_1}{Y_2}{Y_2}{Y_3},
\]
so that 
\[
 Y_1 = \frac{y_3}{\Delta}, \quad
 Y_2 = - \frac{y_2}{\Delta}, \quad
 Y_3 = \frac{y_1}{\Delta}.
\]
Then we have
\begin{align*}
 \frac{\partial Y_1}{\partial z_1} & = - \frac{Y_1^2}{2 \sqrt{-1}}, &
 \frac{\partial Y_2}{\partial z_1} & = - \frac{Y_1 Y_2}{2 \sqrt{-1}}, &
 \frac{\partial Y_3}{\partial z_1} & = - \frac{Y_2^2}{2 \sqrt{-1}}, \\
 \frac{\partial Y_1}{\partial z_2} & = - \frac{2 Y_1 Y_2}{2 \sqrt{-1}}, &
 \frac{\partial Y_2}{\partial z_2} & = - \frac{Y_1 Y_3 + Y_2^2}{2 \sqrt{-1}}, &
 \frac{\partial Y_3}{\partial z_2} & = - \frac{2 Y_2 Y_3}{2 \sqrt{-1}}, \\
 \frac{\partial Y_1}{\partial z_3} & = - \frac{Y_2^2}{2 \sqrt{-1}}, &
 \frac{\partial Y_2}{\partial z_3} & = - \frac{Y_2 Y_3}{2 \sqrt{-1}}, &
 \frac{\partial Y_3}{\partial z_3} & = - \frac{Y_3^2}{2 \sqrt{-1}}.
\end{align*}
Hence, if $f$ is a finite sum of the form
\[
 f = \sum_k P_k f_k,
\]
where $P_k$ is a polynomial of degree at most $r$ in $Y_1, Y_2, Y_3$ and $f_k$ is a holomorphic function on $\fH_2$, then $\delta_{m,i} f$ is again a finite sum of this form, where $P_k$ is of degree at most $r+1$.
\end{itemize}
This completes the proof of Proposition \ref{p:delta_mf}.

\subsection{Periods}
\label{ss:periods-adelic}

Let $E$ be an imaginary quadratic field and put $E_\infty = E \otimes_\Q \R$.
Let $\A_E$ and $\A_{E,f}$ be the rings of adeles and finite adeles of $E$, respectively.
Put
\[
 \bG = (\GL_2 \times \Res_{E/\Q}(\mathbb{G}_m))/\mathbb{G}_m,
\]
where $\mathbb{G}_m$ embeds into $\GL_2 \times \Res_{E/\Q}(\mathbb{G}_m)$ diagonally.
For $(g,z) \in \GL_2 \times \Res_{E/\Q}(\mathbb{G}_m)$, we denote by $[g,z]$ its image in $\bG$.
Choose a trace zero element $\bi \in E^\times$ and put $u = \bi^2 \in \Q^\times$ (so that $u<0$).
Then we define an embedding $\iota : \bG \hookrightarrow \GSp_4$ by
\[
 \iota \left( \left[ \mat{a}{b}{c}{d}, x + y \bi \right] \right) = 
 \begin{pmatrix}
  a & 0 & b & 0 \\
  0 & a & 0 & -bu \\
  c & 0 & d & 0 \\
  0 & -\frac{c}{u} & 0 & d
 \end{pmatrix}
 \cdot 
 \frac{1}{x^2 - y^2 u}
 \begin{pmatrix}
  x & -y & 0 & 0 \\
  -yu & x & 0 & 0 \\
  0 & 0 & x & yu \\
  0 & 0 & y & x
 \end{pmatrix}
\]
(see also \S \ref{ss:weil-seesaw} below).
Put
\[
 \cP^{\bG}(\phi, \boldsymbol{\phi}) = \int_{Z(\A) \bG(\Q) \backslash \bG(\A)} \phi(\iota(g)) \boldsymbol{\phi}(g) \, dg
\]
(whenever the integral makes sense) for automorphic forms $\phi$ and $\boldsymbol{\phi}$ on $\GSp_4(\A)$ and $\bG(\A)$, respectively, where $Z = \{ [1,z] \mid z \in \mathbb{G}_m \}$ and $dg$ is the Tamagawa measure on $Z(\A) \backslash \bG(\A)$.

Now we describe the period explicitly when the automorphic forms correspond to classical modular forms.
Let $\cK$ and $\cK'$ be open compact subgroups of $\GSp_4(\A_f)$ and $\GL_2(\A_f)$, respectively, such that $\nu(\cK) = \nu(\cK') = \hat{\Z}^\times$.
Let $\phi \in \cN^r_{\rho_m}(\cK)$ and $\phi' \in \cN^{r'}_{2m+2}(\cK')$, where the subscript $2m+2$ corresponds to the $1$-dimensional representation of $\GL_1(\C)$ given by $z \mapsto z^{2m+2}$.
We assume that
\begin{itemize}
\item $\phi(zg) = \phi(g)$ for all $z \in \A^\times$ and $g \in \GSp_4(\A)$;
\item $\phi'(zg) = \phi'(g)$ for all $z \in \A^\times$ and $g \in \GL_2(\A)$.
\end{itemize}
Let $f \in N_{\rho_m}^r(\Gamma)$ and $f' \in N^{r'}_{2m+2}(\Gamma')$ be the modular forms corresponding to $\phi$ and $\phi'$, respectively, where $\Gamma = \Sp_4(\Q) \cap \cK$ and $\Gamma' = \SL_2(\Q) \cap \cK'$.
Let $\mu$ be a character of $\A_E^\times/E^\times$ such that
\begin{itemize}
\item $\mu|_{\A^\times} = 1$;
\item $\mu_\infty = 1$.
\end{itemize}
Let $\cU$ be an open compact subgroup of $\A_{E,f}^\times$ such that $\mu|_{\cU} = 1$ and fix a set of representatives $\{ a_1, \dots, a_M \}$ for $E^\times \backslash \A_{E,f}^\times / \cU$.
For $1 \le i \le M$, we define an automorphic form $\phi_i \in \cN^r_{\rho_m}(\cK_i)$ by $\phi_i(g) = \phi(g g_i)$, where $g_i = \iota([1,a_i]) \in \GSp_4(\A_f)$ and $\cK_i = g_i \cK g_i^{-1}$.
Let $f_i \in N_{\rho_m}^r(\Gamma_i)$ be the modular form corresponding to $\phi_i$, where $\Gamma_i = \Sp_4(\Q) \cap \cK_i$.
We consider the period $\cP^{\bG}(\ell \circ \tilde{\phi}, \boldsymbol{\phi})$, where
\begin{itemize}
\item $\ell: V_m \rightarrow \C$ is a linear form given by 
\[
 \ell(v) = \langle v, v_{m,0}^* + v_{m,2}^* \rangle;
\]
\item $\tilde{\phi}$ is an automorphic form on $\GSp_4(\A)$ valued in $V_m$ given by
\[
 \tilde{\phi}(g) = \phi(g g_0) 
\]
with 
\[
 g_0 = \boldsymbol{m} \mat{1}{0}{0}{|u|^{1/2}} \in \Sp_4(\R);
\]
\item $\boldsymbol{\phi}$ is an automorphic form on $\bG(\A)$ given by
\[
 \boldsymbol{\phi}([g,z]) = \mu(z) \overline{\phi'(g)}.
\]
\end{itemize}

\begin{prop}
\label{p:periods-classical}
Assume that $\cK'$ contains $\{ z I_2 \mid z \in \hat{\Z}^\times \}$ and
\[
 \phi(g \iota([k,z])) = \phi(g)
\]
for all $g \in \GSp_4(\A)$, $k \in \cK'$, and $z \in \cU$.
Assume further that the integral $\cP^{\bG}(\ell \circ \tilde{\phi}, \boldsymbol{\phi})$ is absolutely convergent.
Then we have
\begin{align*}
 \cP^{\bG}(\ell \circ \tilde{\phi}, \boldsymbol{\phi})
 & = 4 M^{-1} \vol(\Gamma' \backslash \fH_1)^{-1} |u|^{m/2} \\
 & \times \sum_{i=1}^M \mu(a_i) \int_{\Gamma' \backslash \fH_1} \left\langle f_i \mat{z}{0}{0}{-uz}, v_{m,0}^* - u v_{m,2}^* \right\rangle \overline{f'(z)} \Im(z)^{2m+2} \, d \nu(z),
\end{align*}
where $d \nu(z) = y^{-2} \, dx \, dy$ with the Lebesgue measures $dx, dy$ for $z = x + \sqrt{-1} y$.
\end{prop}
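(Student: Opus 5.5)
We unfold the adelic integral over $Z(\A)\bG(\Q)\backslash\bG(\A)$ into a finite sum of classical integrals over $\Gamma'\backslash\fH_1$, and then compute the archimedean contribution explicitly.

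\textbf{Step 1: reduction to $\GL_2(\A)\times\A_E^\times$.} Write $\bG(\A)=Z(\A)\backslash(\GL_2(\A)\times\A_E^\times)$ modulo the diagonal $\A^\times$. The fibration
\[
 \GL_2(\Q)\backslash\GL_2(\A)\;\longrightarrow\;Z(\A)\bG(\Q)\backslash\bG(\A)\;\longrightarrow\;E^\times\A^\times\backslash\A_E^\times
\]
lets us decompose the integral. Using $E_\infty^\times=\R^\times\cdot\U(1)$ and $\A_E^\times=\bigsqcup_i E^\times a_i E_\infty^\times\cU$, the $z$-integral becomes a sum over $i$. Here we use the following facts:
\begin{itemize}
 \item $\mu(a_i)$ appears because $\mu$ is trivial on $\cU$, $E_\infty^\times$ and $\A^\times$;
 \item $\phi(g\,\iota([1,z]))=\phi(g)$ for $z\in\cU$, by hypothesis;
 \item $\tilde\phi$ is right invariant under $\iota([1,\U(1)])$, after conjugation by $g_0$.
\end{itemize}
The last point needs checking. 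The matrix $g_0^{-1}\iota([1,x+y\bi])g_0$ lies in $K_{G_1}$ for $x^2-y^2u=1$, since $g_0$ rescales the second coordinate by $|u|^{1/2}$. We must also verify that $\ell\circ\rho_m^\vee$ is trivial on this $\U(1)$. For this we compute $\iota(\cdot)$ in $\U(2)$: it is a rotation $\smat{\cos\theta}{\sin\theta}{-\sin\theta}{\cos\theta}$ type element. We then check, via Lemma \ref{l:varrho_m} (the element $\mathtt{X}-\mathtt{Y}$ acts on $v^*_{m,0}+v^*_{m,2}$), that $d\varrho_m(\mathtt X-\mathtt Y)(v_{m,0}^*+v_{m,2}^*)=0$. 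This is exactly why $\ell$ has this shape.

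After this step, the integral equals a constant times
\[
 M^{-1}\sum_i \mu(a_i)\int_{\PGL_2(\Q)\backslash \PGL_2(\A)}\ell(\tilde\phi_i(\iota([g,1])))\,\overline{\phi'(g)}\,dg,
\]
where $\tilde\phi_i(g)=\phi_i(gg_0)$. The relevant volumes are normalized by the Tamagawa measure of $\A^\times E^\times\backslash\A_E^\times$, which is $2L(1,\chi_E)$-related; the ratio to the class group count produces $M^{-1}$. The order of operations here must be kept consistent with $g_i$ being moved past $\iota([g,1])$, which is fine since $\bG$'s factors commute.

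\textbf{Step 2: descent to $\fH_1$.} We use strong approximation for $\GL_2$, together with $\nu(\cK')=\hat\Z^\times$ and $\cK'\supset\hat\Z^\times$, to write the $\GL_2$ integral as
\[
 \vol(\Gamma'\backslash\fH_1)^{-1}\int_{\Gamma'\backslash\SL_2(\R)}.
\]
This uses the Tamagawa volume $2$ of $\PGL_2$, which is the source of a factor $2$. Then we take $h=\smat{y^{1/2}}{xy^{-1/2}}{0}{y^{-1/2}}$, so that $hi=z$.

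\textbf{Step 3: archimedean computation.} We compute
\[
 \iota([h,1])g_0=\boldsymbol n(\diag(x,-ux))\,\boldsymbol m(\diag(y^{1/2},|u|^{1/2}y^{1/2}))\quad\text{modulo }K_{G_1}.
\]
This sends $z_0$ to $\diag(z,-uz)$, with $j=\diag(y^{-1/2},|u|^{-1/2}y^{-1/2})$. Then
\[
 \phi_i=\rho_m(j)^{-1}f_i,\qquad \rho_m(j)^{-1}\text{ acting on }\diag(z,-uz).
\]
Pairing with $v_{m,0}^*+v_{m,2}^*$ gives
\[
 y^{m+1}|u|^{m/2}\langle f_i,\,v^*_{m,0}+|u|v^*_{m,2}\rangle,
\]
which equals $y^{m+1}|u|^{m/2}\langle f_i,\,v^*_{m,0}-uv^*_{m,2}\rangle$ since $u<0$. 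The eigenvalues come from the weights $(m+2,m)$, $(m+1,m+1)$, $(m,m+2)$, and the intermediate index drops out. Similarly $\overline{\phi'(h)}=y^{m+1}\overline{f'(z)}$. The total power is $y^{2m+2}$, and the measure on $\SL_2(\R)/\SO(2)$ gives $d\nu$.

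\textbf{Main obstacle.} The main obstacle is bookkeeping the constant $4$, which comes from the product of the two Tamagawa-measure factors of $2$ together with the torus volume normalization. We would track it by comparing with $\vol(\PGL_2)=2$ and $\vol(\A^\times E^\times\backslash\A_E^\times)=2L(1,\chi_E)$ relative to $\cU$-volumes.
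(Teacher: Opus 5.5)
Your route is the paper's. You split the Tamagawa measure on $Z(\A)\backslash\bG(\A)$ into the $\mathrm{PGL}_2$ and torus factors and collapse the torus integral to the $M$ points $a_i$. You then absorb $g_i=\iota([1,a_i])$ into $\phi_i$ (it commutes with $g_0$ and with $\iota([\cK',1])$), descend to $\Gamma'\backslash\fH_1$ by strong approximation, and evaluate at $g_z$. Your Step 3 is exactly the paper's computation: $\iota([g_z,1])g_0=\boldsymbol{n}(\diag(x,-ux))\boldsymbol{m}(\diag(y^{1/2},|u|^{1/2}y^{1/2}))$, the pairing $|u|^{m/2}y^{m+1}\langle f_i,v^*_{m,0}-uv^*_{m,2}\rangle$, and $\phi'(g_z)=y^{m+1}f'(z)$. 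Two points need repair.

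\textbf{The constant.} You leave the constant as the ``main obstacle'' and assert $\vol(\A^\times E^\times\backslash\A_E^\times)=2L(1,\xi_{E/\Q})$. For the Tamagawa measure used in $\cP^{\bG}$ this volume is exactly $2$, the Tamagawa number of $\Res_{E/\Q}(\mathbb{G}_m)/\mathbb{G}_m$. No $L$-value and no class number formula enter.
\begin{itemize}
\item The torus integrand is invariant under the open subgroup $E_\infty^\times\cU$ and under $\A^\times E^\times$. Hence $\A^\times E^\times\backslash\A_E^\times$ is a finite union of equal-volume cosets of that subgroup's image. These cosets pull back evenly to the $M$ classes of $E^\times\backslash\A_{E,f}^\times/\cU$, so the torus integral is exactly $2M^{-1}\sum_i F(a_i)$.
\item Likewise $\A^\times\GL_2(\Q)\backslash\GL_2(\A)$ has Tamagawa volume $2$, and the pushforward of its measure to $\Gamma'\backslash\fH_1$ is $2\vol(\Gamma'\backslash\fH_1)^{-1}\,d\nu$. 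Your displayed Step 2 formula omits this $2$.
\end{itemize}
So $4=2\cdot 2$, and nothing else needs tracking.

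\textbf{The descent to $\Gamma'\backslash\fH_1$.} When you say ``the measure on $\SL_2(\R)/\SO(2)$ gives $d\nu$'', you are using that $(\ell\circ\tilde\phi_i)(\iota([g,1]))\,\overline{\phi'(g)}$ is right $\SO(2)$-invariant. You only checked invariance in the $E_\infty^1$ direction (via $\mathtt{X}-\mathtt{Y}$). The missing check is as follows.
\begin{itemize}
\item For $k_\theta=\smat{\cos\theta}{\sin\theta}{-\sin\theta}{\cos\theta}$ one has $g_0^{-1}\iota([k_\theta,1])g_0=\exp(\sqrt{-1}\theta(\mathtt{H}_1+\mathtt{H}_2))$.
\item By Lemma \ref{l:varrho_m}, $d\varrho_m(\mathtt{H}_1+\mathtt{H}_2)$ acts on $V_m^\vee$ by the scalar $2m+2$.
\item Hence $(\ell\circ\tilde\phi)(g\,\iota([k_\theta,1]))=e^{\sqrt{-1}(2m+2)\theta}(\ell\circ\tilde\phi)(g)$, while $\overline{\phi'(gk_\theta)}=e^{-\sqrt{-1}(2m+2)\theta}\,\overline{\phi'(g)}$.
\end{itemize}
The weights cancel, and this is precisely where the weight $2m+2$ of $f'$ must match $\rho_m$. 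The paper verifies this together with the $\mathtt{X}-\mathtt{Y}$ invariance before unfolding.
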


\begin{proof}
We have
\begin{align*}
 (\ell \circ \tilde{\phi})(gk)
 & = \langle \phi(g k g_0), v^*_{m,0} + v^*_{m,2} \rangle \\
 & = \langle \varrho_m^\vee(g_0^{-1} k g_0)^{-1} \phi(g g_0), v^*_{m,0} + v^*_{m,2} \rangle \\
 & = \langle \tilde{\phi}(g), \varrho_m(g_0^{-1} k g_0)(v^*_{m,0} + v^*_{m,2}) \rangle \\
\end{align*}
for $g \in \GSp_4(\A)$ and $k \in g_0 K_{G_1} g_0^{-1}$.
Since
\[
 g_0^{-1} \iota \left( \left[ \mat{a}{b}{c}{d}, x + \sqrt{-1} y \right] \right) g_0 =
 \begin{pmatrix}
  a & 0 & b & 0 \\
  0 & a & 0 & b \\
  c & 0 & d & 0 \\
  0 & c & 0 & d
 \end{pmatrix}
 \cdot \frac{1}{x^2 + y^2}
 \begin{pmatrix}
  x & -y & 0 & 0 \\
  y & x & 0 & 0 \\
  0 & 0 & x & -y \\
  0 & 0 & y & x
 \end{pmatrix}
\]
under the identification $E_\infty \simeq \C$ such that $\bi = \sqrt{-1} |u|^{1/2}$, and
\begin{align*}
 d \varrho_m(\mathtt{H}_1 + \mathtt{H}_2)(v^*_{m,0} + v^*_{m,2})
 & = (2m+2)(v^*_{m,0} + v^*_{m,2}), \\
 d \varrho_m(\mathtt{X}-\mathtt{Y})(v^*_{m,0} + v^*_{m,2})
 & = 0 
\end{align*}
by Lemma \ref{l:varrho_m}, we have
\[
 (\ell \circ \tilde{\phi})(g \iota([k,z])) = e^{\sqrt{-1} (2m+2) \theta} (\ell \circ \tilde{\phi})(g)
\]
for $g \in \GSp_4(\A)$, $k = \smat{\cos \theta}{\sin \theta}{-\sin \theta}{\cos \theta} \in \SO(2)$, and $z \in E_\infty^\times$.
Also, we have
\[
\boldsymbol{\phi}(g [k,z]) = e^{- \sqrt{-1} (2m+2) \theta} \boldsymbol{\phi}(g)
\]
for $g \in \bG(\A)$, $k = \smat{\cos \theta}{\sin \theta}{-\sin \theta}{\cos \theta} \in \SO(2)$, and $z \in E_\infty^\times$.
Hence, noting that 
\[
 \vol(\A^\times \GL_2(\Q) \backslash \GL_2(\A)) = \vol(\A^\times E^\times \backslash \A_E^\times) = 2, 
\]
we have
\begin{align*}
 \cP^{\bG}(\ell \circ \tilde{\phi}, \boldsymbol{\phi}) 
 & = \int_{\A^\times E^\times \backslash \A_E^\times} \int_{\A^\times \GL_2(\Q) \backslash \GL_2(\A)} (\ell \circ \tilde{\phi})(\iota([g,z])) \boldsymbol{\phi}([g,z]) \, dg \, dz \\
 & = 2 M^{-1} \sum_{i=1}^M \int_{\A^\times \GL_2(\Q) \backslash \GL_2(\A)} (\ell \circ \tilde{\phi})(\iota([g,a_i])) \mu(a_i) \overline{\phi'(g)} \, dg \\
 & = 2 M^{-1} \sum_{i=1}^M \mu(a_i) \int_{\A^\times \GL_2(\Q) \backslash \GL_2(\A)} (\ell \circ \tilde{\phi}_i)(\iota([g,1])) \overline{\phi'(g)} \, dg \\
 & = 4 M^{-1} \vol(\Gamma' \backslash \fH_1)^{-1} \sum_{i=1}^M \mu(a_i) \int_{\Gamma' \backslash \fH_1} (\ell \circ \tilde{\phi}_i)(\iota([g_z,1])) \overline{\phi'(g_z)} \, d \nu(z),
\end{align*}
where $\tilde{\phi}_i(g) = \tilde{\phi}(g g_i) = \phi_i(g g_0)$ and
\[
 g_z = \mat{1}{x}{0}{1} \mat{y^{1/2}}{0}{0}{y^{-1/2}}
\]
for $z = x + \sqrt{-1} y$.
Since
\begin{align*}
 \tilde{\phi}_i(\iota([g_z,1]))
 & = \rho_m(j(\iota([g_z,1]) g_0, z_0))^{-1} f_i(\iota([g_z,1]) g_0 z_0) \\
 & = \rho_m \mat{y^{1/2}}{0}{0}{|u|^{1/2} y^{1/2}} f_i \mat{z}{0}{0}{|u|z},
\end{align*}
we have
\begin{align*}
 (\ell \circ \tilde{\phi}_i)(\iota([g_z,1]))
 & = \left\langle \rho_m \mat{y^{1/2}}{0}{0}{|u|^{1/2} y^{1/2}} f_i \mat{z}{0}{0}{|u|z}, v^*_{m,0} + v^*_{m,2} \right\rangle \\
 & = \left\langle f_i \mat{z}{0}{0}{|u|z}, \rho_m^\vee \mat{y^{1/2}}{0}{0}{|u|^{1/2} y^{1/2}}^{-1} (v^*_{m,0} + v^*_{m,2}) \right\rangle \\
 & = \left\langle f_i \mat{z}{0}{0}{|u|z}, |u|^{m/2} y^{m+1} v^*_{m,0} + |u|^{m/2+1} y^{m+1} v^*_{m,2} \right\rangle.
\end{align*}
Similarly, we have
\[
 \phi'(g_z) = y^{m+1} f'(z).
\]
This implies the assertion.
\end{proof}

\section{Cohomological representations}
\label{s:cohrep}

In this section, we describe some unitary representations which contribute to the cohomology.
Put $G = \GSp_4(\R)$ and $G_1 = \Sp_4(\R)$.
Define a Cartan involution $\theta$ of $G$ by $\theta(g) = {}^t g^{-1}$.
Let $K_{G_1}$ be a maximal compact subgroup of $G_1$ given by
\[
 K_{G_1} = \{ g \in G_1 \mid \theta(g) = g \} \simeq \U(2)
\]
and put $K_G = Z_G \cdot K_{G_1}$, where $Z_G$ is the center of $G$.
Let $\fg_0$ and $\fg_{1,0}$ be the Lie algebras of $G$ and $G_1$, respectively.
We have a Cartan decomposition
\[
 \fg_{1,0} = \fk_0 \oplus \fp_0,
\]
where $\fk_0$ and $\fp_0$ are the $+1$ and $-1$ eigenspaces of $\theta$, respectively.
Note that $\fk_0$ is the Lie algebra of $K_{G_1}$.
Let $\ft_0$ be a Cartan subalgebra of $\fk_0$ given by
\[
 \ft_0 = \left\{
 \begin{pmatrix}
  & & \vartheta_1 & \\
  & & & \vartheta_2 \\
  -\vartheta_1 & & & \\
  & -\vartheta_2 & & 
 \end{pmatrix}
 \; \middle| \; \vartheta_1, \vartheta_2 \in \R \right\}.
\]
Let $\fg$, $\fg_1$, $\fk$, $\fp$, $\ft$ be the complexifications of $\fg_0$, $\fg_{1,0}$, $\fk_0$, $\fp_0$, $\ft_0$, respectively.
Let $e_1,e_2$ be a basis of $\ft^*$ given by 
\[
 e_i
 \begin{pmatrix}
  & & \vartheta_1 & \\
  & & & \vartheta_2 \\
  -\vartheta_1 & & & \\
  & -\vartheta_2 & & 
 \end{pmatrix} = \sqrt{-1} \vartheta_i, 
\]
via which we identify $\ft^*$ with $\C^2$.
For any subspace $\ff$ of $\fg_1$ which is stable under the adjoint action of $\ft$, we write $\Delta(\ff)$ for the set of roots of $\ft$ in $\ff$ and put
\[
 \rho(\ff) = \frac{1}{2} \sum_{\alpha \in \Delta(\ff)} \alpha.
\]
Let $\fp^+$ and $\fp^-$ be the $+\sqrt{-1}$ and $-\sqrt{-1}$ eigenspaces of the linear map $J:\fp \rightarrow \fp$ induced by the complex structure on $G_1/K_{G_1}$, respectively, so that
\[
 \Delta(\fp^+) = \{ e_1 + e_2, 2 e_1, 2 e_2 \}, \quad
 \Delta(\fp^-) = \{ - e_1 - e_2, -2 e_1, - 2 e_2 \}.
\]
Let $\fq = \fl \oplus \fu$ and $\fq' = \fl' \oplus \fu'$ be $\theta$-stable parabolic subalgebras of $\fg_1$ given by 
\begin{align*}
 \fl & = \left\{
 \begin{pmatrix}
  & & \vartheta & \\
  & a & & b \\
  -\vartheta & & & \\
  & c & & -a
 \end{pmatrix}
 \; \middle| \; \vartheta, a, b, c \in \C \right\} \simeq \mathfrak{u}(1) \oplus \mathfrak{sl}_2, & 
 \Delta(\fu) & = \{ e_1-e_2, e_1+e_2, 2e_1 \}, \\
 \fl' & = \left\{ 
 \begin{pmatrix}
  a & & b & \\
  & & & \vartheta \\
  c & & -a & \\
  & -\vartheta & &
 \end{pmatrix}
 \; \middle| \; \vartheta, a, b, c \in \C \right\} \simeq \mathfrak{sl}_2 \oplus \mathfrak{u}(1), & 
 \Delta(\fu') & = \{ e_1-e_2, -e_1-e_2, -2e_2 \}.
\end{align*}
We denote by $A_\fq$ and $A_{\fq'}$ the cohomological representations as defined in \cite{vz}.
These are irreducible unitary $(\fg_1,K_{G_1})$-modules but we may extend them to $(\fg,K_G)$-modules by letting $Z_G \simeq \R^\times$ act trivially.
For any $(\fg, K_G)$-module $\pi$, we denote by $H^*(\fg, K_G; \pi)$ the relative Lie algebra cohomology of $\pi$.

\begin{lem}
We have
\begin{align*}
 \dim H^{p,q}(\fg, K_G; A_{\fq}) & =
 \begin{cases}
  1 & \text{if $(p,q) = (2,0), (3,1)$;} \\
  0 & \text{otherwise,}
 \end{cases} \\
 \dim H^{p,q}(\fg, K_G; A_{\fq'}) & = 
 \begin{cases}
  1 & \text{if $(p,q) = (0,2), (1,3)$;} \\
  0 & \text{otherwise.}
 \end{cases}
\end{align*}
\end{lem}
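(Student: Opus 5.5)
The plan is to reduce to the Vogan--Zuckerman computation of the cohomology of $A_\fq$ modules. For a $\theta$-stable parabolic $\fq = \fl \oplus \fu$ of $\fg_1$ with trivial infinitesimal character, \cite{vz} gives
\[
 H^{p,q}(\fg_1, K_{G_1}; A_\fq) \simeq H^{p-R^+, q-R^-}(\fl, \fl \cap \fk; \C),
\]
where $R^\pm = \dim(\fu \cap \fp^\pm)$. Since $Z_G$ acts trivially and $G = Z_G \cdot G_1 \cdot \{\diag(1,1,-1,-1)\}$ up to finite index, the $(\fg,K_G)$-cohomology is the $\pi_0$-invariant part of the $(\fg_1,K_{G_1})$-cohomology tensored with $H^*$ of $\mathfrak{z}_0$ relative to $\mathfrak{z}_0$, i.e. trivially. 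So the first step is to record $H^*(\fg,K_G;A_\fq) \simeq H^*(\fg_1,K_{G_1};A_\fq)$, possibly after checking that the component group acts trivially on the relevant classes. I also need to fix the bigrading convention: $H^{p,q}$ means the contribution of $\wedge^p(\fp^+)^* \otimes \wedge^q(\fp^-)^*$.

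Next I compute $R^\pm$. The roots of $\fu$ are $\{e_1-e_2, e_1+e_2, 2e_1\}$. Here $e_1 - e_2$ is a compact root, lying in $\fk$, while $e_1+e_2$ and $2e_1$ lie in $\Delta(\fp^+)$. Hence $R^+ = 2$ and $R^- = 0$. For $\fu'$, with roots $\{e_1-e_2,-e_1-e_2,-2e_2\}$, we get $R^+=0$ and $R^-=2$.

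Then I compute the cohomology of $\fl$. We have $\fl \simeq \mathfrak{u}(1) \oplus \mathfrak{sl}_2$, where the $\mathfrak{u}(1)$ factor is compact and lies in $\fl\cap\fk$. The $\mathfrak{sl}_2$ factor is the split form, with $\fl\cap\fk$ its compact $\mathfrak{so}(2)$. Thus $H^*(\fl,\fl\cap\fk;\C) \simeq H^*(\mathbb{P}^1)$, which is the cohomology of the compact dual. It has classes of Hodge type $(0,0)$ and $(1,1)$. Indeed, $\fl\cap\fp$ is spanned by the root spaces $\pm 2e_2$, one in $\fp^+$ and one in $\fp^-$, so the top class has type $(1,1)$. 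Shifting by $(R^+,R^-)=(2,0)$ gives types $(2,0)$ and $(3,1)$, each one-dimensional. Shifting by $(0,2)$ for $\fq'$ gives $(0,2)$ and $(1,3)$.

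The main obstacle is bookkeeping rather than theory. One must verify that $\fp^\pm$ as defined here (as the $\pm\sqrt{-1}$ eigenspaces of $J$) matches the root assignment used, and that the convention for $H^{p,q}$ agrees with \cite{vz}; otherwise $(2,0)$ and $(0,2)$ get swapped. The other point to check is that passing from $K_{G_1}$ to $K_G$, which is disconnected with $\epsilon=\pm1$, does not kill or double classes. For this I would argue that the element $\diag(1,1,-1,-1)$ normalizes $K_{G_1}$ and acts on $\fp$ by exchanging $\fp^+$ and $\fp^-$. The full group $G$ therefore sees $A_\fq$ and $A_{\fq'}$ as the two $G_1$-constituents of one $G$-representation. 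Since the lemma treats each as a $(\fg,K_G)$-module with $Z_G$ acting trivially, I would interpret $K_G$ here via its identity component, consistent with the stated dimensions, and note this convention explicitly.
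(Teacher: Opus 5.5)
Your proposal is correct and is essentially the paper's argument. You reduce to $(\fg_1,K_{G_1})$-cohomology, read off $\Delta(\fu\cap\fp)=\{e_1+e_2,2e_1\}\subset\Delta(\fp^+)$ and $\Delta(\fu'\cap\fp)=\{-e_1-e_2,-2e_2\}\subset\Delta(\fp^-)$, and apply the Hodge-graded Vogan--Zuckerman formula \cite[Proposition 6.19]{vz} with $H^*(\fl,\fl\cap\fk;\C)\simeq H^*(\mathbb{P}^1)$. Your concern about a disconnected $K_G$ does not arise: in this section the paper sets $K_G=Z_G\cdot K_{G_1}$, without the $\epsilon=\pm1$ factor, so $H^{p,q}(\fg,K_G;\cdot)=H^{p,q}(\fg_1,K_{G_1};\cdot)$ holds by definition.
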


\begin{proof}
By definition, we have
\[
 H^{p,q}(\fg, K_G; A_{\fq}) = H^{p,q}(\fg_1, K_{G_1}; A_{\fq}), \quad
 H^{p,q}(\fg, K_G; A_{\fq'}) = H^{p,q}(\fg_1, K_{G_1}; A_{\fq'}).
\]
Hence, noting that 
\[
 \Delta(\fu \cap \fp) = \{ e_1 + e_2, 2 e_1 \}, \quad 
 \Delta(\fu' \cap \fp) = \{ - e_1 - e_2, - 2 e_2 \},
\]
we can deduce the assertion from \cite[Proposition 6.19]{vz}.
\end{proof}

\begin{lem}
\label{l:Aq-K-types}
We have
\[
 A_{\fq}|_{K_{G_1}} \simeq \bigoplus_{a, b \ge 0} \Sym^{2a+2} \otimes {\det}^{b+1}, \quad
 A_{\fq'}|_{K_{G_1}} \simeq \bigoplus_{a, b \ge 0} (\Sym^{2a+2} \otimes {\det}^{b+1})^\vee,
\]
where $\Sym^a \otimes {\det}^b$ denotes the irreducible representation of $\U(2)$ with highest weight $(a+b,b)$.
\end{lem}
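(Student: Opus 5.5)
We compute the $K_{G_1}$-types of $A_\fq$ with the Blattner-type formula for cohomologically induced modules of Vogan--Zuckerman (\cite[Theorem 5.3]{vz}, or its form in Knapp--Vogan). We then get the statement for $A_{\fq'}$ by a symmetry.

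\emph{Step 1: the minimal $K$-type.} The lowest $K_{G_1}$-type of $A_\fq$ has highest weight $\lambda_0 = 2\rho(\fu \cap \fp)$. Since $\Delta(\fu \cap \fp) = \{e_1+e_2, 2e_1\}$, we get $\lambda_0 = (3,1)$. In the notation of the lemma this is $\Sym^2 \otimes \det$, i.e.\ the case $a=b=0$.

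\emph{Step 2: the remaining $K$-types.} Here $\fl \simeq \mathfrak{u}(1) \oplus \mathfrak{sl}_2$, and the $\mathfrak{sl}_2$-factor (in the second variable) is of noncompact type, with $\fl \cap \fp$ spanned by the root vectors for $\pm 2e_2$. The Blattner formula says every $K$-type of $A_\fq$ has highest weight
\[
 \lambda_0 + \sum_{\beta \in \Delta(\fu \cap \fp)} n_\beta \beta, \qquad n_\beta \ge 0,
\]
and it counts multiplicities through the symmetric algebra $S(\fu \cap \fp)$ restricted to $\fl \cap \fk$ and then induced to $K$, with a $\rho$-shift correction. Concretely, $S(\fu\cap\fp)$ decomposes under $\fl \cap \fk = \ft$, and the resulting weights $(3,1) + n_1(2,0) + n_2(1,1) = (3+2n_1+n_2,\, 1+n_2)$ correspond to $\Sym^{2n_1+2} \otimes \det^{n_2+1}$. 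Each occurs with multiplicity one, and these give exactly the claimed list with $a = n_1$, $b = n_2$.

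As a cross-check, I would realize $A_\fq$ as the (limit of) holomorphic-type discrete series of $\Sp_4(\R)$ with Harish-Chandra parameter $\lambda_0 - \rho_c + \rho_n$-type data (in fact a nondegenerate limit, the ``large'' one of Harish-Chandra parameter $(2,0)$ type). Its $K$-types are then $\tau_{\lambda_0} \otimes S(\fp^+)$ minus the $\fp^+$-contributions coming from the $2e_2$ direction. Using $S(\fp^+) = \bigoplus_{n_1 \ge n_2 \ge 0} V_{(2n_1,2n_2)}$ (Schmid), the Clebsch--Gordan rule shows agreement.

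\emph{Step 3: $A_{\fq'}$.} Conjugating by $\mathrm{diag}(1,1,-1,-1) \in \GSp_4(\R)$, which has similitude $-1$, exchanges $\fp^+$ and $\fp^-$. It sends $\fq$ to a $\theta$-stable parabolic with $\Delta = \{-e_1+e_2, -e_1-e_2, -2e_1\}$, which is $W_K$-conjugate (swap $e_1 \leftrightarrow e_2$) to $\fq'$. This conjugation acts on $K_{G_1} \simeq \U(2)$ by complex conjugation, so it turns each $K$-type into its dual. This gives the second decomposition.

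\emph{Expected obstacle.} The main difficulty is Step 2: the multiplicity count in Blattner's formula, namely checking that the alternating sum over $W_{\fl\cap\fk}$ collapses so that no cancellations or higher multiplicities occur. Since $\fl \cap \fk$ is abelian here, the Weyl group is trivial and the count reduces to partitions of weights by $\{(2,0),(1,1)\}$. Those partitions are unique, which yields multiplicity one.
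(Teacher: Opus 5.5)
Your strategy is the paper's: the minimal $K_{G_1}$-type $2\rho(\fu\cap\fp)=(3,1)$ from Vogan--Zuckerman, then the Blattner formula \cite[Theorem 8.29]{kv} with the partition function $\cP$ of $S(\fu\cap\fp)$. However, the step you single out as the crux rests on a false claim. The alternating sum in that formula does not run over the Weyl group of $\fl\cap\fk$. It runs over $W^1=\{w\in W_K \mid \Delta^+(w)\subset\Delta(\fu\cap\fk)\}$, a set of coset representatives of $W_K$ modulo $W_{L\cap K}$. Precisely because $\fl\cap\fk=\ft$ is abelian, $W^1$ is all of $W_K=\{1,s\}$, exactly as in Blattner's formula for discrete series. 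So for $\Lambda=(3,1)+(2a+b,b)$ and $\rho_{K_{G_1}}=(\tfrac12,-\tfrac12)$ the multiplicity has two terms:
\[
 m_{a,b}=\cP(2a+b,b)-\cP\bigl(s(\Lambda+\rho_{K_{G_1}})-(3,1)-\rho_{K_{G_1}}\bigr)=\cP(2a+b,b)-\cP(b-3,\,2a+b+3).
\]
Uniqueness of the decomposition $(2a+b,b)=n_1(2,0)+n_2(1,1)$ only evaluates the first term, so ``no cancellations'' is not yet proved. The repair is one line, and it is the check the paper makes. Every weight $n_1(2,0)+n_2(1,1)$ of $S(\fu\cap\fp)$ has first coordinate at least its second, while $b-3<2a+b+3$. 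Hence the $s$-term is zero and $m_{a,b}=1$.

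The cross-check in Step 2 is wrong and should be dropped. Since $L\simeq \U(1)\times\SL_2(\R)$ is noncompact, $A_\fq$ is not a discrete series. Nor is it a limit of discrete series: it has the regular infinitesimal character $\rho=(2,1)$, whereas a limit with Harish-Chandra parameter $(2,0)$ has infinitesimal character $(2,0)$.

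Step 3, by contrast, is correct and genuinely different from the paper. The paper computes only $A_\fq$ and leaves $A_{\fq'}$ as the analogous Blattner computation with $\Delta(\fu'\cap\fp)=\{-e_1-e_2,-2e_2\}$. Your argument works as follows. Conjugation by $\diag(1,1,-1,-1)$ commutes with $\theta$ and acts on $K_{G_1}\simeq\U(2)$ by complex conjugation, so it negates $\ft^*$ and sends each $K$-type to its dual. It carries $\fq$ to a $\theta$-stable parabolic that is $K_{G_1}$-conjugate to $\fq'$ via the swap $e_1\leftrightarrow e_2$. Since $A_\fq$ depends only on the $K_{G_1}$-conjugacy class of $\fq$, this yields the second decomposition without a second Blattner computation.
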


\begin{proof}
We only compute $A_{\fq}|_{K_{G_1}}$.
Noting that $\fl \cap \fk = \ft$, we take $\Delta^+(\fk) = \Delta(\fu \cap \fk) = \{ e_1 - e_2 \}$ as a positive system of $\Delta(\fk)$.
Put
\[
 \mu(\fq) = 2 \rho(\fu \cap \fp) = (3,1).
\]
Then by \cite[Theorem 2.5]{vz}, $A_{\fq}$ has a minimal $K_{G_1}$-type with highest weight $\mu(\fq)$ and the other $K_{G_1}$-types have highest weights of the form 
\[
 \mu(\fq) + \sum_{\beta \in \Delta(\fu \cap \fp)} n_\beta \beta
\]
for some non-negative integer $n_\beta$.
For non-negative integers $a,b$, put
\[
 \mu_{a,b} = (2a+b,b).
\]
It remains to compute the multiplicity
\[
 m_{a,b} = \dim \Hom_{K_{G_1}}(\tau_{a,b}, A_{\fq}),
\]
where $\tau_{a,b}$ is the irreducible representation of $K_{G_1}$ with highest weight $\mu(\fq) + \mu_{a,b}$.

Let $W = \langle s \rangle \simeq \Z/2\Z$ be the Weyl group of $\Delta(\fk)$, where $s$ is the simple reflection with respect to $e_1-e_2$.
Define a subset $W^1$ of $W$ by 
\[
 W^1 = \{ w \in W \mid \Delta^+(w) \subset \Delta(\fu \cap \fk) \}, \quad
 \Delta^+(w) = \{ \alpha \in \Delta^+(\fk) \mid w^{-1} \alpha < 0 \}.
\]
In fact, we have $W^1 = W$.
For $\mu \in \ft^*$, let $\cP(\mu)$ be the multiplicity of $\mu$ in the symmetric algebra of $\fu \cap \fp$.
Since $\Delta(\fu \cap \fp) = \{ e_1 + e_2, 2 e_1 \}$, we have
\[
 \cP(\mu) =
 \begin{cases}
  1 & \text{if $\mu = \mu_{a,b}$ for some $a,b \ge 0$;} \\
  0 & \text{otherwise.}
 \end{cases}
\]
Put
\[
 \rho_{K_{G_1}} = \frac{1}{2} \sum_{\alpha \in \Delta^+(\fk)} \alpha = \left( \frac{1}{2}, -\frac{1}{2} \right).
\]
Then by the Blattner multiplicity formula \cite[Theorem 8.29]{kv}, we have
\begin{align*}
 m_{a,b} & = \sum_{w \in W^1} \det(w) \cdot \cP(w(\mu(\fq) + \mu_{a,b} + \rho_{K_{G_1}}) - (\mu(\fq) + \rho_{K_{G_1}})) \\
 & = \cP(2a+b,b) - \cP(b-3,2a+b+3) \\
 & = 1
\end{align*}
for $a, b \ge 0$.
This completes the proof.
\end{proof}

\section{Weil representations}
\label{s:weil}

In this section, we introduce Weil representations for symplectic-orthogonal and unitary dual pairs used in this paper. 
Let $F$ be a local field of characteristic zero and fix a nontrivial additive character $\psi$ of $F$.
For $t \in F^\times$, we define another nontrivial additive character $t \psi$ by $(t \psi)(x) = \psi(tx)$.

\subsection{The symplectic-orthogonal case}
\label{ss:weil-sp-o}

Let $K$ be an \'etale quadratic algebra over $F$.
We denote by $x \mapsto \bar{x}$ the nontrivial automorphism of $K$ over $F$.
Let $\tr_{K/F}$ and $\N_{K/F}$ be the trace and norm maps from $K$ to $F$, respectively.
Let $\xi_{K/F}$ be the (possibly trivial) quadratic character of $F^{\times}$ associated to $K/F$ by class field theory.

Let $V = K$ be the $2$-dimensional (right) $F$-vector space equipped with the quadratic form $\langle x, y \rangle = \tr_{K/F}(\bar{x} y)$.
Let $\GL(V)$ act on $V$ on the left.
Then we have $\GO(V) = K^{\times} \rtimes \langle h_0 \rangle$, where $h_0 \in \O(V) \smallsetminus \SO(V)$ is given by $h_0(x) = \bar{x}$.
Let $\nu : \GO(V) \rightarrow F^\times$ be the similitude character, so that $\nu(h) = \N_{K/F}(h)$ for $h \in K^\times$.
Let $W = F^4$ be the $4$-dimensional (left) $F$-vector space equipped with the symplectic form
\[
 \langle (x_1, \dots, x_4), (y_1, \dots, y_4) \rangle = x_1 y_3 + x_2 y_4 - x_3 y_1 - x_4 y_2.
\]
Let $\GL(W)$ act on $W$ on the right.
Then we have
\[
 \GSp(W) = \left\{ g \in \GL_4(F) \; \middle| \; g  \mat{}{I_2}{-I_2}{} {}^t g = \nu(g) \cdot \mat{}{I_2}{-I_2}{}, \; \nu(g) \in F^\times \right\}.
\]

Let $\cS(K^2)$ be the space of Schwartz functions on $K^2$ and put
\[
 q(x) = \mat{\N_{K/F}(x_1)}{\frac{1}{2} \tr_{K/F}(\bar{x}_1 x_2)}{\frac{1}{2} \tr_{K/F}(\bar{x}_1 x_2)}{\N_{K/F}(x_2)}
\]
for $x = (x_1, x_2) \in K^2$.
As in \cite[\S 5]{kudla-splitting}, we define a Weil representation $\omega_\psi$ of $\Sp(W) \times \O(V)$ on $\cS(K^2)$ relative to $\psi$, so that
\begin{align*}
 \omega_\psi(\boldsymbol{m}(a)) \varphi(x) & = \xi_{K/F}(\det(a)) |\det(a)| \varphi(x a), \\ 
 \omega_\psi(\boldsymbol{n}(b)) \varphi(x) & = \psi(\tr(q(x) b)) \varphi(x), \\
 \omega_\psi \mat{}{-I_2}{I_2}{} \varphi(x) & = \xi_{K/F}(-1) \int_{K^2} \varphi(y) \psi(- \tr_{K/F}(\bar{x} {}^t y)) \, dy, \\
 \omega_\psi(h) \varphi(x) & = \varphi(h^{-1} x)
\end{align*}
for $\varphi \in \cS(K^2)$, $x \in K^2$, $a \in \GL_2(F)$, $b \in \Sym_2(F)$, and $h \in \O(V)$, where $dy$ is the product of the self-dual Haar measures on $K$ with respect to $\psi \circ \tr_{K/F}$.
Note that 
\[
 \omega_{t \psi}(g) = \omega_\psi \left( \mat{I_2}{}{}{t^{-1} I_2} g \mat{I_2}{}{}{t I_2} \right)
\]
for $t \in F^\times$ and $g \in \Sp(W)$.

Let $\cS(K^2 \times F^\times)$ be the space of Schwartz functions on $K^2 \times F^\times$.
As in \cite[\S I.3]{wald85}, we define a representation $\Omega_\psi$ of $\GSp(W) \times \GO(V)$ on $\cS(K^2 \times F^\times)$ by
\begin{align*}
 \Omega_\psi(g) \varphi (x,t) & = (\omega_{t \psi}(g) \varphi_t)(x), \\ 
 \Omega_\psi \mat{I_2}{}{}{\nu I_2} \varphi(x,t) & = |\nu|^{-1} \varphi(x,\nu^{-1} t), \\
 \Omega_\psi(h) \varphi(x,t) & = \varphi(h^{-1} x, \nu(h) t)
\end{align*}
for $\varphi \in \cS(K^2 \times F^\times)$, $x \in K^2$, $t \in F^\times$, $g \in \Sp(W)$, $\nu \in F^\times$, and $h \in \GO(V)$, where $\varphi_t \in \cS(K^2)$ is given by $\varphi_t(x) = \varphi(x,t)$.
When $F = \R$, $K = \C$, and $\psi(x) = e^{2 \pi \sqrt{-1} x}$, we denote by $S(K^2 \times F^\times)$ the subspace of Schwartz functions $\varphi$ on $K^2 \times F^\times = \C^2 \times \R^\times$ of the form
\[
 \varphi(x,t) = P(x_1, \bar{x}_1, x_2, \bar{x}_2) \phi(t) e^{- 2 \pi |t| (x_1 \bar{x}_1 + x_2 \bar{x}_2)}
\]
for $x = (x_1, x_2) \in \C^2$ and $t \in \R^\times$, where $P$ is a polynomial and $\phi$ is a smooth compactly supported function on $\R^\times$.
Then $S(K^2 \times F^\times)$ is invariant under the action of $(\mathfrak{gsp}(W), K_{\GSp(W)}) \times \GO(V)$, where $\mathfrak{gsp}(W)$ is the complexified Lie algebra of $\GSp(W)$ and $K_{\GSp(W)}$ is the standard maximal compact modulo center subgroup of $\GSp(W)$.
When $F$ is non-archimedean, we also write $S(K^2 \times F^\times) = \cS(K^2 \times F^\times)$ for uniformity of notation.

\subsection{The unitary case}
\label{ss:weil-u}

Let $B$ be a quaternion algebra over $F$ such that $K$ embeds into $B$.
We denote by $*$ the main involution on $B$.
Let $\tr_{B/F}$ and $\nu$ be the reduced trace and norm maps from $B$ to $F$, respectively.
Write $B = K + K \bi$ and $K = F + F \bj$ for some trace zero elements $\bi \in B^\times$ and $\bj \in K^\times$.
Then $E = F + F \bi$ is an \'etale quadratic algebra over $F$ such that $B = E + E \bj$.
Let $\pr : B \rightarrow E$ be the associated projection.
We denote by $x \mapsto \bar{x}$ the nontrivial automorphism of $E$ over $F$.

Let $\bV = B$ be the $2$-dimensional (right) $E$-vector space equipped with the hermitian form $(x, y) = 2 \pr(x^* y)$.
Let $\GL(\bV)$ act on $\bV$ on the left.
Then we have $\GU(\bV) = (B^\times \times E^\times)/F^\times$, where $F^\times$ embeds into $B^\times \times E^\times$ diagonally.
For $(h',z) \in B^\times \times E^\times$, we denote by $[h',z]$ its image in $\GU(\bV)$.
Let $\nu : \GU(\bV) \rightarrow F^\times$ be the similitude character, so that $\nu([h',z]) = \nu(h') \N_{E/F}(z)^{-1}$.
Let $\bW = E^2$ be the $2$-dimensional (left) $E$-vector space equipped with the skew-hermitian form
\[
 ((x_1, x_2), (y_1, y_2)) = x_1 \bar{y}_2 - x_2 \bar{y}_1.
\]
Let $\GL(\bW)$ act on $\bW$ on the right.
Then we have
\[
 \GU(\bW) = \left\{ g \in \GL_2(E) \; \middle| \; g  \mat{}{1}{-1}{} {}^t \bar{g} = \nu(g) \cdot \mat{}{1}{-1}{}, \; \nu(g) \in F^\times \right\}.
\]
We may also identify $\GU(\bW)$ with $(\GL_2(F) \times E^\times)/F^\times$, where $F^\times$ embeds into $\GL_2(F) \times E^\times$ diagonally.
For $(g',z) \in \GL_2(F) \times E^\times$, we denote by $[g',z]$ its image in $\GU(\bW)$ under this identification.

Let $\cS(B)$ be the space of Schwartz functions on $B$.
As in \cite[\S 5]{kudla-splitting}, we define a Weil representation $\omega_\psi$ of $\U(\bW) \times \U(\bV)$ on $\cS(B)$ relative to $\psi$ and the trivial character of $E^\times$, so that 
\begin{align*}
 \omega_\psi \mat{a}{}{}{\bar{a}^{-1}} \varphi(x) & = |a| \varphi(x a), \\ 
 \omega_\psi \mat{1}{b}{}{1} \varphi(x) & = \psi(\nu(x) b) \varphi(x), \\
 \omega_\psi \mat{}{-1}{1}{} \varphi(x) & = \gamma_B \int_B \varphi(y) \psi(-\tr_{B/F}(x^* y)) \, dy, \\
 \omega_\psi(h) \varphi(x) & = \varphi(h^{-1} x)
\end{align*}
for $\varphi \in \cS(B)$, $x \in B$, $a \in E^\times$, $b \in F$, and $h \in \U(\bV)$, where
\[
 \gamma_B = 
 \begin{cases}
  +1 & \text{if $B$ is split;} \\
  -1 & \text{if $B$ is ramified}
 \end{cases}
\]
and $dy$ is the self-dual Haar measure on $B$ with respect to $\psi \circ \tr_{B/F}$.
Note that 
\[
 \omega_{t \psi}(g) = \omega_\psi \left( \mat{1}{}{}{t^{-1}} g \mat{1}{}{}{t} \right)
\]
for $t \in F^\times$ and $g \in \U(\bW)$.

Let $\cS(B \times F^\times)$ be the space of Schwartz functions on $B \times F^\times$.
As in \cite[\S I.3]{wald85}, we define a representation $\Omega_\psi$ of $\GU(\bW) \times \GU(\bV)$ on $\cS(B \times F^\times)$ by
\begin{align*}
 \Omega_\psi(g) \varphi (x,t) & = (\omega_{t \psi}(g) \varphi_t)(x), \\  
 \Omega_\psi \mat{1}{}{}{\nu} \varphi(x,t) & = |\nu|^{-1} \varphi(x,\nu^{-1} t), \\
 \Omega_\psi(h) \varphi(x,t) & = \varphi(h^{-1} x, \nu(h) t)
\end{align*}
for $\varphi \in \cS(B \times F^\times)$, $x \in B$, $t \in F^\times$, $g \in \U(\bW)$, $\nu \in F^\times$, and $h \in \GU(\bV)$, where $\varphi_t \in \cS(B)$ is given by $\varphi_t(x) = \varphi(x,t)$.
When $F = \R$, $K = \C$, $B = \mathbb{H}$ (so that $E = \C$), and $\psi(x) = e^{2 \pi \sqrt{-1} x}$, we denote by $S(B \times F^\times)$ the subspace of Schwartz functions $\varphi$ on $B \times F^\times = \mathbb{H} \times \R^\times$ of the form
\[
 \varphi(x,t) = P(x_1, x_2, x_3, x_4) \phi(t) e^{- 2 \pi |t| \nu(x)}
\]
for $x = x_1 + x_2 \bi + x_3 \bj + x_4 \bi \bj$ with $x_1, x_2, x_3, x_4 \in \R$ and $t \in \R^\times$, where $P$ is a polynomial and $\phi$ is a smooth compactly supported function on $\R^\times$.
Then $S(B \times F^\times)$ is invariant under the action of $(\mathfrak{gu}(\bW), K_{\GU(\bW)}) \times \GU(\bV)$, where $\mathfrak{gu}(\bW)$ is the complexified Lie algebra of $\GU(\bW)$ and $K_{\GU(\bW)}$ is the standard maximal compact modulo center subgroup of $\GU(\bW)$.
When $F$ is non-archimedean, we also write $S(B \times F^\times) = \cS(B \times F^\times)$ for uniformity of notation.

In the rest of this subsection, we denote by $c$ the nontrivial automorphism of $E$ over $F$ to distinguish it from the complex conjugation.
Then $c$ induces $E$-anti-linear automorphisms of $\bW$ and $\bV$, which in turn induce automorphisms of $\GU(\bW)$ and $\GU(\bV)$.
Explicitly, we have
\[
 g^c = [g',z^c], \quad
 h^c = [\bj h' \bj^{-1}, z^c]
\]
for $g = [g',z] \in \GU(\bW)$, $h = [h', z] \in \GU(\bV)$ with $g' \in \GL_2(F)$, $h' \in B^\times$, and $z \in E^\times$.
For $\varphi \in \cS(B \times F^\times)$, we define Schwartz functions $\varphi^c, \bar{\varphi} \in \cS(B \times F^\times)$ by
\[
 \varphi^c(x,t) = \varphi(x^c,t), \quad 
 \bar{\varphi}(x,t) = \overline{\varphi(x,t)},
\]
where the bar denotes the complex conjugation.
Then a direct computation shows the following.

\begin{lem}
\label{l:weil-conjugation}
We have
\[
 [\Omega_\psi(g,h)\varphi]^c = \Omega_\psi(g^c,h^c) \varphi^c, \quad
 \overline{\Omega_\psi(g,h)\varphi} = \Omega_\psi(\delta g \delta, h) \bar{\varphi}
\]
for $\varphi \in \cS(B \times F^\times)$, $g \in \GU(\bW)$, and $h \in \GU(\bV)$, where 
\[
 \delta = \left[ \mat{-1}{}{}{1}, 1 \right] \in \GU(\bW).
\]
\end{lem}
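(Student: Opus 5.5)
Both identities are equalities of operators on $\cS(B \times F^\times)$, and both sides are compatible with composition. On the left of the first identity we compose $\Omega_\psi(g,h)$ with the involution $\varphi \mapsto \varphi^c$; on the right we use the automorphisms $g \mapsto g^c$ and $h \mapsto h^c$. Similarly for complex conjugation and $g \mapsto \delta g \delta$, noting that $\delta^2 = 1$ and that $\varphi \mapsto \bar{\varphi}$ is an involution. Hence it suffices to verify each identity on a set of generators of $\GU(\bW) \times \GU(\bV)$.

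I will use the following generators:
\begin{itemize}
\item the elements $\smat{a}{}{}{\bar{a}^{-1}}$ with $a \in E^\times$, the elements $\smat{1}{b}{}{1}$ with $b \in F$, and $\smat{}{-1}{1}{}$, which together generate $\U(\bW)$;
\item the similitude elements $\smat{1}{}{}{\nu}$ with $\nu \in F^\times$;
\item all $h = [h',z] \in \GU(\bV)$, acting on $x \in B$ by $x \mapsto h' x z^{-1}$.
\end{itemize}
For each generator I substitute directly into the defining formulas for $\Omega_\psi$ recalled above.

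For the first identity, I first make the $E$-antilinear automorphism $c$ of $\bV = B$ explicit as $x^c = \bj x \bj^{-1}$. This is consistent with the stated formula $h^c = [\bj h' \bj^{-1}, z^c]$ and with $\bj e \bj^{-1} = e^c$ for $e \in E$. The key facts are then the following.
\begin{itemize}
\item $\nu(x^c) = \nu(x)$ and $\tr_{B/F}((x^c)^* y^c) = \tr_{B/F}(x^* y)$, since conjugation by $\bj$ is an $F$-algebra automorphism commuting with $*$.
\item The self-dual measure $dy$ is invariant under $y \mapsto y^c$.
\item $(xa)^c = x^c a^c$ and $|a^c| = |a|$.
\item $(h^{-1} x)^c = (h^c)^{-1} x^c$ together with $\nu(h^c) = \nu(h)$.
\end{itemize}
The unipotent and similitude generators of $\GU(\bW)$ are fixed by $g \mapsto g^c$, because $g^c = [g', z^c]$ and these elements have trivial $E^\times$-part. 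For them the check reduces to the invariance of $\nu$, of the trace pairing and of $dy$, while $\gamma_B$ is unchanged. For the torus generators one must be careful about how $\smat{a}{}{}{\bar a^{-1}}$ decomposes as $[g',z]$. Since $c$ acts only on the $E^\times$-factor, its image is $\smat{a^c}{}{}{(\bar a^c)^{-1}}$, and the first formula for $\omega_\psi$ then gives the claim.

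For the second identity, I use that $\overline{\psi(y)} = \psi(-y)$, that $\gamma_B = \pm 1$ is real, and that $|a|$ is real. Complex conjugation therefore turns $\omega_{t\psi}(g)\varphi_t$ into $\omega_{-t\psi}(g)\bar{\varphi}_t$ for $g \in \U(\bW)$. By the twisting formula $\omega_{t\psi}(g) = \omega_\psi(\smat{1}{}{}{t^{-1}} g \smat{1}{}{}{t})$ with $t=-1$, we get $\omega_{-t\psi}(g) = \omega_{t\psi}(\smat{1}{}{}{-1} g \smat{1}{}{}{-1})$. Conjugating by $\smat{1}{}{}{-1}$ is the same as conjugating by $\delta = [\smat{-1}{}{}{1},1]$, since the two differ by the central element $-1$. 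This gives $\overline{\Omega_\psi(g)\varphi} = \Omega_\psi(\delta g \delta)\bar{\varphi}$ on $\U(\bW)$. On the similitude elements, $\delta$ commutes with $\smat{1}{}{}{\nu}$, and the defining formula $|\nu|^{-1}\varphi(x,\nu^{-1}t)$ is real-linear, so the identity is immediate. The same reasoning applies to $h \in \GU(\bV)$, which commutes with $\delta$ and acts by a real substitution in $(x,t)$.

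The only step needing real care is the bookkeeping of the $E^\times$-component in the identification $\U(\bW) \subset (\GL_2(F) \times E^\times)/F^\times$ when computing $g^c$ for the torus generators. One must confirm that $[g',z] \mapsto [g',z^c]$ really corresponds to entrywise application of $c$ to the matrix in $\GL_2(E)$, so that the torus formula transforms correctly. I would settle this first by writing the identification explicitly. The remaining verifications are routine substitutions into the formulas for $\Omega_\psi$.
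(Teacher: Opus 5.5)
Your proposal is correct and is essentially the paper's argument. The paper only asserts that the lemma follows by a direct computation, and your check on generators is that computation: you use $x^c = \bj x \bj^{-1}$, and for the second identity you combine $\overline{\omega_{t\psi}(g)\varphi_t} = \omega_{-t\psi}(g)\bar{\varphi}_t$ with the twisting formula and $\delta \equiv \smat{1}{}{}{-1}$ modulo the center.

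The bookkeeping point you flag is harmless. We have $\smat{a}{}{}{(a^c)^{-1}} = (a^c)^{-1} \smat{\N_{E/F}(a)}{}{}{1}$ with the second factor in $\GL_2(F)$, so $g \mapsto g^c$ is entrywise application of $c$ and sends this element to $\smat{a^c}{}{}{a^{-1}}$.
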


\subsection{Change of polarizations}
\label{ss:weil-pft}

When $K$ is split, we also need to introduce a different model of the Weil representation for the unitary dual pair.
We identify $B$ with $\M_2(F)$ via the map
\[
 a + b \bi + c \bj + d \bi \bj \mapsto \mat{a + cJ'}{b - dJ'}{(b + dJ')u}{a - cJ'},
\]
where we write $\bj^2 = (J')^2$ for some $J' \in F^\times$ and put $u = \bi^2 \in F^\times$.
Under this identification, we have
\[
 \mat{x_1}{x_2}{x_3}{x_4}^* = \mat{x_4}{-x_2}{-x_3}{x_1}.
\]
Take a basis $\bv_1, \bv_2$ of $\bV$ given by
\[
 \bv_1 = \mat{1}{0}{0}{0}, \quad
 \bv_2 = \mat{0}{0}{1}{0}.
\]
Note that
\[
 (\bv_1, \bv_1) = (\bv_2, \bv_2) = 0, \quad
 (\bv_1, \bv_2) = \frac{1}{u} \bi.
\]
We define a partial Fourier transform
\[
 \cF_\psi : \cS(B) \rightarrow \cS(E^2)
\]
by
\[
 \cF_\psi(\varphi)(x_1, x_2) = \int_E \varphi(\bv_1 y + \bv_2 x_1) \psi_E(x_2 \bar{y}) \, dy,
\]
where $\psi_E$ is a nontrivial additive character of $E$ given by 
\[
 \psi_E(y) = \psi \left(\frac{1}{2u} \tr_{E/F}(\bi y) \right)
\]
and $dy$ is the self-dual Haar measure on $E$ with respect to $\psi_E$.
This induces the Weil representation $\hat{\omega}_\psi$ of $\U(\bW)$ on $\cS(E^2)$, i.e.
\[
 \hat{\omega}_\psi(g) = \cF_\psi \circ \omega_\psi(g) \circ \cF_\psi^{-1}.
\]

\begin{lem}
\label{l:weil-pft}
We have
\[
 \hat{\omega}_\psi(g) \hat{\varphi}(x) = \hat{\varphi}(xg)
\]
for $\hat{\varphi} \in \cS(E^2)$, $x \in E^2$, and $g \in \U(\bW)$.
\end{lem}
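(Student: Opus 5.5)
The plan is to check the identity on generators of $\U(\bW)$. Since $\hat{\omega}_\psi$ and the right-translation action $\hat{\varphi} \mapsto \hat{\varphi}(\,\cdot\, g)$ are both representations of $\U(\bW)$ on $\cS(E^2)$, it suffices to prove the formula for a generating set. Here $\omega_\psi$ is a genuine representation: it is the linear Weil representation attached to the trivial splitting character. For generators I take
\begin{itemize}
\item the Levi elements $m(a) = \smat{a}{}{}{\bar{a}^{-1}}$ with $a \in E^\times$;
\item the unipotents $n(b) = \smat{1}{b}{}{1}$ with $b \in F$;
\item the Weyl element $w = \smat{}{-1}{1}{}$.
\end{itemize}
These generate $\U(\bW) \simeq \U(1,1)$ via the Bruhat decomposition. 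For each generator $g$, I will verify the equivalent identity $\cF_\psi(\omega_\psi(g)\varphi)(x) = \cF_\psi(\varphi)(xg)$ for $\varphi \in \cS(B)$.

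First I record the coordinates. Write $x = \bv_1 y + \bv_2 x_1$ with $y, x_1 \in E$. Since $\bv_1, \bv_2$ are isotropic and $(\bv_1,\bv_2) = u^{-1}\bi$, the reduced norm is
\[
 \nu(\bv_1 y + \bv_2 x_1) = \tfrac12 (x,x) = \tfrac12 \tr_{E/F}(u^{-1}\bi\, \bar{y} x_1).
\]
This uses $(x,x) = 2\nu(x)$, which follows from $(x,y) = 2\pr(x^*y)$. For the same reason, the trace pairing $\tr_{B/F}(x^* x')$ is expressed through the $F$-bilinear form $\tr_{E/F}((x,x'))/2$ in these coordinates.

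\textbf{Levi elements.} Right multiplication by $a \in E$ acts on both coordinates. Hence
\[
 \cF_\psi(\omega_\psi(m(a))\varphi)(x_1,x_2) = |a| \int_E \varphi(\bv_1 y a + \bv_2 x_1 a)\, \psi_E(x_2\bar{y})\, dy .
\]
The substitution $y \mapsto y a^{-1}$ contributes $|a|_E^{-1}$. Since $|a|$ in the Weil representation formula denotes $|a|_E^{1/2}$ up to normalization, the result is $\cF_\psi(\varphi)(x_1 a, x_2 \bar{a}^{-1})$, which is exactly $(x_1,x_2)\,m(a)$. I will need to check the normalization of $|a|$ for $a \in E$; presumably it is the modulus on $E$ with a square root, and this should be confirmed.

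\textbf{Unipotent elements.} We have
\[
 \psi(\nu(x) b) = \psi\!\left(\tfrac{b}{2u}\tr_{E/F}(\bi \bar{y} x_1)\right) = \psi_E(b\, x_1 \bar{y}).
\]
This merges with $\psi_E(x_2 \bar{y})$ to give $\cF_\psi(\varphi)(x_1, x_2 + b x_1) = \cF_\psi(\varphi)((x_1,x_2)n(b))$.

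\textbf{Weyl element.} This is the main computation. Substitute the formula for $\omega_\psi(w)$, a Fourier transform on $B \cong E^2$ with the constant $\gamma_B = 1$, since $B$ is split when $K$ is split. Then perform the partial Fourier transform in $y$. The composite is a full Fourier transform in one variable composed with a partial one. Using Fourier inversion for the self-dual measure with respect to $\psi_E$, the composite collapses to a coordinate swap $(x_1,x_2) \mapsto (x_2,-x_1)$, which is right multiplication by $w$.

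The main obstacle is bookkeeping: matching the pairing $\psi(-\tr_{B/F}(x^*y))$ with $\psi_E$ in the coordinates $\bv_1,\bv_2$, so that the self-dual measures match and no stray sign or constant survives. The sign conventions of $\bi/u$ versus $\bi$ are the delicate point. If a constant does appear, I will test the identity on a Gaussian-type $\varphi$, or recheck $\gamma_B$. I expect it to vanish because $\bv_1,\bv_2$ form a hyperbolic pair and the Weil index of a split hyperbolic plane is $1$.
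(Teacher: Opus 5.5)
Your route is the paper's. You verify the identity on $\smat{a}{}{}{\bar a^{-1}}$, $\smat{1}{b}{}{1}$ and $\smat{}{-1}{1}{}$ by direct computation. Your unipotent step, via $\psi(\nu(x)b)=\psi_E(b\,x_1\bar y)$, is exactly what the paper does. So is your Weyl-element step: set $\gamma_B=1$, then Fourier inversion in $y$ forces $y_2=x_2$ and leaves $\hat\varphi(x_2,-x_1)$.

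The one thing to fix is the Levi normalization. The factor $|a|$ there is the full normalized absolute value $|a|_E=|\N_{E/F}(a)|_F$, i.e.\ the usual factor $|\det a|_E^{m/2}$ with $m=\dim_E\bV=2$. It is not $|a|_E^{1/2}$. With your guess, the Jacobian $|a|_E^{-1}$ of $y\mapsto ya^{-1}$ would leave a stray factor $|a|_E^{-1/2}$. With the correct reading the two factors cancel, and you get $\hat\varphi(x_1a,x_2\bar a^{-1})$ exactly, as in the paper.

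The measure bookkeeping you were worried about in the Weyl step is harmless. Take $x=\bv_1y+\bv_2x_1$ and $x'=\bv_1y_1+\bv_2y_2$. Then $\psi(\tr_{B/F}(x^*x'))=\psi_E(\bar y\,y_2+x_1\bar y_1)$. Hence the self-dual measure on $B$ is just $dy_1\,dy_2$, a product of $\psi_E$-self-dual measures, and no stray constant appears.
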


\begin{proof}
Write $\hat{\varphi} = \cF_\psi(\varphi)$ with $\varphi \in \cS(B)$.
For $a \in E^\times$, we have
\begin{align*}
 \hat{\omega}_\psi \mat{a}{}{}{\bar{a}^{-1}} \hat{\varphi}(x_1, x_2)
 & = |a| \int_E \varphi((\bv_1 y + \bv_2 x_1) a) \psi_E(x_2 \bar{y}) \, dy \\
 & = \int_E \varphi(\bv_1 y + \bv_2 x_1 a) \psi_E(x_2 \bar{y} \bar{a}^{-1}) \, dy \\
 & = \hat{\varphi}(x_1 a, x_2 \bar{a}^{-1}).
\end{align*}
For $b \in F$, we have 
\begin{align*}
 \hat{\omega}_\psi\mat{1}{b}{}{1} \hat{\varphi}(x_1, x_2) 
 & = \int_E \varphi(\bv_1 y + \bv_2 x_1) \psi \left( \frac{1}{2} (\bv_1 y + \bv_2 x_1, \bv_1 y + \bv_2 x_1) b \right) \psi_E(x_2 \bar{y}) \, dy \\
 & = \int_E \varphi(\bv_1 y + \bv_2 x_1) \psi_E(\bar{y} x_1 b) \psi_E(x_2 \bar{y}) \, dy \\
 & = \hat{\varphi}(x_1, x_1 b + x_2).
\end{align*}
Finally, we have
\begin{align*}
 \hat{\omega}_\psi\mat{}{-1}{1}{} \hat{\varphi}(x_1, x_2)
 & = \int_E \int_{E^2} \varphi(\bv_1 y_1 + \bv_2 y_2) \psi \left( - \frac{1}{2} \tr_{E/F} (\bv_1 y + \bv_2 x_1, \bv_1 y_1 + \bv_2 y_2) \right) dy_1 \, dy_2 \\
 & \qquad \times \psi_E(x_2 \bar{y}) \, dy \\
 & = \int_E \int_{E^2} \varphi(\bv_1 y_1 + \bv_2 y_2) \psi_E(- \bar{y} y_2 - x_1 \bar{y}_1) \, dy_1 \, dy_2 \, \psi_E(x_2 \bar{y}) \, dy \\
 & = \int_E \varphi(\bv_1 y_1 + \bv_2 x_2) \psi_E(- x_1 \bar{y}_1) \, dy_1 \\
 & = \hat{\varphi}(x_2, -x_1).
\end{align*}
This completes the proof.
\end{proof}

We extend $\cF_\psi$ to a partial Fourier transform
\[
 \cF_\psi: \cS(B \times F^\times) \rightarrow \cS(E^2 \times F^\times)
\]
by 
\[
 \cF_\psi(\varphi)(x,t) = \cF_{t \psi}(\varphi_t)(x).
\]
This induces the Weil representation $\hat{\Omega}_\psi$ of $\GU(\bW) \times \GU(\bV)$ on $\cS(E^2 \times F^\times)$, i.e.
\[
 \hat{\Omega}_\psi(g,h) = \cF_\psi \circ \Omega_\psi(g,h) \circ \cF_\psi^{-1}.
\]

\begin{lem}
\label{l:weil-pft2}
We have
\begin{align*}
 \hat{\Omega}_\psi(g) \hat{\varphi}(x,t) & = \hat{\varphi}(xg, \nu(g)^{-1} t), \\
 \hat{\Omega}_\psi(h_1) \hat{\varphi}(x,t) & = |a| \hat{\varphi}(x, a t), \\
 \hat{\Omega}_\psi(h_2) \hat{\varphi}(x,t) & = \psi_E(t \bar{x}_1 x_2 b) \hat{\varphi}(x, t) 
\end{align*}
for $\hat{\varphi} \in \cS(E^2 \times F^\times)$, $x = (x_1, x_2) \in E^2$, $t \in F^\times$, $g \in \GU(\bW)$, and
\[
 h_1 = \left[ \mat{a}{}{}{1}, 1 \right],
 h_2 = \left[ \mat{1}{b}{}{1}, 1 \right] \in \GU(\bV) 
\]
with $a \in F^\times$, $b \in F$.
\end{lem}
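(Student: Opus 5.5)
The plan is to reduce everything to Lemma \ref{l:weil-pft} together with a scaling property of $\cF_{t\psi}$ in $t$, and to check the three formulas directly from the definitions of $\Omega_\psi$ and $\cF_\psi$.

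\textbf{Scaling of $\cF_{s\psi}$.} First record how $\cF_{s\psi}$ depends on $s \in F^\times$. Replacing $\psi$ by $s\psi$ replaces $\psi_E$ by $(s\psi)_E(y) = \psi_E(sy)$. The self-dual measure for $\psi_E(s\,\cdot)$ is $|s|_E^{1/2} dy = |s|\, dy$. Hence
\[
 \cF_{s\psi}(\varphi)(x_1,x_2) = |s| \, \cF_\psi(\varphi)(x_1, s x_2),
\]
and consequently
\[
 \cF_{t\psi}(\varphi)(x_1,x_2) = |\nu| \, \cF_{\nu^{-1}t\psi}(\varphi)(x_1,\nu x_2).
\]

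\textbf{The action of $\GU(\bW)$.} Every $g \in \GU(\bW)$ factors as $g = g_1 \cdot \diag(1,\nu(g))$ with $g_1 \in \U(\bW)$. Since the right-hand side $(x,t) \mapsto (xg, \nu(g)^{-1}t)$ is compatible with products, it suffices to treat the two kinds of factors.
\begin{itemize}[label=$\bullet$]
\item For $g_1 \in \U(\bW)$, the definition gives $\cF_\psi(\Omega_\psi(g_1)\varphi)(x,t) = \cF_{t\psi}(\omega_{t\psi}(g_1)\varphi_t)(x)$. By Lemma \ref{l:weil-pft} applied to $t\psi$, this equals $\hat\varphi(xg_1,t)$.
\item For $\diag(1,\nu)$, we have $\Omega_\psi(\diag(1,\nu))\varphi(x,t) = |\nu|^{-1}\varphi(x,\nu^{-1}t)$. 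Applying $\cF_{t\psi}$ and the scaling identity above gives $\hat\varphi((x_1,\nu x_2),\nu^{-1}t)$, which is $\hat\varphi(x\diag(1,\nu),\nu^{-1}t)$.
\end{itemize}

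\textbf{The element $h_1$.} Here $\nu(h_1) = a$. In the identification $B \simeq \M_2(F)$ we have $\bv_1 = \mathtt{E}_{11}$ and $\bv_2 = \mathtt{E}_{21}$, and $E$ acts by right multiplication, which commutes with left multiplication by $h_1$. Thus $h_1^{-1}\bv_1 = a^{-1}\bv_1$ and $h_1^{-1}\bv_2 = \bv_2$, so
\[
 \Omega_\psi(h_1)\varphi(\bv_1 y + \bv_2 x_1, t) = \varphi(\bv_1 a^{-1} y + \bv_2 x_1, at).
\]
In the integral defining $\cF_{t\psi}$, substitute $y \mapsto ay$; this produces $|a|^2$ and the character $\psi_E(t a x_2 \bar y)$. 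Comparing the measures for $t\psi$ and $at\psi$ gives a factor $|a|^{-1}$. The result is $|a|\,\hat\varphi(x,at)$.

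\textbf{The element $h_2$.} Here $\nu(h_2) = 1$, and $h_2^{-1}\bv_2 = \bv_2 - b\bv_1$ while $h_2^{-1}\bv_1 = \bv_1$. So the argument of $\varphi$ becomes $\bv_1(y - bx_1) + \bv_2 x_1$. Translating $y \mapsto y + bx_1$ pulls out $\psi_E(t x_2 \overline{bx_1}) = \psi_E(t\bar x_1 x_2 b)$, using $b \in F$. This gives the third formula.

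The main obstacle is bookkeeping rather than ideas. One has to keep the normalization of the self-dual measures straight as $t$ varies, and verify that the $E$-structure (right multiplication) commutes with the left action of $B^\times$ in the chosen identification. If signs or conjugations disagree, I would first recheck the formula for $(\bv_1,\bv_2)$ and the definition of $\psi_E$.
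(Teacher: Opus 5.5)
Your proposal is correct and follows the paper's proof essentially step by step. The $\U(\bW)$ part goes through Lemma~\ref{l:weil-pft} applied to $t\psi$, and the element $\smat{1}{}{}{\nu}$ is handled by rescaling the self-dual measure $|t|\,dy$ in $t$. The elements $h_1$ and $h_2$ are handled by the substitutions $y \mapsto ay$ and $y \mapsto y + bx_1$ in the integral defining $\cF_{t\psi}$. Your explicit remark that $(x,t) \mapsto (xg, \nu(g)^{-1}t)$ is compatible with products, so that it suffices to check the two kinds of factors of $g$, is a point the paper leaves implicit.
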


\begin{proof}
Write $\hat{\varphi} = \cF_\psi(\varphi)$ with $\varphi \in \cS(B \times F^\times)$.
For $g \in \U(\bW)$, we have
\begin{align*}
 \hat{\Omega}_\psi(g) \hat{\varphi}(x, t) & = \cF_{t \psi}(\omega_{t \psi}(g) \varphi_t)(x) \\
 & = \hat{\omega}_{t \psi}(g) \cF_{t \psi} (\varphi_t) (x) \\
 & = \cF_{t \psi} (\varphi_t) (xg) \\
 & = \hat{\varphi}(xg, t)
\end{align*}
by Lemma \ref{l:weil-pft}.
For $\nu \in F^\times$, we have
\begin{align*}
 \hat{\Omega}_\psi \mat{1}{}{}{\nu} \hat{\varphi}(x_1, x_2, t)
 & = |\nu|^{-1} \cF_{t\psi}(\varphi_{\nu^{-1} t})(x_1, x_2) \\
 & = |\nu|^{-1} \int_E \varphi_{\nu^{-1} t}(\bv_1 y + \bv_2 x_1) \psi_E(t x_2 \bar{y}) |t| \, dy \\
 & = \cF_{\nu^{-1} t \psi}(\varphi_{\nu^{-1} t})(x_1, \nu x_2) \\
 & = \hat{\varphi}(x_1,\nu x_2, \nu^{-1} t).
\end{align*}
For $h \in \GU(\bV)$, we have
\[
 \hat{\Omega}_\psi(h) \hat{\varphi}(x_1, x_2, t) = \int_E \varphi_{\nu(h) t}(h^{-1}(\bv_1 y + \bv_2 x_1)) \psi_E(t x_2 \bar{y}) |t| \, dy.
\]
If $h = \left[ \smat{a}{}{}{1}, 1 \right]$ with $a \in F^\times$, then we have
\begin{align*}
 \hat{\Omega}_\psi(h) \hat{\varphi}(x_1, x_2, t) 
 & = \int_E \varphi_{a t}(\bv_1 a^{-1} y + \bv_2 x_1) \psi_E(t x_2 \bar{y}) |t| \, dy \\
 & = |a|^2 \int_E \varphi_{a t}(\bv_1 y + \bv_2 x_1) \psi_E(t x_2 a \bar{y}) |t| \, dy \\
 & = |a| \cF_{at \psi}(\varphi_{at})(x_1, x_2) \\
 & = |a| \hat{\varphi}(x_1, x_2, at).
\end{align*}
If $h = \left[ \smat{1}{b}{}{1}, 1 \right]$ with $b \in F$, then we have
\begin{align*}
 \hat{\Omega}_\psi(h) \hat{\varphi}(x_1, x_2, t)
 & = \int_E \varphi_t(\bv_1 y + (-\bv_1 b + \bv_2) x_1) \psi_E(t x_2 \bar{y}) |t| \, dy \\
 & = \int_E \varphi_t(\bv_1 y + \bv_2 x_1) \psi_E(t x_2 (\bar{y} + b \bar{x}_1)) |t| \, dy \\
 & = \psi_E(t \bar{x}_1 x_2 b) \cF_{t \psi}(\varphi_t)(x_1, x_2) \\
 & = \psi_E(t \bar{x}_1 x_2 b) \hat{\varphi}(x_1, x_2, t).
\end{align*}
This completes the proof.
\end{proof}

\subsection{Comparison}
\label{ss:weil-seesaw}

Now we compare the two Weil representations as in \S \ref{ss:weil-sp-o} and \S \ref{ss:weil-u}.
We consider the $2$-dimensional (right) $E$-vector space $V \otimes_F E$ equipped with the hermitian form induced by $\langle \cdot, \cdot \rangle$.
Since this space is isometric to $\bV$, we have a natural embedding $\iota : \GO(V) \hookrightarrow \GU(\bV)$.
Explicitly, the associated map $\iota : K^{\times} \rtimes \langle h_0 \rangle \hookrightarrow (B^\times \times E^\times)/F^\times$ is given by
\[
 \iota(h) = [h,1], \quad
 \iota(h_0) = [\bi,\bi]
\]
for $h \in K^\times$.
We also regard $\bW$ as the $4$-dimensional (left) $F$-vector space equipped with the symplectic form $\frac{1}{2} {\tr_{E/F}} \circ (\cdot, \cdot)$.
Since this space is isometric to $W$, we have a natural embedding $\iota : \GU(\bW) \hookrightarrow \GSp(W)$.
Explicitly, if we identify $\bW$ with $W$ (as $F$-vector spaces) via the map
\[
 (a_1 + b_1 \bi, a_2 + b_2 \bi) \mapsto (a_1, b_1, a_2, - b_2 u), 
\]
then we have
\[
 \iota \mat{a_1 + b_1 \bi}{a_2 + b_2 \bi}{a_3 + b_3 \bi}{a_4 + b_4 \bi} = 
 \begin{pmatrix}
  a_1 & b_1 & a_2 & - b_2 u \\
  b_1 u & a_1 & b_2 u & - a_2 u \\
  a_3 & b_3 & a_4 & - b_4 u \\
  -b_3 & - \frac{a_3}{u} & -b_4 & a_4
 \end{pmatrix}, 
\]
where $a_i, b_j \in F$ and $u = \bi^2 \in F^\times$.
Then we have an identification $V \otimes_F W = \bV \otimes_E \bW$ as symplectic $F$-spaces and the following seesaw diagram:
\[
\begin{tikzcd}
 \GSp(W) \arrow[rd,dash] & \GU(\bV) \arrow[ld,dash] \\
 \GU(\bW) \arrow[u,dash] & \GO(V) \arrow[u,dash]
\end{tikzcd}
\]
We write $\Omega_\psi^K$ and $\Omega_\psi^B$ for the Weil representations of $\GSp(W) \times \GO(V)$ and $\GU(\bW) \times \GU(\bV)$, respectively, realized on the same space $\cS(K^2 \times F^\times) = \cS(B \times F^\times)$, where we identify $K^2$ with $B$ (as $F$-vector spaces) via the map
\[
 (x_1, x_2) \mapsto x_1 + x_2 \bi
\]
for $x_1, x_2 \in K$.

\begin{lem}
\label{l:weil-seesaw}
We have
\[
 \Omega_\psi^B(g, \iota(h)) = \xi_{K/F}(\N_{E/F}(z)) \cdot \Omega_\psi^K(\iota(g), h)
\]
for $g = [g',z] \in \GU(\bW)$ with $(g',z) \in \GL_2(F) \times E^\times$ and $h \in \GO(V)$.
\end{lem}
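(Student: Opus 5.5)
We compare both sides on generators of $\GU(\bW) \times \GO(V)$, working on the common space $\cS(K^2 \times F^\times) = \cS(B \times F^\times)$ under the identification $(x_1,x_2) \mapsto x_1 + x_2 \bi$. Both sides are (projective a priori, but here genuine) representations, so it suffices to check the identity on a generating set. Moreover the $\GU(\bW)$-part and the $\GO(V)$-part act on separate factors and commute, so we check the two factors separately.

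First the $\GO(V)$-part, with $g = 1$. For $h \in K^\times$ we have $\iota(h) = [h,1]$. Left multiplication by $h^{-1}$ on $B = K + K\bi$ acts diagonally on the coordinates $(x_1,x_2)$, and the similitudes agree: $\nu([h,1]) = \N_{K/F}(h)$. Hence both sides equal $\varphi(h^{-1}x, \N(h)t)$. For $h_0$ we have $\iota(h_0) = [\bi,\bi]$, which acts by $x \mapsto \bi^{-1} x \bi$. This sends $x_1 + x_2\bi$ to $\bar{x}_1 + \bar{x}_2 \bi$, because conjugation by $\bi$ induces the nontrivial automorphism of $K$. The similitude is $\nu(\bi)\N(\bi)^{-1} = 1 = \nu(h_0)$, so the two sides agree here as well.

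Next the $\GU(\bW)$-part, with $h = 1$. Write $\GU(\bW) = (\GL_2(F) \times E^\times)/F^\times$ and use the generators
\begin{itemize}
\item $[g',1]$ with $g' \in \SL_2(F)$, i.e.\ $\smat{a}{}{}{a^{-1}}$, $\smat{1}{b}{}{1}$, and $\smat{}{-1}{1}{}$;
\item $[\smat{1}{}{}{\nu},1]$;
\item $[1,z]$ with $z \in E^\times$.
\end{itemize}
For $z = x + y\bi$, $\iota([1,z])$ is the matrix written in \S\ref{ss:periods-adelic}; after unwinding the scalar it lies in $\GSp(W)$ with similitude $\N(z)^{-1}$. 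On the $B$-side, $[1,z] \in \GU(\bW)$ acts by $\omega_\psi\smat{a}{}{}{\bar{a}^{-1}}$-type formulas combined with the similitude action. On the $K$-side, it acts through $\boldsymbol{m}(A)$ with $A = \iota$-block times the scalar $\smat{I_2}{}{}{\nu I_2}$. The factor $\xi_{K/F}(\det A)$ appearing in the $K$-side formula for $\boldsymbol{m}$ produces exactly $\xi_{K/F}(\N_{E/F}(z))$, which explains the character in the statement.

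For the unipotent generator, $\psi(\tr(q(x)b))$ must be compared with $\psi(\nu(x)b)$. Since $\iota\smat{1}{b}{}{1} = \boldsymbol{n}(\diag(b, -bu))$, the check reduces to the identity
\[
\nu(x_1 + x_2\bi) = \N(x_1) - u\N(x_2).
\]
Both sides of the statement carry the same $F^\times$ central actions, since $[t,t]$ maps to $t I_4$.

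The main obstacle is the Weyl element $\smat{}{-1}{1}{}$. Here $\iota$ gives an element of $\Sp(W)$ of the form $\boldsymbol{m}(\cdot)\smat{}{-I_2}{I_2}{}$ (up to a diagonal twist involving $u$), and one must match the full Fourier transform on $B$ with the Fourier transform on $K^2$ composed with the rescaling. This requires three things. First, an identity of self-dual measures: $dy$ on $B$ must equal the product measure on $K^2$ up to a $|u|$ factor, which must cancel against $|\det a|$. Second, the pairing identity $\tr_{B/F}(x^*y) = \tr_{K/F}(\bar{x}\,{}^t y')$ after the twist. Third, the Weil-index constants: $\gamma_B = \xi_{K/F}(-1)\,\xi_{K/F}(-u)$, i.e.\ the Hilbert symbol identity $\gamma_B = (-u, J'^2)_F$ times $\xi_{K/F}(-1)$.

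For the third point I would use $\varepsilon(B) = (u, J'^2)_F$ together with $\xi_{K/F}(c) = (c, J'^2)_F$. The general case then follows by multiplicativity, since both sides define representations of the groups. Throughout, the only nonformal input is this constant comparison; everything else is bookkeeping with the explicit formula for $\iota$ in \S\ref{ss:weil-seesaw}.
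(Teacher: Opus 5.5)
Your proposal is correct and is essentially the paper's argument. The paper's proof is just ``direct computation together with $\xi_{K/F}(u)=\gamma_B$'', and that is exactly what your Weyl-element check reduces to: $\iota\smat{}{-1}{1}{}=\boldsymbol{m}(\diag(1,-u))\smat{}{-I_2}{I_2}{}$ produces the constant $\xi_{K/F}(-u)\,\xi_{K/F}(-1)=\xi_{K/F}(u)$ on the $K$-side, which must match $\gamma_B$; the aside about $[t,t]$ is off, since $[t,t]$ is trivial in $\GU(\bW)$, but it is not needed because the center is already covered by your $[1,z]$ check.
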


\begin{proof}
The assertion follows from a direct computation and the fact that $\xi_{K/F}(u) = \gamma_B$.
\end{proof}

\section{Theta lifting}
\label{s:theta}

In this section, we describe the global theta lifting for the dual pairs introduced in \S \ref{s:weil}.
Let $F$ be a totally real number field and put $F_\infty = F \otimes_\Q \R$.
Let $\A$ and $\A_f$ be the rings of adeles and finite adeles of $F$, respectively. 
Fix a nontrivial additive character $\psi$ of $\A/F$.

\subsection{From $K^\times$ to $\GSp_4$}
\label{ss:theta-Sp4}

Let $K$ be a totally imaginary quadratic extension of $F$ and $\A_K$ the ring of adeles of $K$.
Let $\xi_{K/F}$ be the quadratic character of $\A^\times/F^\times$ associated to $K/F$ by class field theory.
Let $V = K$ and $W = F^4$ be the quadratic and symplectic $F$-spaces, respectively, as in \S \ref{ss:weil-sp-o}.
Put
\[
 G = \GSp(W), \quad
 \tilde{H} = \GO(V), \quad
 H = \GO(V)^0.
\]
Let $Z_G \simeq F^\times$ and $Z_{\tilde{H}} \simeq F^\times$ be the centers of $G$ and $\tilde{H}$, respectively. 
Put $G_\infty = G(F_\infty)$.
Let $\fg_\infty$ be the complexified Lie algebra of $G_\infty$ and $K_{G_\infty}$ the standard maximal compact modulo center subgroup of $G_\infty$.
As in \S \ref{ss:weil-sp-o}, we define a Weil representation $\Omega_\psi$ of $G(\A) \times \tilde{H}(\A)$ on the Schwartz space $\cS(\A_K^2 \times \A^\times)$ and a $(\fg_\infty, K_{G_\infty}) \times G(\A_f) \times \tilde{H}(\A)$-invariant subspace $S(\A_K^2 \times \A^\times)$ of $\cS(\A_K^2 \times \A^\times)$.
Note that $\Omega_\psi$ has trivial central character on $\{ (z \cdot \id_W, z \cdot \id_V) \mid z \in \A^\times \}$.
For $\varphi \in S(\A_K^2 \times \A^\times)$, we may form a theta function on $G(\A) \times \tilde{H}(\A)$:
\[
 \Theta_\varphi(g, h) = \sum_{x \in K^2} \sum_{t \in F^\times} \Omega_\psi(g,h) \varphi(x,t).
\]

Let $\chi$ be a character of $\A_K^\times/K^\times$, which is regarded as an automorphic character of $H(\A) = \A_K^\times$.
For $\varphi \in S(\A_K^2 \times \A^\times)$, we may define an automorphic form $\theta_\varphi(\chi)$ on $G(\A)$ by 
\[
 \theta_\varphi(\chi)(g) = \int_{H(F) \backslash H(\A)} \Theta_\varphi(g, h) \chi(h) \, dh,
\]
where $dh$ is the Tamagawa measure on $H(\A)$.
Let $\Theta(\chi)$ be the automorphic representation of $G(\A)$ spanned by $\theta_\varphi(\chi)$ for all $\varphi \in S(\A_K^2 \times \A^\times)$.
(Strictly speaking, $\Theta(\chi)$ is only a $(\fg_\infty, K_{G_\infty}) \times G(\A_f)$-module.)
Then $\Theta(\chi)$ has central character $\chi|_{\A^\times}$.

\begin{prop}
\begin{enumerate}
\item
The global theta lift $\Theta(\chi)$ is nonzero and semisimple.
\item
Let $\pi$ be an irreducible component of $\Theta(\chi)$.
Then we have
\[
 \pi \simeq \otimes_v \theta(\sigma_v),
\]
where $\sigma_v$ is some irreducible component of $\Ind^{\tilde{H}_v}_{H_v}(\chi_v)$ and $\theta(\sigma_v)$ is the local theta lift of $\sigma_v$ (see Proposition \ref{p:howe-duality-Sp4} below).
\end{enumerate}
\end{prop}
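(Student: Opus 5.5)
We treat the two assertions separately. For (1), the natural route is to compute a Fourier coefficient, i.e.\ the Whittaker-type functional $W_T$ of \eqref{eq:a_f(T)}, of $\theta_\varphi(\chi)$ along the Siegel unipotent radical. Unfolding the theta series against the integral over $\Sym_2(F)\backslash\Sym_2(\A)$ kills all $x\in K^2$ except those with $t\,q(x)=T$. For $T$ positive definite with $\det(T)$ a norm from $K$, up to squares, the set $\{(x,t): t\,q(x)=T\}$ is a single $H(F)$-orbit with trivial stabilizer, since $x_1,x_2$ are then $K$-linearly dependent only if $\det T=0$; more precisely, the stabilizer in $H=K^\times$ of $x$ is trivial. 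Unfolding the $H(F)\backslash H(\A)$ integral then gives a factorizable expression
\[
 \int_{H(\A)} \Omega_\psi(g,h)\varphi(x_0,t_0)\,\chi(h)\,dh.
\]
Choosing $\varphi=\otimes_v\varphi_v$ with each local integral nonzero (take $\varphi_v$ supported near the orbit at finite places, and a Gaussian times a suitable polynomial at $\infty$), we obtain a nonzero coefficient, hence $\Theta(\chi)\neq0$. Absolute convergence is automatic, because $H(F)\backslash H(\A)$ has finite volume modulo the center and $\Omega_\psi$ has trivial central character on the diagonal.

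Semisimplicity follows from cuspidality, or more generally square-integrability. We would use the Rallis tower: the theta lift of $\chi$ to $\GSp_2=\GL_2$ (the Hecke--Jacquet--Langlands lift, nonzero and cuspidal unless $\chi$ factors through the norm), and then the tower property. If the lift to $\GL_2$ is cuspidal and nonzero, the lift to $\GSp_4$ is non-cuspidal but still square integrable in the stable range. Alternatively, we would argue directly via the Rallis inner product/regularized Siegel--Weil formula that $\theta_\varphi(\chi)$ is square integrable, since the pair $(\GO_2,\GSp_4)$ lies in Weil's convergent range with $m=2$, $n=2$ and $m-r>n+1$ failing. Here one must note that $\theta_\varphi(\chi)$ has moderate growth and that its constant terms along parabolics are expressed via lower-rank lifts, giving $L^2$-integrability. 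An automorphic representation generated by square-integrable forms, stable under the action, is a subrepresentation of $L^2$ and hence semisimple. This step, verifying square integrability by constant-term computations, is the main obstacle. I would first try the standard constant-term formula for theta lifts, $\theta(\chi)_P=$ (lift to smaller group) $+\,\varphi(0)$-terms, with $\chi$ nontrivial on the anisotropic torus so the boundary terms vanish.

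For (2), take an irreducible component $\pi$. The map $\varphi\otimes\chi\mapsto\theta_\varphi(\chi)$, composed with projection to $\pi$, gives a nonzero $G(\A)\times H(\A)$-equivariant map $\Omega_\psi\otimes\chi\to\pi$. We extend $\chi$ to the $\tilde H(\A)$-representation $\Ind_{H(\A)}^{\tilde H(\A)}\chi$ by integrating over $\tilde H(F)\backslash\tilde H(\A)$ instead, noting that $\theta_\varphi(\chi)$ is an $\O$-period of $\Theta_\varphi$ against a vector of the induced representation. Decomposing $\Ind=\otimes_v\Ind_{H_v}^{\tilde H_v}\chi_v$, we obtain a nonzero $\tilde H(\A)$-invariant pairing $\Omega_\psi\times\sigma\to\pi$ for some irreducible constituent $\sigma=\otimes\sigma_v$. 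Then locally $\Hom(\Omega_{\psi,v},\sigma_v^\vee\otimes\pi_v)\neq0$, so Howe duality (Proposition~\ref{p:howe-duality-Sp4}) gives $\pi_v\simeq\theta(\sigma_v)$, after adjusting the contragredient by the standard MVW involution.
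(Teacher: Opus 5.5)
Your architecture (nonvanishing, then square-integrability $\Rightarrow$ semisimplicity, then Howe duality place by place) matches the paper's, but the inputs differ.

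\textbf{How the paper argues.} The paper obtains nonvanishing and square-integrability modulo the center together from the argument of \cite[Proposition 10.1]{yamana}. It then proves (2) purely locally. The nonzero global map $\Omega_\psi\otimes\chi\to\pi$ yields $\Hom_{G_v\times H_v}(\Omega_{\psi,v}\otimes\chi_v,\pi_v)\neq0$. By Frobenius reciprocity this space equals $\Hom_{G_v\times\tilde{H}_v}(\Omega_{\psi,v}\otimes\Ind^{\tilde{H}_v}_{H_v}(\chi_v),\pi_v)$. Since $\Ind^{\tilde{H}_v}_{H_v}(\chi_v)$ has at most two irreducible summands, one of them, $\sigma_v$, pairs nontrivially, and Proposition~\ref{p:howe-duality-Sp4} gives $\pi_v\simeq\theta(\sigma_v)$.

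\textbf{Your part (2).} Your detour through an automorphic realization of the global induced representation (whose lift is again $\Theta(\chi)$) and its decomposition into $\otimes_v\sigma_v$ is correct but unnecessary. The MVW adjustment is also unnecessary: $\theta(\sigma)$ is defined through $(\Omega_\psi\otimes\sigma)_{\tilde{H}}$, so a nonzero $G_v\times\tilde{H}_v$-map $\Omega_{\psi,v}\otimes\sigma_v\to\pi_v$ gives $\pi_v\simeq\theta(\sigma_v)$ directly. What your route would buy is a self-contained proof of (1) instead of a citation, but as written that proof is not complete.

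\textbf{The gap: square-integrability.} You leave this step as a sketch, and the mechanism you name (boundary terms vanish because $\chi$ is nontrivial on the anisotropic torus) only covers part of the statement. The proposition allows any $\chi$, and by Lemma~\ref{l:W_T}(2) the Siegel constant term is forced to vanish only when $\chi|_{\A_K^1}\neq1$. If $\chi$ factors through the norm, that term survives and the lift to $\SL_2$ is an Eisenstein series, not a cusp form. The constant-term approach can be completed, but you must actually check Langlands' criterion in both cases:
\begin{itemize}
\item If $\chi|_{\A_K^1}=1$: since $V$ is anisotropic, only $x=0$ contributes along the Siegel and Borel radicals. On $\diag(a_1,a_2,a_1^{-1},a_2^{-1})$ this gives $\xi_{K/F}(a_1a_2)|a_1a_2|$ times a constant. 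Relative to $\delta_B^{1/2}=|a_1|^2|a_2|$, the exponent is $-e_1=-\alpha_1-\tfrac12\alpha_2$.
\item If $\chi|_{\A_K^1}\neq1$: only the Klingen constant term survives. It equals $\xi_{K/F}(a)|a|$ times the cuspidal lift to $\SL_2$, so relative to $\delta_Q^{1/2}=|a|^2$ the exponent is $|a|^{-1}$.
\end{itemize}
Both exponents lie in the negative cone, so $\Theta(\chi)$ is square-integrable modulo the center for every $\chi$. Your remarks about Weil's convergent range and the stable range have no bearing on this and should be dropped.

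\textbf{Two smaller slips in (1).} First, for $\det(T)\neq0$ the set $\{(x,t) \mid tq(x)=T\}$ is two $K^\times$-orbits interchanged by $h_0$, not one (Lemma~\ref{l:W_T}). So $W_T$ is a sum of two Euler products, and nonvanishing of each local factor is not enough. Either take a singular $T\neq0$, where there is a single orbit with trivial stabilizer, or choose $\varphi_v$ at one place so that it separates the two orbits. Second, convergence of $\theta_\varphi(\chi)$ does not come from the trivial central character on the diagonal. It comes from the compact support of $\varphi$ in the $\A^\times$-variable, combined with the product formula and the compactness of $\A^\times K^\times\backslash\A_K^\times$.
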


\begin{proof}
It follows from the argument in the proof of \cite[Proposition 10.1]{yamana} that $\Theta(\chi)$ is nonzero and contained in the space of automorphic forms on $G(\A)$ which are square-integrable modulo the center.
Hence the proposition follows from the Howe duality (see Proposition \ref{p:howe-duality-Sp4} below), noting that for any irreducible component $\pi \simeq \otimes_v \pi_v$ of $\Theta(\chi)$, we have
\[
 \Hom_{G_v \times \tilde{H}_v}(\Omega_{\psi,v} \otimes \Ind^{\tilde{H}_v}_{H_v}(\chi_v), \pi_v) = \Hom_{G_v \times H_v}(\Omega_{\psi,v} \otimes \chi_v, \pi_v) \ne 0.
\]
\end{proof}

We also need to consider the local theta lifting.
For the moment, we fix a place $v$ of $F$ and suppress the subscript $v$ from the notation.
Thus $F$ is non-archimedean, or $F = \R$ and $K = \C$.
When $F = \R$, we abuse terminology and mean a $(\fg,K_G)$-module by a representation of $G$, where $\fg$ is the complexified Lie algebra of $G$ and $K_G$ is the standard maximal compact modulo center subgroup of $G$.
For any irreducible representation $\sigma$ of $\tilde{H}$, we write
\[
 \Theta(\sigma) = (\Omega_\psi \otimes \sigma)_{\tilde{H}}
\]
for the big theta lift of $\sigma$, where the subscript $\tilde{H}$ indicates that we take $\tilde{H}$-coinvariants.

\begin{prop}
\label{p:howe-duality-Sp4}
The big theta lift $\Theta(\sigma)$ is a nonzero representation of $G$ of finite length and has a unique irreducible quotient $\theta(\sigma)$.
In particular, $\theta(\sigma)$ is the unique irreducible representation of $G$ such that
\[
 \Hom_{G \times \tilde{H}}(\Omega_\psi \otimes \sigma, \theta(\sigma)) \ne 0.
\]
\end{prop}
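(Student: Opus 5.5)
This is the Howe duality statement for the dual pair $(\GSp(W),\GO(V))$ with $\dim V = 2$ and $\dim W = 4$. I will reduce it to the isometry case, where the needed results are known, and then pass to similitudes in the standard way (Roberts, Gan--Takeda).

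\textbf{Step 1 (isometry groups).} For the pair $(\Sp(W),\O(V))$ and an irreducible representation $\sigma_1$ of $\O(V)$, I will invoke the following known results:
\begin{itemize}
\item Kudla's theorem that the big theta lift $\Theta(\sigma_1)$ has finite length.
\item Howe duality, proved by Waldspurger for $p\ne 2$, by Gan--Takeda in general for non-archimedean $F$, and by Howe in the archimedean case. It says that if $\Theta(\sigma_1)\neq 0$, then $\Theta(\sigma_1)$ has a unique irreducible quotient.
\end{itemize}
Nonvanishing will come from the stable range and persistence. Since $\dim V = 2 \le \frac12\dim W = 2$, every $\sigma_1$ occurs. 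Concretely, $\sigma_1$ lifts already to $\Sp_2$ by the Shimizu--Jacquet--Langlands / Waldspurger theory for $\dim V = 2$, and by the Kudla persistence principle the lift to $\Sp_4$ is then nonzero. Alternatively, one can compute the $\O(V)$-coinvariants directly using the Schrödinger model on $\cS(K^2)$: the lift is a quotient of a degenerate principal series induced from the Siegel parabolic, which is nonzero.

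\textbf{Step 2 (similitudes).} Here $\tilde H=\GO(V)$ with $\dim V$ even. I will use the induced model
\[
\Omega_\psi=\Ind_{R}^{G\times\tilde H}\omega_\psi, \qquad R=\{(g,h)\mid \nu(g)=\nu(h)\},
\]
which is compactly induced from $R$; the model on $\cS(K^2\times F^\times)$ is exactly this. The image $\nu(\GO(V))=\N_{K/F}(K^\times)$ has finite index in $F^\times$. Hence $\Hom_{G\times\tilde H}(\Omega_\psi\otimes\sigma,\pi)$ can be analysed by restricting $\sigma$ to $\tilde H^+=\tilde H$ and $\pi$ to $G^+=\{g\mid \nu(g)\in\nu(\tilde H)\}$. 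Since $\tilde H$ acts with $\nu$ surjecting onto $\nu(\tilde H)$, this is Roberts' framework, with the Gan--Takeda refinement for general $p$.

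Next I will invoke Roberts' lemma that $\sigma|_{\O(V)}$ is multiplicity free, and then decompose into $\O(V)$-constituents. This shows that $\Theta(\sigma)$ is $\Ind_{G^+}^{G}$ of an extension of the isometry big theta lifts. Finite length and nonvanishing then follow from Step 1. Uniqueness of the irreducible quotient follows because Howe duality for isometries, combined with the requirement that the irreducible quotients match compatibly on the $\O(V)$-orbit of constituents, pins down the quotient; Gan--Takeda, Prop.\ 2.3 and its proof, give this directly.

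\textbf{Step 3 (the ``in particular'').} This is formal. Any nonzero map $\Omega_\psi\otimes\sigma\to\pi$ factors through the coinvariants $\Theta(\sigma)$. Its image is an irreducible quotient of $\Theta(\sigma)$, so by uniqueness $\pi\simeq\theta(\sigma)$. Conversely, the quotient map $\Theta(\sigma)\to\theta(\sigma)$ produces a nonzero element of the Hom space.

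\textbf{Main obstacle.} I expect the main obstacle to be Step 2 in the archimedean case with $K=\C$. The group $\GO(2)(\R)$ has $\nu>0$, so $G^+=\GSp_4(\R)^+$, and $\Theta(\sigma)$ is induced from $\GSp_4(\R)^+$ to $\GSp_4(\R)$. One must check that this induced representation still has a unique irreducible quotient, i.e.\ that it is not a direct sum of two non-isomorphic pieces. For this I would use the presence of $h_0$ in $\GO(V)$, which interchanges the two relevant constituents, together with the explicit description via $A_\fq$ and $A_{\fq'}$ from \S\ref{s:cohrep}. Alternatively, I would simply cite the literature treatment of the $(\fg,K_G)$-module theta lift for $\GO(2)\to\GSp_4$ (e.g.\ Harris--Kudla, Roberts).
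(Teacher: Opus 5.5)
Your overall route is the paper's: Howe duality for $(\Sp(W),\O(V))$, Roberts' passage to similitudes, and the stable range for nonvanishing. But you are missing the ingredient the paper uses for the last step, the conservation relation \cite{sun-zhu}, and you mislocate the difficulty.

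\textbf{The step from $G^+$ to $G$.} Put $G^+=\{g\in G\mid \nu(g)\in\nu(\tilde H)\}$. Whenever $K$ is a field, $\nu(\tilde H)=\N_{K/F}(K^\times)$ has index $2$ in $F^\times$. This covers the inert and ramified $p$-adic cases just as much as $K=\C$. By Frobenius reciprocity, the irreducible $\pi$ with $\Hom_{G\times\tilde H}(\Omega_\psi\otimes\sigma,\pi)\ne 0$ are exactly the constituents of $\Ind_{G^+}^{G}\theta^+(\sigma)$, where $\theta^+(\sigma)$ is the lift to $G^+$ given by Roberts.

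Suppose $\theta^+(\sigma)$ were isomorphic to its conjugate by some $g$ with $a=\nu(g)\notin\N_{K/F}(K^\times)$. Then this induced representation would be $\pi\oplus(\pi\otimes(\xi_{K/F}\circ\nu))$ with $\pi\not\simeq\pi\otimes(\xi_{K/F}\circ\nu)$, and the uniqueness asserted in the proposition would fail. Your sentence ``Howe duality for isometries \dots\ pins down the quotient'' only gives uniqueness on $G^+$; it does not address this.

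What rules it out is a dichotomy. The constituents of $\theta^+(\sigma)^g|_{\Sp(W)}$ are theta lifts from $\O(aV)$, where $aV=(K,a\N_{K/F})$ has the same dimension and discriminant as $V$ but is not isometric to it. The conservation relation forbids an irreducible representation of $\Sp(W)$ from lifting to both $\O(V)$ and $\O(aV)$. For $p$-adic $F$, the first-occurrence dimensions in these two towers add up to $2\dim W+4=12$, whereas lifting to both would make the sum at most $4$. In the archimedean case this specializes to your holomorphic versus antiholomorphic ($A_\fq$ versus $A_{\fq'}$) observation. However, it is needed for arbitrary $\sigma$, not only the one in Lemma~\ref{l:theta-real}. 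Also, $h_0$ has similitude $1$ and plays no role in this step.

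\textbf{Nonvanishing.} It is not true that every $\sigma_1$ already lifts to $\Sp_2$. Take $\chi=\eta\circ\N_{K/F}$ for a character $\eta$ of $F^\times$. For one of the two extensions $\sigma$ of $\chi$ to $\tilde H$, $\sigma|_{\O(V)}$ is the sign character $\det$ of $\O(V)$. This character first occurs at $\dim W=4$: its partner, the trivial character, occurs at $\dim W=0$, and the two first occurrences add up to $2\dim V=4$. So persistence from $\Sp_2$ does not apply. Your remark about the degenerate principal series only bounds $\Theta(\sigma_1)$ from above, so it does not give nonvanishing either. The correct justification is the one you state first and the paper uses: $(\Sp(W),\O(V))$ is in the stable range, since $W$ has a $2$-dimensional isotropic subspace and $\dim V=2$. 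Hence every $\sigma_1$ has a nonzero lift by \cite{li-invent}.
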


\begin{proof}
Put $G_1 = \Sp(W)$ and $\tilde{H}_1 = \O(V)$.
By the result of Roberts \cite{roberts}, the Howe duality \cite{howe-jams,waldspurger-howe-duality,gt16} for $(G_1, \tilde{H}_1)$ and the conservation relation \cite{sun-zhu} imply the Howe duality for $(G, \tilde{H})$.
Moreover, since $(G_1, \tilde{H}_1)$ is in the stable range \cite{li-invent}, $\Theta(\sigma)$ is always nonzero.
This yields the assertion.
\end{proof}

\begin{rem}
\label{r:big-theta-Sp4}
If $\sigma$ is unitary, then it follows from \cite[Theorem A]{loke-ma}, \cite[Corollary 7.3]{chen-zou}, combined with the argument in the proof of \cite[Lemma 2.2]{gt11}, that $\Theta(\sigma)$ is irreducible.
\end{rem}

We describe $\theta(\sigma)$ explicitly in the unramified case and the real case.
First suppose that $F$ is non-archimedean and non-dyadic, $K$ is split or inert, and $\psi$ is of order zero.
Let $\fo_F$ and $\fo_K$ be the rings of integers of $F$ and $K$, respectively.
Let $K_G = \GSp_4(\fo_F)$ and $K_{\tilde{H}} = \fo_K^\times \rtimes \mu_2$ be the standard maximal compact subgroups of $G = \GSp_4(F)$ and $\tilde{H} = K^\times \rtimes \mu_2$, respectively.
Let $T$ be the maximal torus of $G$ consisting of diagonal matrices and $B_G$ the standard Borel subgroup of $G$ containing $T$.

\begin{lem}
\label{l:theta-unram}
Let $\chi$ be a unitary unramified character of $K^\times$.
Let $\chi_1$ and $\chi_2$ be unitary unramified characters of $F^\times$ such that
\begin{itemize}
\item $\chi = \chi_1 \otimes \chi_2$ if $K$ is split;
\item $\chi = \chi_1 \circ \N_{K/F}$ and $\chi_2 = \chi_1 \xi_{K/F}$ if $K$ is inert.
\end{itemize}
Let $\sigma$ be an irreducible component of $\Ind^{\tilde{H}}_H(\chi)$ and assume that $\theta(\sigma)$ has a nonzero $K_G$-fixed vector, which is the case if and only if $\sigma$ has a nonzero $K_{\tilde{H}}$-fixed vector.
Then $\theta(\sigma)$ is the irreducible unramified component of $\Ind^G_{B_G}(\chi_T)$, where $\chi_T$ is the pullback of the character 
\[
 \xi_{K/F} |\cdot|^{1/2} \otimes \chi_1 \otimes |\cdot|^{-1/2} \otimes \chi_2
\]
via the natural embedding $T \hookrightarrow (F^\times)^4$.
In particular, $\theta(\sigma)$ is of type
\[
\begin{cases}
 \text{IIIb} & \text{if $K$ is split and $\chi_1 \ne \chi_2$;} \\
 \text{VId} & \text{if $K$ is split and $\chi_1 = \chi_2$;} \\
 \text{Vd} & \text{if $K$ is inert}
\end{cases} 
\]
in the list of Roberts-Schmidt \cite{roberts-schmidt}.
\end{lem}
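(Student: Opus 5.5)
The plan is to compute the $K_G$-fixed part of the big theta lift through its Jacquet module with respect to the Siegel parabolic $P = M N$, with $M = \{ \boldsymbol{m}(a) \cdot \diag(1,1,\nu,\nu) \}$. Kudla's filtration of the Jacquet module $(\Omega_\psi)_N$ with respect to the Siegel parabolic gives a two-step filtration. The closed piece is $\cS(K^2_{\mathrm{reg}} \times F^\times)$ restricted to the locus where $q(x)=0$. The quotient piece is the evaluation at $x=0$, which contributes the character $\boldsymbol{m}(a)\mapsto \xi_{K/F}(\det a)|\det a|$ with $\tilde H$ acting through $\nu$. In the stable-range situation ($\dim V = 2 < 4$), the nonzero $\tilde H$-equivariant contributions to $\Hom_{\tilde H}((\Omega_\psi)_N, \sigma)$ come from the orbit of rank-one isotropic vectors. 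This is nonempty only for $K$ split; for $K$ inert the relevant orbit is empty and one must pass to the Klingen/Borel Jacquet module instead. So the first step is to avoid this case split and work directly with the Borel. I will compute the Jacquet module of $\Omega_\psi$ along the unipotent radical of $B_G$ in stages: first along the Klingen unipotent radical, which realizes $\Theta(\sigma)_{N_Q}$ as an extension involving the theta lift for the smaller pair $(\GSp_2, \GO(V))$, i.e.\ $(\GL_2, \GO(V))$. Then I use the known Jacquet–Langlands–Shimizu unramified description: the lift of $\chi$ to $\GL_2$ is the principal series $\pi(\chi_1,\chi_2)$, in the inert case $\pi(\chi_1, \chi_1\xi_{K/F})$.

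From this I get a nonzero $G$-map $\Theta(\sigma) \to \Ind^G_{B_G}(\chi_T)$ by Frobenius reciprocity. The character picks up $|\cdot|^{1/2}$ twists from the modulus normalization on the $\GL_1$ factor of the Klingen Levi, and $\xi_{K/F}$ from the factor $\xi_{K/F}(\det a)$ in $\omega_\psi(\boldsymbol{m}(a))$. Matching the order of coordinates in $T \hookrightarrow (F^\times)^4$ is a bookkeeping check. Since $\theta(\sigma)$ is unramified by assumption, and the spherical vector of $\Omega_\psi$ (the characteristic function of $\fo_K^2 \times \fo_F^\times$) maps to a nonzero spherical vector, the unramified subquotient of $\Ind^G_{B_G}(\chi_T)$ must equal $\theta(\sigma)$. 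This uses the uniqueness in Proposition \ref{p:howe-duality-Sp4} and the uniqueness of the unramified constituent.

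For the equivalence with $K_{\tilde H}$-fixed vectors, I will use that the spherical Schwartz function is fixed by $K_G \times K_{\tilde H}$, so $\Hom_{\tilde H}$ pairing produces $K_G$-invariants exactly when $\sigma^{K_{\tilde H}} \ne 0$. The converse follows by the standard Howe unramified correspondence argument: the spherical Hecke algebra of $\Omega_\psi$ is generated by the spherical vector.

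Finally, I will read off the Roberts–Schmidt types from the inducing data $\chi_T$. The inducing character has exponents $\pm 1/2$ with $\xi_{K/F}|\cdot|^{\pm1/2}$, and the reducibility points are:
\begin{itemize}
\item IIIb when $\chi_1 \neq \chi_2$: the Klingen-induced piece from $\chi_1 \times \chi_2 \rtimes |\cdot|^{1/2}$-type data, with $\xi_{K/F}$ trivial;
\item VId when $\chi_1 = \chi_2$;
\item Vd when $\xi_{K/F}$ is the unramified quadratic character and $\chi_2 = \chi_1\xi_{K/F}$.
\end{itemize}
The main obstacle is the precise Jacquet module computation and matching normalizations (similitude twist and modulus factors). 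I would first try the split case via Siegel-parabolic orbits, where it is cleanest, and then treat the inert case via the Klingen parabolic.
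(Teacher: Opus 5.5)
Your route is sound in outline, but it is genuinely different from the paper's. The paper does no Jacquet-module analysis at all. It quotes \cite[Proposition A.8]{gi11} and \cite[Proposition 6.5]{gurevich-szpruch} for the fact that $\theta(\sigma)$ is the unramified constituent of $\Ind^G_{B_G}(\xi_{K/F}|\cdot|\otimes\chi_1\chi_2^{-1}\otimes\chi_2|\cdot|^{-1/2})$ in the coordinates $(t_1,t_2,t_0)\mapsto\diag(t_1,t_2,t_0t_1^{-1},t_0t_2^{-1})$. The lemma is then just that statement rewritten in terms of the four diagonal entries.

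You instead re-derive this input, as follows:
\begin{itemize}
\item The top quotient $J^0$ of Kudla's filtration of the Klingen Jacquet module, together with right exactness of $\tilde H$-coinvariants, gives a nonzero map from $\Theta(\sigma)_{N_Q}$ to $\xi_{K/F}|\cdot|^{\pm1}\otimes\theta_{W_0}(\sigma)$.
\item Here $\theta_{W_0}(\sigma)=\pi(\chi_1,\chi_2)$ is the lift to $\GL_2$, which is nonzero because $\sigma$ is spherical.
\item Frobenius reciprocity and the uniqueness of the unramified constituent of a principal series then finish the argument.
\end{itemize}
This is self-contained modulo Kudla's filtration and the $\GL_2$-level correspondence, and it explains where the exponents come from. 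The citation buys brevity and, above all, the similitude normalizations that you defer to ``bookkeeping''. A quick way to fix the twist on $t_0$ is the central character $\chi|_{F^\times}=\xi_{K/F}\chi_1\chi_2$ of $\Theta(\sigma)$.

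Three corrections.

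\textbf{The Siegel-parabolic remark is backwards.} The $x=0$ quotient $\cS(F^\times)$, on which $\tilde H$ acts through $\nu$, does contribute whenever $\sigma$ factors through $\nu$. This is the case for the spherical $\sigma=\chi_1\circ\nu$ when $K$ is inert or $\chi_1=\chi_2$; only for $\chi_1\ne\chi_2$ does it contribute nothing. Since the Klingen quotient works uniformly, just drop that discussion.

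\textbf{The ``if and only if'' needs more than fixedness of $\varphi_0$.} That $\varphi_0$ is $K_G\times K_{\tilde H}$-fixed does not by itself make its image nonzero. What you need is Howe's result that $\Omega_\psi^{K_G}$ is spanned by the $\tilde H$-translates of $\varphi_0$, and $\Omega_\psi^{K_{\tilde H}}$ by its $G$-translates. Combine this with exactness of taking invariants under compact open subgroups; the paper treats this as known.

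\textbf{The Roberts--Schmidt reading should be stated precisely.} In Roberts--Schmidt notation $\chi_T$ gives $\xi_{K/F}|\cdot|\times\chi_1\chi_2^{-1}\rtimes|\cdot|^{-1/2}\chi_2$. This is Weyl-conjugate to:
\begin{itemize}
\item $\chi_1\chi_2^{-1}\times|\cdot|\rtimes|\cdot|^{-1/2}\chi_2$ when $K$ is split and $\chi_1\ne\chi_2$: group III, spherical member IIIb (note $\chi_1\chi_2^{-1}\notin\{1,|\cdot|^{\pm2}\}$ since it is unitary and nontrivial);
\item $|\cdot|\times 1\rtimes|\cdot|^{-1/2}\chi_2$ when $\chi_1=\chi_2$: group VI, spherical member VId;
\item $|\cdot|\xi_{K/F}\times\xi_{K/F}\rtimes|\cdot|^{-1/2}\chi_2$ when $K$ is inert: group V, spherical member Vd.
\end{itemize}
Your description ``$\chi_1\times\chi_2\rtimes|\cdot|^{1/2}$'' is not the right inducing data, although the type conclusions you draw are correct.
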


\begin{proof}
By \cite[Proposition A.8]{gi11}, \cite[Proposition 6.5]{gurevich-szpruch}, $\theta(\sigma)$ is the irreducible unramified component of 
\[
 \Ind^G_{B_G}(\xi_{K/F}|\cdot| \otimes \chi_1 \chi_2^{-1} \otimes \chi_2 |\cdot|^{-1/2}),
\]
where we identify $T$ with $(F^\times)^3$ via the map
\[
 (t_1, t_2, t_0) \mapsto \diag(t_1, t_2, t_0 t_1^{-1}, t_0 t_2^{-1}).
\]
This implies the assertion.
\end{proof}

Next suppose that $F = \R$, $K = \C$, and $\psi(x) = e^{2 \pi \sqrt{-1} x}$.

\begin{lem}
\label{l:theta-real}
Let $\chi$ be a character of $\C^\times$ given by $\chi(z) = \bar{z}/z$ and put $\sigma = \Ind^{\tilde{H}}_H(\chi)$, so that $\sigma$ is irreducible.
Then we have
\[
 \theta(\sigma) = \Ind^{(\fg, K_G)}_{(\fg, K_G^0)}(A_\fq), 
\]
where $A_\fq$ is the cohomological representation as in \S \ref{s:cohrep}.
\end{lem}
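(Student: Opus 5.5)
We identify $\theta(\sigma)$ by locating a distinguished $K_{G_1}$-type in the Fock model and then invoking uniqueness of the Howe quotient.

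\textbf{Reduction to $\Sp_4$.} First pass from $(G,\tilde{H}) = (\GSp_4(\R), \GO(2))$ to the isometry pair $(G_1,\tilde{H}_1) = (\Sp_4(\R), \O(2))$. Since $\GO(V)(\R) = \C^\times \rtimes \langle h_0 \rangle$ and $\nu(\C^\times) = \R_+^\times$, the representation $\theta(\sigma)$ of $G$ is induced from the subgroup $G^+ = \GSp_4(\R)^+$. On $G^+ \times \C^\times$, restriction to $\Sp_4(\R) \times \U(1)$ determines the representation (the center acts trivially, as $\chi|_{\R^\times}=1$). This reduces the claim to identifying $\theta_{\Sp_4}(\chi|_{\U(1)})$, i.e. the theta lift of the character $e^{\sqrt{-1}\vartheta} \mapsto e^{-2\sqrt{-1}\vartheta}$ of $\SO(2)$ (extended to $\O(2)$ via $\sigma$), with $A_\fq$ as a $(\fg_1,K_{G_1})$-module. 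Induction from $K_G^0$ to $K_G$ then matches the induction from $G^+$ to $G$, because $\Ind^{\tilde H}_H$ on the $\GO$ side corresponds to inducing across the two components on the $\GSp$ side. This is Roberts' compatibility, already used in Proposition \ref{p:howe-duality-Sp4}.

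\textbf{Lowest $K$-type.} Next we work in the Fock model of $\omega_\psi$ for $\Sp_4(\R)\times\O(2)$ on polynomials in $x_1,\bar x_1,x_2,\bar x_2$ times the Gaussian. The joint harmonics for $\U(2)\times\SO(2)$ decompose as in Kashiwara--Vergne. The $\SO(2)$-weight $\pm 2$ (say $-2$, according to $\chi(z)=\bar z/z$) harmonics of minimal degree are the degree-$2$ polynomials in $\bar x_1,\bar x_2$ (or in $x_1,x_2$). These span a $\U(2)$-representation $\Sym^2$, twisted by the metaplectic factor $\det^{\dim V/2}=\det$. So the minimal $K_{G_1}$-type of $\theta$ has highest weight $(3,1)$, or its dual, depending on the sign. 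Comparing with Lemma \ref{l:Aq-K-types}, where $\mu(\fq) = (3,1)$, we fix the sign convention that produces $(3,1)$ rather than $(-1,-3)$; the character $\bar z/z$ is chosen precisely to match $\fq$. The conjugate character gives the same induced $\sigma$, so this is consistent. Moreover, the $\fp^-$-action on this harmonic vector vanishes by the degree count, so the vector is holomorphic-type.

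\textbf{Identification.} By Proposition \ref{p:howe-duality-Sp4} and Remark \ref{r:big-theta-Sp4}, $\Theta(\sigma)$ is irreducible and unitary, with lowest $K$-type $(3,1)$ of multiplicity one. Its infinitesimal character can be computed by Przebinda's correspondence of infinitesimal characters: for $\O(2)$ with character of weight $2$, one gets $\rho$-shifted parameter $(1,0)$, which is the trivial infinitesimal character $(2,1)$ of $\Sp_4$, exactly that of $A_\fq$. Salamanca-Riba's theorem, or Vogan's classification of unitary modules with regular integral infinitesimal character via lowest $K$-type, then forces the module to be $A_\fq$. Alternatively, one can check that the $K$-types of $\Theta$ computed from the harmonic decomposition, namely $\Sym^{2a+2}\otimes\det^{b+1}$, agree with Lemma \ref{l:Aq-K-types}. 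The main obstacle is the sign and normalization bookkeeping: showing that $\chi(z)=\bar z/z$ produces $A_\fq$ rather than $A_{\fq'}$ within $\GSp_4(\R)^+$, and that the $\GO$-induction matches $\Ind_{K_G^0}^{K_G}$. I would settle this by explicitly computing the action of $\mathtt{H}_1,\mathtt{H}_2$ on the Gaussian-times-$\bar x_i\bar x_j$ using the formulas of \S\ref{ss:weil-sp-o}.
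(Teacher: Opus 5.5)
Your proposal is correct, and its reduction step is the same as the paper's. Both restrict to $\sigma_1=\sigma|_{\tilde H_1}$, which is irreducible, identify its lift to $\Sp_4(\R)$, and return to $\GSp_4(\R)$ through $\GSp_4(\R)^+$; on the $(\fg,K)$ side this last step is $\Ind^{(\fg,K_G)}_{(\fg,K_G^0)}$.

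The two arguments differ at the key identification $\theta(\sigma_1)=A_\fq$. The paper simply quotes J.-S.~Li's theorem \cite[Theorem 6.2]{li-duke}. You instead rederive it from four ingredients: the lowest $K_{G_1}$-type $(3,1)$, the infinitesimal character $\rho$, unitarity, and Salamanca-Riba's theorem. The infinitesimal character is more easily read off as $\Lambda+\rho$ for the positive system $\Delta^+(\fk)\cup\Delta(\fp^-)$, namely $(3,1)+(-1,-2)=(2,-1)\sim(2,1)$.

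Two small repairs are needed.
\begin{itemize}
\item Unitarity comes from $\theta(\sigma_1)$ being the $\sigma_1$-isotypic part of the Weil representation of the compact pair $(\O(2),\Sp_4(\R))$. It does not come from Remark~\ref{r:big-theta-Sp4}.
\item Once Salamanca-Riba gives some $A_{\fq''}$, you must add that $2\rho(\fu''\cap\fp)=(3,1)$ forces $\Delta(\fu''\cap\fp)=\{2e_1,e_1+e_2\}$ up to $W_K$, so $A_{\fq''}\cong A_\fq$.
\end{itemize}
Your ``alternative'' of matching the lists of $K$-types would not by itself identify the modules.

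The citation buys brevity. Your route is self-contained modulo general structure theory and exhibits the lowest $K$-type vectors that \S\ref{s:schwartz_forms} later uses. It can also be made much lighter. By Lemma~\ref{l:Aq-K-types}, none of the constituents $(3,-1),(2,0),(1,1)$ of $(3,1)\otimes\fp^-$ is a $K$-type of $A_\fq$, so $\fp^-$ kills its lowest $K$-type. Hence $A_\fq$ and $\theta(\sigma_1)$ are both the unique irreducible quotient of $U(\fg_1)\otimes_{U(\fk\oplus\fp^-)}V_{(3,1)}$, and neither Przebinda nor Salamanca-Riba is needed.

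The ``sign bookkeeping'' you flag as the main obstacle is not actually an obstacle, and your explanation of it is off. Since $\Ind^{\tilde H}_H(\chi)=\Ind^{\tilde H}_H(\chi^{-1})$, the sign of $\chi$ cannot decide between $A_\fq$ and $A_{\fq'}$. Indeed, $\bar x_i\bar x_j\varphi^0$ and $x_ix_j\varphi^0$ are interchanged by $h_0$, which commutes with $\Sp_4(\R)$, and for $t>0$ both span the $K_{G_1}$-type $(3,1)$. What distinguishes holomorphic from antiholomorphic is the sign of $t$ in $\omega_{t\psi}$. Even that is immaterial here: conjugation by $\diag(1,1,-1,-1)\in K_G\smallsetminus K_G^0$ interchanges $\fp^+$ and $\fp^-$, so $\Ind^{(\fg,K_G)}_{(\fg,K_G^0)}(A_\fq)\cong\Ind^{(\fg,K_G)}_{(\fg,K_G^0)}(A_{\fq'})$.

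Finally, $\fp^-$ kills $\bar x_i\bar x_j\varphi^0$ because these vectors are harmonic, not because of a degree count. See the $\fp^-$ rows of the table in the proof of Lemma~\ref{l:phi-closed}.
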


\begin{proof}
Put $\sigma_1 = \sigma|_{\tilde{H}_1}$.
Note that $\sigma_1$ is irreducible.
Let $\theta(\sigma_1)$ be the theta lift of $\sigma_1$ to $G_1$.
Then by \cite[Theorem 6.2]{li-duke}, we have
\[
 \theta(\sigma_1) = A_\fq
\]
as representations of $G_1$.
From this, we can deduce the assertion.
\end{proof}

Now we consider the global theta lifting again.
For our purposes, it is more natural to define it with respect to the standard measure on $H(\A) = \A_K^\times$ given as follows.
For each place $v$ of $F$, we define the standard measure $dh_v$ on $K_v^\times$ as follows:
\begin{itemize} 
\item if $v$ is real, then $dh_v = 2 \, dh_v^{\mathrm{Leb}} / |h_v|$, where $dh_v^{\mathrm{Leb}}$ is the Lebesgue measure on $K_v = \C$;
\item if $v$ is finite, then $dh_v$ is the Haar measure on $K_v^\times$ such that $\vol(\fo_{K_v}^\times) = 1$.
\end{itemize}
We define the standard measure $dh^\std$ on $\A_K^\times$ as the product measure $\prod_v dh_v$.
Then the Tamagawa measure on $\A_K^\times$ is given by $C_K^{-1} \cdot dh^\std$ with 
\[
 C_K = |D_K|^{1/2} \cdot \Res_{s=1} \zeta_K(s), 
\]
where $D_K$ is the discriminant of $K$ and $\zeta_K(s)$ is the Dedekind zeta function of $K$.

For $\varphi \in S(\A_K^2 \times \A^\times)$, we write $\theta_\varphi(\chi)^\std$ for the theta lift defined with respect to $dh^\std$, so that $\theta_\varphi(\chi)^\std = C_K \cdot \theta_\varphi(\chi)$.
Then it has a Fourier expansion of the form
\[
 \theta_\varphi(\chi)^\std(g) = \sum_{T \in \Sym_2(F)} W_T(g),
\]
where
\[
 W_T(g) = \int_{\Sym_2(F) \backslash \Sym_2(\A)} \theta_\varphi(\chi)^\std (\boldsymbol{n}(b) g) \overline{\psi(\tr(Tb))} \, db
\]
with the Tamagawa measure $db$ on $\Sym_2(\A)$.
For $x \in K^2 \smallsetminus \{ 0 \}$ and $t \in F^\times$, we may define a function $\cW_{x,t}$ on $G(\A)$ by 
\[
 \cW_{x,t}(g) = \int_{\A_K^\times} \Omega_\psi(g) \varphi(h^{-1} x, \nu(h) t) \chi(h) \, dh^\std.
\]

\begin{lem}
\label{l:W_T}
Let $T \in \Sym_2(F)$.
\begin{enumerate}
\item If $T \ne 0$, then we have $W_T = 0$ unless $T = t_0 q(x_0)$ for some $x_0 \in K^2 \smallsetminus \{ 0 \}$ and $t_0 \in F^\times$, in which case
\[
 W_T =
 \begin{cases}
  \cW_{x_0,t_0} + \cW_{h_0 x_0,t_0} & \text{if $\det(T) \ne 0$;} \\
  \cW_{x_0,t_0} & \text{if $\det(T) = 0$.}
 \end{cases}
\]
\item 
If $T = 0$, then we have $W_T = 0$ unless $\chi|_{\A_K^1} = 1$.
\end{enumerate}
\end{lem}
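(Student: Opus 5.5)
The plan is to unfold the theta integral against the unipotent radical and then sort the resulting sum into $\A_K^\times$-orbits. First I insert the definitions of $\theta_\varphi(\chi)^\std$ and $\Theta_\varphi$ into $W_T$ and use $\Omega_\psi(\boldsymbol{n}(b))\varphi(x,t) = \psi(t\,\tr(q(x)b))\varphi(x,t)$. The $H$-action and the $\boldsymbol{n}(b)$-action commute, and $\nu(h)$ only rescales $t$. Exchanging the finite-type sums with the compact integral over $\Sym_2(F)\backslash\Sym_2(\A)$, whose volume is $1$ for the Tamagawa measure, then gives
\[
 W_T(g) = \int_{K^\times\backslash\A_K^\times} \sum_{\substack{x \in K^2,\ t \in F^\times \\ t\,q(x) = T}} \Omega_\psi(g)\varphi(h^{-1}x,\nu(h)t)\,\chi(h)\,dh^\std .
\]
This uses that $q(h^{-1}x)\nu(h) = q(x)$, so the condition $t\,q(x)=T$ is preserved by $h$. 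Note also that the summand is unchanged when $(x,t)$ is replaced by $(ax, \N(a)^{-1}t)$ for $a \in K^\times$, so the sum is genuinely a sum over $K^\times$-orbits of pairs.

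Next I analyze the set $\{(x,t) : t\,q(x) = T\}$ modulo the action $a\cdot(x,t) = (a^{-1}x, \N(a)t)$ of $K^\times$.

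\textbf{Case $T \ne 0$.} Then $x \ne 0$, so if the set is empty, $W_T = 0$. Otherwise fix a point $(x_0, t_0)$. The key linear-algebra step is to show that $q(x) = q(x_0)\cdot c$ with $c \in F^\times$ forces $x = a x_0$ or $x = a\,\overline{x_0}$ for some $a \in K$. The reason is that the Gram matrix $q(x)$, for the hermitian-type form $\tr_{K/F}(\bar{x}_i x_j)/2$, determines the pair $(x_1, x_2)$ up to $\GO(V) = K^\times \rtimes \langle h_0 \rangle$ acting diagonally.

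I will treat the subcases separately:
\begin{itemize}
\item If $\det T \ne 0$, then $x_1, x_2$ are $F$-linearly independent, so $x_2 = x_1\lambda$ with $\lambda \in K \smallsetminus F$. The matrix $q$ determines $\N(x_1)$, $\N(\lambda)$ and $\tr(\lambda)$, which pins down $\lambda$ up to conjugation. This yields exactly two $K^\times$-orbits, represented by $x_0$ and $h_0 x_0$. They are distinct precisely because $\lambda \notin F$.
\item If $\det T = 0$, then $x_2 = x_1 c$ with $c \in F$, and there is a single orbit, since $h_0 x_0 = \bar{x}_1 x_0'$ lies in the same orbit.
\end{itemize}

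It remains to check that the stabilizer of $(x_0, t_0)$ in $K^\times$ is trivial: $a x_0 = x_0$ forces $a = 1$. So the stabilizer is trivial and each orbit unfolds $K^\times\backslash\A_K^\times$ to $\A_K^\times$. Writing each orbit as $\{(a^{-1}x_0, \N(a)t_0) : a \in K^\times\}$ and substituting $h \mapsto ha$ gives $\cW_{x_0,t_0}$, respectively $\cW_{x_0,t_0} + \cW_{h_0x_0,t_0}$. The result is independent of the chosen representative by the same change of variables. Here I use that $\chi$ is trivial on $K^\times$.

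\textbf{Case $T = 0$.} Now the sum runs over $x = 0$ and all $t$, giving $\sum_{t} \Omega_\psi(g)\varphi(0, \N(h)t)$. This function of $h$ factors through $\N$. Hence the integrand is invariant under $\A_K^1$ (modulo $K^1$), and integrating $\chi$ over the compact group $K^1\backslash\A_K^1$ kills it unless $\chi|_{\A_K^1} = 1$.

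The main obstacle I expect is the orbit classification in the nondegenerate case, specifically confirming that $x_0$ and $h_0 x_0$ give distinct, exhaustive orbits under the twisted action including the $t$-coordinate. A secondary concern is justifying the interchange of sums and integrals, which holds by absolute convergence for Schwartz $\varphi$ and compactness of the quotients.
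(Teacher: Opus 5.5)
Your proposal is correct and follows the paper's proof essentially step for step. Both arguments unfold against $\boldsymbol{n}(b)$ to obtain a sum over $\Xi_T=\{(x,t) : t\,q(x)=T\}$, split $\Xi_T$ into free $K^\times$-orbits (two orbits, represented by $(x_0,t_0)$ and $(h_0x_0,t_0)$, when $\det T\neq 0$, and one orbit otherwise), unfold each orbit to an integral over $\A_K^\times$, and handle $T=0$ through the $\A_K^1$-invariance of the integrand. The paper simply asserts the orbit decomposition, so your justification via $\lambda=x_2/x_1$ is a useful addition; in the degenerate case, however, you should allow $x_1=0$, for instance by writing $x$ as a $K^\times$-multiple of a fixed nonzero vector in $F^2$.
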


\begin{proof}
For brevity, we write $dh = dh^\std$ in this proof.
By definition, we have
\begin{align*}
 W_T(1) & = \int_{\Sym_2(F) \backslash \Sym_2(\A)} \int_{K^\times \backslash \A_K^\times} \sum_{x \in K^2} \sum_{t \in F^\times} \Omega_\psi(\boldsymbol{n}(b), h) \varphi(x,t) \chi(h) \overline{\psi(\tr(Tb))} \, dh \, db \\
 & = \int_{\Sym_2(F) \backslash \Sym_2(\A)} \int_{K^\times \backslash \A_K^\times} \sum_{x \in K^2} \sum_{t \in F^\times} \psi(t \tr(q(x) b)) \varphi(h^{-1} x, \nu(h) t) \chi(h) \overline{\psi(\tr(Tb))} \, dh \, db \\
 & = \int_{K^\times \backslash \A_K^\times} \sum_{(x,t) \in \Xi_T} \varphi(h^{-1} x, \nu(h) t) \chi(h) \, dh,
\end{align*}
where $\Xi_T = \{ (x,t) \in K^2 \times F^\times \mid t q(x) = T \}$.
In particular, we have $W_T(1) = 0$ unless $\Xi_T \ne \varnothing$.
Suppose that $T = t_0 q(x_0)$ for some $x_0 \in K^2$ and $t_0 \in F^\times$.
Define an action of $K^\times$ on $K^2 \times F^\times$ by 
\[
 h \cdot (x,t) = (h x, \nu(h)^{-1} t).
\]
Note that $T \ne 0$ if and only if $x_0 \ne 0$, in which case the stabilizer of $(x_0, t_0)$ in $K^\times$ is trivial.
If $\det(T) \ne 0$, then we have $\Xi_T = K^\times (x_0, t_0) \sqcup K^\times (h_0 x_0, t_0)$, so that 
\begin{align*}
 W_T(1) & = \int_{K^\times \backslash \A_K^\times} \sum_{\gamma \in K^\times} (\varphi(h^{-1} \gamma^{-1} x_0, \nu(\gamma h) t_0) + \varphi(h^{-1} \gamma^{-1} h_0 x_0, \nu(\gamma h) t_0)) \chi(h) \, dh \\
 & = \int_{\A_K^\times} (\varphi(h^{-1} x_0, \nu(h) t_0) + \varphi(h^{-1} h_0 x_0, \nu(h) t_0)) \chi(h) \, dh.
\end{align*}
If $T \ne 0$ and $\det(T) = 0$, then we have $\Xi_T = K^\times (x_0, t_0)$, so that 
\begin{align*}
 W_T(1) & = \int_{K^\times \backslash \A_K^\times} \sum_{\gamma \in K^\times} \varphi(h^{-1} \gamma^{-1} x_0, \nu(\gamma h) t_0) \chi(h) \, dh \\
 & = \int_{\A_K^\times} \varphi(h^{-1} x_0, \nu(h) t_0) \chi(h) \, dh.
\end{align*}
If $T = 0$, then we have $\Xi_T = \{ (0,t) \mid t \in F^\times \}$, so that 
\begin{align*}
 W_T(1) & = \int_{K^\times \backslash \A_K^\times} \sum_{t \in F^\times} \varphi(0, \nu(h) t) \chi(h) \, dh \\ 
 & = \int_{K^\times \A_K^1 \backslash \A_K^\times} \sum_{t \in F^\times} \varphi(0, \nu(h) t) \int_{K^1 \backslash \A_K^1} \chi(h_1 h) \, dh_1 \, dh.
\end{align*}
This completes the proof.
\end{proof}

\subsection{From $\GU(1,1)$ to $\GU(B)$}
\label{ss:theta-U11}

Let $B$ be a totally definite quaternion algebra over $F$ such that $K$ embeds into $B$.
Write $B = K + K \bi$ and $K = F + F \bj$ for some trace zero elements $\bi \in B^\times$ and $\bj \in K^\times$.
Then $E = F + F \bi$ is a totally imaginary quadratic extension of $F$ such that $B = E + E \bj$.
Let $\bV = B$ and $\bW = E^2$ be the hermitian and split skew-hermitian $E$-spaces, respectively, as in \S \ref{ss:weil-u}.
Put
\[
 \bG = \GU(\bW), \quad \bH = \GU(\bV).
\]
Put $\bG_\infty = \bG(F_\infty)$.
Let $\boldsymbol{\fg}_\infty$ be the complexified Lie algebra of $\bG_\infty$ and $K_{\bG_\infty}$ the standard maximal compact modulo center subgroup of $\bG_\infty$.
As in \S \ref{ss:weil-u}, we define a Weil representation $\Omega_\psi$ of $\bG(\A) \times \bH(\A)$ on the Schwartz space $\cS(B(\A) \times \A^\times)$ and a $(\boldsymbol{\fg}_\infty, K_{\bG_\infty}) \times \bG(\A_f) \times \bH(\A)$-invariant subspace $S(B(\A) \times \A^\times)$ of $\cS(B(\A) \times \A^\times)$.
Note that $\Omega_\psi$ has trivial central character on $\{ (z \cdot \id_\bW, z \cdot \id_\bV) \mid z \in \A^\times_E \}$.
For $\varphi \in S(B(\A) \times \A^\times)$, we may form a theta function on $\bG(\A) \times \bH(\A)$:
\[
 \Theta_\varphi(g, h) = \sum_{x \in B} \sum_{t \in F^\times} \Omega_\psi(g,h) \varphi(x,t).
\]

Let $\pi$ be an irreducible cuspidal automorphic representation of $\GL_2(\A)$ with central character $\xi_\pi$ and $\mu$ a character of $\A_E^\times/E^\times$.
Assume that $\xi_\pi \cdot \mu|_{\A^\times} = 1$, so that $\pi \boxtimes \mu$ can be regarded as an irreducible cuspidal automorphic representation of $\bG(\A) = (\GL_2(\A) \times \A_E^\times)/\A^\times$ with central character $\mu^{-1}$.
(Strictly speaking, $\pi \boxtimes \mu$ is only a $(\boldsymbol{\fg}_\infty, K_{\bG_\infty}) \times \bG(\A_f)$-module.)
For $f \in \pi$, we denote by $f^\mu$ an automorphic form on $\bG(\A)$ given by
\[
 f^\mu(g) = \mu(z) f(g')
\]
for $g = [g',z] \in \bG(\A)$ with $(g',z) \in \GL_2(\A) \times \A_E^\times$.
For $\varphi \in S(B(\A) \times \A^\times)$ and $f \in \pi$, we may define an automorphic form $\theta_\varphi(f^\mu)$ on $\bH(\A)$ by 
\[
 \theta_\varphi(f^\mu)(h) = \int_{\bG(F) \backslash \bG(\A)} \Theta_\varphi(g, h) f^\mu(g) \, dg,
\]
where $dg$ is the Tamagawa measure on $\bG(\A)$.
Let $\Theta(\pi \boxtimes \mu)$ be the automorphic representation of $\bH(\A)$ spanned by $\theta_\varphi(f^\mu)$ for all $\varphi \in S(B(\A) \times \A^\times)$ and $f \in \pi$.
Then $\Theta(\pi \boxtimes \mu)$ has central character $\mu^{-1}$.

We also need to consider the local theta lifting. 
For the moment, we fix a place $v$ of $F$ and suppress the subscript $v$ from the notation.
Thus $F$ is non-archimedean, or $F = \R$ and $E = \C$.
Let $\pi$ be an irreducible representation of $\GL_2(F)$ with central character $\xi_\pi$ and $\mu$ a character of $E^\times$.
Assume that $\xi_\pi \cdot \mu|_{F^\times} = 1$, so that $\pi \boxtimes \mu$ can be regarded as an irreducible representation of $\bG$.
We write
\[
 \Theta(\pi \boxtimes \mu) = (\Omega_\psi \otimes (\pi \boxtimes \mu))_{\bG}
\]
for the big theta lift of $\pi \boxtimes \mu$, where the subscript $\bG$ indicates that we take $\bG$-coinvariants.
Let $\pi_E$ be the base change of $\pi$ to $\GL_2(E)$.
We denote by 
\[
 \varepsilon(\pi_E \times \mu)
\]
the value of the local Rankin-Selberg $\varepsilon$-factor $\varepsilon(s, \pi_E \times \mu, \psi \circ \tr_{E/F})$ at $s=\frac{1}{2}$ (which does not depend on $\psi$).
Put 
\[
 \varepsilon(B) = 
 \begin{cases}
  +1 & \text{if $B$ is split;} \\
  -1 & \text{if $B$ is ramified.}
 \end{cases} 
\]

\begin{prop}
\label{p:howe-duality-U11}
The big theta lift $\Theta(\pi \boxtimes \mu)$ is nonzero if and only if
\[
 \varepsilon(\pi_E \times \mu) \cdot \xi_\pi(-1) = \varepsilon(B),
\]
in which case it is a representation of $\bG$ of finite length and has a unique irreducible quotient $\theta(\pi \boxtimes \mu)$.
In particular, $\theta(\pi \boxtimes \mu)$ is the unique irreducible representation of $\bG$ such that 
\[
 \Hom_{\bG \times \bH}(\Omega_\psi \otimes (\pi \boxtimes \mu), \theta(\pi \boxtimes \mu)) \ne 0.
\]
Moreover, we have
\[
 \theta(\pi \boxtimes \mu) = \pi^B \boxtimes \mu, 
\]
where $\pi^B$ is the Jacquet-Langlands transfer of $\pi$ to $B^\times$.
\end{prop}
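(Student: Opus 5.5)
The plan is to reduce to the similitude-free dual pair $(\U(\bW), \U(\bV))$ and then transfer the known results back to similitude groups, in the same way as in the proof of Proposition \ref{p:howe-duality-Sp4}.

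\emph{Step 1: Howe duality and the Roberts transfer.} Put $\bG_1 = \U(\bW) \simeq \U(1,1)$ and $\bH_1 = \U(\bV)$. Here $\bH_1$ is $\U(1,1)$ or the compact-type form, according as $B$ is split or ramified. Howe duality for $(\bG_1, \bH_1)$ is known \cite{howe-jams,waldspurger-howe-duality,gt16}. This is an equal-rank pair, and the image $\nu(\bH)$ is either all of $F^\times$ or its index is controlled. So the argument of Roberts \cite{roberts}, with the similitude extension $\Omega_\psi$ of \cite[\S I.3]{wald85}, passes Howe duality from $(\bG_1,\bH_1)$ to $(\bG,\bH)$. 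Concretely, one decomposes $\pi\boxtimes\mu$ restricted to $\bG_1$ into $L$-packet constituents $\tau$ and compares $\Hom_{\bG\times\bH}$ with $\Hom_{\bG_1\times \bH_1}$ via induction from the subgroup $\{(g,h) : \nu(g)=\nu(h)\}$. From this we get finite length of $\Theta(\pi\boxtimes\mu)$ and a unique irreducible quotient whenever it is nonzero, hence also the uniqueness characterization of $\theta(\pi\boxtimes\mu)$.

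\emph{Step 2: The nonvanishing criterion.} For unitary groups in two variables, the nonvanishing of $\theta$ from $\U(1,1)$ to the two hermitian planes is governed by the epsilon dichotomy of Harris--Kudla--Sweet and Gan--Gross--Prasad. Exactly one of the two Witt towers receives the lift. For the data here (trivial splitting character, $\bW$ skew-hermitian of dimension $2$), the sign condition reads $\varepsilon(\frac12,\tau_E\times\mu,\psi_E)\cdot(\text{central sign})=\varepsilon(B)$. The required bookkeeping is:
\begin{itemize}
\item rewrite the $\bG_1$-data in terms of $\pi\boxtimes\mu$, noting that the base change $\pi_E$ encodes the restriction to $\U(1,1)$ tensored with $\mu$;
\item track the factor $\xi_\pi(-1)$ coming from the Weil constant and the determinant character $\det(\bV)$, using $\xi_{K/F}(u)=\gamma_B$ as in Lemma \ref{l:weil-seesaw};
\item check independence of $\psi$, which holds because $\pi_E\times\mu$ is conjugate-symplectic-type self-dual up to the central condition $\xi_\pi\mu|_{F^\times}=1$.
\end{itemize}
Alternatively, in the spirit of Waldspurger \cite{wald85}, one can view $\GU(\bV)=(B^\times\times E^\times)/F^\times$. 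The theta correspondence $\GL_2\leftrightarrow B^\times$ (the Jacquet--Langlands/Shimizu correspondence via the norm form on $B$) is then twisted by $E^\times$. The existence of a nonzero $E^\times$-equivariant functional, i.e.\ the Tunnell--Saito criterion, yields the condition $\varepsilon(\pi_E\times\mu)\xi_\pi(-1)=\varepsilon(B)$. I will use this second route.

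\emph{Step 3: Identifying the lift.} I will use the seesaw identity obtained by restricting $\Omega_\psi$ on $B\times F^\times$ to $\GL_2\times B^\times$. The similitude Weil representation for $(\GL_2, \GO(B))$, with $\GO(B)^0=(B^\times\times B^\times)/F^\times$, gives Shimizu's lift: $\pi\mapsto \pi^B\boxtimes\pi^B$. Restricting the second $B^\times$ factor to $E^\times$ and integrating against $\mu$ then identifies the $B^\times$-part of $\theta(\pi\boxtimes\mu)$ with $\pi^B$ and the $E^\times$-part with $\mu$. The step forcing nonvanishing is precisely the nonvanishing of $\Hom_{E^\times}(\pi^B,\mu^{-1})$, so Tunnell--Saito gives the dichotomy in Step 2.

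I expect the main obstacle to be the sign normalization: matching the Saito--Tunnell $\varepsilon$ with $\varepsilon(\pi_E\times\mu)\xi_\pi(-1)$ under the specific conventions for $\Omega_\psi$, $\gamma_B$ and the embedding $E\subset B$. I would first check this carefully in the unramified and real cases, then argue uniformly via Tunnell/Saito.
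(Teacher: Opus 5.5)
Your Step~1 is the same mechanism the paper uses. Your Steps~2--3 take a different route from the paper's, and that route has a gap.

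\emph{What the paper does.} It proves the proposition purely by citation. Howe duality for $(\bG,\bH)$ comes from Zhang's unitary version of Roberts' argument \cite{zhang_chong}. That argument is fed by Howe duality for $(\U(\bW),\U(\bV))$ \emph{and} by the Sun--Zhu conservation relation \cite{sun-zhu}. The nonvanishing criterion and the identification $\theta(\pi\boxtimes\mu)=\pi^B\boxtimes\mu$ are quoted from Harris \cite[Theorem 4.1]{harris-jams}.

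\emph{Step 1.} You omit the conservation relation, and your phrase ``its index is controlled'' stands in for exactly that missing input. For example, at a real place with $B=\mathbb{H}$ one has $\nu(\bH)=\R_{>0}\neq\nu(\bG)$. For a discrete series $\pi$, only one of the two constituents of $(\pi\boxtimes\mu)|_{\U(\bW)}$ lifts to the compact group $\U(\bV)$.

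\emph{Steps 2--3.} Instead of citing Harris, you reprove his theorem via the seesaw with $(\GL_2,\GO(B))$ and Tunnell--Saito. This is more self-contained. The sign issue you worry about largely disappears: in the paper's normalization the stated condition is literally the Tunnell--Saito condition for $\Hom_{E^\times}(\pi^B\otimes\mu,\C)\neq 0$ (compare Lemma \ref{l:seesaw-local}).

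\emph{The gap: the seesaw gives less than you use.}
\begin{itemize}
\item The seesaw computes $\Theta(\pi\boxtimes\mu)$ as the coinvariants of $\Theta'(\pi)\otimes\mu$ under the right-multiplication copy of $E^\times$. Here $\Theta'(\pi)$ is the \emph{big} theta lift of $\pi$ to $\GO(B)$.
\item Shimizu's correspondence only identifies the irreducible quotient $\theta'(\pi)$, which is essentially $\pi^B\boxtimes\pi^B$.
\item Right exactness therefore gives one direction. If $\Hom_{E^\times}(\pi^B,\mu^{-1})\neq 0$, then $\pi^B\boxtimes\mu$ is a quotient of $\Theta(\pi\boxtimes\mu)$. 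Together with Step~1 this yields the ``if'' direction and the identification.
\item The ``only if'' direction needs $\Theta'(\pi)=\theta'(\pi)$. This is automatic when $B$ is ramified, since $\GO(B)$ is then compact modulo centre. When $B$ is split it is not part of Shimizu's theorem. For instance, you must still show $\Theta(\pi\boxtimes\mu)=0$ when $\pi$ is a discrete series, $E$ is a field, and $\Hom_{E^\times}(\pi,\mu^{-1})=0$.
\item The natural fix is the dichotomy. In that case the condition holds for the division algebra, so the lift there is nonzero. Conservation then forbids nonzero lifts to both hermitian planes. This is the same input that is missing from your Step~1.
\end{itemize}

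\emph{One-dimensional $\pi$.} Tunnell--Saito is a statement about infinite-dimensional $\pi$ only. For $\pi=\chi\circ\det$ your chain of equivalences breaks down.
\begin{itemize}
\item Put $\eta=(\chi\circ\N_{E/F})\mu$. This character is trivial on $F^\times$, so $\eta^{-1}$ is the Galois conjugate of $\eta$.
\item The functional equation then gives $\varepsilon(\pi_E\times\mu)=\varepsilon(1,\eta)\varepsilon(0,\eta)=\eta(-1)=1$. Also $\xi_\pi(-1)=1$.
\item So the stated condition holds with $B$ split for \emph{every} $\mu$ with $\mu|_{F^\times}=\chi^{-2}$.
\item By contrast, $\Hom_{E^\times}(\chi\circ\det,\mu^{-1})\neq 0$ only for $\mu=\chi^{-1}\circ\N_{E/F}$.
\end{itemize}
One-dimensional $\pi$ therefore need a separate argument. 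The paper itself only ever applies the proposition to generic $\pi$.
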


\begin{proof}
Put $\bG_1 = \U(\bW)$ and $\bH_1 = \U(\bV)$.
By the result of Zhang \cite{zhang_chong}, the Howe duality \cite{howe-jams,waldspurger-howe-duality,minguez,gt16} for $(\bG_1, \bH_1)$ and the conservation relation \cite{sun-zhu} imply the Howe duality for $(\bG, \bH)$.
(Note that $E$ is assumed to be a quadratic field extension of $F$ in \cite{zhang_chong}, but the same argument works when $E = F \oplus F$.)
Moreover, by \cite[Theorem 4.1]{harris-jams}, $\Theta(\pi \boxtimes \mu)$ is nonzero if and only if $\varepsilon(\pi_E \times \mu) \cdot \xi_\pi(-1) = \varepsilon(B)$, in which case $\pi^B$ exists and $\theta(\pi \boxtimes \mu) = \pi^B \boxtimes \mu$.
(Note that the convention in \cite{harris-jams} differs from ours.)
This yields the assertion.
\end{proof}

\begin{rem}
\label{r:big-theta-U11}
If $F$ is non-archimedean and $\pi$ is generic, then it follows from \cite[Theorem 6.1]{chen-zou}, \cite[Theorem 1.7]{fsx}, combined with the argument in the proof of \cite[Lemma 2.2]{gt11}, that $\Theta(\pi \boxtimes \mu)$ is zero or irreducible.
\end{rem}

Now we consider the global theta lifting again.
We assume that 
\[
 \varepsilon_v(\pi_E \times \mu) \cdot \xi_{\pi,v}(-1) = \varepsilon_v(B)
\]
for all places $v$ of $F$, where $\pi_E$ is the base change of $\pi$ to $\GL_2(\A_E)$, and 
\[
 \varepsilon_v(\pi_E \times \mu) = \varepsilon(\pi_{E,v} \times \mu_v), \quad
 \varepsilon_v(B) = \varepsilon(B_v).
\]
Then the Jacquet-Langlands transfer $\pi^B$ of $\pi$ to $B^\times(\A)$ exists.
Moreover, by \cite[Theorem 4.5]{harris-jams} (see also \cite[Theorem 10.1 and Lemma 10.2]{yamana}), we have:

\begin{prop}
\label{p:theta-u11-u2}
The global theta lift $\Theta(\pi \boxtimes \mu)$ is nonzero if and only if $L(\frac{1}{2}, \pi_E \times \mu) \ne 0$, where $L(s, \pi_E \times \mu)$ is the global Rankin-Selberg $L$-function.
If this is the case, then we have
\[
 \Theta(\pi \boxtimes \mu) = \pi^B \boxtimes \mu
\]
as spaces of automorphic forms on $\bH(\A) = (B^\times(\A) \times \A_E^\times)/\A^\times$.
\end{prop}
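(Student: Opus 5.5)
The plan is to derive Proposition \ref{p:theta-u11-u2} from the Rallis inner product formula together with the local results already recorded in Proposition \ref{p:howe-duality-U11}. We reduce to the isometry groups $\bG_1 = \U(\bW)$ and $\bH_1 = \U(\bV)$. Since $\pi \boxtimes \mu$ is cuspidal and $\bV$ is anisotropic of dimension $2$ (as $B$ is a division algebra, or at least $\bV$ is anisotropic at the definite archimedean places), the theta integral converges absolutely. The tower $\U(1,1) \to \U(\bV)$ is the first occurrence in the relevant Witt tower in the sense of Rallis. Hence $\theta_\varphi(f^\mu)$ is cuspidal (trivially so, since $\bH_1$ is anisotropic modulo center), and the Rallis inner product formula applies in its unitary form (Harris, Kudla--Rallis, Li). It reads
\[
 \langle \theta_{\varphi}(f^\mu), \theta_{\varphi}(f^\mu) \rangle \doteq L^S(\tfrac12, \pi_E \times \mu) \cdot \prod_{v \in S} Z_v(\varphi_v, f_v),
\]
where $Z_v$ are the local doubling zeta integrals of matrix coefficients against $\omega_\psi(\varphi_v,\varphi_v)$. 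Here we use that the standard $L$-function of $\pi\boxtimes\mu$ on $\U(1,1)$ with respect to the doubling method is $L(s,\pi_E\times\mu)$ (up to a twist by a splitting character, which is trivial in our normalization).

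The main step, and the expected obstacle, is to show that the local zeta integrals can be made nonzero at every place $v \in S$ exactly under the local hypothesis $\varepsilon_v(\pi_E\times\mu)\xi_{\pi,v}(-1)=\varepsilon_v(B)$. For this, $Z_v$ factors through the big theta lift: $Z_v(\varphi_v,f_v)$ is, up to a nonzero constant, the local Hermitian pairing on $\Theta(\pi_v\boxtimes\mu_v)$. It is not identically zero precisely when $\Theta(\pi_v\boxtimes\mu_v)\ne0$, and that holds by Proposition \ref{p:howe-duality-U11} under our sign hypothesis. I would cite Harris \cite{harris-jams} and the local doubling analysis (Gan--Ichino, Yamana \cite{yamana}) rather than recompute. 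One must also verify that the normalized doubling zeta integral has no pole or zero at $s=0$ for unitary $\pi_v$; Yamana's Lemma 10.2 supplies this.

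Given this, if $L(\tfrac12,\pi_E\times\mu)\neq0$ we choose $\varphi$ and $f$ with nonzero local integrals, so the Petersson norm of $\theta_\varphi(f^\mu)$ is nonzero and $\Theta(\pi\boxtimes\mu)\neq0$. Conversely, if the central value vanishes then every inner product vanishes, so all theta lifts are zero. This uses that the partial $L$-value and the complete value differ by finite nonzero local factors at $s=1/2$, by unitarity and genericity of $\pi_v$.

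Finally, for the identification $\Theta(\pi\boxtimes\mu)=\pi^B\boxtimes\mu$: the lift lies in the space of cusp forms of $\bH(\A)$ with central character $\mu^{-1}$. Every irreducible constituent $\sigma\simeq\otimes\sigma_v$ receives a nonzero $\bG_v\times\bH_v$-map from $\Omega_{\psi,v}\otimes(\pi_v\boxtimes\mu_v)$. Hence $\sigma_v=\theta(\pi_v\boxtimes\mu_v)=\pi^B_v\boxtimes\mu_v$ by Proposition \ref{p:howe-duality-U11}. Multiplicity one for $B^\times$ (strong multiplicity one via Jacquet--Langlands) then forces the image to equal the whole $\pi^B\boxtimes\mu$ space, since the image is nonzero and $\bH(\A)$-stable.
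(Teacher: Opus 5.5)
Your proposal is correct and follows the same route the paper relies on. The paper gives no independent argument for this statement; it cites \cite[Theorem 4.5]{harris-jams} and \cite[Theorem 10.1 and Lemma 10.2]{yamana}, whose proofs go through the same steps you outline: the Rallis inner product formula (Siegel--Weil plus doubling, the computation the paper itself carries out in Proposition \ref{p:rallis-abstract}), nonvanishing of the local zeta integrals exactly when the local theta lifts of Proposition \ref{p:howe-duality-U11} are nonzero, and then local Howe duality together with multiplicity one for $B^\times$ to identify the image with $\pi^B \boxtimes \mu$.

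One point is misattributed. Absolute convergence of $\theta_\varphi(f^\mu)$ comes from the cuspidality of $f^\mu$, not from the anisotropy of $\bV$. The anisotropy of $\bV$ is what makes $\bH_1(F)\backslash\bH_1(\A)$ compact, and hence what makes the inner product and the Siegel--Weil step work.
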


\subsection{A seesaw identity}
\label{ss:seesaw}

As in \S \ref{ss:weil-seesaw}, we consider the seesaw diagram
\[
\begin{tikzcd}
 G \arrow[rd,dash] & \bH \arrow[ld,dash] \\
 \bG \arrow[u,dash] & \tilde{H} \arrow[u,dash]
\end{tikzcd}
\]
and identify $K^2$ with $B$ (as $F$-vector spaces) via the map
\[
 (x_1, x_2) \mapsto x_1 + x_2 \bi
\]
for $x_1, x_2 \in K$.
Put
\begin{align*}
 \cP^{\bG}(\phi, \boldsymbol{\phi}) & = \int_{Z_G(\A) \bG(F) \backslash \bG(\A)} \phi(g) \boldsymbol{\phi}(g) \, dg, \\
 \cP^{H}(\boldsymbol{\phi}', \chi) & = \int_{Z_{\tilde{H}}(\A) H(F) \backslash H(\A)} \boldsymbol{\phi}'(h) \chi(h) \, dh
\end{align*}
(whenever the integrals make sense) for automorphic forms $\phi$, $\boldsymbol{\phi}$, $\boldsymbol{\phi}'$ on $G(\A)$, $\bG(\A)$, $\bH(\A)$, respectively, where $dg$ and $dh$ are the Tamagawa measures on $Z_G(\A) \backslash \bG(\A)$ and $Z_{\tilde{H}}(\A) \backslash H(\A)$, respectively.

\begin{lem}
\label{l:seesaw}
Assume that $\xi_\pi \cdot \chi|_{\A^\times} = \xi_\pi \cdot \mu|_{\A^\times} = 1$.
Then we have
\[
 \cP^{\bG}(\theta_\varphi(\chi), f^{\mu'}) = \cP^H(\theta_\varphi(f^\mu), \chi)
\]
for $\varphi \in S(\A_K^2 \times \A^\times) = S(B(\A) \times \A^\times)$ and $f \in \pi$, where $\mu'$ is a character of $\A_E^\times/E^\times$ given by
\[
 \mu' = \mu \cdot (\xi_{K/F} \circ \N_{E/F}).
\]
\end{lem}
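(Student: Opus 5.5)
The plan is the standard seesaw argument: unfold both sides into the same double integral of the theta kernel over $\bG \times H$, and compare the two Weil representations through Lemma \ref{l:weil-seesaw}.

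\emph{Unfolding the left side.} By definition of $\theta_\varphi(\chi)$,
\[
 \cP^{\bG}(\theta_\varphi(\chi), f^{\mu'}) = \int_{Z_G(\A)\bG(F)\backslash\bG(\A)} \int_{H(F)\backslash H(\A)} \Theta^K_\varphi(\iota(g), h)\,\chi(h)\, f^{\mu'}(g)\, dh\, dg,
\]
where $\Theta^K_\varphi$ is the theta kernel for $\Omega^K_\psi$.

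\emph{Changing kernels.} Lemma \ref{l:weil-seesaw} holds place by place. Taking the product over all places and summing over $(x,t) \in K^2 \times F^\times = B \times F^\times$ gives
\[
 \Theta^K_\varphi(\iota(g),h) = \xi_{K/F}(\N_{E/F}(z))^{-1}\,\Theta^B_\varphi(g,\iota(h))
\]
for $g = [g',z]$. Here $\xi_{K/F}\circ\N_{E/F}$ is a quadratic automorphic character of $\A_E^\times$, trivial on $E^\times$. It is well defined on $\bG(\A)$ because it is trivial on the diagonal $\A^\times$: indeed $\xi_{K/F}(a^2) = 1$ for $a \in \A^\times$. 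Since this character is quadratic, its inverse is itself, so it combines with $f^{\mu'}$ to give
\[
 f^{\mu'}(g)\,\xi_{K/F}(\N_{E/F}(z)) = f^{\mu}(g).
\]

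\emph{Swapping the integrals.} We need to justify interchanging the order of integration. The form $f$ is cuspidal, hence rapidly decreasing, and $\Theta_\varphi$ has moderate growth. Moreover $Z_{\tilde H}(\A)H(F)\backslash H(\A) = \A^\times K^\times\backslash \A_K^\times$ is compact. So after absorbing the center, the double integral converges absolutely.

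For the center, we split $Z_G(\A)\backslash\bG(\A)$ against the $H$-integral. The element $z \in \A^\times$ acts on $\bG$ as the scalar $z\cdot\id_\bW$ and on $H$ as $z\cdot\id_V$. The joint action of $\{(z\,\id_W, z\,\id_V)\}$ on the kernel is trivial. Under the given assumptions, $\chi(z)\,f^{\mu'}(z)$ equals $\chi(z)\,\xi_\pi(z)^{-1}\mu'(z)$, which is trivial, since $\xi_\pi\chi|_{\A^\times} = 1$ and $\mu'|_{\A^\times} = \mu|_{\A^\times}$ (as $\xi_{K/F}(z^2) = 1$). This lets us move the quotient by $Z$ from one group to the other.

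After interchanging, the right side is
\[
 \int_{Z_{\tilde H}(\A)H(F)\backslash H(\A)} \int_{\bG(F)\backslash\bG(\A)} \Theta^B_\varphi(g,\iota(h))\,f^\mu(g)\,dg\;\chi(h)\,dh = \cP^H(\theta_\varphi(f^\mu),\chi),
\]
exactly as required.

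\emph{Main obstacle.} The delicate step is the bookkeeping of the centers. The left side quotients by $Z_G(\A)$ inside $\bG$, while $\theta_\varphi(f^\mu)$ integrates over all of $\bG(F)\backslash\bG(\A)$ and $\theta_\varphi(\chi)$ over all of $H(F)\backslash H(\A)$. One must check that these measures match, since all of them are Tamagawa. I would handle this by unfolding along the exact sequence of centers,
\[
 Z_G(\A)\backslash\bG(\A) \;\text{ versus }\; \bG(\A) = Z(\A)\cdot(\ldots),
\]
and using the triviality of the combined central character established above. The identity $\vol(\A^\times F^\times\backslash\A^\times) = 1$ for the Tamagawa measure (with the standard normalization) then ensures that no constant appears.
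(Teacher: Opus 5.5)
Your proof is the paper's proof. There, one forms the integral $I(\varphi,f)$ of $\Theta^B_\varphi(g,h)f^\mu(g)\chi(h)$ over $Z(\A)(\bG\times H)(F)\backslash(\bG\times H)(\A)$, with $Z=\{(z\cdot\id_W,z\cdot\id_V)\}$ the diagonal center. Integrating over $\bG(F)\backslash\bG(\A)$ first gives $\cP^H(\theta_\varphi(f^\mu),\chi)$. Rewriting the kernel via Lemma \ref{l:weil-seesaw} and integrating over $H(F)\backslash H(\A)$ first gives $\cP^{\bG}(\theta_\varphi(\chi),f^{\mu'})$. Two pieces of your bookkeeping are wrong as written, though neither affects the structure.

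\textbf{Central character check.} This is miscomputed. We have $z\cdot\id_W=\iota([zI_2,1])$, so $f^{\mu'}$ picks up the factor $\xi_\pi(z)$. The combined factor is therefore $\chi(z)\xi_\pi(z)$, which is $1$ directly by the first hypothesis. Your expression $\chi(z)\xi_\pi(z)^{-1}\mu'(z)$ equals $\xi_\pi(z)^{-3}$ under the hypotheses, so it is not trivial in general. The second hypothesis $\xi_\pi\mu|_{\A^\times}=1$ is needed only to make $f^\mu$ and $f^{\mu'}$ well defined on $\bG(\A)$.

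\textbf{Volume identity.} No volume identity is involved; the quotient $\A^\times F^\times\backslash\A^\times$ you cite is a single point. Once the integrand is $Z(\A)$-invariant, note that $Z(\A)$ maps isomorphically onto both $Z_G(\A)$ and $Z_{\tilde H}(\A)$. Fibering $Z(\A)(\bG\times H)(F)\backslash(\bG\times H)(\A)$ over $Z_G(\A)\bG(F)\backslash\bG(\A)$ gives fiber $H(F)\backslash H(\A)$. Fibering it over $Z_{\tilde H}(\A)H(F)\backslash H(\A)$ gives fiber $\bG(F)\backslash\bG(\A)$. Fubini then yields both sides, with exactly the quotient measures used in the definitions of $\cP^{\bG}$ and $\cP^H$.
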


\begin{proof}
We write $\Theta^K_\varphi$ and $\Theta^B_\varphi$ for the theta functions on $G(\A) \times \tilde{H}(\A)$ and $\bG(\A) \times \bH(\A)$, respectively.
Put 
\[
 I(\varphi,f) = \int_{Z(\A) (\bG \times H)(F) \backslash (\bG \times H)(\A)} \Theta^B_\varphi(g,h) f^\mu(g) \chi(h) \, dg \, dh, 
\]
where 
\[
 Z = \{ (z \cdot \id_W, z \cdot \id_V) \mid z \in F^\times \} \subset Z_G \times Z_{\tilde{H}}.
\]
Then we have
\begin{align*}
 I(\varphi,f) & = \int_{Z_{\tilde{H}}(\A) H(F) \backslash H(\A)} \int_{\bG(F) \backslash \bG(\A)} \Theta^B_\varphi(g,h) f^\mu(g) \chi(h) \, dg \,dh \\
 & = \int_{Z_{\tilde{H}}(\A) H(F) \backslash H(\A)} \theta_\varphi(f^\mu)(h) \chi(h) \, dh.
\end{align*}
On the other hand, by Lemma \ref{l:weil-seesaw}, we have
\[
 \Theta^B_\varphi(g,h) = \xi_{K/F}(\N_{E/F}(z)) \cdot \Theta^K_\varphi(g,h)
\]
for $g = [g',z] \in \bG(\A)$ with $(g',z) \in \GL_2(\A) \times \A_E^\times$ and $h \in \tilde{H}(\A)$.
Hence we have
\begin{align*}
 I(\varphi,f) & = \int_{Z_G(\A) \bG(F) \backslash \bG(\A)} \int_{H(F) \backslash H(\A)} \Theta^K_\varphi(g,h) f^{\mu'}(g) \chi(h) \, dh \, dg \\
 & = \int_{Z_G(\A) \bG(F) \backslash \bG(\A)} \theta_\varphi(\chi)(g) f^{\mu'}(g) \, dg.
\end{align*}
This completes the proof.
\end{proof}

Finally, we record an application of the local version of the seesaw identity, which will be used in a sequel to this paper.
We fix a finite place $v$ of $F$ and suppress the subscript $v$ from the notation.
Thus $F$ is a non-archimedean local field of characteristic zero, $K$ and $E$ are \'etale quadratic algebras over $F$, and $B$ is the quaternion algebra over $F$ associated to $K$ and $E$ as above.
Let $\chi$ be a unitary character of $H = K^\times$.
Define a character $\chi'$ of $H$ by $\chi'(x) = \chi(\bar{x})$.
Then $\Ind^{\tilde{H}}_H(\chi)$ is described as follows:
\begin{itemize}
\item if $\chi' \ne \chi$, then $\Ind^{\tilde{H}}_H(\chi)$ is irreducible;
\item if $\chi' = \chi$, then $\Ind^{\tilde{H}}_H(\chi) = \sigma \oplus \sigma'$, where $\sigma$ and $\sigma'$ are the two distinct extensions of $\chi$ to $\tilde{H}$.
\end{itemize}
Let $\pi$ be an irreducible unitary generic representation of $\GL_2(F)$ with central character $\xi_\pi$ and $\mu$ a unitary character of $E^\times$.
Define a character $\mu'$ of $E^\times$ by $\mu' = \mu \cdot (\xi_{K/F} \circ \N_{E/F})$.

\begin{lem}
\label{l:seesaw-local}
Assume that $\xi_\pi \cdot \chi|_{F^\times} = \xi_\pi \cdot \mu|_{F^\times} = 1$.
\begin{enumerate}
\item 
If the condition 
\[
 \varepsilon(\pi_K \times \chi) \cdot \xi_\pi(-1) = \varepsilon(\pi_E \times \mu) \cdot \xi_\pi(-1) = \varepsilon(B) 
\]
holds, then there exists a unique irreducible component $\sigma$ of $\Ind^{\tilde{H}}_H(\chi)$ such that 
\[
 \Hom_{\bG}(\theta(\sigma) \otimes (\pi \boxtimes \mu'), \C) \ne 0.
\]
Moreover, this Hom space is $1$-dimensional.
\item 
If the above condition does not hold, then we have
\[
 \Hom_{\bG}(\theta(\sigma) \otimes (\pi \boxtimes \mu'), \C) = 0
\]
for all irreducible components $\sigma$ of $\Ind^{\tilde{H}}_H(\chi)$.
\end{enumerate}
\end{lem}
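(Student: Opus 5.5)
We prove the lemma with a local seesaw argument: we move the $\bG$-invariant functional over to an $H$-invariant functional on the theta lift from $\bG$ to $\bH$, and then use Waldspurger's local dichotomy (Tunnell–Saito) for toric functionals on $B^\times$.

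\emph{Step 1: the seesaw identity.} For any irreducible $\sigma \subset \Ind^{\tilde{H}}_H(\chi)$, consider
\[
 \Hom_{\bG \times \tilde{H}}(\Omega_\psi \otimes (\pi \boxtimes \mu') \otimes \sigma, \C).
\]
Factor it through the $\tilde{H}$-coinvariants, which are $\Theta(\sigma^\vee)$ up to the twist by $\xi_{K/F} \circ \N_{E/F}$ coming from Lemma \ref{l:weil-seesaw}; this twist is exactly what turns $\mu'$ into $\mu$. The space equals $\Hom_{\bG}(\Theta(\sigma^\vee) \otimes (\pi \boxtimes \mu'), \C)$. By Remark \ref{r:big-theta-Sp4} the big theta lift is irreducible, so this is $\Hom_{\bG}(\theta(\sigma^\vee) \otimes (\pi \boxtimes \mu'),\C)$. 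Alternatively, factor through the $\bG$-coinvariants. By Proposition \ref{p:howe-duality-U11} and Remark \ref{r:big-theta-U11}, these are either zero or $\theta(\pi^\vee \boxtimes \mu^{-1}) = (\pi^\vee)^B \boxtimes \mu^{-1}$, and they are nonzero exactly when $\varepsilon(\pi_E \times \mu)\xi_\pi(-1) = \varepsilon(B)$ (contragredients do not change this condition). The space then becomes
\[
 \Hom_{\tilde{H}}\bigl(((\pi^\vee)^B \boxtimes \mu^{-1})|_{\tilde{H}} \otimes \sigma, \C\bigr).
\]
Replacing $\chi$ by $\chi^{-1}$ and $\pi$ by $\pi^\vee$ throughout, we obtain the statement in the form required (the central character condition ensures compatibility on $F^\times$).

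\emph{Step 2: summing over $\sigma$.} Sum over the components $\sigma$. By Frobenius reciprocity,
\[
 \bigoplus_\sigma \Hom_{\tilde{H}}(\Pi|_{\tilde{H}} \otimes \sigma, \C) = \Hom_{\tilde{H}}(\Pi \otimes \Ind^{\tilde{H}}_H \chi, \C) = \Hom_H(\Pi \otimes \chi, \C).
\]
Here $\Pi = \pi^B \boxtimes \mu$ restricted to $H = K^\times \hookrightarrow \bH$, with $h \mapsto [h,1]$. Since $\mu$ is trivial on that image, this is $\Hom_{K^\times}(\pi^B \otimes \chi, \C)$. By Tunnell–Saito, this space is one-dimensional if $\varepsilon(\pi_K \times \chi)\xi_\pi(-1) = \varepsilon(B)$ and zero otherwise. (One must check the normalization $\varepsilon(\pi_K\times\chi)$ versus $\varepsilon(1/2,\pi_K\otimes\chi)$ and the $\xi_\pi(-1)$, $\chi|_{F^\times}$ conventions.)

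\emph{Step 3: conclusion.} The total dimension over all $\sigma$ is therefore $1$ if both sign conditions hold and $0$ otherwise. When both hold, exactly one $\sigma$ contributes, with a one-dimensional Hom space; otherwise every Hom space vanishes. This gives both assertions, since $\theta(\sigma)$ and $\theta(\sigma^\vee)$ correspond bijectively as $\sigma$ varies (unitarity gives $\sigma^\vee = \bar\sigma$).

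\emph{Main obstacle.} The delicate part is Step 1. Identifying the $\tilde{H}$-coinvariants with $\theta(\sigma)$ uses irreducibility of the big theta lift (Remark \ref{r:big-theta-Sp4}), and identifying the $\bG$-coinvariants uses Remark \ref{r:big-theta-U11}, which requires $\pi$ generic. We also need to keep track of contragredients and of the character twist $\xi_{K/F}\circ\N_{E/F}$ when passing between $\Omega^K_\psi$ and $\Omega^B_\psi$; the computation from Lemma \ref{l:weil-seesaw} handles the twist. A further point is that the vanishing case of the $\bG$-side must be treated separately. When $\Theta(\pi\boxtimes\mu)=0$ the Hom space is zero for every $\sigma$, which is consistent with the second assertion.
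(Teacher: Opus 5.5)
Your argument is the paper's. It uses the seesaw identity from Lemma \ref{l:weil-seesaw}, which converts $\mu'$ into $\mu$. It then uses the two big theta lifts, which are irreducible by Remarks \ref{r:big-theta-Sp4} and \ref{r:big-theta-U11}, with the $\bG$-side vanishing unless $\varepsilon(\pi_E\times\mu)\xi_\pi(-1)=\varepsilon(B)$. Finally it sums over $\sigma$ by Frobenius reciprocity and applies Tunnell--Saito to $\Hom_{K^\times}(\pi^B\otimes\chi,\C)$.

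What needs repair is the contragredient bookkeeping in Step 1, which comes from using the other convention for big theta lifts. In this paper $\Theta(\sigma)$ is \emph{defined} as $(\Omega_\psi\otimes\sigma)_{\tilde{H}}$, and $\Theta(\pi\boxtimes\mu)$ as $(\Omega_\psi\otimes(\pi\boxtimes\mu))_{\bG}$. So the $\tilde{H}$-coinvariants are $\Theta(\sigma)=\theta(\sigma)$. The $\bG$-coinvariants are $\Theta(\pi\boxtimes\mu)$, which by Proposition \ref{p:howe-duality-U11} is $0$ or $\pi^B\boxtimes\mu$. The twist $\xi_{K/F}\circ\N_{E/F}$ enters only in the middle isomorphism between $\Omega^K_\psi\otimes(\pi\boxtimes\mu')$ and $\Omega^B_\psi\otimes(\pi\boxtimes\mu)$, not in the coinvariants.

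Two of your steps fail as written:
\begin{itemize}
\item Your identification of the $\bG$-coinvariants with $(\pi^\vee)^B\boxtimes\mu^{-1}$ is wrong. That representation is in general not isomorphic to $\pi^B\boxtimes\mu$: the subgroup $\{[1,z]\}$ acts on it by $\mu^{-1}$ rather than $\mu$.
\item The fix ``replace $\chi$ by $\chi^{-1}$ and $\pi$ by $\pi^\vee$ throughout'' is not a legitimate move. The representation $\pi$ also occurs in the Hom space $\Hom_{\bG}(\theta(\sigma)\otimes(\pi\boxtimes\mu'),\C)$ whose dimension you are computing, so replacing it changes the statement.
\end{itemize}
For the same reason, no bijection between $\theta(\sigma)$ and $\theta(\sigma^\vee)$ is needed.

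With the paper's normalization there are no contragredients anywhere. In the nonvanishing case you get directly $\Hom_{\bG}(\theta(\sigma)\otimes(\pi\boxtimes\mu'),\C)\simeq\Hom_{\tilde{H}}((\pi^B\boxtimes\mu)\otimes\sigma,\C)$. Your Steps 2--3, which already use $\Pi=\pi^B\boxtimes\mu$, then go through verbatim.
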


\begin{proof}
Recall that $\Theta(\sigma) = (\Omega^K_\psi \otimes \sigma)_{\tilde{H}}$ and $\Theta(\pi \boxtimes \mu) = (\Omega^B_\psi \otimes (\pi \boxtimes \mu))_{\bG}$.
By Lemma \ref{l:weil-seesaw}, we have
\begin{align*}
 \Hom_{\bG}(\Theta(\sigma) \otimes (\pi \boxtimes \mu'), \C) 
 & \simeq \Hom_{\bG \times \tilde{H}}(\Omega^K_\psi \otimes (\pi \boxtimes \mu') \otimes \sigma, \C) \\
 & \simeq \Hom_{\bG \times \tilde{H}}(\Omega^B_\psi \otimes (\pi \boxtimes \mu) \otimes \sigma, \C) \\
 & \simeq \Hom_{\tilde{H}}(\Theta(\pi \boxtimes \mu) \otimes \sigma, \C).
\end{align*}
Also, by Propositions \ref{p:howe-duality-Sp4}, \ref{p:howe-duality-U11} and Remarks \ref{r:big-theta-Sp4}, \ref{r:big-theta-U11}, the big theta lifts are given as follows:
\begin{itemize}
\item $\Theta(\sigma) \ne 0$ and $\Theta(\sigma) = \theta(\sigma)$;
\item $\Theta(\pi \boxtimes \mu) \ne 0$ if and only if $\varepsilon(\pi_E \times \mu) \cdot \xi_\pi(-1) = \varepsilon(B)$, in which case $\Theta(\pi \boxtimes \mu) = \pi^B \boxtimes \mu$.
\end{itemize}
From this, we deduce that
\[
 \Hom_{\bG}(\theta(\sigma) \otimes (\pi \boxtimes \mu'), \C) = 0
\]
unless $\varepsilon(\pi_E \times \mu) \cdot \xi_\pi(-1) = \varepsilon(B)$, in which case 
\[
 \Hom_{\bG}(\theta(\sigma) \otimes (\pi \boxtimes \mu'), \C) \simeq
 \Hom_{\tilde{H}}((\pi^B \boxtimes \mu) \otimes \sigma, \C).
\]
Hence we may assume that $\varepsilon(\pi_E \times \mu) \cdot \xi_\pi(-1) = \varepsilon(B)$, so that 
\[
 \bigoplus_\sigma  \Hom_{\bG}(\theta(\sigma) \otimes (\pi \boxtimes \mu'), \C) \simeq 
 \Hom_{\tilde{H}}((\pi^B \boxtimes \mu) \otimes \Ind^{\tilde{H}}_H(\chi), \C),
\]
where $\sigma$ runs over irreducible components of $\Ind^{\tilde{H}}_H(\chi)$.
Then we have
\begin{align*}
 \Hom_{\tilde{H}}((\pi^B \boxtimes \mu) \otimes \Ind^{\tilde{H}}_H(\chi), \C)
 & \simeq \Hom_{\tilde{H}}(\pi^B \boxtimes \mu, \Ind^{\tilde{H}}_H(\chi^{-1})) \\
 & \simeq \Hom_H(\pi^B \boxtimes \mu, \chi^{-1}) \\
 & \simeq \Hom_H((\pi^B \boxtimes \mu) \otimes \chi, \C) \\
 & \simeq \Hom_{K^\times}(\pi^B \otimes \chi, \C).
\end{align*}
By the result of Tunnell-Saito \cite{tunnell, saito}, the right-hand side is nonzero if and only if $\varepsilon(\pi_K \times \chi) \cdot \xi_\pi(-1) = \varepsilon(B)$, in which case this Hom space is $1$-dimensional.
This yields the assertion.
\end{proof}

\section{Schwartz forms}
\label{s:schwartz_forms}

In this section, we construct a Schwartz form \`a la Kudla-Millson \cite{km1} for the dual pair introduced in \S \ref{ss:weil-sp-o}.

\subsection{Setup}
\label{ss:schwartz-setup}

Put $G_1 = \Sp_4(\R)$.
Let $K_{G_1} \simeq \U(2)$ be the standard maximal compact subgroup of $G_1$.
Let $\fg_1$ and $\fk$ be the complexified Lie algebras of $G_1$ and $K_{G_1}$, respectively.
Recall the decomposition $\fg_1 = \fk \oplus \fp^+ \oplus \fp^-$ as in \S \ref{ss:autom-adelic} and the bases
\[
 \mathtt{A}_1, \mathtt{A}_2, \mathtt{A}_3, \mathtt{A}_4, \mathtt{B}_1, \mathtt{B}_2, \mathtt{B}_3, \mathtt{C}_1, \mathtt{C}_2, \mathtt{C}_3, \quad
 \mathtt{H}_1, \mathtt{H}_2, \mathtt{X}, \mathtt{Y}, \quad
 \mathtt{X}_{2,0}, \mathtt{X}_{1,1}, \mathtt{X}_{0,2}, \quad
 \mathtt{X}_{-2,0}, \mathtt{X}_{-1,-1}, \mathtt{X}_{0,-2}
\]
of $\fg_1$, $\fk$, $\fp^+$, $\fp^-$, respectively, as in \S \ref{ss:diff-adelic}.

Fix $t \in \R^\times$ and take an additive character $\psi$ of $\R$ given by $\psi(x) = e^{2 \pi \sqrt{-1} x}$.
Let $\omega$ be the Weil representation of $G_1 \times \C^1$ on $\cS(\C^2)$ relative to $t \psi$ as in \S \ref{ss:weil-sp-o}.
More explicitly, we have
\begin{align*}
 \omega(\boldsymbol{m}(a)) \varphi(x) & = \det(a) \cdot \varphi(xa), \\ 
 \omega(\boldsymbol{n}(b)) \varphi(x) & = e^{2 \pi t \sqrt{-1} \tr(q(x) b)} \cdot \varphi(x), \\
 \omega \mat{}{-I_2}{I_2}{} \varphi(x) & = - t^2 \cdot \int_{\C^2} \varphi(y) e^{- 2 \pi t \sqrt{-1} (\bar{x} {}^t y + x {}^t \bar{y})} \, dy, \\
 \omega(h) \varphi(x) & = \varphi(h^{-1} x)
\end{align*}
for $\varphi \in \cS(\C^2)$, $x \in \C^2$, $a \in \GL_2(\R)$, $b \in \Sym_2(\R)$, and $h \in \C^1$, where $dy$ is four times the Lebesgue measure on $\C^2$.
This induces a representation $\omega$ of $\fg_1$ given by 
\begin{align*}
 \omega(\mathtt{A}_1) & = 1 + x_1 \frac{\partial}{\partial x_1} + \bar{x}_1 \frac{\partial}{\partial \bar{x}_1}, &
 \omega(\mathtt{A}_2) & = x_1 \frac{\partial}{\partial x_2} + \bar{x}_1 \frac{\partial}{\partial \bar{x}_2}, \\
 \omega(\mathtt{A}_3) & = x_2 \frac{\partial}{\partial x_1} + \bar{x}_2 \frac{\partial}{\partial \bar{x}_1}, &
 \omega(\mathtt{A}_4) & = 1 + x_2 \frac{\partial}{\partial x_2} + \bar{x}_2 \frac{\partial}{\partial \bar{x}_2}, \\
 \omega(\mathtt{B}_1) & = 2 \pi t \sqrt{-1} \cdot x_1 \bar{x}_1, &
 \omega(\mathtt{B}_2) & = 2 \pi t \sqrt{-1} \cdot (x_1 \bar{x}_2 + \bar{x}_1 x_2), \\
 \omega(\mathtt{B}_3) & = 2 \pi t \sqrt{-1} \cdot x_2 \bar{x}_2, \\
 \omega(\mathtt{C}_1) & = \frac{\sqrt{-1}}{2 \pi t} \cdot \frac{\partial^2}{\partial x_1 \partial \bar{x}_1}, &
 \omega(\mathtt{C}_2) & = \frac{\sqrt{-1}}{2 \pi t} \cdot \left( \frac{\partial^2}{\partial x_1 \partial \bar{x}_2} + \frac{\partial^2}{\partial \bar{x}_1 \partial x_2} \right), \\
 \omega(\mathtt{C}_3) & = \frac{\sqrt{-1}}{2 \pi t} \cdot \frac{\partial^2}{\partial x_2 \partial \bar{x}_2},
\end{align*}
where we write $x = (x_1, x_2) \in \C^2$.

\subsection{Derivatives of Schwartz functions}
\label{ss:schwartz-derivatives}

From now on, we assume for simplicity that $t > 0$.
Let $\varphi^0 \in \cS(\C^2)$ be the Gaussian given by 
\[
 \varphi^0(x) = e^{- 2 \pi t (x_1 \bar{x}_1 + x_2 \bar{x}_2)}.
\]
Define Schwartz functions $\varphi_0, \varphi'_0, \varphi''_0 \in \cS(\C^2)$ by
\[
 \varphi_0(x) = \bar{x}_1^2 \cdot \varphi^0(x), \quad
 \varphi'_0(x) = \bar{x}_1 \bar{x}_2 \cdot \varphi^0(x), \quad
 \varphi''_0(x) = \bar{x}_2^2 \cdot \varphi^0(x).
\]
We can easily compute the action of $\fg_1$ on these functions. 

\begin{lem}
\label{l:schwartz-table}
Put $\kappa = - 2 \pi t$.
Then we have 
\[
 \omega(*) \varphi_0^\bullet = P_*^\bullet \cdot \varphi^0,
\]
where $P_*^\bullet$ is a polynomial in $x_1, \bar{x}_1, x_2, \bar{x}_2$ given by the following table.
\[
{\renewcommand{\arraystretch}{1.2}
\begin{array}{c|l|l|l}
 * \ \diagdown \ \varphi_0^\bullet & \varphi_0 & \varphi_0' & \varphi_0'' \\ \hline
 \mathtt{A}_1 & 2 \kappa x_1 \bar{x}_1^3 + 3 \bar{x}_1^2 & 2 \kappa x_1 \bar{x}_1^2 \bar{x}_2 + 2 \bar{x}_1 \bar{x}_2 & 2 \kappa x_1 \bar{x}_1 \bar{x}_2^2 + \bar{x}_2^2 \\
 \mathtt{A}_2 & \kappa (x_1 \bar{x}_1^2 \bar{x}_2 + \bar{x}_1^3 x_2) & \kappa (x_1 \bar{x}_1 \bar{x}_2^2 + \bar{x}_1^2 x_2 \bar{x}_2) + \bar{x}_1^2 & \kappa (x_1 \bar{x}_2^3 + \bar{x}_1 x_2 \bar{x}_2^2) + 2 \bar{x}_1 \bar{x}_2 \\
 \mathtt{A}_3 & \kappa (x_1 \bar{x}_1^2 \bar{x}_2 + \bar{x}_1^3 x_2) + 2 \bar{x}_1 \bar{x}_2 & \kappa (x_1 \bar{x}_1 \bar{x}_2^2 + \bar{x}_1^2 x_2 \bar{x}_2) + \bar{x}_2^2 & \kappa (x_1 \bar{x}_2^3 + \bar{x}_1 x_2 \bar{x}_2^2) \\
 \mathtt{A}_4 & 2 \kappa \bar{x}_1^2 x_2 \bar{x}_2 + \bar{x}_1^2 & 2 \kappa \bar{x}_1 x_2 \bar{x}_2^2 + 2 \bar{x}_1 \bar{x}_2 & 2 \kappa x_2 \bar{x}_2^3 + 3 \bar{x}_2^2 \\
 \sqrt{-1} \mathtt{B}_1 & \kappa x_1 \bar{x}_1^3 & \kappa x_1 \bar{x}_1^2 \bar{x}_2 & \kappa x_1 \bar{x}_1 \bar{x}_2^2 \\
 \sqrt{-1} \mathtt{B}_2 & \kappa (x_1 \bar{x}_1^2 \bar{x}_2 + \bar{x}_1^3 x_2) & \kappa (x_1 \bar{x}_1 \bar{x}_2^2 + \bar{x}_1^2 x_2 \bar{x}_2) & \kappa (x_1 \bar{x}_2^3 + \bar{x}_1 x_2 \bar{x}_2^2) \\
 \sqrt{-1} \mathtt{B}_3 & \kappa \bar{x}_1^2 x_2 \bar{x}_2 & \kappa \bar{x}_1 x_2 \bar{x}_2^2 & \kappa x_2 \bar{x}_2^3 \\
 \sqrt{-1} \mathtt{C}_1 & \kappa x_1 \bar{x}_1^3 + 3 \bar{x}_1^2 & \kappa x_1 \bar{x}_1^2 \bar{x}_2 + 2 \bar{x}_1 \bar{x}_2 & \kappa x_1 \bar{x}_1 \bar{x}_2^2 + \bar{x}_2^2 \\
 \sqrt{-1} \mathtt{C}_2 & \kappa (x_1 \bar{x}_1^2 \bar{x}_2 + \bar{x}_1^3 x_2) + 2 \bar{x}_1 \bar{x}_2 & \kappa (x_1 \bar{x}_1 \bar{x}_2^2 + \bar{x}_1^2 x_2 \bar{x}_2) + \bar{x}_1^2 + \bar{x}_2^2 & \kappa (x_1 \bar{x}_2^3 + \bar{x}_1 x_2 \bar{x}_2^2) + 2 \bar{x}_1 \bar{x}_2 \\
 \sqrt{-1} \mathtt{C}_3 & \kappa \bar{x}_1^2 x_2 \bar{x}_2 + \bar{x}_1^2 & \kappa \bar{x}_1 x_2 \bar{x}_2^2 + 2 \bar{x}_1 \bar{x}_2 & \kappa x_2 \bar{x}_2^3 + 3 \bar{x}_2^2
\end{array}
}
\]
\end{lem}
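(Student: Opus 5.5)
This table is a direct computation from the explicit differential operators $\omega(\mathtt{A}_i)$, $\omega(\mathtt{B}_j)$, $\omega(\mathtt{C}_j)$ listed in \S\ref{ss:schwartz-setup}. I would apply each operator to functions of the form $P\cdot\varphi^0$ with $P$ a monomial in $\bar{x}_1,\bar{x}_2$, using the product rule together with the derivatives of the Gaussian:
\[
 \frac{\partial \varphi^0}{\partial x_i} = \kappa \bar{x}_i \varphi^0, \quad \frac{\partial \varphi^0}{\partial \bar{x}_i} = \kappa x_i \varphi^0, \qquad \kappa = -2\pi t .
\]
Since $P$ depends only on $\bar{x}_1,\bar{x}_2$, we have $\partial P/\partial x_i = 0$, which makes many terms vanish.

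The first-order operators $\mathtt{A}_1,\dots,\mathtt{A}_4$ are handled one at a time. For example,
\[
 \omega(\mathtt{A}_1)(\bar{x}_1^2\varphi^0) = \bigl(\bar{x}_1^2 + x_1\kappa\bar{x}_1^3 + 2\bar{x}_1^2 + \kappa\bar{x}_1^3 x_1\bigr)\varphi^0 ,
\]
which gives $2\kappa x_1\bar{x}_1^3 + 3\bar{x}_1^2$. Here the factor $2\kappa$ comes from the $x_1\partial_{x_1}$ and $\bar{x}_1\partial_{\bar{x}_1}$ terms each hitting $\varphi^0$. The entries for $\mathtt{A}_2$, $\mathtt{A}_3$, $\mathtt{A}_4$ and for $\varphi_0'$, $\varphi_0''$ follow in the same way. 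The constant terms come from $\bar{x}_j\partial_{\bar{x}_i}$ acting on the monomial, plus the identity term in $\mathtt{A}_1$ and $\mathtt{A}_4$.

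The $\mathtt{B}_j$ rows are immediate. Multiplying by $\sqrt{-1}$ turns $2\pi t\sqrt{-1}$ into $-2\pi t = \kappa$, so $\sqrt{-1}\mathtt{B}_j$ acts by multiplication by $\kappa x_1\bar{x}_1$, $\kappa(x_1\bar{x}_2+\bar{x}_1x_2)$ and $\kappa x_2\bar{x}_2$ respectively.

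The $\mathtt{C}_j$ rows are the only genuinely second-order computations, and they are where I expect most of the bookkeeping. Multiplying by $\sqrt{-1}$ gives $\sqrt{-1}\cdot\sqrt{-1}/(2\pi t) = 1/\kappa$. I would compute
\[
 \partial_{x_i}\partial_{\bar{x}_j}(P\varphi^0) = \partial_{\bar{x}_j}\bigl(\kappa\bar{x}_i P\varphi^0\bigr) = \kappa\bigl(\delta_{ij}P + \bar{x}_i\,\partial_{\bar{x}_j}P + \kappa\,\bar{x}_i x_j P\bigr)\varphi^0 .
\]
Dividing by $\kappa$ gives $\delta_{ij}P + \bar{x}_i\,\partial_{\bar{x}_j}P + \kappa\,\bar{x}_i x_j P$. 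For instance, $\sqrt{-1}\mathtt{C}_1\varphi_0$ yields $\bar{x}_1^2 + 2\bar{x}_1^2 + \kappa x_1\bar{x}_1^3$, matching the table. For $\mathtt{C}_2$ one sums the $(i,j)=(1,2)$ and $(2,1)$ cases, so the $\delta_{ij}$ term drops out. This gives, for example, $\kappa(x_1\bar{x}_1^2\bar{x}_2 + \bar{x}_1^3x_2) + 2\bar{x}_1\bar{x}_2$ for $\varphi_0$ and $\bar{x}_1^2+\bar{x}_2^2$ as the constant part for $\varphi_0'$.

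The only real obstacle is sign and factor bookkeeping. The main points to check are the identity terms in $\mathtt{A}_1$ and $\mathtt{A}_4$, and the fact that the $\sqrt{-1}$ normalizations turn every coefficient into $\kappa$ or $1/\kappa$. A useful consistency check on the finished table is that the combination $\mathtt{X}_{2,0} = \frac{1}{2}(\mathtt{A}_1 + \sqrt{-1}\mathtt{B}_1 + \sqrt{-1}\mathtt{C}_1)$ gives the entry $2\kappa x_1\bar{x}_1^3 + 3\bar{x}_1^2$ on $\varphi_0$, consistent with the $\mathtt{A}_1$ row.
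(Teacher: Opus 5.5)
Your proposal is correct and follows the paper's route: the paper offers no argument beyond remarking that the action of $\fg_1$ on $\varphi_0, \varphi_0', \varphi_0''$ is an easy direct computation from the explicit formulas for $\omega(\mathtt{A}_i)$, $\omega(\mathtt{B}_j)$, $\omega(\mathtt{C}_j)$. Your Wirtinger-derivative bookkeeping, including the general expression $\delta_{ij}P + \bar{x}_i\,\partial_{\bar{x}_j}P + \kappa\,\bar{x}_i x_j P$ for the $\sqrt{-1}\mathtt{C}$ rows, reproduces every entry of the table; your $\mathtt{X}_{2,0}$ check is equivalent to $\omega(\mathtt{X}_{-2,0})\varphi_0 = 0$, which is exactly what the paper uses later in Lemma \ref{l:phi-closed}.
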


\begin{lem}
\label{l:schwartz-3dim-0}
We have
\begin{align*}
 \omega(\mathtt{H}_1) \varphi_0 & = 3 \varphi_0, &
 \omega(\mathtt{H}_1) \varphi'_0 & = 2 \varphi'_0, &
 \omega(\mathtt{H}_1) \varphi''_0 & = \varphi''_0, \\
 \omega(\mathtt{H}_2) \varphi_0 & = \varphi_0, &
 \omega(\mathtt{H}_2) \varphi'_0 & = 2 \varphi'_0, &
 \omega(\mathtt{H}_2) \varphi''_0 & = 3 \varphi''_0, \\
 \omega(\mathtt{X}) \varphi_0 & = 0, &
 \omega(\mathtt{X}) \varphi'_0 & = \varphi_0, &
 \omega(\mathtt{X}) \varphi''_0 & = 2 \varphi'_0, \\
 \omega(\mathtt{Y}) \varphi_0 & = 2 \varphi'_0, &
 \omega(\mathtt{Y}) \varphi'_0 & = \varphi''_0, &
 \omega(\mathtt{Y}) \varphi''_0 & = 0.
\end{align*}
\end{lem}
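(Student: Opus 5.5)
The plan is to express $\mathtt{H}_1,\mathtt{H}_2,\mathtt{X},\mathtt{Y}$ as complex linear combinations of the basis $\mathtt{A}_i,\mathtt{B}_j,\mathtt{C}_j$ of $\fg_{1,0}$. Then we read off $\omega(\cdot)\varphi_0^\bullet$ from Lemma \ref{l:schwartz-table}. From the explicit matrices in \S\ref{ss:diff-adelic} we have
\[
 \mathtt{H}_1 = -\sqrt{-1}(\mathtt{B}_1 - \mathtt{C}_1), \quad
 \mathtt{H}_2 = -\sqrt{-1}(\mathtt{B}_3 - \mathtt{C}_3),
\]
\[
 \mathtt{X} = \tfrac{1}{2}\bigl(\mathtt{A}_2 - \mathtt{A}_3 - \sqrt{-1}(\mathtt{B}_2 - \mathtt{C}_2)\bigr), \quad
 \mathtt{Y} = \tfrac{1}{2}\bigl(\mathtt{A}_3 - \mathtt{A}_2 - \sqrt{-1}(\mathtt{B}_2 - \mathtt{C}_2)\bigr).
\]
I will check these entry by entry, using $\mathtt{B}_2 = \mathtt{E}_{14}+\mathtt{E}_{23}$, $\mathtt{C}_2 = \mathtt{E}_{32}+\mathtt{E}_{41}$, $\mathtt{A}_2 = \mathtt{E}_{12}-\mathtt{E}_{43}$ and $\mathtt{A}_3 = \mathtt{E}_{21}-\mathtt{E}_{34}$.

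With these expressions, each operator becomes a difference of two rows of the table. For example,
\[
 \omega(\mathtt{H}_1) = \sqrt{-1}\mathtt{C}_1 - \sqrt{-1}\mathtt{B}_1 \quad\text{(as operators)}.
\]
The $\kappa$-terms cancel, leaving $3\bar x_1^2$, $2\bar x_1\bar x_2$ and $\bar x_2^2$ times $\varphi^0$. This gives the eigenvalues $3,2,1$, and similarly $\mathtt{H}_2$ gives $1,2,3$.

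For $\mathtt{X}$ we take half of
\[
 (\mathtt{A}_2 - \mathtt{A}_3) + (\sqrt{-1}\mathtt{C}_2 - \sqrt{-1}\mathtt{B}_2).
\]
Again the $\kappa$-terms cancel in each pair. For $\varphi_0$ this leaves $\tfrac12(-2\bar x_1\bar x_2 + 2\bar x_1\bar x_2) = 0$. For $\varphi_0'$ it leaves $\tfrac12(\bar x_1^2 - \bar x_2^2 + \bar x_1^2 + \bar x_2^2) = \bar x_1^2$. For $\varphi_0''$ it leaves $\tfrac12(2\bar x_1\bar x_2 + 2\bar x_1\bar x_2) = 2\bar x_1\bar x_2$. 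These match the claim. The computation for $\mathtt{Y}$ uses the sign-flipped $\mathtt{A}$ part and is analogous.

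The only real obstacle is getting the linear combinations of $\mathtt{H}_i,\mathtt{X},\mathtt{Y}$ exactly right, including signs and factors of $\sqrt{-1}$, since an error there flips the roles of $\mathtt{X}$ and $\mathtt{Y}$. As a sanity check, I will verify that the result is compatible with Lemma \ref{l:varrho_m}: the span of $\varphi_0,\varphi_0',\varphi_0''$ should realize a $K_{G_1}$-type with weights $(3,1),(2,2),(1,3)$, namely $\Sym^2\otimes\det$. This is consistent with the minimal $K$-type $\mu(\fq)=(3,1)$ of $A_\fq$ in Lemma \ref{l:Aq-K-types}.
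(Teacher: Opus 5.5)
Your proposal is correct and is essentially the paper's proof: the paper writes $\mathtt{H}_1,\mathtt{H}_2,\mathtt{X},\mathtt{Y}$ as exactly the same linear combinations of $\mathtt{A}_i,\mathtt{B}_j,\mathtt{C}_j$ and reads off the result from Lemma~\ref{l:schwartz-table}, where the $\kappa$-terms cancel pairwise as you describe. Your comparison with Lemma~\ref{l:varrho_m} (the case $m=1$) is a consistency check only and is not needed for the proof.
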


\begin{proof}
This is an immediate consequence of Lemma \ref{l:schwartz-table}, noting that
\begin{align*}
 \mathtt{H}_1 & = - \sqrt{-1} \mathtt{B}_1 + \sqrt{-1} \mathtt{C}_1, &
 \mathtt{H}_2 & = - \sqrt{-1} \mathtt{B}_3 + \sqrt{-1} \mathtt{C}_3, \\
 \mathtt{X} & = \frac{1}{2} (\mathtt{A}_2 - \mathtt{A}_3 - \sqrt{-1} \mathtt{B}_2 + \sqrt{-1} \mathtt{C}_2), &
 \mathtt{Y} & = \frac{1}{2} (- \mathtt{A}_2 +\mathtt{A}_3 - \sqrt{-1} \mathtt{B}_2 + \sqrt{-1} \mathtt{C}_2).
\end{align*}
\end{proof} 

For any non-negative integer $n$, we define Schwartz functions $\varphi_n, \varphi'_n, \varphi''_n \in \cS(\C^2)$ by
\[
 \begin{pmatrix}
  \varphi_n \\
  \varphi'_n \\
  \varphi''_n 
 \end{pmatrix} 
 = \omega(\mathtt{D})^n
 \begin{pmatrix}
  \varphi_0 \\
  \varphi'_0 \\
  \varphi''_0 
 \end{pmatrix},
\]
where 
\[
 \mathtt{D} = 
 \begin{pmatrix}
  \mathtt{X}_{1,1} & -2 \mathtt{X}_{2,0} & 0 \\
  \mathtt{X}_{0,2} & 0 &  -\mathtt{X}_{2,0} \\
  0 & 2 \mathtt{X}_{0,2} & - \mathtt{X}_{1,1}
 \end{pmatrix}.
\]

\begin{lem}
\label{l:schwartz-3dim-n}
We have
\begin{align*}
 \omega(\mathtt{H}_1) \varphi_n & = (n+3) \varphi_n, &
 \omega(\mathtt{H}_1) \varphi_n' & = (n+2) \varphi_n', &
 \omega(\mathtt{H}_1) \varphi_n'' & = (n+1) \varphi_n'', \\
 \omega(\mathtt{H}_2) \varphi_n & = (n+1) \varphi_n, &
 \omega(\mathtt{H}_2) \varphi_n' & = (n+2) \varphi_n', &
 \omega(\mathtt{H}_2) \varphi_n'' & = (n+3) \varphi_n'', \\
 \omega(\mathtt{X}) \varphi_n & = 0, &
 \omega(\mathtt{X}) \varphi_n' & = \varphi_n, &
 \omega(\mathtt{X}) \varphi_n'' & = 2 \varphi_n', \\
 \omega(\mathtt{Y}) \varphi_n & = 2 \varphi_n', &
 \omega(\mathtt{Y}) \varphi_n' & = \varphi_n'', &
 \omega(\mathtt{Y}) \varphi_n'' & = 0.
\end{align*}
\end{lem}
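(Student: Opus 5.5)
Induction on $n$, with the case $n=0$ being exactly Lemma \ref{l:schwartz-3dim-0}. For the inductive step we do not compute $\omega(\mathtt{D})$ on explicit polynomials. Instead we reuse the purely Lie-algebraic commutation relations from the proof of Lemma \ref{l:D.phi}. There, $\mathtt{D}$ is regarded as a $3\times 3$ matrix with entries in $U(\fg_1)$, and the identities
\[
 \mathtt{H}_1 \mathtt{D} = \mathtt{D} \mathtt{H}_1 + (\cdots), \quad
 \mathtt{H}_2 \mathtt{D} = \mathtt{D} \mathtt{H}_2 + (\cdots), \quad
 \mathtt{X} \mathtt{D} = \mathtt{D} \mathtt{X} + (\cdots), \quad
 \mathtt{Y} \mathtt{D} = \mathtt{D} \mathtt{Y} + (\cdots)
\]
hold in $\M_3(U(\fg_1))$. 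They follow from the bracket relations between $\fk$ and $\fp^+$ listed in \S\ref{ss:diff-adelic} and from the commutativity of $\fp^+$. Applying $\omega$ turns them into identities of operators on $\cS(\C^2)$.

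Set $\Phi_n = {}^t(\varphi_n, \varphi'_n, \varphi''_n)$, so that $\Phi_{n+1} = \omega(\mathtt{D})\Phi_n$. The inductive hypothesis says $\omega(Z)\Phi_n = M_Z^{(n)}\Phi_n$ for $Z \in \{\mathtt{H}_1,\mathtt{H}_2,\mathtt{X},\mathtt{Y}\}$. Here $M_Z^{(n)}$ is the scalar matrix read off from the statement: $\diag(n+3,n+2,n+1)$ for $\mathtt{H}_1$, $\diag(n+1,n+2,n+3)$ for $\mathtt{H}_2$, and for $\mathtt{X}$ and $\mathtt{Y}$ the nilpotent matrices
\[
 \begin{pmatrix} 0&0&0\\1&0&0\\0&2&0\end{pmatrix}, \qquad
 \begin{pmatrix} 0&2&0\\0&0&1\\0&0&0\end{pmatrix},
\]
which do not depend on $n$. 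These are the same matrices as in the proof of Lemma \ref{l:D.phi}, with $m$ replaced by $n+1$ in the toral entries.

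Then we compute
\[
 \omega(Z)\Phi_{n+1} = \omega(Z\mathtt{D})\Phi_n = \omega(\mathtt{D})M_Z^{(n)}\Phi_n + \omega(R_Z)\Phi_n ,
\]
where $R_Z$ is the correction matrix. The scalar matrix $M_Z^{(n)}$ commutes past the operator entries, but here we must keep track that $\mathtt{D}M$ means right multiplication of the operator matrix by a scalar matrix. It then remains to check the identity
\[
 \mathtt{D}M_Z^{(n)} + R_Z = M_Z^{(n+1)}\mathtt{D}
\]
entrywise as matrices over $\fp^+$. This is literally the computation displayed in the proof of Lemma \ref{l:D.phi}, with $m=n+1$. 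For example, for $\mathtt{H}_1$ the diagonal $\diag(m+2,m+1,m)$ together with the correction yields $\diag(m+3,m+2,m+1)\,\mathtt{D}$. That matches the claimed eigenvalues $(n+4,n+3,n+2)$ once we put $m=n+1$.

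The only point needing care, and so the main (mild) obstacle, is the index shift. The $K_{G_1}$-type of $\Phi_0$ given in Lemma \ref{l:schwartz-3dim-0} must be matched with the $d\varrho_m$ action of Lemma \ref{l:varrho_m}, and the matching is $m=1$ with the order of components as given. This shows that the same matrix identities apply without any twist. Once that bookkeeping is confirmed, the induction closes immediately.
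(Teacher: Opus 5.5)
Your proposal is correct and is essentially the paper's proof: induction with base case Lemma \ref{l:schwartz-3dim-0}, and the inductive step obtained by applying $\omega$ to the commutation identities from the proof of Lemma \ref{l:D.phi} with $m=n+1$, which give $\omega(Z)\vec{\varphi}_{n+1} = M_Z^{(n+1)}\,\omega(\mathtt{D})\vec{\varphi}_n$. The identities $\mathtt{D}M_Z^{(n)} + R_Z = M_Z^{(n+1)}\mathtt{D}$ do hold entrywise over $\fp^+$, so the extra matching with the $d\varrho_m$-action that you flag as the main obstacle is not needed.
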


\begin{proof}
We proceed by induction on $n$.
For $n=0$, the assertion follows from Lemma \ref{l:schwartz-3dim-0}.
For general $n$, put
\[
 \vec{\varphi}_n = 
 \begin{pmatrix}
  \varphi_n \\
  \varphi'_n \\
  \varphi''_n 
 \end{pmatrix}.
\]
Then it follows from the induction hypothesis and the computation in the proof of Lemma \ref{l:D.phi} that
\begin{align*}
 \omega(\mathtt{H}_1) \vec{\varphi}_{n+1} & = \omega \left( \mathtt{D}
 \begin{pmatrix}
  n+3 & 0 & 0 \\
  0 & n+2 & 0 \\
  0 & 0 & n+1
 \end{pmatrix} + 
 \begin{pmatrix}
  \mathtt{X}_{1,1} & - 4 \mathtt{X}_{2,0} & 0 \\
  0 & 0 & - 2 \mathtt{X}_{2,0} \\
  0 & 0 & - \mathtt{X}_{1,1}
 \end{pmatrix}
 \right) \vec{\varphi}_n \\
 & = \omega \left(
 \begin{pmatrix}
  n+4 & 0 & 0 \\
  0 & n+3 & 0 \\
  0 & 0 & n+2
 \end{pmatrix}
 \mathtt{D} \right) \vec{\varphi}_n, \\
 \omega(\mathtt{H}_2) \vec{\varphi}_{n+1} & = \omega \left( \mathtt{D}
 \begin{pmatrix}
  n+1 & 0 & 0 \\
  0 & n+2 & 0 \\
  0 & 0 & n+3
 \end{pmatrix} + 
 \begin{pmatrix}
  \mathtt{X}_{1,1} & 0 & 0 \\
  2 \mathtt{X}_{0,2} & 0 & 0 \\
  0 & 4 \mathtt{X}_{0,2} & - \mathtt{X}_{1,1}  
 \end{pmatrix}
 \right) \vec{\varphi}_n \\
 & = \omega \left(
 \begin{pmatrix}
  n+2 & 0 & 0 \\
  0 & n+3 & 0 \\
  0 & 0 & n+4
 \end{pmatrix}
 \mathtt{D} \right) \vec{\varphi}_n, \\
 \omega(\mathtt{X}) \vec{\varphi}_{n+1} & = \omega \left( \mathtt{D}
 \begin{pmatrix}
  0 & 0 & 0 \\
  1 & 0 & 0 \\
  0 & 2 & 0
 \end{pmatrix} + 
 \begin{pmatrix}
  2 \mathtt{X}_{2,0} & 0 & 0 \\
  \mathtt{X}_{1,1} & 0 & 0 \\
  0 & 2 \mathtt{X}_{1,1} & - 2 \mathtt{X}_{2,0}
 \end{pmatrix}
 \right) \vec{\varphi}_n \\
 & = \omega \left(
 \begin{pmatrix}
  0 & 0 & 0 \\
  1 & 0 & 0 \\
  0 & 2 & 0
 \end{pmatrix}
 \mathtt{D} \right) \vec{\varphi}_n, \\
 \omega(\mathtt{Y}) \vec{\varphi}_{n+1} & = \omega \left( \mathtt{D}
 \begin{pmatrix}
  0 & 2 & 0 \\
  0 & 0 & 1 \\
  0 & 0 & 0
 \end{pmatrix} + 
 \begin{pmatrix}
  2 \mathtt{X}_{0,2} & - 2 \mathtt{X}_{1,1} & 0 \\
  0 & 0 & - \mathtt{X}_{1,1} \\
  0 & 0 & - 2 \mathtt{X}_{0,2}  
 \end{pmatrix}  
 \right) \vec{\varphi}_n \\
 & = \omega \left(
 \begin{pmatrix}
  0 & 2 & 0 \\
  0 & 0 & 1 \\
  0 & 0 & 0
 \end{pmatrix}
 \mathtt{D} \right) \vec{\varphi}_n.
\end{align*}
This completes the proof.
\end{proof}

Let $\cS(\C^2)_n$ be the space spanned by $\varphi_n, \varphi_n', \varphi_n''$, which is nonzero by Proposition \ref{p:phi-sharp} below.
Hence, by Lemma \ref{l:schwartz-3dim-n}, $\cS(\C^2)_n$ is stable under the action of $K_{G_1} \simeq \U(2)$ and is isomorphic to the $3$-dimensional representation 
\[
 \Sym^2 \otimes {\det}^{n+1}.
\]
Moreover, the spaces $\cS(\C^2)_n$ for all $n \ge 0$ form a ladder in the $K_{G_1}$-types of the cohomological representation $A_\fq$ described in Lemma \ref{l:Aq-K-types}.
Indeed, by Lemma \ref{l:theta-real}, we may realize $A_\fq$ on the space
\[
 \{ \varphi \in \cS(\C^2) \mid \text{$\omega(h) \varphi = h^2 \varphi$ for all $h \in \C^1$} \},
\]
which contains $\cS(\C^2)_n$ for all $n \ge 0$.
Put 
\[
 k(z) =
 \begin{pmatrix}
  \cos \theta & & \sin \theta & \\
  & \cos \theta & & \sin \theta \\
  - \sin \theta & & \cos \theta & \\
  & - \sin \theta & & \cos \theta 
 \end{pmatrix}, \quad
 k'(z) =
 \begin{pmatrix}
  \cos \theta & \sin \theta & & \\
  - \sin \theta & \cos \theta & & \\
  & & \cos \theta & \sin \theta \\
  & & - \sin \theta & \cos \theta 
 \end{pmatrix}
\]
for $z = e^{\sqrt{-1} \theta}$ with $\theta \in \R$.

\begin{lem}
\label{l:S(C^2)_n}
\begin{enumerate}
\item 
\label{S(C^2)_n-1}
We have
\[
 \omega(k(z)) \varphi = z^{2n+4} \varphi
\]
for all $z \in \C^1$ and $\varphi \in \cS(\C^2)_n$.
\item 
\label{S(C^2)_n-2}
The space
\[
 \{ \varphi \in \cS(\C^2)_n \mid \text{$\omega(k'(z)) \varphi = \varphi$ for all $z \in \C^1$} \}
\]
is spanned by $\varphi_n + \varphi_n''$.
\end{enumerate}
\end{lem}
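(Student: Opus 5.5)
Both parts reduce to computing the differentials of the one-parameter subgroups $k(z)$ and $k'(z)$ inside $\fk$, and then reading off their action from Lemma \ref{l:schwartz-3dim-n}.

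For $z = e^{\sqrt{-1}\theta}$, the element $k(z)$ is $\exp$ of $\theta$ times the matrix with $\vartheta_1=\vartheta_2=1$ in $\ft_0$. Comparing with the explicit matrices $\mathtt{H}_1,\mathtt{H}_2$ of \S\ref{ss:diff-adelic}, that matrix equals $\sqrt{-1}(\mathtt{H}_1+\mathtt{H}_2)$. Indeed, $\mathtt{H}_1$ has $(1,3)$-entry $-\sqrt{-1}$, so $\sqrt{-1}\mathtt{H}_1$ has $(1,3)$-entry $1$ and $(3,1)$-entry $-1$, as required. Hence
\[
 \omega(k(z))\varphi = \exp\bigl(\sqrt{-1}\theta\, \omega(\mathtt{H}_1+\mathtt{H}_2)\bigr)\varphi .
\]
By Lemma \ref{l:schwartz-3dim-n}, $\mathtt{H}_1+\mathtt{H}_2$ acts on each of $\varphi_n,\varphi_n',\varphi_n''$ by the scalar $(n+3)+(n+1)=(n+2)+(n+2)=(n+1)+(n+3)=2n+4$. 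This gives part (1). One point needs justification: the Lie algebra action integrates to the group action. This holds because $\omega$ is a smooth representation on $\cS(\C^2)$, $\cS(\C^2)_n$ is finite dimensional and $\fk$-stable, and $K_{G_1}$ is connected.

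For part (2), $k'(z)$ is $\boldsymbol{m}$ of the rotation matrix $\smat{\cos\theta}{\sin\theta}{-\sin\theta}{\cos\theta}$. Its derivative at $\theta=0$ is $\mathtt{E}_{12}-\mathtt{E}_{21}+\mathtt{E}_{34}-\mathtt{E}_{43} = \mathtt{A}_2-\mathtt{A}_3$. From the expressions in the proof of Lemma \ref{l:schwartz-3dim-0}, $\mathtt{X}-\mathtt{Y} = \mathtt{A}_2-\mathtt{A}_3$. So the invariants of $k'$ in the finite-dimensional space $\cS(\C^2)_n$ are exactly the kernel of $\omega(\mathtt{X}-\mathtt{Y})$ there, again using connectedness.

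It remains to compute that kernel using Lemma \ref{l:schwartz-3dim-n}:
\begin{align*}
 (\mathtt{X}-\mathtt{Y})\varphi_n &= -2\varphi_n', \\
 (\mathtt{X}-\mathtt{Y})\varphi_n' &= \varphi_n-\varphi_n'', \\
 (\mathtt{X}-\mathtt{Y})\varphi_n'' &= 2\varphi_n'.
\end{align*}
For $a\varphi_n+b\varphi_n'+c\varphi_n''$, the image is $b\varphi_n + 2(c-a)\varphi_n' - b\varphi_n''$. This vanishes if and only if $b=0$ and $a=c$, provided $\varphi_n,\varphi_n',\varphi_n''$ are linearly independent.

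The one nontrivial step is this linear independence. Since these three vectors have distinct $\mathtt{H}_1$-eigenvalues $n+3,n+2,n+1$, independence is equivalent to each being nonzero. That follows once $\cS(\C^2)_n\neq 0$, which the text asserts via Proposition \ref{p:phi-sharp}. Indeed, if the space is nonzero, it carries a nonzero representation in which $\mathtt{X}$ and $\mathtt{Y}$ move between the three weight vectors with nonzero coefficients. For example, if $\varphi_n'=0$ then $\varphi_n=\mathtt{X}\varphi_n'=0$ and $\varphi_n''=\mathtt{Y}\varphi_n'=0$. Similar arguments show that the vanishing of any one of the three forces all to vanish. Therefore the kernel is spanned by $\varphi_n+\varphi_n''$.
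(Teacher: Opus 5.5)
Your proof is correct and follows the paper's own argument. The paper likewise writes $k(z)=\exp(\sqrt{-1}\theta(\mathtt{H}_1+\mathtt{H}_2))$ and $k'(z)=\exp(\theta(\mathtt{X}-\mathtt{Y}))$ and reads both parts off Lemma \ref{l:schwartz-3dim-n}; you additionally supply the kernel computation and the linear-independence check that the paper leaves implicit. (Your dichotomy that $\varphi_n,\varphi_n',\varphi_n''$ either all vanish or are independent makes the appeal to Proposition \ref{p:phi-sharp} unnecessary, since in the degenerate case the claim holds trivially.)
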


\begin{proof}
We have 
\[
 k(z) = \exp(\sqrt{-1} \theta (\mathtt{H}_1 + \mathtt{H}_2)), \quad
 k'(z) = \exp(\theta (\mathtt{X} - \mathtt{Y}))
\]
for $z = e^{\sqrt{-1} \theta}$ with $\theta \in \R$.
From this and Lemma \ref{l:schwartz-3dim-n}, we can deduce the assertion.
\end{proof}

\subsection{Construction}
\label{ss:schwartz-def}

Recall the basis $\{ \mathtt{X}_\alpha \mid \alpha \in \Delta \}$ of $\fp$, where
\[
 \Delta = \{ \pm (2,0), \pm (1,1), \pm (0,2) \}.
\]
Let $\{ \omega_\alpha \mid \alpha \in \Delta \}$ be its dual basis of $\fp^*$.
As representations of $K_{G_1} \simeq \U(2)$, we have $\fp^+ \simeq \Sym^2$ and hence 
\[
 \wedge^2 (\fp^+)^* \simeq \Sym^2 \otimes {\det}^{-3}.
\]
Indeed, we can easily compute the action of $\fk$ on the following basis of $\wedge^2 (\fp^+)^*$:
\[
 \eta = \omega_{2,0} \wedge \omega_{1,1}, \quad
 \eta' = \omega_{2,0} \wedge \omega_{0,2}, \quad
 \eta'' = \omega_{1,1} \wedge \omega_{0,2}.
\]

\begin{lem}
\label{l:diff-form-K-type}
We have
\begin{align*}
 \mathtt{H}_1 \cdot \eta & = - 3 \eta, &
 \mathtt{H}_1 \cdot \eta' & = - 2 \eta', &
 \mathtt{H}_1 \cdot \eta'' & = - \eta'', \\
 \mathtt{H}_2 \cdot \eta & = - \eta, &
 \mathtt{H}_2 \cdot \eta' & = - 2 \eta', &
 \mathtt{H}_2 \cdot \eta'' & = - 3 \eta'', \\
 \mathtt{X} \cdot \eta & = - \eta', &
 \mathtt{X} \cdot \eta' & = - 2 \eta'', &
 \mathtt{X} \cdot \eta'' & = 0, \\
 \mathtt{Y} \cdot \eta & = 0, &
 \mathtt{Y} \cdot \eta' & = - 2 \eta, &
 \mathtt{Y} \cdot \eta'' & = - \eta'.
\end{align*}
\end{lem}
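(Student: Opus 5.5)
The plan is to compute the coadjoint action of $\fk$ on $(\fp^+)^*$ directly from the bracket relations listed in \S\ref{ss:diff-adelic}, and then extend it to $\wedge^2(\fp^+)^*$ by the Leibniz rule.

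First, since $[\fk,\fp^\pm]\subset\fp^\pm$, the restrictions of $\omega_{2,0},\omega_{1,1},\omega_{0,2}$ to $\fp^+$ form the basis of $(\fp^+)^*$ dual to $\mathtt{X}_{2,0},\mathtt{X}_{1,1},\mathtt{X}_{0,2}$. The action of $Z\in\fk$ on $(\fp^+)^*$ is
\[
 (Z\cdot\omega)(W)=-\omega([Z,W]), \qquad W\in\fp^+.
\]
Applying this to the brackets recorded just before Lemma \ref{l:varrho_m} gives the following formulas.
\begin{align*}
 \mathtt{H}_1\cdot\omega_{2,0}&=-2\omega_{2,0}, & \mathtt{H}_1\cdot\omega_{1,1}&=-\omega_{1,1}, & \mathtt{H}_1\cdot\omega_{0,2}&=0,\\
 \mathtt{H}_2\cdot\omega_{2,0}&=0, & \mathtt{H}_2\cdot\omega_{1,1}&=-\omega_{1,1}, & \mathtt{H}_2\cdot\omega_{0,2}&=-2\omega_{0,2},\\
 \mathtt{X}\cdot\omega_{2,0}&=-2\omega_{1,1}, & \mathtt{X}\cdot\omega_{1,1}&=-\omega_{0,2}, & \mathtt{X}\cdot\omega_{0,2}&=0,\\
 \mathtt{Y}\cdot\omega_{2,0}&=0, & \mathtt{Y}\cdot\omega_{1,1}&=-\omega_{2,0}, & \mathtt{Y}\cdot\omega_{0,2}&=-2\omega_{1,1}.
\end{align*}
For example, $[\mathtt{X},\mathtt{X}_{1,1}]=2\mathtt{X}_{2,0}$ gives $(\mathtt{X}\cdot\omega_{2,0})(\mathtt{X}_{1,1})=-2$. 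The pairings of $\mathtt{X}\cdot\omega_{2,0}$ with $\mathtt{X}_{2,0}$ and $\mathtt{X}_{0,2}$ vanish, since $[\mathtt{X},\mathtt{X}_{2,0}]=0$ and $[\mathtt{X},\mathtt{X}_{0,2}]=\mathtt{X}_{1,1}$. The other entries are obtained in the same way.

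Second, we apply $Z\cdot(\alpha\wedge\beta)=(Z\cdot\alpha)\wedge\beta+\alpha\wedge(Z\cdot\beta)$ to $\eta,\eta',\eta''$. The toral weights add up as follows. For $\mathtt{H}_1$ they are $-2-1=-3$, $-2+0=-2$ and $-1+0=-1$. For $\mathtt{H}_2$ they are $0-1=-1$, $0-2=-2$ and $-1-2=-3$. For $\mathtt{X}$, the terms $\omega_{1,1}\wedge\omega_{1,1}$ and $\omega_{0,2}\wedge\omega_{0,2}$ vanish, which gives:
\begin{align*}
 \mathtt{X}\cdot\eta&=-2\omega_{1,1}\wedge\omega_{1,1}-\omega_{2,0}\wedge\omega_{0,2}=-\eta',\\
 \mathtt{X}\cdot\eta'&=-2\omega_{1,1}\wedge\omega_{0,2}=-2\eta'',\\
 \mathtt{X}\cdot\eta''&=-\omega_{0,2}\wedge\omega_{0,2}=0.
\end{align*}
Similarly, for $\mathtt{Y}$:
\begin{align*}
 \mathtt{Y}\cdot\eta&=-\omega_{2,0}\wedge\omega_{2,0}=0,\\
 \mathtt{Y}\cdot\eta'&=-2\omega_{2,0}\wedge\omega_{1,1}=-2\eta,\\
 \mathtt{Y}\cdot\eta''&=-\omega_{2,0}\wedge\omega_{0,2}-2\omega_{1,1}\wedge\omega_{1,1}=-\eta'.
\end{align*}

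The argument has no real obstacle; it is bookkeeping. The points needing care are the sign convention for the contragredient action and the fact that $\fk$ preserves $\fp^+$, which justifies restricting the $\omega_\alpha$ to $\fp^+$. Comparing the resulting weights and raising/lowering pattern with Lemma \ref{l:varrho_m} confirms $\wedge^2(\fp^+)^*\simeq\Sym^2\otimes{\det}^{-3}$, consistent with the remark preceding the lemma.
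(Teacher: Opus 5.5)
Your proposal is correct and is the same approach as the paper, which states this lemma without proof as an easy computation of the contragredient action of $\fk$ on $\wedge^2(\fp^+)^*$ from the brackets $[\fk,\fp^+]$ listed in \S\ref{ss:diff-adelic}. Your sign convention $(Z\cdot\omega)(W)=-\omega([Z,W])$ is the one the paper uses, and your entries for $\omega_{2,0},\omega_{1,1},\omega_{0,2}$ and the Leibniz-rule step all check out.
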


Recall that the relative Lie algebra cohomology
\[
 H^\bullet(\fg_1,K_{G_1};\cS(\C^2))
\]
is defined as the cohomology of the complex $(\cS(\C^2) \otimes \wedge^\bullet \fp^*)^{K_{G_1}}$ with differential
\[
 d = \sum_{\alpha \in \Delta} \omega(\mathtt{X}_\alpha) \otimes \varepsilon(\omega_\alpha),
\]
where $\varepsilon(\omega_\alpha)$ is the left exterior product by $\omega_\alpha$.
We regard
\[
 \vec{\varphi}_0 = 
 \begin{pmatrix}
  \varphi_0 \\
  \varphi'_0 \\
  \varphi''_0 
 \end{pmatrix}
\]
as an element in $\cS(\C^2) \otimes \wedge^2 \fp^*$ given by
\[
 \vec{\varphi}_0 = \varphi_0 \otimes \eta + \varphi_0' \otimes \eta' + \varphi_0'' \otimes \eta''.
\]
By Lemmas \ref{l:schwartz-3dim-0} and \ref{l:diff-form-K-type}, we have
\[
 \vec{\varphi}_0 \in (\cS(\C^2) \otimes \wedge^2 \fp^*)^{K_{G_1}}.
\]

\begin{lem}
\label{l:phi-closed}
We have
\[
 d \vec{\varphi}_0 = 0.
\]
\end{lem}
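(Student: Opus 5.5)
The plan is to verify $d\vec{\varphi}_0 = 0$ directly from Lemma \ref{l:schwartz-table}, splitting $d$ according to $\fp = \fp^+ \oplus \fp^-$. Since $\vec{\varphi}_0$ has values in $\wedge^2 (\fp^+)^*$, the element $d\vec{\varphi}_0$ has two kinds of components.

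First, applying $\varepsilon(\omega_\alpha)$ with $\alpha \in \{(2,0),(1,1),(0,2)\}$ produces the $\wedge^3 (\fp^+)^*$-component. It is a multiple of $\omega_{2,0} \wedge \omega_{1,1} \wedge \omega_{0,2}$. Keeping track of the signs from reordering the wedge products ($\omega_{0,2}\wedge\eta = +$, $\omega_{1,1}\wedge\eta' = -$, $\omega_{2,0}\wedge\eta'' = +$, each relative to $\omega_{2,0}\wedge\omega_{1,1}\wedge\omega_{0,2}$), its coefficient is
\[
 \omega(\mathtt{X}_{0,2})\varphi_0 - \omega(\mathtt{X}_{1,1})\varphi_0' + \omega(\mathtt{X}_{2,0})\varphi_0''.
\]
Second, applying $\varepsilon(\omega_{-\alpha})$ produces the components in $(\fp^-)^* \wedge \wedge^2(\fp^+)^*$. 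The nine wedges $\omega_{-\alpha}\wedge\eta^\bullet$ are linearly independent. Hence this part vanishes if and only if $\omega(\mathtt{X}_{-\alpha})\varphi_0^\bullet = 0$ for all three $\alpha$ and all $\bullet \in \{\ ,',''\}$.

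For the $\fp^-$-part, I will write
\begin{align*}
 \mathtt{X}_{-2,0} & = \tfrac12(\mathtt{A}_1 - \sqrt{-1}\mathtt{B}_1 - \sqrt{-1}\mathtt{C}_1), \\
 \mathtt{X}_{-1,-1} & = \tfrac12(\mathtt{A}_2+\mathtt{A}_3 - \sqrt{-1}\mathtt{B}_2 - \sqrt{-1}\mathtt{C}_2), \\
 \mathtt{X}_{0,-2} & = \tfrac12(\mathtt{A}_4 - \sqrt{-1}\mathtt{B}_3 - \sqrt{-1}\mathtt{C}_3).
\end{align*}
These are checked from the explicit matrices, in the same way as the expressions for $\mathtt{X}_{2,0},\mathtt{X}_{1,1},\mathtt{X}_{0,2}$ in the proof of Lemma \ref{l:modform-X-phi}. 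Reading off the table, each of the nine vanishings is immediate. For example, for $\mathtt{X}_{-2,0}\varphi_0$ the $\mathtt{A}_1$-entry $2\kappa x_1\bar{x}_1^3+3\bar{x}_1^2$ equals the sum of the $\sqrt{-1}\mathtt{B}_1$- and $\sqrt{-1}\mathtt{C}_1$-entries. The same cancellation occurs in every column, including the $\mathtt{A}_2+\mathtt{A}_3$ versus $\sqrt{-1}\mathtt{B}_2+\sqrt{-1}\mathtt{C}_2$ rows. Conceptually, this says that $\bar{x}_i\bar{x}_j\varphi^0$ is annihilated by $\fp^-$, i.e.\ it is a holomorphic-type vector.

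For the $\wedge^3(\fp^+)^*$-part, I will use
\begin{align*}
 \mathtt{X}_{2,0} & = \tfrac12(\mathtt{A}_1+\sqrt{-1}\mathtt{B}_1+\sqrt{-1}\mathtt{C}_1), \\
 \mathtt{X}_{1,1} & = \tfrac12(\mathtt{A}_2+\mathtt{A}_3+\sqrt{-1}\mathtt{B}_2+\sqrt{-1}\mathtt{C}_2), \\
 \mathtt{X}_{0,2} & = \tfrac12(\mathtt{A}_4+\sqrt{-1}\mathtt{B}_3+\sqrt{-1}\mathtt{C}_3).
\end{align*}
From the table one gets, for example,
\[
 2\omega(\mathtt{X}_{0,2})\varphi_0 = (4\kappa\bar{x}_1^2x_2\bar{x}_2 + 2\bar{x}_1^2)\varphi^0 .
\]
The analogous expressions for $2\omega(\mathtt{X}_{1,1})\varphi_0'$ and $2\omega(\mathtt{X}_{2,0})\varphi_0''$ are obtained the same way. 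I then check that the signed combination above cancels: the $\kappa$-terms are $4\kappa(\bar{x}_1^2x_2\bar{x}_2 - x_1\bar{x}_1\bar{x}_2^2 - \bar{x}_1^2x_2\bar{x}_2 + x_1\bar{x}_1\bar{x}_2^2)=0$, and the constant terms are $2\bar{x}_1^2 - 2(\bar{x}_1^2+\bar{x}_2^2) + 2\bar{x}_2^2 = 0$.

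The only real obstacle is bookkeeping. One must get the expressions for $\mathtt{X}_{\pm\alpha}$ in terms of $\mathtt{A}_i,\mathtt{B}_i,\mathtt{C}_i$ right, especially the factor in $\mathtt{X}_{\pm1,\pm1}$. One must also fix the sign convention for $\varepsilon(\omega_\alpha)$ on $\eta,\eta',\eta''$. A useful cross-check is that the $\wedge^3$-coefficient must be $K_{G_1}$-equivariant. Since $\wedge^3(\fp^+)^* \simeq \det^{-3}$, this coefficient is a $\det^3$-isotypic vector in $\cS(\C^2)$ built from degree-$4$ polynomials times $\varphi^0$. One can compare this with Lemma \ref{l:schwartz-3dim-0} and Lemma \ref{l:diff-form-K-type} to confirm the signs.
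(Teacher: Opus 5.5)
Your proposal is correct and is essentially the paper's own proof. The paper likewise writes $\mathtt{X}_{\pm\alpha}$ in terms of $\mathtt{A}_i,\mathtt{B}_i,\mathtt{C}_i$ and reads off from Lemma \ref{l:schwartz-table} that $\fp^-$ kills each $\varphi_0^\bullet$. It then reduces $d\vec{\varphi}_0=0$ to exactly your identity $\omega(\mathtt{X}_{0,2})\varphi_0-\omega(\mathtt{X}_{1,1})\varphi_0'+\omega(\mathtt{X}_{2,0})\varphi_0''=0$, with the same wedge-product signs.
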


\begin{proof}
Put $\kappa = - 2 \pi t$.
Since 
\begin{align*}
 \mathtt{X}_{2,0} & = \frac{1}{2} (\mathtt{A}_1 + \sqrt{-1} \mathtt{B}_1 + \sqrt{-1} \mathtt{C}_1), & 
 \mathtt{X}_{-2,0} & = \frac{1}{2} (\mathtt{A}_1 - \sqrt{-1} \mathtt{B}_1 - \sqrt{-1} \mathtt{C}_1), \\ 
 \mathtt{X}_{1,1} & = \frac{1}{2} (\mathtt{A}_2 + \mathtt{A}_3 + \sqrt{-1} \mathtt{B}_2 + \sqrt{-1} \mathtt{C}_2), &
 \mathtt{X}_{-1,-1} & = \frac{1}{2} (\mathtt{A}_2 + \mathtt{A}_3 - \sqrt{-1} \mathtt{B}_2 - \sqrt{-1} \mathtt{C}_2), \\
 \mathtt{X}_{0,2} & = \frac{1}{2} (\mathtt{A}_4 + \sqrt{-1} \mathtt{B}_3 + \sqrt{-1} \mathtt{C}_3), & 
 \mathtt{X}_{0,-2} & = \frac{1}{2} (\mathtt{A}_4 - \sqrt{-1} \mathtt{B}_3 - \sqrt{-1} \mathtt{C}_3),
\end{align*}
it follows from Lemma \ref{l:schwartz-table} that 
\[
 \omega(*) \varphi_0^\bullet = P_*^\bullet \cdot \varphi^0,
\]
where $P_*^\bullet$ is a polynomial in $x_1, \bar{x}_1, x_2, \bar{x}_2$ given by the following table.
\[
{\renewcommand{\arraystretch}{1.2}
\begin{array}{c|l|l|l}
 * \ \diagdown \ \varphi_0^\bullet & \varphi_0 & \varphi_0' & \varphi_0'' \\ \hline
 \mathtt{X}_{2,0} & 2 \kappa x_1 \bar{x}_1^3 + 3 \bar{x}_1^2 & 2 \kappa x_1 \bar{x}_1^2 \bar{x}_2 + 2 \bar{x}_1 \bar{x}_2 & 2 \kappa x_1 \bar{x}_1 \bar{x}_2^2 + \bar{x}_2^2 \\
 \mathtt{X}_{1,1} & 2 \kappa (x_1 \bar{x}_1^2 \bar{x}_2 + \bar{x}_1^3 x_2) + 2 \bar{x}_1 \bar{x}_2 & 2 \kappa (x_1 \bar{x}_1 \bar{x}_2^2 + \bar{x}_1^2 x_2 \bar{x}_2) + \bar{x}_1^2 + \bar{x}_2^2 & 2 \kappa (x_1 \bar{x}_2^3 + \bar{x}_1 x_2 \bar{x}_2^2) + 2 \bar{x}_1 \bar{x}_2 \\
 \mathtt{X}_{0,2} & 2 \kappa \bar{x}_1^2 x_2 \bar{x}_2 + \bar{x}_1^2 & 2 \kappa \bar{x}_1 x_2 \bar{x}_2^2 + 2 \bar{x}_1 \bar{x}_2 & 2 \kappa x_2 \bar{x}_2^3 + 3 \bar{x}_2^2 \\
 \mathtt{X}_{-2,0} & 0 & 0 & 0 \\
 \mathtt{X}_{-1,-1} & 0 & 0 & 0 \\
 \mathtt{X}_{0,-2} & 0 & 0 & 0
\end{array}
}
\]
Hence we have 
\begin{align*}
 d(\varphi_0 \otimes \eta) & = \omega(\mathtt{X}_{0,2}) \varphi_0 \otimes (\omega_{0,2} \wedge \omega_{2,0} \wedge \omega_{1,1}), \\
 d(\varphi_0' \otimes \eta') & = \omega(\mathtt{X}_{1,1}) \varphi_0' \otimes (\omega_{1,1} \wedge \omega_{2,0} \wedge \omega_{0,2}), \\
 d(\varphi_0'' \otimes \eta'') & = \omega(\mathtt{X}_{2,0}) \varphi_0'' \otimes (\omega_{2,0} \wedge \omega_{1,1} \wedge \omega_{0,2}),
\end{align*}
and 
\[
 \omega(\mathtt{X}_{0,2}) \varphi_0 - \omega(\mathtt{X}_{1,1}) \varphi_0' + \omega(\mathtt{X}_{2,0}) \varphi_0'' = 0.
\]
This proves the lemma.
\end{proof}

Finally, we define a Schwartz form $\vec{\varphi} \in \cS(\C^2 \times \R^\times) \otimes \wedge^2 \fp^*$ by
\[
 \vec{\varphi}(x,t) = t \phi(t) \cdot \vec{\varphi}_{0,t}(x) = t \phi(t) \cdot (\varphi_{0,t}(x) \eta + \varphi_{0,t}'(x) \eta' + \varphi_{0,t}''(x) \eta''), 
\]
where we fix a smooth compactly supported real-valued function $\phi$ on $\R^\times$ such that $\supp(\phi) \subset \R^\times_+$ and
\[
 \int_0^\infty \phi(t) \, \frac{dt}{t} = \frac{1}{2 \pi}
\]
with the Lebesgue measure $dt$, and write $\vec{\varphi}_0 = \vec{\varphi}_{0,t}$, etc., to indicate the dependence on $t$.

\section{Schwartz functions}
\label{s:choices-schwartz}

In this section, we choose Schwartz functions explicitly which will be used later in the construction of modular forms.

\subsection{Setup}

Let $F$ be a non-archimedean local field of characteristic zero and fix a nontrivial additive character $\psi$ of $F$ of order zero.
Let $\fo_F$ be the ring of integers of $F$, $\fp_F$ the maximal ideal of $\fo_F$, and $\varpi_F$ a uniformizer of $\fo_F$.
For $i \in \Z$, put $\fp_F^i = \varpi_F^i \fo_F$.
Let $K$ and $E$ be \'etale quadratic algebras over $F$.
Write $K = F + F \bj$ and $E = F + F \bi$ for some trace zero elements $\bj \in K^\times$ and $\bi \in E^\times$.
Put $J = \bj^2 \in F^\times$ and $u = \bi^2 \in F^\times$.
Let $B$ be the quaternion algebra over $F$ of the form $B = K + K \bi = E + E \bj$.
We identify $B$ with $K^2$ (as $F$-vector spaces) via the map $(x_1,x_2) \mapsto x_1 + x_2 \bi$.
Recall the Weil representations $\Omega_\psi^B$ and $\Omega_\psi^K$ of $\GU(\bW) \times \GU(\bV)$ and $\GSp(W) \times \GO(V)$, respectively, on $\cS(B \times F^\times) = \cS(K^2 \times F^\times)$, where 
\begin{align*}
 \GU(\bW) & = (\GL_2(F) \times E^\times)/F^\times, & 
 \GU(\bV) & = (B^\times \times E^\times)/F^\times, \\
 \GSp(W) & = \GSp_4(F), &
 \GO(V) & = K^\times \rtimes \mu_2.
\end{align*}
We consider the six cases in the following table.
\begin{center}
{\renewcommand{\arraystretch}{1.3}
\begin{tabular}{cccc}
 $F$ & $K$ & $E$ & $B$ \\ \hline
 non-dyadic & split & any & split \\
 non-dyadic & split & split & split \\
 non-dyadic & inert & split or inert & split \\
 non-dyadic & inert & ramified & ramified \\
 non-dyadic & ramified & split & split \\
 dyadic & split & split & split \\
\end{tabular}
}
\end{center}

\subsection{The split case I}
\label{ss:choices-split-1}

Suppose that
\begin{itemize}
 \item $2 \in \fo_F^\times$;
 \item $J \in (\fo_F^\times)^2$;
 \item $u \in \fo_F^\times \cup \varpi_F \fo_F^\times$.
\end{itemize}
Fix $J' \in \fo_F^\times$ such that $(J')^2 = J$.
We identify $K$ and $B$ with $F \oplus F$ and $\M_2(F)$, respectively, via the maps
\[
 a + c \bj \mapsto (a+cJ', a-cJ'), \quad  
 a + b \bi + c \bj + d \bi \bj \mapsto \mat{a + cJ'}{b - dJ'}{(b + dJ')u}{a - cJ'}.
\]
Under this identification, we define a maximal order $\fo_B$ in $B$ by $\fo_B = \M_2(\fo_F)$.
Let $\varphi \in \cS(B \times F^\times)$ be the characteristic function of $\fo_B \times \fo_F^\times$.
Then we have
\[
 \Omega_\psi^B([k,z], [k',z']) \varphi = \varphi
\]
for $k \in \GL_2(\fo_F)$, $k' \in \fo_B^\times$, and $z, z' \in \fo_E^\times$.
If we regard $\varphi$ as an element in $\cS(K^2 \times F^\times)$, then $\varphi$ is the characteristic function of
\[
 (\fo_K \oplus (1, u)^{-1} \fo_K) \times \fo_F^\times.
\]
Hence, putting
\[
 \cK = 
 \begin{cases}
 \GSp_4(\fo_F) & \text{if $u \in \fo_F^\times$;} \\
 \left\{ g \in \GSp_4(\fo_F) \; \middle| \; g \equiv 
 \begin{pmatrix}
  * & * & * & 0 \\
  0 & * & 0 & 0 \\
  * & * & * & 0 \\
  * & * & * & * 
 \end{pmatrix}
 \bmod \fp_F \right\} & \text{if $u \in \varpi_F \fo_F^\times$,}
 \end{cases}
\]
we have
\[
 \Omega_\psi^K(g, h) \varphi = \varphi
\]
for $g \in \cK$ and
\[
 h \in 
 \begin{cases}
  \fo_K^\times \rtimes \mu_2 & \text{if $u \in \fo_F^\times$;} \\
  \fo_K^\times & \text{if $u \in \varpi_F \fo_F^\times$.}
 \end{cases}
\]

\subsection{The split case II}
\label{ss:choices-split-2}

Suppose that 
\begin{itemize}
 \item $2 \in \fo_F^\times$;
 \item $J \in (\fo_F^\times)^2$;
 \item $u \in (\fo_F^\times)^2$.
\end{itemize}
Fix $J', u' \in \fo_F^\times$ such that $(J')^2 = J$, $(u')^2 = u$.
We identify $K$ and $B$ with $F \oplus F$ and $\M_2(F)$, respectively, as in \S \ref{ss:choices-split-1}.
Under this identification, we define an order $\mathfrak{I}_B$ in $B$ as an Iwahori subalgebra $\mathfrak{I}$ of $\M_2(F)$ given by
\[
 \mathfrak{I} = \left\{ x \in \M_2(\fo_F) \; \middle| \; x \equiv \mat{*}{*}{0}{*} \bmod \fp_F \right\}.
\]
Put
\[
 k_0 = \mat{1}{\frac{1}{u'}}{1}{-\frac{1}{u'}}, 
\]
so that
\[
 k_0 (a + b\bi) k_0^{-1} = \mat{a + bu'}{0}{0}{a - bu'}.
\]
Let $\varphi \in \cS(B \times F^\times)$ be the characteristic function of $\mathfrak{I}_B k_0 \times \fo_F^\times$.
Then we have
\[
 \Omega_\psi^B([k,z], [k',z']) \varphi = \varphi
\]
for $k \in \mathfrak{I}^\times$, $k' \in \mathfrak{I}_B^\times$, and $z, z' \in \fo_E^\times$.
If we regard $\varphi$ as an element in $\cS(K^2 \times F^\times)$, then $\varphi$ is the characteristic function of
\[
 \{ (x_1, x_2) \in \fo_K \oplus \fo_K \mid x_1 + x_2 u' \in \fo_F \oplus \fp_F \} \times \fo_F^\times.
\]
Hence, putting
\[
 \cK = \left\{ g \in \GSp_4(\fo_F) \; \middle| \; g \equiv 
 \begin{pmatrix}
  a & 0 & * & * \\
  0 & a & * & * \\
  0 & 0 & * & 0 \\
  0 & 0 & 0 & * 
 \end{pmatrix}
 \bmod \fp_F, \; a \in \fo_F^\times \right\},
\]
we have
\[
 \Omega_\psi^K(g, h) \varphi = \varphi
\]
for $g \in \cK$ and $h \in \fo_K^\times$.

\subsection{The inert case I}
\label{ss:choices-inert-1}

Suppose that
\begin{itemize}
 \item $2 \in \fo_F^\times$;
 \item $J \in \fo_F^\times \smallsetminus (\fo_F^\times)^2$;
 \item $u \in \fo_F^\times$.
\end{itemize}
We define a maximal order $\fo_B$ in $B$ by
\[
 \fo_B = \fo_F + \fo_F \bi + \fo_F \bj + \fo_F \bi \bj.
\]
Let $\varphi \in \cS(B \times F^\times)$ be the characteristic function of $\fo_B \times \fo_F^\times$.
Then we have
\[
 \Omega_\psi^B([k,z], [k',z']) \varphi = \varphi
\]
for $k \in \GL_2(\fo_F)$, $k' \in \fo_B^\times$, and $z, z' \in \fo_E^\times$.
If we regard $\varphi$ as an element in $\cS(K^2 \times F^\times)$, then $\varphi$ is the characteristic function of
\[
 (\fo_K \oplus \fo_K) \times \fo_F^\times.
\]
Hence, putting $\cK = \GSp_4(\fo_F)$, we have
\[
 \Omega_\psi^K(g, h) \varphi = \varphi
\]
for $g \in \cK$ and $h \in \fo_K^\times \rtimes \mu_2$.

\subsection{The inert case II}
\label{ss:choices-inert-2}

Suppose that
\begin{itemize}
 \item $2 \in \fo_F^\times$;
 \item $J \in \fo_F^\times \smallsetminus (\fo_F^\times)^2$;
 \item $u \in \varpi_F \fo_F^\times$.
\end{itemize}
Let $\fo_B$ be the unique maximal order in $B$, so that 
\[
 \fo_B = \fo_F + \fo_F \bi + \fo_F \bj + \fo_F \bi \bj.
\]
Let $\varphi \in \cS(B \times F^\times)$ be the characteristic function of $\fo_B \times \fo_F^\times$.
Then we have
\[
 \Omega_\psi^B([k,z], [k',z']) \varphi = \varphi
\]
for $k \in \mathfrak{I}^\times$, $k' \in \fo_B^\times$, and $z, z' \in \fo_E^\times$, where $\mathfrak{I}$ is the Iwahori subalgebra of $\M_2(F)$ as in \S \ref{ss:choices-split-2}.
If we regard $\varphi$ as an element in $\cS(K^2 \times F^\times)$, then $\varphi$ is the characteristic function of
\[
 (\fo_K \oplus \fo_K) \times \fo_F^\times.
\]
Hence, putting $\cK = \GSp_4(\fo_F)$, we have
\[
 \Omega_\psi^K(g, h) \varphi = \varphi
\]
for $g \in \cK$ and $h \in \fo_K^\times \rtimes \mu_2$.

\subsection{The ramified case}
\label{ss:choices-ramified}

Suppose that 
\begin{itemize}
 \item $2 \in \fo_F^\times$;
 \item $J \in \varpi_F \fo_F^\times$;
 \item $u \in (\fo_F^\times)^2$.
\end{itemize}
Fix $u' \in \fo_F^\times$ such that $(u')^2 = u$.
We identify $E$ and $B$ with $F \oplus F$ and $\M_2(F)$, respectively, via the maps
\[
 a + b \bi \mapsto (a+bu', a-bu'), \quad  
 a + b \bi + c \bj + d \bi \bj \mapsto \mat{a + bu'}{c + du'}{(c - du')J}{a - bu'}. 
\]
Under this identification, we define a maximal order $\fo_B$ in $B$ by $\fo_B = \M_2(\fo_F)$.
Let $\varphi \in \cS(B \times F^\times)$ be the characteristic function of $\fo_B \times \fo_F^\times$.
Then we have
\[
 \Omega_\psi^B([k,z], [k',z']) \varphi = \varphi
\]
for $k \in \GL_2(\fo_F)$, $k' \in \fo_B^\times$, and $z, z' \in \fo_E^\times$.
If we regard $\varphi$ as an element in $\cS(K^2 \times F^\times)$, then $\varphi$ is the characteristic function of
\[
 \{ (x_1, x_2) \in \fp_K^{-1} \oplus \fp_K^{-1} \mid x_1 - x_2 u' \in \fo_K \} \times \fo_F^\times.
\]
Hence, putting 
\[
 \cK = \left\{ g \in \GSp_4(\fo_F) \; \middle| \; g \equiv 
 \begin{pmatrix}
  a & 0 & 0 & 0 \\
  0 & a & 0 & 0 \\
  0 & 0 & * & 0 \\
  0 & 0 & 0 & * 
 \end{pmatrix}
 \bmod \fp_F, \; a \in \fo_F^\times \right\},
\]
we have
\[
 \Omega_\psi^K(g, h) \varphi = \varphi
\]
for $g \in \cK$ and $h \in \fo_K^\times \rtimes \mu_2$.

\subsection{The dyadic case}
\label{ss:choices-dyadic}

Suppose that
\begin{itemize}
 \item $2 \in \fp_F$;
 \item $J \in (\fo_F^\times)^2$;
 \item $u \in (\fo_F^\times)^2$.
\end{itemize}
Fix $J', u' \in \fo_F^\times$ such that $(J')^2 = J$, $(u')^2 = u$.
We identify $K$ and $B$ with $F \oplus F$ and $\M_2(F)$, respectively, as in \S \ref{ss:choices-split-1}.
Under this identification, we define a maximal order $\fo_B$ in $B$ by $\fo_B = \M_2(\fo_F)$.
Let $\varphi \in \cS(B \times F^\times)$ be the characteristic function of $\fo_B k_0 \times 2^{-1} \fo_F^\times$, where $k_0$ is the element in $B^\times$ as in \S \ref{ss:choices-split-2}.
Then we have
\[
 \Omega_\psi^B([k,z], [k',z']) \varphi = \varphi
\]
for $k \in \GL_2(\fo_F)$, $k' \in \fo_B^\times$, and $z, z' \in \fo_E^\times$.
If we regard $\varphi$ as an element in $\cS(K^2 \times F^\times)$, then $\varphi$ is the characteristic function of 
\[
 \{ (x_1, x_2) \in \fo_K \oplus \fo_K \mid x_1 + x_2 u' \in 2 \fo_K \} \times 2^{-1} \fo_F^\times.
\]
Hence, putting 
\[
 \cK = \left\{ g \in \GSp_4(\fo_F) \; \middle| \; g \equiv 
 \begin{pmatrix}
  a & 0 & 0 & 0 \\
  0 & a & 0 & 0 \\
  0 & 0 & * & 0 \\
  0 & 0 & 0 & * 
 \end{pmatrix}
 \bmod 2 \fo_F, \; a \in \fo_F^\times \right\},
\]
we have
\[
 \Omega_\psi^K(g, h) \varphi = \varphi
\]
for $g \in \cK$ and $h \in \fo_K^\times \rtimes \mu_2$.

\section{The main construction of modular forms}
\label{s:main_construction}

In this section, we construct an algebraic and $p$-integral Siegel modular form of degree $2$ contributing to $H^{2,0}$.
We also introduce a $p$-depletion $f^\flat$ of $f$ in the following sense: if $a_f(T)$ and $a_{f^\flat}(T)$ are the $T$-th Fourier coefficients of $f$ and $f^\flat$ for $T \in \Sym_2(\Q)$, respectively, then we have $a_{f^\flat}(T) = 0$ unless $T \in \Sym_2(\Z_p)$ and $\det(T) \in \Z_p^\times$, in which case
\[
 a_{f^\flat}(T) = a_f(T).
\]

\subsection{Setup}
\label{ss:choices}

Fix an odd rational prime $p$.
Let $\Sigma$ be a finite set of odd rational primes such that $p \notin \Sigma$.
Let $\Qbar$ be the algebraic closure of $\Q$ in $\C$.
Fix an algebraic closure $\Qbar_p$ of $\Q_p$ and an embedding $\Qbar \hookrightarrow \Qbar_p$.

Let $K$ be an imaginary quadratic field such that
\begin{itemize}
\item $2$ and $p$ are split in $K$;
\item $v$ is split or inert in $K$ for all $v \in \Sigma$.
\end{itemize}
Fix an embedding $K \hookrightarrow \Qbar$, which determines embeddings $i_\infty : K \hookrightarrow \C$ and $i_p : K \hookrightarrow \Q_p$.
For each place $v$ of $\Q$, put $K_v = K \otimes_\Q \Q_v$.
We use the identification $K_\infty \simeq \C$ such that its restriction to $K$ agrees with $i_\infty$.
We also use the identification $K_p \simeq \Q_p \oplus \Q_p$ such that the restriction to $K$ of its first projection agrees with $i_p$.
Choose a trace zero element $\bj \in K^\times$ such that $K = \Q + \Q \bj$.
We may assume that $J = \bj^2$ is a negative square-free integer (so that $J$ is the discriminant of $K$).
Let $\Sigma_s^K$ (resp.~$\Sigma_i^K$, resp.~$\Sigma_r^K$) be the set of odd rational primes which are split (resp.~inert, resp.~ramified) in $K$.
Put $\Sigma^+ = \Sigma \cap \Sigma_s^K$ and $\Sigma^- = \Sigma \cap \Sigma_i^K$.
We assume that $\# \Sigma^-$ is odd.
For each $v \in \Sigma_s^K \cup \{ 2 \}$, we fix $J'_v \in \Z_v^\times$ such that $(J'_v)^2 = J$ and identify $K_v$ with $\Q_v \oplus \Q_v$ via the map $a + b \bj \mapsto (a + b J'_v, a - b J'_v)$.
(When $v=p$, we choose $J_v'$ such that it determines the above identification induced by $i_p$.)

Let $B$ be the quaternion algebra over $\Q$ which is ramified precisely at $\Sigma^- \cup \{ \infty \}$, so that $K$ embeds into $B$.
Choose a trace zero element $\bi \in B^\times$ such that $B = K + K \bi$.
Then $E = \Q + \Q \bi$ is an imaginary quadratic field such that
\begin{itemize}
\item $v$ is split or inert in $E$ for all $v \in \Sigma_i^K \smallsetminus \Sigma^-$;
\item $v$ is ramified in $E$ for all $v \in \Sigma^-$. 
\end{itemize}
In particular, we have $E \ne K$.
By replacing $\bi$ by $\alpha \bi$ with some $\alpha \in K^\times$ if necessary, we may assume that
\begin{itemize}
\item $p$ is split or inert in $E$;
\item $v$ is split in $E$ for all $v \in \Sigma^+ \cup \Sigma_r^K \cup \{ 2 \}$.
\end{itemize}
We may further assume that $u = \bi^2$ is a negative square-free integer (so that $u$ is the discriminant of $E$).
Let $\Sigma_s^E$ (resp.~$\Sigma_i^E$, resp.~$\Sigma_r^E$) be the set of odd rational primes which are split (resp.~inert, resp.~ramified) in $E$.
For each $v \in \Sigma_s^E \cup \{ 2 \}$, we fix $u'_v \in \Z_v^\times$ such that $(u'_v)^2 = u$ and identify $E_v$ with $\Q_v \oplus \Q_v$ via the map $a + b \bi \mapsto (a + b u'_v, a - b u'_v)$.

\subsection{Construction}
\label{ss:theta_construction}

Now we apply the theta lifting described in \S \ref{ss:theta-Sp4} (with $F = \Q$) to construct a Siegel modular form of degree $2$.
Let $\chi$ be a character of $\A_K^\times/K^\times$ such that
\begin{itemize}
\item $\chi|_{\A^\times} = 1$;
\item $\chi_\infty(z) = \bar{z}/z$;
\item $\chi_v$ is unramified for all $v \ne \infty$.
\end{itemize}
Let $\rho$ be the $3$-dimensional representation of $\GL_2(\C)$ on $V_\rho = \wedge^2 (\fp^+)^*$ such that $\rho(a - \sqrt{-1} b)$ agrees with the natural action of $\smat{a}{b}{-b}{a}$ for all $a + \sqrt{-1} b \in \U(2)$.
We identify $V_\rho$ with $\C^3$ via the basis $\eta, \eta', \eta''$ as in \S \ref{ss:schwartz-def}.
Let $\vec{\varphi} = \vec{\varphi}_\infty \otimes (\otimes_{v \ne \infty} \varphi_v) \in S(\A_K^2 \times \A^\times) \otimes V_\rho$ be the Schwartz form such that 
\begin{itemize}
\item $\vec{\varphi}_\infty \in S(\C^2 \times \R^\times) \otimes V_\rho$ is the Schwartz form as in \S \ref{ss:schwartz-def};
\item $\varphi_v \in S(K_v^2 \times \Q_v^\times)$ is the Schwartz function as in \S \ref{ss:choices-split-1} (resp.~\S \ref{ss:choices-split-2}, resp.~\S \ref{ss:choices-inert-1}, resp.~\S \ref{ss:choices-inert-2}, resp.~\S \ref{ss:choices-ramified}, resp.~\S \ref{ss:choices-dyadic}) if $v \in \Sigma_s^K \smallsetminus \Sigma^+$ (resp.~$v \in \Sigma^+$, resp.~$v \in \Sigma_i^K \smallsetminus \Sigma^-$, resp.~$v \in \Sigma^-$, resp.~$v \in \Sigma_r^K$, resp.~$v=2$).
\end{itemize}
Let $\phi = \theta_{\vec{\varphi}}(\chi)^\std$ be the theta lift (with respect to the standard additive character of $\A/\Q$ and the standard measure on $\A_K^\times$).
Then we have $\phi \in \cM_\rho(\cK)$, where $\cK = \prod_{v \ne \infty} \cK_v$ is the open compact subgroup of $\GSp_4(\A_f)$ such that $\cK_v$ is as in \S \ref{ss:choices-split-1} (resp.~\S \ref{ss:choices-split-2}, resp.~\S \ref{ss:choices-inert-1}, resp.~\S \ref{ss:choices-inert-2}, resp.~\S \ref{ss:choices-ramified}, resp.~\S \ref{ss:choices-dyadic}) if $v \in \Sigma_s^K \smallsetminus \Sigma^+$ (resp.~$v \in \Sigma^+$, resp.~$v \in \Sigma_i^K \smallsetminus \Sigma^-$, resp.~$v \in \Sigma^-$, resp.~$v \in \Sigma_r^K$, resp.~$v=2$).
Let $f \in M_\rho(\Gamma)$ be the classical modular form corresponding to $\phi$, where $\Gamma = \Sp_4(\Q) \cap \cK$.

To describe properties of the Fourier coefficients of $f$, we need to introduce more notation.
Let $\Q(\chi)$ be the field generated by $\chi(a)$ for all $a \in \A_{K,f}^\times$, where $\A_{K,f}$ is the ring of finite adeles of $K$.

\begin{lem}
We have $\Q(\chi) \supset K$ and $[\Q(\chi):\Q] < \infty$.
\end{lem}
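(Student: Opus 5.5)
Both assertions come from the finiteness of the relevant class group, together with the way the infinity type of $\chi$ pins down $\chi$ on principal ideles.

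For $[\Q(\chi):\Q] < \infty$, consider the group $\A_{K,f}^\times / K^\times \widehat{\fo}_K^\times$, which is the class group of $K$ and hence finite. The character $\chi$ is trivial on $\widehat{\fo}_K^\times = \prod_v \fo_{K_v}^\times$, since $\chi_v$ is unramified at every finite place. Take $a \in \A_{K,f}^\times$. Let $h$ be the class number and set $N = h \cdot \#\fo_K^\times$. Then $a^h = \gamma u$ with $\gamma \in K^\times$ embedded diagonally in the finite adeles and $u \in \widehat{\fo}_K^\times$. Since $\chi$ is trivial on $K^\times \subset \A_K^\times$, viewing $\gamma$ as an element of $\A_K^\times$ and splitting off its archimedean component gives
\[
 \chi_f(\gamma) = \chi_\infty(\gamma)^{-1} = \gamma/\bar{\gamma},
\]
where $\chi_\infty(z) = \bar{z}/z$ is evaluated through $i_\infty$. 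Therefore
\[
 \chi(a)^h = \chi_f(\gamma)\,\chi_f(u) = \gamma/\bar{\gamma} \in K.
\]
So every value $\chi(a)$ is a root of a polynomial $X^h - c$ with $c \in K$, and these polynomials come from only finitely many values, because $\chi$ factors through a finite class group on the finite ideles modulo $K^\times$-translates. It follows that $\Q(\chi)$ is generated over $\Q$ by finitely many algebraic numbers. More concretely, $\chi|_{\A_{K,f}^\times}$ takes values in a finitely generated subgroup of $\Qbar^\times$: the image of $\widehat{\fo}_K^\times$ is trivial, the image of the finite-index subgroup $K^\times\widehat{\fo}_K^\times\cap \A_{K,f}^\times$ lies in $\{\gamma/\bar\gamma\}$ (with $\gamma$ ranging over $K^\times$), and coset representatives contribute finitely many algebraic numbers. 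Hence $[\Q(\chi):\Q] < \infty$.

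For $\Q(\chi) \supset K$, the formula $\chi(\gamma) = \gamma/\bar{\gamma}$ for $\gamma \in K^\times$ shows that $\Q(\chi)$ contains the elements $\gamma/\bar{\gamma}$. I will pick $\gamma = 1 + \bj$, so that
\[
 \gamma/\bar{\gamma} = \frac{1+\bj}{1-\bj} = \frac{(1+\bj)^2}{1-J} = \frac{1+J+2\bj}{1-J}.
\]
This element lies in $K$ but not in $\Q$, since $J \ne 1$ and the coefficient of $\bj$ is nonzero. Hence $K = \Q(\gamma/\bar{\gamma}) \subset \Q(\chi)$.

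The only subtle step is the bookkeeping that uses triviality of $\chi$ on $K^\times$ to compute $\chi$ on the finite part of a principal idele via $\chi_\infty$. Everything else follows directly from finiteness of the class group.
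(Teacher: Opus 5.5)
Your argument is correct and is essentially the paper's: write $a=\gamma a_i u$ with $a_i$ a class representative and $u\in\widehat{\fo}_K^\times$, use $\chi_f(\gamma)=\chi_\infty(\gamma)^{-1}=\gamma/\bar\gamma\in K$ (which also gives $K\subset\Q(\chi)$, as your choice $\gamma=1+\bj$ makes explicit) and $\chi(a_i)^h\in K$, so that $\Q(\chi)=K(\chi(a_1),\dots,\chi(a_h))$. You should drop two side claims: that the constants $c$ in $X^h-c$ take only finitely many values, and that the image lies in a finitely generated subgroup of $\Qbar^\times$. Both are false, since the values $\gamma/\bar\gamma$ fill out $K^1$, which has infinite rank, and neither is needed.
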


\begin{proof}
Let $h = h_K$ be the class number of $K$ and fix a set of representatives $\{ a_1, \dots, a_h \}$ for $K^\times \backslash \A_{K,f}^\times / \hat{\fo}_K^\times$, where $\hat{\fo}_K = \prod_{v \ne \infty} \fo_{K_v}$.
Then $\Q(\chi)$ is generated by $\chi(a_1), \dots, \chi(a_h)$ and $\chi_\infty(b) = \bar{b}/b$ for all $b \in K^\times$.
In particular, we have $\Q(\chi) \supset K$, noting that $[K:\Q] = 2$.
On the other hand, we have $a_i^h \in b_i \hat{\fo}_K^\times$ for some $b_i \in K^\times$, so that $\chi(a_i)^h = \chi_\infty(b_i)^{-1} \in K$.
Hence $\Q(\chi) = K(\chi(a_1), \dots, \chi(a_h))$ is a finite extension of $K$.
This completes the proof.
\end{proof}

By this lemma, we may regard $\Q(\chi)$ as a subfield of $\Qbar_p$ via the fixed embedding $\Qbar \hookrightarrow \Qbar_p$.
Let $\cO$ be the completion of the ring of integers of $\Q(\chi)$ in $\Qbar_p$.

\begin{lem}
\label{l:chi-values-integral}
Let $v \ne \infty$ and $a \in K_v^\times$.
\begin{enumerate}
\item 
If $v \ne p$, then we have $\chi_v(a) \in \cO^\times$.
\item
If $v = p$ and $a = (p,1)$, then we have $\chi_v(a) \in p \cO^\times$.
\end{enumerate}
\end{lem}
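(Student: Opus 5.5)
The plan is to reduce everything to the class-group description of $\chi$ from the preceding lemma. Let $h = h_K$ and fix representatives $a_1, \dots, a_h$ for $K^\times \backslash \A_{K,f}^\times / \hat{\fo}_K^\times$. Every $a \in \A_{K,f}^\times$ can be written as $a = \gamma a_i k$ with $\gamma \in K^\times$ and $k \in \hat{\fo}_K^\times$. Since $\chi$ is trivial on $K^\times$ and unramified at all finite places, we get
\[
 \chi_f(a) = \chi_\infty(\gamma)^{-1} \chi_f(a_i).
\]
For a local element $a \in K_v^\times$ (viewed in $\A_{K,f}^\times$), this gives $\chi_v(a) = \chi_f(a)$.

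\emph{Values on the representatives.} By the previous proof, $a_i^h = b_i k_i$ with $b_i \in K^\times$ and $k_i \in \hat{\fo}_K^\times$, so $\chi(a_i)^h = \chi_\infty(b_i)^{-1} = b_i/\bar{b}_i$. Hence the key quantities are the numbers $b/\bar b$ for $b \in K^\times$, and we need their $p$-adic valuations under $i_p$. I will use the identification $K_p \simeq \Q_p \oplus \Q_p$, where $i_p$ is the first projection and $\bar{\ }$ swaps the two factors. Then
\[
 \ord_p(b/\bar b) = \ord_{\fp}(b) - \ord_{\bar\fp}(b),
\]
where $\fp$ is the prime of $K$ corresponding to $i_p$.

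\emph{Case $v \ne p$.} Take $a \in K_v^\times$. Its ideal is supported at $v$, so write the ideal class relation as $a = \gamma a_i k$. We may choose the representatives $a_i$ to have trivial components at $p$, since class representatives can be taken prime to $p$. Then $\gamma \in K^\times$ satisfies $(\gamma) = (\text{ideal of } a)(\text{ideal of } a_i)^{-1}$, which is prime to $p$. So $\gamma$ is a unit at both primes above $p$, and $\chi_\infty(\gamma)^{-1} = \gamma/\bar\gamma \in \cO^\times$. Similarly $\chi(a_i)^h = b_i/\bar b_i$ with $b_i$ prime to $p$, so $\chi(a_i)$ is a $p$-adic unit. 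Therefore $\chi_v(a) \in \cO^\times$.

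\emph{Case $v = p$, $a = (p,1)$.} The element $(p,1) \in K_p^\times$ generates the ideal $\fp$ (the first factor corresponds to $i_p$). Write $\fp^h = (\beta)$ with $\beta \in K^\times$, so that $a^h = \beta \cdot k$ up to a unit $k \in \hat{\fo}_K^\times$. This requires checking that $\beta$ has components $(p^h, 1)$ times units at $p$, which holds since $(\beta) = \fp^h$. Then
\[
 \chi_p(a)^h = \chi_\infty(\beta)^{-1} = \beta/\bar\beta,
\]
whose $i_p$-valuation is $\ord_\fp(\beta) - \ord_{\bar\fp}(\beta) = h$. So $\ord_p \chi_p(a) = 1$, i.e.\ $\chi_p(a) \in p\cO^\times$, using that valuations are normalized on $\Qbar_p$ and $\cO$ is a valuation ring containing $p$.

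\emph{Main obstacle.} The delicate step is bookkeeping the sign conventions: whether $\chi_\infty(b) = \bar b/b$ applied to the diagonal embedding yields $b/\bar b$ or its inverse, and which factor of $K_p$ corresponds to $i_p$. An error here would flip $p$ to $p^{-1}$. I would verify this directly with the relation $1 = \chi(\beta) = \chi_\infty(\beta)\chi_f(\beta)$ and the matching of the identifications fixed in \S\ref{ss:choices}.
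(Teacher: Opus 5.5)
Your proposal is correct, and part (2) is exactly the paper's argument: you write $a^h = \beta k$ with $\beta \in K^\times$ generating $\fp^h$, so that $\chi_p(a)^h = \chi_\infty(\beta)^{-1} = \beta/\bar{\beta}$ has $p$-adic valuation $h$. For part (1) the paper applies the same direct trick, namely $a^h \in b\,\hat{\fo}_K^\times$ with $(b)$ supported above $v \ne p$, so $b, \bar{b} \in \Z_p^\times$; your detour through class representatives chosen prime to $p$ is therefore valid but unnecessary.
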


\begin{proof}
We have $a^h \in b \hat{\fo}_K^\times$ for some $b \in K^\times$, so that $\chi_v(a)^h = \chi_\infty(b)^{-1} = b/\bar{b}$.
We regard $b,\bar{b}$ as elements in $\Q_p$.
Then we have
\[
 \begin{cases}
  b, \bar{b} \in \Z_p^\times & \text{if $v \ne p$;} \\
  b \in p^h \Z_p^\times, \; \bar{b} \in \Z_p^\times & \text{if $v = p$ and $a = (p,1)$.}
 \end{cases}
\]
This yields the assertion.
\end{proof}

Recall that
\[
 q(x) = \mat{x_1 \bar{x}_1}{\frac{1}{2} (x_1 \bar{x}_2 + \bar{x}_1 x_2)}{\frac{1}{2} (x_1 \bar{x}_2 + \bar{x}_1 x_2)}{x_2 \bar{x}_2}
\]
and $h_0 x = (\bar{x}_1, \bar{x}_2)$ for $x = (x_1, x_2) \in K^2$.

\begin{thm}
\label{t:fourier-theta}
Let $T \in \Sym_2(\Q)$.
\begin{enumerate}
\item 
\label{fourier-theta-1}
We have $a_f(T) = 0$ unless $T = t q(x)$ for some $x = (x_1,x_2) \in K^2 \smallsetminus \{ 0 \}$ and $t \in \Q^\times$ (so that $T \ne 0$), in which case
\[
 a_f(T) = \prod_{v \ne \infty} \cW_{x,t,v}(1) \cdot
 \begin{pmatrix}
  t \bar{x}_1^2 \\
  t \bar{x}_1 \bar{x}_2 \\
  t \bar{x}_2^2
 \end{pmatrix}
 + \prod_{v \ne \infty} \cW_{h_0 x,t,v}(1) \cdot
 \begin{pmatrix}
  t x_1^2 \\
  t x_1 x_2 \\
  t x_2^2
 \end{pmatrix}
\]
if $\det(T) \ne 0$ and
\[
 a_f(T) = \prod_{v \ne \infty} \cW_{x,t,v}(1) \cdot
 \begin{pmatrix}
  t \bar{x}_1^2 \\
  t \bar{x}_1 \bar{x}_2 \\
  t \bar{x}_2^2
 \end{pmatrix}
\]
if $\det(T) = 0$, where $\cW_{x,t,v}$ is a function on $\GSp_4(\Q_v)$ given by
\[
 \cW_{x,t,v}(g) = \int_{K_v^\times} \Omega_{\psi,v}(g) \varphi_v(h_v^{-1} x, \nu(h_v) t) \chi_v(h_v) \, dh_v
\]
with the standard measure $dh_v$ on $K_v^\times$.
\item
\label{fourier-theta-2}
We have $a_f(T) \in \Q(\chi)^3$.
\item 
\label{fourier-theta-3}
We have $a_f(T) \in \cO^3$.
\end{enumerate}
\end{thm}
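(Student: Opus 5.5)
We will derive (1) from Lemma \ref{l:W_T} together with the relation \eqref{eq:a_f(T)} between $a_f(T)$ and the adelic Whittaker coefficient $W_T$. Since $\chi|_{\A_K^1}\ne 1$ (indeed $\chi_\infty(z)=\bar z/z$ is nontrivial on $\C^1$), Lemma \ref{l:W_T} gives $W_0=0$. For $T\ne0$ it gives $W_T=0$ unless $T=t q(x)$, and otherwise $W_T=\cW_{x,t}+\cW_{h_0x,t}$ or $\cW_{x,t}$, according as $\det T\neq 0$ or $\det T=0$. Each $\cW_{x,t}$ factors as $\prod_v\cW_{x,t,v}$ because $\vec\varphi$ and $\chi$ are factorizable and $dh^\std$ is a product measure. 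The only remaining task for (1) is the archimedean factor. Take $z=b+\sqrt{-1}a{}^ta$ and evaluate $\cW_{x,t,\infty}(\boldsymbol n(b)\boldsymbol m(a))$ using the explicit Weil action. Then integrate over $h\in\C^\times$: write $h=re^{\sqrt{-1}\alpha}$, so $\nu(h)=r^2$ and $\chi_\infty(h)=e^{-2\sqrt{-1}\alpha}$. In $\vec\varphi_\infty$ we have $t\phi(t)\bar x_1^2\varphi^0$, etc. The $\alpha$-integral fixes the weight: $h^{-1}x$ multiplies $\bar x_i$ by $e^{\sqrt{-1}\alpha}$, so $\bar x_i\bar x_j$ picks up $e^{2\sqrt{-1}\alpha}$, which cancels $\chi_\infty$. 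After substituting $s=r^2t$, the $r$-integral becomes $\int\phi(s)\,ds/s=1/(2\pi)$ times the Gaussian. The Gaussian combines with the $\psi(\tr(Tb))$ factor to give $e^{2\pi\sqrt{-1}\tr(Tz)}$. Finally, undoing $\rho({}^ta^{-1})$ leaves exactly $t(\bar x_1^2,\bar x_1\bar x_2,\bar x_2^2)$. I must track the constants carefully here: the factor $2$ in $dh_\infty$ and the $2\pi$ in $\int_0^\infty\phi(t)\,dt/t$ are presumably chosen so that the constant is $1$. For $h_0x$ the same computation gives conjugated coordinates.

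For (2) and (3) we compute the local factors $\cW_{x,t,v}(1)=\int_{K_v^\times}\varphi_v(h^{-1}x,\nu(h)t)\chi_v(h)\,dh$. Since $\varphi_v$ is a characteristic function of a set $L_v\times U_v$ (with $U_v=\fo^\times$, or $2^{-1}\fo^\times$ at $v=2$), the integrand is $\chi_v(h)$ times the indicator of a set that is invariant under $\fo_{K_v}^\times$. This set is compact: $\nu(h)\in t^{-1}U_v$ bounds $|h|$ in the inert case, and in the split case $K_v=\Q_v\oplus\Q_v$ the lattice condition together with $x\ne 0$ bounds the other coordinate. Because $\chi_v$ is unramified and $\vol(\fo_{K_v}^\times)=1$, the integral is a finite sum $\sum\chi_v(\varpi^{m})$ over the relevant $\fo_{K_v}^\times$-cosets. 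For almost all $v$ this sum equals $1$, since $x\in L_v$ and $t\in\fo^\times$ there, so only $h\in\fo_{K_v}^\times$ contributes. Hence every local factor is a finite sum of values of $\chi_v$, which lie in $\Q(\chi)$. The vector part $t\bar x_i\bar x_j$ lies in $K\subset\Q(\chi)$, which proves (2).

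For (3), Lemma \ref{l:chi-values-integral} shows that $\chi_v(h)\in\cO$ for $v\ne p$, and at $v=p$ we have $\chi_p((p,1))\in p\cO$ and $\chi_p((1,p))=\chi_p((p,1))^{-1}\chi_p(p)$. The integrality of $\cW_{x,t,v}(1)$ is clear for $v\neq p$, but the vector part $t\bar x_i\bar x_j$ need not be $p$-integral. I plan to combine it with the $p$-factor. At $p$, $K_p$ is split and $\varphi_p=\mathbf 1_{\fo_K^2\times\Z_p^\times}$. The contributing $h=(p^{a},p^{-a})\cdot$unit must satisfy $h^{-1}x\in\fo_K^2$, which bounds $a$ in terms of the valuations of the two components of $x$. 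The factor $\chi_p(h)$ then contributes $p$-powers of size $p^{a}$, and these should compensate the negative valuations of the $\bar x$-components after using $\chi_p(p,p)=\chi|_{\Q_p^\times}(p)=1$. A cleaner route is to rewrite $t\bar x_i\bar x_j$ in terms of $h^{-1}x$ and $\nu(h)t$, moving $h$ into the summand. Write $y=h^{-1}x$ and $s=\nu(h)t$. Then $t\bar x_i\bar x_j=s\,\bar y_i\bar y_j\,\bar h^{2}/\nu(h)=s\,\bar y_i\bar y_j\,\bar h/h$. Since $\chi_\infty(h)^{-1}=h/\bar h$ on $K^\times$, the adelic character $\chi$ converts $\bar h/h$ into local values of $\chi$. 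Globally, one uses $\chi(h)=1$ for $h\in K^\times$ to absorb this factor, so that everything becomes products of $\cO$-integral quantities: $s\bar y_i\bar y_j$ is $p$-integral in the first $K_p$-coordinate because $y\in\fo_K^2$ and $s\in\Z_p^\times$, and Lemma \ref{l:chi-values-integral} controls the $\chi_v$-values. I expect this $p$-integrality bookkeeping, which replaces $x$ by a representative $\gamma x$ ($\gamma\in K^\times$) making the $p$-component integral, to be the main obstacle.
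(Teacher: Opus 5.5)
Parts (1) and (2) of your plan are sound and follow the paper. (1) is Lemma \ref{l:W_T} plus the archimedean evaluation of Lemma \ref{l:fourier-real}. The constants do come out to $1$: $\int_{\C^\times}\phi(\nu(h)t)\,dh=4\pi\int_0^\infty\phi(r^2t)\,dr/r=1$ for $t>0$. For $t<0$ the integral is $0$, since $\supp(\phi)\subset\R_+^\times$, a case your substitution $s=r^2t$ tacitly excludes. For (2), you observe that each $\cW_{x,t,v}(1)$ is a finite sum of values $\chi_v(h)$ over $\fo_{K_v}^\times$-cosets of volume $1$. This is a fine substitute for the explicit Lemmas \ref{l:fourier-split-1}--\ref{l:fourier-dyadic}, because $\chi_v(h)\in\Q(\chi)$ by the definition of $\Q(\chi)$.

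Part (3), however, is not actually proved. You have the right ingredients, but neither route is carried out, and the one you call cleaner is misdirected. In your identity $t\bar x_i\bar x_j=s\,\bar y_i\bar y_j\,\bar h/h$ the element $h$ lies in $K_p^\times$, so there is nothing global to which $\chi|_{K^\times}=1$ could be applied. Replacing $(x,t)$ by $(\gamma x,\nu(\gamma)^{-1}t)$ does not remove the estimate either, since it preserves $m_1+m_2+m_0$. There is also a slip in your first route: contributing elements are $h=(h_1,h_2)=(p^{n_1},p^{n_2})\epsilon$ with $\epsilon=(\epsilon_1,\epsilon_2)\in\fo_{K_p}^\times$ and $n_1+n_2=-m_0$, not $(p^a,p^{-a})$.

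The needed global input is already contained in Lemma \ref{l:chi-values-integral}(2), and with it your identity closes purely locally. At $p$ we are in \S\ref{ss:choices-split-1} with $u\in\Z_p^\times$, so $\varphi_p$ is the characteristic function of $\fo_{K_p}^2\times\Z_p^\times$. Write $x_i=(x_{i1},x_{i2})$ in $K_p=\Q_p\oplus\Q_p$, so that $i_p(\bar x_i)=x_{i2}$. Then your identity reads $i_p(t\bar x_i\bar x_j)=(h_2/h_1)\,s\,y_{i2}y_{j2}$, and $s\,y_{i2}y_{j2}\in\Z_p$ because $y\in\fo_{K_p}^2$ and $s\in\Z_p^\times$. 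Using $\chi_p(p,p)=1$ and unramifiedness, $\chi_p(h)\,h_2/h_1=(\alpha/p)^{n_1-n_2}\,\epsilon_2/\epsilon_1\in\cO^\times$, where $\alpha=\chi_p(p,1)\in p\cO^\times$. Hence $\cW_{x,t,p}(1)\cdot i_p(t\bar x_i\bar x_j)\in\cO$. The $h_0x$-term is the same computation with $x$ replaced by $h_0x$.

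This is the paper's argument in different form. The paper writes $\cW_{x,t,p}(1)=\sum_{i=-m_2-m_0}^{m_1}\alpha^{2i+m_0}$, whose lowest term has valuation $-2m_2-m_0$. It then compares this with $\ord_p(t\,x_{i2}x_{j2})\ge m_0+2m_2$.
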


\begin{proof}
Since $\chi_\infty(z) = \bar{z}/z$, we have $\chi|_{\A_K^1} \ne 1$.
Hence \eqref{fourier-theta-1} follows from \eqref{eq:a_f(T)}, Lemma \ref{l:W_T}, and Lemma \ref{l:fourier-real} below.
By Lemmas \ref{l:fourier-split-1}, \ref{l:fourier-split-2}, \ref{l:fourier-inert}, \ref{l:fourier-ramified}, \ref{l:fourier-dyadic} below, we have $\cW_{x,t,v}(1) \in \Q(\chi)$ for all $v \ne \infty$.
This proves \eqref{fourier-theta-2}.
Moreover, by combining these lemmas with Lemma \ref{l:chi-values-integral}, we have $\cW_{x,t,v}(1) \in \cO$ for all $v \ne \infty, p$ and
\[
 \cW_{x,t,p}(1) = \sum_{i=-m_2-m_0}^{m_1} \alpha^{2i+m_0} 
\]
with $\alpha = \chi_p(p,1) \in p \cO^\times$, where we regard $x_1, x_2, t$ as elements in $\Q_p$ and put
\[
 m_1 = \min \{ \ord_{\Q_p}(x_1), \ord_{\Q_p}(x_2) \}, \quad
 m_2 = \min \{ \ord_{\Q_p}(\bar{x}_1), \ord_{\Q_p}(\bar{x}_2) \}, \quad
 m_0 = \ord_{\Q_p}(t).
\]
In particular, we have
\[
 \cW_{x,t,p}(1) \cdot
 \begin{pmatrix}
  t \bar{x}_1^2 \\
  t \bar{x}_1 \bar{x}_2 \\
  t \bar{x}_2^2
 \end{pmatrix}
 \in \cO^3.
\]
This proves \eqref{fourier-theta-3}.
\end{proof}

\subsection{$p$-depletion}
\label{ss:p-depletion}

Define an element $\tau_p$ in the group ring $\Q[\Sp_4(\Q_p)]$ by 
\[
 \tau_p = I_4 - p^{-1} \sum_{a \in \GL_2(\Z_p) \backslash X_p} \boldsymbol{m}(a^{-1}) + p^{-1} \cdot \boldsymbol{m}(p^{-1} I_2)
\]
and put 
\[
 \phi^\flat = \tau_p \phi,
\]
where $X_p = \{ a \in \M_2(\Z_p) \mid \ord_{\Q_p}(\det(a)) = 1 \}$ and $\Q[\Sp_4(\Q_p)]$ acts on $\phi$ by right translation.
(Strictly speaking, we choose a set of representatives for $\GL_2(\Z_p) \backslash X_p$, but $\phi^\flat$ does not depend on this choice.)
Then we have $\phi^\flat \in \cM_\rho(\cK^\flat)$, where $\cK^\flat = \cK_p^\flat \prod_{v \ne \infty, p} \cK_v$ is the open compact subgroup of $\GSp_4(\A_f)$ with 
\[
 \cK_p^\flat = \left\{ g \in \GSp_4(\Z_p) \; \middle| \; g \equiv 
 \begin{pmatrix}
  * & * & * & * \\
  * & * & * & * \\
  0 & 0 & * & * \\
  0 & 0 & * & * 
 \end{pmatrix}
 \bmod p^2 \Z_p \right\}.
\]
Let $f^\flat \in M_\rho(\Gamma^\flat)$ be the classical modular form corresponding to $\phi^\flat$, where $\Gamma^\flat = \Sp_4(\Q) \cap \cK^\flat$.
We call $f^\flat$ the $p$-depletion of $f$, which is justified by the following.

\begin{thm}
\label{t:fourier-f-flat}
Let $T \in \Sym_2(\Q)$.
Then we have $a_{f^\flat}(T) = 0$ unless $T \in \Sym_2(\Z_p)$ and $\det(T) \in \Z_p^\times$, in which case 
\[
 a_{f^\flat}(T) = a_f(T).
\]
\end{thm}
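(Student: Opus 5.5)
We compute the Fourier coefficients of $\phi^\flat = \tau_p \phi$ directly from the definition of $\tau_p$, using the formula \eqref{eq:a_f(T)} relating $a_f(T)$ to the Whittaker-type coefficient $W_T$. Since $\tau_p$ acts by right translation at the place $p$ only, the $T$-th coefficient of $\phi^\flat$ is $W^\flat_T(g) = \sum_j c_j W_T(g\gamma_j)$, where $\tau_p = \sum_j c_j \gamma_j$. By Theorem \ref{t:fourier-theta}, $W_T$ factors as an archimedean factor times $\prod_{v\ne\infty}\cW_{x,t,v}$ (plus the $h_0 x$ term). The operator only touches the $p$-component, so it suffices to show that applying $\tau_p$ to the local function $\cW_{x,t,p}$ gives $\cW_{x,t,p}(1)$ if $T \in \Sym_2(\Z_p)$ with $\det T \in \Z_p^\times$, and $0$ otherwise. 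This is the content of the local statement (Proposition \ref{p:fourier-p-depletion} in \S\ref{s:local_fourier}) that I would prove.

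Concretely, each $\boldsymbol{m}(a^{-1})$ with $a \in \M_2(\Z_p)$ rescales the Fourier variable. By the transformation $\Omega_\psi(\boldsymbol{m}(a))\varphi(x,t) = \xi|\det a|\,\varphi(xa,t)$, the coefficient $\cW_{x,t,p}(\boldsymbol{m}(a^{-1}))$ equals $|\det a|^{-1}\cW_{xa^{-1},t,p}(1)$, up to the character $\xi_{K_p/\Q_p}=1$ since $p$ splits in $K$. Moreover, $T=tq(x)$ is replaced by ${}^ta^{-1}Ta^{-1}$. Since $\varphi_p$ is the characteristic function of $\fo_K^2\times\Z_p^\times$ ($p\notin\Sigma$, $p$ split in $K$), $\cW_{x,t,p}(1)$ is given by the explicit geometric sum $\sum_{i=-m_2-m_0}^{m_1}\alpha^{2i+m_0}$ recorded in the proof of Theorem \ref{t:fourier-theta}, or more intrinsically as a sum over the $h\in K_p^\times/\fo_{K_p}^\times$ with $h^{-1}x\in\fo_K^2$ and $\nu(h)t\in\Z_p^\times$. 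Writing $\lambda(T)$ for the indicator that $T\in\Sym_2(\Z_p)$, the sum over $h$ is a sum over lattices. The three terms of $\tau_p$ should then correspond to an inclusion–exclusion: the identity term counts all admissible $h$; the sum over the $p+1$ index-$p$ superlattices (the cosets $\GL_2(\Z_p)\backslash X_p$) removes those $h$ for which the relevant vector $h^{-1}x$ lies in a smaller lattice; and the $\boldsymbol{m}(p^{-1}I_2)$ term restores the double-counted ones, where $x$ is divisible by $p$ in both coordinates.

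The key step, and the main obstacle, is verifying this inclusion–exclusion identity exactly. The coset sum must be organized according to the reduction mod $p$ of the vector(s) attached to $T$, and the $p^{-1}$ normalizations must match the $|\det a|^{-1}$ factors. I would first rewrite everything in terms of the split coordinates $x_1=(x_{11},x_{12})$, $x_2=(x_{21},x_{22})$ in $\Q_p\oplus\Q_p$, so that $T$ is essentially the symmetrization of the rank-one products of the two vectors $y=(x_{11},x_{21})$ and $y'=(x_{12},x_{22})$. Then $\det T\in\Z_p^\times$ with $T$ integral means $y,y'$ span a unimodular configuration after the $t$-twist. Counting the superlattices $L\supset\Z_p^2$ of index $p$ containing appropriately scaled $y$ and $y'$ gives $p+1$, $1$, or $0$ according to the mod-$p$ rank of $(y,y')$. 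The combination $1-p^{-1}\cdot(\text{count})\cdot p+p^{-1}\cdot p\cdot(\ldots)$ should then vanish unless $\det T$ is a unit, and equal $1$ when it is. After establishing this local identity, I would reassemble the global coefficient via Theorem \ref{t:fourier-theta}\eqref{fourier-theta-1} for both the $x$ and the $h_0x$ terms. The archimedean factor is unaffected, so $a_{f^\flat}(T)$ is either $a_f(T)$ or $0$ as claimed.
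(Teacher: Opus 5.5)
Your global reduction is the same as the paper's. By \eqref{eq:a_f(T)} and Lemma \ref{l:W_T}, $\tau_p$ only touches the $p$-components of $\cW_{x,t}$ and $\cW_{h_0x,t}$, and $q(h_0x)=q(x)$, so everything reduces to Proposition \ref{p:fourier-p-depletion}.

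The difference is in how you prove that local statement. The paper plugs in the closed formula $w(x,t)=\sum_{i=-m_2-m_0}^{m_1}\alpha^{2i+m_0}$ from Lemma \ref{l:fourier-split-1}. It then moves the matrix $x$ into a normal form under $\GL_2(\Z_p)$, evaluates $\tilde w$ with explicit coset representatives (Lemma \ref{l:tilde_w}), and compares geometric series, arguing separately when $\det(x)=0$ (Lemma \ref{l:flat_w}).

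Your inclusion--exclusion works term by term in $h=(p^{n_1},p^{n_2})$ with $n_1+n_2=-m_0$, before anything is summed. Put $u=p^{-n_1}y$ and $u'=p^{-n_2}y'$. The three parts of $\tau_p$ contribute $0$ unless $u,u'\in\Z_p^2$, in which case they contribute
\[
 1-\#\{\ell\subset\mathbb{F}_p^2 \text{ a line} : \bar u,\bar u'\in\ell\}+p\cdot\mathbf{1}[\bar u=\bar u'=0].
\]
This equals $1$ if $\bar u,\bar u'$ span $\mathbb{F}_p^2$ and $0$ otherwise. Since $\det(u;u')=p^{m_0}\det(x)$ has the valuation of $t\det(x)$ for every such $h$, the spanning condition is $\det T=-(t\det x)^2/4\in\Z_p^\times$, uniformly in $h$. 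Combined with the vanishing of $w$ for non-integral $T$, this gives the proposition.

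In other words, you are showing that $\tau_p\varphi_p$ is the characteristic function of $\{x : \smat{x_{11}}{x_{21}}{x_{12}}{x_{22}}\in\GL_2(\Z_p)\}\times\Z_p^\times$. This is shorter than the paper's route. It does not use the explicit value of $w$ or of $\chi$, and it treats singular $T$ with no extra case. The paper's computation, by contrast, yields explicit intermediate values of $\tilde w$.

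Two normalizations in your sketch must be fixed for the identity to close.
\begin{enumerate}
\item Since $ua^{-1}\in\Z_p^2\iff u\in\Z_p^2a$, the lattices counted are the row lattices $\Z_p^2a$ with $p\Z_p^2\subset\Z_p^2a\subset\Z_p^2$, i.e.\ lines in $\mathbb{F}_p^2$. Without rescaling by $p^{-1}$, an index-$p$ superlattice of $\Z_p^2$ always contains integral $y,y'$, so the count would not detect anything.
\item The $\boldsymbol{m}(p^{-1}I_2)$ term has weight $p^{-1}\,|\det(p^{-1}I_2)|=p$, not $p^{-1}\cdot p$. This factor $p$ is exactly what cancels $1-(p+1)$ when both scaled vectors are divisible by $p$.
\end{enumerate}
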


\begin{proof}
The assertion follows from \eqref{eq:a_f(T)}, Lemma \ref{l:W_T}, and Proposition \ref{p:fourier-p-depletion} below.
\end{proof}

\begin{rem}
By Theorem \ref{t:fourier-f-flat}, the Fourier coefficients of $f^\flat$ at the cusp $\sqrt{-1} \infty$ are supported on non-singular symmetric matrices.
On the other hand, $\phi^\flat$ belongs to the automorphic representation generated by $\phi$, which is not cuspidal.
Hence $f^\flat$ has a nonzero singular Fourier coefficient at some other cusp.
\end{rem}

\section{Local Fourier coefficients}
\label{s:local_fourier}

To finish the proof of Theorems \ref{t:fourier-theta} and \ref{t:fourier-f-flat}, we need to compute the local integrals appearing in the Fourier coefficients of the theta lift explicitly.
We fix a place $v$ of $F$ and omit the subscript $v$ from the notation.
Let $\chi$ be a character of $K^\times$ such that $\chi|_{F^\times} = 1$ and $\varphi$ a Schwartz function on $K^2 \times F^\times$.
Put $T = t q(x)$ for $x \in K^2$ and $t \in F^\times$, i.e.~$T = \smat{T_1}{T_2/2}{T_2/2}{T_3}$ with 
\[
 T_1 = t \N_{K/F}(x_1), \quad
 T_2 = t \tr_{K/F}(\bar{x}_1 x_2), \quad
 T_3 = t \N_{K/F}(x_2).
\]
Assume that $T \ne 0$.
Then we may define a function $\cW_{x,t}$ on $\GSp_4(F)$ by
\[
 \cW_{x,t}(g) = \int_{K^\times} \Omega_\psi(g) \varphi(h^{-1} x, \nu(h) t) \chi(h) \, dh,
\]
where $dh$ is the standard measure on $K^\times$.

\subsection{The split case I}

We use the setting of \S \ref{ss:choices-split-1}.
In particular, $\varphi$ is the characteristic function of
\[
 (\fo_K \oplus (1, u)^{-1} \fo_K) \times \fo_F^\times
\]
with $u \in \fo_F^\times \cup \varpi_F \fo_F^\times$.
Put
\[
 k_{ij} = \ord_F(x_{ij}), \quad
 m_1 = \min \{ k_{11}, k_{21} \}, \quad
 m_2 = \min \{ k_{12}, k_{22} + \ord_F(u) \}
\]
for $x = (x_1, x_2)$ with $x_i = (x_{i1}, x_{i2})$ and $m_0 = \ord_F(t)$.

\begin{lem}
\label{l:fourier-split-1}
Assume that $\chi$ is unramified and put $\alpha = \chi(\varpi_F,1)$.
Then we have $\cW_{x,t}(1) = 0$ unless $T_1, T_2, T_3 \in u^{-1} \fo_F$, in which case
\[
 \cW_{x,t}(1) = \sum_{i=-m_2-m_0}^{m_1} \alpha^{2i+m_0}.
\]
\end{lem}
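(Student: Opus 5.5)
The plan is to evaluate $\cW_{x,t}(1)=\int_{K^\times}\varphi(h^{-1}x,\nu(h)t)\chi(h)\,dh$ directly in the split coordinates. Under the identification $K=F\oplus F$, write $h=(h_1,h_2)\in F^\times\times F^\times$. Then $\nu(h)=h_1h_2$, $h^{-1}x=((h_1^{-1}x_{11},h_2^{-1}x_{12}),(h_1^{-1}x_{21},h_2^{-1}x_{22}))$, and $\chi(h)=\chi_1(h_1)\chi_2(h_2)$. Since $\chi|_{F^\times}=1$ we have $\chi_2=\chi_1^{-1}$, and since $\chi$ is unramified, $\chi(h)=\alpha^{\ord(h_1)-\ord(h_2)}$ with $\alpha=\chi(\varpi_F,1)$. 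The standard measure gives $\fo_K^\times$ volume $1$, and the integrand is invariant under $\fo_K^\times$. So the integral becomes a sum over pairs $(a,b)=(\ord h_1,\ord h_2)\in\Z^2$ of $\alpha^{a-b}$ times the indicator of the support condition.

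Next I unpack the support condition. The first component $h^{-1}x_1$ must lie in $\fo_K$, which means $k_{11}\ge a$ and $k_{12}\ge b$. The second component $h^{-1}x_2$ must lie in $(1,u)^{-1}\fo_K=\fo_F\oplus u^{-1}\fo_F$, which means $k_{21}\ge a$ and $k_{22}+\ord u\ge b$. Together these give $a\le m_1$ and $b\le m_2$. Finally, $\nu(h)t\in\fo_F^\times$ forces $a+b=-m_0$.

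Substituting $b=-m_0-a$, the sum runs over integers $a$ with $-m_2-m_0\le a\le m_1$, and the weight is $\alpha^{a-b}=\alpha^{2a+m_0}$. This is exactly the stated formula, with $i=a$.

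It remains to match the vanishing criterion: the range is nonempty if and only if $m_1+m_2+m_0\ge 0$. I would verify that this is equivalent to $T_1,T_2,T_3\in u^{-1}\fo_F$, and this is the main obstacle.

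Here $T_1=tx_{11}x_{12}$, $T_3=tx_{21}x_{22}$, and $T_2=t(x_{11}x_{22}+x_{12}x_{21})$, since $\bar x$ swaps the two components. Their valuations are
\[
\ord T_1=m_0+k_{11}+k_{12},\qquad \ord T_3=m_0+k_{21}+k_{22}.
\]

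For the forward direction, suppose $m_1+m_2+m_0\ge0$. Every pairing $k_{1i}+k_{2j}$ that appears is at least $m_1$ plus the corresponding term bounded below by $m_2-\ord u$, so each $T_i\in u^{-1}\fo_F$.

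For the converse, assume all $T_i\in u^{-1}\fo_F$. Let $m_1$ be attained at $k_{11}$ or $k_{21}$, and $m_2$ at $k_{12}$ or $k_{22}+\ord u$. I check the four combinations:
\begin{itemize}
\item $(k_{11},k_{12})$ uses $T_1$, with the $u^{-1}$ slack.
\item $(k_{21},k_{22}+\ord u)$ uses $T_3$ directly.
\item $(k_{11},k_{22}+\ord u)$ and $(k_{21},k_{12})$ use $T_2$. Here one must rule out cancellation in $x_{11}x_{22}+x_{12}x_{21}$. If the two terms had equal valuation smaller than the bound, then the minimality assumptions would force $T_1$ or $T_3$ to violate its bound.
\end{itemize}

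In the case $u\in\varpi_F\fo_F^\times$ the off-by-one slack has to be tracked carefully; the inequalities $\ord T_1\ge -\ord u$, $\ord T_3\ge-\ord u$, and $\ord T_2\ge-\ord u$ are exactly what this case analysis needs. If $m_1+m_2+m_0<0$, the range is empty, which gives $\cW_{x,t}(1)=0$.
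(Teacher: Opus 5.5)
Your computation of $\cW_{x,t}(1)$ as a sum over $(a,b)=(\ord h_1,\ord h_2)$ with $a\le m_1$, $b\le m_2$, $a+b=-m_0$ is exactly the paper's proof, and so is your ``forward direction'': every product occurring in $T_1,T_2,T_3$ has the form $x_{i1}x_{j2}$ with $k_{i1}\ge m_1$ and $k_{j2}\ge m_2-\ord_F(u)$, so $\ord_F(T_i)\ge m_0+m_1+m_2-\ord_F(u)$. These two steps already prove the lemma. The formula holds unconditionally, the sum is empty exactly when $m_0+m_1+m_2<0$, and the lemma only asserts that $\cW_{x,t}(1)=0$ when some $T_i\notin u^{-1}\fo_F$.

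The converse you single out as ``the main obstacle'' is not needed. Moreover, it is false when $u\in\varpi_F\fo_F^\times$. Take $t=1$, $x_1=(1,\varpi_F^{-1})$ and $x_2=(1,1)$. Then $T_1=\varpi_F^{-1}$, $T_2=1+\varpi_F^{-1}$ and $T_3=1$ all lie in $u^{-1}\fo_F=\varpi_F^{-1}\fo_F$. On the other hand $m_1=0$, $m_2=\min\{-1,1\}=-1$ and $m_0=0$, so the range is empty and $\cW_{x,t}(1)=0$. You can also see this directly: $h^{-1}x_1\in\fo_K$ forces $a\le 0$ and $b\le -1$, which is incompatible with $a+b=0$.

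Your case $(k_{11},k_{12})$ is exactly where the argument breaks. There $\ord_F(T_1)\ge-\ord_F(u)$ only gives $m_0+m_1+m_2\ge -1$. For $u\in\fo_F^\times$ the converse does hold, most cleanly by Gauss's lemma applied to
$T_1y_1^2+T_2y_1y_2+T_3y_2^2=t(x_{11}y_1+x_{21}y_2)(x_{12}y_1+x_{22}y_2)$, but it plays no role here. Drop the claimed equivalence and your proof is complete.
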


\begin{proof}
We have
\[
 \cW_{x,t}(1) = \sum_{n_1, n_2} \chi(\varpi_F^{n_1}, \varpi_F^{n_2}),
\]
where $n_1, n_2$ run over integers such that
\begin{align*}
 k_{11} - n_1 & \ge 0, & 
 k_{12} - n_2 & \ge 0, \\
 k_{21} - n_1 & \ge 0, & 
 k_{22} - n_2 & \ge - \ord_F(u),
\end{align*}
and $m_0 + n_1 + n_2 = 0$.
Hence, noting that $\chi(\varpi_F,\varpi_F) = 1$, we have
\[
 \cW_{x,t}(1) = \sum_{i=-m_2-m_0}^{m_1} \alpha^{2i+m_0}.
\]
On the other hand, we have
\begin{align*}
 \ord_F(T_1) & = k_{11} + k_{12} + m_0, \\
 \ord_F(T_2) & \ge \min \{ k_{11} + k_{22}, k_{12} + k_{21} \} + m_0, \\
 \ord_F(T_3) & = k_{21} + k_{22} + m_0, 
\end{align*}
so that
\begin{align*}
 \min\{ \ord_F(T_1), \ord_F(T_2), \ord_F(T_3) \} 
 & \ge \min \{ k_{11}, k_{21} \} + \min \{ k_{12}, k_{22} \} + m_0 \\
 & \ge m_1 + m_2 + m_0 - \ord_F(u).
\end{align*}
This completes the proof. 
\end{proof}

Suppose that $F = \Q_p$ with $p$ an odd prime and $u \in \Z_p^\times$.
Then $\cW_{x,t}$ is right $\GSp_4(\Z_p)$-invariant.
Define a function $\cW_{x,t}^\flat$ on $\GSp_4(\Q_p)$ by
\[
 \cW_{x,t}^\flat(g) = \cW_{x,t}(g) - p^{-1} \sum_{a \in \GL_2(\Z_p) \backslash X_p} \cW_{x,t}(g \boldsymbol{m}(a^{-1})) + p^{-1} \cW_{x,t}(g \boldsymbol{m}(p^{-1} I_2)),
\]
where $X_p = \{ a \in \M_2(\Z_p) \mid \ord_{\Q_p}(\det(a)) = 1 \}$.

\begin{prop}
\label{p:fourier-p-depletion}
We have $\cW_{x,t}^\flat(1) = 0$ unless $T_1, T_2, T_3 \in \Z_p$ and $\det(T) \in \Z_p^\times$, in which case
\[
 \cW_{x,t}^\flat(1) = \cW_{x,t}(1).
\]
\end{prop}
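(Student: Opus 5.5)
The plan is to rewrite each term of $\cW^\flat_{x,t}(1)$ as a value $\cW_{y,t}(1)$ at a modified vector $y$, and then to swap the order of summation, so that the Hecke combination becomes a per-$h$ count of lattices.

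\textbf{Step 1: translations become substitutions.} Since $p$ is odd and $u \in \Z_p^\times$, we are in \S\ref{ss:choices-split-1} with $K_p = \Q_p \oplus \Q_p$, so $\xi_{K/F}$ is trivial. Moreover $(1,u)^{-1}\fo_K = \fo_K$, so $\varphi$ is the characteristic function of $\fo_K^2 \times \Z_p^\times$. The formula $\Omega_\psi(\boldsymbol{m}(a))\varphi(x,t) = |\det a|\,\varphi(xa,t)$ commutes with the $h$-action. It therefore gives
\[
 \cW_{x,t}(\boldsymbol{m}(a^{-1})) = p\,\cW_{xa^{-1},t}(1) \quad (a \in X_p), \qquad
 \cW_{x,t}(\boldsymbol{m}(p^{-1}I_2)) = p^2\,\cW_{p^{-1}x,t}(1).
\]
Substituting, we obtain
\[
 \cW^\flat_{x,t}(1) = \cW_{x,t}(1) - \sum_{a \in \GL_2(\Z_p)\backslash X_p} \cW_{xa^{-1},t}(1) + p\,\cW_{p^{-1}x,t}(1).
\]

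\textbf{Step 2: unfold into a sum over $h$ and count lattices.} By the proof of Lemma~\ref{l:fourier-split-1}, each $\cW_{y,t}(1)$ equals $\sum_h \chi(h)$. Here $h = (\varpi^{n_1},\varpi^{n_2})$ runs over $K_p^\times/\fo_K^\times$, subject to $\nu(h)t \in \Z_p^\times$ (that is, $n_1+n_2+m_0=0$) and $h^{-1}y \in \fo_K^2$.

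In split coordinates, write $x_i = (x_{i1},x_{i2})$ and set $v = (x_{11},x_{21})$, $w = (x_{12},x_{22}) \in \Q_p^2$ (row vectors). Then $h^{-1}x$ corresponds to the pair $(p^{-n_1}v,\, p^{-n_2}w)$, and right multiplication by $a^{-1}$ acts on both vectors. Interchanging the sums over $h$ and $a$, the coefficient of $\chi(h)$ in $\cW^\flat_{x,t}(1)$ is
\[
 \mathbf{1}[M_h \subset \Z_p^2] - \#\{ L \supset M_h : [\Z_p^2 : L] = p \} + p\,\mathbf{1}[M_h \subset p\Z_p^2],
\]
where $M_h$ is the lattice spanned by $p^{-n_1}v$ and $p^{-n_2}w$. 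This uses that $\{\Z_p^2 a \mid a \in \GL_2(\Z_p)\backslash X_p\}$ is exactly the set of index-$p$ sublattices of $\Z_p^2$.

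Assume $M_h \subset \Z_p^2$ and look at the image of $M_h$ in $\mathbb{F}_p^2$:
\begin{itemize}
 \item if the image is $0$, the coefficient is $1-(p+1)+p = 0$;
 \item if the image is a line, it is $1-1+0=0$;
 \item if the image is all of $\mathbb{F}_p^2$, it is $1$.
\end{itemize}
Hence $\cW^\flat_{x,t}(1) = \sum_h \chi(h)$, where $h$ runs over those $h$ with $n_1+n_2+m_0=0$, $M_h \subset \Z_p^2$, and $\det(p^{-n_1}v,\, p^{-n_2}w) \in \Z_p^\times$.

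\textbf{Step 3: relate the determinant to $\det(T)$.} A direct computation of $T = t\,q(x)$ in split coordinates gives $\det T = -\tfrac14\, t^2 \det(v,w)^2$. Under $n_1+n_2 = -m_0$ we have
\[
 \det(p^{-n_1}v,\, p^{-n_2}w) = p^{m_0}\det(v,w) = \pm 2\,t^{-1}p^{m_0}\sqrt{-\det T},
\]
which up to a unit is $\sqrt{-\det T}$ (using $t \in p^{m_0}\Z_p^\times$). So the determinant condition is independent of $h$ and is equivalent to $\det(T) \in \Z_p^\times$.

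Consequently, if $\det(T) \notin \Z_p^\times$ the sum is empty and $\cW^\flat_{x,t}(1) = 0$. If $T \notin \Sym_2(\Z_p)$, then already $\cW_{x,t}(1) = 0$ by Lemma~\ref{l:fourier-split-1}, and no $h$ has $M_h \subset \Z_p^2$, so again $\cW^\flat_{x,t}(1)=0$. If $\det(T) \in \Z_p^\times$, every $h$ contributing to $\cW_{x,t}(1)$ also satisfies the determinant condition, so $\cW^\flat_{x,t}(1) = \cW_{x,t}(1)$.

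I expect the main obstacle to be the bookkeeping in Steps 1 and 3: getting the normalizations of the measure and of $|\det a|$ right so that the coefficients come out exactly $1,\,-1,\,p$, and verifying the identity $\det T = -\tfrac14 t^2\det(v,w)^2$ under the fixed identification $K_p \simeq \Q_p\oplus\Q_p$.
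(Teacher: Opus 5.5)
Your proof is correct, and it takes a genuinely different route from the paper's.

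The paper's route is computational:
\begin{itemize}
\item It inserts the closed formula $\cW_{x,t}(1)=\sum_{i=-m_2-m_0}^{m_1}\alpha^{2i+m_0}$ from Lemma~\ref{l:fourier-split-1}.
\item It uses $\GL_2(\Z_p)$-invariance to move $x$ into a normal form: $x_{12}=0$ with either $k_{11}>k_{21}$ or $x_{21}=0$, and a separate form $x_{11}=x_{12}=0$ when $\det x=0$.
\item It computes $\tilde w(x,t)$ from the explicit coset decomposition of $X_p$ (Lemma~\ref{l:tilde_w}).
\item It checks cancellation of the resulting geometric sums case by case according to the sign of $m_1+m_2+m_0$ (Lemma~\ref{l:flat_w}).
\end{itemize}

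You instead apply the Hecke combination pointwise in $h$, before summing over the torus. There it collapses to the lattice identity $\mathbf{1}[M\subset\Z_p^2]-\#\{L\supset M\}+p\,\mathbf{1}[M\subset p\Z_p^2]=\mathbf{1}[M=\Z_p^2]$. In effect your Step~2 shows that $\tau_p$ turns $\varphi$ into the characteristic function of $\{x : \smat{x_{11}}{x_{21}}{x_{12}}{x_{22}}\in\GL_2(\Z_p)\}\times\Z_p^\times$.

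What your route buys:
\begin{itemize}
\item It is uniform: there are no normal forms and no separate $\det T=0$ case.
\item It does not need the explicit value of $\cW_{x,t}(1)$.
\item It explains conceptually why exactly the terms with $\det T\in\Z_p^\times$ survive.
\item The integrality condition on $T$ reduces to the observation $T=\nu(h)t\,q(h^{-1}x)$.
\end{itemize}
The paper's route is more hands-on, but it stays entirely within formulas it has already derived.

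Your normalizations are all right: $|\det a^{-1}|=p$, $|\det(p^{-1}I_2)|=p^2$, $\xi_{K/F}$ trivial at $p$, $(1,u)^{-1}\fo_K=\fo_K$, and $\det T=-\tfrac14 t^2\det(v,w)^2$ with $\det(p^{-n_1}v,p^{-n_2}w)=p^{m_0}\det(v,w)$. So the obstacle you anticipated does not actually arise.
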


The rest of this subsection is devoted to the proof of Proposition \ref{p:fourier-p-depletion}.
We identify $x$ with the matrix
\[
 \mat{x_{11}}{x_{21}}{x_{12}}{x_{22}},
\]
on which $\GL_2(\Q_p)$ acts by right multiplication.
Under this identification, we have
\[
 \det(T) = - \frac{t^2 (x_1 \bar{x}_2 - \bar{x}_1 x_2)^2}{4}
 = -\frac{t^2 (x_{11} x_{22} - x_{12} x_{21})^2}{4}
 = -\frac{(t \det(x))^2}{4}.
\]
Put $w(x,t) = \cW_{x,t}(1)$, so that
\[
 w(x,t) = \sum_{i=-m_2-m_0}^{m_1} \alpha^{2i+m_0}
\]
by Lemma \ref{l:fourier-split-1}, where
\[
 m_1 = \min \{ k_{11}, k_{21} \}, \quad
 m_2 = \min \{ k_{12}, k_{22} \}, \quad 
 m_0 = \ord_{\Q_p}(t)
\]
with $k_{ij} = \ord_{\Q_p}(x_{ij})$.
Note that
\[
 w(xa, \nu t) = w(x,t) 
\]
for $a \in \GL_2(\Z_p)$ and $\nu \in \Z_p^\times$.
Put
\[
 \tilde{w}(x,t) = \sum_{a \in \GL_2(\Z_p) \backslash X_p} w(xa^{-1}, t).
\]

\begin{lem}
\label{l:tilde_w}
Assume that $\det(x) \ne 0$ and $x_{12} = 0$.
\begin{enumerate}
\item
If $k_{11} > k_{21}$, then we have $\tilde{w}(x,t) = 0$ unless $m_1 + m_2 + m_0 \ge 0$, in which case
\[
 \tilde{w}(x,t) = 
 \sum_{i=-m_2-m_0}^{m_1} \alpha^{2i+m_0}
 + p \sum_{i=-m_2-m_0+1}^{m_1-1} \alpha^{2i+m_0}.
\]
\item
If $x_{21} = 0$, then we have $\tilde{w}(x,t) = 0$ unless $m_1 + m_2 + m_0 > 0$, in which case
\[
 \tilde{w}(x,t) = 
 \sum_{i=-m_2-m_0}^{m_1} \alpha^{2i+m_0}
 + p \sum_{i=-m_2-m_0+1}^{m_1-1} \alpha^{2i+m_0}.
\]
\end{enumerate}
\end{lem}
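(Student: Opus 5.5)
The plan is to compute $\tilde{w}(x,t)$ directly, using an explicit set of representatives for $\GL_2(\Z_p) \backslash X_p$ and the closed formula for $w$ from Lemma \ref{l:fourier-split-1}. The $p+1$ cosets are represented by
\[
 a_\infty = \mat{p}{0}{0}{1}, \qquad a_c = \mat{1}{0}{c}{p} \quad (c \in \Z/p\Z).
\]
Here $w(xa^{-1},t)$ depends only on $m_1(xa^{-1})$, $m_2(xa^{-1})$ and $m_0$. Concretely, $m_1$ is the minimal valuation of the entries in the first row of the matrix $xa^{-1}$, and $m_2$ is the one for the second row. (The matrix is $\smat{x_{11}}{x_{21}}{x_{12}}{x_{22}}$, acted on by right multiplication.) Since $x_{12}=0$, the second row is $(0,x_{22})$, and right multiplication by $a^{-1}$ changes it by a simple rule. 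From $a_\infty^{-1}=\diag(p^{-1},1)$ we get second row $(0,x_{22})$, so $m_2$ is unchanged. From $a_c^{-1}=p^{-1}\smat{p}{0}{-c}{1}$ we get second row $(-cx_{22}/p,\,x_{22}/p)$, so $m_2$ drops by $1$. The first row $(x_{11},x_{21})$ becomes $(x_{11}/p,\,x_{21})$ for $a_\infty$, and $(x_{11}-cx_{21}/p,\,x_{21}/p)$ for $a_c$.

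Next I will compute $m_1$ for each coset in each case.

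In case (1), $k_{11}>k_{21}=m_1$. For $a_\infty$ the first row $(x_{11}/p, x_{21})$ has minimal valuation $\min(k_{11}-1,k_{21})=m_1$. For $a_c$ the entry $x_{11}-cx_{21}/p$ has valuation $m_1-1$ when $c\ne0$, because $k_{11}\ge m_1+1>m_1-1$. For $c=0$ that entry is $x_{11}$, with valuation at least $m_1+1$, while $x_{21}/p$ has valuation $m_1-1$; so $m_1$ drops to $m_1-1$ in every $a_c$ case. Thus there is one term with $(m_1,m_2)$ and $p$ terms with $(m_1-1,m_2-1)$, giving
\[
 \tilde{w} = \sum_{i=-m_2-m_0}^{m_1}\alpha^{2i+m_0} + p\sum_{i=-m_2-m_0+1}^{m_1-1}\alpha^{2i+m_0}.
\]

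In case (2), $x_{21}=0$ and $m_1=k_{11}$. Now $a_\infty$ gives $(m_1-1,m_2)$, and every $a_c$ gives first row $(x_{11},0)$, i.e. $(m_1,m_2-1)$. So
\[
 \tilde{w}=\sum_{i=-m_2-m_0}^{m_1-1}\alpha^{2i+m_0}+p\sum_{i=-m_2-m_0+1}^{m_1}\alpha^{2i+m_0}.
\]
This has to be reconciled with the claimed formula. The difference between the two is $(p-1)\cdot(\text{top term } i=m_1) - (p-1)\cdot(\text{bottom term } i=-m_2-m_0)$, i.e. $(p-1)(\alpha^{2m_1+m_0} - \alpha^{-2m_2-m_0})$. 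These terms do not cancel in general, so I will first re-examine the coset representatives and the direction of $m_1$ versus $m_2$ (rows versus columns of the identification) before concluding. The most likely fix is that the roles of rows and columns are interchanged, which swaps which index drops for $a_\infty$ versus $a_c$. With that swap, case (2) becomes: $a_\infty$ gives $(m_1,m_2-1)$, and the $a_c$ with $c\ne0$ together with $c=0$ split so as to reproduce exactly one copy of the full range and $p$ copies of the shortened range. I would verify the correct identification against Lemma \ref{l:fourier-split-1}, where $m_1=\min(k_{11},k_{21})$ comes from the first components $x_{i1}$.

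The vanishing conditions come from the convention that $\sum_{i=A}^{B}$ is empty when $B<A$. In case (1), the first sum is nonzero only if $m_1+m_2+m_0\ge0$, and when it is empty the second sum is empty as well. In case (2) the same bookkeeping, applied to the shifted ranges, yields the strict inequality $m_1+m_2+m_0>0$.

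The main obstacle is this case-(2) bookkeeping: getting the row/column convention right so that the $p+1$ terms telescope into the stated form. I expect that to require a careful recheck of how $\GL_2$ acts on $x$.
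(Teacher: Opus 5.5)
\textbf{The gap: wrong coset representatives, not a row/column mix-up.} Your diagnosis of the case-(2) mismatch is off. You read $m_1$ and $m_2$ as the minimal valuations of the first and second rows of $\smat{x_{11}}{x_{21}}{x_{12}}{x_{22}}$ under right multiplication, and that is exactly the paper's convention. So rechecking it against Lemma~\ref{l:fourier-split-1} will not resolve anything.

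The actual error is in the representatives. The function $w$ is only right $\GL_2(\Z_p)$-invariant. Hence the sum $\sum_{a\in\GL_2(\Z_p)\backslash X_p}w(xa^{-1},t)$ needs representatives of the \emph{left} cosets $\GL_2(\Z_p)a$; equivalently, $a^{-1}$ must run over $p^{-1}X_p/\GL_2(\Z_p)$. Your matrices $a_c=\smat{1}{0}{c}{p}=\smat{1}{0}{c}{1}\smat{1}{0}{0}{p}$ are distinct right cosets, but they all lie in the single left coset $\GL_2(\Z_p)\smat{1}{0}{0}{p}$. So what you computed is $w(x\diag(p^{-1},1),t)+p\,w(x\diag(1,p^{-1}),t)$. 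It counts one coset $p$ times and omits the $p-1$ cosets $\GL_2(\Z_p)\smat{p}{0}{c}{1}$ with $c\not\equiv 0$.

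In case (1) this is harmless only by accident: there every nontrivial coset yields the pair $(m_1-1,m_2-1)$ anyway, which is why your case-(1) answer comes out right. In case (2) it produces the wrong value you found. (Incidentally, the discrepancy is $(p-1)\alpha^{2m_1+m_0}$, not the expression you wrote.)

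\textbf{The fix.} Use the left-coset representatives $\smat{1}{0}{0}{p}$ and $\smat{p}{0}{c}{1}$ for $c\in\Z/p\Z$. Equivalently, as the paper does via the right-coset decomposition of $X_p$, let $a^{-1}$ run over $\smat{p^{-1}}{0}{0}{1}$ and $\smat{1}{p^{-1}y}{0}{p^{-1}}$ for $0\le y\le p-1$. With this choice case (2) goes through. For $x_{21}=0$:
\begin{itemize}
\item $a^{-1}=\diag(1,p^{-1})$ gives $(m_1,m_2-1)$;
\item $c=0$ gives $(m_1-1,m_2)$;
\item each $c\ne 0$ gives $(m_1-1,m_2-1)$, because the second row becomes $(-cx_{22}/p,\,x_{22})$.
\end{itemize}
Writing $A=-m_2-m_0$ and $B=m_1$, this gives
\[
 \tilde w(x,t)=\sum_{i=A}^{B-1}\alpha^{2i+m_0}+\sum_{i=A+1}^{B}\alpha^{2i+m_0}+(p-1)\sum_{i=A+1}^{B-1}\alpha^{2i+m_0}.
\]
This vanishes if $B\le A$. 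If $B>A$ it equals $\sum_{i=A}^{B}\alpha^{2i+m_0}+p\sum_{i=A+1}^{B-1}\alpha^{2i+m_0}$. That is the claim, including the strict inequality $m_1+m_2+m_0>0$, which your sketch asserted but could not derive from the incorrect decomposition.

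With the correct representatives, case (1) gives $(m_1,m_2)$ once and $(m_1-1,m_2-1)$ exactly $p$ times. This confirms your formula and the condition $m_1+m_2+m_0\ge 0$ there.
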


\begin{proof}
Since $\{ a^{-1} \mid a \in X_p \} = p^{-1} X_p$ and
\[
 X_p = \mat{1}{0}{0}{p} \GL_2(\Z_p) \sqcup \bigsqcup_{y=0}^{p-1} \mat{p}{y}{0}{1} \GL_2(\Z_p), 
\]
we have
\begin{align*}
 \tilde{w}(x,t)
 & = w \left( x \mat{p^{-1}}{0}{0}{1}, t \right)
 + \sum_{y=0}^{p-1} w \left( x \mat{1}{p^{-1} y}{0}{p^{-1}}, t \right) \\
 & = w \left( \mat{p^{-1} x_{11}}{x_{21}}{0}{x_{22}}, t \right)
 + \sum_{y=0}^{p-1} w \left( \mat{x_{11}}{p^{-1} x_{11} y + p^{-1} x_{21}}{0}{p^{-1} x_{22}}, t \right).
\end{align*}
Hence we have
\[
 \tilde{w}(x,t) = 
 \sum_{i=-m_2-m_0}^{m_1} \alpha^{2i+m_0}
 + p \sum_{i=-m_2-m_0+1}^{m_1-1} \alpha^{2i+m_0}
\]
if $k_{11} > k_{21}$ and 
\[
 \tilde{w}(x,t) = 
 \sum_{i=-m_2-m_0}^{m_1-1} \alpha^{2i+m_0}
 + (p-1) \sum_{i=-m_2-m_0+1}^{m_1-1} \alpha^{2i+m_0}
 + \sum_{i=-m_2-m_0+1}^{m_1} \alpha^{2i+m_0}
\]
if $x_{21} = 0$.
This implies the assertion.
\end{proof}

Put $w^\flat(x,t) = \cW_{x,t}^\flat(1)$.
Since $\cW_{x,t}(\boldsymbol{m}(a)) = |\det(a)| \cdot \cW_{xa,t}(1)$ for $a \in \GL_2(\Q_p)$, we have
\[
 w^\flat(x,t) = w(x,t) - \tilde{w}(x,t) + p w(p^{-1} x,t).
\]

\begin{lem}
\label{l:flat_w}
We have $w^\flat(x,t) = 0$ unless $t \det(x) \in \Z_p^\times$, in which case
\[
 w^\flat(x,t) = w(x,t).
\]
\end{lem}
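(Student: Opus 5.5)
The plan is to reduce to a normal form and then apply the explicit formula $w^\flat(x,t) = w(x,t) - \tilde{w}(x,t) + p\,w(p^{-1}x,t)$. All three terms are invariant under $x \mapsto xk$ for $k \in \GL_2(\Z_p)$: for $\tilde w$ this holds because $X_p$ is stable under conjugation by $\GL_2(\Z_p)$. They are also unchanged under $t \mapsto \nu t$ with $\nu \in \Z_p^\times$. Since $T \ne 0$ and $w^\flat$ vanishes by convention when relevant, I would first treat $\det(x) \ne 0$. In that case right multiplication by $\GL_2(\Z_p)$ (column operations on $\smat{x_{11}}{x_{21}}{x_{12}}{x_{22}}$) lets us assume $x_{12} = 0$. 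A further column operation then puts us either in the situation $k_{11} > k_{21}$ or in the situation $x_{21} = 0$; these are exactly the two cases of Lemma \ref{l:tilde_w}. The degenerate case $\det(x) = 0$, meaning $\det T = 0$, should be handled separately. There one reduces to $x_{12} = x_{22} = 0$ or the analogous shape and checks directly that the three terms cancel, using the same coset decomposition of $X_p$ as in the proof of Lemma \ref{l:tilde_w}.

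In the normal form, $t\det(x) = t x_{11} x_{22}$, so $\ord_{\Q_p}(t\det x) = k_{11} + k_{22} + m_0$. In case (2) this equals $m_1 + m_2 + m_0$. In case (1) it is $\ge m_1 + m_2 + m_0 + 1$, with $m_1 = k_{21}$ and $m_2 = k_{22}$. I would set $S(a,b) = \sum_{i=a}^{b} \alpha^{2i+m_0}$, interpreted as $0$ when $a > b$. Then
\[
 w(x,t) = S(-m_2-m_0, m_1), \qquad p\,w(p^{-1}x,t) = p\,S(-m_2-m_0+1, m_1-1),
\]
since replacing $x$ by $p^{-1}x$ lowers $m_1$ and $m_2$ by one each. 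Lemma \ref{l:tilde_w} gives $\tilde w = S(-m_2-m_0, m_1) + p\,S(-m_2-m_0+1, m_1-1)$ exactly when its nonvanishing condition holds. In that range the three terms cancel, so $w^\flat = 0$. Outside that range $\tilde w = 0$, and I must check that $p\,S(-m_2-m_0+1, m_1-1) = 0$ there as well. This follows because the range is empty once $m_1 + m_2 + m_0 < 2$. So in case (2), $w^\flat = w$ exactly when $m_1 + m_2 + m_0 = 0$, which is equivalent to $t\det x \in \Z_p^\times$. In case (1), $w^\flat = w$ exactly when $m_1 + m_2 + m_0 < 0$. However, $w$ itself already vanishes when $m_1 + m_2 + m_0 < 0$, since the sum $S$ is then empty. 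So $w^\flat \ne 0$ forces $m_1 + m_2 + m_0 = 0$ in case (2), giving $t\det x$ a unit, and forces nothing in case (1) because there $w^\flat = 0$ always. That case occurs with $\ord(t\det x) \ge 1$ or larger, consistent with the claim.

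The step I expect to be the main obstacle is the bookkeeping at the boundary values $m_1 + m_2 + m_0 \in \{0, 1\}$. I also need to confirm that the case split really captures $\ord(t\det x) = 0$ only in case (2). In case (1) with $m_1 + m_2 + m_0 = -1$ the determinant valuation could still be $0$, and there $w = 0$ anyway. That is consistent with the claim, since $w^\flat = w = 0$ in that situation. Finally, I would verify that the normal-form reduction is compatible with the definition of $m_1$ and $m_2$ used in Lemma \ref{l:fourier-split-1}, where $u \in \Z_p^\times$ removes the $\ord(u)$ shift.
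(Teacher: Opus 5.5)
Your argument is the paper's proof: the same normal form ($x_{12}=0$ with either $k_{11}>k_{21}$ or $x_{21}=0$), Lemma \ref{l:tilde_w} combined with $w(p^{-1}x,t)=\sum_{i=-m_2-m_0+1}^{m_1-1}\alpha^{2i+m_0}$, and the same case split on $m_1+m_2+m_0$, with the boundary cases handled correctly. The one slip is in the degenerate case. Right multiplication by $\GL_2(\Z_p)$ acts on columns, so the attainable shape is $x_{11}=x_{12}=0$; the zero row $x_{12}=x_{22}=0$ is unreachable and would force $T=0$. With the zero first column, the coset representatives give $x\,\diag(p^{-1},1)=x$ and $x\smat{1}{p^{-1}y}{0}{p^{-1}}=p^{-1}x$, hence $\tilde w(x,t)=w(x,t)+p\,w(p^{-1}x,t)$ and $w^\flat(x,t)=0$.
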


\begin{proof}
First assume that $\det(x) \ne 0$.
By replacing $x$ by an element in its $\GL_2(\Z_p)$-orbit, we may assume that $x_{12} = 0$ and either $k_{11} > k_{21}$ or $x_{21} = 0$.
Then, noting that
\[
 w(p^{-1} x,t) = \sum_{i=-m_2-m_0+1}^{m_1-1} \alpha^{2i+m_0}, 
\]
we can deduce from Lemma \ref{l:tilde_w} the following.
\begin{itemize}
\item 
If $m_1 + m_2 + m_0 < 0$, then we have
\[
 w^\flat(x,t) = w(x,t) = 0.
\]
\item 
If $m_1 + m_2 + m_0 \ge 0$ and $k_{11} > k_{21}$ (so that $\ord_{\Q_p}(t \det(x)) > m_1 + m_2 + m_0$), then we have
\[
 w^\flat(x,t) = 0.
\]
\item 
If $m_1 + m_2 + m_0 \ge 0$ and $x_{21} = 0$ (so that $\ord_{\Q_p}(t \det(x)) = m_1 + m_2 + m_0$), then we have
\[
 w^\flat(x,t) =
 \begin{cases}
  w(x,t) & \text{if $m_1 + m_2 + m_0 = 0$;} \\
  0 & \text{if $m_1 + m_2 + m_0 > 0$.}
 \end{cases}
\]
\end{itemize}
Next assume that $\det(x) = 0$.
By replacing $x$ by an element in its $\GL_2(\Z_p)$-orbit, we may assume that $x_{11} = x_{12} = 0$.
Then as in the proof of Lemma \ref{l:tilde_w}, we have
\begin{align*}
 \tilde{w}(x,t)
 & = w \left( \mat{0}{x_{21}}{0}{x_{22}}, t \right)
 + \sum_{y=0}^{p-1} w \left( \mat{0}{p^{-1} x_{21}}{0}{p^{-1} x_{22}}, t \right) \\
 & = w(x,t) + p w(p^{-1} x, t).
\end{align*}
This completes the proof. 
\end{proof}

Now Proposition \ref{p:fourier-p-depletion} follows from Lemmas \ref{l:fourier-split-1} and \ref{l:flat_w}.

\subsection{The split case II}

We use the setting of \S \ref{ss:choices-split-2}.
In particular, $\varphi$ is the characteristic function of
\[
 \{ (x_1, x_2) \in \fo_K \oplus \fo_K \mid x_1 + x_2 u' \in \fo_F \oplus \fp_F \} \times \fo_F^\times
\]
with $u' \in \fo_F^\times$.
Put
\[
 k_{ij} = \ord_F(x_{ij}), \quad
 m_1 = \min \{ k_{11}, k_{21} \}, \quad
 m_2 = \min \{ k_{12}, k_{22} \}
\]
for $x = (x_1, x_2)$ with $x_i = (x_{i1}, x_{i2})$ and $m_0 = \ord_F(t)$.
Put
\[
 m_2' = 
 \begin{cases}
  m_2 - 1 & \text{if $\ord_F(x_{12} + x_{22} u') = m_2$;} \\
  m_2 & \text{if $\ord_F(x_{12} + x_{22} u') > m_2$.}
 \end{cases}
\]

\begin{lem}
\label{l:fourier-split-2}
Assume that $\chi$ is unramified and put $\alpha = \chi(\varpi_F,1)$.
Then we have $\cW_{x,t}(1) = 0$ unless $T_1, T_2, T_3 \in \fo_F$, in which case
\[
 \cW_{x,t}(1) = \sum_{i=-m'_2-m_0}^{m_1} \alpha^{2i + m_0}.
\]
\end{lem}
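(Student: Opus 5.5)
I would follow the proof of Lemma \ref{l:fourier-split-1} closely. At $g = 1$ the Weil representation acts trivially, so
\[
 \cW_{x,t}(1) = \int_{K^\times} \varphi(h^{-1} x, \nu(h) t) \chi(h) \, dh .
\]
Write $h = (h_1, h_2) \in F^\times \oplus F^\times$. Both $\varphi$ and $\chi$ are invariant under $\fo_K^\times = \fo_F^\times \times \fo_F^\times$: for $\varphi$ this is the stated invariance under $h \in \fo_K^\times$ from \S\ref{ss:choices-split-2}, and $\chi$ is unramified. Since $\vol(\fo_K^\times) = 1$, the integral collapses to a lattice sum
\[
 \cW_{x,t}(1) = \sum_{n_1,n_2} \chi(\varpi_F^{n_1}, \varpi_F^{n_2}) ,
\]
taken over integers $n_1, n_2$ with $(\varpi_F^{-n_1}, \varpi_F^{-n_2}) x \in$ the support set and $\nu(h) t \in \fo_F^\times$. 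The second condition means $m_0 + n_1 + n_2 = 0$.

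The next step is to translate the support condition. The element $h^{-1}$ acts componentwise on $x_i = (x_{i1}, x_{i2})$, so $h^{-1} x$ has coordinates $\varpi_F^{-n_1} x_{i1}$ and $\varpi_F^{-n_2} x_{i2}$. Membership in $\fo_K \oplus \fo_K$ gives $n_1 \le m_1$ and $n_2 \le m_2$. The extra condition $x_1 + x_2 u' \in \fo_F \oplus \fp_F$ reads componentwise:
\begin{itemize}
\item the first component $\varpi_F^{-n_1}(x_{11} + x_{21} u')$ lies in $\fo_F$, which is automatic once $n_1 \le m_1$;
\item the second component gives $\ord_F(x_{12} + x_{22} u') - n_2 \ge 1$.
\end{itemize}
Combined with $n_2 \le m_2$, the second condition is exactly $n_2 \le m_2'$. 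Indeed, if $\ord_F(x_{12} + x_{22} u') > m_2$, the extra constraint is implied by $n_2 \le m_2$. If it equals $m_2$, it forces $n_2 \le m_2 - 1$. Note that $\ord_F(x_{12} + x_{22} u') \ge m_2$ always, because $u' \in \fo_F^\times$. This case split is the only genuinely new point, and I expect it to be the main (mild) obstacle: checking that the definition of $m_2'$ matches exactly.

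With these constraints, set $i = n_1$, so that $n_2 = -m_0 - i$. The sum runs over $-m_2' - m_0 \le i \le m_1$. Using $\chi(\varpi_F, \varpi_F) = 1$, which holds because $\chi|_{F^\times} = 1$, each term is
\[
 \chi(\varpi_F^{i}, \varpi_F^{-m_0 - i}) = \alpha^{2i + m_0} .
\]
This gives the stated formula, with the empty sum being $0$.

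For the vanishing claim, suppose the sum is nonempty, i.e.\ $m_1 + m_2' + m_0 \ge 0$, so in particular $m_1 + m_2 + m_0 \ge 0$. As in Lemma \ref{l:fourier-split-1}, we have $\ord_F(T_1) = k_{11} + k_{12} + m_0$ and $\ord_F(T_3) = k_{21} + k_{22} + m_0$. Also, since $T_2 = t\,(x_{11} x_{22} + x_{12} x_{21})$, we get $\ord_F(T_2) \ge \min\{k_{11} + k_{22}, k_{12} + k_{21}\} + m_0$. All three orders are therefore at least $m_1 + m_2 + m_0 \ge 0$, so $T_1, T_2, T_3 \in \fo_F$. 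Equivalently, if some $T_j \notin \fo_F$ the index range is empty and $\cW_{x,t}(1) = 0$. This completes the plan.
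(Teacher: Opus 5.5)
Your proposal is correct and follows essentially the same route as the paper's proof. Both reduce $\cW_{x,t}(1)$ to a sum over $(n_1,n_2)$ with $m_0+n_1+n_2=0$, absorb the extra congruence condition into the cutoff $n_2 \le m_2'$, and evaluate each term as $\alpha^{2i+m_0}$ using $\chi(\varpi_F,\varpi_F)=1$. Both then deduce the vanishing from $\min_j \ord_F(T_j) \ge m_1+m_2+m_0 \ge m_1+m_2'+m_0$.
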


\begin{proof}
We have
\[
 \cW_{x,t}(1) = \sum_{n_1, n_2} \mathbb{I}_{\fp_F}(\varpi_F^{-n_2}(x_{12} + x_{22} u')) \chi(\varpi_F^{n_1}, \varpi_F^{n_2}),
\]
where $n_1, n_2$ run over integers such that
\begin{align*}
 k_{11} - n_1 & \ge 0, & 
 k_{12} - n_2 & \ge 0, \\
 k_{21} - n_1 & \ge 0, & 
 k_{22} - n_2 & \ge 0,
\end{align*}
and $m_0 + n_1 + n_2 = 0$.
Hence, noting that $\chi(\varpi_F,\varpi_F) = 1$, we have
\begin{align*}
 \cW_{x,t}(1) & = \sum_{i=-m_2-m_0}^{m_1} \mathbb{I}_{\fp_F^{-i-m_0+1}}(x_{12} + x_{22} u') \alpha^{2i + m_0} \\
 & = \sum_{i=-m'_2-m_0}^{m_1} \alpha^{2i + m_0}.
\end{align*}
On the other hand, we have
\begin{align*}
 \ord_F(T_1) & = k_{11} + k_{12} + m_0, \\
 \ord_F(T_2) & \ge \min \{ k_{11} + k_{22}, k_{12} + k_{21} \} + m_0, \\
 \ord_F(T_3) & = k_{21} + k_{22} + m_0, 
\end{align*}
so that
\begin{align*}
 \min\{ \ord_F(T_1), \ord_F(T_2), \ord_F(T_3) \}  
 & \ge \min \{ k_{11}, k_{21} \} + \min \{ k_{12}, k_{22} \} + m_0 \\
 & \ge m_1 + m'_2 + m_0.
\end{align*}
This completes the proof.
\end{proof}

\subsection{The inert case}

We use the setting of \S \ref{ss:choices-inert-1} or \S \ref{ss:choices-inert-2}.
In particular, $\varphi$ is the characteristic function of
\[
 (\fo_K \oplus \fo_K) \times \fo_F^\times.
\]
Put $k_i = \ord_K(x_i)$ for $x = (x_1,x_2)$ and $m_0 = \ord_F(t)$.

\begin{lem}
\label{l:fourier-inert}
Assume that $\chi$ is unramified (and hence trivial).
Then we have $\cW_{x,t}(1) = 0$ unless $T_1, T_2, T_3 \in \fo_F$, in which case 
\[
 \cW_{x,t}(1) = 
 \begin{cases}
  1 & \text{if $m_0$ is even;} \\
  0 & \text{otherwise.}
 \end{cases}
\]
\end{lem}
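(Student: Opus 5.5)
Since $K/F$ is inert, we can compute $\cW_{x,t}(1)$ directly from the definition, because the integrand reduces to an indicator function. First, $\chi$ is trivial: it is unramified, so it factors through $K^\times/\fo_K^\times$. Since $K$ is inert, this group is generated by $\varpi_F$, and $\chi(\varpi_F) = 1$ because $\chi|_{F^\times} = 1$. Also, $\varphi$ is the characteristic function of $(\fo_K \oplus \fo_K) \times \fo_F^\times$, and the standard measure gives $\fo_K^\times$ volume $1$. Hence
\[
 \cW_{x,t}(1) = \int_{K^\times} \mathbb{I}_{\fo_K^2}(h^{-1}x)\, \mathbb{I}_{\fo_F^\times}(\nu(h)t) \, dh .
\]

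Next, decompose $K^\times = \bigsqcup_{n \in \Z} \varpi_F^n \fo_K^\times$ and write $h = \varpi_F^n h_1$ with $h_1 \in \fo_K^\times$. Then $\nu(h) = \varpi_F^{2n} \N_{K/F}(h_1)$, where $\N_{K/F}(h_1) \in \fo_F^\times$. So the condition $\nu(h)t \in \fo_F^\times$ is exactly $m_0 + 2n = 0$. In particular, if $m_0$ is odd then no $n$ satisfies it, and $\cW_{x,t}(1) = 0$ for every $x$. If $m_0$ is even, only $n = -m_0/2$ contributes. The condition $h^{-1}x \in \fo_K^2$ then reads $k_i + m_0/2 \ge 0$ for $i = 1,2$, with the convention $k_i = \infty$ if $x_i = 0$. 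It does not depend on $h_1$, so
\[
 \cW_{x,t}(1) = \begin{cases} 1 & \text{if $m_0$ is even and } k_1, k_2 \ge -m_0/2; \\ 0 & \text{otherwise.} \end{cases}
\]

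It remains to match the condition $k_1, k_2 \ge -m_0/2$ (for $m_0$ even) with integrality of $T_1, T_2, T_3$. Since $K/F$ is unramified, $\ord_F(T_1) = m_0 + 2k_1$ and $\ord_F(T_3) = m_0 + 2k_2$. Thus $T_1, T_3 \in \fo_F$ holds if and only if $k_1, k_2 \ge -m_0/2$. When these hold, $t\bar{x}_1 x_2$ has $K$-valuation $m_0 + k_1 + k_2 \ge 0$, so $T_2 = \tr_{K/F}(t \bar{x}_1 x_2) \in \fo_F$ as well.

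Therefore, if some $T_i \notin \fo_F$, either $m_0$ is odd or the valuation condition fails; in both cases $\cW_{x,t}(1) = 0$. If $T_1, T_2, T_3 \in \fo_F$, then $\cW_{x,t}(1)$ equals $1$ or $0$ according as $m_0$ is even or odd. This proves the lemma. The only mildly delicate points are the triviality of $\chi$ and the implication from $T_1, T_3 \in \fo_F$ to $T_2 \in \fo_F$; both follow from the unramifiedness of $K/F$.
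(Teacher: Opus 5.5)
Your proof is correct and follows the paper's argument. You write $h = \varpi_F^n h_1$, which reduces $\cW_{x,t}(1)$ to the terms satisfying $m_0 + 2n = 0$ and $k_i \ge n$. You then match this with integrality of $T$ via $\ord_F(T_1) = 2k_1 + m_0$, $\ord_F(T_3) = 2k_2 + m_0$ and $\ord_F(T_2) \ge k_1 + k_2 + m_0$, exactly as the paper does.
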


\begin{proof}
We have
\[
 \cW_{x,t}(1) = \sum_n \chi(\varpi_F^n),
\]
where $n$ runs over integers such that
\[
 k_1 - n \ge 0, \quad
 k_2 - n \ge 0, \quad
 m_0 + 2n = 0.
\]
Hence we have
\[
 \cW_{x,t}(1) = 
 \begin{cases}
  1 & \text{if $m_0$ is even and $2 \min \{ k_1, k_2 \} + m_0 \ge 0$;} \\
  0 & \text{otherwise.}
 \end{cases}
\]
On the other hand, we have
\begin{align*}
 \ord_F(T_1) & = 2 k_1 + m_0, \\
 \ord_F(T_2) & \ge k_1 + k_2 + m_0, \\
 \ord_F(T_3) & = 2 k_2 + m_0, 
\end{align*}
so that 
\[
 \min\{ \ord_F(T_1), \ord_F(T_2), \ord_F(T_3) \} = 2 \min\{ k_1, k_2 \} + m_0.
\]
This completes the proof. 
\end{proof}

\subsection{The ramified case}

We use the setting of \S \ref{ss:choices-ramified}.
In particular, $\varphi$ is the characteristic function of
\[
 \{ (x_1, x_2) \in \fp_K^{-1} \oplus \fp_K^{-1} \mid x_1 - x_2 u' \in \fo_K \} \times \fo_F^\times
\]
with $u' \in \fo_F^\times$.
Put $k_i = \ord_K(x_i)$ for $x = (x_1,x_2)$ and $m_0 = \ord_F(t)$.

\begin{lem}
\label{l:fourier-ramified}
Assume that $\chi$ is unramified and put $\alpha = \chi(\varpi_K)$ (so that $\alpha = \pm 1$).
Then we have $\cW_{x,t}(1) = 0$ unless $T_1, T_2, T_3 \in \fp_F^{-1}$, in which case
\[
 \cW_{x,t}(1) =  
 \begin{cases}
  \alpha^{m_0} & \text{if $\ord_K(x_1 - x_2 u') + m_0 \ge 0$;} \\
  0 & \text{otherwise.}
 \end{cases}
\]
\end{lem}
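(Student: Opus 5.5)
The plan is to follow the pattern of Lemmas \ref{l:fourier-split-1} and \ref{l:fourier-inert}: evaluate $\cW_{x,t}(1)$ directly as a lattice-point sum over $K^\times/\fo_K^\times$, and then compare the resulting nonvanishing condition with the integrality of $T$.

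\emph{Reduction to a sum.} Since $\varphi$ is the characteristic function of $L \times \fo_F^\times$, where
\[
 L = \{ (x_1,x_2) \in \fp_K^{-1} \oplus \fp_K^{-1} \mid x_1 - x_2 u' \in \fo_K \},
\]
and $\chi$ is unramified, the function $h \mapsto \varphi(h^{-1}x, \nu(h)t)\chi(h)$ is right $\fo_K^\times$-invariant. Indeed, $L$ is an $\fo_K$-module, and $\nu(\fo_K^\times) \subset \fo_F^\times$. With $\vol(\fo_K^\times)=1$, write $h = \varpi_K^n$. The condition $\nu(h)t \in \fo_F^\times$ reads $\ord_F(\N(\varpi_K^n) t) = n + m_0 = 0$, since $K/F$ is ramified and $\N(\varpi_K)$ is a uniformizer of $F$. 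So only $n=-m_0$ survives, and
\[
 \cW_{x,t}(1) = \chi(\varpi_K)^{-m_0}\, \mathbb{I}_L(\varpi_K^{m_0} x).
\]
Because $\chi|_{F^\times}=1$, we have $\alpha^2 = \chi(\varpi_K^2) = \chi(\text{unit}\cdot\varpi_F) = 1$, so $\alpha^{-m_0} = \alpha^{m_0}$.

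\emph{Analyzing $\varpi_K^{m_0}x \in L$.} The condition consists of three parts: $k_1 + m_0 \ge -1$, $k_2 + m_0 \ge -1$, and $\ord_K(x_1 - x_2u') + m_0 \ge 0$. I will argue that the third implies the first two whenever $T$ is appropriately integral, and conversely check that nonvanishing forces $T_i \in \fp_F^{-1}$.

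The key point is the identity
\[
 \N(x_1 - x_2u') = \N(x_1) - u'\tr(\bar x_1 x_2) + u\N(x_2),
\]
which gives
\[
 t\N(x_1 - x_2u') = T_1 - u'T_2 + uT_3 .
\]

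For the forward direction, assume $\varpi_K^{m_0}x \in L$. Then $T_1 = t\N(x_1)$ has $\ord_F = k_1 + m_0 \ge -1$ in $F$-valuation, because $\ord_F\N(x_1)=k_1$ in the ramified case. Similarly $T_3 \in \fp_F^{-1}$. For $T_2$, we have $T_2 = t\tr(\bar x_1x_2)$ with $\ord_K(t\bar x_1x_2) \ge 2m_0 + k_1 + k_2 \ge -2$. Since $\tr(\fp_K^{-2}) = \tr(\varpi_F^{-1}\fo_K) \subset \fp_F^{-1}$ (the inverse different of $K/F$ is $\fp_K^{-1}$ for $p$ odd), we get $T_2 \in \fp_F^{-1}$.

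For the converse, assume $T_1,T_2,T_3 \in \fp_F^{-1}$ and that the third condition holds. Then $\ord_F(T_1) \ge -1$ gives $k_1 + m_0 \ge -1$, and likewise $\ord_F(T_3) \ge -1$ gives $k_2 + m_0 \ge -1$. So, given $T \in \fp_F^{-1}$, membership in $L$ is equivalent to $\ord_K(x_1 - x_2u') + m_0 \ge 0$. This yields both the vanishing statement and the formula.

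\emph{Main obstacle.} The delicate step is bookkeeping the mixed $\ord_K$/$\ord_F$ valuations. This includes confirming that the trace bound uses the correct different (which requires $2 \in \fo_F^\times$), and that $u' \in \fo_F^\times$ does not spoil the equivalence. I expect this to reduce to the two valuation facts used above, so the computation should close without further case splitting.
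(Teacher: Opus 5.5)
Your proposal is correct and is essentially the paper's argument: collapse the integral to the single term $h=\varpi_K^{-m_0}$ forced by $\nu(h)t\in\fo_F^\times$, read off the conditions $k_1+m_0\ge -1$, $k_2+m_0\ge -1$, $\ord_K(x_1-x_2u')+m_0\ge 0$, and compare them with $\ord_F(T_1)=k_1+m_0$, $\ord_F(T_3)=k_2+m_0$ and a trace bound for $T_2$ (the paper packages this comparison as $\min_i \ord_F(T_i)=\min\{k_1,k_2\}+m_0$). Two minor remarks: the identity $t\N(x_1-x_2u')=T_1-u'T_2+uT_3$, which you call the key point, is never actually used, and your trace bound needs only $\tr(\fo_K)\subset\fo_F$, not the different.
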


\begin{proof}
We have
\[
 \cW_{x,t}(1) = \sum_n \mathbb{I}_{\fo_K}(\varpi_K^{-n}(x_1 - x_2 u')) \chi(\varpi_K^n),
\]
where $n$ runs over integers such that
\[
 k_1 - n \ge -1, \quad
 k_2 - n \ge -1, \quad
 m_0 + n = 0.
\]
Hence we have
\[
 \cW_{x,t}(1) =  
 \begin{cases}
  \alpha^{-m_0} & \text{if $\min \{ k_1, k_2 \} + m_0 + 1 \ge 0$ and $\ord_K(x_1 - x_2 u') + m_0 \ge 0$;} \\
  0 & \text{otherwise.}
 \end{cases}
\]
On the other hand, we have
\begin{align*}
 \ord_F(T_1) & = k_1 + m_0, \\
 \ord_F(T_2) & \ge \left[ \frac{k_1 + k_2 + 1}{2} \right] + m_0, \\
 \ord_F(T_3) & = k_2 + m_0,
\end{align*}
so that
\[
 \min\{ \ord_F(T_1), \ord_F(T_2), \ord_F(T_3) \} = \min\{ k_1, k_2 \} + m_0.
\]
This completes the proof. 
\end{proof}

\subsection{The dyadic case}

We use the setting of \S \ref{ss:choices-dyadic}.
In particular, $\varphi$ is the characteristic function of
\[
 \{ (x_1, x_2) \in \fo_K \oplus \fo_K \mid x_1 + x_2 u' \in 2 \fo_K \} \times 2^{-1} \fo_F^\times
\]
with $u' \in \fo_F^\times$.
Put
\[
 k_{ij} = \ord_F(x_{ij}), \quad
 m_1 = \min \{ k_{11}, k_{21} \}, \quad
 m_2 = \min \{ k_{12}, k_{22} \}
\]
for $x = (x_1, x_2)$ with $x_i = (x_{i1}, x_{i2})$ and $m_0 = \ord_F(2t)$.
Put
\[
 m_i' = \min \{ m_i, \ord_F(2^{-1} (x_{1i} + x_{2i} u')) \}.
\]

\begin{lem}
\label{l:fourier-dyadic}
Assume that $\chi$ is unramified and put $\alpha = \chi(\varpi_F,1)$.
Then we have $\cW_{x,t}(1) = 0$ unless $T_1, T_2, T_3 \in 2^{-1} \fo_F$, in which case
\[
 \cW_{x,t}(1) = \sum_{i=-m'_2-m_0}^{m'_1} \alpha^{2i + m_0}.
\]
\end{lem}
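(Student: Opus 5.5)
We follow the pattern of Lemmas \ref{l:fourier-split-1} and \ref{l:fourier-split-2}. Under the identification $K = F \oplus F$, write $h = (h_1,h_2) \in K^\times$ with $h_i = \varpi_F^{n_i} \epsilon_i$, $\epsilon_i \in \fo_F^\times$. Then $\nu(h) = h_1 h_2$ and $h^{-1}x$ has components $h_i^{-1} x_{1i}$ and $h_i^{-1} x_{2i}$. The integrand $\varphi(h^{-1}x, \nu(h)t)$ is invariant under $\fo_K^\times$: the conditions $x_1 + x_2u' \in 2\fo_K$ and $x_i \in \fo_K$ are componentwise, so scaling each component by a unit preserves them, and $t \in 2^{-1}\fo_F^\times$ is a unit condition. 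Since $\chi$ is unramified and $\vol(\fo_K^\times)=1$, we get
\[
 \cW_{x,t}(1) = \sum_{n_1,n_2} \chi(\varpi_F^{n_1},\varpi_F^{n_2}).
\]
The sum runs over integers $n_1,n_2$ satisfying three conditions.
\begin{itemize}
\item $k_{1i} - n_i \ge 0$ and $k_{2i} - n_i \ge 0$, i.e. $n_i \le m_i$.
\item $\ord_F(x_{1i}+x_{2i}u') - n_i \ge \ord_F(2)$, i.e. $n_i \le \ord_F(2^{-1}(x_{1i}+x_{2i}u'))$.
\item $\nu(h)t \in 2^{-1}\fo_F^\times$, i.e. $n_1+n_2+m_0 = 0$, where $m_0 = \ord_F(2t)$.
\end{itemize}
Combining the first two conditions gives exactly $n_i \le m_i'$.

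Next, set $n_1 = i$ and $n_2 = -i-m_0$. Using $\chi(\varpi_F,\varpi_F) = \chi|_{F^\times}(\varpi_F) = 1$, we have $\chi(\varpi_F^{i},\varpi_F^{-i-m_0}) = \alpha^{2i+m_0}$. The constraints $i \le m_1'$ and $-i-m_0 \le m_2'$ then give
\[
 \cW_{x,t}(1) = \sum_{i=-m_2'-m_0}^{m_1'} \alpha^{2i+m_0}.
\]
This sum is empty, hence zero, unless $m_1'+m_2'+m_0 \ge 0$.

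It remains to check the support claim: if $m_1'+m_2'+m_0 \ge 0$, then $T_1,T_2,T_3 \in 2^{-1}\fo_F$. With $x_1=(x_{11},x_{12})$ and $x_2=(x_{21},x_{22})$ we have $\N(x_1) = x_{11}x_{12}$, $\N(x_2) = x_{21}x_{22}$, and $\tr(\bar x_1 x_2) = x_{12}x_{21}+x_{11}x_{22}$. The crude bound $\ord_F(T_j) \ge m_1+m_2+\ord_F(t)$ loses a factor of $2$, because $\ord_F(t) = m_0 - \ord_F(2)$. So the refined congruence must be used, and this is the main step.

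Write $x_{2i} = -u'^{-1}x_{1i} + u'^{-1} y_i$, where $y_i = x_{1i}+x_{2i}u'$ and $\ord_F(y_i) \ge m_i' + \ord_F(2)$. Substituting,
\begin{align*}
 \N(x_2) &= u^{-1}\bigl(x_{11}x_{12} - x_{11}y_2 - y_1x_{12} + y_1y_2\bigr), \\
 \tr(\bar x_1 x_2) &= u'^{-1}\bigl(-2x_{11}x_{12} + x_{12}y_1 + x_{11}y_2\bigr).
\end{align*}
Every term except $x_{11}x_{12}$ carries either a factor of $2$ or a factor $y_i$, and each such term has valuation at least $m_1'+m_2'+\ord_F(2)$. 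For $T_1 = t\,x_{11}x_{12}$ we use $\ord_F(x_{1i}) \ge m_i \ge m_i'$, which gives $\ord_F(T_1) \ge m_1'+m_2'+m_0-\ord_F(2) \ge -\ord_F(2)$. The same bound then follows for $T_2$ and $T_3$ from the displayed expansions, since $u,u' \in \fo_F^\times$. Hence $T_1,T_2,T_3 \in 2^{-1}\fo_F$ whenever the sum is nonempty, which completes the proof.
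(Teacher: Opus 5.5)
Your proof is correct and is essentially the paper's: the reduction to the sum over $(n_1,n_2)$ with $n_i\le m_i'$ and $n_1+n_2+m_0=0$, and the rewriting as $\sum_i \alpha^{2i+m_0}$, are the same. One correction: the crude bound does not lose a factor of $2$, because $\ord_F(T_j)\ge m_1+m_2+\ord_F(t) = m_1+m_2+m_0-\ord_F(2)\ge m_1'+m_2'+m_0-\ord_F(2)\ge -\ord_F(2)$ is exactly the target $T_j\in 2^{-1}\fo_F$. The paper argues precisely this way, via $\min_j \ord_F(2T_j)\ge m_1+m_2+m_0$, so your substitution through $y_i$ is unnecessary; your bound for $T_1$ is already the crude one, and the expansion only sharpens $T_2$ to $\fo_F$.
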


\begin{proof}
We have
\[
 \cW_{x,t}(1) = \sum_{n_1, n_2} \mathbb{I}_{2 \fo_F}(\varpi_F^{-n_1} (x_{11} + x_{21} u')) \mathbb{I}_{2 \fo_F}(\varpi_F^{-n_2} (x_{12} + x_{22} u')) \chi(\varpi_F^{n_1}, \varpi_F^{n_2}),
\]
where $n_1, n_2$ run over integers such that
\begin{align*}
 k_{11} - n_1 & \ge 0, & 
 k_{12} - n_2 & \ge 0, \\
 k_{21} - n_1 & \ge 0, & 
 k_{22} - n_2 & \ge 0,
\end{align*}
and $m_0 + n_1 + n_2 = 0$.
Hence, noting that $\chi(\varpi_F,\varpi_F) = 1$, we have
\begin{align*}
 \cW_{x,t}(1) & = \sum_{i=-m_2-m_0}^{m_1} \mathbb{I}_{2 \fp_F^i}(x_{11} + x_{21} u') \mathbb{I}_{2 \fp_F^{-i - m_0}}(x_{12} + x_{22} u') \alpha^{2i + m_0} \\
 & = \sum_{i=-m'_2-m_0}^{m_1'} \alpha^{2i + m_0}.
\end{align*}
On the other hand, we have
\begin{align*}
 \ord_F(2 T_1) & = k_{11} + k_{12} + m_0, \\
 \ord_F(2 T_2) & \ge \min \{ k_{11} + k_{22}, k_{12} + k_{21} \} + m_0, \\
 \ord_F(2 T_3) & = k_{21} + k_{22} + m_0, 
\end{align*}
so that
\begin{align*}
 \min\{ \ord_F(2 T_1), \ord_F(2 T_2), \ord_F(2 T_3) \} 
 & \ge \min \{ k_{11}, k_{21} \} + \min \{ k_{12}, k_{22} \} + m_0 \\
 & \ge m'_1 + m'_2 + m_0.
\end{align*}
This completes the proof. 
\end{proof}

\subsection{The real case}

We use the setting of \S \ref{ss:schwartz-def}.
In particular, we have
\[
 \vec{\varphi}(x,t) = t \phi(t) \cdot e^{- 2 \pi t (x_1 \bar{x}_1 + x_2 \bar{x}_2)} \cdot \vec{x}
\]
with
\[
 \vec{x} = 
 \begin{pmatrix}
  \bar{x}_1^2 \\
  \bar{x}_1 \bar{x}_2 \\
  \bar{x}_2^2
 \end{pmatrix}.
\]
This Schwartz form gives rise to a $\C^3$-valued function $\cW_{x,t}$ on $\GSp_4(\R)$ as before.

\begin{lem}
\label{l:fourier-real}
Assume that $\chi(z) = \bar{z}/z$.
Let $a \in \GL_2(\R)$ and $b \in \Sym_2(\R)$.
Then we have
\[
 \cW_{x,t}(\boldsymbol{n}(b) \boldsymbol{m}(a)) = 
 \begin{cases}
  t \cdot e^{2 \pi \sqrt{-1} \tr(T z)} \cdot \rho({}^t a) \vec{x} & \text{if $t>0$;} \\
  0 & \text{if $t<0$,}
 \end{cases}
\]
where $T = t q(x)$ and $z = b + \sqrt{-1} a {}^t a$.
\end{lem}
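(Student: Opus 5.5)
The plan is to compute $\cW_{x,t}(\boldsymbol{n}(b)\boldsymbol{m}(a))$ directly from the explicit formulas for $\Omega_\psi$ in \S\ref{ss:weil-sp-o} with $K=\C$, $F=\R$. By definition
\[
 \cW_{x,t}(g) = \int_{\C^\times} \Omega_\psi(g)\vec{\varphi}(h^{-1}x, \nu(h)t)\,\chi(h)\,dh, \qquad \nu(h)=|h|^2 .
\]
First I evaluate $\Omega_\psi(g)\vec{\varphi}$ at a point $(y,s)$ for $g=\boldsymbol{n}(b)\boldsymbol{m}(a)$, using $\Omega_\psi(g)\vec\varphi(y,s)=\omega_{s\psi}(g)\vec\varphi_s(y)$. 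Since $\xi_{\C/\R}=\mathrm{sgn}$, we have $\xi(\det a)|\det a| = \det a$. This gives
\[
 \Omega_\psi(\boldsymbol{n}(b)\boldsymbol{m}(a))\vec\varphi(y,s) = e^{2\pi\sqrt{-1}\, s\tr(q(y)b)}\,\det(a)\,\vec\varphi(ya,s).
\]
Next I use two facts. For real $a$ one has $q(ya)={}^t a\,q(y)\,a$, so $y_1\bar y_1+y_2\bar y_2=\tr q(y)$ turns into $\tr(q(y)\,a\,{}^t a)$. The Gaussian factor together with the phase then combine into $e^{2\pi\sqrt{-1}\tr(s\,q(y)\,z)}$ with $z=b+\sqrt{-1}a{}^ta$. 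Second, the vector of quadratic monomials satisfies $\vec{ya}=\rho_0({}^ta)\vec y$, where $\rho_0$ is the appropriate $\Sym^2$ action. I would then check, using the identification of $V_\rho=\wedge^2(\fp^+)^*$ with $\C^3$ via $\eta,\eta',\eta''$ and Lemma \ref{l:diff-form-K-type}, that $\det(a)\rho_0({}^ta)$ is exactly $\rho({}^ta)$ in the paper's normalization. This determinant/twist bookkeeping is the step I expect to be the main obstacle. It is pure convention-matching: one has to verify it on the diagonal torus and on a unipotent element, and check that the $K$-action agrees with Lemma \ref{l:diff-form-K-type}.

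Then I substitute $(y,s)=(h^{-1}x,|h|^2t)$. Since $q(h^{-1}x)=|h|^{-2}q(x)$, we get $s\,q(y)=t\,q(x)=T$, so the exponential becomes $e^{2\pi\sqrt{-1}\tr(Tz)}$, independent of $h$. The monomial vector gives $\vec{h^{-1}x}=\bar h^{-2}\vec x$ (the entries are quadratic in $\bar x_i$). Multiplying by $\chi(h)=\bar h/h$ leaves $|h|^{-2}$. The remaining scalar is the prefactor $s\phi(s)=|h|^2t\,\phi(|h|^2t)$ from the definition of $\vec\varphi$, and this cancels against $|h|^{-2}$. The integrand therefore reduces to
\[
 t\,\phi(|h|^2t)\,e^{2\pi\sqrt{-1}\tr(Tz)}\,\rho({}^ta)\vec x .
\]

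Finally I integrate with $dh=2\,dh^{\mathrm{Leb}}/|h|^2$. Writing $h=re^{\sqrt{-1}\theta}$, so that $dh = 2\,dr\,d\theta/r$, and substituting $s=r^2t$ gives
\[
 \int_{\C^\times}\phi(|h|^2t)\,dh = 4\pi\int_0^\infty \phi(r^2t)\,\frac{dr}{r} = 2\pi\int \phi(s)\,\frac{ds}{s}.
\]
If $t>0$ this equals $2\pi\cdot\frac{1}{2\pi}=1$ by the normalization of $\phi$. If $t<0$ it vanishes, because $\supp(\phi)\subset\R_+^\times$. This yields $\cW_{x,t}(\boldsymbol{n}(b)\boldsymbol{m}(a)) = t\,e^{2\pi\sqrt{-1}\tr(Tz)}\rho({}^ta)\vec x$ for $t>0$ and $0$ for $t<0$, as claimed.
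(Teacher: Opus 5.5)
Your proposal is correct and follows the paper's proof essentially step by step. You evaluate the Weil action of $\boldsymbol{n}(b)\boldsymbol{m}(a)$ and substitute $(h^{-1}x,\nu(h)t)$, so that after multiplying by $\chi(h)$ the $h$-dependence collapses to $\phi(\nu(h)t)$; you then integrate in polar coordinates against the normalization of $\phi$. The identification $\det(a)\Sym^2({}^ta)=\rho({}^ta)$ that you flag is used without comment in the paper, and it does hold in the basis $\eta,\eta',\eta''$ matched with $v_{1,0},v_{1,1},v_{1,2}$ as in \S\ref{ss:main_periods}.
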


\begin{proof}
We have
\begin{align*}
 & \Omega_\psi(\boldsymbol{n}(b) \boldsymbol{m}(a)) \vec{\varphi}(h^{-1}x, \nu(h)t) \\
 & = \psi(t \tr(q(x) b)) \cdot \det(a) \cdot \vec{\varphi}(h^{-1}xa, \nu(h)t) \\
 & = \nu(h) t \cdot \phi(\nu(h) t) \cdot e^{2 \pi t \sqrt{-1} \tr(q(x) b)} \cdot e^{-2 \pi t \tr(q(x) a {}^t a)} \cdot \bar{h}^{-2} \cdot \rho({}^t a) \vec{x} \\
 & = \chi(h)^{-1} t \cdot \phi(\nu(h) t) \cdot e^{2 \pi \sqrt{-1} \tr(T z)} \cdot \rho({}^t a) \vec{x}, 
\end{align*}
so that
\[
 \cW_{x,t}(\boldsymbol{n}(b) \boldsymbol{m}(a))
 = t \cdot \int_{\C^\times} \phi(\nu(h) t) \, dh \cdot e^{2 \pi \sqrt{-1} \tr(T z)} \cdot \rho({}^t a) \vec{x}.
\]
Since 
\[
 \int_{\C^\times} \phi(\nu(h) t) \, dh = 2 \cdot 2 \pi \cdot \int_0^\infty \phi(r^2 t) \, \frac{dr}{r} = 
 \begin{cases}
  1 & \text{if $t>0$;} \\
  0 & \text{if $t<0$,}
 \end{cases}
\]
the desired identity follows.
\end{proof}

\section{The main formula for periods}
\label{s:main_formula}

In this section, we give a formula for the square of the period in terms of central values of Rankin-Selberg $L$-functions.

\subsection{Setup}
\label{ss:choices-2}

We use the setting of \S \ref{ss:choices}.
Let $n$ be a non-negative integer.
Let $\pi$ be an irreducible cuspidal automorphic representation of $\GL_2(\A_\Q)$ such that
\begin{itemize}
\item the central character of $\pi$ is trivial;
\item $\pi_\infty$ is the discrete series representation of weight $2n+4$;
\item $\pi_v = \St \otimes \xi_v$ for all $v \in \Sigma$, where $\St$ is the Steinberg representation of $\GL_2(\Q_v)$ and $\xi_v$ is an unramified (possibly trivial) quadratic character of $\Q_v^\times$;
\item $\pi_v$ is an unramified principal series representation for all $v \notin \Sigma \cup \{ \infty \}$.
\end{itemize}
Let $\chi$ be the character of $\A_K^\times/K^\times$ as in \S \ref{ss:theta_construction} such that 
\begin{itemize}
\item $\chi|_{\A_\Q^\times} = 1$;
\item $\chi_\infty(z) = \bar{z}/z$;
\item $\chi_v$ is unramified for all $v \ne \infty$.
\end{itemize}
Let $\mu$ be an auxiliary character of $\A_E^\times/E^\times$ such that
\begin{itemize}
\item $\mu|_{\A_\Q^\times} = 1$;
\item $\mu_\infty = 1$;
\item $\mu_v = \xi_v \circ \N_{E_v/\Q_v}$ for all $v \in \Sigma^-$;
\item $\mu_v$ is unramified for all $v \ne \infty$.
\end{itemize}
Define a character $\mu'$ of $\A_E^\times/E^\times$ by $\mu' = \mu \cdot (\xi_{K/\Q} \circ \N_{E/\Q})$, so that
\begin{itemize}
\item $\mu'|_{\A_\Q^\times} = 1$;
\item $\mu'_\infty = 1$.
\end{itemize}

\subsection{Periods}
\label{ss:main_periods}

Let $(\rho,V_\rho)$ be the $3$-dimensional representation of $\GL_2(\C)$ as in \S \ref{ss:theta_construction}.
By Lemmas \ref{l:varrho_m} and \ref{l:diff-form-K-type}, we may identify $(\rho \otimes \det^n, V_\rho \otimes V_{\det}^{\otimes n})$ with the representation $(\rho_{n+1}, V_{n+1})$ as in \S \ref{ss:diff-classical} so that
\[
 \eta \otimes \lambda^{\otimes n} = v_{n+1,0}, \quad
 \eta' \otimes \lambda^{\otimes n} = v_{n+1,1}, \quad
 \eta'' \otimes \lambda^{\otimes n} = v_{n+1,2},
\]
where $\eta, \eta', \eta''$ are the basis of $V_\rho$ as in \S \ref{ss:schwartz-def} and $\lambda$ is a fixed basis of $V_{\det}$.
Hence, by Proposition \ref{p:delta_mf}, the differential operator $\delta^n = \delta_n \circ \cdots \circ \delta_1$ induces a map
\[
 \delta^n : M_{\rho}(\Gamma) \rightarrow N_{\rho \otimes \det^n}^n(\Gamma)
\]
for any congruence subgroup $\Gamma$ of $\Sp_4(\Q)$.
Let $f \in M_\rho(\Gamma)$ and $f^\flat \in M_\rho(\Gamma^\flat)$ be the modular forms as in \S \ref{ss:theta_construction} and \S \ref{ss:p-depletion}, respectively, so that $\delta^n f \in N_{\rho \otimes \det^n}^n(\Gamma)$ and $\delta^n f^\flat \in N_{\rho \otimes \det^n}^n(\Gamma^\flat)$.
Let $\cU$ be an open compact subgroup of $\A_{E,f}^\times$ such that $\mu'|_{\cU} = 1$ and fix a set of representatives $\{ a_1, \dots, a_M \}$ for $E^\times \backslash \A_{E,f}^\times / \cU$.
As in \S \ref{ss:periods-adelic}, we define modular forms $(\delta^n f)_1, \dots, (\delta^n f)_M$ and $(\delta^n f^\flat)_1, \dots, (\delta^n f^\flat)_M$.

Let $\bff \in S_k(\Gamma_0(N))$ be the normalized Hecke eigenform corresponding to $\pi$, where $k=2n+4$ and $N = \prod_{q \in \Sigma} q$.
Put
\begin{align*}
 & P(\delta^n f, \bff, \mu') = M^{-1} \vol(\Gamma_0(N) \backslash \fH_1)^{-1} \\
 & \times \sum_{i=1}^M \mu'(a_i) \int_{\Gamma_0(N) \backslash \fH_1} \left\langle (\delta^n f)_i \mat{z}{0}{0}{-uz}, v_{n+1,0}^* - u v_{n+1,2}^* \right\rangle \overline{\bff(z)} \Im(z)^{2n+4} \, d \nu(z).
\end{align*}
Recall the (completed) Rankin-Selberg $L$-functions
\begin{align*}
 L(s, \pi_K \times \chi) & = L_\infty(s, \pi_K \times \chi) \cdot L_{\fin}(s, \pi_K \times \chi), \\
 L(s, \pi_E \times \mu) & = L_\infty(s, \pi_E \times \mu) \cdot L_{\fin}(s, \pi_E \times \mu),
\end{align*}
where $\pi_K$ and $\pi_E$ are the base changes of $\pi$ to $\GL_2(\A_K)$ and $\GL_2(\A_E)$, respectively.
Note that
\begin{align*}
 L_\infty(s, \pi_K \times \chi) & = \Gamma_\C \left( s+\frac{k+1}{2} \right) \Gamma_\C \left( s+\frac{k-3}{2} \right), \\
 L_\infty(s, \pi_E \times \mu) & = \Gamma_\C \left( s+\frac{k-1}{2} \right)^2,
\end{align*}
where $\Gamma_\C(s) = 2 (2\pi)^{-s} \Gamma(s)$.

\begin{thm}
\label{t:period-L-value-1}
We have
\[
 P(\delta^n f, \bff, \mu')^2 = C \cdot L(\tfrac{1}{2}, \pi_K \times \chi) \cdot L(\tfrac{1}{2}, \pi_E \times \mu),
\]
where
\begin{align*}
 C & = (-1)^{n+1} 2^{-2n-11} \pi^{4} \cdot |D_K|^{1/2} \cdot |D_E|^{-n-1/2} \rho_E^{-2} \cdot \zeta(2)^{-2} \\
 & \times \prod_{q \in \Sigma_s^K \cap \Sigma_r^E} \chi_q(1,q)
 \cdot \prod_{q \in \Sigma^+} \frac{\chi_q(q,1) \mu_q(1,q)}{(q+1)^2}
 \cdot \prod_{q \in \Sigma^-} \frac{2 \xi_q(q)}{(q+1)^2}
 \cdot \prod_{q \in \Sigma_r^K} \mu_q(q,1).
\end{align*}
Here $D_K$ and $D_E$ are the discriminants of $K$ and $E$, respectively, $\rho_E$ is the residue of the Dedekind zeta function $\zeta_E(s)$ of $E$ at $s=1$, and $\zeta(s)$ is the Riemann zeta function.
\end{thm}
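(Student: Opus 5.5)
Our plan follows outline (1) of the introduction, made explicit.

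\emph{Step 1: adelic reformulation.} Let $\phi' \in \pi$ be the adelic newform attached to $\bff$. By Propositions \ref{p:delta-f} and \ref{p:periods-classical}, with $\mu'$ in place of $\mu$ and the translate $g_0$, we can write $P(\delta^n f, \bff, \mu')$ as an explicit constant times $\cP^{\bG}(\ell \circ \widetilde{\mathtt{D}^n\phi}, \boldsymbol{\phi})$. Here $\boldsymbol{\phi} = \overline{\phi'}^{\mu'}$, and the constant involves $(-4\pi)^{-n}$, $|u|^{n/2}$, $\vol(\Gamma_0(N)\backslash\fH_1)$ and $C_K$. We then push $\mathtt{D}^n$ onto the Schwartz form: $\mathtt{D}^n \phi = \theta_{\vec{\varphi}_n}(\chi)^\std$, where $\vec\varphi_n$ is built from $(\varphi_n,\varphi_n',\varphi_n'')$ as in \S\ref{ss:schwartz-derivatives}. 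Pairing with $v^*_{n+1,0}+v^*_{n+1,2}$ and translating by $g_0$ produces, at infinity, the $k'$-invariant vector $\varphi_n+\varphi_n''$ of Lemma \ref{l:S(C^2)_n}, which has $k(z)$-weight $2n+4$. The seesaw identity (Lemma \ref{l:seesaw}) then turns the period into $C_K\,\cP^H(\theta_{\varphi}(\bar\phi'^{\mu}),\chi)$, a toric period on $\bH = \GU(\bV)$ of an element of $\pi^B \boxtimes \mu$ (Proposition \ref{p:theta-u11-u2}). The local signs are arranged by the choices in \S\ref{ss:choices}, so that $B$ ramified at $\Sigma^-\cup\{\infty\}$ is the correct quaternion algebra. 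Since $\bar\phi'$ corresponds to $\pi^\vee \simeq \pi$, we will use Lemma \ref{l:weil-conjugation} and the complex-conjugation symmetry ($c$ and $\delta$) to compare $\cP^H$ with its complex conjugate. This shows that the square $P^2$ equals $|P|^2$ up to an explicit sign, which should be the source of the factor $(-1)^{n+1}$.

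\emph{Step 2: Waldspurger and Rallis.} We apply Waldspurger's formula in Ichino's explicit form on $B^\times \times \A_E^\times$ to get
\[
|\cP^H(\theta(\phi'^\mu),\chi)|^2 = \frac{\zeta(2)L(\tfrac12,\pi_K\times\chi)}{2\,L(1,\pi,\Ad)}\,\prod_v \alpha_v\cdot \|\theta(\phi'^\mu)\|^2 .
\]
Here $\alpha_v$ are normalized local toric integrals of matrix coefficients of the local components of $\theta_{\varphi_v}(\phi'^\mu_v)$. Next, the Rallis inner product formula for $(\U(1,1),\U(2))$, with the similitude version handled as in Harris and Yamana, gives
\[
\|\theta_\varphi(\phi'^\mu)\|^2 = c\, \frac{L(\tfrac12,\pi_E\times\mu)}{L(1,\xi_{E})L(2,\cdot)}\prod_v Z_v(\varphi_v,\phi'_v)\,\|\phi'\|^2 ,
\]
and the Petersson norm satisfies $\|\phi'\|^2 \doteq L(1,\pi,\Ad)$, which cancels the adjoint $L$-value. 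The factors $\rho_E$, $|D_E|$ and $|D_K|^{1/2}$ enter through $C_K$ and $L(1,\xi_E)$.

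\emph{Step 3: local computations.} At unramified places the local zeta integrals and the $\alpha_v$ equal $1$. At $q \in \Sigma^\pm$, $q \in \Sigma_r^K$ and $q = 2$ we compute the doubling zeta integrals and the toric integrals against Steinberg, using the explicit lattices of \S\S\ref{ss:choices-split-1}--\ref{ss:choices-dyadic}. This produces the factors $(q+1)^{-2}$, $\chi_q(q,1)\mu_q(1,q)$, $2\xi_q(q)$ and $\mu_q(q,1)$.

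\emph{Main obstacle.} We expect the hardest step to be the archimedean one: computing the Rallis zeta integral of $\varphi_n+\varphi_n''$ against the weight-$(2n+4)$ lowest vector, together with the archimedean toric integral on $\mathbb H^\times$ for the corresponding vector of $\Sym^{2n+2}$. Our plan is to compute $\varphi_n+\varphi_n''$ explicitly via the partial Fourier transform of Lemma \ref{l:weil-pft2}. In that model $\U(1,1)$ acts linearly, which should reduce the zeta integral to a Gaussian moment computation, yielding $\Gamma$-values matching $L_\infty(\tfrac12,\pi_K\times\chi)L_\infty(\tfrac12,\pi_E\times\mu)$ and the powers of $2$ and $\pi$ in $C$.
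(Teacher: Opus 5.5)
\paragraph{Verdict.} Your architecture is the paper's: Lemma~\ref{l:adelic_period}, the seesaw Lemma~\ref{l:seesaw-explicit}, a conjugation step turning $\cP^H(\theta_\varphi(f_0^\mu),\chi)^2$ into $|\cP^H(\theta_\varphi(f_0^\mu),\chi)|^2$, then Propositions~\ref{p:wald-explicit}, \ref{p:rallis-explicit} and \ref{p:petersson-norm-f}. The gap is in how you account for the unimodular factors in $C$.

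\paragraph{Where the phases cannot come from.} You assert that the conjugation step yields only a sign, and that the local computations of Step~3 produce $\chi_q(q,1)\mu_q(1,q)$, $\xi_q(q)$ and $\mu_q(q,1)$. Neither is right. The local toric integrals in Waldspurger's formula and the doubling zeta integrals $Z_v(\varphi_v,f_v)$ are local incarnations of $|\cP|^2$ and $\|\theta\|^2$. Once their local $L$-factors are absorbed into the global $L$-values, they leave only rational factors such as $(q+1)/q$, $(q-1)/q$, $(q+1)^{-2}$ and $2/(q^2-1)$ (Lemmas~\ref{l:wald-ps}--\ref{l:wald-real}, \ref{l:rallis-ps}--\ref{l:rallis-1dim}). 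This accounts for the $(q+1)^{-2}$ and the $2$ in $C$, but not for any character values. Moreover $\chi_q(q,1)$ is in general a non-real unit, since $\chi|_{\A^\times}=1$ forces $\chi_q(1,q)=\overline{\chi_q(q,1)}$. So every phase in $C$ must come from the conjugation step, and there it is not merely a sign. With the Schwartz functions of \S\ref{ss:adelic_period}, one has $\Omega_\psi(h_0)\bar\varphi_q^c\neq\varphi_q$ at $q\in\Sigma^+$, at $q\in\Sigma_r^K$, and at $q\in\Sigma_s^K\cap\Sigma_r^E$.

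\paragraph{The missing step.} What you need is Lemma~\ref{l:change_of_schwartz}.
\begin{itemize}
\item Place by place, $\Omega_\psi(h_0)\bar\varphi^c=(-1)^n\Omega_\psi(h_1)\varphi$ for an explicit $h_1=[h_1',z]\in\bH(\A_f)$ supported on those three sets of primes (Lemmas~\ref{l:local-theta-split-1}--\ref{l:local-theta-dyadic}).
\item At $\infty$, the sign $(-1)^n$ requires the explicit Laguerre-polynomial formula for $\varphi_n+\varphi_n''$ in Proposition~\ref{p:phi-sharp} (Lemma~\ref{l:local-theta-real}).
\item Together these give $\overline{\cP^H}=(-1)^n\mu(z)\chi(h_1')^{-1}\cP^H$.
\item Global reciprocity then finishes. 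The identity $\prod_{q\in\Sigma_r^K}\chi_q(\bj)=\chi_\infty(\bj)^{-1}=-1$ supplies the second factor of $-1$ in $(-1)^{n+1}$. The identity $\prod_{q\in\Sigma_r^E}\mu_q(\bi)=1$ converts the values $\mu_q(\bi)$ at $q\in\Sigma_s^K\cap\Sigma_r^E$ into $\prod_{q\in\Sigma^-}\xi_q(q)$.
\end{itemize}
The factor $\chi_q(1,q)$ for $q\in\Sigma_s^K\cap\Sigma_r^E$, which your plan never produces, also arises only here. Without this step your constant is off by the unit $C_{\chi,\mu}$ of Proposition~\ref{p:complex_conjugation}, and even the overall sign is left undetermined.

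\paragraph{Smaller remarks.} Your archimedean plan, via Gaussian moments in the mixed model of Lemma~\ref{l:weil-pft2}, is a different route from the paper's. The paper reduces to $\SL_2(\R)$ and uses $\HDS_2\otimes\HDS_2=\bigoplus_{m\ge0}\HDS_{2m+4}$ together with Schur orthogonality. It therefore needs only the norm of the projection of $\varphi_n+\varphi_n''$ onto the lowest weight vector, plus the formal degree. Also, the power of $|u|$ in your Step~1 should be $(n+1)/2$, not $n/2$, since $\delta^n f$ has weight $\rho_{n+1}$.
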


For the $p$-adic interpolation, we need to consider the period of the $p$-depleted modular form against the $p$-stabilized modular form.
We adopt the following convention for the slash operator:
\[
 (\bff|_k g)(z) = \det(g)^{k/2} (cz+d)^{-k} \bff(gz)
\]
for $g = \smat{a}{b}{c}{d} \in \GL_2(\R)^+$ and $z \in \fH_1$.
We write the Fourier expansion of $\bff$ as
\[
 \bff(z) = \sum_{m=1}^\infty a_{\bff}(m) e^{2 \pi \sqrt{-1} mz}.
\]
Let $\alpha, \beta$ be the roots of the Hecke polynomial
\[
 X^2 - a_{\bff}(p) X + p^{k-1}.
\]
Let $\bff_\alpha \in S_k(\Gamma_0(Np))$ be the $p$-stabilization of $\bff$ with respect to $\alpha$:
\[
 \bff_\alpha(z) = \bff(z) - \beta \bff(pz).
\]
Let $\bff_\alpha^\rho \in S_k(\Gamma_0(Np))$ be the complex conjugate of $\bff_\alpha$:
\[
 \bff_\alpha^\rho(z) = \sum_{m=1}^\infty \overline{a_{\bff_\alpha}(m)} e^{2 \pi \sqrt{-1} mz}.
\]
Define a cusp form $\tilde{\bff}_\alpha \in S_k(\Gamma_0(Np^2))$ by
\[
 \tilde{\bff}_\alpha(z) = \left( \bff_\alpha^\rho|_k \mat{0}{-1}{Np}{0} \right)(pz).
\]
Put
\begin{align*}
 & P(\delta^n f^\flat, \tilde{\bff}_\alpha, \mu') = M^{-1} \vol(\Gamma_0(Np^2) \backslash \fH_1)^{-1} \\
 & \times \sum_{i=1}^M \mu'(a_i) \int_{\Gamma_0(Np^2) \backslash \fH_1} \left\langle (\delta^n f^\flat)_i \mat{z}{0}{0}{-uz}, v_{n+1,0}^* - u v_{n+1,2}^* \right\rangle \overline{\tilde{\bff}_\alpha(z)} \Im(z)^{2n+4} \, d \nu(z).
\end{align*}
We also need to modify the Rankin-Selberg $L$-functions by including the factors at $\infty$ and $p$ as in \cite[\S 1 and \S 2]{coates89}:
\begin{align*}
 \cE_\infty(\pi_K \times \chi) & = (-1)^n, \\
 \cE_\infty(\pi_E \times \mu) & = (-1)^n, \\
 \cE_p(\pi_K \times \chi) & = (1 - \xi_p(p) \chi_p(p,1) p^{-1/2})^2 (1 - \xi_p(p) \chi_p(1,p) p^{-1/2})^2, \\
 \cE_p(\pi_E \times \mu) & = 
 \begin{cases}
  (1 - \xi_p(p) \mu_p(p,1) p^{-1/2})^2 (1 - \xi_p(p) \mu_p(1,p) p^{-1/2})^2 & \text{if $p$ is split in $E$;} \\
  (1 - \xi_p(p)^2 p^{-1})^2 & \text{if $p$ is inert in $E$,} 
 \end{cases}
\end{align*}
where $\xi_p$ is the unitary unramified character of $\Q_p^\times$ such that $\xi_p(p) = \beta p^{-(k-1)/2}$ (so that $\pi_p = \Ind(\xi_p \otimes \xi_p^{-1})$).

\begin{thm}
\label{t:period-L-value-2}
We have
\begin{align*}
 P(\delta^n f^\flat, \tilde{\bff}_\alpha, \mu')^2 & = C \cdot \alpha^4 p^{-6n-12} (1+p^{-1})^{-2} \\
 & \times \cE_\infty(\pi_K \times \chi) \cdot \cE_p(\pi_K \times \chi) \cdot L(\tfrac{1}{2}, \pi_K \times \chi) \\
 & \times \cE_\infty(\pi_E \times \mu) \cdot \cE_p(\pi_E \times \mu) \cdot L(\tfrac{1}{2}, \pi_E \times \mu),
\end{align*}
where $C$ is the constant in Theorem \ref{t:period-L-value-1}.
\end{thm}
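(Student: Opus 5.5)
The plan is to reduce Theorem \ref{t:period-L-value-2} to Theorem \ref{t:period-L-value-1}, treating the latter as known. The reduction is by uniqueness of invariant functionals (Lemma \ref{l:seesaw-local}), so the whole difference between the two periods becomes a ratio of local functionals at $p$.

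First, I will rewrite both classical periods adelically using Proposition \ref{p:periods-classical} and Proposition \ref{p:delta-f}. Then $P(\delta^n f,\bff,\mu')$ and $P(\delta^n f^\flat,\tilde{\bff}_\alpha,\mu')$ become $\cP^{\bG}$-periods of $\ell\circ\widetilde{\mathtt{D}^n\phi}$, up to the same explicit constant. The first period is paired against $\phi'_{\bff}\otimes\mu'$, and the second against $\tau_p\phi$ paired with the adelic form of $\tilde{\bff}_\alpha$ (with $\mu'$). The volume normalizations differ only through $[\Gamma_0(N):\Gamma_0(Np^2)]$, and this index is what produces part of the factor $(1+p^{-1})^{-2}$ together with powers of $p$.

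Next I use uniqueness. The functional $(\phi_1,\phi_2)\mapsto\cP^{\bG}(\phi_1,\phi_2)$ on $\theta(\chi)\otimes(\pi\boxtimes\mu')$ is factorizable, since by Lemma \ref{l:seesaw-local} the local Hom spaces are at most one-dimensional. Fixing a local functional $\ell_p$ at $p$ normalized on the unramified vectors, I obtain
\[
 \frac{P(\delta^n f^\flat,\tilde{\bff}_\alpha,\mu')}{P(\delta^n f,\bff,\mu')} = (\text{index constant})\cdot\frac{\ell_p(\tau_p\varphi_p^\circ,\ W_{\alpha,p})}{\ell_p(\varphi_p^\circ,\ W_p^\circ)},
\]
where $W_{\alpha,p}$ is the local vector of $\tilde{\bff}_\alpha$: it is the $p$-stabilized vector translated by $\smat{0}{-1}{p}{0}$ and then by $\smat{p}{}{}{1}$. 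Because $\tau_p$ is a finite combination of $\boldsymbol{m}(a)$-translates, the numerator equals $\sum_j c_j\,\ell_p(\varphi^\circ_p, \boldsymbol{m}(a_j)^{-1}\cdot W_{\alpha,p})$. To evaluate these terms, I will realize $\ell_p$ concretely. Through the seesaw, $\ell_p$ factors as the Waldspurger toric functional on $\pi^B_p\otimes\chi_p$ composed with the theta functional. Equivalently, I can use a Rankin–Selberg-type local zeta integral on $\GU(1,1)$ in the Whittaker/partial Fourier model of Lemma \ref{l:weil-pft2}. In that model $\bG$ acts geometrically, so each translate becomes an explicit geometric sum over $\GL_2(\Z_p)\backslash X_p$ weighted by $\xi_p(p)^{\pm1}$.

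Carrying out the sum, the three terms of $\tau_p$ should combine with the Iwahori-level vector $W_{\alpha,p}$. I expect the sum to factor into $\alpha\,p^{-(3n+6)/2}$ times the product $(1-\xi_p(p)\chi_p(p,1)p^{-1/2})(1-\xi_p(p)\chi_p(1,p)p^{-1/2})$ times the corresponding $\mu$-factor (or $(1-\xi_p(p)^2p^{-1})$ when $p$ is inert in $E$), all divided by the local $L$-factors already included in Theorem \ref{t:period-L-value-1}. Squaring this gives $\cE_p(\pi_K\times\chi)\cE_p(\pi_E\times\mu)\alpha^4p^{-6n-12}$. The factor $\cE_\infty$ contributes only a sign $(-1)^{2n}=1$ in the product, and this sign has to be checked against the complex-conjugation comparison of \S \ref{s:complex_conjugation}.

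The main obstacle is the explicit computation of this local functional at $p$ (the analogue of Lemma \ref{l:unique-hom-p-depletion}). I would first try to do it in the mixed model. There the test vector $\varphi^\circ_p$ is the characteristic function of $\M_2(\Z_p)\times\Z_p^\times$ from \S \ref{ss:choices-split-1} or \S \ref{ss:choices-inert-1}, and the action of $\boldsymbol{m}(a)$ is simply $\varphi\mapsto|\det a|\varphi(\cdot\,a)$. This reduces the functional to a lattice-counting sum against Iwahori-Whittaker values, analogous to the proofs of Lemma \ref{l:tilde_w} and Lemma \ref{l:flat_w}.
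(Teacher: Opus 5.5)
Your outline matches the paper's reduction: by Lemmas \ref{l:adelic_period}, \ref{l:p-stabilization} and \ref{l:kill-p-depletion}, the one-dimensionality of $\Hom_{\bG_p\times H_p}(\Omega_{\psi,p}\otimes(\pi_p\boxtimes\mu_p)\otimes\chi_p,\C)$ reduces everything to a ratio of local functionals at $p$. However, the step you propose for evaluating the numerator fails. You write $\ell_p(\tau_p\varphi_p^\circ,W_{\alpha,p})=\sum_j c_j\,\ell_p(\varphi_p^\circ,\boldsymbol{m}(a_j)^{-1}W_{\alpha,p})$, which uses invariance of $\ell_p$ under $\boldsymbol{m}(a_j)$. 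But $\ell_p$ is only invariant under $\iota(\bG_p)$. The intersection $\iota(\bG_p)\cap\boldsymbol{m}(\GL_2(\Q_p))$ is just $\boldsymbol{m}$ of the torus $\Q_p^\times\cdot E_p^\times\subset\GL_2(\Q_p)$. The elements $\boldsymbol{m}(a^{-1})$, $a\in\GL_2(\Z_p)\backslash X_p$, do not lie in it in general; for $p$ inert in $E$ none of them do, since $\ord_{\Q_p}\det(a^{-1})$ is odd. So $\boldsymbol{m}(a_j)^{-1}$ does not even act on $W_{\alpha,p}\in\pi_p\boxtimes\mu'_p$, and $\tau_p$ cannot be moved to the $\GU(1,1)$ side; the $p$-depletion has to be computed on the Schwartz function itself. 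This is what the paper does:
\begin{itemize}
\item Lemma \ref{l:varphi-hat-flat} computes $\hat{\varphi}^\flat=\cF_\psi(\tau_p\varphi_p)$ explicitly.
\item The functional is realized as $\cQ(\varphi,w)=\int_{F^\times\backslash K^\times}\int_{\bT_0\backslash\bG}\hat{\Omega}_\psi(g,h)\cF_\psi(\varphi)(x_0,1)\,w(g)\chi(h)\,dg\,dh$, with $w$ in Waldspurger's torus model rather than a Whittaker model.
\item $\cQ(\varphi^\flat,\tilde{w}_0)$ is evaluated by splitting each $\bT_0 g\bK$ into $\bK_0$-cosets and using Macdonald's formula (resp.\ \cite{bff} when $E_p$ is split).
\item One also needs $\cQ(\varphi_0,w_0)\neq0$ (Lemmas \ref{l:unique-hom-unram-inert}, \ref{l:unique-hom-unram-split}).
\end{itemize}
Your closing fallback (lattice counting in the mixed model) points in this direction. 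But that computation, Lemma \ref{l:unique-hom-p-depletion}, is the real content of the proof, and you only guess its outcome.

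Your constant bookkeeping is also wrong and would give a wrong formula if carried through. There is no index factor $[\Gamma_0(N):\Gamma_0(Np^2)]$: both periods are normalized by $\vol(\Gamma\backslash\fH_1)^{-1}$, and Lemma \ref{l:adelic_period} gives the same constant $C'$ for both. All of $\alpha^4p^{-6n-12}(1+p^{-1})^{-2}$ comes from $C_p^2$ in Lemma \ref{l:kill-p-depletion}. There the local ratio is $\xi_p(p)^{-2}p^{-1}(1+p^{-1})^{-1}$ times the Euler-type factors, and Lemma \ref{l:p-stabilization} contributes $\varepsilon_{\fin}(\pi)p^{-k/2}$. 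Since $\xi_p(p)^{-2}=\alpha^2p^{-(k-1)}$, this gives $C_p=\alpha^2p^{-3k/2}(1+p^{-1})^{-1}(\cdots)$ with $k=2n+4$. Your guess $\alpha p^{-(3n+6)/2}$ squares to $\alpha^2p^{-3n-6}$, not $\alpha^4p^{-6n-12}$. Moreover, $\tilde{\bff}_\alpha$ involves $\smat{0}{-1}{Np}{0}$, which also changes the local vectors at $q\mid N$, so the comparison is not purely local at $p$. The paper handles this via $\bff|_k\smat{0}{-1}{N}{0}=\varepsilon_{\fin}(\pi)\bff$. This sign, like $\cE_\infty(\pi_K\times\chi)\cE_\infty(\pi_E\times\mu)=(-1)^{2n}=1$, disappears on squaring. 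So no check against \S\ref{s:complex_conjugation} is needed: $P(\delta^n f^\flat,\tilde{\bff}_\alpha,\mu')^2=C_p^2\,P(\delta^n f,\bff,\mu')^2$, and Theorem \ref{t:period-L-value-1} finishes the proof.
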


The rest of this paper is devoted to the proof of Theorems \ref{t:period-L-value-1} and \ref{t:period-L-value-2}.

\section{Expression as toric periods}
\label{s:main_seesaw}

In this section, we express the periods in Theorems \ref{t:period-L-value-1} and \ref{t:period-L-value-2} in terms of toric periods.

\subsection{From classical to adelic}
\label{ss:adelic_period}

Let $\vec{\varphi} = \vec{\varphi}_\infty \otimes (\otimes_{v \ne \infty} \varphi_v) \in S(\A_K^2 \times \A^\times) \otimes V_\rho \otimes V_{\det}^{\otimes n}$ be the Schwartz form as in \S \ref{ss:theta_construction} but with the following modification: $\vec{\varphi}_\infty$ is given by
\[
 \vec{\varphi}_\infty(x,t) = t \phi(t) \cdot (\varphi_{n,t}(x) \eta + \varphi_{n,t}'(x) \eta' + \varphi_{n,t}''(x) \eta'') \otimes \lambda^{\otimes n},
\]
where $\varphi_{n,t}, \varphi'_{n,t}, \varphi_{n,t}'' \in \cS(\C^2)$ are as in \S \ref{ss:schwartz-derivatives} (with a subscript indicating the dependence on $t$) and $\phi \in C_c^\infty(\R^\times)$ is as in \S \ref{ss:schwartz-def}.
This induces a Schwartz function $\varphi = \varphi_\infty \otimes (\otimes_{v \ne \infty} \varphi_v) \in S(\A_K^2 \times \A^\times)$ given by
\[
 \varphi_\infty = \ell \circ \Omega_{\psi,\infty}(g_0) \vec{\varphi}_\infty,
\]
where $\ell : V_\rho \otimes V_{\det}^{\otimes n} \rightarrow \C$ and $g_0 \in \Sp_4(\R)$ are as in \S \ref{ss:periods-adelic}.
Note that $\varphi_\infty = \Omega_{\psi,\infty}(g_0) (\ell \circ \vec{\varphi}_\infty)$, where
\[
 (\ell \circ \vec{\varphi}_\infty)(x,t) = t \phi(t) \cdot (\varphi_{n,t}(x) + \varphi''_{n,t}(x)).
\]
We also modify $\varphi$ at $p$ and define a Schwartz function $\varphi^\flat = \varphi_p^\flat \otimes (\otimes_{v \ne p} \varphi_v) \in S(\A_K^2 \times \A^\times)$ by 
\[
 \varphi_p^\flat = \tau_p \varphi_p,
\]
where $\tau_p$ is the Hecke operator as in \S \ref{ss:p-depletion}.

We write $f_0 \in \pi$ for the automorphic form on $\GL_2(\A)$ corresponding to $\bar{\bff}$:
\[
 f_0(\gamma g h) = \overline{(\bff|_k g)(\sqrt{-1})}
\]
for $\gamma \in \GL_2(\Q)$, $g \in \GL_2(\R)^+$, and $h \in \cK_0(N)$, where
\[
 \cK_0(N) = \left\{ h \in \GL_2(\hat{\Z}) \; \middle| \; h \equiv \mat{*}{*}{0}{*} \bmod N \hat{\Z} \right\}.
 \]
Note that $f_0$ is decomposable.
We also write $f_\alpha$ for the automorphic form on $\GL_2(\A)$ corresponding to $\overline{\tilde{\bff}_\alpha}$.
Put $\bG = (\GL_2(\Q) \times E^\times)/\Q^\times$.
We extend $f_0$ and $f_\alpha$ to automorphic forms $f_0^{\mu'}$ and $f_\alpha^{\mu'}$ on $\bG(\A)$, respectively, as in \S \ref{ss:theta-U11}.

\begin{lem}
\label{l:adelic_period}
We have
\begin{align*}
 P(\delta^n f, \bff, \mu') & = C' \cdot \cP^{\bG}(\theta_{\varphi}(\chi)^\std, f_0^{\mu'}), \\
 P(\delta^n f^\flat, \tilde{\bff}_\alpha, \mu') & = C' \cdot \cP^{\bG}(\theta_{\varphi^\flat}(\chi)^\std, f^{\mu'}_\alpha), 
\end{align*}
where
\[
 C' = (-1)^n 2^{-2n-2} \pi^{-n} |u|^{-(n+1)/2}.
\]
\end{lem}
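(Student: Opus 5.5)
The plan is to combine Proposition \ref{p:periods-classical} with Proposition \ref{p:delta-f} and the construction of the Schwartz functions $\varphi_n,\varphi_n',\varphi_n''$ via $\omega(\mathtt{D})^n$.

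First, let $\phi=\theta_{\vec\varphi}(\chi)^\std$ be built from the original Schwartz form with $\vec\varphi_{0,t}$ at $\infty$. The theta correspondence intertwines the $\fg$-action on automorphic forms with $\Omega_\psi$ on Schwartz functions. Hence $\mathtt{D}^n\phi$, with $\mathtt{D}$ as in \eqref{eq:D.phi}, is the theta lift of the Schwartz form whose archimedean component is $t\phi(t)(\varphi_{n,t}\eta+\varphi'_{n,t}\eta'+\varphi''_{n,t}\eta'')\otimes\lambda^{\otimes n}$, i.e.\ of the modified $\vec\varphi$. Here one must check that $\omega_{t\psi}$ on $\varphi_{\bullet,t}$ is compatible with the $t$-twisted action $\Omega_\psi$, using $\omega_{t\psi}(g)=\omega_\psi(\ldots)$. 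Since we assumed $t>0$ in \S\ref{ss:schwartz-derivatives}, $\phi$ is supported on $\R_+^\times$, and $\mathtt{D}$ lies in $\fg_1$, this is direct. By Proposition \ref{p:delta-f} applied $n$ times (with $m=1,\dots,n$), $\delta^n f$ then corresponds to $(-4\pi)^{-n}\mathtt{D}^n\phi$. Under the identification $V_\rho\otimes V_{\det}^{\otimes n}=V_{n+1}$ this produces the factor $(-1)^n2^{-2n}\pi^{-n}$.

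Next, apply Proposition \ref{p:periods-classical} with $m=n+1$, $\phi'$ corresponding to $\bff$ (so that $\boldsymbol\phi=\overline{\phi'}\cdot\mu'$ and $f_0=\overline{\phi'}$), and $\mu'$ in place of $\mu$. This yields
\[
\cP^{\bG}(\ell\circ\tilde\phi,\boldsymbol\phi)=4|u|^{(n+1)/2}P(\delta^nf,\bff,\mu')\cdot(\text{the }(-4\pi)^{n}\text{ factor}),
\]
giving $C'=(-1)^n2^{-2n-2}\pi^{-n}|u|^{-(n+1)/2}$. It remains to identify $\ell\circ\tilde\phi$ with $\theta_\varphi(\chi)^\std$ restricted via $\iota$. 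Right translation by $g_0$ moves onto the Schwartz function as $\Omega_{\psi,\infty}(g_0)$, and $\ell$ commutes with theta lifting, which gives $\varphi_\infty$ as defined.

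We must also verify the hypotheses of Proposition \ref{p:periods-classical}. Invariance of $\phi$ under $\iota([k,z])$ for $k\in\cK'=\cK_0(N)$ and $z\in\cU$ follows from the local invariance statements in \S\S\ref{ss:choices-split-1}--\ref{ss:choices-dyadic}, via Lemma \ref{l:weil-seesaw} (possibly after shrinking $\cU$, which only affects $M$ harmlessly). The $\iota$ of \S\ref{ss:periods-adelic} must match the seesaw embedding of \S\ref{ss:weil-seesaw}. Absolute convergence holds because $\bff$ is cuspidal and $\phi$ is of moderate growth.

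For the $p$-depleted version, the same argument applies: $\tau_p$ acts by right translation at $p$ and commutes with everything at $\infty$, so $\phi^\flat$ is the lift of $\varphi^\flat$. One uses $\tilde\bff_\alpha$, of level $\Gamma_0(Np^2)$, with $\cK'=\cK_0(Np^2)$, and must check that $\cK^\flat_p\supset\iota(\cK_0(p^2)_p\times\fo_{E_p}^\times)$. The main obstacle is sign and normalization bookkeeping, chiefly the compatibility of theta lifting with the Lie algebra action under the $t$-rescaling and the precise identification of $\ell\circ\Omega(g_0)$, rather than any conceptual difficulty.
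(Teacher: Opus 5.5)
Your proposal is correct and is essentially the paper's proof. By Proposition~\ref{p:delta-f}, $\delta^n f$ (resp.\ $\delta^n f^\flat$) corresponds to $(-4\pi)^{-n}$ times the theta lift of the modified Schwartz form (resp.\ its $\tau_p$-translate); Proposition~\ref{p:periods-classical} with $m=n+1$ and $\boldsymbol{\phi}=f_0^{\mu'}$ (resp.\ $f_\alpha^{\mu'}$) then supplies the factor $4|u|^{(n+1)/2}$, and $\ell\circ\tilde{\phi}_0=\theta_{\varphi}(\chi)^\std$ finishes the argument. One small correction: no shrinking of $\cU$ is needed, since any compact subgroup of $\A_{E,f}^\times$ lies in $\hat{\fo}_E^\times$, so the factor $\xi_{K/\Q}\circ\N_{E/\Q}$ from Lemma~\ref{l:weil-seesaw} agrees with $\mu'$ on $\cU$ (as $\mu$ is unramified) and is therefore trivial there.
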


\begin{proof}
Put $\phi_0 = \theta_{\vec{\varphi}}(\chi)^\std$.
By Proposition \ref{p:delta-f}, $\delta^n f$ corresponds to the adelic automorphic form $(-4\pi)^{-n} \cdot \phi_0$.
Hence $\delta^n f^\flat$ corresponds to the adelic automorphic form $(-4\pi)^{-n} \cdot \tau_p \phi_0$, noting that the differential operator commutes with the Hecke operator.
By Lemma \ref{l:weil-seesaw} and the choice of $\vec{\varphi}$, we have
\begin{align*}
 \phi_0(g \iota([h,z])) & = \phi_0(g), \\
 \tau_p \phi_0(g \iota([h',z])) & = \tau_p \phi_0(g)
\end{align*}
for all $g \in \GSp_4(\A)$, $h \in \cK_0(N)$, $h' \in \cK_0(Np^2)$, and $z \in \cU$.
Hence, by Proposition \ref{p:periods-classical}, we have
\begin{align*}
 (-4\pi)^{-n} \cdot \cP^{\bG}(\ell \circ \tilde{\phi}_0, f_0^{\mu'}) & = 4 |u|^{(n+1)/2} \cdot P(\delta^n f, \bff, \mu'), \\
 (-4\pi)^{-n} \cdot \cP^{\bG}(\ell \circ \tau_p \tilde{\phi}_0, f_\alpha^{\mu'}) & = 4 |u|^{(n+1)/2} \cdot P(\delta^n f^\flat, \tilde{\bff}_\alpha, \mu'),
\end{align*}
where $\tilde{\phi}_0(g) = \phi_0(g g_0)$.
 Since $\ell \circ \tilde{\phi}_0 = \theta_{\varphi}(\chi)^\std$ and $\ell \circ \tau_p \tilde{\phi}_0 = \theta_{\varphi^\flat}(\chi)^\std$, the assertion follows.
\end{proof}

\subsection{A seesaw identity}
\label{ss:toric_period}

Put $H = K^\times$ and $\bH = (B^\times \times E^\times)/\Q^\times$.
As in \S \ref{ss:weil-seesaw}, we regard $H$ as a subgroup of $\bH$ via the map $h \mapsto [h,1]$.
We also regard $\varphi$ as an element in $S(B(\A) \times \A^\times)$ and consider the theta lift $\theta_\varphi(f_0^\mu)$ as in \S \ref{ss:theta-U11}.
Then Lemma \ref{l:seesaw} gives the following.

\begin{lem}
\label{l:seesaw-explicit}
We have
\[
 \cP^{\bG}(\theta_{\varphi}(\chi)^\std, f^{\mu'}_0)
 = C_K \cdot \cP^H(\theta_\varphi(f_0^\mu), \chi), 
\]
where
\[
 C_K = |D_K|^{1/2} \cdot \Res_{s=1} \zeta_K(s).
\]
\end{lem}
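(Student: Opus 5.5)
This lemma is a direct specialization of Lemma \ref{l:seesaw}, with $F = \Q$, $\pi$ as in \S \ref{ss:choices-2}, $f = f_0$, and the fixed characters $\chi, \mu$. The plan is to verify the hypotheses of Lemma \ref{l:seesaw} and then account for the normalization of measures.

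\textbf{Hypotheses.} The central character $\xi_\pi$ is trivial, and by the choices in \S \ref{ss:choices-2} we have $\chi|_{\A^\times} = 1$ and $\mu|_{\A^\times} = 1$. Hence
\[
\xi_\pi \cdot \chi|_{\A^\times} = \xi_\pi \cdot \mu|_{\A^\times} = 1 .
\]
The character $\mu' = \mu \cdot (\xi_{K/\Q} \circ \N_{E/\Q})$ appearing in Lemma \ref{l:seesaw} is exactly the $\mu'$ defined in \S \ref{ss:choices-2}. Finally, $\varphi$ is a pure tensor in $S(\A_K^2 \times \A^\times) = S(B(\A) \times \A^\times)$. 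At $\infty$ it lies in this space because $\varphi_\infty = \Omega_{\psi,\infty}(g_0)(\ell \circ \vec\varphi_\infty)$: here $\ell \circ \vec{\varphi}_\infty$ has the required polynomial-times-Gaussian form, and the space is stable under $\Sp_4(\R)$ translations in the $K$-finite sense. Since $g_0$ is a Levi element, $\Omega(g_0)$ only rescales the variable, so the polynomial-Gaussian shape is preserved. The identification $K^2 = B$ used here is the one fixed in \S \ref{ss:seesaw}.

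\textbf{Applying the lemma and the measure constant.} Lemma \ref{l:seesaw} gives
\[
\cP^{\bG}(\theta_\varphi(\chi), f_0^{\mu'}) = \cP^H(\theta_\varphi(f_0^\mu), \chi),
\]
where $\theta_\varphi(\chi)$ is defined with the Tamagawa measure on $H(\A)$. By \S \ref{ss:theta-Sp4}, $\theta_\varphi(\chi)^\std = C_K \cdot \theta_\varphi(\chi)$, and $\cP^{\bG}$ is linear in its first argument. Multiplying both sides by $C_K$ therefore yields the stated identity. The other measures in the statement ($dg$ on $\bG$ and $dh$ on $Z_{\tilde H}(\A) \backslash H(\A)$ in $\cP^H$) are the Tamagawa measures in both places, so no further constant arises.

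\textbf{Main obstacle.} The only step needing care is convergence and the interchange of integrals in the proof of Lemma \ref{l:seesaw}, which is implicitly assumed there. Here it is justified as follows. The integral over $H(\Q) \backslash H(\A)$ modulo center is over a compact set. The function $f_0$ is cuspidal, so the theta kernel integrated against $f_0^\mu$ over $\bG(\Q)\backslash\bG(\A)$ converges absolutely, by the rapid decay of cusp forms and the moderate growth of the theta function. Fubini then applies to the double integral $I(\varphi, f_0)$.
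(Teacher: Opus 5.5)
Your proposal is correct and is the paper's argument: the lemma is Lemma~\ref{l:seesaw} applied to $f = f_0$, whose hypotheses hold because $\xi_\pi$, $\chi|_{\A^\times}$ and $\mu|_{\A^\times}$ are trivial, combined with $\theta_\varphi(\chi)^{\std} = C_K \cdot \theta_\varphi(\chi)$ and the linearity of $\cP^{\bG}$. Two side remarks are slightly off but do not affect the argument: first, $\Omega_{\psi,\infty}(g_0)$ does not preserve the Gaussian $e^{-2\pi t(x_1\bar{x}_1 + x_2\bar{x}_2)}$ (no such translation-stability holds) but turns it into $e^{-2\pi t(x_1\bar{x}_1 - u x_2\bar{x}_2)}$, so $\varphi_\infty$ lies in $S(B(\R)\times\R^\times)$, which is what the seesaw argument uses; second, absolute convergence over $\bG(\Q)\backslash\bG(\A)$ also needs the compact support of $\varphi$ in $t \in \A^\times$, because $|f_0^{\mu}|$ is constant along the noncompact central direction and cuspidal decay gives nothing there.
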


\subsection{Behavior under $p$-depletion and $p$-stabilization}

To compute $\cP^{\bG}(\theta_{\varphi^\flat}(\chi)^\std, f^{\mu'}_\alpha)$, we relate it to $\cP^{\bG}(\theta_{\varphi}(\chi)^\std, f^{\mu'}_0)$.
We first express $f_\alpha$ in terms of $f_0$.

\begin{lem}
\label{l:p-stabilization}
If we write $f_0 = \otimes_v f_v \in \pi$, then we have
\[
 f_\alpha = \varepsilon_{\fin}(\pi) p^{-k/2} \cdot \tilde{f}_p \otimes (\otimes_{v \ne p} f_v) \in \pi, 
\]
where $\varepsilon_{\fin}(\pi)$ is the value of the standard $\varepsilon$-function $\varepsilon_{\fin}(s, \pi, \psi)$ at $s = \tfrac{1}{2}$ (which does not depend on $\psi$) and
\[
 \tilde{f}_p = \pi_p(t_p^{-2}) f_p - \xi_p(p) p^{-1/2} \pi_p(t_p^{-1}) f_p \in \pi_p
\]
with 
\[
 t_p = \mat{p}{0}{0}{1} \in \GL_2(\Q_p).
\]
\end{lem}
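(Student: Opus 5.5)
The plan is to translate each classical operation used to build $\tilde{\bff}_\alpha$ from $\bff$ into a right translation of the adelic form by an element of $\GL_2(\A_f)$, and then evaluate these translations place by place using $f_0 = \otimes_v f_v$.

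First I will record the dictionary between slash operators and right translations. For $\gamma \in \GL_2(\Q)^+$, the form attached to $\bff|_k \gamma$ in the sense of the definition of $f_0$ is obtained from that attached to $\bff$ by right translation by $\gamma_f^{-1}$. Here $\gamma_f$ is the finite-adelic image of $\gamma$, and this follows from left $\GL_2(\Q)$-invariance and the cocycle relation for $j$. Complex conjugation commutes with everything, since $f_0$ is already the conjugate. Since $\pi$ has trivial central character, the coefficients $a_{\bff}(m)$ are real, so $\bff^\rho = \bff$. Since $\alpha\beta = p^{k-1}$ and $|\beta| = p^{(k-1)/2}$, we have $\bar{\beta} = \alpha$ or $\beta$ according to the usual Ramanujan bound. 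In either case, $\bff_\alpha^\rho(z) = \bff(z) - \bar{\beta}\,\bff(pz)$, and I will check which root actually appears.

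Next I treat the classical factors one at a time. I write $\bff(pz) = p^{-k/2}(\bff|_k \diag(p,1))(z)$, so this term corresponds to translation by $t_p^{-1}$ at $p$ only, modulo the components of the diagonal element at other places. Those components lie in $\cK_0(N)$ and act trivially. The Atkin–Lehner element $\smat{0}{-1}{Np}{0}$ factors as the product of its local components. At each $q \mid N$, the local component is the local Atkin–Lehner involution, and on the Steinberg newform $f_q$ it acts by the local root number $\varepsilon_q(\tfrac12,\pi_q)$. At $\infty$, the contribution is fixed by weight $k$. At $p$, I will use $\smat{0}{-1}{p}{0} = \smat{0}{-1}{1}{0} t_p$ with $\smat{0}{-1}{1}{0} \in \GL_2(\Z_p)$. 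The product $\prod_{q\mid N}\varepsilon_q$, times the trivial factors at unramified places and the factor at $\infty$, is $\varepsilon_{\fin}(\pi)$.

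Finally, I apply $z \mapsto pz$ once more, which contributes another $t_p^{-1}$ and another factor $p^{-k/2}$. Combining the two terms of $\bff_\alpha^\rho$, the $p$-component becomes
\[
 \pi_p(t_p^{-2}) f_p - \bar{\beta}\, p^{-k/2}\, \pi_p(t_p^{-1}) f_p.
\]
Here I use that $\pi_p(k)f_p = f_p$ for $k \in \GL_2(\Z_p)$, and that translations at $p$ by the $\Z_p$-part of the Weyl element may be absorbed. The scalar $\bar{\beta}\, p^{-k/2}$ should equal $\xi_p(p)\, p^{-1/2}$, since $\xi_p(p) = \beta p^{-(k-1)/2}$.

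The main obstacle is the bookkeeping of signs and normalizations rather than any conceptual point. I have to check the exact power $p^{-k/2}$ from the determinant factor in the slash operator, the conjugate of $\beta$ coming from $\bff_\alpha^\rho$ and from $f_0$ being a conjugate, and the sign at $\infty$ together with the convention for $\varepsilon_{\fin}$. I would settle these by testing the identity on Fourier–Whittaker coefficients at $1$, using the explicit unramified Whittaker function at $p$.
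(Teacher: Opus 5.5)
Your dictionary (the form attached to $\overline{\bff|_k\gamma}$ is the right translate of $f_0$ by $\gamma_f^{-1}$) is correct, and the overall plan matches the paper's. However, the two points you defer as ``bookkeeping'' are both resolved incorrectly in the proposal.

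\textbf{The Weyl element at $p$ cannot be absorbed in the second term.} Apply your dictionary to $\overline{\bff|_k\smat{p}{0}{0}{1}\smat{0}{-1}{Np}{0}\smat{p}{0}{0}{1}}$. Its $p$-component is $\pi_p(t_p^{-2}w^{-1}t_p^{-1})f_p$ with $w=\smat{0}{-1}{N}{0}\in\GL_2(\Z_p)$. Here $w^{-1}$ acts on $\pi_p(t_p^{-1})f_p$, which is not $\GL_2(\Z_p)$-fixed. Absorbing it as you propose would give $\pi_p(t_p^{-3})f_p$, not the $\pi_p(t_p^{-1})f_p$ you wrote.

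What actually happens is $w^{-1}t_p^{-1}w=\diag(1,p^{-1})=p^{-1}t_p$. So, modulo the trivial central character, the Weyl element \emph{inverts} $t_p$.

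The paper avoids this with the global identities $\smat{0}{-1}{Np}{0}\smat{p}{0}{0}{1}=\smat{0}{-1}{N}{0}\smat{p^2}{0}{0}{1}$ and $\smat{p}{0}{0}{1}\smat{0}{-1}{Np}{0}\smat{p}{0}{0}{1}=\smat{0}{-1}{N}{0}\smat{p^2}{0}{0}{p}$. With these, $\bff|_k\smat{0}{-1}{N}{0}=\varepsilon_{\fin}(\pi)\bff$ is applied to $\bff$ first in both terms, and $\smat{p^2}{0}{0}{p}$ acts as $\smat{p}{0}{0}{1}$. This also makes your local Atkin--Lehner analysis unnecessary. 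Note that in your dictionary there is no separate contribution at $\infty$, since the archimedean component of $\gamma$ is absorbed into the slash operator. Multiplying in a ``factor at $\infty$'' would therefore introduce a spurious $i^k$.

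\textbf{The scalar is off by a complex conjugation.} There is no case distinction to check. Since $a_{\bff}(p)\in\R$ and $|\alpha|=|\beta|=p^{(k-1)/2}$ (Deligne), one always has $\bar\beta=\alpha$. Hence $\tilde{\bff}_\alpha$ carries the coefficient $\alpha p^{-k/2}$ on its second term.

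But $f_\alpha$ is attached to $\overline{\tilde{\bff}_\alpha}$, and complex conjugation does not ``commute with everything''; it conjugates this non-real scalar. The coefficient in $f_\alpha$ is therefore $\bar\alpha p^{-k/2}=\beta p^{-k/2}=\xi_p(p)p^{-1/2}$.

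Your $\bar\beta p^{-k/2}=\alpha p^{-k/2}$ equals $\xi_p(p)^{-1}p^{-1/2}$. This agrees with the statement only when $\xi_p(p)^2=1$.
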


\begin{proof}
Since $a_{\bff}(m) \in \R$ for all $m \ge 1$, we have
\[
 \bff_\alpha^\rho = \bff - \alpha p^{-k/2} \bff|_k \mat{p}{0}{0}{1}.
\]
Hence we have
\begin{align*}
 \tilde{\bff}_\alpha
 & = p^{-k/2} \bff_\alpha^\rho|_k \mat{0}{-1}{Np}{0} \mat{p}{0}{0}{1} \\ 
 & = p^{-k/2} \bff|_k \mat{0}{-1}{Np}{0} \mat{p}{0}{0}{1}
 - \alpha p^{-k} \bff|_k \mat{p}{0}{0}{1} \mat{0}{-1}{Np}{0} \mat{p}{0}{0}{1} \\
 & = p^{-k/2} \bff|_k \mat{0}{-1}{N}{0} \mat{p^2}{0}{0}{1}
 - \alpha p^{-k} \bff|_k \mat{0}{-1}{N}{0} \mat{p^2}{0}{0}{p}.
\end{align*}
Moreover, since
\[
 \bff|_k \mat{0}{-1}{N}{0} = \varepsilon_{\fin}(\pi) \bff,
\]
we have
\[
 \tilde{\bff}_\alpha = \varepsilon_{\fin}(\pi) p^{-k/2}
 \left( \bff|_k \mat{p^2}{0}{0}{1} - \alpha p^{-k/2} \bff|_k \mat{p}{0}{0}{1} \right), 
\]
so that 
\[
 \overline{\tilde{\bff}_\alpha} = \varepsilon_{\fin}(\pi) p^{-k/2}
 \left( \overline{\bff|_k \mat{p^2}{0}{0}{1}} - \beta p^{-k/2} \overline{\bff|_k \mat{p}{0}{0}{1}} \right).
\]
This yields the assertion.
\end{proof}

\begin{lem}
\label{l:kill-p-depletion}
We have
\[
 \cP^{\bG}(\theta_{\varphi^\flat}(\chi)^\std, f^{\mu'}_\alpha)
 = \varepsilon_{\fin}(\pi) \cdot C_p \cdot \cP^{\bG}(\theta_{\varphi}(\chi)^\std, f^{\mu'}_0), 
\]
where 
\begin{align*}
 C_p & = \alpha^2 p^{-3k/2} (1+p^{-1})^{-1} \\
 & \times (1 - \xi_p(p) \chi_p(p,1) p^{-1/2}) (1 - \xi_p(p) \chi_p(1,p) p^{-1/2}) \\
 & \times 
 \begin{cases}
  (1 - \xi_p(p) \mu_p(p,1) p^{-1/2}) (1 - \xi_p(p) \mu_p(1,p) p^{-1/2}) & \text{if $p$ is split in $E$;} \\
  (1 - \xi_p(p)^2 p^{-1}) & \text{if $p$ is inert in $E$.} 
 \end{cases}
\end{align*}
\end{lem}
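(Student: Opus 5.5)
The strategy is to reduce the comparison to a purely local statement at $p$ via uniqueness of invariant functionals, and then compute that local constant explicitly. By Lemma \ref{l:p-stabilization}, $f_\alpha^{\mu'}$ differs from $f_0^{\mu'}$ only at $p$: it equals $\varepsilon_{\fin}(\pi) p^{-k/2}$ times the vector obtained by replacing $f_p$ with $\tilde f_p$. Likewise, $\varphi^\flat$ differs from $\varphi$ only at $p$, with $\varphi_p$ replaced by $\tau_p\varphi_p$. The global period $(\varphi,f)\mapsto \cP^{\bG}(\theta_\varphi(\chi)^\std, f^{\mu'})$ restricted to the $p$-component defines an element of
\[
 \Hom_{\bG_p}(\theta(\sigma_p)\otimes(\pi_p\boxtimes\mu'_p),\C)
\]
(after composing with $\Omega_{\psi,p}\to\theta(\sigma_p)$ and integrating against $\chi_p$). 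By Lemma \ref{l:seesaw-local}, this space is at most one-dimensional. So if we fix any nonzero local functional $\ell_p$, we get
\[
 \frac{\cP^{\bG}(\theta_{\varphi^\flat}(\chi)^\std,f_\alpha^{\mu'})}{\cP^{\bG}(\theta_\varphi(\chi)^\std,f_0^{\mu'})}=\varepsilon_{\fin}(\pi)p^{-k/2}\,\frac{\ell_p(\tau_p\varphi_p,\tilde f_p)}{\ell_p(\varphi_p,f_p)},
\]
provided the denominator is nonzero. If it is zero, both sides vanish and the identity holds trivially, given $C_p$ finite.

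To compute the ratio, I realize $\ell_p$ through the seesaw of Lemma \ref{l:seesaw-local}, which turns it into a toric-type integral. Since $K_p$ is split ($p$ is split in $K$) and $\varphi_p$ is the spherical function of \S\ref{ss:choices-split-1}, it is convenient to take
\[
 \ell_p(\varphi,f)=\int_{\Q_p^\times\backslash \bG_p}\int_{K_p^\times}\langle \Omega_\psi(g,h)\varphi, \cdot\rangle\, \chi_p(h)\, f^{\mu'}(g)\,dh\,dg,
\]
or, more concretely, the explicit model: the Waldspurger toric functional on $\pi^B_p\otimes\chi_p$ composed with the local theta map $\Omega^B_p\otimes\pi_p\to\pi_p\boxtimes\mu$. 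The computation then splits into two parts.

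First, the $\tau_p$ side. Since $\tau_p$ is a combination of translates by $\boldsymbol m(a^{-1})$ with $a\in X_p$ and $\boldsymbol m(p^{-1})$, I would decompose each coset representative as $\iota(\text{element of }\bG_p)$ times an element fixing $\varphi_p$, up to $K_p^\times$-action. Using $\bG_p$-invariance, this moves $\tau_p$ onto the $\pi_p\boxtimes\mu'_p$ side as a Hecke-type operator built from $t_p$ and from $\mu'_p$-twists. The split-versus-inert behaviour of $p$ in $E$ enters here. Combined with $\tilde f_p=\pi_p(t_p^{-2})f_p-\xi_p(p)p^{-1/2}\pi_p(t_p^{-1})f_p$, this reduces everything to evaluating the unramified functional on translates $\pi_p(t_p^{-j})f_p$.

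Second, those translates. I would evaluate them by Macdonald's formula for spherical vectors in the Kirillov or induced model. The expected output is the product of the Euler-type factors $(1-\xi_p(p)\chi_p(\cdot)p^{-1/2})$ and the corresponding $\mu_p$ factors. The powers $\alpha^2p^{-3k/2}=\xi_p(p)^{-2}p^{-k/2}\cdot p^{-1}$, together with $(1+p^{-1})^{-1}$ (the index of the Iwahori subgroup), should then come out from normalization.

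The main obstacle is the first step: transporting $\tau_p$ across the seesaw, since it lives in $\Sp_4(\Q_p)$, not in $\bG_p$. I would first try to verify directly that $\tau_p\varphi_p$ is an explicit combination of $\Omega(\iota(g))\varphi_p$ for $g\in\GU(\bW)_p$, plus terms killed by the $\chi_p$-integration. Failing that, I would compute $\ell_p(\tau_p\varphi_p,\tilde f_p)$ directly by unfolding in the $\hat\Omega$ model of Lemma \ref{l:weil-pft2}, where $\bG_p$ acts geometrically.
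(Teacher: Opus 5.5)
Your first step matches the paper: the global period, as a function of the $p$-components, lies in the one-dimensional space $\Hom_{\bG_p\times H_p}(\Omega_{\psi,p}\otimes(\pi_p\boxtimes\mu_p)\otimes\chi_p,\C)$, and Lemma \ref{l:p-stabilization} supplies $\varepsilon_{\fin}(\pi)p^{-k/2}$. The substance of the lemma, however, is the local ratio (Lemma \ref{l:unique-hom-p-depletion}), and your proposal has no working method for it.

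Your primary plan fails as stated. In the mixed model of Lemma \ref{l:weil-pft2}, $\bG_p$ acts geometrically by $E$-linear maps and $H_p$ only moves $t$. Hence every $\bG_p\times H_p$-translate of $\varphi_p$ becomes a multiple of $\mathbb{I}_L\otimes\mathbb{I}_{c\fo_F^\times}$ with $L\subset E^2$ an $\fo_E$-lattice. By contrast, the proof of Lemma \ref{l:varphi-hat-flat} shows that the coset terms $\Omega_\psi(\boldsymbol m(a^{-1}))\varphi_p$ become multiples of $\mathbb{I}_{U_k\times U'_k}\otimes\mathbb{I}_{\fo_F^\times}$. One checks that $\bi U_k\subset U_k$ only when $k^2u\equiv 1 \bmod p$. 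So when $p$ is inert in $E$ none of the $p+1$ terms is a translate of $\varphi_p$ up to $K_p^\times$, and when $p$ is split $p-1$ of them are not. Abstractly, the image of $\tau_p\varphi_p$ is some combination of the three $\bK_0$-fixed oldforms in $(\pi_p\boxtimes\mu_p)^\vee$, but finding those coefficients is exactly the computation to be done. Your ``more concrete'' alternative, the toric functional on $\pi^B_p$ composed with the local theta map, cannot be evaluated on specific vectors either, since that map is only given abstractly by Howe duality.

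What is missing is the explicit model the paper uses, which your fallback gestures at without its ingredients. The paper realizes the functional as
\[
 \cQ(\varphi,w)=\int_{F^\times\backslash K^\times}\int_{\bT_0\backslash\bG}\hat{\Omega}_\psi(g,h)\cF_\psi(\varphi)(x_0,1)\,w(g)\,\chi(h)\,dg\,dh,
\]
where $\bT_0$ is the stabilizer of $x_0\in E^2$ and $w$ runs over Waldspurger's model of $\pi\boxtimes\mu$ relative to a torus $\bT\supset\bT_0$. It then proceeds in four steps. It computes $\cF_\psi(\tau\varphi_0)$ in closed form (Lemma \ref{l:varphi-hat-flat}). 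It refines $\bT_0 g\bK$ into $\bK_0$-double cosets $\bT_0 g k_{\bmm}\bK_0$. It evaluates the $\bT$-spherical function by Macdonald's formula in the inert case and by the Bump--Friedberg--Furusawa formula in the split case. Finally it sums the resulting geometric series case by case.

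Separately, your remark that if $\ell_p(\varphi_p,f_p)=0$ then ``both sides vanish'' is unjustified in your ratio formulation. Only the right-hand side is forced to vanish, since nothing you have proved relates $\ell_p(\tau_p\varphi_p,\tilde f_p)$ to $\ell_p(\varphi_p,f_p)$. The paper needs $\cQ(\varphi_0,w_0)\neq 0$ to divide, and it proves this by explicit evaluation (Lemmas \ref{l:unique-hom-unram-inert} and \ref{l:unique-hom-unram-split}).
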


\begin{proof}
By abuse of notation, we write $f_0^{\mu'} = \otimes_v f_v$, where $f_v$ is regarded as an element in $\pi_v \boxtimes \mu'_v$.
For $\varphi'_p \in S(K_p^2 \times \Q_p^\times)$ and $f_p' \in \pi_p \boxtimes \mu'_p$, put
\[
 \cQ^{\mathrm{aut}}(\varphi'_p,f'_p) = \cP^{\bG}(\theta_{\varphi'}(\chi)^\std, f'),
\]
where 
\[
 \varphi' = \varphi_p' \otimes (\otimes_{v \ne p} \varphi_v), \quad
 f' = f'_p \otimes (\otimes_{v \ne p} f_v).
\]
Then we have
\[
 \cQ^{\mathrm{aut}}(\varphi_p, f_p) = \cP^{\bG}(\theta_{\varphi}(\chi)^\std, f_0^{\mu'}).
\]
Also, by Lemma \ref{l:p-stabilization}, we have
\[
 \cQ^{\mathrm{aut}}(\varphi_p^\flat, \tilde{f}_p) = \frac{\cP^{\bG}(\theta_{\varphi^\flat}(\chi)^\std, f_\alpha^{\mu'})}{\varepsilon_{\fin}(\pi) p^{-k/2}}.
\]
On the other hand, by Lemma \ref{l:weil-seesaw}, $\cQ^{\mathrm{aut}}$ defines an element in
\[
 \Hom_{\bG_p \times H_p}(\Omega_{\psi,p} \otimes (\pi_p \boxtimes \mu_p) \otimes \chi_p, \C) \simeq
 \Hom_{H_p}(\Theta(\pi_p \boxtimes \mu_p) \otimes \chi_p, \C), 
\]
where $\Theta(\pi_p \boxtimes \mu_p)$ is the big theta lift of $\pi_p \boxtimes \mu_p$.
As shown in the proof of Lemma \ref{l:seesaw-local}, the above Hom space is $1$-dimensional.
Pick any nonzero element $\cQ$ in this space.
By Lemmas \ref{l:unique-hom-unram-inert} and \ref{l:unique-hom-unram-split} below, we have $\cQ(\varphi_p, f_p) \ne 0$ and hence 
\[
 \cQ^{\mathrm{aut}}(\varphi_p^\flat, \tilde{f}_p)
 = \cQ^{\mathrm{aut}}(\varphi_p, f_p) \cdot 
 \frac{\cQ(\varphi_p^\flat, \tilde{f}_p)}{\cQ(\varphi_p, f_p)}.
\]
From this and Lemma \ref{l:unique-hom-p-depletion} below, we can deduce the assertion.
\end{proof}

\section{Local functionals}
\label{s:local_functionals}

In this section, we finish the proof of Lemma \ref{l:kill-p-depletion}.
We write $F = \Q_p$ with $p$ an odd prime and omit the subscript $p$ from the notation.

\subsection{An explicit formula}

We use the setting of \S \ref{ss:choices-split-1}.
In particular, the natural embedding $K \hookrightarrow B$ is given by
\[
 (x_1, x_2) \mapsto \mat{x_1}{}{}{x_2}
\]
under the identifications $K = F \oplus F$ and $B = \M_2(F)$ as in \S \ref{ss:choices-split-1}.
Suppose further that $u \in \fo_F^\times$.
When $E$ is split (so that $u \in (\fo_F^\times)^2$), we fix $u' \in \fo_F^\times$ such that $(u')^2 = u$ and identify $E$ with $F \oplus F$ via the map $a+b\bi \mapsto (a+bu',a-bu')$.

As in \S \ref{ss:weil-seesaw}, we realize the Weil representation $\Omega_\psi$ of $\bG \times \bH$ on $\cS(K^2 \times F^\times)$.
Let $\pi = \Ind(\xi \otimes \xi^{-1})$ be a principal series representation of $\GL_2(F)$, where $\xi$ is a unitary unramified characters of $F^\times$, and $\mu$ a unitary unramified character of $E^\times$ such that $\mu|_{F^\times} = 1$.
Note that $\mu$ is trivial if $E$ is inert.
We regard $\pi \boxtimes \mu$ as an irreducible representation of $\bG$.
Let $\chi$ be a unitary unramified character of $K^\times$ such that $\chi|_{F^\times} = 1$.
Then as explained in the proof of Lemma \ref{l:kill-p-depletion}, the space
\begin{equation}
\label{eq:hom-space-p-depletion}
 \Hom_{\bG \times H}(\Omega_\psi \otimes (\pi \boxtimes \mu) \otimes \chi, \C) 
\end{equation}
is $1$-dimensional.
Pick any nonzero element $\cQ$ in this space.
Let $\varphi_0 \in \cS(K^2 \times F^\times)$ be the characteristic function of $(\fo_K \oplus \fo_K) \times \fo_F^\times$ and put $\varphi^\flat = \tau \varphi_0$, where $\tau \in \Q[\Sp_4(F)]$ is the element as in \S \ref{ss:p-depletion}.
Let $w_0 \in \pi \boxtimes \mu$ be the unique (up to scalars) nonzero $\bK$-fixed vector and put
\[
 \tilde{w}_0 = (\pi \boxtimes \mu)([t_p^{-2}, 1]) w_0 - \xi(p) p^{-1/2} \cdot (\pi \boxtimes \mu)([t_p^{-1}, 1]) w_0, 
\]
where $\bK$ is a maximal compact subgroup of $\bG$ given by
\[
 \bK = \{ [k,z] \mid k \in \GL_2(\fo_F), \; z \in \fo_E^\times \}
\]
and 
\[
 t_p = \mat{p}{0}{0}{1} \in \GL_2(F).
\]
Note that $\tilde{w}_0$ is $\bK_0$-fixed, where
\[
 \bK_0 = \{ [k,z] \mid k \in K_0, \; z \in \fo_E^\times \}
\]
with 
\[
 K_0 = \left\{ k \in \GL_2(\fo_F) \; \middle| \; k \equiv \mat{*}{*}{0}{*} \bmod p^2 \fo_F \right\}.
\]
By Lemmas \ref{l:unique-hom-unram-inert} and \ref{l:unique-hom-unram-split} below, we have $\cQ(\varphi_0,w_0) \ne 0$.
Moreover, we have:

\begin{lem}
\label{l:unique-hom-p-depletion}
\begin{enumerate}
\item 
\label{unique-hom-p-depletion-inert}
If $E$ is inert, then we have
\begin{align*}
 \frac{\cQ(\varphi^\flat,\tilde{w}_0)}{\cQ(\varphi_0,w_0)} 
 = \xi(p)^{-2} p^{-1} (1+p^{-1})^{-1}
 & \times (1 - \xi(p) \chi(p,1) p^{-1/2}) (1 - \xi(p) \chi(1,p) p^{-1/2}) \\
 & \times (1 - \xi(p)^2 p^{-1}).
\end{align*}
\item
\label{unique-hom-p-depletion-split}
If $E$ is split, then we have
\begin{align*}
 \frac{\cQ(\varphi^\flat,\tilde{w}_0)}{\cQ(\varphi_0,w_0)}
 = \xi(p)^{-2} p^{-1} (1+p^{-1})^{-1}
 & \times (1 - \xi(p) \chi(p,1) p^{-1/2}) (1 - \xi(p) \chi(1,p) p^{-1/2}) \\
 & \times (1 - \xi(p) \mu(p,1) p^{-1/2})(1 - \xi(p) \mu(1,p) p^{-1/2}).
\end{align*}
\end{enumerate}
\end{lem}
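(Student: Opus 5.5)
Since the Hom space \eqref{eq:hom-space-p-depletion} is $1$-dimensional, I would compute the ratio with one explicit nonzero functional $\cQ$. It is convenient to build $\cQ$ on the unitary side, using the $\bG$-equivariant partial Fourier transform $\cF_\psi$ of \S\ref{ss:weil-pft}. By Lemma \ref{l:weil-pft2}, $\bG$ acts on $\hat\varphi\in\cS(E^2\times F^\times)$ by the linear action $(x,t)\mapsto(xg,\nu(g)^{-1}t)$. So after Waldspurger's orbit method, a nonzero element of the Hom space is given by a Godement--Jacquet/Rankin--Selberg type zeta integral
\[
 \cQ(\varphi,w)=\int_{H}\int_{N\backslash \bG'} \hat{\Omega}_\psi(g,h)\hat\varphi(e_2,1)\,W_w(g)\,\chi(h)\,dg\,dh,
\]
evaluated at $s=\tfrac12$. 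Here $W_w$ is the Whittaker function of $w\in\pi\boxtimes\mu$, $\bG'$ is a suitable subgroup, and the $H$-integral is absorbed through the $\bH$-action of Lemma \ref{l:weil-pft2}. Alternatively, one can take the toric–theta composite $\cQ(\varphi,w)=\int_{F^\times\backslash K^\times} Z(\theta(\varphi\otimes w))(h)\chi(h)\,dh$, with $\theta$ realized via the Jacquet–Langlands/Waldspurger local model. For unramified data, $\cQ(\varphi_0,w_0)$ should be a product of unramified local $L$-factors $L(\tfrac12,\pi_K\times\chi)L(\tfrac12,\pi_E\times\mu)/L(1,\pi,\Ad)$ up to harmless constants; this is the content of Lemmas \ref{l:unique-hom-unram-inert}, \ref{l:unique-hom-unram-split}, and it shows nonvanishing.

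The key steps are as follows. (1) Expand $\varphi^\flat=\tau\varphi_0$: it is $\varphi_0$, minus $p^{-1}$ times $p^2+p+1$ translates by $\boldsymbol m(a^{-1})$, plus $p^{-1}$ times a translate by $\boldsymbol m(p^{-1}I_2)$. On the $K^2$-model these act by $\varphi(x,t)\mapsto|\det a|\varphi(xa,t)$, i.e.\ they replace the lattice $\fo_K^2$ by superlattices of index $p$ (resp.\ $p^{-1}\fo_K^2$). (2) Similarly expand $\tilde w_0$ as a combination of $\pi([t_p^{-2},1])w_0$ and $\pi([t_p^{-1},1])w_0$. (3) By bilinearity, the ratio becomes a finite sum of values $\cQ(\Omega(\boldsymbol m(a^{-1}))\varphi_0,\pi(t_p^{-j})w_0)$. (4) Use equivariance to move group elements across. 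The elements of $\iota(\bG)\cap\boldsymbol m(\GL_2)$ that are relevant, namely $\iota([t_p,1])$, give Hecke-type identities. The remaining coset sum over $\GL_2(\Z_p)\backslash X_p$ is not in $\iota(\bG)$, so I would compute it directly in the explicit model. There each term reduces to evaluating the zeta integral on a characteristic function of a lattice, i.e.\ a geometric series in $\xi(p)p^{-1/2}$, $\chi(p,1)$, $\chi(1,p)$, and $\mu$.

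Concretely, in the Whittaker/zeta model, $W_{w_0}$ on the torus is given by the Casselman–Shalika formula, $W(\diag(p^m,1))=p^{-m/2}\sum_{i+j=m}\xi(p)^{i}\xi(p)^{-j}$. The $p$-stabilized vector $\tilde w_0$ then has $W_{\tilde w_0}(\diag(p^m,1))$ supported on $m\ge 2$, with a single geometric term proportional to $\xi(p)^{-(m-2)}$ times a simple shift; this accounts for the factor $\xi(p)^{-2}$. Likewise, the depleted $\hat\varphi^\flat$, by the same mechanism as Proposition \ref{p:fourier-p-depletion}, restricts the $(x,t)$-support to $t\det(x)\in\Z_p^\times$. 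Feeding both into the zeta integral truncates the Euler products. What remains is an elementary summation, which should produce exactly the linear factors $(1-\xi(p)\chi(p,1)p^{-1/2})(1-\xi(p)\chi(1,p)p^{-1/2})$ together with the $E$-factor, split or inert according to the Frobenius of $\mu$. Dividing by the unramified value, and using $L(1,\pi,\Ad)$-type normalizations, produces $(1+p^{-1})^{-1}$ and $p^{-1}$.

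The main obstacle is step (4): writing the chosen functional explicitly enough that the nine terms (or their combinations) can be evaluated. In particular, the coset sum over $X_p$ does not commute with $\bG$, so it cannot simply be pushed onto $\tilde w_0$. I would first try the $E^2$-model of Lemma \ref{l:weil-pft2}, where both the $\boldsymbol m(a)$ translates and the $\bG$-action are nearly linear. I would treat the split and inert cases of $E$ separately only at the final summation.
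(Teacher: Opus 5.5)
Your proposal has a genuine gap. The explicit functional $\cQ$, on which the whole computation rests, is never correctly constructed, and the concrete inputs you do assert are not right.

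\textbf{The functional.} The Whittaker-type integral over $N\backslash\bG'$ based at $e_2$ is not an element of the Hom space. In the model of Lemma \ref{l:weil-pft2}, $\bG$ acts by $(x,t)\mapsto(xg,\nu(g)^{-1}t)$. The vector $e_2=(0,1)$ is isotropic, and the stabilizer of $(e_2,1)$ is $N=\{[\smat{1}{b}{0}{1},1]\mid b\in F\}$. The function $g\mapsto\hat\varphi(e_2g,\nu(g)^{-1})$ is left $N$-invariant, while $W_w$ transforms under $N$ by a nontrivial character. So your integrand does not descend to $N\backslash\bG$; functionals coming from a null orbit factor through the Jacquet module, not the Whittaker model.

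In Waldspurger's orbit method the functional lives on the open orbit. Take $x_0$ with $(x_0,x_0)\ne0$, namely $x_0=(1,\bi)$ if $E$ is inert and $x_0=((1,0),(0,1))$ if $E$ is split. Its stabilizer is the torus $\bT_0$, and you pair with Waldspurger's toric model $\mathfrak{W}(\pi\boxtimes\mu)\subset\Ind^{\bG}_{\bT}(\mu_\bT)$:
\[
 \cQ(\varphi,w)=\int_{F^\times\backslash K^\times}\int_{\bT_0\backslash\bG}\hat{\Omega}_\psi(g,h)\cF_\psi(\varphi)(x_0,1)\,w(g)\,\chi(h)\,dg\,dh .
\]
The spherical vector of this model is given by Macdonald's formula when $E$ is inert (because $\bT\subset Z_\bG\bK$) and by the Bump--Friedberg--Furusawa formula when $E$ is split. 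Summing over $\bT_0\backslash\bG/\bK$ gives $\cQ(\varphi_0,w_0)=\bigl[(1-\xi(p)\chi(p,1)p^{-1/2})(1-\xi(p)^{-1}\chi(p,1)p^{-1/2})\bigr]^{-1}\ne0$. This is not the ratio of $L$-factors you predict. That does not affect the quotient, but nonvanishing has to be checked for the functional actually used.

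\textbf{The numerator.} This is the real content of the lemma, and here your specific claims fail. $W_{\tilde w_0}(\diag(p^m,1))$ is not supported on $m\ge2$: at $m=1$ it equals $-\xi(p)p^{-1/2}W_{w_0}(1)$. For $m\ge2$ it is not a single geometric term. Since $\tilde w_0$ is only $\bK_0$-invariant (level $p^2$), torus values would not determine the pairing anyway. The computation requires three things.
\begin{enumerate}
\item[(a)] A closed form for $\hat\varphi^\flat=\cF_\psi(\tau\varphi_0)$ in the $E^2$-model, namely $\mathbb{I}_{\fo_F^\times}(t)$ times
\begin{itemize}
\item[] $1-p^{-1}$ on $\{x_1\in\fo_E\smallsetminus p\fo_E,\ x_2\in\fo_E\}$, and
\item[] $-p^{-1}$ on $\{x_1\in\fo_E\smallsetminus p\fo_E,\ x_2\in p^{-1}\fo_E\smallsetminus\fo_E,\ r_1r_2-s_1s_2u\in\fo_F\}$,
\end{itemize}
together with its $\bK_0$-invariance. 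Your observation that $\varphi^\flat$ is supported on $\det x\in\fo_F^\times$ is true in the $K^2$-model, but that is not what the orbit sum sees, and the $-p^{-1}$ piece genuinely contributes.
\item[(b)] The refinement of each $\bT_0g\bK$ into cosets $\bT_0g_kk_\bmm\bK_0$, with their volumes.
\item[(c)] The values $\tilde w_0(g_kk_\bmm)$, read off from the toric spherical function at $g_kk_\bmm t_p^{-1}$ and $g_kk_\bmm t_p^{-2}$.
\end{enumerate}
The factor $\xi(p)^{-2}$, and also the $E$-factor, come from the relation $A_k-\xi(p)A_{k-1}=\xi(p)^{-k}(1-\xi(p)^2p^{-1})$ satisfied by the normalized toric spherical values $A_k$ at $g_k$. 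In the split case the analogue has $(1-\xi(p)\mu(p,1)p^{-1/2})(1-\xi(p)\mu(1,p)p^{-1/2})$ in place of $(1-\xi(p)^2p^{-1})$. Neither factor comes from Casselman--Shalika. In the split case the stated product appears only after substantial cancellation among the contributions of $\bmm=(1,0),(1,1),(1,p),(0,1),(p,1),(1,pm)$.

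Expanding $\tau$ into its $p+3$ translates does not shortcut this. The translates by $\boldsymbol{m}(a^{-1})$ are not in $\iota(\bG)$, so each must still be summed over the orbit. As written, the assertion that the final summation ``should produce exactly the linear factors'' is the lemma itself, left unproved.
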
 

The rest of this section is devoted to the proof of Lemma \ref{l:unique-hom-p-depletion}.
For this, we will construct $\cQ$ explicitly, and compute $\cQ(\varphi_0,w_0)$ and $\cQ(\varphi^\flat,\tilde{w}_0)$ directly.
We distinguish two cases according to whether $E$ is inert or split, except that we compute $\varphi^\flat$ uniformly as follows.

As in \S \ref{ss:weil-pft}, we realize the Weil representation $\hat{\Omega}_\psi$ of $\bG \times \bH$ on $\cS(E^2 \times F^\times)$ via the partial Fourier transform
\[
 \cF_\psi : \cS(K^2 \times F^\times) = \cS(B \times F^\times) \rightarrow \cS(E^2 \times F^\times), 
\]
where we identify $K^2$ with $B$ via the map
\[
 ((x_{11},x_{12}), (x_{21},x_{22})) \mapsto \mat{x_{11}}{x_{21}}{x_{22}u}{x_{12}}.
\]
Explicitly, we have
\[
 \cF_\psi(\varphi)(r_1 + s_1 \bi, r_2 + s_2 \bi, t) = \int_{F^2} \varphi((r, s_1), (s, r_1 u^{-1}), t) \psi(t(s_2 r -r_2 s)) |t| \, dr \, ds,
\]
where $dr, ds$ are the self-dual Haar measures on $F$ with respect to $\psi$.
Put $\hat{\varphi}_0 = \cF_\psi(\varphi_0)$ and $\hat{\varphi}^\flat = \cF_\psi(\varphi^\flat)$.
Then $\hat{\varphi}_0$ is the characteristic function of $(\fo_E \oplus \fo_E) \times \fo_F^\times$ and $\hat{\varphi}^\flat$ is given as follows.

\begin{lem}
\label{l:varphi-hat-flat}
We have
\[
 \hat{\varphi}^\flat(x_1, x_2, t) =
 \begin{cases}
  1 - p^{-1} & \text{if $x_1 \in \fo_E \smallsetminus p \fo_E$, $x_2 \in \fo_E$, $t \in \fo_F^\times$;} \\
  - p^{-1} & \text{if $x_1 \in \fo_E \smallsetminus p \fo_E$, $x_2 \in p^{-1} \fo_E \smallsetminus \fo_E$, $r_1 r_2 - s_1 s_2 u \in \fo_F$, $t \in \fo_F^\times$;} \\
  0 & \text{otherwise}
 \end{cases}
\]
for $x_1, x_2 \in E$ with $x_i = r_i + s_i \bi$ and $t \in F^\times$.
(Note that when $E$ is split and $x_i = (x_{i1}, x_{i2})$, we have $r_1 r_2 - s_1 s_2 u \in \fo_F$ if and only if $x_{11} x_{22} + x_{12} x_{21} \in \fo_F$.)
In particular, $\hat{\varphi}^\flat$ is $\bK_0$-fixed.
\end{lem}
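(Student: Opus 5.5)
The formula for $\hat{\varphi}^\flat$ will be obtained by transporting the three terms of $\tau = I_4 - p^{-1}\sum_{a} \boldsymbol{m}(a^{-1}) + p^{-1}\boldsymbol{m}(p^{-1}I_2)$ to the $E^2$-model through $\cF_\psi$. The point is that $\boldsymbol{m}(a)$ for $a\in\GL_2(F)$ lies in the image of $\iota:\bG\hookrightarrow\GSp(W)$ only for special $a$. I therefore work directly in the $K^2$-model first. There $\Omega_\psi^K(\boldsymbol{m}(a))\varphi_0(x,t)=\xi_{K/F}(\det a)|\det a|\varphi_0(xa,t)$, with $\xi_{K/F}=1$ since $K$ is split. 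Hence
\[
 \varphi^\flat(x,t) = \mathbb{I}(x)-p^{-1}\cdot p\sum_{a\in \GL_2(\fo_F)\backslash X_p}\mathbb{I}(xa^{-1}) + p^{-1}\cdot p^{-2}\,\mathbb{I}(p^{-1}x),
\]
times $\mathbb{I}_{\fo_F^\times}(t)$. Here $\mathbb{I}$ is the characteristic function of $\fo_K\oplus\fo_K$, with $x$ viewed as a $2\times 2$ matrix on which $\GL_2(F)$ acts by right multiplication (as in the proof of Proposition \ref{p:fourier-p-depletion}). Using the coset decomposition of $X_p$ from the proof of Lemma \ref{l:tilde_w}, the middle sum counts the index-$p$ superlattices $L'\supset \M_2(\fo_F)$-row-structure containing $x$. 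Concretely, $\sum_a \mathbb{I}(xa^{-1})$ equals the number of lattices $\Lambda$ with $\fo_F^2\subset\Lambda\subset p^{-1}\fo_F^2$ and $[\Lambda:\fo_F^2]=p$ such that both rows of the matrix $x$ (coordinates indexed appropriately) lie in $\Lambda$.

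Next, I apply $\cF_\psi$ using the explicit integral formula. Under the identification of $K^2$ with $B$, the variables $(x_{11},x_{21})$ (integrated as $(r,s)$) form one ``row'' and $(x_{12},x_{22})=(s_1,r_1u^{-1})$ the other, with $u\in\fo_F^\times$. The Fourier transform in $(r,s)$ of the characteristic function of a lattice $\Lambda$ is $\vol(\Lambda)$ times the characteristic function of the dual lattice $\Lambda^\perp$ (with respect to the pairing $(r,s),(r_2,s_2)\mapsto s_2r-r_2s$), restricted to $t\in\fo_F^\times$. Then each term becomes a product of a condition on $x_1=r_1+s_1\bi$ (whether $(s_1,r_1u^{-1})\in\Lambda$) and a condition on $x_2$ (whether $(r_2,s_2)\in\Lambda^\perp$), with weight $\vol(\Lambda)=p$ for the middle terms and $p^2$ for the last term.

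Then I sum case by case. For $x_1\in p\fo_E$ (so $p^{-1}x_1$-type conditions hold for every $\Lambda$), the contributions are
\begin{itemize}
\item $x_2\in\fo_E$: $1-(p+1)p^{-1}\cdot p\cdot p^{-1}+p^{-1}p^{-2}p^{2}\cdot\ldots$;
\item $x_2$ in the dual lattices: the analogous bookkeeping.
\end{itemize}
I expect these to cancel exactly, mirroring Lemma \ref{l:flat_w}. For $x_1\in\fo_E\smallsetminus p\fo_E$, exactly one $\Lambda$ among the $p+1$ contains the relevant vector in the non-trivial way, namely $\Lambda=\fo_F^2+p^{-1}F\cdot(s_1,r_1u^{-1})$ read appropriately. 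Its dual lattice is $\{y\in p^{-1}\fo_F^2:\langle y,(s_1,r_1u^{-1})\rangle\in\fo_F\}$, and that condition is exactly $r_1r_2-s_1s_2u\in\fo_F$ after unwinding. This yields $1-p^{-1}$ on $x_2\in\fo_E$ and $-p^{-1}$ on the annulus with the pairing condition. Outside $\fo_E$ for $x_1$, every term vanishes. For split $E$, the parenthetical remark follows by writing $r_i=(x_{i1}+x_{i2})/2$ and $s_iu'=(x_{i1}-x_{i2})/2$.

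Finally, $\bK_0$-invariance follows from Lemma \ref{l:weil-pft2}, since $\hat\Omega_\psi(g)$ acts by $x\mapsto xg$ (with the similitude adjusting $t$, preserved by $\fo_E^\times$-similitudes). For $g\in K_0$, the set $x_1\notin p\fo_E$ is preserved because $g$ is integral with unit diagonal mod $p$. The annulus condition and the pairing $r_1r_2-s_1s_2u$, which is essentially $\tr(\bi^{-1}\bar{x}_1x_2)$-type, transform by a factor in $\fo_F^\times$ plus terms in $\fo_F$, the latter because the lower-left entry of $g$ is in $p^2\fo_F$ and $x_2\in p^{-1}\fo_E$. The main obstacle is the sign and normalisation bookkeeping in the Fourier transform, namely which pairing is dual and the measure factors $|t|$ and $\vol$. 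I would check it first on the term $\boldsymbol{m}(p^{-1}I_2)$, which must come out as $\mathbb{I}_{p\fo_E}(x_1)\mathbb{I}_{p^{-1}\fo_E}(x_2)$ times $p^{-1}$.
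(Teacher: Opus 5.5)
Your strategy is the paper's. You write $\varphi^\flat$ in the $K^2$-model using the coset representatives from the proof of Lemma \ref{l:tilde_w}, and take the partial Fourier transform term by term, integrating over the row $(x_{11},x_{21})=(r,s)$ with the other row $(x_{12},x_{22})=(s_1,r_1u^{-1})$ as a parameter. You then sort the cases by where $x_1$ lies. Your lattices $\Lambda$ and their duals $\Lambda^\perp$ are exactly the paper's sets $U_k$ (transported by $x_1\mapsto(s_1,r_1u^{-1})$) and $U'_k$. The duality language is a tidier packaging of the paper's explicit formulas for $\cF_\psi(\varphi'_k)$.

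The normalization bookkeeping, which you flagged as the main obstacle, is wrong as written, and this matters.

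\emph{Last term.} Since $|\det(p^{-1}I_2)|=p^{2}$, the last term of $\varphi^\flat$ is $+p\,\mathbb{I}(p^{-1}x)$, not $p^{-1}\cdot p^{-2}\,\mathbb{I}(p^{-1}x)$. Its lattice is $p\fo_F^2$, of volume $p^{-2}$, not $p^{2}$. Your last term comes out right only because these two errors cancel. So the sanity check you propose on $\boldsymbol{m}(p^{-1}I_2)$ would pass and expose nothing.

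\emph{Middle terms.} $\mathbb{I}(xa^{-1})=1$ iff both rows of $x$ lie in $\fo_F^2a$. For $a\in X_p$ this is an index-$p$ \emph{sub}lattice of $\fo_F^2$, of volume $p^{-1}$, not a superlattice of volume $p$. With volume $p$ these terms would carry weight $-p$ instead of $-p^{-1}$, and the value $1-p^{-1}$ could not come out. (The factors in your first bullet in fact use $p^{-1}$.) Done correctly, for $t\in\fo_F^\times$ one gets
\[
 \hat{\varphi}^\flat(x_1,x_2,t) = \mathbb{I}_{\fo_E}(x_1)\mathbb{I}_{\fo_E}(x_2) - p^{-1}\sum_{\Lambda}\mathbb{I}_{\Lambda}(s_1,r_1u^{-1})\,\mathbb{I}_{\Lambda^\perp}(r_2,s_2) + p^{-1}\mathbb{I}_{p\fo_E}(x_1)\mathbb{I}_{p^{-1}\fo_E}(x_2),
\]
with $\Lambda$ running over the $p+1$ index-$p$ sublattices of $\fo_F^2$. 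This is the paper's expression.

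\emph{Case $x_1\in\fo_E\smallsetminus p\fo_E$.} For primitive $v=(s_1,r_1u^{-1})$, the unique $\Lambda\ni v$ is $\fo_Fv+p\fo_F^2$; your ``$\fo_F^2+p^{-1}F\cdot v$'' is not a lattice. Its dual is the set $\{y\in p^{-1}\fo_F^2:\langle y,v\rangle\in\fo_F\}$ you wrote. So your analysis of this case is correct once the sublattice picture is used throughout.

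\emph{Case $x_1\in p\fo_E$.} The cancellation you only ``expect'' needs one fact: the $p+1$ duals $\Lambda^\perp$ all contain $\fo_F^2$ and cover each point of $p^{-1}\fo_F^2\smallsetminus\fo_F^2$ exactly once. This is the paper's decomposition of $p^{-1}\fo_E$ via the $U'_k$. It gives $1-(p+1)p^{-1}+p^{-1}=0$ on $\fo_E$ and $-p^{-1}+p^{-1}=0$ on $p^{-1}\fo_E\smallsetminus\fo_E$.

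A minor point on $\bK_0$-invariance, where you spell out what the paper leaves to Lemma \ref{l:weil-pft2}. We have $r_1r_2-s_1s_2u=\frac12\tr_{E/F}(\bar x_1x_2)$, whereas $\tr_{E/F}(\bi^{-1}\bar x_1x_2)=2(r_1s_2-s_1r_2)$ is a different quantity. Your argument still works with the correct expression. For $\smat{a}{b}{c}{d}\in K_0$ it becomes a unit multiple of $(ad+bc)\cdot\frac12\tr_{E/F}(\bar x_1x_2)+ab\N(x_1)+cd\N(x_2)$. Here $ad+bc\in\fo_F^\times$, and the last two terms lie in $\fo_F$ on the support because $c\in p^2\fo_F$.
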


\begin{proof}
Recall that
\[
 \varphi^\flat = \varphi_0 - p^{-1} \Omega_\psi \left(\boldsymbol{m} \mat{p^{-1}}{0}{0}{1} \right) \varphi_0 - p^{-1} \sum_{k=0}^{p-1} \Omega_\psi \left(\boldsymbol{m} \mat{1}{p^{-1} k}{0}{p^{-1}} \right) \varphi_0 + p^{-1} \Omega_\psi(\boldsymbol{m}(p^{-1} I_2)) \varphi_0, 
\]
so that 
\[
 \varphi^\flat(x_1,x_2,t) = \varphi_0(x_1,x_2,t)
 - \varphi_0(p^{-1} x_1,x_2,t)
 - \sum_{k=0}^{p-1} \varphi_0(x_1,p^{-1}(kx_1 + x_2),t)
 + p \varphi_0(p^{-1}x_1,p^{-1}x_2,t)
\]
for $x_1, x_2 \in K$ and $t \in F^\times$.
From this, we deduce that
\[
 \hat{\varphi}^\flat = \mathbb{I}_{\fo_E \oplus \fo_E} \otimes \mathbb{I}_{\fo_F^\times}
 - \sum_{k=0}^p \cF_\psi(\varphi'_k) \otimes \mathbb{I}_{\fo_F^\times} + 
 p^{-1} \mathbb{I}_{p \fo_E \oplus p^{-1} \fo_E} \otimes \mathbb{I}_{\fo_F^\times},
\]
where $\varphi'_k \in \cS(K^2)$ is a Schwartz function given by 
\[
 \varphi'_k(x_1, x_2) = \mathbb{I}_{\fo_K}(x_1) \mathbb{I}_{p \fo_K}(k x_1 + x_2)
\]
for $0 \le k \le p-1$ and 
\[
 \varphi'_p(x_1, x_2) = \mathbb{I}_{p \fo_K}(x_1) \mathbb{I}_{\fo_K}(x_2), 
\]
and $\cF_\psi(\varphi_k') \in \cS(E^2)$ is its partial Fourier transform, so that
\[
 \cF_\psi(\varphi_k')(r_1 + s_1 \bi, r_2 + s_2 \bi) = \int_{F^2} \varphi_k'((r, s_1), (s, r_1 u^{-1})) \psi(s_2 r -r_2 s) \, dr \, ds.
\]
By a direct computation, we have
\[
 \cF_\psi(\varphi_k')(r_1 + s_1 \bi, r_2 + s_2 \bi) = p^{-1} \mathbb{I}_{\fo_F}(s_1) \mathbb{I}_{p \fo_F}(k s_1 + r_1 u^{-1}) \mathbb{I}_{\fo_F}(s_2 + k r_2) \mathbb{I}_{p^{-1} \fo_F}(r_2)
\]
for $0 \le k \le p-1$ and 
\[
 \cF_\psi(\varphi_p')(r_1 + s_1 \bi, r_2 + s_2 \bi) = p^{-1} \mathbb{I}_{p \fo_F}(s_1) \mathbb{I}_{\fo_F}(r_1) \mathbb{I}_{p^{-1} \fo_F}(s_2) \mathbb{I}_{\fo_F}(r_2).
\]
Hence we have
\[
 \hat{\varphi}^\flat(x_1, x_2, t) = p^{-1} I(x_1, x_2) \mathbb{I}_{\fo_F^\times}(t)
\]
for $x_1, x_2 \in E$ and $t \in F^\times$, where 
\[
 I(x_1, x_2) = p \mathbb{I}_{\fo_E}(x_1) \mathbb{I}_{\fo_E}(x_2) + \mathbb{I}_{p \fo_E}(x_1) \mathbb{I}_{p^{-1} \fo_E}(x_2) - \sum_{k=0}^p \mathbb{I}_{U_k}(x_1) \mathbb{I}_{U'_k}(x_2)
\]
with 
\begin{align*}
 U_k & = \{ r + s \bi \mid r, s \in \fo_F, \; r + ksu \in p \fo_F \}, \\
 U'_k & = \{ r + s \bi \mid r, s \in p^{-1} \fo_F, \; kr + s \in \fo_F \}
\end{align*}
for $0 \le k \le p-1$ and 
\begin{align*}
 U_p & = \{ r + s \bi \mid r \in \fo_F, \; s \in p \fo_F \}, \\
 U'_p & = \{ r + s \bi \mid r \in \fo_F, \; s \in p^{-1} \fo_F \}. 
\end{align*}
Since
\[
 \fo_E = p \fo_E \sqcup \bigsqcup_{k=0}^p (U_k \smallsetminus p \fo_E), \quad
 p^{-1} \fo_E = \fo_E \sqcup \bigsqcup_{k=0}^p (U'_k \smallsetminus \fo_E),
\]
we have
\[
 I(x_1,x_2) = 
 \begin{cases}
  p + 1 - (p+1) = 0 & \text{if $x_1 \in p \fo_E$ and $x_2 \in \fo_E$;} \\  
  0 + 1 - 1 = 0 & \text{if $x_1 \in p \fo_E$ and $x_2 \in p^{-1} \fo_E \smallsetminus \fo_E$;} \\
  p + 0 - 1 = p-1 & \text{if $x_1 \in \fo_E \smallsetminus p \fo_E$ and $x_2 \in \fo_E$;} \\
  0 + 0 - 1 = -1 & \text{if $x_1 \in U_k \smallsetminus p \fo_E$ and $x_2 \in U'_k \smallsetminus \fo_E$;} \\
  0 + 0 - 0 = 0 & \text{if $x_1 \in U_k \smallsetminus p \fo_E$ and $x_2 \in U'_{k'} \smallsetminus \fo_E$ with $k \ne k'$;} \\
  0 & \text{otherwise.}
 \end{cases}
\]
Moreover, writing $x_i = r_i + s_i \bi$ for $x_1 \in \fo_E$ and $x_2 \in p^{-1} \fo_E$, we have $x_1 \in U_k$ and $x_2 \in U'_k$ for some $0 \le k \le p$ if and only if $r_1 r_2 - s_1 s_2 u \in \fo_F$.
This yields the desired identity.
Moreover, by combining this with Lemma \ref{l:weil-pft2}, we can deduce that $\hat{\varphi}^\flat$ is $\bK_0$-fixed.
This completes the proof.
\end{proof}

We also introduce some notation which will be used in both cases.
Put 
\[
 k_{(1,m)} = \mat{1}{0}{m}{1}, \quad
 k_{(pm,1)} = \mat{pm}{-1}{1}{0}.
\]
Then we can take
\[
 \{ k_{(1,m)} \mid 0 \le m \le p^2-1 \} \cup
 \{ k_{(pm,1)} \mid 0 \le m \le p-1 \}
\]
as a set of representatives for $\GL_2(\fo_F)/K_0$, noting that the map $\smat{a}{b}{c}{d} \mapsto (a,c) \bmod p^2 \fo_F$ induces a bijection $\GL_2(\fo_F)/K_0 \simeq \mathbb{P}^1(\Z/p^2 \Z)$.

\subsection{The inert case}

Suppose that $E$ is inert (so that $u \in \fo_F^\times \smallsetminus (\fo_F^\times)^2$).
Let $\bT$ be a maximal torus of $\bG$ given by
\[
 \bT = \{ [\iota(z), z'] \mid z, z' \in E^\times \},
\]
where
\[
 \iota(a + b \bi) = \mat{a}{bu}{b}{a}.
\]
Put $x_0 = (1, \bi) \in E^2$ and let $\bT_0$ be the stabilizer of $x_0$ in $\bG$, so that 
\[
 \bT_0 = \{ [\iota(z), z] \mid z \in E^\times \}.
\]
Following \cite[Lemme 8]{wald85}, we take the model $\mathfrak{W}(\pi \boxtimes \mu)$ defined as the image of a nonzero element in the $1$-dimensional space $\Hom_{\bG}(\pi \boxtimes \mu, \Ind^{\bG}_{\bT}(\mu_{\bT}))$, where $\mu_\bT$ is the trivial character of $\bT$.
Since $\bT \subset Z_{\mathbf{G}} \cdot \mathbf{K}$, any $\mathbf{K}$-fixed vector in $(\pi \boxtimes \mu)^\vee$ belongs to $\Hom_{\bT}(\pi \boxtimes \mu, \mu_{\bT})$, so that $\mathfrak{W}(\pi \boxtimes \mu)$ agrees with the space of left $\mathbf{K}$-invariant matrix coefficients of $\pi \boxtimes \mu$.
We normalize the $\bK$-fixed vector $w_0 \in \mathfrak{W}(\pi \boxtimes \mu)$ (i.e.~the $\mathbf{K}$-bi-invariant matrix coefficient of $\pi \boxtimes \mu$) so that $w_0(1) = 1$.
Put
\[
 \cQ(\varphi,w)
 = \int_{F^\times \backslash K^\times} \int_{\bT_0 \backslash \bG} 
 \hat{\Omega}_\psi(g,h) \cF_\psi(\varphi)(x_0,1) w(g) \chi(h) \, dg \, dh
\]
for $\varphi \in \cS(K^2 \times F^\times)$ and $w \in \mathfrak{W}(\pi \boxtimes \mu)$, where we normalize the measures so that $\vol(\bT_0 \backslash \bT_0 \bK) = \vol(F^\times \backslash F^\times \fo_K^\times) = 1$.

\begin{lem}
\label{l:abs-conv-inert}
The integral $\cQ(\varphi,w)$ is absolutely convergent.
\end{lem}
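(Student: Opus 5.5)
We bound the integrand by absolute values and reduce to a Cartan-decomposition estimate. First, replace $\varphi$ by $|\varphi|$ (or a dominating characteristic function) and $w$ by $|w|$. Since $\chi$ is unitary and $F^\times\backslash K^\times$ acts on the Schwartz function through $\hat{\Omega}_\psi(h)$, Lemma \ref{l:weil-pft2} shows that $h=[\smat{a}{}{}{1},1]$ acts by $|a|\hat\varphi(x,at)$. Because $\hat\varphi$ is supported in $t\in\fo_F^\times$ (up to a compact set in $t$), the $h$-integral is over a compact set $F^\times\backslash F^\times\fo_K^\times$ times finitely many translates. So it suffices to show that
\[
 \int_{\bT_0\backslash\bG} |\hat{\Omega}_\psi(g)\hat\varphi'(x_0,t_0)|\,|w(g)|\,dg<\infty
\]
for each Schwartz function $\hat\varphi'$ and $t_0$ in a compact set. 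By Lemma \ref{l:weil-pft2}, $\hat{\Omega}_\psi(g)\hat\varphi'(x_0,t_0)=\hat\varphi'(x_0g,\nu(g)^{-1}t_0)$. Compact support in $t$ therefore forces $\nu(g)\in\fo_F^\times$ up to finitely many cosets, and compact support in $x$ forces $x_0g$ to lie in a compact subset of $E^2$.

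Next, parametrize $\bT_0\backslash\bG$. Since $\bG=\bT\cdot$ (Iwasawa/Cartan pieces), we write $g=[\iota(z),1]\,[\smat{p^m}{}{}{1},1]\,k$ modulo $\bT_0$ and the center, with $z\in E^\times/F^\times\fo_E^\times$, $m\ge0$, $k\in\bK$. This uses $\bG=\bT_0\bT\,\{[t_p^m,1]\}\,\bK$, a Cartan decomposition relative to the compact-mod-center torus $\bT$ when $E$ is inert. Because $E$ is inert, $E^\times/F^\times\fo_E^\times$ is trivial, so $\bT_0\backslash\bG$ is essentially $\bigsqcup_{m\ge0}\bT_0\backslash\bT_0 Z\,[t_p^m,1]\bK$. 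The volume of the $m$-th piece is $\asymp p^m$, since it is the $\bK$-orbit of $[t_p^m,1]$ on $\mathbb{P}^1$-type data.

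On the $m$-th piece, $|w(g)|$ is the matrix coefficient $w([t_p^m,1])$. By Macdonald's formula for the tempered (unitary) principal series $\Ind(\xi\otimes\xi^{-1})$, it is bounded by $C(1+m)p^{-m/2}$. The Schwartz factor is bounded uniformly, say by $\|\hat\varphi'\|_\infty$, and is in fact eventually zero or decaying: $x_0[t_p^m,1]k$ must lie in a fixed compact set while $\nu$ is fixed. This alone would give $\sum_m p^m\cdot(1+m)p^{-m/2}$, which diverges, so the Schwartz support must contribute decay.

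The main obstacle is this step. We must show that among the $\asymp p^m$ cosets $k$, only $O(p^{m/2})$ or fewer satisfy $x_0[t_p^m,1]k\in$ compact set with $\nu$ a unit, or that the condition fails for large $m$. We would compute directly. Since $x_0=(1,\bi)$ is anisotropic and the skew-hermitian form value $(x_0g,x_0g)=\nu(g)(x_0,x_0)$ is fixed, $x_0g$ lies on a fixed "sphere". We check how $(1,\bi)\smat{p^m}{}{}{1}\cdot p^{-m/2}$-normalized (up to center) translates by $k$ to see the norm of $x_0g$. The expectation is that the norm grows like $p^{m/2}$ for generic $k$, so only $m$ bounded contributes, giving absolute convergence, indeed a finite sum. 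If instead a positive proportion of $k$ survive, we fall back on the sharper bound: the Schwartz condition restricts $k$ to those preserving an $\fo_E$-lattice, with count $O(1)$. Either outcome yields a convergent geometric series, proving the lemma.
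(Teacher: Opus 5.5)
There is a genuine gap. It sits in your first reduction, not in the step you single out as the main obstacle. You also leave the decisive estimate as an ``expectation'' with two speculative outcomes, so the argument is incomplete as written.

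\textbf{The $h$-integral is not compact.} You assert that the $h$-integral runs over a compact set up to finitely many translates. In this setting $K$ is split at $p$, so $F^\times\backslash K^\times\simeq F^\times$ via $h=(a,1)$ is non-compact. By Lemma \ref{l:weil-pft2} the integrand is $|a|\,\cF_\psi(\varphi)(x_0g,\nu(g)^{-1}a)\,w(g)\,\chi(a,1)$. The $t$-support only confines $a$ to $\nu(g)$ times a compact set, and $\nu(g)$ takes every value in $F^\times$ as $g$ runs over $\bT_0\backslash\bG$. So you cannot fix $t_0=a$ in a compact set and discard the $h$-integral.

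\textbf{For fixed $t_0$ the $g$-integral is trivially finite.} Use the representatives $g_{i,k}=[\smat{p^k}{}{}{1},p^{-i}]$ ($i\in\Z$, $k\ge0$) of $\bT_0\backslash\bG/\bK$. Then $x_0g_{i,k}=(p^{i+k},p^i\bi)$ and $\nu(g_{i,k})=p^{k+2i}$. The $x$-support bounds $i$ from below and the $t$-support fixes $k+2i$, leaving finitely many $(i,k)$. Your divergent model sum $\sum_m p^m(1+m)p^{-m/2}$ is an artifact of two omissions:
\begin{itemize}
\item the central direction $i$ (your $m$-th piece contains $Z_{\bG}$ and has infinite volume);
\item the Weil-representation factor $|a|$.
\end{itemize}
Your dichotomy about how many cosets $k\in\bK$ survive is moot. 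Once $|\cF_\psi(\varphi)|$ is dominated by a constant times a $\bK$-invariant lattice characteristic function, the support condition does not depend on $k\in\bK$ at all.

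\textbf{The missing mechanism.} The factor $|a|=|\nu(g)|$ cancels the volume growth $p^k$ of the double cosets. The paper does this as follows.
\begin{itemize}
\item Reduce to $\bK$-fixed $w$ via $\cQ(\varphi,(\pi\boxtimes\mu)(g)w)=\cQ(\Omega_\psi(g^{-1})\varphi,w)$. This also repairs your tacit assumption that $|w|$ is constant on $\bT_0[t_p^m,1]\bK$, which holds only for right $\bK$-invariant $w$.
\item Dominate $\cF_\psi(\varphi)$ by characteristic functions of $p^i(\fo_E\oplus\fo_E)\times p^l\fo_F^\times$.
\item Translate by $[1,p^{-i}]\in Z_{\bG}$ and by $(p^{2i+l},1)\in K^\times$. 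Each such term becomes $|p^{2i+l}|\,\chi(p^{2i+l},1)\,\cQ(\varphi_0,w)$.
\item For $\varphi_0$ the only nonzero terms have $i\ge0$ and $l=2i+k$. Up to harmless constants, the double-coset series in absolute value is $\sum_{i\ge0}\sum_{k\ge0}p^{-2i-k}\cdot p^{k}\cdot p^{-k/2}|A_k|$.
\item This equals $\sum_{i\ge0}p^{-2i}\sum_{k\ge0}p^{-k/2}|A_k|$, which converges because $|A_k|=O(1+k)$ for unitary $\xi$.
\end{itemize}
Your Macdonald bound and the observation that a fixed $t_0$ restricts $\nu(g)$ are correct ingredients. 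What is missing is keeping the $a$-integral and its factor $|a|$ coupled to $\nu(g)$.
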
 

\begin{proof}
Since $\cQ(\varphi, (\pi \boxtimes \mu)(g) w) = \cQ(\Omega_\psi(g^{-1}) \varphi, w)$ for all $g \in \bG$, we may assume that $w$ is $\bK$-fixed.
Since
\[
 \cQ(\varphi,w) = \int_{F^\times} \int_{\bT_0 \backslash \bG} |a| \cF_\psi(\varphi)(x_0 g, \nu(g)^{-1} a) w(g) \chi(a,1) \, dg \, da 
\]
by Lemma \ref{l:weil-pft2}, we may further assume that $\cF_\psi(\varphi)$ is the characteristic function of $p^i (\fo_E \oplus \fo_E) \times p^l \fo_F^\times$ for some $i,l \in \Z$.
Then, noting that $\hat{\varphi}_0$ is the characteristic function of $(\fo_E \oplus \fo_E) \times \fo_F^\times$, we have
\begin{align*}
 \cQ(\varphi,w) & = \int_{F^\times} \int_{\bT_0 \backslash \bG} |a| \hat{\varphi}_0(p^{-i} x_0 g, p^{-l} \nu(g)^{-1} a) w(g) \chi(a,1) \, dg \, da \\
 & = \int_{F^\times} \int_{\bT_0 \backslash \bG} |p^{2i+l} a| \hat{\varphi}_0(x_0 g, \nu(g)^{-1} a) w(g \cdot [1,p^{-i}]) \chi(p^{2i+l} a,1) \, dg \, da \\
 & = |p^{2i+l}| \chi(p^{2i+l},1) \cQ(\varphi_0,w).
\end{align*}
Hence the absolute convergence of $\cQ(\varphi,w)$ follows from the computation in the proof of Lemma \ref{l:unique-hom-unram-inert} below.
\end{proof}

By this lemma, the above integral defines an element $\cQ$ in the Hom space \eqref{eq:hom-space-p-depletion}.

\begin{lem}
\label{l:unique-hom-unram-inert}
We have
\[
 \cQ(\varphi_0,w_0) = \frac{1}{(1 - \xi(p) \chi(p,1) p^{-1/2}) (1 - \xi(p)^{-1} \chi(p,1) p^{-1/2})}.
\]
In particular, $\cQ$ is nonzero.
\end{lem}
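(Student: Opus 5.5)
The plan is to evaluate the integral defining $\cQ(\varphi_0,w_0)$ directly: first collapse the torus integral over $h$, then parametrize $\bT_0 \backslash \bG$ by $\GL_2(F)$, and finally sum over the Cartan decomposition using Macdonald's formula for the spherical matrix coefficient $w_0$.

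\emph{Step 1: reduce the $h$-integral.} As in the proof of Lemma \ref{l:abs-conv-inert}, Lemma \ref{l:weil-pft2} gives
\[
 \cQ(\varphi_0,w_0) = \int_{F^\times} \int_{\bT_0 \backslash \bG} |a|\, \hat{\varphi}_0(x_0 g, \nu(g)^{-1} a)\, w_0(g)\, \chi(a,1) \, dg \, da,
\]
where $\hat{\varphi}_0$ is the characteristic function of $(\fo_E \oplus \fo_E) \times \fo_F^\times$. Here we write $h=(a,b) \in K^\times = F^\times \oplus F^\times$ and normalize $b=1$ using $F^\times$. This normalization is legitimate because $\chi|_{F^\times}=1$ and $\vol(F^\times\backslash F^\times \fo_K^\times)=1$.

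\emph{Step 2: parametrize $\bT_0\backslash\bG$.} Every $[g',z]$ can be written as $[\iota(z),z]\,[\iota(z)^{-1}g',1]$, so $\bT_0\backslash \bG$ is identified with $\GL_2(F)$ via $g' \mapsto [g',1]$. Under this identification $x_0[g',1]=x_0g'$ and $\nu([g',1])=\det g'$. I will check that this identification matches the normalization $\vol(\bT_0\backslash\bT_0\bK)=1$ with $\vol(\GL_2(\fo_F))=1$, which is where $\bT \subset Z_{\bG}\bK$ enters. The $a$-integral then forces $a \in \det(g')\,\fo_F^\times$ and produces the factor $|\det g'|\,\chi(\det g',1)$. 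This leaves
\[
 \cQ(\varphi_0,w_0)=\int_{\GL_2(F)} \mathbb{I}_{\fo_E^2}(x_0 g')\, |\det g'|\, \chi(\det g',1)\, w_0([g',1])\,dg'.
\]

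\emph{Step 3: Cartan decomposition.} Write $g'=k_1\,\diag(p^{m_1},p^{m_2})\,k_2$ with $m_1\ge m_2$. The key point is that $x_0k_1=(k_{11}+k_{21}\bi,\,k_{12}+k_{22}\bi)$ has both entries in $\fo_E^\times$. This holds because $E$ is inert, so $r^2-us^2$ is a unit whenever $(r,s)$ is primitive. Consequently the condition $x_0g'\in\fo_E^2$ is equivalent to $m_2\ge 0$. Two further inputs: the center $[\lambda,1]=[1,\lambda^{-1}]$ acts on $w_0$ trivially since $\mu|_{F^\times}=1$ and $\xi_\pi$ is trivial, and $\vol(K\,d\,K)=p^{m_1-m_2}(1+p^{-1})$ for $m_1>m_2$ (and $1$ for $m_1=m_2$). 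With these, the integral becomes
\[
 \sum_{m_2\ge0}\ \sum_{r\ge0} \vol\bigl(K\diag(p^r,1)K\bigr)\, p^{-r-2m_2}\,\beta^{2m_2+r}\, w_0(\diag(p^r,1)),\qquad \beta=\chi(p,1).
\]

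\emph{Step 4: evaluate with Macdonald's formula.} Insert Macdonald's formula for $w_0(\diag(p^r,1))$ in terms of $\xi(p)^{\pm1}$. The sum over $m_2$ is the geometric factor $(1-\beta^2p^{-2})^{-1}$. The sum over $r$ is the classical generating function
\[
 \sum_{r\ge 0} \vol\bigl(K\diag(p^r,1)K\bigr)\,w_0(\diag(p^r,1))\,X^r = \frac{1-p^{-1}X^2}{(1-\xi(p)p^{-1/2}X)(1-\xi(p)^{-1}p^{-1/2}X)},
\]
evaluated at $X=\beta p^{-1}\cdot p^{1/2}$ with the appropriate power normalization. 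With that normalization, the numerator $1-p^{-1}X^2$ cancels against the $m_2$-factor, and the result is the stated expression. Nonvanishing then follows because $\xi$ and $\chi$ are unitary, so neither factor in the denominator can vanish or blow up.

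The main obstacle is the bookkeeping of normalizations in Steps 2–4: the measure on $\bT_0\backslash\bG$, the exact powers of $p$ coming from $|\det g'|$ and from Macdonald's formula, and the correct specialization of the generating function so that the $(1-\beta^2p^{-2})$ factor cancels exactly. I will first check the result against the $r=m_2=0$ term, which should equal $1$.
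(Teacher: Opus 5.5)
Your argument is the paper's computation in different coordinates, and Steps 1--3 are correct. Under your identification $\bT_0\backslash\bG\simeq\GL_2(F)$, the paper's representative $g_{i,k}=[\diag(p^k,1),p^{-i}]$ corresponds to $\diag(p^{i+k},p^i)$. So your $(m_2,r)$ is the paper's $(i,k)$, and your $\vol(K\diag(p^r,1)K)$ is its $\vol(\bT_0\backslash\bT_0 g_{i,k}\bK)$. Your observation that $x_0k_1\in(\fo_E^\times)^2$ makes the integrand bi-$\GL_2(\fo_F)$-invariant. This is a self-contained substitute for the paper's appeal to Furusawa for the double cosets and their volumes.

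The problem is Step 4: the generating function you display is false, so the evaluation as written does not produce the lemma. Compare the $X^1$ coefficients. On the left it is $\vol(Kt_pK)\,w_0(t_p)=p(1+p^{-1})\cdot\frac{p^{-1/2}}{1+p^{-1}}(\alpha+\alpha^{-1})=p^{1/2}(\alpha+\alpha^{-1})$, with $\alpha=\xi(p)$. Your right side gives $p^{-1/2}(\alpha+\alpha^{-1})$. No substitution repairs this:
\begin{itemize}
\item at $X=\beta p^{-1/2}$ the numerator becomes $1-\beta^2p^{-2}$ and cancels the $m_2$-sum, but the denominator is $(1-\alpha\beta p^{-1})(1-\alpha^{-1}\beta p^{-1})$;
\item at $X=\beta$ the denominator is right, but the numerator $1-p^{-1}\beta^2$ no longer cancels.
\end{itemize}
Your planned check against the $r=m_2=0$ term cannot catch this, since every such series has constant term $1$.

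The correct bookkeeping is as follows. With $A_r$ as in the paper, Macdonald gives $w_0(\diag(p^r,1))=\frac{p^{-r/2}}{1+p^{-1}}A_r$. The $r$-th term of your inner sum is then $p^{-r}\beta^r\cdot p^r(1+p^{-1})\cdot \frac{p^{-r/2}}{1+p^{-1}}A_r=A_rY^r$ with $Y=\beta p^{-1/2}$, and $1$ for $r=0$. The identity you need is
\[
 1+\sum_{r\ge1}A_rY^r=\frac{1-p^{-1}Y^2}{(1-\alpha Y)(1-\alpha^{-1}Y)}.
\]
It follows from $A_0=1+p^{-1}$ together with
\[
 \frac{\alpha^{-1}(1-\alpha^{-2}p^{-1})}{1-\alpha^{-2}}+\frac{\alpha(1-\alpha^{2}p^{-1})}{1-\alpha^{2}}=p^{-1}(\alpha+\alpha^{-1}).
\]
At $Y=\beta p^{-1/2}$ the numerator $1-\beta^2p^{-2}$ cancels your $m_2$-factor $(1-\beta^2p^{-2})^{-1}$, which yields the stated formula. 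Equivalently, in your original variable the correct identity is
\[
 \sum_{r\ge0}\vol(K\diag(p^r,1)K)\,w_0(\diag(p^r,1))\,X^r=\frac{1-X^2}{(1-\alpha p^{1/2}X)(1-\alpha^{-1}p^{1/2}X)},
\]
to be evaluated at $X=\beta p^{-1}$.
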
 

\begin{proof}
Put $\alpha = \xi(p)$ and $\beta = \chi(p,1)$.
Put $g_{i,k} = [g_k, p^{-i}] \in \bG$ with
\[
 g_k = \mat{p^k}{0}{0}{1}
\]
and $h_l = (p^l, 1) \in K^\times$.
Then we can take $\{ g_{i,k} \mid i,k \in \Z, \; k \ge 0 \}$ and $\{ h_l \mid l \in \Z \}$ as sets of representatives for $\bT_0 \backslash \bG / \bK$ and $F^\times \backslash K^\times / \fo_K^\times$, respectively (see \cite[Lemma 3.4.1]{furusawa93}), so that
\[
 \cQ(\varphi_0,w_0) = \sum_{k=0}^\infty \sum_{i,l \in \Z} \hat{\Omega}_\psi(g_{i,k},h_l) \hat{\varphi}_0(x_0,1) w_0(g_{i,k}) \chi(h_l) \vol(\bT_0 \backslash \bT_0 g_{i,k} \bK) \vol(F^\times \backslash F^\times h_l \fo_K^\times).
\]
By Lemma \ref{l:weil-pft2}, we have
\begin{align*}
 \hat{\Omega}_\psi(g_{i,k},h_l) \hat{\varphi}_0(x_0,1)
 & = p^{-l} \hat{\varphi}_0(p^{i+k},p^i\bi, p^{-2i-k+l}) \\
 & = 
 \begin{cases}
  p^{-2i-k} & \text{if $i \ge 0$ and $l=2i+k$;} \\
  0 & \text{otherwise.} 
 \end{cases}
\end{align*}
Also, by Macdonald's formula, we have
\[
 w_0(g_{i,k}) = \frac{p^{-k/2}}{1+p^{-1}} A_k
\]
for $k \ge 0$, where
\[
 A_k = \alpha^k \frac{1 - \alpha^{-2} p^{-1}}{1-\alpha^{-2}}
 + \alpha^{-k} \frac{1 - \alpha^2 p^{-1}}{1-\alpha^2}.
\]
Moreover, it is easy to see that
\begin{align*}
 \vol(\bT_0 \backslash \bT_0 g_{i,k} \bK)
 & = \vol(\bT_0 \cap g_{i,k} \bK g_{i,k}^{-1})^{-1} \\
 & = 
 \begin{cases}
  1 & \text{if $k=0$;} \\
  p^k (1+p^{-1}) & \text{if $k \ge 1$}
 \end{cases}
\end{align*}
(see \cite[Lemma 3.5.3]{furusawa93}) and
\begin{equation}
\label{eq:v0-hl-inert}
 \chi(h_l) = \beta^l, \quad
 \vol(F^\times \backslash F^\times h_l \fo_K^\times) = 1.
\end{equation}
Hence we have
\begin{align*}
 \cQ(\varphi_0,w_0) 
 & = \sum_{i=0}^\infty p^{-2i} \beta^{2i}
 + \sum_{k=1}^\infty \sum_{i=0}^\infty p^{-2i-k/2} A_k \beta^{2i+k} \\
 & = \frac{1}{1 - \beta^2 p^{-2}} 
 \left( 1 + \sum_{k=1}^\infty p^{-k/2} A_k \beta^k \right).
\end{align*}
Finally, we have
\begin{align*}
 1 + \sum_{k=1}^\infty p^{-k/2} A_k \beta^k 
 & = 1 + \frac{1 - \alpha^{-2} p^{-1}}{1-\alpha^{-2}} \frac{\alpha \beta p^{-1/2}}{1 - \alpha \beta p^{-1/2}}
 + \frac{1 - \alpha^2 p^{-1}}{1-\alpha^2} \frac{\alpha^{-1} \beta p^{-1/2}}{1 - \alpha^{-1} \beta p^{-1/2}} \\
 & = \frac{1-\beta^2 p^{-2}}{(1 - \alpha \beta p^{-1/2})(1 - \alpha^{-1} \beta p^{-1/2})},
\end{align*}
so that 
\[
 \cQ(\varphi_0,w_0) = \frac{1}{(1 - \alpha \beta p^{-1/2})(1 - \alpha^{-1} \beta p^{-1/2})}.
\]
This completes the proof.
\end{proof}

\begin{lem}
\label{l:unique-hom-p-depletion-inert}
We have
\[
 \cQ(\varphi^\flat,\tilde{w}_0)
 = \xi(p)^{-2} p^{-1} \cdot \frac{(1 - \xi(p)^2 p^{-1}) (1 - \xi(p) \chi(1,p) p^{-1/2})}{(1+p^{-1})(1 - \xi(p)^{-1} \chi(p,1) p^{-1/2})}.
\]
\end{lem}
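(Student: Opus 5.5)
We compute $\cQ(\varphi^\flat,\tilde{w}_0)$ exactly as in the proof of Lemma \ref{l:unique-hom-unram-inert}, unfolding over $\bT_0\backslash\bG$ and $F^\times\backslash K^\times$. Since $\hat{\varphi}^\flat$ is $\bK_0$-fixed (Lemma \ref{l:varphi-hat-flat}) and $\tilde{w}_0$ is $\bK_0$-fixed, the integrand is right $\bK_0$-invariant in $g$ and right $\fo_K^\times$-invariant in $h$. We therefore sum over $\bT_0\backslash\bG/\bK_0$ and over $h_l=(p^l,1)$, with $\chi(h_l)=\beta^l$ as in \eqref{eq:v0-hl-inert}. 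For $\bT_0\backslash\bG/\bK_0$ we use representatives $g_{i,k}\kappa$, where $\kappa$ runs over the representatives $k_{(1,m)}$, $k_{(pm,1)}$ of $\GL_2(\fo_F)/K_0$. The sum over $\kappa$ for fixed $(i,k)$ can be organized by the image of $x_0 g_{i,k}\kappa$.

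It is cleaner to move the translates off $w_0$ and onto the Schwartz function. Writing $\tilde{w}_0=\pi([t_p^{-2},1])w_0-\xi(p)p^{-1/2}\pi([t_p^{-1},1])w_0$ and using $\cQ(\varphi,\pi(g)w)=\cQ(\Omega_\psi(g^{-1})\varphi,w)$, we get
\[
\cQ(\varphi^\flat,\tilde{w}_0)=\cQ(\Omega_\psi([t_p^{2},1])\varphi^\flat,w_0)-\xi(p)p^{-1/2}\cQ(\Omega_\psi([t_p,1])\varphi^\flat,w_0).
\]
We then replace $\Omega_\psi([t_p^j,1])\varphi^\flat$ by its $\bK$-average, that is, by its projection to $\bK$-invariants. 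This is legitimate because $w_0$ is $\bK$-bi-invariant and $\cQ(\cdot,w_0)$ is $\bK$-invariant. Each averaged function is a $\bK$-invariant Schwartz function on $E^2\times F^\times$, hence a finite combination of characteristic functions of $p^a(\fo_E\oplus\fo_E)\times p^b\fo_F^\times$. The average of a translate of $\hat{\varphi}^\flat$ is computed directly from the explicit formula of Lemma \ref{l:varphi-hat-flat}, using Lemma \ref{l:weil-pft2}: the element $g=[t_p^j,1]$ acts by $x\mapsto xg$ and $t\mapsto \nu(g)^{-1}t$. The condition on $r_1r_2-s_1s_2u$ must be averaged over $\GL_2(\fo_F)$ acting on pairs $(x_1,x_2)$, and we count the $\bK$-orbits with weights.

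Once these decompositions are in hand, the scaling identity from the proof of Lemma \ref{l:abs-conv-inert} applies: the characteristic function of $p^a(\fo_E^2)\times p^b\fo_F^\times$ contributes $|p^{2a+b}|\chi(p^{2a+b},1)\cQ(\varphi_0,w_0)$. Combining this with Lemma \ref{l:unique-hom-unram-inert}, the answer becomes a finite Laurent polynomial in $\alpha=\xi(p)$, $\beta=\chi(p,1)$ and $p^{-1/2}$, multiplied by $[(1-\alpha\beta p^{-1/2})(1-\alpha^{-1}\beta p^{-1/2})]^{-1}$. Since $\chi|_{F^\times}=1$, we have $\chi(1,p)=\beta^{-1}$. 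The final step is to factor this polynomial as
\[
\alpha^{-2}p^{-1}(1+p^{-1})^{-1}(1-\alpha^2p^{-1})(1-\alpha\beta^{-1}p^{-1/2})(1-\alpha\beta p^{-1/2}),
\]
which then cancels against one denominator factor to give the claimed formula.

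The main obstacle is the $\bK$-averaging of the translates of $\hat{\varphi}^\flat$. The second piece of $\hat{\varphi}^\flat$, supported on $x_2\in p^{-1}\fo_E\smallsetminus\fo_E$ with the bilinear condition, is not $\bK$-invariant, so its average is not a single lattice indicator. If this gets messy, the fallback is to evaluate the double sum directly over $g_{i,k}\kappa$. In that approach Macdonald's formula gives $w_0$ on each coset, and $\tilde{w}_0(g_{i,k}\kappa)$ equals $w_0$ at $g_{i,k}\kappa t_p^{-2}$ minus $\xi(p)p^{-1/2}$ times $w_0$ at $g_{i,k}\kappa t_p^{-1}$. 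The volumes $\vol(\bT_0\backslash\bT_0g_{i,k}\kappa\bK_0)$ come from counting, and the resulting geometric series are summed as before.
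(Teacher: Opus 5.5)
Your main route has a genuine gap; the fallback you mention at the end is the paper's method, but it is only sketched.

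\textbf{Where the main route fails.} The first two steps are fine: moving $[t_p^{-j},1]$ onto $\varphi^\flat$, and replacing the result by its $\bK$-average because $w_0$ is $\bK$-fixed. The next step fails. A $\bK$-invariant Schwartz function on $E^2\times F^\times$ is \emph{not} in general a finite combination of characteristic functions of $p^a(\fo_E\oplus\fo_E)\times p^b\fo_F^\times$. $\bK$ acts on $x=(x_1,x_2)$ by right multiplication by $\GL_2(\fo_F)$ and by scalars in $\fo_E^\times$. So the $\bK$-orbit of $x$ remembers the $\fo_F$-lattice $\fo_Fx_1+\fo_Fx_2\subset E$ up to $\fo_E^\times$, including its ``conductor''. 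A lattice indicator only sees the largest $a$ with $x\in p^a(\fo_E\oplus\fo_E)$. Concretely, take the first piece $\mathbb{I}(x_1\in\fo_E\smallsetminus p\fo_E)\,\mathbb{I}(x_2\in\fo_E)$ of $\hat\varphi^\flat$. At $t=1$, its $\bK$-average equals $1$ at $x=(1,\bi)$ and $p/(p+1)$ at $x=(1,p\bi)$. Every $\mathbb{I}_{p^a(\fo_E\oplus\fo_E)}\otimes\mathbb{I}_{p^b\fo_F^\times}$ takes equal values at these two points. In the proof of Lemma~\ref{l:abs-conv-inert} such indicators serve only to dominate $|\cF_\psi(\varphi)|$ for absolute convergence, not to evaluate $\cQ$.

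This is not a bookkeeping issue. If your decomposition existed, each $\cQ(\Omega_\psi([t_p^j,1])\varphi^\flat,w_0)$ would equal $P_j(\beta)\,\cQ(\varphi_0,w_0)$ with $P_j$ independent of $\alpha=\xi(p)$. Then, as $\xi$ varies, $\cQ(\varphi^\flat,\tilde w_0)/\cQ(\varphi_0,w_0)$ would be affine in $\alpha$. But the required ratio $\alpha^{-2}p^{-1}(1+p^{-1})^{-1}(1-\alpha^2p^{-1})(1-\alpha\beta p^{-1/2})(1-\alpha\beta^{-1}p^{-1/2})$ has nonzero $\alpha^{-2}$ and $\alpha^{2}$ terms. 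Those terms come from the values of $w_0$ on orbits of conductor $\ge 1$ (Macdonald's $A_c$), which lattice indicators cannot see. Recovering them along your route would need an extra input that the proposal lacks. One option is to express orbit indicators through spherical Hecke operators of $\bG$ applied to lattice indicators, and then use the Hecke eigenvalue of $w_0$. As written, your final ``factorization'' is read off from the target formula rather than derived.

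\textbf{Completing the fallback.} The fallback is exactly the paper's proof, but it has to be carried out. First, $\{g_{i,k}\kappa\}$ with $\kappa\in\GL_2(\fo_F)/K_0$ is not a set of representatives for $\bT_0\backslash\bG/\bK_0$; for $k=0$ all $p^2+p$ of them give one double coset. You must do one of the following:
\begin{enumerate}
\item give every $\kappa$ the weight $\vol(\bT_0\backslash\bT_0g_{i,k}\bK)/[\bK:\bK_0]$; or
\item use the disjoint decomposition $\bT_0g_{i,k}\bK=\bigsqcup_{\bmm\in\mathfrak{M}_k}\bT_0g_{i,k,\bmm}\bK_0$, with $\mathfrak{M}_0=\{(1,0)\}$, $\mathfrak{M}_k=\{(1,pm)\}_{0\le m\le p-1}\cup\{(0,1)\}$ for $k\ge1$, and volumes $1$, $p^{k-2}$, $p^k$, as the paper does.
\end{enumerate}
The $t$-support of $\hat\varphi^\flat$ forces $l=2i+k$. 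Lemma~\ref{l:varphi-hat-flat} then leaves only three families of contributions:
\begin{enumerate}
\item $(k,\bmm,i)=(0,(1,0),0)$, with value $1-p^{-1}$;
\item $(k,\bmm,i)=(1,(1,0),-1)$, with value $-p^{-1}$;
\item $(k,(0,1),0)$ for $k\ge1$, with value $1-p^{-1}$.
\end{enumerate}
The condition $r_1r_2-s_1s_2u\in\fo_F$ is what kills $\bmm=(1,pm)$ with $m\neq0$. On these cosets, $\tilde w_0$ equals $(1+p^{-1})^{-1}$ times $p^{-1}(A_2-\alpha A_1)$, $p^{-1/2}(A_1-\alpha A_0)$ and $p^{-k/2-1}(A_{k+2}-\alpha A_{k+1})$ respectively. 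The identity $A_k-\alpha A_{k-1}=\alpha^{-k}(1-\alpha^2p^{-1})$ reduces the sum to two isolated terms plus a geometric series in $\alpha^{-1}\beta p^{-1/2}$. That series sums to the stated formula.
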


\begin{proof}
We use the notation of the proof of Lemma \ref{l:unique-hom-unram-inert}.
By a direct computation, we have
\[
 \bT_0 g_{i,k} \bK = \bigsqcup_{\bmm \in \mathfrak{M}_k} \bT_0 g_{i,k,\bmm} \bK_0,
\]
where
\[
 \mathfrak{M}_k = 
 \begin{cases}
  \{ (1,0) \} & \text{if $k=0$;} \\
  \{ (1, pm) \mid 0 \le m \le p-1 \} \cup \{ (0,1) \} & \text{if $k \ge 1$}
 \end{cases}
\]
and $g_{i,k,\bmm} = [g_k k_\bmm, p^{-i}]$.
This allows us to write
\[
 \cQ(\varphi^\flat, \tilde{w}_0) = \sum_{k=0}^\infty \sum_{\bmm \in \mathfrak{M}_k} \cQ_{k,\bmm}
\]
with 
\[
 \cQ_{k,\bmm} = \sum_{i,l \in \Z} \hat{\Omega}_\psi(g_{i,k,\bmm},h_l) \hat{\varphi}^\flat(x_0,1) \tilde{w}_0(g_{i,k,\bmm}) \chi(h_l) \vol(\bT_0 \backslash \bT_0 g_{i,k,\bmm} \bK_0) \vol(F^\times \backslash F^\times h_l \fo_K^\times).
\]
By Lemma \ref{l:weil-pft2} and \eqref{eq:v0-hl-inert}, we have
\[
 \cQ_{k,\bmm} = \sum_{i,l \in \Z} p^{-l} \hat{\varphi}^\flat(x_0 g_{i,k,\bmm}, p^{-2i-k+l}) \tilde{w}_0(g_{i,k,\bmm}) \beta^l \vol(\bT_0 \backslash \bT_0 g_{i,k,\bmm} \bK_0).
\]
By Lemma \ref{l:varphi-hat-flat}, we have $\hat{\varphi}^\flat(x_0 g_{i,k,\bmm}, p^{-2i-k+l}) = 0$ unless $l=2i+k$, so that
\[
 \cQ_{k,\bmm} = \sum_{i \in \Z} p^{-2i-k} \hat{\varphi}^\flat(x_0 g_{i,k,\bmm}, 1) \tilde{w}_0([g_k k_\bmm, 1]) \beta^{2i+k} \vol(\bT_0 \backslash \bT_0 g_{i,k,\bmm} \bK_0).
\]
Moreover, we have
\[
 \hat{\varphi}^\flat(x_0 g_{i,k,\bmm}, 1) =
 \begin{cases}
  1-p^{-1} & \text{if $i \in \mathfrak{I}_{k,\bmm}$;} \\
  -p^{-1} & \text{if $i \in \mathfrak{I}'_{k,\bmm}$;} \\
  0 & \text{otherwise,}
 \end{cases}
\]
where $\mathfrak{I}_{k,\bmm}, \mathfrak{I}'_{k,\bmm}$ are the subsets of $\Z$ given as follows.
\begin{itemize}
\item
If $k=0$ and $\bmm=(1,0)$, then we have
\[
 \hat{\varphi}^\flat(x_0 g_{i,k,\bmm}, 1) = \hat{\varphi}^\flat(p^i, p^i \bi, 1), 
\]
so that 
\begin{align*}
 \mathfrak{I}_{k,\bmm} & = \{ 0 \}, \\
 \mathfrak{I}'_{k,\bmm} & = \varnothing.
\end{align*}
\item
If $k \ge 1$ and $\bmm=(1,0)$, then we have
\[
 \hat{\varphi}^\flat(x_0 g_{i,k,\bmm}, 1) = \hat{\varphi}^\flat(p^{i+k}, p^i \bi, 1), 
\]
so that
\begin{align*}
 \mathfrak{I}_{k,\bmm} & = \varnothing, \\
 \mathfrak{I}'_{k,\bmm} & =
 \begin{cases}
  \{ -1 \} & \text{if $k=1$;} \\
  \varnothing & \text{if $k \ge 2$.}
 \end{cases}
\end{align*}
\item 
If $k \ge 1$ and $\bmm=(1,pm)$ with $1 \le m \le p-1$, then we have
\[
 \hat{\varphi}^\flat(x_0 g_{i,k,\bmm}, 1) = \hat{\varphi}^\flat(p^{i+k} + p^{i+1} m \bi, p^i \bi, 1), 
\]
so that
\begin{align*}
 \mathfrak{I}_{k,\bmm} & = \varnothing, \\
 \mathfrak{I}'_{k,\bmm} & = \varnothing.
\end{align*}
\item 
If $k \ge 1$ and $\bmm=(0,1)$, then we have
\[
 \hat{\varphi}^\flat(x_0 g_{i,k,\bmm}, 1) = \hat{\varphi}^\flat(p^i \bi, -p^{i+k},  1),
\]
so that
\begin{align*}
 \mathfrak{I}_{k,\bmm} & = \{ 0 \}, \\
 \mathfrak{I}'_{k,\bmm} & = \varnothing.
\end{align*}
\end{itemize}
In particular, we have
\[
 \cQ_{k,\bmm} = 0
\]
unless
\begin{itemize}
\item $k=0$ and $\bmm = (1,0)$; or
\item $k=1$ and $\bmm = (1,0)$; or
\item $k \ge 1$ and $\bmm = (0,1)$.
\end{itemize}
As in the proof of Lemma \ref{l:unique-hom-unram-inert}, $\tilde{w}_0([g_k k_\bmm, 1])$ is given as follows.
\begin{itemize}
\item 
If $k=0$ and $\bmm=(1,0)$, then we have
\begin{align*}
 g_k k_\bmm t_p^{-2} & = \mat{p^{-2}}{0}{0}{1}, \\
 g_k k_\bmm t_p^{-1} & = \mat{p^{-1}}{0}{0}{1},
\end{align*}
so that 
\[
 \tilde{w}_0([g_k k_\bmm, 1]) = p^{-1} (1+p^{-1})^{-1} (A_2 - \alpha A_1).
\]
\item
If $k=1$ and $\bmm=(1,0)$, then we have
\begin{align*}
 g_k k_\bmm t_p^{-2} & = \mat{p^{-1}}{0}{0}{1}, \\
 g_k k_\bmm t_p^{-1} & = \mat{1}{0}{0}{1},
\end{align*}
so that 
\[
 \tilde{w}_0([g_k k_\bmm, 1]) = p^{-1/2} (1+p^{-1})^{-1} (A_1 - \alpha A_0).
\]
\item
If $k \ge 1$ and $\bmm=(0,1)$, then we have
\begin{align*}
 g_k k_\bmm t_p^{-2} & = \mat{p^k}{0}{0}{p^{-2}} \mat{0}{-1}{1}{0}, \\
 g_k k_\bmm t_p^{-1} & = \mat{p^k}{0}{0}{p^{-1}} \mat{0}{-1}{1}{0},
\end{align*}
so that
\[
 \tilde{w}_0([g_k k_\bmm, 1]) = p^{-k/2-1} (1+p^{-1})^{-1} (A_{k+2} - \alpha A_{k+1}).
\]
\end{itemize}
Moreover, it is easy to see that
\begin{align*}
 \vol(\bT_0 \backslash \bT_0 g_{i,k,\bmm} \bK_0) 
 & = [\bK:\bK_0]^{-1} \cdot \vol(\bT_0 \cap g_{i,k,\bmm} \bK_0 g_{i,k,\bmm}^{-1})^{-1} \\
 & =
 \begin{cases}
  1 & \text{if $k=0$ and $\bmm=(1,0)$;} \\ 
  p^{k-2} & \text{if $k \ge 1$ and $\bmm=(1,pm)$;} \\ 
  p^k & \text{if $k \ge 1$ and $\bmm=(0,1)$.}
 \end{cases}
\end{align*}
Hence, noting that
\[
 A_k - \alpha A_{k-1} = \alpha^{-k} (1 - \alpha^2 p^{-1})
\]
for $k \ge 1$, we can compute $\cQ_{k,\bmm}$ as follows.
\begin{itemize}
\item 
If $k=0$ and $\bmm=(1,0)$, then we have
\[
 \cQ_{k,\bmm} = p^{-1} (1-p^{-1}) (1+p^{-1})^{-1} (1 - \alpha^2 p^{-1}) \alpha^{-2}.
\]
\item 
If $k=1$ and $\bmm=(1,0)$, then we have
\[
 \cQ_{k,\bmm} = - p^{-3/2} (1+p^{-1})^{-1} (1 - \alpha^2 p^{-1}) \alpha^{-1} \beta^{-1}.
\]
\item
If $k \ge 1$ and $\bmm=(0,1)$, then we have
\[
 \cQ_{k,\bmm} = p^{-k/2-1} (1-p^{-1}) (1+p^{-1})^{-1} (1 - \alpha^2 p^{-1}) \alpha^{-k-2} \beta^k.
\]
\end{itemize}
Thus we obtain
\begin{align*}
 \cQ(\varphi^\flat, \tilde{w}_0)
 & = \cQ_{0,(1,0)} + \cQ_{1,(1,0)} + \sum_{k=1}^\infty \cQ_{k,(0,1)} \\
 & = \frac{1 - \alpha^2 p^{-1}}{1+p^{-1}}
 \left( (1-p^{-1}) p^{-1} \alpha^{-2} - p^{-3/2} \alpha^{-1} \beta^{-1} + \frac{(1-p^{-1}) p^{-3/2} \alpha^{-3} \beta}{1 - \alpha^{-1} \beta p^{-1/2}} \right) \\
 & = \frac{\alpha^{-2} p^{-1} (1 - \alpha^2 p^{-1}) (1 - \alpha \beta^{-1} p^{-1/2})}{(1+p^{-1}) (1 - \alpha^{-1} \beta p^{-1/2})}.
\end{align*}
This completes the proof.
\end{proof}

Now Lemma \ref{l:unique-hom-p-depletion}\eqref{unique-hom-p-depletion-inert} follows from Lemmas \ref{l:unique-hom-unram-inert} and \ref{l:unique-hom-p-depletion-inert}.

\subsection{The split case}

Suppose that $E$ is split (so that $u \in (\fo_F^\times)^2$).
Let $\bT$ be a maximal torus of $\bG$ given by
\[
 \bT = \left\{ \left[ \mat{a_1}{}{}{a_2}, z \right] \; \middle| \; a_1, a_2 \in F^\times, \; z \in E^\times \right\}.
\]
Put $x_0 = ((1,0), (0,1)) \in E^2$ and let $\bT_0$ be the stabilizer of $x_0$ in $\bG$, so that 
\[
 \bT_0 = \left\{ \left[ \mat{a_1}{}{}{a_2}, (a_1, a_2) \right] \; \middle| \; a_1, a_2 \in F^\times \right\}.
\]
Following \cite[Lemme 8]{wald85}, we take the model $\mathfrak{W}(\pi \boxtimes \mu)$ defined as the image of a nonzero element in the $1$-dimensional space $\Hom_{\bG}(\pi \boxtimes \mu, \Ind^{\bG}_{\bT}(\mu_{\bT}))$, where $\mu_\bT$ is a character of $\bT$ given by
\[
 \mu_\bT\left( \left[ \mat{a_1}{}{}{a_2}, z \right] \right) = \mu((a_1, a_2)^{-1} z).
\]
We normalize the $\bK$-fixed vector $w_0 \in \mathfrak{W}(\pi \boxtimes \mu)$ so that $w_0(1) = 1$ (see also the proof of Lemma \ref{l:unique-hom-unram-split} below).
Put
\[
 \cQ(\varphi,w)
 = \int_{F^\times \backslash K^\times} \int_{\bT_0 \backslash \bG} 
 \hat{\Omega}_\psi(g,h) \cF_\psi(\varphi)(x_0,1) w(g) \chi(h) \, dg \, dh
\]
for $\varphi \in \cS(K^2 \times F^\times)$ and $w \in \mathfrak{W}(\pi \boxtimes \mu)$, where we normalize the measures so that $\vol(\bT_0 \backslash \bT_0 \bK) = \vol(F^\times \backslash F^\times \fo_K^\times) = 1$.

\begin{lem}
The integral $\cQ(\varphi,w)$ is absolutely convergent.
\end{lem}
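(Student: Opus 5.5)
We follow the proof of Lemma \ref{l:abs-conv-inert} step by step, reducing to the unramified data and then to an explicit convergent sum.

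\emph{Step 1: reduce to a $\bK$-fixed $w$.} We have $\cQ(\varphi,(\pi\boxtimes\mu)(g)w) = \cQ(\Omega_\psi(g^{-1})\varphi, w)$. Every $w$ is a finite linear combination of translates of $w_0$, so it suffices to treat $w = w_0$ with arbitrary $\varphi$.

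\emph{Step 2: reduce to characteristic functions.} By Lemma \ref{l:weil-pft2}, the integral over $F^\times\backslash K^\times$ can be written through the element $h_1 = [\smat{a}{}{}{1},1]$, giving
\[
 \cQ(\varphi,w) = \int_{F^\times}\int_{\bT_0\backslash\bG} |a|\,\cF_\psi(\varphi)(x_0g,\nu(g)^{-1}a)\,w(g)\chi(a,1)\,dg\,da .
\]
Since $|\cF_\psi(\varphi)|$ is dominated by a multiple of the characteristic function of $p^i(\fo_E\oplus\fo_E)\times p^l\fo_F^\times$ for suitable $i,l$, we may take $\cF_\psi(\varphi)$ to be that characteristic function.

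Translating by the central element $[1,p^{-i}]$ and rescaling $a$, as in the inert case, identifies this integral with $|p^{2i+l}|\,\chi(p^{2i+l},1)$ times $\cQ(\varphi_0,w')$ for a translate $w'$ of $w_0$. The factor $\mu$ contributes a unitary constant, and the torus $\bT_0$ is normalized by the central translation. It therefore suffices to prove absolute convergence of the integral defining $\cQ(\varphi_0,w_0)$.

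\emph{Step 3: the explicit sum.} Here lies the real work. We need a decomposition
\[
 \bT_0\backslash\bG/\bK .
\]
Since $\bT_0 = \{[\diag(a_1,a_2),(a_1,a_2)]\}$ and $\bG/Z$ contains $\GL_2$, natural representatives are $[\smat{p^{k}}{m}{0}{1}, (p^{i},p^{j})]$ or a similar family. One could instead use the Iwasawa decomposition together with the $\bT$-orbit of $x_0$.

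On this space we need three ingredients. First, we evaluate
\[
 \hat{\Omega}_\psi(g,h)\hat{\varphi}_0(x_0,1) = p^{-l}\hat{\varphi}_0(x_0g,\ldots),
\]
which is an indicator forcing a finite range of $i,j$ for each $k$ and pins down $l$. Second, we bound the volumes $\vol(\bT_0\backslash\bT_0 g\bK)$, which are of size $O(p^{k})$. Third, we use the Macdonald-type formula for the $\mathfrak{W}$-model vector, or simply the bound $|w_0(g_k)| \ll (k+1)p^{-k/2}$, valid because $\xi$ is unitary.

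The main obstacle is that, unlike the inert case, $\bT_0\backslash\bG$ is not compact modulo center in the torus directions. The support condition $x_0g\in\fo_E^2$ must therefore supply the truncation in the split torus parameter.

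I expect the resulting series to be of the shape
\[
 \sum_{k}\sum_{j} p^{k}\,p^{-k/2}\,(k+1)\,p^{-(\ldots)},
\]
with a geometric decay in $k$ coming from the factor $p^{-2i-k}$ seen in the inert case. This should make the sum converge absolutely, since $|\chi|=|\mu|=1$. Checking the exponent bookkeeping in the split torus coordinates is where the care is needed. The same bookkeeping will be reused in the computation of $\cQ(\varphi_0,w_0)$ in the next lemma.
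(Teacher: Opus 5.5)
Your Steps 1 and 2 are exactly the paper's reduction. Its proof consists of running the argument of Lemma~\ref{l:abs-conv-inert} and then appealing to the computation in the proof of Lemma~\ref{l:unique-hom-unram-split}. (A minor point: the $t$-support of a Schwartz function is compact in $F^\times$, so $|\cF_\psi(\varphi)|$ is dominated by a finite sum over $l$ of such characteristic functions, not a single one. This is harmless, and the change of variables applies verbatim to the integrand with $|w|$.)

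The gap is Step~3. You leave it as an expectation, and the mechanism you anticipate is not the right one. There is no extra non-compactness in the split case: $\bT=\bT_0Z_{\bG}$ in both cases, so the only difference from the inert case is that the central directions $E^\times/\fo_E^\times$ form $\Z^2$ instead of $\Z$. Nor does the support condition cut $i,j$ down to a finite range.

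Concretely, take $g_{i,j,k}=[g_k,(p^{-i},p^{-j})]$ with $g_0=1$ and $g_k=\smat{p^k}{1}{0}{1}$ for $k\ge1$ (no extra parameter $m$ is needed), and take $h_l=(p^l,1)$. Then $x_0g_{i,j,k}$ equals $((p^{i+k},0),(p^i,p^j))$ for $k\ge1$ and $((p^i,0),(0,p^j))$ for $k=0$. Also $\nu(g_{i,j,k})=p^{i+j+k}$. So Lemma~\ref{l:weil-pft2} gives
\[
 \hat{\Omega}_\psi(g_{i,j,k},h_l)\hat{\varphi}_0(x_0,1)=
 \begin{cases}
  p^{-i-j-k} & \text{if $i,j\ge0$ and $l=i+j+k$;}\\
  0 & \text{otherwise.}
 \end{cases}
\]
Thus the support imposes only the one-sided condition $i,j\ge0$ and pins down $l$. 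The decay as $i,j\to\infty$ comes from the factor $p^{-l}$, i.e.\ from $|a|$.

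Now combine this with three facts:
\begin{enumerate}
\item $\vol(\bT_0\backslash\bT_0g_{i,j,k}\bK)\le p^k$;
\item $|\chi(h_l)|=1$;
\item $|w_0(g_{i,j,k})|\ll(k+1)p^{-k/2}$ uniformly in $i,j$. This follows from the explicit formula in the proof of Lemma~\ref{l:unique-hom-unram-split}, where the $(i,j)$-dependence enters only through the unitary factor $\mu(p,1)^{j-i}$.
\end{enumerate}
The absolutely-valued series is then bounded by $\sum_{k,i,j\ge0}(k+1)p^{-k/2-i-j}<\infty$, which closes the argument.
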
 

\begin{proof}
The lemma follows from the argument in the proof of Lemma \ref{l:abs-conv-inert} and the computation in the proof of Lemma \ref{l:unique-hom-unram-split} below.
\end{proof}

By this lemma, the above integral defines an element $\cQ$ in the Hom space \eqref{eq:hom-space-p-depletion}.

\begin{lem}
\label{l:unique-hom-unram-split}
We have 
\[
 \cQ(\varphi_0,w_0) = \frac{1}{(1 - \xi(p) \chi(p,1) p^{-1/2}) (1 - \xi(p)^{-1} \chi(p,1) p^{-1/2})}.
\]
In particular, $\cQ$ is nonzero.
\end{lem}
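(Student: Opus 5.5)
We follow the proof of Lemma \ref{l:unique-hom-unram-inert}, replacing the anisotropic torus by the split torus. Put $\alpha = \xi(p)$ and $\beta = \chi(p,1)$. We first choose explicit representatives for $\bT_0 \backslash \bG / \bK$. Write $\bG = (\GL_2(F) \times E^\times)/F^\times$ with $E^\times = F^\times \times F^\times$. By the Iwasawa decomposition, every double coset has a representative of the form $[n(y)\,\diag(p^{a},1),(p^{i},p^{j})]$ with $y \in F/\fo_F$. Modulo $\bT_0$, which contains $[\diag(a_1,a_2),(a_1,a_2)]$, the representatives can be taken as $g_{i,j,k} = [\smat{1}{p^{-k}}{0}{1}, (p^{i},p^{j})]$ with $k \ge 0$ (the analogue of \cite[Lemma 3.4.1]{furusawa93}). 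We keep $h_l = (p^l,1)$ as representatives for $F^\times \backslash K^\times/\fo_K^\times$, so that \eqref{eq:v0-hl-inert} still holds. For $k\ge1$ the volume of $\bT_0\backslash\bT_0 g_{i,j,k}\bK$ is $p^k(1-p^{-1})$, and for $k=0$ it is $1$; we compute these via $\vol(\bT_0\cap g\bK g^{-1})^{-1}$.

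Next we evaluate the Schwartz function using Lemma \ref{l:weil-pft2}. The vector $x_0 g_{i,j,k}$ has coordinates $((p^i,0),(p^{i}p^{-k}\cdot,\,p^j))$, up to the precise placement of the unipotent entry. Hence $\hat{\Omega}_\psi(g_{i,j,k},h_l)\hat\varphi_0(x_0,1) = p^{-l}\,\mathbb{I}$, where $\mathbb I$ is the indicator of the conditions that the entries lie in $\fo_E$ and $\nu(g)^{-1}p^{l}\in\fo_F^\times$. This forces $l = i+j$ (through the similitude) together with integrality conditions on $i$, $j$, $k$. For the matrix coefficient we write $w_0(g) = \mu_\bT(t)\,w_0(g')$ and apply Macdonald's formula to the split-torus (Waldspurger) model. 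In this model $w_0$ restricted to $[\smat{1}{p^{-k}}{0}{1},1]$ is a Bessel-type function, and we evaluate it by the Iwasawa decomposition $\smat{1}{p^{-k}}{0}{1} \in \diag(p^{-k},p^{k})\,\bK$ (modulo the torus). This produces expressions in $A_{2k}$-type sums and the values $\mu(p,1)$, $\mu(1,p)$ (note $\mu(p,p)=1$).

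We then sum the resulting geometric series over $i$, $j$ and $k$. As in the inert case, the sum over the torus variables gives a factor of the shape $(1-\beta^2p^{-2})^{-1}$ times a series in $k$. Using the identity $A_k - \alpha A_{k-1} = \alpha^{-k}(1-\alpha^2p^{-1})$ and the explicit $A_k$, this collapses to $\bigl((1-\alpha\beta p^{-1/2})(1-\alpha^{-1}\beta p^{-1/2})\bigr)^{-1}$. The $\mu$-dependence must cancel, since the claimed answer is independent of $\mu$. This cancellation serves as a check: it should come from the relation between the $\bT$-invariant model with character $\mu_\bT$ and the normalization $w_0(1)=1$.

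The main obstacle is the second step, namely the correct description of $\bT_0\backslash\bG/\bK$ for the split torus together with the explicit value of the normalized $\bK$-fixed vector in $\Ind^{\bG}_{\bT}(\mu_\bT)$ at these representatives. Here I would first compute the unramified split Waldspurger functional as the integral $\int_{F^\times} W(\diag(a,1)g)\mu(a,1)\,d^\times a$ of the Whittaker function. This yields $w_0(g)$ on the representatives in closed form, and the normalization $w_0(1)=1$ is then imposed by dividing by its value at $1$ (a ratio of local $L$-factors).
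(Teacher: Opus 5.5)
Your skeleton agrees with the paper's. You unfold $\cQ(\varphi_0,w_0)$ over $\bT_0\backslash\bG/\bK$ and $F^\times\backslash K^\times/\fo_K^\times$, use Lemma \ref{l:weil-pft2} to pin down $l$, and use the volumes $1$ and $p^k(1-p^{-1})$. Your representatives $[\smat{1}{p^{-k}}{0}{1},(p^i,p^j)]$ are $\bT_0$-translates of the paper's $[\smat{p^k}{1}{0}{1},(p^{-i},p^{-j})]$, since $\smat{p^k}{1}{0}{1}=\diag(p^k,1)\smat{1}{p^{-k}}{0}{1}$.

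The gap is the values of $w_0$ on these representatives, which is the whole content of the lemma, and the route you describe for them fails. Macdonald's formula does not apply here. In the inert case $\bT\subset Z_\bG\bK$, so $w_0$ is the bi-$\bK$-invariant spherical function; for the split torus, $w_0$ is not bi-$\bK$-invariant. Your reduction ``$\smat{1}{p^{-k}}{0}{1}\in\diag(p^{-k},p^k)\bK$ modulo the torus'' is also false. It would put $\smat{1}{p^{-k}}{0}{1}$ in $\bT\bK$, i.e.\ in the $k=0$ double coset, contradicting your own list of representatives: $\diag(a,d)^{-1}\smat{1}{y}{0}{1}$ lies in $\GL_2(\fo_F)$ only if $y\in\fo_F$. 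What is actually needed, and what the paper imports from \cite[Theorem 1.1]{bff}, is
\[
 w_0(g_{i,j,k}) = \frac{p^{-k/2}}{1-p^{-1}}\, A_k\, \gamma^{-i+j}, \qquad \gamma=\mu(p,1),
\]
on the paper's representatives, where
\[
 A_k=\alpha^k\frac{(1-\alpha^{-1}\gamma p^{-1/2})(1-\alpha^{-1}\gamma^{-1}p^{-1/2})}{1-\alpha^{-2}}+\alpha^{-k}\frac{(1-\alpha\gamma p^{-1/2})(1-\alpha\gamma^{-1}p^{-1/2})}{1-\alpha^2}.
\]
No $A_{2k}$ terms occur, and $\gamma$ enters $A_k$ itself.

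Consequently the summation step you outline is also wrong:
\begin{itemize}
\item The sum over the torus variables $i,j\ge 0$ is $[(1-\beta\gamma p^{-1})(1-\beta\gamma^{-1}p^{-1})]^{-1}$, not $(1-\beta^2p^{-2})^{-1}$.
\item The identity $A_k-\alpha A_{k-1}=\alpha^{-k}(1-\alpha^2p^{-1})$ belongs to the inert $A_k$. The paper uses it only in the $p$-depleted computation.
\item The $\mu$-independence is not a normalization effect. It comes from
\[
 1+\sum_{k\ge1}p^{-k/2}A_k\beta^k=\frac{(1-\beta\gamma p^{-1})(1-\beta\gamma^{-1}p^{-1})}{(1-\alpha\beta p^{-1/2})(1-\alpha^{-1}\beta p^{-1/2})},
\]
whose numerator cancels the torus factor.
\end{itemize}

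Your closing remark does point to a correct way to fill the gap without citing \cite{bff}. Set $w(g)=\int_{F^\times}W_0(\diag(a,1)g)\,\mu(a,1)\,d^\times a$, which is $\mu_\bT$-equivariant, and note $w(1)=L(\tfrac12,\pi\otimes\mu(\cdot,1))$. The factor $\psi(ap^{-k})$ then cuts Shintani's formula down to $\ord(a)\ge k$, plus a term of weight $-1/(p-1)$ at $\ord(a)=k-1$. After translating by $\bT$, this reproduces the formula above. But that computation is left undone, and the computation you actually outline would not produce it.
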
 

\begin{proof}
Put $\alpha = \xi(p)$, $\beta = \chi(p,1)$, and $\gamma = \mu(p,1)$.
Put $g_{i,j,k} = [g_k, (p^{-i}, p^{-j})] \in \bG$ with
\[
 g_k =
 \begin{cases}
  1 & \text{if $k=0$;} \\
  {\renewcommand{\arraystretch}{1} \mat{p^k}{1}{0}{1}} & \text{if $k \ge 1$}
 \end{cases}
\]
and $h_l = (p^l, 1) \in K^\times$.
Then we can take $\{ g_{i,j,k} \mid i,j,k \in \Z, \; k \ge 0 \}$ and $\{ h_l \mid l \in \Z \}$ as sets of representatives for $\bT_0 \backslash \bG / \bK$ and $F^\times \backslash K^\times / \fo_K^\times$, respectively, so that
\[
 \cQ(\varphi_0,w_0) = \sum_{k=0}^\infty \sum_{i,j,l \in \Z} \hat{\Omega}_\psi(g_{i,j,k},h_l) \hat{\varphi}_0(x_0,1) w_0(g_{i,j,k}) \chi(h_l) \vol(\bT_0 \backslash \bT_0 g_{i,j,k} \bK) \vol(F^\times \backslash F^\times h_l \fo_K^\times).
\]
By Lemma \ref{l:weil-pft2}, we have
\begin{align*}
 \hat{\Omega}_\psi(g_{i,j,k},h_l) \hat{\varphi}_0(x_0,1)
 & =
 \begin{cases}
  p^{-l} \hat{\varphi}_0((p^i, 0), (0, p^j), p^{-i-j+l}) & \text{if $k=0$;} \\
  p^{-l} \hat{\varphi}_0((p^{i+k}, 0), (p^i, p^j), p^{-i-j-k+l}) & \text{if $k \ge 1$}
 \end{cases} \\
 & = 
 \begin{cases}
  p^{-i-j-k} & \text{if $i,j \ge 0$ and $l=i+j+k$;} \\
  0 & \text{otherwise.} 
 \end{cases}
\end{align*}
Also, by \cite[Theorem 1.1]{bff}, we have
\[
 w_0(g_{i,j,k}) = \frac{p^{-k/2}}{1-p^{-1}} A_k \gamma^{-i+j}
\]
for $k \ge 0$, where
\[
 A_k = \alpha^k \frac{(1 - \alpha^{-1} \gamma p^{-1/2})(1 - \alpha^{-1} \gamma^{-1} p^{-1/2})}{1-\alpha^{-2}} + \alpha^{-k} \frac{(1 - \alpha \gamma p^{-1/2})(1 - \alpha \gamma^{-1} p^{-1/2})}{1-\alpha^2}.
\]
(This also implies that $w(1) \ne 0$ for any nonzero $\bK$-fixed vector $w \in \mathfrak{W}(\pi \boxtimes \mu)$.)
Moreover, it is easy to see that
\begin{align*}
 \vol(\bT_0 \backslash \bT_0 g_{i,j,k} \bK)
 & = \vol(\bT_0 \cap g_{i,j,k} \bK g_{i,j,k}^{-1})^{-1} \\
 & = 
 \begin{cases}
  1 & \text{if $k=0$;} \\
  p^k (1-p^{-1}) & \text{if $k \ge 1$} 
 \end{cases}
\end{align*}
and 
\begin{equation}
\label{eq:v0-hl-split}
 \chi(h_l) = \beta^l, \quad
 \vol(F^\times \backslash F^\times h_l \fo_K^\times) = 1.
\end{equation}
Hence we have
\begin{align*}
 \cQ(\varphi_0,w_0)
 & = \sum_{i=0}^\infty \sum_{j=0}^\infty p^{-i-j} \gamma^{-i+j} \beta^{i+j}
 + \sum_{k=1}^\infty \sum_{i=0}^\infty \sum_{j=0}^\infty p^{-i-j-k/2} A_k \gamma^{-i+j} \beta^{i+j+k} \\
 & = \frac{1}{(1 - \beta \gamma p^{-1}) (1 - \beta \gamma^{-1} p^{-1})}
 \left( 1 + \sum_{k=1}^\infty p^{-k/2} A_k \beta^k \right).
\end{align*}
Finally, we have
\begin{align*}
 1 + \sum_{k=1}^\infty p^{-k/2} A_k \beta^k
 & = 1 + \frac{(1 - \alpha^{-1} \gamma p^{-1/2})(1 - \alpha^{-1} \gamma^{-1} p^{-1/2}) }{1-\alpha^{-2}} \frac{\alpha \beta p^{-1/2}}{1 - \alpha \beta p^{-1/2}} \\
 & \phantom{{}=1} + \frac{(1 - \alpha \gamma p^{-1/2})(1 - \alpha \gamma^{-1} p^{-1/2}) }{1-\alpha^2} \frac{\alpha^{-1} \beta p^{-1/2}}{1 - \alpha^{-1} \beta p^{-1/2}} \\
 & = \frac{(1 - \beta \gamma p^{-1}) (1 - \beta \gamma^{-1} p^{-1})}{(1 - \alpha \beta p^{-1/2}) (1 - \alpha^{-1} \beta p^{-1/2})},
\end{align*}
so that 
\[
 \cQ(\varphi_0,w_0) = \frac{1}{(1 - \alpha \beta p^{-1/2}) (1 - \alpha^{-1} \beta p^{-1/2})}.
\]
This completes the proof.
\end{proof}

\begin{lem}
\label{l:unique-hom-p-depletion-split} 
We have
\[
 \cQ(\varphi^\flat,\tilde{w}_0) = \xi(p)^{-2} p^{-1} \cdot
 \frac{(1 - \xi(p) \chi(1,p) p^{-1/2})(1 - \xi(p) \mu(p,1) p^{-1/2})(1 - \xi(p) \mu(1,p) p^{-1/2})}{(1+p^{-1})(1 - \xi(p)^{-1} \chi(p,1) p^{-1/2})}.
\]
\end{lem}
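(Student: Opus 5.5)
We follow the proof of Lemma \ref{l:unique-hom-p-depletion-inert}, using the split-case data from the proof of Lemma \ref{l:unique-hom-unram-split}. Write $\alpha = \xi(p)$, $\beta = \chi(p,1)$, $\gamma = \mu(p,1)$; note $\chi(1,p) = \beta^{-1}$ and $\mu(1,p) = \gamma^{-1}$, because both characters are trivial on $F^\times$.

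\textbf{Reduction to a lattice sum.} Decompose each double coset into $\bK_0$-cosets,
\[
 \bT_0 g_{i,j,k} \bK = \bigsqcup_{\bmm} \bT_0 g_{i,j,k,\bmm} \bK_0 ,
\]
where $g_{i,j,k,\bmm} = [g_k k_\bmm, (p^{-i},p^{-j})]$ and $k_\bmm$ runs over a suitable subset of the representatives $k_{(1,m)}, k_{(pm,1)}$ of $\GL_2(\fo_F)/K_0$. Modulo the left action of $\bT_0$, only a bounded number of classes $\bmm$ survive for each $k$: one when $k=0$, and a few when $k\ge 1$. This gives
\[
 \cQ(\varphi^\flat,\tilde w_0) = \sum_{k,\bmm} \cQ_{k,\bmm},
\]
with each $\cQ_{k,\bmm}$ a sum over $i,j,l$. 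By Lemma \ref{l:weil-pft2} together with \eqref{eq:v0-hl-split}, the summand is
\[
 p^{-l}\,\hat\varphi^\flat\bigl(x_0 g_{i,j,k,\bmm},\, p^{-i-j-k+l}\bigr)\,\tilde w_0\bigl(g_{i,j,k,\bmm}\bigr)\,\beta^l\,\vol .
\]
Lemma \ref{l:varphi-hat-flat} forces $l = i+j+k$. On the remaining terms, $\hat\varphi^\flat$ takes the value $1-p^{-1}$ or $-p^{-1}$, according to whether $x_1 \in \fo_E \smallsetminus p\fo_E$ with $x_2$ integral, or $x_2 \in p^{-1}\fo_E \smallsetminus \fo_E$ with the bilinear condition $x_{11}x_{22}+x_{12}x_{21} \in \fo_F$.

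\textbf{Determining the surviving terms.} Next we list, for each $(k,\bmm)$, the pairs $(i,j)$ at which $\hat\varphi^\flat$ is nonzero. The condition $x_1 \notin p\fo_E$ pins $\min(i+k,\ \cdot) = 0$ in the first $E$-coordinate, so we expect $i$ to be frozen (essentially $i=0$, or $i=-1$ at $k=1$) while $j$ ranges over a half-line. This is the step that will generate the $\gamma$-dependence.

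\textbf{Evaluating the Whittaker-type values.} We compute $\tilde w_0(g_{i,j,k,\bmm})$ by moving $t_p^{-2}$ and $t_p^{-1}$ through $g_k k_\bmm$ and rewriting the result in the form $\bT_0\, g_{i',j',k'}\, \bK$. Then \cite[Theorem 1.1]{bff} gives
\[
 w_0(g_{i,j,k}) = \frac{p^{-k/2}}{1-p^{-1}}\,A_k\,\gamma^{-i+j}.
\]
Combinations such as $A_{k+2} - \alpha A_{k+1}$ (and the analogous shifted ones) should collapse to a single geometric term, as in the inert case. Here the collapse is
\[
 A_k - \alpha A_{k-1} = \alpha^{-k}\,\frac{(1-\alpha\gamma p^{-1/2})(1-\alpha\gamma^{-1}p^{-1/2})}{1-\alpha^2}\,(1-\alpha^2),
\]
so that, up to sign and normalization, this difference is $\alpha^{-k}$ times the product $(1-\alpha\gamma p^{-1/2})(1-\alpha\gamma^{-1} p^{-1/2})$. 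That product is exactly the $\mu$-factor appearing in the statement; it replaces the factor $(1-\alpha^2p^{-1})$ of the inert case.

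\textbf{Volumes and summation.} The volumes $\vol(\bT_0\backslash \bT_0 g\bK_0)$ equal $[\bK:\bK_0]^{-1}\vol(\bT_0\cap g\bK_0g^{-1})^{-1}$. We expect powers of $p$ times $(1-p^{-1})$, with $[\bK:\bK_0]=p(p+1)$ producing the factor $(1+p^{-1})^{-1}$. Summing the geometric series in $j$ and in $k$ should then produce $(1-\alpha^{-1}\beta p^{-1/2})^{-1}$ and the numerator factor $(1-\alpha\beta^{-1}p^{-1/2})$, exactly as in the final line of the inert computation.

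\textbf{Main obstacle.} The bookkeeping is the hard part: getting the correct coset decomposition $\bT_0 g_{i,j,k}\bK = \bigsqcup \bT_0 g\bK_0$ in the split torus, and identifying the support of $\hat\varphi^\flat$ through the bilinear condition. We plan to check these first by verifying that the total volume of the $\bK_0$-cosets recovers $\vol(\bT_0\backslash\bT_0 g_{i,j,k}\bK)$.
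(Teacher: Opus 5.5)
Your overall plan is exactly the paper's: decompose $\bT_0 g_{i,j,k}\bK$ into $\bK_0$-cosets, use Lemmas \ref{l:weil-pft2} and \ref{l:varphi-hat-flat} to force $l=i+j+k$, evaluate $\tilde w_0$ via \cite[Theorem 1.1]{bff}, and sum. Your identity $A_k-\alpha A_{k-1}=\alpha^{-k}(1-\alpha\gamma p^{-1/2})(1-\alpha\gamma^{-1}p^{-1/2})$ is also correct. But the lemma \emph{is} this bookkeeping, and the structural predictions you base it on are wrong, so the appeal to ``as in the inert case'' does not carry the argument.

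\emph{At $k=0$.} There are five $\bK_0$-classes, not one. Here $g_{i,j,0}$ is central, and $\bT_0\cap\bK=\{[\diag(a_1,a_2),(a_1,a_2)]\}$ is now a split torus. Its orbits on $\GL_2(\fo_F)/K_0\simeq\mathbb{P}^1(\Z/p^2\Z)$ are represented by $(1,0),(1,1),(1,p),(0,1),(p,1)$. Your own volume check would detect this: the $(1,0)$-piece has volume $p^{-2}(1+p^{-1})^{-1}$, not $1$. All five pieces contribute nonzero terms.

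\emph{At $k\ge1$.} There are $p+1$ classes, and, unlike the inert case, the classes $\bmm=(1,pm)$ with $1\le m\le p-1$ do not drop out. Since $E=F\oplus F$, the conditions on $x_1,x_2$ are imposed through minima of two coordinate valuations. So $i$ is not frozen: both $i$ and $j$ run over half-lines. For instance, for $k\ge2$ the support is $\{(-1,j)\mid j\ge0\}\cup\{(i,-1)\mid i\ge0\}$, with value $-p^{-1}$. At $k=1$ the bilinear condition reads $p^{i+j+1}(1+2m)\in\fo_F$, which makes $m=\frac{p-1}{2}$ exceptional. The value $m=p-1$ is exceptional too: there $x_{11}$ acquires an extra factor of $p$, and $\tilde w_0([g_1k_\bmm,1])=\gamma^{-1}-\alpha p^{-1/2}$.

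Because of these double half-line sums, the individual $\cQ_{k,\bmm}$ carry denominators $(1-\beta\gamma p^{-1})(1-\beta\gamma^{-1}p^{-1})$. These are absent from the statement and must cancel globally. In the paper this happens only after assembling all pieces:
\begin{itemize}
\item $\cQ_{0,(1,0)}+\cQ_{1,(1,0)}=0$ and $\cQ_{0,(0,1)}+\cQ_{1,(1,p(p-1))}=0$. In particular, the $(1,0)$-contributions, which survive in the inert case, cancel each other here.
\item The remaining terms combine into $\cQ''=(1-\alpha\beta^{-1}p^{-1/2})(1-\beta\gamma p^{-1})(1-\beta\gamma^{-1}p^{-1})$. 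These are the $k=0$ terms for $(1,1),(1,p),(p,1)$, the $k=1$ terms with $1\le m\le p-2$, the $k\ge2$ terms with $\bmm=(1,pm)$, and the $(0,1)$-terms for $k\ge1$.
\item This $\cQ''$ both removes those denominators and produces the factor $1-\xi(p)\chi(1,p)p^{-1/2}$.
\end{itemize}

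Your proposal anticipates neither the extra terms nor this cancellation, so as it stands it states the correct target but gives no derivation. To close the gap you need:
\begin{itemize}
\item the full table of supports $\mathfrak{I}_{k,\bmm},\mathfrak{I}'_{k,\bmm}$;
\item the values of $\tilde w_0$ and of $\vol(\bT_0\backslash\bT_0 g_{i,j,k,\bmm}\bK_0)$ on each of these classes;
\item the final summation exhibiting the factorization of $\cQ''$.
\end{itemize}
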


\begin{proof}
We use the notation of the proof of Lemma \ref{l:unique-hom-unram-split}.
By a direct computation, we have
\[
 \bT_0 g_{i,j,k} \bK = \bigsqcup_{\bmm \in \mathfrak{M}_k} \bT_0 g_{i,j,k,\bmm} \bK_0,
\]
where 
\[
 \mathfrak{M}_k = 
 \begin{cases}
  \{ (1,0), (1,1), (1,p), (0,1), (p,1) \} & \text{if $k=0$;} \\
  \{ (1, pm) \mid 0 \le m \le p-1 \} \cup \{ (0,1) \} & \text{if $k \ge 1$}
 \end{cases}
\]
and $g_{i,j,k,\bmm} = [g_k k_\bmm, (p^{-i}, p^{-j})]$.
This allows us to write
\[
 \cQ(\varphi^\flat, \tilde{w}_0) = \sum_{k=0}^\infty \sum_{\bmm \in \mathfrak{M}_k} \cQ_{k,\bmm}
\]
with 
\[
 \cQ_{k,\bmm} = \sum_{i,j,l \in \Z} \hat{\Omega}_\psi(g_{i,j,k,\bmm},h_l) \hat{\varphi}^\flat(x_0,1) \tilde{w}_0(g_{i,j,k,\bmm}) \chi(h_l) \vol(\bT_0 \backslash \bT_0 g_{i,j,k,\bmm} \bK_0) \vol(F^\times \backslash F^\times h_l \fo_K^\times).
\]
By Lemma \ref{l:weil-pft2} and \eqref{eq:v0-hl-split}, we have
\[
 \cQ_{k,\bmm} = \sum_{i,j,l \in \Z} p^{-l} \hat{\varphi}^\flat(x_0 g_{i,j,k,\bmm}, p^{-i-j-k+l}) \tilde{w}_0(g_{i,j,k,\bmm}) \beta^l \vol(\bT_0 \backslash \bT_0 g_{i,j,k,\bmm} \bK_0).
\]
By Lemma \ref{l:varphi-hat-flat}, we have $\hat{\varphi}^\flat(x_0 g_{i,j,k,\bmm}, p^{-i-j-k+l}) = 0$ unless $l=i+j+k$, so that
\[
 \cQ_{k,\bmm} = \sum_{i,j \in \Z} p^{-i-j-k} \hat{\varphi}^\flat(x_0 g_{i,j,k,\bmm}, 1) \tilde{w}_0([g_k k_\bmm, 1]) \gamma^{-i+j} \beta^{i+j+k} \vol(\bT_0 \backslash \bT_0 g_{i,j,k,\bmm} \bK_0).
\]
Moreover, we have
\[
 \hat{\varphi}^\flat(x_0 g_{i,j,k,\bmm}, 1) =
 \begin{cases}
  1-p^{-1} & \text{if $(i,j) \in \mathfrak{I}_{k,\bmm}$;} \\
  -p^{-1} & \text{if $(i,j) \in \mathfrak{I}'_{k,\bmm}$;} \\
  0 & \text{otherwise,}
 \end{cases}
\]
where $\mathfrak{I}_{k,\bmm}, \mathfrak{I}'_{k,\bmm}$ are the subsets of $\Z^2$ given as follows.
\begin{itemize}
\item
If $k=0$ and $\bmm=(1,0)$, then we have
\[
 \hat{\varphi}^\flat(x_0 g_{i,j,k,\bmm}, 1) = \hat{\varphi}^\flat((p^i,0), (0,p^j), 1), 
\]
so that 
\begin{align*}
 \mathfrak{I}_{k,\bmm} & = \{ (0,j) \mid j \ge 0 \}, \\
 \mathfrak{I}'_{k,\bmm} & = \varnothing.
\end{align*}
\item 
If $k=0$ and $\bmm=(1,1)$, then we have
\[
 \hat{\varphi}^\flat(x_0 g_{i,j,k,\bmm}, 1) = \hat{\varphi}^\flat((p^i,p^j), (0,p^j), 1), 
\]
so that 
\begin{align*}
 \mathfrak{I}_{k,\bmm} & = \{ (0,j) \mid j \ge 0 \} \cup \{ (i,0) \mid i \ge 1 \}, \\
 \mathfrak{I}'_{k,\bmm} & = \varnothing.
\end{align*}
\item
If $k=0$ and $\bmm=(1,p)$, then we have
\[
 \hat{\varphi}^\flat(x_0 g_{i,j,k,\bmm}, 1) = \hat{\varphi}^\flat((p^i,p^{j+1}), (0,p^j), 1),
\]
so that
\begin{align*}
 \mathfrak{I}_{k,\bmm} & = \{ (0,j) \mid j \ge 0 \}, \\
 \mathfrak{I}'_{k,\bmm} & = \{ (i,-1) \mid i \ge 1 \}.
\end{align*}
\item 
If $k=0$ and $\bmm=(0,1)$, then we have
\[
 \hat{\varphi}^\flat(x_0 g_{i,j,k,\bmm}, 1) = \hat{\varphi}^\flat((0,p^j), (-p^i,0), 1),
\]
so that
\begin{align*}
 \mathfrak{I}_{k,\bmm} & = \{ (i,0) \mid i \ge 0 \}, \\
 \mathfrak{I}'_{k,\bmm} & = \varnothing.
\end{align*}
\item 
If $k=0$ and $\bmm=(p,1)$, then we have
\[
 \hat{\varphi}^\flat(x_0 g_{i,j,k,\bmm}, 1) = \hat{\varphi}^\flat((p^{i+1},p^j), (-p^i,0), 1),
\]
so that
\begin{align*}
 \mathfrak{I}_{k,\bmm} & = \{ (i,0) \mid i \ge 0 \}, \\
 \mathfrak{I}'_{k,\bmm} & = \{ (-1,j) \mid j \ge 1 \}.
\end{align*}
\item 
If $k \ge 1$ and $\bmm=(1,0)$, then we have
\[
 \hat{\varphi}^\flat(x_0 g_{i,j,k,\bmm}, 1) = \hat{\varphi}^\flat((p^{i+k}, 0), (p^i,p^j), 1), 
\]
so that
\begin{align*}
 \mathfrak{I}_{k,\bmm} & = \varnothing, \\
 \mathfrak{I}'_{k,\bmm} & =
 \begin{cases}
  \{ (-1,j) \mid j \ge 0 \} & \text{if $k=1$;} \\
  \varnothing & \text{if $k \ge 2$.}
 \end{cases}
\end{align*}
\item 
If $k \ge 1$ and $\bmm=(1,pm)$ with $1 \le m \le p-1$, then we have
\[
 \hat{\varphi}^\flat(x_0 g_{i,j,k,\bmm}, 1) = \hat{\varphi}^\flat((p^{i+k}+p^{i+1}m, p^{j+1}m), (p^i,p^j), 1),
\]
so that
\begin{align*}
 \mathfrak{I}_{k,\bmm} & = \varnothing, \\
 \mathfrak{I}'_{k,\bmm} & =
 \begin{cases}
  \{ (-1,j) \mid j \ge -1 \} \cup \{ (i,-1) \mid i \ge 0 \} & \text{if $k=1$ and $m=\frac{p-1}{2}$;} \\
  \{ (i,-1) \mid i \ge 0 \} & \text{if $k=1$ and $m=p-1$;} \\
  \{ (-1,j) \mid j \ge 0 \} \cup \{ (i,-1) \mid i \ge 0 \} & \text{if $k=1$ and $m \ne \frac{p-1}{2},p-1$;} \\
  \{ (-1,j) \mid j \ge 0 \} \cup \{ (i,-1) \mid i \ge 0 \} & \text{if $k \ge 2$.}
 \end{cases}
\end{align*}
\item 
If $k \ge 1$ and $\bmm=(0,1)$, then we have
\[
 \hat{\varphi}^\flat(x_0 g_{i,j,k,\bmm}, 1) = \hat{\varphi}^\flat((p^i,p^j), (-p^{i+k},0), 1),
\]
so that
\begin{align*}
 \mathfrak{I}_{k,\bmm} & = \{ (0,j) \mid j \ge 0 \} \cup \{ (i,0) \mid i \ge 1 \}, \\
 \mathfrak{I}'_{k,\bmm} & = \varnothing.
\end{align*}
\end{itemize}
As in the proof of Lemma \ref{l:unique-hom-unram-split}, $\tilde{w}_0([g_k k_\bmm, 1])$ is given as follows.
\begin{itemize}
\item
If $k=0$ and $\bmm=(1,0)$, then we have
\begin{align*}
 g_k k_\bmm t_p^{-2} & = \mat{p^{-2}}{0}{0}{1}, \\
 g_k k_\bmm t_p^{-1} & = \mat{p^{-1}}{0}{0}{1},
\end{align*}
so that 
\[
 \tilde{w}_0([g_k k_\bmm, 1]) = \gamma^2 - \alpha \gamma p^{-1/2}.
\]
\item
If $k=0$ and $\bmm=(1,1)$, then we have
\begin{align*}
 g_k k_\bmm t_p^{-2} & = \mat{1}{p^{-2}}{0}{p^{-2}} \mat{0}{-1}{1}{p^2}, \\
 g_k k_\bmm t_p^{-1} & = \mat{1}{p^{-1}}{0}{p^{-1}} \mat{0}{-1}{1}{p},
\end{align*}
so that
\[
 \tilde{w}_0([g_k k_\bmm, 1]) = p^{-1} (1-p^{-1})^{-1} A_2
 - \alpha p^{-1} (1-p^{-1})^{-1} A_1.
\]
\item
If $k=0$ and $\bmm=(1,p)$, then we have
\begin{align*}
 g_k k_\bmm t_p^{-2} & = \mat{p^{-1}}{p^{-2}}{0}{p^{-1}} \mat{0}{-1}{1}{p}, \\
 g_k k_\bmm t_p^{-1} & = \mat{p^{-1}}{0}{0}{1} \mat{1}{0}{1}{1},
\end{align*}
so that
\[
 \tilde{w}_0([g_k k_\bmm, 1]) = \gamma p^{-1/2} (1-p^{-1})^{-1} A_1 - \alpha \gamma p^{-1/2}.
\]
\item
If $k=0$ and $\bmm=(0,1)$, then we have
\begin{align*}
 g_k k_\bmm t_p^{-2} & = \mat{1}{0}{0}{p^{-2}} \mat{0}{-1}{1}{0}, \\
 g_k k_\bmm t_p^{-1} & = \mat{1}{0}{0}{p^{-1}} \mat{0}{-1}{1}{0},
\end{align*}
so that
\[
 \tilde{w}_0([g_k k_\bmm, 1]) = \gamma^{-2} - \alpha \gamma^{-1} p^{-1/2}.
\]
\item
If $k=0$ and $\bmm=(p,1)$, then we have
\begin{align*}
 g_k k_\bmm t_p^{-2} & = \mat{1}{p^{-1}}{0}{p^{-2}} \mat{0}{-1}{1}{0}, \\
 g_k k_\bmm t_p^{-1} & = \mat{1}{0}{0}{p^{-1}} \mat{1}{-1}{1}{0},
\end{align*}
so that
\[
 \tilde{w}_0([g_k k_\bmm, 1]) = \gamma^{-1} p^{-1/2} (1-p^{-1})^{-1} A_1 - \alpha \gamma^{-1} p^{-1/2}.
\]
\item
If $k \ge 1$ and $\bmm=(1,0)$, then we have
\begin{align*}
 g_k k_\bmm t_p^{-2} & = \mat{p^{k-2}}{1}{0}{1}, \\
 g_k k_\bmm t_p^{-1} & = \mat{p^{k-1}}{1}{0}{1},
\end{align*}
so that
\[
 \tilde{w}_0([g_k k_\bmm, 1]) = 
 \begin{cases}
  \gamma - \alpha p^{-1/2} & \text{if $k=1$;} \\
  p^{-k/2+1} (1-p^{-1})^{-1} A_{k-2} - \alpha p^{-k/2} (1-p^{-1})^{-1} A_{k-1} & \text{if $k \ge 2$.}
 \end{cases}
\]
\item
If $k \ge 1$ and $\bmm=(1,pm)$ with $1 \le m \le p-1$, then we have
\begin{align*}
 g_k k_\bmm t_p^{-2} & = \mat{p^{k-1}}{p^{k-2}m^{-1} + p^{-1}}{0}{p^{-1}} \mat{0}{-m^{-1}}{m}{p}, \\
 g_k k_\bmm t_p^{-1} & = \mat{p^{k-1}}{1}{0}{1} \mat{1}{0}{m}{1},
\end{align*}
so that
\[
 \tilde{w}_0([g_k k_\bmm, 1]) = 
 \begin{cases}
  \gamma^{-1} - \alpha p^{-1/2} & \text{if $k = 1$ and $m = p-1$;} \\
  p^{-1/2} (1-p^{-1})^{-1} A_1 - \alpha p^{-1/2} & \text{if $k = 1$ and $m \ne p-1$;} \\
  p^{-k/2} (1-p^{-1})^{-1} A_k - \alpha p^{-k/2} (1-p^{-1})^{-1} A_{k-1} & \text{if $k \ge 2$.}
 \end{cases}
\]
\item
If $k \ge 1$ and $\bmm=(0,1)$, then we have
\begin{align*}
 g_k k_\bmm t_p^{-2} & = \mat{p^k}{p^{-2}}{0}{p^{-2}} \mat{0}{-1}{1}{0}, \\
 g_k k_\bmm t_p^{-1} & = \mat{p^k}{p^{-1}}{0}{p^{-1}} \mat{0}{-1}{1}{0},
\end{align*}
so that
\[
 \tilde{w}_0([g_k k_\bmm, 1]) = p^{-k/2-1} (1-p^{-1})^{-1} A_{k+2} 
 - \alpha p^{-k/2-1} (1-p^{-1})^{-1} A_{k+1}.
\]
\end{itemize}
Moreover, it is easy to see that
\begin{align*}
 \vol(\bT_0 \backslash \bT_0 g_{i,j,k,\bmm} \bK_0)
 & = [\bK:\bK_0]^{-1} \cdot \vol(\bT_0 \cap g_{i,j,k,\bmm} \bK_0 g_{i,j,k,\bmm}^{-1})^{-1} \\
 & = p^{-2} (1+p^{-1})^{-1} \times 
 \begin{cases}
  1 & \text{if $k=0$ and $\bmm=(1,0)$;} \\
  p^2 (1-p^{-1}) & \text{if $k=0$ and $\bmm=(1,1)$;} \\
  p (1-p^{-1}) & \text{if $k=0$ and $\bmm=(1,p)$;} \\
  1 & \text{if $k=0$ and $\bmm=(0,1)$;} \\
  p (1-p^{-1}) & \text{if $k=0$ and $\bmm=(p,1)$;} \\
  p^k (1-p^{-1}) & \text{if $k \ge 1$ and $\bmm=(1,pm)$;} \\
  p^{k+2} (1-p^{-1}) & \text{if $k \ge 1$ and $\bmm=(0,1)$.}
 \end{cases}
\end{align*}
Hence, noting that
\[
 A_k - \alpha A_{k-1} = \alpha^{-k} (1 - \alpha \gamma p^{-1/2})(1 - \alpha \gamma^{-1} p^{-1/2})
\]
for $k \ge 1$, we can compute $\cQ_{k,\bmm}$ as follows.
\begin{itemize}
\item
If $k=0$ and $\bmm=(1,0)$, then we have
\begin{align*}
 \cQ_{k,\bmm} & = \sum_{j=0}^\infty p^{-j-2} (1-p^{-1}) (1+p^{-1})^{-1}
 (\gamma^2 - \alpha \gamma p^{-1/2}) \gamma^j \beta^j \\
 & = \frac{\gamma^2 p^{-2} (1-p^{-1}) (1 - \alpha \gamma^{-1} p^{-1/2})}{(1+p^{-1})(1 - \beta \gamma p^{-1})}.
\end{align*}
\item
If $k=0$ and $\bmm=(1,1)$, then we have
\begin{align*}
 \cQ_{k,\bmm} & = \sum_{j=0}^\infty p^{-j-1} (1-p^{-1}) (1+p^{-1})^{-1} (A_2 - \alpha A_1) \gamma^j \beta^j \\
 & + \sum_{i=1}^\infty p^{-i-1} (1-p^{-1}) (1+p^{-1})^{-1} (A_2 - \alpha A_1) \gamma^{-i} \beta^i \\
 & = \frac{\alpha^{-2} p^{-1} (1-p^{-1}) (1 - \alpha \gamma p^{-1/2})(1 - \alpha \gamma^{-1} p^{-1/2})}{1+p^{-1}}
 \left( \frac{1}{1 - \beta \gamma p^{-1}} + \frac{\beta \gamma^{-1} p^{-1}}{1 - \beta \gamma^{-1} p^{-1}} \right) \\
 & = \frac{\alpha^{-2} p^{-1} (1-p^{-1}) (1 - \alpha \gamma p^{-1/2})(1 - \alpha \gamma^{-1} p^{-1/2}) (1 - \beta^2 p^{-2})}{(1+p^{-1}) (1 - \beta \gamma p^{-1}) (1 - \beta \gamma^{-1} p^{-1})}.
\end{align*}
\item
If $k=0$ and $\bmm=(1,p)$, then we have
\begin{align*}
 \cQ_{k,\bmm} & = \sum_{j=0}^\infty p^{-j-3/2} (1-p^{-1}) (1+p^{-1})^{-1} (\gamma A_1 - \alpha \gamma (1-p^{-1})) \gamma^j \beta^j \\
 & - \sum_{i=1}^\infty p^{-i-3/2} (1+p^{-1})^{-1} (\gamma A_1 - \alpha \gamma (1-p^{-1})) \gamma^{-i-1}\beta^{i-1} \\
 & = \frac{\alpha^{-1} \gamma p^{-3/2} (1 - \alpha \gamma p^{-1/2})(1 - \alpha \gamma^{-1} p^{-1/2})}{1 + p^{-1}} \left( \frac{1-p^{-1}}{1-\beta \gamma p^{-1}} - \frac{\gamma^{-2} p^{-1}}{1 - \beta \gamma^{-1} p^{-1}} \right) \\
 & = \frac{\alpha^{-1} p^{-3/2} (1 - \alpha \gamma p^{-1/2})(1 - \alpha \gamma^{-1} p^{-1/2}) (\gamma - (\gamma + \gamma^{-1} + \beta) p^{-1} + 2 \beta p^{-2})}{(1 + p^{-1}) (1-\beta \gamma p^{-1}) (1 - \beta \gamma^{-1} p^{-1})}.
\end{align*}
\item
If $k=0$ and $\bmm=(0,1)$, then we have
\begin{align*}
 \cQ_{k,\bmm} & = \sum_{i=0}^\infty p^{-i-2} (1-p^{-1}) (1+p^{-1})^{-1} (\gamma^{-2} - \alpha \gamma^{-1} p^{-1/2}) \gamma^{-i} \beta^i \\
 & = \frac{\gamma^{-2} p^{-2} (1-p^{-1}) (1 - \alpha \gamma p^{-1/2})}{(1+p^{-1}) (1 - \beta \gamma^{-1} p^{-1})}.
\end{align*}
\item
If $k=0$ and $\bmm=(p,1)$, then we have
\begin{align*}
 \cQ_{k,\bmm} & = \sum_{i=0}^\infty p^{-i-3/2} (1-p^{-1}) (1+p^{-1})^{-1} (\gamma^{-1} A_1 - \alpha \gamma^{-1} (1-p^{-1})) \gamma^{-i} \beta^i \\
 & - \sum_{j=1}^\infty p^{-j-3/2} (1+p^{-1})^{-1} (\gamma^{-1} A_1 - \alpha \gamma^{-1} (1-p^{-1})) \gamma^{j+1} \beta^{j-1} \\
 & = \frac{\alpha^{-1} \gamma^{-1} p^{-3/2} (1 - \alpha \gamma p^{-1/2})(1 - \alpha \gamma^{-1} p^{-1/2})}{1+p^{-1}} \left( \frac{1 - p^{-1}}{1 - \beta \gamma^{-1} p^{-1}} - \frac{\gamma^2 p^{-1}}{1 - \beta \gamma p^{-1}} \right) \\
 & = \frac{\alpha^{-1} p^{-3/2} (1 - \alpha \gamma p^{-1/2})(1 - \alpha \gamma^{-1} p^{-1/2}) (\gamma^{-1} - (\gamma + \gamma^{-1} + \beta)p^{-1} + 2 \beta p^{-2})}{(1+p^{-1}) (1 - \beta \gamma p^{-1}) (1 - \beta \gamma^{-1} p^{-1})}.
\end{align*}
\item
If $k=1$ and $\bmm=(1,0)$, then we have
\begin{align*}
 \cQ_{k,\bmm} & = - \sum_{j=0}^\infty p^{-j-2} (1-p^{-1}) (1+p^{-1})^{-1} (\gamma - \alpha p^{-1/2}) \gamma^{j+1} \beta^j \\
 & = - \frac{\gamma^2 p^{-2} (1-p^{-1}) (1 - \alpha \gamma^{-1} p^{-1/2})}{(1+p^{-1}) (1 - \beta \gamma p^{-1})}.
\end{align*}
\item
If $k \ge 2$ and $\bmm=(1,0)$, then we have
\[
 \cQ_{k,\bmm} = 0.
\]
\item
If $k=1$ and $\bmm=(1, \frac{p(p-1)}{2})$, then we have
\begin{align*}
 \cQ_{k,\bmm} & = - \sum_{j=-1}^\infty p^{-j-5/2} (1+p^{-1})^{-1} (A_1 - \alpha(1-p^{-1})) \gamma^{j+1} \beta^j \\
 & \phantom{={}} - \sum_{i=0}^\infty p^{-i-5/2} (1+p^{-1})^{-1} (A_1 - \alpha(1-p^{-1})) \gamma^{-i-1} \beta^i \\
 & = - \frac{\alpha^{-1} p^{-5/2} (1 - \alpha \gamma p^{-1/2})(1 - \alpha \gamma^{-1} p^{-1/2})}{1+p^{-1}} \left( \frac{\beta^{-1} p}{1 - \beta \gamma p^{-1}} + \frac{\gamma^{-1}}{1 - \beta \gamma^{-1} p^{-1}} \right) \\
 & = - \frac{\alpha^{-1} \beta^{-1} p^{-3/2} (1 - \alpha \gamma p^{-1/2})(1 - \alpha \gamma^{-1} p^{-1/2}) (1 - \beta^2 p^{-2})}{(1+p^{-1}) (1 - \beta \gamma p^{-1}) (1 - \beta \gamma^{-1} p^{-1})}.
\end{align*}
\item
If $k=1$ and $\bmm=(1,p(p-1))$, then we have
\begin{align*}
 \cQ_{k,\bmm} & = - \sum_{i=0}^\infty p^{-i-2} (1-p^{-1}) (1+p^{-1})^{-1} (\gamma^{-1} - \alpha p^{-1/2}) \gamma^{-i-1} \beta^i \\
 & = - \frac{\gamma^{-2} p^{-2} (1-p^{-1}) (1 - \alpha \gamma p^{-1/2})}{(1+p^{-1}) (1 - \beta \gamma^{-1} p^{-1})}.
\end{align*}
\item
If $k=1$ and $\bmm=(1,pm)$ with $1 \le m \le p-2$, $m \ne \frac{p-1}{2}$, then we have
\begin{align*}
 \cQ_{k,\bmm} & = - \sum_{j=0}^\infty p^{-j-5/2} (1+p^{-1})^{-1} (A_1 - \alpha(1-p^{-1})) \gamma^{j+1} \beta^j \\
 & \phantom{={}} - \sum_{i=0}^\infty p^{-i-5/2} (1+p^{-1})^{-1} (A_1 - \alpha(1-p^{-1})) \gamma^{-i-1} \beta^i \\
 & = - \frac{\alpha^{-1} p^{-5/2} (1 - \alpha \gamma p^{-1/2})(1 - \alpha \gamma^{-1} p^{-1/2})}{1+p^{-1}} \left( \frac{\gamma}{1 - \beta \gamma p^{-1}} + \frac{\gamma^{-1}}{1 - \beta \gamma^{-1} p^{-1}} \right) \\
 & = - \frac{\alpha^{-1} p^{-5/2} (1 - \alpha \gamma p^{-1/2})(1 - \alpha \gamma^{-1} p^{-1/2}) (\gamma + \gamma^{-1} - 2 \beta p^{-1})}{(1+p^{-1}) (1 - \beta \gamma p^{-1}) (1 - \beta \gamma^{-1} p^{-1})}.
\end{align*}
\item
If $k \ge 2$ and $\bmm=(1,pm)$ with $1 \le m \le p-1$, then we have
\begin{align*}
 \cQ_{k,\bmm} 
 & = - \sum_{j=0}^\infty p^{-j-k/2-2} (1+p^{-1})^{-1} (A_k - \alpha A_{k-1}) \gamma^{j+1} \beta^{j+k-1} \\
 & \phantom{={}} - \sum_{i=0}^\infty p^{-i-k/2-2} (1+p^{-1})^{-1} (A_k - \alpha A_{k-1}) \gamma^{-i-1} \beta^{i+k-1} \\
 & = - \frac{\alpha^{-k} p^{-k/2-2} (1 - \alpha \gamma p^{-1/2})(1 - \alpha \gamma^{-1} p^{-1/2})}{1+p^{-1}} \left( \frac{\beta^{k-1} \gamma}{1-\beta \gamma p^{-1}} + \frac{\beta^{k-1} \gamma^{-1}}{1 - \beta \gamma^{-1} p^{-1}}\right) \\
 & = - \frac{\alpha^{-k} \beta^{k-1} p^{-k/2-2} (1 - \alpha \gamma p^{-1/2})(1 - \alpha \gamma^{-1} p^{-1/2}) (\gamma + \gamma^{-1} - 2 \beta p^{-1})}{(1+p^{-1}) (1-\beta \gamma p^{-1}) (1 - \beta \gamma^{-1} p^{-1})}, 
\end{align*}
so that 
\[
 \sum_{k=2}^\infty \cQ_{k,\bmm} = 
 - \frac{\alpha^{-2} \beta p^{-3} (1 - \alpha \gamma p^{-1/2})(1 - \alpha \gamma^{-1} p^{-1/2}) (\gamma + \gamma^{-1} - 2 \beta p^{-1})}{(1+p^{-1}) (1 - \alpha^{-1} \beta p^{-1/2}) (1-\beta \gamma p^{-1}) (1 - \beta \gamma^{-1} p^{-1})}.
\]
\item
If $k \ge 1$ and $\bmm=(0,1)$, then we have
\begin{align*}
 \cQ_{k,\bmm} & = 
 \sum_{j=0}^\infty p^{-j-k/2-1} (1-p^{-1}) (1+p^{-1})^{-1} (A_{k+2} - \alpha A_{k+1}) \gamma^j \beta^{j+k} \\
 & + \sum_{i=1}^\infty p^{-i-k/2-1} (1-p^{-1}) (1+p^{-1})^{-1} (A_{k+2} - \alpha A_{k+1}) \gamma^{-i} \beta^{i+k} \\
 & = \frac{\alpha^{-k-2} p^{-k/2-1} (1-p^{-1}) (1 - \alpha \gamma p^{-1/2})(1 - \alpha \gamma^{-1} p^{-1/2})}{1+p^{-1}} \left( \frac{\beta^k}{1 - \beta \gamma p^{-1}} + \frac{\beta^{k+1} \gamma^{-1} p^{-1} }{1 - \beta \gamma^{-1} p^{-1}} \right) \\
 & = \frac{\alpha^{-k-2} \beta^k p^{-k/2-1} (1-p^{-1}) (1 - \alpha \gamma p^{-1/2})(1 - \alpha \gamma^{-1} p^{-1/2}) (1 - \beta^2 p^{-2})}{(1+p^{-1}) (1 - \beta \gamma p^{-1}) (1 - \beta \gamma^{-1} p^{-1})}, 
\end{align*}
so that 
\[
 \sum_{k=1}^\infty \cQ_{k,\bmm} = 
 \frac{\alpha^{-3} \beta p^{-3/2} (1-p^{-1}) (1 - \alpha \gamma p^{-1/2})(1 - \alpha \gamma^{-1} p^{-1/2}) (1 - \beta^2 p^{-2})}{(1+p^{-1}) (1 - \alpha^{-1} \beta p^{-1/2})(1 - \beta \gamma p^{-1}) (1 - \beta \gamma^{-1} p^{-1})}.
\]
\end{itemize}
Thus we obtain
\[
 \cQ_{0,(1,0)} + \cQ_{1,(1,0)} = \cQ_{0,(0,1)} + \cQ_{1,(1,p(p-1))} = 0
\]
and
\[
 \cQ_{0,(1,1)} + \cQ_{0,(1,p)} + \cQ_{0,(p,1)} + \sum_{m=1}^{p-2} \cQ_{1,(p,pm)} = 
 \frac{\alpha^{-2} p^{-1} (1 - \alpha \gamma p^{-1/2})(1 - \alpha \gamma^{-1} p^{-1/2})}{(1+p^{-1}) (1 - \beta \gamma p^{-1}) (1 - \beta \gamma^{-1} p^{-1})}
 \cdot \cQ'
\]
with
\begin{align*}
 \cQ' & = (1-p^{-1}) (1-\beta^2 p^{-2}) \\
 & + \alpha p^{-1/2} (\gamma - (\gamma + \gamma^{-1} + \beta) p^{-1} + 2 \beta p^{-2}) \\
 & + \alpha p^{-1/2} (\gamma^{-1} - (\gamma + \gamma^{-1} + \beta) p^{-1} + 2 \beta p^{-2}) \\
 & - \alpha \beta^{-1} p^{-1/2} (1-\beta^2 p^{-2}) \\
 & - (p-3) \cdot \alpha p^{-3/2} (\gamma + \gamma^{-1} - 2 \beta p^{-1}) \\
 & = 1 - \alpha \beta^{-1} p^{-1/2} - p^{-1} + (\alpha \gamma + \alpha \gamma^{-1}) p^{-3/2} - \beta^2 p^{-2} - \alpha \beta p^{-5/2} + \beta^2 p^{-3},
\end{align*}
so that
\[
 \cQ(\varphi^\flat, \tilde{w}_0) = 
 \frac{\alpha^{-2} p^{-1} (1 - \alpha \gamma p^{-1/2})(1 - \alpha \gamma^{-1} p^{-1/2})}{(1+p^{-1}) (1 - \alpha^{-1} \beta p^{-1/2})(1 - \beta \gamma p^{-1}) (1 - \beta \gamma^{-1} p^{-1})} \cdot \cQ''
\]
with 
\begin{align*}
 \cQ'' & = (1 - \alpha^{-1} \beta p^{-1/2}) \cdot \cQ' \\
 & - (p-1) \cdot \beta p^{-2} (\gamma + \gamma^{-1} - 2 \beta p^{-1}) \\
 & + \alpha^{-1} \beta p^{-1/2} (1-p^{-1}) (1-\beta^2 p^{-2}) \\
 & = (1 - \alpha \beta^{-1} p^{-1/2}) (1 - \beta \gamma p^{-1}) (1 - \beta \gamma^{-1} p^{-1}).
\end{align*}
This completes the proof.
\end{proof}

Now Lemma \ref{l:unique-hom-p-depletion}\eqref{unique-hom-p-depletion-split} follows from Lemmas \ref{l:unique-hom-unram-split} and \ref{l:unique-hom-p-depletion-split}.

\section{Complex conjugation}
\label{s:complex_conjugation}

In this section, we describe the behaviour of the toric period under the complex conjugation.
We write $F = \Q$ and denote by $c$ the nontrivial automorphism of $E$ over $F$.

\subsection{An explicit formula}

Let $\varphi \in S(B(\A) \times \A^\times) = S(\A_K^2 \times \A^\times)$ be the Schwartz function as in \S \ref{ss:adelic_period}.
Define a Schwartz function $\varphi' \in S(B(\A) \times \A^\times)$ by
\[
 \varphi' = \Omega_\psi(h_0) \bar{\varphi}^c,
\]
where $h_0 = [\bi,\bi] \in \bH(F)$ and $\bar{\varphi}^c$ is as in \S \ref{ss:weil-u}.
Note that $h_0^2 = 1$, $h_0^c = h_0$, and $\nu(h_0) = 1$.

\begin{lem}
\label{l:complex_conjugation}
We have
\[
 \overline{\cP^H(\theta_\varphi(f_0^\mu), \chi)} = \cP^H(\theta_{\varphi'}(f_0^\mu), \chi).
\]
\end{lem}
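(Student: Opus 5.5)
The plan is to unfold both sides as double integrals over $\bG$ and $H$ and relate them by the conjugation symmetries of Lemma \ref{l:weil-conjugation}, combined with the change of variables $g \mapsto g^c$ on $\bG$ and the translation by $h_0$ on $\bH$.

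First write
\[
 \overline{\cP^H(\theta_\varphi(f_0^\mu), \chi)} = \int \int \overline{\Theta_\varphi(g,h)}\, \overline{f_0^\mu(g)}\, \overline{\chi(h)} \, dg \, dh .
\]
The integrand has three conjugated factors, and I will treat them in turn.

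\emph{The character.} Since $\chi|_{\A^\times} = 1$, we have $\overline{\chi(h)} = \chi(h)^{-1} = \chi(\bar h)$. Conjugation by $h_0 = [\bi,\bi]$ acts on $H = K^\times \subset B^\times$ as the Galois involution $h \mapsto \bar h$, because $\bi$ anticommutes with $\bj$.

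\emph{The form.} Since $\bff$ has real Fourier coefficients and $\mu|_{\A^\times}=1$, we get $\overline{f_0} = f_0(\delta \cdot \delta)$, up to the standard identity $\overline{f_0(g)} = f_0(\delta g \delta)$ for forms with real Fourier coefficients. Here $\delta$ comes from $\GL_2(\R)$ (with $\det = -1$) together with its finite-adelic analogue, and this is where the rationality of $\bff$ enters. Also $\overline{\mu(z)} = \mu(z^c)$. Hence $\overline{f_0^\mu(g)} = f_0^\mu(\delta g^c \delta)$, where I use that $g^c = [g', z^c]$.

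\emph{The theta kernel.} By Lemma \ref{l:weil-conjugation}, $\overline{\Theta_\varphi(g,h)} = \Theta_{\bar\varphi}(\delta g\delta, h)$, which follows by summing over the lattice $B \times F^\times$. Next, $\Theta_{\bar\varphi}(g,h) = \Theta_{\bar\varphi^c}(g^c,h^c)$, since $c$ preserves $B \times F^\times$. Finally $h^c = h_0 h^{c} h_0^{-1}$-type manipulations give
\[
 \Theta_{\bar\varphi^c}(g^c, h^c) = \Theta_{\varphi'}(g^c, h_0 h^c h_0^{-1}),
\]
using left invariance of $\Theta$ under $h_0 \in \bH(F)$ together with $\varphi' = \Omega(h_0)\bar\varphi^c$.

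I still need to check what $h^c$ and $h_0 h^c h_0^{-1}$ are for $h \in H$. We have $h^c = [\bj h \bj^{-1}, 1] = [h,1]$ since $K$ is commutative, and conjugation by $\bi$ sends $h$ to $\bar h$. So the $\bH$-argument becomes $\bar h$, and this exactly cancels the $\chi(\bar h)$ produced in the character step.

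After the change of variables $g \mapsto \delta g^c \delta$ on $\bG(\F)\backslash \bG(\A)$ and $h \mapsto \bar h$ on $H$, both of which preserve the Tamagawa measures and the rational points, the integral becomes $\cP^H(\theta_{\varphi'}(f_0^\mu),\chi)$. Absolute convergence comes from rapid decay of the cusp form, which justifies all the rearrangements.

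The main obstacle is bookkeeping the two involutions on $\bG$ consistently. Lemma \ref{l:weil-conjugation} conjugates $g$ by $\delta$ on the theta side, so on the form side I need $\overline{f_0^\mu(g)} = f_0^\mu(\delta g^c\delta)$. The key claim $\overline{f_0(g)} = f_0(\delta g \delta)$ is where the real Fourier coefficients and the $\delta$-stability of $\cK_0(N)$ enter, and it must be verified adelically. I would check it first at the archimedean place via $\overline{(\bff|_k g)(\sqrt{-1})}$ and then use strong approximation.
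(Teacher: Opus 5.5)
Your proposal is correct and follows essentially the same route as the paper: you rewrite $\overline{f_0^\mu(g)} = f_0^\mu(\delta g^c \delta)$ and $\overline{\chi(h)} = \chi(h_0 h h_0)$, convert $\overline{\Theta_\varphi}$ into $\Theta_{\varphi'}$ using Lemma~\ref{l:weil-conjugation}, the $c$-stability of $B \times F^\times$ and the $\bH(F)$-invariance of the theta kernel, and then change variables using $h^c = h$ on $H$. The only difference is that you justify $\overline{f_0(g)} = f_0(\delta g \delta)$ explicitly (real Fourier coefficients of $\bff$ and $\delta_f \in \cK_0(N)$), whereas the paper simply attributes it to ``the assumption on $f_0, \chi, \mu$''.
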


\begin{proof}
By the assumption on $f_0, \chi, \mu$, we have
\[
 \overline{f_0^\mu(g)} = f_0^\mu(\delta g^c \delta), \quad
 \overline{\chi(h)} = \chi(h_0 h h_0)
\]
for $g \in \bG(\A)$ and $h \in H(\A)$, where 
\[
 \delta = \left[ \mat{-1}{0}{0}{1}, 1 \right] \in \bG(F).
\]
Hence we have
\begin{align*}
 \overline{\cP^H(\theta_\varphi(f_0^\mu), \chi)}
 & = \int_{Z_{\tilde{H}}(\A) H(F) \backslash H(\A)} \int_{\bG(F) \backslash \bG(\A)} \overline{\Theta_\varphi(g,h) f_0^{\mu}(g) \chi(h)} \, dg \, dh \\
 & = \int_{Z_{\tilde{H}}(\A) H(F) \backslash H(\A)} \int_{\bG(F) \backslash \bG(\A)} \overline{\Theta_\varphi(g,h)} f_0^\mu(\delta g^c \delta) \chi(h_0 h h_0) \, dg \, dh \\
 & = \int_{Z_{\tilde{H}}(\A) H(F) \backslash H(\A)} \int_{\bG(F) \backslash \bG(\A)} \overline{\Theta_\varphi(\delta g^c \delta, h_0 h h_0)} f_0^\mu(g) \chi(h) \, dg \, dh.
\end{align*}
On the other hand, by Lemma \ref{l:weil-conjugation}, we have
\begin{align*}
 \overline{\Theta_\varphi(\delta g^c \delta,h_0 h h_0)}
 & = \sum_{x \in B} \sum_{t \in F^\times} \overline{\Omega_\psi(\delta g^c \delta, h_0 h h_0) \varphi(x^c,t)} \\
 & = \sum_{x \in B} \sum_{t \in F^\times} \Omega_\psi(g, h_0 h^c h_0) \bar{\varphi}^c(x,t) \\
 & = \Theta_{\varphi'}(g,h^c).
\end{align*}
Since $h^c = h$ for $h \in H(\A)$, the assertion follows.
\end{proof}

\begin{lem}
\label{l:change_of_schwartz}
We have
\[
 \theta_{\varphi'}(f_0^\mu)(h) = (-1)^n \mu(z) \cdot \theta_\varphi(f_0^\mu)(h h_1')
\]
for $h \in \bH(\A)$, where $h_1' \in \A_{K,f}^\times$ and $z \in \A_{E,f}^\times$ are given by
\[
 h'_{1,q} = 
 \begin{cases}
  (1,q) & \text{if $q \in \Sigma_s^K \cap \Sigma_r^E$;} \\
  (q,1) & \text{if $q \in \Sigma^+$;} \\
  \bj & \text{if $q \in \Sigma_r^K$;} \\
  1 & \text{otherwise,}
 \end{cases} \quad
 z_q = 
 \begin{cases}
  \bi & \text{if $q \in \Sigma_s^K \cap \Sigma_r^E$;} \\
  (q,1) & \text{if $q \in \Sigma^+$;} \\
  (1,q) & \text{if $q \in \Sigma_r^K$;} \\
  1 & \text{otherwise.}
 \end{cases}
\]
\end{lem}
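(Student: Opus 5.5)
The plan is to reduce the identity to a comparison of Schwartz functions, one place at a time. Since $\theta_{\varphi'}(f_0^\mu)$ depends linearly on $\varphi'$, and $\Omega_\psi(g,h)$ intertwines with right translation of the theta lift, it is enough to find $h_1 \in \bH(\A)$ and a constant $c$ with
\[
 \varphi' = c \cdot \Omega_\psi(h_1) \varphi \quad \text{modulo} \ \ker(\varphi \mapsto \theta_\varphi(f_0^\mu)).
\]
Then $\theta_{\varphi'}(f_0^\mu)(h) = c\,\theta_\varphi(f_0^\mu)(h h_1)$. We take $h_1 = [h_1', z]$ at the finite places. The factor $\mu(z)$ in the statement comes from the central character: $[1,z]$ acts on the lift through $\mu^{-1}$ up to the identification $[h_1',z]$ versus $h_1'$ times center. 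Since $\varphi'$ and $\varphi$ are both factorizable, the problem splits into local statements $\Omega_\psi(h_0)\bar\varphi_v^c = c_v\,\Omega_\psi(h_{1,v})\varphi_v$, or equivalently agreement after applying a $\bG_v$-coinvariant functional.

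At the finite places I would check each case of \S\ref{s:choices-schwartz} directly. Every $\varphi_v$ is a real-valued characteristic function, so $\bar\varphi_v = \varphi_v$. The main computation is the effect of $x \mapsto h_0 x^c$, that is $x \mapsto \bi x^c \bi^{-1}$ up to the similitude, on the lattices $\fo_B$, $\mathfrak{I}_B k_0$, $\fo_B k_0$ and so on.

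At unramified places, and at inert places, the lattice $\fo_B$ is stable under both $c$ and conjugation by $\bi$, so $c_v = 1$ and $h_{1,v} = 1$. The dyadic lattice also needs checking; there I expect stability up to an element of $\fo_B^\times \times \fo_E^\times$, which acts trivially.

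The nontrivial places are the following.
\begin{itemize}
\item For $q \in \Sigma_s^K \cap \Sigma_r^E$, conjugation by $\bi$ normalizes $\fo_B$ but swaps the two factors of $K_q$. I would compensate with $[(1,q),\bi]$, using that $\bi$ is a uniformizer of $E_q$.
\item For $q \in \Sigma^+$, the Iwahori order $\mathfrak{I}_B k_0$ is sent by $c$ (which is $u' \mapsto -u'$ on $E_q = F \oplus F$) to the opposite Iwahori. This is repaired by the Atkin–Lehner-type element $[(q,1),(q,1)]$.
\item For $q \in \Sigma_r^K$, the characteristic function of $\{x_1 - x_2u' \in \fo_K\}$ is sent to that of $\{x_1 + x_2u' \in \fo_K\}$. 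This is repaired by $[\bj,(1,q)]$, using $\bj \in \varpi_K\fo_K^\times$.
\end{itemize}
In each case the verification is a lattice computation: $\Omega_\psi([h',z])\varphi(x,t) = \varphi(h'^{-1}xz, \nu t)$, tracking the $t$-variable and the factor $|\cdot|$. I would confirm it using the explicit matrix models of \S\ref{ss:choices-split-2} and \S\ref{ss:choices-ramified}.

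At infinity, $\varphi_\infty = \Omega(g_0)(t\phi(t)(\varphi_n+\varphi_n''))$. I would compute $\overline{\varphi_\infty}^c$ composed with $h_0$. Complex conjugation turns $\bar x$-polynomials into $x$-polynomials, and $c$ together with $h_0$, acting by $x \mapsto \bi x^c \bi^{-1}$, restores them. Here I would use Lemma~\ref{l:weil-conjugation}, together with the way $\mathtt{D}$ transforms under $g \mapsto \delta g \delta$. Each application of $\mathtt{D}$ should contribute a sign $-1$, giving $(-1)^n$ relative to $n=0$. At $n=0$ I expect the $\overline{\phantom{x}}$ of $\bar x_1^2 + \bar x_2^2$ to be matched exactly. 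The realness of $\phi$ and of $f_0$ (since $a_{\bff}(m) \in \R$) is used here.

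The main obstacle is this archimedean step. It requires tracking $\delta\mathtt{X}_\alpha\delta$ against $\mathtt{X}_\alpha$ and the action of $g_0$, and the sign must come out as exactly $(-1)^n$. I would check it first for $n=0$ by explicit formula, then induct using $\delta\,\mathtt{D}\,\delta = -\overline{\mathtt{D}}$-type identities.
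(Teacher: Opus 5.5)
Your strategy is the paper's: prove the exact local identity $\Omega_\psi(h_0)\bar{\varphi}_v^c = c_v\,\Omega_\psi(h_{1,v})\varphi_v$ at every place, with $\prod_v c_v = (-1)^n$, and then split $h_1 = [h_1',1]\cdot[1,z]$. Your correcting elements are exactly those of Lemmas \ref{l:local-theta-split-1}--\ref{l:local-theta-dyadic}: $[(1,q),\bi]$ on $\Sigma_s^K\cap\Sigma_r^E$, $[(q,1),(q,1)]$ on $\Sigma^+$, $[\bj,(1,q)]$ on $\Sigma_r^K$, and the identity elsewhere (including $q=2$ and all $q\in\Sigma_i^K$). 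The identity holds on the nose, so you need not work modulo the kernel of $\varphi\mapsto\theta_\varphi(f_0^\mu)$.

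Two justifications in the finite part need fixing.
\begin{itemize}
\item For $q\in\Sigma_s^K\cap\Sigma_r^E$, conjugation by $\bi=\smat{0}{1}{u}{0}$ does \emph{not} normalize $\fo_B=\M_2(\Z_q)$. Since $\nu(\bi)=-u\in q\Z_q^\times$, one gets $\bi\fo_B\bi^{-1}=\smat{\Z_q}{q^{-1}\Z_q}{q\Z_q}{\Z_q}$. This is why a correction is needed; the swap of the factors of $K_q$ is not the reason, since it also occurs at the unramified split places, where nothing needs correcting. The correction works because $(1,q)\in\bi\fo_B^\times$, so $\bi\fo_B\bi^{-1}=(1,q)\fo_B\bi^{-1}$, and because $\nu([(1,q),\bi])\in\Z_q^\times$ keeps $t$ in $\Z_q^\times$.
\item Watch the direction of the central character. $[h',z]$ acts on $B$ by $x\mapsto h'xz^{-1}$, and substituting $g\mapsto g[1,z]$ in the theta integral gives $\theta_\varphi(f_0^\mu)(h[1,z])=\mu(z)\,\theta_\varphi(f_0^\mu)(h)$. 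Reading ``$[1,z]$ acts through $\mu^{-1}$'' literally would give $\mu(z)^{-1}$, which differs at $\Sigma^+$ and $\Sigma_r^K$.
\end{itemize}

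At $\infty$ your route genuinely differs from the paper's. The paper reads the sign off the closed formula of Proposition \ref{p:phi-sharp}. There $\varphi_\infty$ is $(\sqrt{-1})^n t\phi(t)(\bar x_1^2-u\bar x_2^2)$, times a real polynomial in $tx_1\bar x_1$, $tx_2\bar x_2$, $\sqrt{-1}tx_1\bar x_2$, $\sqrt{-1}t\bar x_1x_2$, times a Gaussian. Every factor except $(\sqrt{-1})^n$ is fixed by $\varphi\mapsto\overline{\varphi(\bar x_1,-\bar x_2)}$.

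Your induction on $\vec\varphi_{n+1}=\omega(\mathtt D)\vec\varphi_n$ avoids the Laguerre computation and also works, but not as you state it.
\begin{itemize}
\item Lemma \ref{l:weil-conjugation} concerns only $\GU(\bW)$ and says nothing about $\mathtt X_\alpha\in\fp^+$. Instead write $\mathcal J\varphi(x_1,x_2)=\overline{\varphi(\bar x_1,-\bar x_2)}$ for $\Omega_\psi(h_0)\bar\varphi^c$ in $K^2$-coordinates. The formulas of \S\ref{ss:schwartz-setup} show that $\mathcal J$ commutes with $\omega(\mathtt X_{2,0})$, $\omega(\mathtt X_{0,2})$ and $\omega(g_0)$, but anticommutes with $\omega(\mathtt X_{1,1})$.
\item Hence, with $\mathcal J$ acting componentwise, $\mathcal J\,\omega(\mathtt D)\vec\varphi=-\Lambda\,\omega(\mathtt D)\,\Lambda\,\mathcal J\vec\varphi$, where $\Lambda=\diag(1,-1,1)$. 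This is not $-\omega(\mathtt D)\mathcal J\vec\varphi$.
\item The base case must be tracked on all three components: $\mathcal J\vec\varphi_0=\Lambda\vec\varphi_0$, and in particular $\mathcal J\varphi_0'=-\varphi_0'$. Checking only $\varphi_0+\varphi_0''$ is not enough, because $\varphi_n'$ feeds into $\varphi_{n+1}$ and $\varphi_{n+1}''$.
\end{itemize}
Induction then gives $\mathcal J\vec\varphi_n=(-1)^n\Lambda\vec\varphi_n$, so $\mathcal J(\varphi_n+\varphi_n'')=(-1)^n(\varphi_n+\varphi_n'')$. Since $\phi$ is real and $\mathcal J$ commutes with $\Omega_\psi(g_0)$, this is Lemma \ref{l:local-theta-real}.

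Your version is more conceptual and self-contained. The paper's is shorter only because Proposition \ref{p:phi-sharp} is needed anyway for the archimedean zeta integral.
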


\begin{proof}
By Lemmas \ref{l:local-theta-split-1}, \ref{l:local-theta-split-2}, \ref{l:local-theta-inert}, \ref{l:local-theta-ramified}, \ref{l:local-theta-dyadic}, \ref{l:local-theta-real} below, we have
\[
 \varphi' = (-1)^n \cdot \Omega_\psi(h_1) \varphi
\]
with $h_1 = [h_1',z] \in \bH(\A_f)$.
This yields the assertion.
\end{proof}

Now we state the main result in this section.

\begin{prop}
\label{p:complex_conjugation}
We have
\[
 \cP^H(\theta_\varphi(f_0^\mu), \chi) = (-1)^{n+1} \cdot C_{\chi,\mu} \cdot \overline{\cP^H(\theta_\varphi(f_0^\mu), \chi)},
\]
where 
\[
 C_{\chi,\mu} = 
 \prod_{q \in \Sigma_s^K \cap \Sigma_r^E} \chi_q(1,q)
 \cdot \prod_{q \in \Sigma^+} \chi_q(q,1) \mu_q(1,q)
 \cdot \prod_{q \in \Sigma^-} \xi_q(q)
 \cdot \prod_{q \in \Sigma_r^K} \mu_q(q,1).
\]
\end{prop}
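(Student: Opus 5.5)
We combine Lemma \ref{l:complex_conjugation} with Lemma \ref{l:change_of_schwartz}. By Lemma \ref{l:complex_conjugation},
\[
 \overline{\cP^H(\theta_\varphi(f_0^\mu), \chi)} = \cP^H(\theta_{\varphi'}(f_0^\mu), \chi),
\]
and by Lemma \ref{l:change_of_schwartz} the integrand on the right equals $(-1)^n \mu(z)\, \theta_\varphi(f_0^\mu)(h h_1')\chi(h)$. Since $h_1' \in \A_{K,f}^\times = H(\A_f)$ lies in the group of integration, the substitution $h \mapsto h h_1'^{-1}$ is legitimate. The Tamagawa measure is invariant, and $\chi$ is a character, so $\chi(h h_1'^{-1}) = \chi(h_1')^{-1}\chi(h)$. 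This gives
\[
 \overline{\cP^H(\theta_\varphi(f_0^\mu), \chi)} = (-1)^n \mu(z)\chi(h_1')^{-1} \cdot \cP^H(\theta_\varphi(f_0^\mu), \chi).
\]
Inverting, and noting that $\mu$ and $\chi$ are unitary, we get
\[
 \cP^H = (-1)^n \, \overline{\mu(z)}\, \chi(h_1') \cdot \overline{\cP^H}.
\]

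It remains to show that $\overline{\mu(z)}\chi(h_1') = -C_{\chi,\mu}$, one prime at a time.
\begin{itemize}
\item For $q \in \Sigma_s^K \cap \Sigma_r^E$, the $\chi$-factor is $\chi_q(1,q)$ and the $\mu$-factor is $\overline{\mu_q(\bi)}$.
\item For $q \in \Sigma^+$, we get $\chi_q(q,1)\,\overline{\mu_q(q,1)} = \chi_q(q,1)\mu_q(1,q)$, using $\mu_q|_{\Q_q^\times}=1$.
\item For $q \in \Sigma_r^K$, we get $\chi_q(\bj)\,\overline{\mu_q(1,q)} = \chi_q(\bj)\mu_q(q,1)$.
\end{itemize}

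The leftover factors are $\chi_q(\bj)$ at $\Sigma_r^K$ and $\mu_q(\bi)$ at $\Sigma_s^K\cap\Sigma_r^E$. Three facts should absorb them together with the sign and the $\Sigma^-$ product:
\begin{itemize}
\item Since $\bj^2 = J \in \Q^\times$ and $\chi|_{\A^\times}=1$, each $\chi_q(\bj)=\pm1$, and likewise each $\mu_q(\bi)=\pm 1$.
\item The global triviality of $\chi$ on $\bj \in K^\times$ and of $\mu$ on $\bi\in E^\times$ (unramified outside these sets, with known archimedean components) ties these local signs to $\chi_\infty(\bj) = \bar{\bj}/\bj = -1$ and to $\mu_v(\bi)$ at the other primes.
\item At $q \in \Sigma^-$, $\mu_q = \xi_q\circ\N$, so $\mu_q(\bi)=\xi_q(-u)=\xi_q(q)$ up to the unramified part, since $u$ has odd valuation there. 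This produces the factor $\prod_{q\in\Sigma^-}\xi_q(q)$.
\end{itemize}
Concretely, $1=\chi(\bj)=\prod_v\chi_v(\bj)$ gives $\prod_{q\in\Sigma_r^K}\chi_q(\bj)=-1$, because $\chi_\infty(\bj) = -1$ and $\chi_q(\bj)=1$ at unramified $q$ with $\bj$ a unit. Similarly, $\mu(\bi)=1$ gives $\prod_{q \in \Sigma_r^E}\mu_q(\bi)=1$ since $\mu_\infty=1$. Here $\Sigma_r^E=(\Sigma_s^K\cap\Sigma_r^E)\sqcup\Sigma^-$, because primes in $\Sigma_r^K$, $\Sigma^+$ and $2$ split in $E$; the prime $2$ must be handled with care. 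Combining these, the sign $(-1)^n$ becomes $(-1)^{n+1}$, and the $\mu_q(\bi)$ at $\Sigma_s^K\cap\Sigma_r^E$ convert to $\prod_{\Sigma^-}\xi_q(q)$.

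The main obstacle is this sign bookkeeping. One must check precisely which local components of $\bj$ and $\bi$ are units, verify that $\mu_q(\bi)=\xi_q(q)$ on $\Sigma^-$ given the normalization of $\xi_q$ and $u$, and handle the dyadic place correctly. The substitution step itself is routine.
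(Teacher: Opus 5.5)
Your proposal is correct and follows essentially the same route as the paper: combine Lemmas \ref{l:complex_conjugation} and \ref{l:change_of_schwartz} to get $\overline{\cP^H(\theta_\varphi(f_0^\mu),\chi)} = (-1)^n\mu(z)\chi(h_1')^{-1}\cP^H(\theta_\varphi(f_0^\mu),\chi)$. Then use $\chi(\bj)=1$ with $\chi_\infty(\bj)=-1$ to get $\prod_{q\in\Sigma_r^K}\chi_q(\bj)=-1$, and use $\mu(\bi)=1$ with $\Sigma_r^E=(\Sigma_s^K\cap\Sigma_r^E)\sqcup\Sigma^-$ and $\mu_q=\xi_q\circ\N_{E_q/\Q_q}$ on $\Sigma^-$ to turn the $\mu_q(\bi)$ factors into $\prod_{q\in\Sigma^-}\xi_q(q)$. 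The bookkeeping you flag as the main obstacle is immediate from the setup: $J$ and $u$ are square-free and $2$ splits in both $K$ and $E$, so $\bj$ and $\bi$ are units at every finite place outside $\Sigma_r^K$ and $\Sigma_r^E$ respectively, including $q=2$, and the unramified local characters are trivial there.
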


\begin{proof}
By Lemmas \ref{l:complex_conjugation} and \ref{l:change_of_schwartz}, we have
\[
 \overline{\cP^H(\theta_\varphi(f_0^\mu), \chi)}
 = (-1)^n \mu(z) \chi(h_1')^{-1} \cdot \cP^H(\theta_\varphi(f_0^\mu), \chi).
\]
On the other hand, we have
\begin{align*}
 \prod_{q \in \Sigma_r^K} \chi_q(\bj) & = \prod_{v \ne \infty} \chi_v(\bj) = \chi_\infty(\bj)^{-1} = -1, \\
 \prod_{q \in \Sigma_r^E} \mu_q(\bi) & = \prod_{v \ne \infty} \mu_v(\bi) = \mu_\infty(\bi)^{-1} = 1. 
\end{align*}
Since $\Sigma_r^E = (\Sigma_s^K \cap \Sigma_r^E) \cup \Sigma^-$ and $\mu_q = \xi_q \circ \N_{E_q/\Q_q}$ for $q \in \Sigma^-$, we have
\[
 \prod_{q \in \Sigma_s^K \cap \Sigma_r^E} \mu_q(\bi) = \prod_{q \in \Sigma^-} \mu_q(\bi)^{-1} = \prod_{q \in \Sigma^-} \xi_q(q).
\]
This yields the assertion.
\end{proof}

To finish the proof of Lemma \ref{l:change_of_schwartz} (and hence Proposition \ref{p:complex_conjugation}), we need to compute $\varphi'$ explicitly.
We fix a place $v$ of $F$ and omit the subscript $v$ from the notation.

\subsection{The split case I}

We use the setting of \S \ref{ss:choices-split-1}.
In particular, $\varphi$ is the characteristic function of $\fo_B \times \fo_F^\times$, where $\fo_B$ is a maximal order in $B$ given by $\fo_B = \M_2(\fo_F)$ under the identification $B = \M_2(F)$ as in \S \ref{ss:choices-split-1}.

\begin{lem}
\label{l:local-theta-split-1}
\begin{enumerate}
\item 
\label{local-theta-split-1-i}
If $E$ is split or inert, then we have 
\[
 \Omega_\psi(h_0) \bar{\varphi}^c = \varphi.
\]
\item 
\label{local-theta-split-1-ii}
If $E$ is ramified, then we have
\[
 \Omega_\psi(h_0) \bar{\varphi}^c = \Omega_\psi(h_1) \varphi,
\]
where $h_1 = [h_1', \bi] \in \bH$ with $h_1' = (1,\varpi_F) \in K^\times$ under the identification $K = F \oplus F$ as in \S \ref{ss:choices-split-1}.
\end{enumerate}
\end{lem}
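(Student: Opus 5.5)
We compute both sides directly as functions on $B \times F^\times$. Since $\varphi$ is the characteristic function of $\fo_B \times \fo_F^\times$, it is real-valued, so $\bar{\varphi}^c = \varphi^c$. Here $\varphi^c(x,t) = \varphi(x^c,t)$, where $x \mapsto x^c$ is the $E$-antilinear automorphism of $\bV = B$ induced by $c$. Concretely, $(e_1 + e_2 \bj)^c = e_1^c + e_2^c \bj$ for $e_1,e_2 \in E$.

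In the basis $a + b\bi + c\bj + d\bi\bj$ the map $x\mapsto x^c$ sends $\bi \mapsto -\bi$ and $\bi\bj \mapsto -\bi\bj$. Then $\Omega_\psi(h_0)$ with $h_0 = [\bi,\bi]$ acts by
\[
 \Omega_\psi(h_0)\varphi'(x,t) = \varphi'(h_0^{-1}x, \nu(h_0)t) = \varphi'(\bi^{-1} x \bi, t),
\]
because $h_0$ acts on $\bV$ by $x \mapsto \bi x \bi^{-1}$ and $\nu(h_0)=1$. So the first step is to compute the composite $F$-linear map $\sigma(x) = (\bi^{-1} x \bi)^c$ on $B$. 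Conjugation by $\bi$ fixes $1,\bi$ and negates $\bj,\bi\bj$. Composing with $c$ gives
\[
 \sigma(a + b\bi + c\bj + d\bi\bj) = a - b\bi - c\bj + d\bi\bj.
\]
This is the main involution composed with $x \mapsto \bi\bj\,x\,(\bi\bj)^{-1}$, up to sign. In any case $\Omega_\psi(h_0)\bar\varphi^c$ is the characteristic function of $\sigma^{-1}(\fo_B) \times \fo_F^\times$, and the question reduces to identifying the lattice $\sigma(\fo_B)$.

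Next we transport to $\M_2(F)$ via the identification of \S\ref{ss:choices-split-1},
\[
 a+b\bi+c\bj+d\bi\bj \mapsto \mat{a+cJ'}{b-dJ'}{(b+dJ')u}{a-cJ'}.
\]
Under $\sigma$ the matrix $\smat{\alpha}{\beta}{\gamma}{\delta}$ becomes $\smat{\delta}{-\beta}{-\gamma}{\alpha}$ (we check this on the basis), and this preserves $\M_2(\fo_F)$. Hence if $E$ is split or inert, i.e.\ $u \in \fo_F^\times$, we get $\sigma(\fo_B) = \fo_B$, and part (i) follows.

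The ramified case is the main obstacle, and the statement needs care there. When $u \in \varpi_F\fo_F^\times$, the set $\fo_B = \M_2(\fo_F)$ is not of the same shape relative to $E$, so we verify directly. Using $(1,u)^{-1}$ as in the description of $\varphi$ as an element of $\cS(K^2\times F^\times)$, $\sigma$ swaps the roles of the two diagonal entries in a way that introduces a factor $\varpi_F$ on one diagonal $K$-coordinate. We compensate by $h_1 = [(1,\varpi_F), \bi]$, acting by $x \mapsto h_1' x \bi^{-1}$ with similitude $\nu(h_1) = \varpi_F \cdot \N(\bi)^{-1} = \varpi_F/(-u) \in \fo_F^\times$. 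We compute $h_1'\,\M_2(\fo_F)\,\bi^{-1}$ explicitly and check that it equals $\sigma^{-1}(\M_2(\fo_F))$. We also check that the modulus factor from the $t$-variable is trivial, since $|\nu(h_1)|=1$ and $\fo_F^\times$ is preserved. This is a finite matrix computation that uses only $\bi^{-1} = u^{-1}\bi$ and $\ord(u)=1$.
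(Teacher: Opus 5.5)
Your reduction is sound and is essentially the paper's route. $\Omega_\psi(h_0)\bar\varphi^c$ is the characteristic function of $\sigma^{-1}(\fo_B)\times\fo_F^\times$ with $\sigma(x)=(\bi^{-1}x\bi)^c$, and your formula $\sigma(a+b\bi+c\bj+d\bi\bj)=a-b\bi-c\bj+d\bi\bj$ is correct. The paper splits this into two steps: $\bar\varphi^c=\varphi$, then $h_0\fo_B=\bi\fo_B\bi^{-1}$. It then uses $\bi\in\fo_B^\times$ when $u\in\fo_F^\times$, and $h_1'\in\bi\fo_B^\times$ when $\ord_F(u)=1$.

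The problem is the matrix form you then assert, which is where all the content of the lemma lies. Your own formula says $\sigma=\Ad(\bi\bj)$ exactly, with no main involution involved. In the coordinates of \S\ref{ss:choices-split-1} this is
\[
 \sigma\mat{\alpha}{\beta}{\gamma}{\delta}=\mat{\delta}{-u^{-1}\gamma}{-u\beta}{\alpha}.
\]
It is not $\smat{\delta}{-\beta}{-\gamma}{\alpha}$, which is the main involution $x\mapsto x^*$. That map negates $\bi\bj$, whereas $\sigma$ fixes it, so the check on the basis you mention would fail there. A symptom is that your argument for (i) never actually uses $u\in\fo_F^\times$.

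For (i) the conclusion still holds, because the correct $\sigma$ preserves $\M_2(\fo_F)$ when $u$ is a unit. For (ii), however, your formula would give $\sigma^{-1}(\M_2(\fo_F))=\M_2(\fo_F)$ for every $u$. On the other hand, using $\bi^{-1}=\smat{0}{u^{-1}}{1}{0}$ one finds $h_1'\M_2(\fo_F)\bi^{-1}=\smat{\fo_F}{\fp_F^{-1}}{\fp_F}{\fo_F}$ when $\ord_F(u)=1$. So the check you propose would fail as written.

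Your $K^2$-heuristic for how $\varpi_F$ ``appears'' is actually the right picture, but it contradicts your matrix formula. In $K^2$-coordinates $\sigma(x_1,x_2)=(\bar x_1,-\bar x_2)$, which sends $\fo_K\oplus(1,u)^{-1}\fo_K$ to $\fo_K\oplus(u,1)^{-1}\fo_K$. With the corrected $\sigma$, which is an involution, one gets $\sigma^{-1}(\M_2(\fo_F))=\smat{\fo_F}{\fp_F^{-1}}{\fp_F}{\fo_F}=h_1\fo_B$. Together with $\nu(h_1)=-\varpi_F/u\in\fo_F^\times$, this proves (ii).
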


\begin{proof}
Since
\[
 \mat{x_1}{x_2}{x_3}{x_4}^c = \mat{x_1}{-x_2}{-x_3}{x_4}, 
\]
we have $\bar{\varphi}^c = \varphi$.
Also, we have 
\[
 \bi = \mat{0}{1}{u}{0}.
\]
If $E$ is split or inert, then we have $\bi \in \fo_B^\times$.
Hence we have $h_0 \fo_B = \bi \fo_B \bi^{-1} = \fo_B$, so that $\Omega_\psi(h_0) \varphi = \varphi$.
This proves \eqref{local-theta-split-1-i}.
If $E$ is ramified, then we have 
\[
 h_1' = \mat{1}{0}{0}{\varpi_F} \in \bi \fo_B^\times
\]
and $\nu(h_1) \in \fo_F^\times$.
Hence we have $h_0 \fo_B = \bi \fo_B \bi^{-1} = h_1' \fo_B \bi^{-1} = h_1 \fo_B$, so that $\Omega_\psi(h_0) \varphi = \Omega_\psi(h_1) \varphi$.
This proves \eqref{local-theta-split-1-ii}.
\end{proof}

\subsection{The split case II}

We use the setting of \S \ref{ss:choices-split-2}.
In particular, $\varphi$ is the characteristic function of $\mathfrak{I}_B k_0 \times \fo_F^\times$, where $\mathfrak{I}_B$ is an Iwahori subalgebra of $B$ and $k_0$ is an element in $B^\times$ given by 
\[
 \mathfrak{I}_B = \mat{\fo_F}{\fo_F}{\fp_F}{\fo_F}, \quad
 k_0 = \mat{1}{\frac{1}{u'}}{1}{-\frac{1}{u'}}
\]
under the identification $B = \M_2(F)$ as in \S \ref{ss:choices-split-1}.

\begin{lem}
\label{l:local-theta-split-2}
We have
\[
 \Omega_\psi(h_0) \bar{\varphi}^c = \Omega_\psi(h_1) \varphi,
\]
where $h_1 = [h_1', z] \in \bH$ with $h_1' = (\varpi_F,1) \in K^\times$ and $z = (\varpi_F,1) \in E^\times$ under the identifications $K = F \oplus F$ and $E = F \oplus F$ as in \S \ref{ss:choices-split-1} and \S \ref{ss:choices-ramified}, respectively.
\end{lem}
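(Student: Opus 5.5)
We argue as in the proof of Lemma \ref{l:local-theta-split-1}, computing both sides as characteristic functions of lattice cosets in $B \times F^\times$. First, $\bar{\varphi}^c$: since $\varphi$ is real-valued, $\bar{\varphi}^c(x,t) = \varphi(x^c,t)$. In the identification $B = \M_2(F)$ of \S \ref{ss:choices-split-1}, $c$ acts by $\smat{x_1}{x_2}{x_3}{x_4} \mapsto \smat{x_1}{-x_2}{-x_3}{x_4}$, i.e.\ $x^c = d x d^{-1}$ with $d = \diag(1,-1)$. Also $\mathfrak{I}_B$ is stable under this conjugation, so $\bar{\varphi}^c$ is the characteristic function of $\mathfrak{I}_B\, d k_0 d^{-1} \times \fo_F^\times$. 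A direct computation gives
\[
 d k_0 d^{-1} = \mat{1}{-\frac{1}{u'}}{-1}{-\frac{1}{u'}}.
\]

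Next, apply $h_0 = [\bi,\bi]$. Since $\nu(h_0)=1$, we have $\Omega_\psi(h_0)\phi(x,t) = \phi(\bi^{-1} x \bi, t)$ (the action being $x \mapsto h' x z^{-1}$). The resulting function is therefore the characteristic function of $\bi\, \mathfrak{I}_B\, d k_0 d^{-1} \bi^{-1} \times \fo_F^\times$, where $\bi = \smat{0}{1}{u}{0}$. On the other side, $h_1 = [h_1', z]$ with $h_1' = \diag(\varpi_F,1)$ in $B$ and $z = (\varpi_F,1) \in E$. Via the embedding $E \hookrightarrow B$, $a + b\bi \mapsto \smat{a}{b}{bu}{a}$, the element $z$ corresponds to $k_0^{-1}\diag(\varpi_F,1) k_0$ (this is the stated property of $k_0$). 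Moreover $\nu(h_1) = \varpi_F \cdot \varpi_F^{-1} = 1$, so the $t$-variable is unchanged. Hence $\Omega_\psi(h_1)\varphi$ is the characteristic function of
\[
 h_1'\, \mathfrak{I}_B k_0\, z^{-1} \times \fo_F^\times = \diag(\varpi_F,1)\, \mathfrak{I}_B\, \diag(\varpi_F^{-1},1)\, k_0 \times \fo_F^\times.
\]
Note that $\diag(\varpi_F,1)\mathfrak{I}_B\diag(\varpi_F^{-1},1) = \smat{\fo_F}{\fp_F}{\fo_F}{\fo_F}$, the opposite Iwahori order.

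It therefore remains to verify the lattice identity
\[
 \bi\, \mathfrak{I}_B\, d k_0 d^{-1}\, \bi^{-1} = \mat{\fo_F}{\fp_F}{\fo_F}{\fo_F} k_0 .
\]
For this we will observe that $\bi \mathfrak{I}_B \bi^{-1} = \smat{\fo_F}{\fp_F}{\fo_F}{\fo_F}$, since $u$ is a unit and conjugating by $\smat{0}{1}{u}{0}$ swaps the diagonal entries and moves $\fp_F$ from the lower-left to the upper-right corner. The claim then reduces to showing that $d k_0 d^{-1} \bi^{-1} k_0^{-1}$ lies in $\smat{\fo_F}{\fp_F}{\fo_F}{\fo_F}^\times$. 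This is an explicit $2\times 2$ computation using $(u')^2 = u$ and $2 \in \fo_F^\times$; we expect the product to be diagonal or anti-diagonal-free with unit entries, possibly up to a sign absorbed by $\fo_F^\times$.

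This final matrix check is the only real obstacle. In it we have to keep the left/right action conventions and the orientation of $c$ consistent, since a mismatch would replace $(\varpi_F,1)$ by $(1,\varpi_F)$. If the direct computation produces the opposite orientation, we would recheck the sign conventions for the action of $E^\times$ (by $z^{-1}$ on the right) against Lemma \ref{l:weil-pft2} before concluding.
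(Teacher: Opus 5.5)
Your setup is correct and follows the paper's route. You correctly find that $\bar\varphi^c$ is the characteristic function of $\mathfrak{I}_B\,dk_0d^{-1}\times\fo_F^\times$. Since $dk_0d^{-1}=\smat{0}{1}{-1}{0}k_0$, this set is $\smat{\fo_F}{\fo_F}{\fo_F}{\fp_F}k_0$, the paper's $\tilde{\mathfrak{I}}_Bk_0$. Your computation that $\Omega_\psi(h_1)\varphi$ is the characteristic function of $\mathfrak{I}_B^-k_0\times\fo_F^\times$, with $\mathfrak{I}_B^-=\smat{\fo_F}{\fp_F}{\fo_F}{\fo_F}$, is exactly the paper's.

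\textbf{Gap in the final reduction.} Put $X=dk_0d^{-1}$. Then $\bi\,\mathfrak{I}_BX\bi^{-1}=(\bi\mathfrak{I}_B\bi^{-1})(\bi X\bi^{-1})=\mathfrak{I}_B^-\,\bi X\bi^{-1}$. So the condition you need is $\bi X\bi^{-1}k_0^{-1}\in(\mathfrak{I}_B^-)^\times$, not $X\bi^{-1}k_0^{-1}\in(\mathfrak{I}_B^-)^\times$. You dropped the left factor $\bi$, and $\bi=\smat{0}{1}{u}{0}\notin(\mathfrak{I}_B^-)^\times$, so the two conditions are not equivalent.

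The criterion you wrote is in fact false. Using $k_0\bi^{-1}k_0^{-1}=\diag(1/u',-1/u')$, one gets $X\bi^{-1}k_0^{-1}=\smat{0}{-1/u'}{-1/u'}{0}$. Its upper-right entry is a unit, so it does not lie in $\mathfrak{I}_B^-$. Carrying out your plan as written would therefore give an apparent contradiction. It would also send you looking for an orientation error between $(\varpi_F,1)$ and $(1,\varpi_F)$ that does not exist. Separately, you left this last step as an expectation, yet it is where the content of the lemma lies.

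\textbf{Fix.} Restore the factor: $\bi X\bi^{-1}k_0^{-1}=\bi\smat{0}{1}{-1}{0}\diag(1/u',-1/u')=\diag(-1/u',-u')$. This is a unit of $\mathfrak{I}_B^-$, so both sides are the characteristic function of $\mathfrak{I}_B^-k_0\times\fo_F^\times$. The lemma then holds exactly as stated, with $h_1'=z=(\varpi_F,1)$.

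Equivalently, as the paper does, first move $\bi^{-1}$ through $k_0$:
\[
\bi\,\tilde{\mathfrak{I}}_Bk_0\bi^{-1}=\bi\,\tilde{\mathfrak{I}}_B\diag(1/u',-1/u')\,k_0=\bi\,\tilde{\mathfrak{I}}_Bk_0=\mathfrak{I}_B^-k_0 .
\]
The two facts $k_0^c=\smat{0}{1}{-1}{0}k_0$ and $k_0\bi^{-1}k_0^{-1}=\diag(1/u',-1/u')$ are what make this step work. They should be stated and used explicitly.
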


\begin{proof}
Since
\[
 \mat{x_1}{x_2}{x_3}{x_4}^c = \mat{x_1}{-x_2}{-x_3}{x_4}, \quad 
 k_0^c = \mat{0}{1}{-1}{0} k_0, 
\]
$\bar{\varphi}^c$ is the characteristic function of $\tilde{\mathfrak{I}}_B k_0 \times \fo_F^\times$, where 
\[
 \tilde{\mathfrak{I}}_B = \mat{\fo_F}{\fo_F}{\fo_F}{\fp_F}.
\]
Also, we have
\[
 \bi = \mat{0}{1}{u}{0}, \quad
 k_0 \bi^{-1} k_0^{-1} = \mat{\frac{1}{u'}}{0}{0}{-\frac{1}{u'}}, \quad
 h_1' = \mat{\varpi_F}{0}{0}{1}, \quad
 k_0 z^{-1} k_0^{-1} = \mat{\varpi_F^{-1}}{0}{0}{1},
\]
and $\nu(h_1) = 1$.
Hence, putting 
\[
 \mathfrak{I}_B^- = \mat{\fo_F}{\fp_F}{\fo_F}{\fo_F}, 
\]
we have $h_0 \tilde{\mathfrak{I}}_B k_0 = \bi \tilde{\mathfrak{I}}_B k_0 \bi^{-1} = \mathfrak{I}_B^- k_0$ and $h_1 \mathfrak{I}_B k_0 = h_1' \mathfrak{I}_B k_0 z^{-1} = \mathfrak{I}_B^- k_0$, so that $\Omega_\psi(h_0) \bar{\varphi}^c = \Omega_\psi(h_1) \varphi$.
This completes the proof.
\end{proof}

\subsection{The inert case}

We use the setting of \S \ref{ss:choices-inert-1} or \S \ref{ss:choices-inert-2}.
In particular, $\varphi$ is the characteristic function of $\fo_B \times \fo_F^\times$, where $\fo_B$ is a maximal order in $B$ given by $\fo_B = \fo_K + \fo_K \bi$.

\begin{lem}
\label{l:local-theta-inert} 
We have
\[
 \Omega_\psi(h_0) \bar{\varphi}^c = \varphi.
\]
\end{lem}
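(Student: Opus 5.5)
We need to show $\Omega_\psi(h_0)\bar\varphi^c=\varphi$ in the inert setting, where $\varphi=\mathbb{I}_{\fo_B}\otimes\mathbb{I}_{\fo_F^\times}$, $\fo_B=\fo_K+\fo_K\bi$ and $h_0=[\bi,\bi]\in\bH$. The argument mirrors Lemma \ref{l:local-theta-split-1}: first show that $\bar\varphi^c=\varphi$, then that $\Omega_\psi(h_0)$ fixes $\varphi$.

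\textbf{Step 1: $\bar\varphi^c=\varphi$.} Since $\varphi$ is real-valued, $\bar\varphi=\varphi$. It then suffices to check that $\fo_B$ is stable under the automorphism $x\mapsto x^c$ of $B$, the $E$-antilinear map induced by $c$. Using $B=E+E\bj$, write $x=a+b\bi+c\bj+d\bi\bj$ with $a,b,c,d\in F$. From the formula $h^c=[\bj h'\bj^{-1},z^c]$, the map is $x\mapsto \bj x\bj^{-1}$ up to the normalization of \S\ref{ss:weil-u}. Conjugation by $\bj$ sends $\bi\mapsto-\bi$ and fixes $\bj$, so it changes the signs of $b$ and $d$ and maps the lattice $\fo_F+\fo_F\bi+\fo_F\bj+\fo_F\bi\bj=\fo_B$ to itself. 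This is the description of $\fo_B$ in \S\ref{ss:choices-inert-1} and \S\ref{ss:choices-inert-2}. The $t$-variable is untouched.

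\textbf{Step 2: $h_0$ fixes $\varphi$.} By definition $\Omega_\psi(h_0)\varphi(x,t)=\varphi(h_0^{-1}x,\nu(h_0)t)$, and $\nu(h_0)=\nu(\bi)\N_{E/F}(\bi)^{-1}=1$. In $\GU(\bV)$ the element $[\bi,\bi]$ acts on $\bV=B$ (right $E$-module) by $x\mapsto \bi x\bi^{-1}$. So it suffices that $\bi\fo_B\bi^{-1}=\fo_B$. Conjugation by $\bi$ fixes $\bi$ and sends $\bj\mapsto-\bj$, $\bi\bj\mapsto-\bi\bj$. Again this only changes signs of basis coordinates of the $\fo_F$-basis $1,\bi,\bj,\bi\bj$, so $\fo_B$ is preserved. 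This holds whether $u$ is a unit (inert case I) or $u\in\varpi_F\fo_F^\times$ (inert case II): we never need $\bi\in\fo_B^\times$, only that conjugation permutes the basis up to sign. In case II one could also note that $\fo_B$ is the unique maximal order, hence stable under every automorphism.

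The only point needing care is Step 1: pinning down the precise definition of $x^c$ on $B$ so that it is compatible with $h^c=[\bj h'\bj^{-1},z^c]$. I expect it is $x\mapsto\bj x\bj^{-1}$, possibly composed with the main involution or a scaling. Any of these preserves $\fo_B$, since $\bj\in\fo_K^\times$ (as $J\in\fo_F^\times$) and $\fo_B$ is stable under $*$. So the conclusion is robust to the exact normalization.
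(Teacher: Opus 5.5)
Your proposal is correct and follows the paper's proof: first $\bar{\varphi}^c = \varphi$ because $c$ preserves $\fo_B$, then $\Omega_\psi(h_0)\varphi = \varphi$ because $\nu(h_0) = 1$ and $\bi \fo_B \bi^{-1} = \fo_B$. Your hedge about the normalization of $x^c$ is unnecessary: the paper uses $(x_1 + x_2 \bi)^c = x_1 - x_2 \bi$ for $x_1, x_2 \in K$, which is exactly $x \mapsto \bj x \bj^{-1}$, so your sign-change check on the basis $1, \bi, \bj, \bi\bj$ is precisely what is needed.
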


\begin{proof}
Since $(x_1 + x_2 \bi)^c = x_1 - x_2 \bi$ for $x_1, x_2 \in K$, we have $\bar{\varphi}^c = \varphi$.
Also, we have $h_0 \fo_B = \bi \fo_B \bi^{-1} = \fo_B$, so that $\Omega_\psi(h_0) \varphi = \varphi$.
This completes the proof.
\end{proof}

\subsection{The ramified case}

We use the setting of \S \ref{ss:choices-ramified}.
In particular, $\varphi$ is the characteristic function of $\fo_B \times \fo_F^\times$, where $\fo_B$ is a maximal order in $B$ given by $\fo_B = \M_2(\fo_F)$ under the identification $B = \M_2(F)$ as in \S \ref{ss:choices-ramified}.

\begin{lem}
\label{l:local-theta-ramified}
We have
\[
 \Omega_\psi(h_0) \bar{\varphi}^c = \Omega_\psi(h_1) \varphi,
\]
where $h_1 = [\bj, z] \in \bH$ with $z = (1,\varpi_F) \in E^\times$ under the identification $E = F \oplus F$ as in \S \ref{ss:choices-ramified}.
\end{lem}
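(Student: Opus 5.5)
We follow the pattern of Lemmas \ref{l:local-theta-split-1} and \ref{l:local-theta-split-2}: first identify $\bar{\varphi}^c$ as the characteristic function of an explicit lattice times $\fo_F^\times$, then compare the images of the lattices under $h_0$ and $h_1$, checking that the similitudes are units so that the $t$-variable is unaffected.

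\emph{Step 1: computing $\bar{\varphi}^c$.} Under the identification of \S \ref{ss:choices-ramified},
\[
 a + b\bi + c\bj + d\bi\bj \mapsto \mat{a+bu'}{c+du'}{(c-du')J}{a-bu'}.
\]
The automorphism $c$ is induced by $\bi \mapsto -\bi$, i.e.\ $b \mapsto -b$, $d \mapsto -d$. It therefore sends
\[
 \mat{x_1}{x_2}{x_3}{x_4} \mapsto \mat{x_4}{J^{-1}x_3}{Jx_2}{x_1} = w \mat{x_1}{x_2}{x_3}{x_4} w^{-1}, \qquad w = \mat{0}{1}{J}{0}.
\]
Hence $\fo_B^c = w \fo_B w^{-1} = \mat{\fo_F}{\fp_F^{-1}}{\fp_F}{\fo_F}$. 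Since $\varphi$ is real-valued, $\bar{\varphi}^c$ is the characteristic function of $\fo_B^c \times \fo_F^\times$.

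\emph{Step 2: the action of $h_0$.} The element $\bi$ corresponds to $\diag(u',-u') \in \fo_B^\times$, which normalizes $\fo_B^c$, and $\nu(h_0)=1$. Therefore
\[
 \Omega_\psi(h_0)\bar{\varphi}^c(x,t) = \bar{\varphi}^c(\bi^{-1} x \bi, t) = \bar{\varphi}^c(x,t).
\]

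\emph{Step 3: the action of $h_1$.} We have $h_1^{-1}x = \bj^{-1} x z$ with $z = (1,\varpi_F) \mapsto \diag(1,\varpi_F)$, and $\bj$ corresponds to $w$. So
\[
 h_1 \fo_B = \bj\, \fo_B\, \diag(1,\varpi_F)^{-1} = w\, \fo_B\, \diag(1,\varpi_F^{-1}).
\]
Since $w = \mat{0}{1}{J}{0}$ with $J \in \varpi_F\fo_F^\times$, we can write $w = \mat{0}{1}{1}{0}\diag(J,1)$. From this,
\[
 w\fo_B = \mat{\fp_F}{\fo_F}{\fo_F}{\fo_F}^{\mathrm{cols}} ,
\]
meaning the first column lies in $\fp_F$ and the second in $\fo_F$, up to row swap. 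Right-multiplying by $\diag(1,\varpi_F^{-1})$ then scales the second column. This must be matched carefully against $\fo_B^c = \fo_B w'$ for a suitable $w' \in \bj\fo_B^\times$-type element. The cleaner approach is to note $\fo_B^c = w\fo_B w^{-1}$ and $w^{-1} = J^{-1}w$, so
\[
 \fo_B^c = w\,\fo_B\, w.
\]
One then checks that $w\,\diag(1,\varpi_F)^{-1}\cdot \varpi_F \in \GL_2(\fo_F)\cdot(\text{unit scalar})$ up to the central scalar absorbed into $\fo_F^\times$, i.e.\ $w\,\diag(\varpi_F,1)\in \varpi_F\GL_2(\fo_F)$. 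Central scalars in $[\,\cdot\,,\cdot\,]$ are killed modulo $F^\times$, so the factor $\varpi_F$ may be moved onto $z$.

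\emph{Step 4: similitudes.} Finally, $\nu(h_1) = \nu(\bj)\N(z)^{-1} = -J\varpi_F^{-1} \in \fo_F^\times$, so the $t$-support $\fo_F^\times$ is preserved. Hence $h_1\fo_B = \fo_B^c$, which gives the claimed identity.

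The main obstacle is bookkeeping the precise identifications: the order of the matrix multiplications, the sign of $c$ on the $\bj$-components, and whether $z$ acts as $\diag(1,\varpi_F)$ or $\diag(\varpi_F,1)$. We would verify this by testing the lattice equality on basis vectors.
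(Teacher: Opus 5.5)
Your strategy is the paper's: identify $\bar{\varphi}^c$ with the characteristic function of $\fo_B^c \times \fo_F^\times$, use that $\bi = \diag(u',-u')$ normalizes $\fo_B^c$, show $h_1 \fo_B = \fo_B^c$, and check $\nu(h_1) \in \fo_F^\times$. Steps 1, 2 and 4 are correct. In Step 1, $c$ is conjugation by $\bj = w$ and $\fo_B^c = \smat{\fo_F}{\fp_F^{-1}}{\fp_F}{\fo_F}$. In Step 4, $\nu(h_1) = -J\varpi_F^{-1} \in \fo_F^\times$.

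\textbf{The gap is in Step 3.} The lattice identity $h_1 \fo_B = \fo_B^c$ is the only step with real content, and you assert it rather than prove it. Each explicit check you offer for it is false.
\begin{enumerate}
\item Left multiplication by $w$ acts on rows, so $w \fo_B = \smat{\fo_F}{\fo_F}{\fp_F}{\fp_F}$. It is not ``first column in $\fp_F$.''
\item From $w^{-1} = J^{-1} w$ you get $\fo_B^c = J^{-1} w \fo_B w$. Dropping the non-unit scalar $J^{-1}$ changes the lattice: in fact $w \fo_B w = \varpi_F \fo_B^c$.
\item The matrix $w\,\diag(\varpi_F,1) = \smat{0}{1}{J\varpi_F}{0}$ has an entry $1$ and determinant of valuation $2$. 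So it lies neither in $\GL_2(\fo_F)$ nor in $\varpi_F \GL_2(\fo_F)$.
\item Moving $\varpi_F$ ``onto $z$'' is not legitimate. Only $[\varpi_F h', \varpi_F z] = [h', z]$ holds; rescaling one component alone rescales the lattice.
\end{enumerate}

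\textbf{The fix (the paper's key observation).} Note that $\bj \in z \fo_B^\times$. Indeed $z^{-1} \bj = \smat{0}{1}{J\varpi_F^{-1}}{0} \in \GL_2(\fo_F)$, because $J \in \varpi_F \fo_F^\times$. Your Step 1 result is exactly $\fo_B^c = z \fo_B z^{-1}$ with $z = \diag(1,\varpi_F)$. Hence
\[
 h_1 \fo_B = \bj \fo_B z^{-1} = z \fo_B z^{-1} = \fo_B^c,
\]
and therefore $\Omega_\psi(h_0) \bar{\varphi}^c = \mathbb{I}_{\fo_B^c \times \fo_F^\times} = \Omega_\psi(h_1) \varphi$.

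Equivalently, comparing $h_1 \fo_B = w \fo_B \diag(1,\varpi_F^{-1})$ with $\fo_B^c = w \fo_B w^{-1}$, the condition you need is $\diag(1,\varpi_F^{-1})\, w \in \GL_2(\fo_F)$. This is the same as $\diag(\varpi_F,1)\, w \in \varpi_F \GL_2(\fo_F)$: your factors were in the wrong order. Directly, right-multiplying $\smat{\fo_F}{\fo_F}{\fp_F}{\fp_F}$ by $\diag(1,\varpi_F^{-1})$ scales the second column and gives $\smat{\fo_F}{\fp_F^{-1}}{\fp_F}{\fo_F}$. With this line replacing Step 3, your proof is complete.
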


\begin{proof}
Since
\[
 \mat{x_1}{x_2}{x_3}{x_4}^c = \mat{x_4}{x_3 J^{-1}}{x_2 J}{x_1}, \quad
 z = \mat{1}{0}{0}{\varpi_F}, 
\]
$\bar{\varphi}^c$ is the characteristic function of $\fo_B' \times \fo_F^\times$, where $\fo_B' = z \fo_B z^{-1}$.
Also, we have
\[
 \bi = \mat{u'}{0}{0}{-u'} \in (\fo_B')^\times, \quad
 \bj = \mat{0}{1}{J}{0} \in z \fo_B^\times,
\]
and $\nu(h_1) \in \fo_F^\times$.
Hence we have $h_0 \fo_B' = \bi \fo_B' \bi^{-1} = \fo_B' = \bj \fo_B z^{-1} = h_1 \fo_B$, so that $\Omega_\psi(h_0) \bar{\varphi}^c = \Omega_\psi(h_1) \varphi$.
This completes the proof.
\end{proof}

\subsection{The dyadic case}

We use the setting of \S \ref{ss:choices-dyadic}.
In particular, $\varphi$ is the characteristic function of $\fo_B k_0 \times 2^{-1} \fo_F^\times$, where $\fo_B$ is a maximal order in $B$ and $k_0$ is an element in $B^\times$ given by
\[
 \fo_B = \M_2(\fo_F), \quad
 k_0 = \mat{1}{\frac{1}{u'}}{1}{-\frac{1}{u'}}
\]
under the identification $B = \M_2(F)$ as in \S \ref{ss:choices-split-1}.

\begin{lem}
\label{l:local-theta-dyadic}
We have
\[
 \Omega_\psi(h_0) \bar{\varphi}^c = \varphi.
\]
\end{lem}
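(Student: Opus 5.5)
We follow the pattern of Lemmas \ref{l:local-theta-split-1} and \ref{l:local-theta-split-2}. Here $\varphi$ is the characteristic function of a lattice-type set times $2^{-1}\fo_F^\times$, and the $t$-component is unaffected: $\nu(h_0)=1$, and conjugation acts only on the $B$-variable. So it suffices to show that the set $\fo_B k_0$ is carried to itself by the composite of $x \mapsto x^c$ followed by $x \mapsto h_0 \cdot x$, where $h_0 = [\bi,\bi]$ acts by $x \mapsto \bi x \bi^{-1}$. Since $\varphi$ is real-valued, $\bar{\varphi}^c = \varphi^c$ is the characteristic function of $(\fo_B k_0)^c \times 2^{-1}\fo_F^\times$.

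\emph{Step 1: compute $c$ on $\M_2(F)$.} Use the identification of \S\ref{ss:choices-split-1}, $a+b\bi+c\bj+d\bi\bj \mapsto \smat{a+cJ'}{b-dJ'}{(b+dJ')u}{a-cJ'}$. The automorphism $c$ negates $b$ and $d$, since it sends $x_1+x_2\bi \mapsto x_1 - x_2\bi$. Hence $c$ is $\smat{x_1}{x_2}{x_3}{x_4}\mapsto \smat{x_1}{-x_2}{-x_3}{x_4}$, exactly as in the split case II. This map is conjugation by $\diag(1,-1)\in\GL_2(\fo_F)$. Therefore $(\fo_B k_0)^c = \fo_B k_0^c$, and, as computed in the proof of Lemma \ref{l:local-theta-split-2}, $k_0^c = \smat{0}{1}{-1}{0} k_0$. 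Since $\smat{0}{1}{-1}{0}\in\fo_B^\times = \GL_2(\fo_F)$, we get $\fo_B k_0^c = \fo_B k_0$. So $\bar\varphi^c = \varphi$.

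\emph{Step 2: the action of $h_0$.} We have $\Omega_\psi(h_0)\varphi$ equal to the characteristic function of $\bi\,\fo_B k_0\,\bi^{-1}\times 2^{-1}\fo_F^\times$. Here $\bi = \smat{0}{1}{u}{0}$ with $u\in(\fo_F^\times)^2$, so $\bi\in\GL_2(\fo_F)$ and $\bi\fo_B = \fo_B$. Also $k_0\bi^{-1}k_0^{-1} = \diag(1/u',-1/u')$ by the computation in Lemma \ref{l:local-theta-split-2}, and this lies in $\GL_2(\fo_F)$ because $u'\in\fo_F^\times$. Hence $\fo_B k_0 \bi^{-1} = \fo_B \diag(1/u',-1/u') k_0 = \fo_B k_0$. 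Combining the two steps gives $\Omega_\psi(h_0)\bar\varphi^c=\varphi$.

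\emph{Main obstacle.} The only delicate point is confirming the formula for $c$ under this identification (in particular the relation $k_0^c = \smat{0}{1}{-1}{0}k_0$) in residue characteristic $2$. The verification is purely matrix-theoretic and valid for any $u'\in\fo_F^\times$, so no dyadic issue arises. Unlike in the split case II, both correcting matrices here are units of the maximal order, so no nontrivial $h_1$ appears.
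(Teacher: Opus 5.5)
Your proposal is correct and follows the same route as the paper. You first show $\bar{\varphi}^c = \varphi$ using $k_0^c = \smat{0}{1}{-1}{0} k_0$ with $\smat{0}{1}{-1}{0} \in \fo_B^\times$, and then get $\bi \fo_B k_0 \bi^{-1} = \fo_B k_0$ because both $\bi$ and $k_0 \bi^{-1} k_0^{-1}$ lie in $\fo_B^\times$ — this is exactly the paper's argument.
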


\begin{proof}
Since
\[
 \mat{x_1}{x_2}{x_3}{x_4}^c = \mat{x_1}{-x_2}{-x_3}{x_4}, \quad
 k_0^c = \mat{0}{1}{-1}{0} k_0, 
\]
we have $\bar{\varphi}^c = \varphi$.
Also, we have 
\[
 \bi = \mat{0}{1}{u}{0} \in \fo_B^\times, \quad
 k_0 \bi^{-1} k_0^{-1} = \mat{\frac{1}{u'}}{0}{0}{-\frac{1}{u'}} \in \fo_B^\times.
\]
Hence we have $h_0 \fo_B k_0 = \bi \fo_B k_0 \bi^{-1} = \fo_B k_0$, so that $\Omega_\psi(h_0) \varphi = \varphi$.
This completes the proof.
\end{proof}

\subsection{The real case}

Suppose that $F=\R$, $E=\C$, and $B=\mathbb{H}$.
Let $\varphi = \ell \circ \Omega_\psi(g_0) \vec{\varphi}$ be the Schwartz function as in \S \ref{ss:adelic_period}.

\begin{lem}
\label{l:local-theta-real}
We have
\[
 \Omega_\psi(h_0) \bar{\varphi}^c = (-1)^n \cdot \varphi.
\]
\end{lem}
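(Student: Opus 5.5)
The plan is to peel off $g_0$ and $\ell$, reduce to a statement about the explicit Schwartz functions $\varphi_n,\varphi_n',\varphi_n''$ on $\C^2$, and then argue by induction on $n$ through the operator $\omega(\mathtt{D})$. Set
\[
 T(\varphi') = \Omega_\psi(h_0)\bar{\varphi'}^c ,
\]
an anti-linear involution of $S(B\times\R^\times)$. Since $\nu(h_0)=1$, $T$ does not move the variable $t$. On $x$ it is a real-linear substitution $x\mapsto \sigma(x)$ followed by complex conjugation of values, where $\sigma(x) = (h_0^{-1}x)^c$. I would first make $\sigma$ explicit in the coordinates $x=(x_1,x_2)\in\C^2=K^2$ by the identification $x_1+x_2\bi$. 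Conjugation by $\bi$ acts on $K^2$ as $(x_1,x_2)\mapsto(\bar x_1,\bar x_2)$, and $c$ fixes $E=\R+\R\bi$ while negating $\bj$. Together these give an explicit signed permutation of the real coordinates. Because $\bi=\sqrt{-1}|u|^{1/2}$ at $\infty$, I expect the result to mix $x_1$ and $x_2$ with a factor involving $|u|^{1/2}$. That factor is exactly the one absorbed by $g_0=\boldsymbol{m}(\diag(1,|u|^{1/2}))$.

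Next I would move $T$ past $\Omega_\psi(g_0)$. Using $\omega(\boldsymbol{m}(a))\varphi(x)=\det(a)\varphi(xa)$, I would check directly that
\[
 T\circ\Omega_\psi(g_0) = \Omega_\psi(g_0)\circ T_0 ,
\]
where $T_0$ is the anti-linear map $\psi(x,t)\mapsto\overline{\psi(\sigma_0(x),t)}$. Here $\sigma_0$ is a $|u|$-free substitution, which I expect to be $(x_1,x_2)\mapsto(\pm\bar x_2,\pm\bar x_1)$ up to fixed signs. The claim then reduces to
\[
 T_0\bigl(t\phi(t)(\varphi_{n,t}+\varphi''_{n,t})\bigr) = (-1)^n\, t\phi(t)(\varphi_{n,t}+\varphi''_{n,t}),
\]
and $\phi$ is real-valued, so the factor $t\phi(t)$ is harmless. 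For $n=0$ this is a direct check. The function $\varphi_0+\varphi_0''=(\bar x_1^2+\bar x_2^2)\varphi^0$ is conjugated to $(x_1^2+x_2^2)\varphi^0$, and the substitution $\sigma_0$ should bring it back to itself.

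For the inductive step I would compute how $T_0$ intertwines the operators $\omega(\mathtt{X}_{2,0}),\omega(\mathtt{X}_{1,1}),\omega(\mathtt{X}_{0,2})$ of Lemma~\ref{l:schwartz-table}. These are explicit first- and second-order differential operators together with multiplication by quadratic polynomials. Conjugating by complex conjugation turns $\fp^+$ into $\fp^-$, and the substitution $\sigma_0$ should turn $\fp^-$ back into $\fp^+$. I expect relations of the form
\[
 T_0\,\omega(\mathtt{X}_{2,0}) = -\omega(\mathtt{X}_{0,2})\,T_0,\qquad
 T_0\,\omega(\mathtt{X}_{1,1}) = -\omega(\mathtt{X}_{1,1})\,T_0 ,
\]
up to signs to be determined, and hence $T_0$ applied to the triple $(\varphi_n,\varphi_n',\varphi_n'')$ reverses it. From the matrix form of $\mathtt{D}$ (rows $1$ and $3$ are interchanged under the reversal of indices, with an overall sign), one should get
\[
 T_0(\varphi_{n+1},\varphi'_{n+1},\varphi''_{n+1}) = -\,(\varphi''_{n+1},\varphi'_{n+1},\varphi_{n+1})
\]
from the corresponding statement at level $n$ with sign $(-1)^n$, possibly after a sign on $\varphi_n'$. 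Summing the first and third entries then yields $T_0(\varphi_n+\varphi_n'')=(-1)^n(\varphi_n+\varphi_n'')$. I would state the inductive hypothesis at the level of the whole triple, not just the sum.

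The main obstacle is getting the intertwining relations for $T_0$ with the $\mathtt{X}_\alpha$ exactly right, including signs. In particular one has to check that the reversal symmetry of $\mathtt{D}$ produces exactly one factor $-1$ per application and no stray $|u|$ or $\sqrt{-1}$. I would first compute $\sigma_0$ concretely and verify the relations on the operators $\omega(\mathtt{A}_i),\omega(\mathtt{B}_i),\omega(\mathtt{C}_i)$ individually, using the expressions of $\mathtt{X}_\alpha$ in terms of them recorded in the proof of Lemma~\ref{l:phi-closed}. Only after that would I assemble the relations for $\mathtt{D}$.
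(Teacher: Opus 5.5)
Your strategy works: pull $T$ through $\Omega_\psi(g_0)$, then propagate a symmetry of $(\varphi_0,\varphi_0',\varphi_0'')$ through $\omega(\mathtt{D})$ by induction. It also differs from the paper's route. However, the concrete data you anticipate are wrong, and with the actual $T_0$ your "reversal" inductive statement fails already at $n=0$.

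Start with the maps themselves. The automorphism $c$ acts on $\bV=B$ by $x\mapsto \bj x\bj^{-1}$ (this is what $h^c=[\bj h'\bj^{-1},z^c]$ encodes). So $c$ fixes $K$ and negates $\bi$, which is the opposite of what you wrote. The element $h_0=h_0^{-1}$ acts by $x\mapsto \bi x\bi^{-1}$, i.e.\ $(x_1,x_2)\mapsto(\bar x_1,\bar x_2)$. Hence $\sigma(x_1,x_2)=(\bar x_1,-\bar x_2)$: there is no mixing of $x_1,x_2$ and no $|u|$. Since $g_0$ is diagonal with real entries, $T\circ\Omega_\psi(g_0)=\Omega_\psi(g_0)\circ T_0$ with
\[
 T_0\psi(x,t)=\overline{\psi(\bar x_1,-\bar x_2,t)}.
\]
This is a sign change on $x_2$, not a coordinate swap.

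Now the intertwining relations. On functions $P\cdot\varphi^0$, the operators $\omega(\mathtt{X}_{2,0}),\omega(\mathtt{X}_{1,1}),\omega(\mathtt{X}_{0,2})$ are the real-coefficient operators $\nabla_1,\nabla_2,\nabla_3$ from the proof of Lemma~\ref{l:diff-diag}. Here $\nabla_1,\nabla_3$ are even and $\nabla_2$ is odd under $x_2\mapsto -x_2$. Therefore
\[
 T_0\,\omega(\mathtt{X}_{2,0})=\omega(\mathtt{X}_{2,0})\,T_0,\qquad
 T_0\,\omega(\mathtt{X}_{0,2})=\omega(\mathtt{X}_{0,2})\,T_0,\qquad
 T_0\,\omega(\mathtt{X}_{1,1})=-\,\omega(\mathtt{X}_{1,1})\,T_0 .
\]
Nothing is interchanged. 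Indeed $T_0\varphi_0=\varphi_0$, $T_0\varphi_0'=-\varphi_0'$, $T_0\varphi_0''=\varphi_0''$, so the triple is not reversed.

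The correct inductive hypothesis is
\[
 T_0(\varphi_n,\varphi_n',\varphi_n'')=(-1)^n(\varphi_n,\,-\varphi_n',\,\varphi_n'').
\]
It passes through each row of $\mathtt{D}$; the coefficients of $\mathtt{D}$ are real, so antilinearity is harmless. For example,
\[
 T_0\varphi_{n+1}=-\omega(\mathtt{X}_{1,1})T_0\varphi_n-2\,\omega(\mathtt{X}_{2,0})T_0\varphi_n'=(-1)^{n+1}\varphi_{n+1},
\]
and the other two rows work the same way. Add the first and third entries and use that $t\phi(t)$ and $|u|^{1/2}$ are real; this gives the lemma.

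With this correction, compare the two routes. The paper does not induct. It quotes Proposition~\ref{p:phi-sharp}, the Laguerre expansion proved later for the archimedean zeta integral. That lets it write
\[
 \varphi(x,t)=(\sqrt{-1})^n t\phi(t)\,(\bar x_1^2-u\bar x_2^2)\,P(\dots)\,e^{-2\pi t(x_1\bar x_1-ux_2\bar x_2)}
\]
with $P$ real and arguments $tx_1\bar x_1$, $tx_2\bar x_2$, $\sqrt{-1}tx_1\bar x_2$, $\sqrt{-1}t\bar x_1x_2$. Each of these is invariant under the substitution $(x_1,x_2)\mapsto(\bar x_1,-\bar x_2)$ followed by complex conjugation. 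So the sign is just $\overline{(\sqrt{-1})^n}=(-1)^n(\sqrt{-1})^n$.

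The paper's argument is two lines, given machinery it needs anyway. Your argument uses only the $\fp^+$-action formulas. It avoids the forward reference to Proposition~\ref{p:phi-sharp} and yields the finer statement about the whole triple.
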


\begin{proof}
By Proposition \ref{p:phi-sharp} below, we have
\[
 \varphi(x,t) = (\sqrt{-1})^n \cdot t \phi(t) \cdot (\bar{x}_1^2 - u \bar{x}_2^2) \cdot P(t x_1 \bar{x}_1, t x_2 \bar{x}_2, \sqrt{-1} t x_1 \bar{x}_2, \sqrt{-1} t \bar{x}_1 x_2) \cdot e^{-2 \pi t (x_1 \bar{x}_1 - u x_2 \bar{x}_2)}
\]
for $x = x_1 + x_2 \bi$ with $x_1, x_2 \in K = \C$ and $t \in \R^\times$, where $\phi$ is the fixed $\R$-valued function as in \S \ref{ss:schwartz-def} and $P$ is some polynomial in four variables with coefficients in $\R$.
Hence, noting that $(h_0^{-1} x)^c = (\bar{x}_1 + \bar{x}_2 \bi)^c = \bar{x}_1 - \bar{x}_2 \bi$, the assertion follows.
\end{proof}

\section{Waldspurger's formula}
\label{s:waldspurger}

In this section, we prove an explicit version of Waldspurger's formula for toric periods \cite{wald85}.

\subsection{An explicit formula}
\label{ss:wald-explicit}

As in \S \ref{ss:toric_period}, we consider the toric period
\[
 \cP^H(\theta_\varphi(f_0^\mu), \chi) = \int_{\A^\times K^\times \backslash \A_K^\times} \theta_\varphi(f_0^\mu)(h) \chi(h) \, d\dot{h}, 
\]
where $d\dot{h}$ is the quotient measure induced by the Tamagawa measures on $\A_K^\times$ and $\A^\times$.
Note that $d \dot{h} = C_K^{-1} \cdot \prod_v d \dot{h}_v$ with $d \dot{h}_v = dh_v / d\nu_v$, where $dh_v$ is the standard measure on $K_v^\times$ as in \S \ref{ss:theta-Sp4} and $d\nu_v$ is the standard measure on $\Q_v^\times$ given as follows:
\begin{itemize}
\item if $v = \infty$, then $d\nu_v = d\nu_v^{\mathrm{Leb}} / |\nu_v|$, where $d\nu_v^{\mathrm{Leb}}$ is the Lebesgue measure on $\Q_v = \R$;
\item if $v \ne \infty$, then $d\nu_v$ is the Haar measure on $\Q_v^\times$ such that $\vol(\Z_v^\times) = 1$.
\end{itemize}

\begin{prop}
\label{p:wald-explicit}
We have
\[
 |\cP^H(\theta_\varphi(f_0^\mu), \chi)|^2 = \pi \cdot |D_K|^{1/2} C_K^{-2} \cdot \zeta(2) \cdot C_{\Sigma^+,\Sigma^-} \cdot \frac{L_{\fin}(\frac{1}{2}, \pi_K \times \chi)}{L_{\fin}(1, \pi, \Ad)} \cdot \langle{\theta_\varphi(f_0^\mu),\theta_\varphi(f_0^\mu)}\rangle,
\]
where
\[
 C_{\Sigma^+,\Sigma^-} = \prod_{q \in \Sigma^+} \frac{q+1}{q} \cdot \prod_{q \in \Sigma^-} \frac{q-1}{q}, 
\]
$L_{\fin}(s, \pi, \Ad)$ is the adjoint $L$-function of $\pi$, and $\langle \cdot, \cdot \rangle$ is the Petersson inner product with respect to the Tamagawa measure on $Z_\bH(\A) \backslash \bH(\A)$.
\end{prop}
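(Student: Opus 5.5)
The plan is to invoke Waldspurger's formula in the refined local form of Ichino--Ikeda type for the torus $H = K^\times \subset \bH' = B^\times$ (modulo center), applied to the automorphic form $\theta_\varphi(f_0^\mu)$. By Proposition \ref{p:theta-u11-u2}, $\theta_\varphi(f_0^\mu)$ lies in $\pi^B \boxtimes \mu$. Since $\chi$ and the center interact through $[h,1]$, the toric period over $\A^\times K^\times\backslash \A_K^\times$ is a $K^\times$-period of a vector in $\pi^B$; the factor $\mu$ is constant on the $E^\times$-part and does not enter. I would first check that $\theta_\varphi(f_0^\mu)$ is a pure tensor $\otimes_v \phi_v$ under $\pi^B \simeq \otimes_v \pi^B_v$. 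This holds because $\varphi$ and $f_0$ are decomposable and the local theta correspondence $\Omega_{\psi,v}\otimes(\pi_v\boxtimes\mu_v)\to\pi_v^B\boxtimes\mu_v$ is unique up to scalar.

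Next I would write
\[
 \frac{|\cP^H(\phi,\chi)|^2}{\langle\phi,\phi\rangle} = \frac{C}{C_K^{2}}\cdot\frac{\zeta(2)\,L(\tfrac12,\pi_K\times\chi)}{2\,L(1,\pi,\Ad)}\prod_v \alpha_v^\natural(\phi_v),
\]
with the constant in Waldspurger's/Ichino's formula normalized for the standard measures $d\dot h_v$, so that the Tamagawa factor $C_K^{-1}$ appears squared. Here $\alpha_v^\natural$ is the normalized integral $\int_{F_v^\times\backslash K_v^\times}\langle\pi^B_v(h)\phi_v,\phi_v\rangle\chi_v(h)\,d\dot h_v$ divided by $\langle \phi_v,\phi_v\rangle$ and by the ratio of local $L$-factors. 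The remaining work is to compute each $\alpha_v^\natural$ and show that the product equals $\pi|D_K|^{1/2}C_{\Sigma^+,\Sigma^-}$, after the archimedean $L$-factors are absorbed into the finite $L$-values in the statement.

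To compute the local matrix coefficients I would not identify $\phi_v$ directly. Instead I would use the local seesaw: the matrix coefficient of $\phi_v$ equals a local doubling/Rallis-type integral $\int_{\bG_v}\langle\Omega(g)\varphi_v,\varphi_v\rangle\langle\pi_v(g)f_v,f_v\rangle dg$ composed with $\Omega(h)$. For $v\notin\Sigma\cup\{\infty\}$ this reduces to the unramified computation, with $\phi_v$ spherical and $\alpha_v^\natural=1$ up to measure volumes; this gives the factor $|D_K|^{1/2}$ at primes ramified in $K$ from $\vol(\fo_{K_v}^\times)$ normalizations. For $v\in\Sigma^+$, the Iwahori choice of \S\ref{ss:choices-split-2} makes $\phi_v$ the $K_v^\times$-invariant Iwahori-fixed newvector of the Steinberg representation, and the toric integral is a finite sum over $F_v^\times\backslash K_v^\times/\fo_{K_v}^\times$, giving $(q+1)/q$. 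For $v\in\Sigma^-$, $\pi_v^B$ is a character of $B_v^\times$ and the integral is a volume, giving $(q-1)/q$. At $\infty$, $\pi^B_\infty$ is $\Sym^{2n+2}$ of $\mathbb{H}^\times$, and $\phi_\infty$ must be the weight vector on which $\C^\times$ acts by $\chi_\infty^{-1}$. Its toric integral is $\vol(\R^\times\backslash\C^\times)=\pi$ up to the ratio of archimedean $L$-factors.

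I expect the main obstacle to be the archimedean place: checking that $\phi_\infty$ really is the correct $\C^\times$-weight vector, with no stray constants. The ramified places $\Sigma^+$ and the dyadic place also need care, to confirm that the chosen Schwartz functions produce the claimed vectors and not some translate. For $\infty$ I would use Lemma \ref{l:S(C^2)_n} and the $k'$-invariance of $\varphi_n+\varphi''_n$ to pin down the weight. For the finite places I would use the $\bK$-type invariances listed in \S\ref{s:choices-schwartz} together with the multiplicity one of the relevant invariant lines.
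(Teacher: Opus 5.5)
Your overall strategy is the paper's. You restrict $\theta_\varphi(f_0^\mu)$ to $B^\times(\A)$, identify it up to scalar as a pure tensor $f^B=\otimes_v f^B_v$ via its local invariance properties and multiplicity one, apply Waldspurger's local--global formula (his Proposition~7), and evaluate the local toric integrals. The detour through a local seesaw expression for matrix coefficients is unnecessary: once $f^B_v$ is pinned down, Macdonald's formula and the Iwahori coefficients of the Steinberg representation suffice.

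However, the statement is exactly a constant, and your bookkeeping would not produce it. The global formula you write omits $L_{\fin}(1,\xi_{K/\Q})$. With the standard local measures it reads
\[
 \frac{|\cP^H(\theta_\varphi(f_0^\mu),\chi)|^2}{\langle\theta_\varphi(f_0^\mu),\theta_\varphi(f_0^\mu)\rangle}
 =\frac{\zeta(2)\,L_{\fin}(\tfrac12,\pi_K\times\chi)}{2\,C_K\,L_{\fin}(1,\pi,\Ad)\,L_{\fin}(1,\xi_{K/\Q})}\cdot\cP_\infty(f^B_\infty,\chi_\infty)\cdot\prod_{v\ne\infty}\frac{\cP_v(f^B_v,\chi_v)}{\mathcal{L}_v},
\]
where $\mathcal{L}_v=\zeta_v(2)L_v(\tfrac12,\pi_K\times\chi)/(L_v(1,\pi,\Ad)L_v(1,\xi_{K/\Q}))$.

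The conversion $d\dot h=C_K^{-1}\prod_v d\dot h_v$ contributes a single $C_K^{-1}$, not its square. The local integrals $\cP_v$ are linear in $d\dot h_v$, so the conversion constant enters once even though the period is squared. The second $C_K^{-1}$ and the factor $|D_K|^{1/2}$ both come from $L_{\fin}(1,\xi_{K/\Q})=\Res_{s=1}\zeta_K(s)=|D_K|^{-1/2}C_K$.

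Your claim that $|D_K|^{1/2}$ arises from $\vol(\fo_{K_v}^\times)$ at primes ramified in $K$ is therefore wrong. With $\vol(\fo_{K_v}^\times)=\vol(\Z_v^\times)=1$, the local integral there is $\Psi(1)+\Psi(\varpi_K)\chi(\varpi_K)=(1+\alpha_1\beta q^{-1/2})(1+\alpha_2\beta q^{-1/2})/(1+q^{-1})$ in the notation of Lemma~\ref{l:wald-ps}. This is exactly $\mathcal{L}_v$, so the local factor is $1$.

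The archimedean factor is also off. Since $\Psi(h)\chi(h)\equiv 1$ on $\C^\times$, one has $\cP_\infty=\vol(\C^\times/\R^\times,d\dot h_\infty)=2\pi$ for $dh_\infty=2\,dh^{\mathrm{Leb}}_\infty/|h|$, not $\pi$. This $2$ is what cancels the $2^{-1}$. No archimedean $L$-factors need to be absorbed, because the formula is applied with finite $L$-values and an unnormalized $\cP_\infty$. With these corrections the constant is $2^{-1}C_K^{-1}\cdot 2\pi\cdot C_{\Sigma^+,\Sigma^-}\cdot|D_K|^{1/2}C_K^{-1}$, as claimed.

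Two smaller inaccuracies:
\begin{enumerate}
\item For $v\in\Sigma^+$, the set $F_v^\times\backslash K_v^\times/\fo_{K_v}^\times\simeq\Z$ is infinite, not finite. The integral is a pair of geometric series built from $\Psi(\diag(\varpi_F^i,\varpi_F^{-i}))=q^{-2|i|}$ and $\Psi(\diag(\varpi_F^{i+1},\varpi_F^{-i}))=\xi(\varpi_F)q^{-|2i+1|}$. These are coefficients of the $\mathfrak{I}_{B_v}^\times$-fixed vector, which is not $K_v^\times$-invariant.
\item At $\infty$, the $\C^\times$-weight $\chi_\infty^{-1}$ of $f^B_\infty$ comes from $\omega(h)\varphi_n=h^2\varphi_n$ for $h\in\C^1\subset\O(V_\infty)$, i.e.\ the realization of $A_\fq$ in \S\ref{ss:schwartz-derivatives}. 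It does not come from the $k'$-invariance in Lemma~\ref{l:S(C^2)_n}, which concerns the center $E^1_\infty$ of $\bG_{1,\infty}$ and is used in the Rallis computation instead.
\end{enumerate}
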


\begin{proof}
Let $f^B$ be the restriction of $\theta_\varphi(f_0^\mu)$ to $B^\times(\A)$, where we regard $B^\times$ as a subgroup of $\bH$ via the map $h \mapsto [h,1]$.
Then we have
\begin{align*}
 \cP^H(\theta_\varphi(f_0^\mu), \chi) & = \int_{\A^\times K^\times \backslash \A_K^\times} f^B(h) \chi(h) \, d\dot{h}, \\
 \langle \theta_\varphi(f_0^\mu), \theta_\varphi(f_0^\mu) \rangle & = \int_{\A^\times B^\times(F) \backslash B^\times(\A)} |f^B(h)|^2 \, dh,
\end{align*}
where $dh$ is the Tamagawa measure on $\A^\times \backslash B^\times(\A)$.
We may assume that $f^B$ is nonzero.
Then by Proposition \ref{p:theta-u11-u2} and the choice of $\varphi$, $f^B = \otimes_v f^B_v$ is a decomposable vector in $\pi^B$ such that 
\begin{itemize}
\item 
$\pi^B_\infty(h) f^B_\infty = h \bar{h}^{-1} f^B_\infty$ for all $h \in K_\infty^\times = \C^\times$;
\item 
if $v \notin \Sigma \cup \{ \infty \}$, then $f_v^B$ is $\fo_{B_v}^\times$-fixed, where $\fo_{B_v}$ is the maximal order in $B_v$ as in \S \ref{ss:choices-split-1} or \S \ref{ss:choices-inert-1} or \S \ref{ss:choices-ramified} or \S \ref{ss:choices-dyadic};
\item 
if $v \in \Sigma^+$, then $f^B_v$ is $\mathfrak{I}_{B_v}^\times$-fixed, where $\mathfrak{I}_{B_v}$ is the Iwahori subalgebra of $B_v$ as in \S \ref{ss:choices-split-2};
\item 
if $v \in \Sigma^-$, then $f^B_v$ is $\fo_{B_v}^\times$-fixed, where $\fo_{B_v}$ is the unique maximal order in $B_v$.
\end{itemize}
Note that $f_B$ is uniquely determined by these conditions up to scalars.
Moreover, by \cite[Proposition 7]{wald85}, we have
\begin{align*}
 \frac{|\cP^H(\theta_\varphi(f_0^\mu), \chi)|^2}{\langle \theta_\varphi(f_0^\mu), \theta_\varphi(f_0^\mu) \rangle} & = 2^{-1} C_K^{-1} \cdot \frac{\zeta(2) L_{\fin}(\frac{1}{2}, \pi_K \times \chi)}{L_{\fin}(1, \pi, \Ad) L_{\fin}(1, \xi_{K/\Q})} \\
 & \quad \times \cP_{\infty}(f^B_\infty, \chi_\infty) \cdot \prod_{v \ne \infty} \left( \frac{\zeta_v(2) L_v(\frac{1}{2}, \pi_K \times \chi)}{L_v(1, \pi, \Ad) L_v(1, \xi_{K/\Q})} \right)^{-1} \cP_v(f^B_v, \chi_v) 
\end{align*}
with 
\[
 \cP_v(f^B_v, \chi_v) = \int_{\Q_v^\times \backslash K_v^\times} \Psi_{f^B_v}(h_v) \chi_v(h_v) \, d\dot{h}_v.
\]
Here $\Psi_{f^B_v}$ is a matrix coefficient of $\pi^B_v$ given by 
\[
 \Psi_{f^B_v}(h_v) = \frac{\langle \pi^B_v(h_v) f^B_v, f^B_v \rangle_v}{\langle f^B_v, f^B_v \rangle_v},
\]
where $\langle \cdot, \cdot \rangle_v$ is a $B^\times_v$-invariant inner product on $\pi^B_v$.
Hence the proposition follows from Lemmas \ref{l:wald-ps}, \ref{l:wald-st}, \ref{l:wald-1dim}, \ref{l:wald-real} below, where we compute $\cP_v(f^B_v, \chi_v)$ explicitly.
\end{proof}

To finish the proof of Proposition \ref{p:wald-explicit}, we need to compute the integrals of the matrix coefficients explicitly.
We fix a place $v$ of $F$ and omit the subscript $v$ from the notation.
For a matrix coefficient $\Psi$ of $\pi^B$, we will compute the integral
\[
 \cP(\Psi, \chi) = \int_{F^\times \backslash K^\times} \Psi(h) \chi(h) \, d\dot{h},
\]
where $d\dot{h}$ is the quotient measure induced by the standard measures on $K^\times$ and $F^\times$.

\subsection{The principal series case}

We use the setting of \S \ref{ss:choices-split-1} or \S \ref{ss:choices-inert-1} or \S \ref{ss:choices-ramified} or \S \ref{ss:choices-dyadic}.
In particular, $B$ is split. 
We identify $B$ with $\M_2(F)$ as before.
Let $\pi = \Ind(\xi_1 \otimes \xi_2)$ be a principal series representation of $\GL_2(F)$, where $\xi_1$ and $\xi_2$ are unramified characters of $F^\times$, and $\chi$ an unramified character of $K^\times$ such that $\xi_1 \cdot \xi_2 \cdot \chi|_{F^\times} = 1$.
Let $\Psi$ be the $\GL_2(\fo_F)$-bi-invariant matrix coefficient of $\pi$ such that $\Psi(1) = 1$.

\begin{lem}
\label{l:wald-ps}
We have
\[
 \cP(\Psi, \chi) = \frac{\zeta(2) L(\frac{1}{2}, \pi_K \times \chi)}{L(1, \pi, \Ad) L(1, \xi_{K/F})}.
\]
\end{lem}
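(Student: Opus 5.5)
We compute $\cP(\Psi,\chi)$ directly using Macdonald's formula for the spherical function $\Psi$ and a Cartan-type decomposition of $F^\times\backslash K^\times$ modulo $K^\times\cap\GL_2(\fo_F)$, which equals $\fo_K^\times$ in each of the four settings since $\fo_K\subset\fo_B$. Write $\alpha_i=\xi_i(\varpi_F)$. Macdonald gives, for $g\in\GL_2(\fo_F)\smat{\varpi_F^k}{}{}{1}\GL_2(\fo_F)$ with $k\ge0$,
\[
 \Psi(g)=\frac{q^{-k/2}}{1+q^{-1}}\Bigl(\alpha^k\frac{1-\alpha^{-2}q^{-1}}{1-\alpha^{-2}}+\alpha^{-k}\frac{1-\alpha^{2}q^{-1}}{1-\alpha^{2}}\Bigr),
\]
with $\alpha=\alpha_1\omega^{-1/2}$ after twisting away the central character $\omega=\xi_1\xi_2$. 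Here $q=|\fo_F/\fp_F|$. This is the same formula used in the proof of Lemma \ref{l:unique-hom-unram-inert}.

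We split into cases according to $K$.

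\emph{Split case.} Here $K=F\oplus F$ embeds diagonally, so $F^\times\backslash K^\times/\fo_K^\times$ has representatives $h_l=(\varpi_F^l,1)$ with $l\in\Z$, each of volume $1$. The element $h_l$ lies in the double coset indexed by $k=|l|$, and $\chi(h_l)\omega(\cdot)$ contributes $\beta^l$ for suitable $\beta$. Summing two geometric series over $l\ge0$ and $l<0$ gives a rational function in $\alpha,\beta,q^{-1/2}$. We then compare it with
\[
 \frac{\zeta(2)L(\tfrac12,\pi_K\times\chi)}{L(1,\pi,\Ad)L(1,\xi_{K/F})}=\frac{(1-q^{-2})^{-1}\prod(1-\alpha^{\pm1}\beta^{\pm1}q^{-1/2})^{-1}}{(1-\alpha^2q^{-1})^{-1}(1-q^{-1})^{-1}(1-\alpha^{-2}q^{-1})^{-1}(1-q^{-1})^{-1}}.
\]
This comparison is a finite algebraic identity, exactly as at the end of the proof of Lemma \ref{l:unique-hom-unram-inert}.

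\emph{Inert case.} $F^\times\backslash K^\times/\fo_K^\times$ is trivial and $\fo_K^\times\subset\GL_2(\fo_F)$, so $\cP=\vol=1$. Since $\chi$ is trivial on $K^\times$ up to the central constraint and unramified, the $L$-factors cancel: $L(\tfrac12,\pi_K\times\chi)=[(1-\alpha^2q^{-1})(1-\alpha^{-2}q^{-1})]^{-1}$ and $L(1,\xi_{K/F})=(1+q^{-1})^{-1}$, so the ratio is $1$. The identity then reduces to $(1-q^{-2})^{-1}(1-q^{-1})(1+q^{-1})=1$, which holds.

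\emph{Ramified case.} $F^\times\backslash K^\times/\fo_K^\times$ has two classes, $1$ and $\varpi_K$. The uniformizer $\varpi_K$ has $\det$ of valuation $1$, so it lies in the $k=1$ Cartan cell. The volume of $F^\times\backslash F^\times\fo_K^\times$ is $\tfrac12$, since $[\fo_K^\times:\fo_F^\times(1+\fp_K)]$ is handled via the standard measure normalization; we will track this carefully. The sum is then $\tfrac12(1+\chi(\varpi_K)\Psi(\varpi_K))$, and we match it with $L(\tfrac12,\pi_K\times\chi)=(1-\chi(\varpi_K)\alpha q^{-1/2})^{-1}(1-\chi(\varpi_K)\alpha^{-1}q^{-1/2})^{-1}$ and $L(1,\xi_{K/F})=1$.

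\emph{Dyadic case.} This is identical to the split case, since $\fo_B=\M_2(\fo_F)$ contains $\fo_K$ diagonally after conjugation by $k_0$.

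The main obstacle is bookkeeping rather than ideas. First, we must get the quotient measure normalizations right, especially in the ramified case, where the volume of $\fo_F^\times\backslash\fo_K^\times$ with respect to $dh/d\nu$ must be computed from $\vol(\fo_K^\times)=\vol(\fo_F^\times)=1$; this gives volume $1$, while the factor $2$ comes from the index. Second, we must correctly identify which Cartan cell each torus element lands in for the given embedding $K\hookrightarrow\M_2(F)$. We would first verify the split case fully, since it fixes the normalization of $\alpha$ versus $\xi_1,\xi_2$, and then check the other cases against it.
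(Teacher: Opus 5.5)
Your approach is the paper's: Macdonald's formula plus an explicit enumeration of $F^\times\backslash K^\times/\fo_K^\times$ when $K$ is split, inert or ramified, with the dyadic setting treated as a split case. One small correction there: $k_0$ plays no role, since it diagonalizes $E$, whereas $K$ is already diagonal in $\M_2(F)$ and $\Psi$ is $\GL_2(\fo_F)$-bi-invariant. The paper avoids your central-character twist by working with $\alpha_1,\alpha_2$ directly, but that is cosmetic. Your split and inert cases are fine.

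The step that fails as written is the ramified case. Since $F^\times\cap\fo_K^\times=\fo_F^\times$, the coset $F^\times\backslash F^\times\fo_K^\times\simeq\fo_F^\times\backslash\fo_K^\times$ has volume $\vol(\fo_K^\times)/\vol(\fo_F^\times)=1$ for $d\dot h=dh/d\nu$, not $\tfrac12$. The index $[\fo_K^\times:\fo_F^\times(1+\fp_K)]$ you invoke equals $1$ for ramified $K$ and is irrelevant. So both classes $1$ and $\varpi_K$ carry volume $1$, and
\[
\cP(\Psi,\chi)=1+\chi(\varpi_K)\Psi(\varpi_K).
\]
Your $\tfrac12\bigl(1+\chi(\varpi_K)\Psi(\varpi_K)\bigr)$ is off by a factor of $2$, and it contradicts your own last paragraph, which has the right value.

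To finish this case, note that Macdonald at $k=1$ collapses to $\Psi(\varpi_K)=q^{-1/2}(\alpha_1+\alpha_2)/(1+q^{-1})$. With $\beta=\chi(\varpi_K)$, the constraint $\xi_1\xi_2\chi|_{F^\times}=1$ reads $\alpha_1\alpha_2\beta^2=1$, so
\[
1+\beta\Psi(\varpi_K)=\frac{(1+\alpha_1\beta q^{-1/2})(1+\alpha_2\beta q^{-1/2})}{1+q^{-1}}.
\]
Multiply numerator and denominator by $(1-\alpha_1\beta q^{-1/2})(1-\alpha_2\beta q^{-1/2})$ and use $\alpha_1^2\beta^2=\alpha_1\alpha_2^{-1}$ and $\alpha_2^2\beta^2=\alpha_1^{-1}\alpha_2$. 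This gives exactly $\zeta(2)L(\tfrac12,\pi_K\times\chi)/L(1,\pi,\Ad)$, with $L(1,\xi_{K/F})=1$ and $L(\tfrac12,\pi_K\times\chi)=(1-\alpha_1\beta q^{-1/2})^{-1}(1-\alpha_2\beta q^{-1/2})^{-1}$.

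Your ramified $L$-factor also needs adjusting. In your normalization $\alpha=\alpha_1\omega(\varpi_F)^{-1/2}$, twisting $\pi$ forces you to twist $\chi$ as well. So the quantity appearing there must be $\chi(\varpi_K)\omega(\varpi_F)^{1/2}$, which equals $\pm1$, not $\chi(\varpi_K)$ itself.
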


\begin{proof}
Recall Macdonald's formula
\[
 \Psi\! \mat{\varpi_F^i}{0}{0}{1} = 
 \frac{q^{-i/2}}{1+q^{-1}}
 \left( \alpha_1^i \frac{1 - \alpha_1^{-1} \alpha_2 q^{-1}}{1 - \alpha_1^{-1} \alpha_2} + \alpha_2^i \frac{1 - \alpha_1 \alpha_2^{-1} q^{-1}}{1 - \alpha_1 \alpha_2^{-1}} \right)
\]
for $i \ge 0$, where $\alpha_1 = \xi_1(\varpi_F)$ and $\alpha_2 = \xi_2(\varpi_F)$.

First assume that $K$ is split.
We identify $K$ with $F \oplus F$ as in \S \ref{ss:choices-split-1}.
Put $\beta = \chi(\varpi_F, 1)$, so that $\chi(1, \varpi_F) = \alpha_1^{-1} \alpha_2^{-1} \beta^{-1}$.
Then we have 
\begin{align*}
 \cP(\Psi,\chi) & = \sum_{i \in \Z} \Psi\! \mat{\varpi_F^i}{0}{0}{1} \chi(\varpi_F^i,1) \\ 
 & = \sum_{i = 0}^\infty \frac{q^{-i/2}}{1+q^{-1}}
 \left( \alpha_1^i \frac{1 - \alpha_1^{-1} \alpha_2 q^{-1}}{1 - \alpha_1^{-1} \alpha_2} + \alpha_2^i \frac{1 - \alpha_1 \alpha_2^{-1} q^{-1}}{1 - \alpha_1 \alpha_2^{-1}} \right) \beta^i \\
 & + \sum_{i = 1}^\infty \frac{q^{-i/2}}{1+q^{-1}}
 \left( \alpha_1^i \frac{1 - \alpha_1^{-1} \alpha_2 q^{-1}}{1 - \alpha_1^{-1} \alpha_2} + \alpha_2^i \frac{1 - \alpha_1 \alpha_2^{-1} q^{-1}}{1 - \alpha_1 \alpha_2^{-1}} \right) \alpha_1^{-i} \alpha_2^{-i} \beta^{-i} \\
 & = \frac{1}{(1+q^{-1})(\alpha_1 - \alpha_2)} 
 \bigg( \frac{\alpha_1 - \alpha_2 q^{-1}}{1 - \alpha_1 \beta q^{-1/2}}
 - \frac{\alpha_2 - \alpha_1 q^{-1}}{1 - \alpha_2 \beta q^{-1/2}} \\
 & \qquad + \frac{\alpha_2^{-1} \beta^{-1} q^{-1/2} (\alpha_1 - \alpha_2 q^{-1})}{1 - \alpha_2^{-1} \beta^{-1} q^{-1/2}} 
 - \frac{\alpha_1^{-1} \beta^{-1} q^{-1/2} (\alpha_2 - \alpha_1 q^{-1})}{1 - \alpha_1^{-1} \beta^{-1} q^{-1/2}} 
 \bigg) \\
 & = \frac{(1-q^{-1}) (1-\alpha_1 \alpha_2^{-1} q^{-1}) (1-\alpha_1^{-1} \alpha_2 q^{-1})}{(1 + q^{-1}) (1 - \alpha_1 \beta q^{-1/2}) (1 - \alpha_2 \beta q^{-1/2}) (1 - \alpha_1^{-1} \beta^{-1} q^{-1/2}) (1 - \alpha_2^{-1} \beta^{-1} q^{-1/2})}.
\end{align*}
This yields the desired identity.

Next assume that $K$ is inert.
Then we have 
\[
 \cP(\Psi, \chi) = \Psi(1) = 1.
\]
This yields the desired identity.

Finally, assume that $K$ is ramified.
Then we may assume that 
\[
 \varpi_K = \mat{0}{1}{\varpi_F}{0}
\]
in $B$.
Put $\beta = \chi(\varpi_K)$, so that $\alpha_1 \alpha_2 \beta^2 = 1$.
Then we have
\begin{align*}
 \cP(\Psi,\chi) & = \Psi(1) + \Psi\! \mat{0}{1}{\varpi_F}{0} \chi(\varpi_K) \\ 
 & = 1 + \frac{q^{-1/2}}{1+q^{-1}}
 \left( \alpha_1 \frac{1 - \alpha_1^{-1} \alpha_2 q^{-1}}{1 - \alpha_1^{-1} \alpha_2} + \alpha_2 \frac{1 - \alpha_1 \alpha_2^{-1} q^{-1}}{1 - \alpha_1 \alpha_2^{-1}} \right) \beta \\
 & = \frac{(1 + \alpha_1 \beta q^{-1/2}) (1 + \alpha_2 \beta q^{-1/2})}{1+q^{-1}}.
\end{align*}
This yields the desired identity.
\end{proof}

\subsection{The twisted Steinberg case}

We use the setting of \S \ref{ss:choices-split-2}.
In particular, $B$ is split. 
We identify $B$ with $\M_2(F)$ as before.
Let $I$ be an Iwahori subgroup of $\GL_2(F)$ given by
\[
 I = \left\{ k \in \GL_2(\fo_F) \; \middle| \; k \equiv \mat{*}{*}{0}{*} \bmod \fp_F \right\}.
\]
Let $\pi = \St \otimes \xi$ be a twisted Steinberg representation of $\GL_2(F)$, where $\xi$ is an unramified character of $F^\times$, and $\chi$ an unramified character of $K^\times$ such that $\xi^2 \cdot \chi|_{F^\times} = 1$.
Let $\Psi$ be the $I$-bi-invariant matrix coefficient of $\pi$ such that $\Psi(1) = 1$.

\begin{lem}
\label{l:wald-st}
We have
\[
 \cP(\Psi, \chi) = \frac{q+1}{q} \cdot \frac{\zeta(2) L(\frac{1}{2}, \pi_K \times \chi)}{L(1, \pi, \Ad) L(1, \xi_{K/F})}.
\]
\end{lem}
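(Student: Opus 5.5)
The plan is to compute $\cP(\Psi,\chi)$ directly as a sum over the split torus, in the same way as the split case of Lemma \ref{l:wald-ps}, but with Macdonald's formula replaced by the explicit Iwahori-bi-invariant matrix coefficient of $\St \otimes \xi$. In the setting of \S \ref{ss:choices-split-2}, $K = F \oplus F$ embeds into $B = \M_2(F)$ as the diagonal torus. Since $\fo_K^\times$ is the diagonal of $\GL_2(\fo_F)$, it lies in $I$. Hence
\[
 F^\times \backslash K^\times / \fo_K^\times = \{ (\varpi_F^i, 1) \mid i \in \Z \},
\]
each coset has volume $1$ for the standard quotient measure, and $\Psi$ is constant on each coset. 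Putting $\beta = \chi(\varpi_F,1)$ and $\alpha = \xi(\varpi_F)$, the relation $\xi^2 \cdot \chi|_{F^\times} = 1$ gives $\chi(1,\varpi_F) = \alpha^{-2}\beta^{-1}$, and
\[
 \cP(\Psi,\chi) = \sum_{i \in \Z} \Psi\! \mat{\varpi_F^i}{0}{0}{1} \beta^i .
\]

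The key input is the value of $\Psi$ on $t_i = \diag(\varpi_F^i,1)$. Write $\pi = (\St \otimes \xi)$ and realize it inside $\Ind(\xi|\cdot|^{1/2} \otimes \xi|\cdot|^{-1/2})$, so that the Iwahori-fixed line is spanned by a single vector $v$. Then $\Psi(t_i)$ can be computed as follows:
\begin{itemize}
\item for $i \ge 0$, use the action of the Iwahori--Hecke operator $U = I t_1 I$ on $v$. Its eigenvalue on the Steinberg Iwahori vector is $\alpha$ times a power of $q$, so by multiplicativity $\Psi(t_i) = (\alpha q^{-1})^{i}$;
\item for $i < 0$, use $t_{-i} = w\, t_i\, w^{-1}$ together with the Atkin--Lehner element $\smat{0}{1}{\varpi_F}{0}$, which normalizes $I$ and acts on $v$ by $\pm\alpha$. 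This gives $\Psi(t_i) = \alpha^{i} q^{-|i|}$.
\end{itemize}
So the expected formula is
\[
 \Psi(t_i) = \alpha^i q^{-|i|} \qquad (i \in \Z).
\]
I would verify this, and in particular the sign, by evaluating the known intertwining-integral formula for the Iwahori spherical function of the Steinberg representation. A useful sanity check is unitarity: one needs $\Psi(t_{-i}) = \overline{\Psi(t_i)}$ up to the central character.

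With this formula, write $a = \alpha\beta$ and sum the two geometric series:
\[
 \cP(\Psi,\chi) = \sum_{i \ge 0} (a q^{-1})^i + \sum_{i \ge 1} (a^{-1} q^{-1})^i
 = \frac{1 - q^{-2}}{(1 - a q^{-1})(1 - a^{-1} q^{-1})}.
\]
On the other side, the local factors are:
\begin{itemize}
\item $L(\tfrac12, \pi_K \times \chi) = L(\tfrac12, \St \otimes \xi\chi_1)\, L(\tfrac12, \St \otimes \xi\chi_2) = (1 - \alpha\beta q^{-1})^{-1} (1 - \alpha^{-1}\beta^{-1} q^{-1})^{-1}$;
\item $L(1, \pi, \Ad) = (1 - q^{-2})^{-1}$;
\item $\zeta(2) = (1 - q^{-2})^{-1}$;
\item $L(1, \xi_{K/F}) = (1 - q^{-1})^{-1}$.
\end{itemize}
Multiplying out,
\[
 \frac{q+1}{q} \cdot \frac{\zeta(2)\, L(\frac12, \pi_K \times \chi)}{L(1,\pi,\Ad)\, L(1,\xi_{K/F})}
 = \frac{(1+q^{-1})(1-q^{-1})}{(1 - a q^{-1})(1 - a^{-1} q^{-1})},
\]
which agrees with the sum above.

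The main obstacle is pinning down $\Psi(t_i)$ exactly, both the sign $\pm$ in front of $\alpha q^{-1}$ and the behaviour for negative $i$, since the final identity depends on these. I would settle this via the explicit Iwahori--Hecke algebra action on $\St \otimes \xi$ (quadratic relation $(T_s - q)(T_s + 1) = 0$, with $T_s$ acting by $-1$ on the Steinberg) rather than by a separate Macdonald-type formula.
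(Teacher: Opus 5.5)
Your proposal is correct and is essentially the paper's proof. Both compute a direct sum over $F^\times\backslash K^\times/\fo_K^\times$ using the Iwahori-bi-invariant coefficient $\Psi(\diag(\varpi_F^i,1))=\xi(\varpi_F)^i q^{-|i|}$, then sum the same geometric series and compare local $L$-factors. The paper quotes this coefficient from \cite[\S 6.6.3]{periods1} in the equivalent form $\Psi(\diag(\varpi_F^i,\varpi_F^{-i}))=q^{-2|i|}$, $\Psi(\diag(\varpi_F^{i+1},\varpi_F^{-i}))=\xi(\varpi_F)q^{-|2i+1|}$, whereas you derive it from the Iwahori--Hecke algebra.

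There is one small slip in your sketch for $i<0$. Conjugating $\diag(\varpi_F^i,1)$ by the Atkin--Lehner element gives $\varpi_F^{i}\diag(\varpi_F^{-i},1)$, not $\diag(\varpi_F^{-i},1)$, so the central character $\xi^2$ enters at that step. The formula you end up with is nonetheless the correct one.
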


\begin{proof}
We identify $K$ with $F \oplus F$ as in \S \ref{ss:choices-split-1}.
Then we have
\begin{align*}
 \cP(\Psi, \chi) = \sum_{i \in \Z} \Psi\! \mat{\varpi_F^i}{0}{0}{\varpi_F^{-i}} \chi(\varpi_F^i, \varpi_F^{-i}) + \sum_{i \in \Z} \Psi\! \mat{\varpi_F^{i+1}}{0}{0}{\varpi_F^{-i}} \chi(\varpi_F^{i+1}, \varpi_F^{-i}).
\end{align*}
As in \cite[\S 6.6.3]{periods1}, we have 
\[
 \Psi\! \mat{\varpi_F^i}{0}{0}{\varpi_F^{-i}} = q^{-2|i|}, \quad
 \Psi\! \mat{\varpi_F^{i+1}}{0}{0}{\varpi_F^{-i}} = \xi(\varpi_F) \cdot q^{-|2i+1|}.
\]
Hence, putting $\alpha = \xi(\varpi_F) \cdot \chi(\varpi_F, 1)$ (so that $\alpha^2 = \chi(\varpi_F, \varpi_F^{-1})$), we have
\begin{align*}
 \cP(\Psi, \chi) & = \sum_{i=0}^\infty q^{-2i} \alpha^{2i}
 + \sum_{i=1}^\infty q^{-2i} \alpha^{-2i}
 + \sum_{i=0}^\infty q^{-2i-1} \alpha^{2i+1}
 + \sum_{i=1}^\infty q^{-2i+1} \alpha^{-2i+1} \\
 & = \frac{1}{1 - \alpha^2 q^{-2}} + \frac{\alpha^{-2} q^{-2}}{1 - \alpha^{-2} q^{-2}} + \frac{\alpha q^{-1}}{1 - \alpha^2 q^{-2}} + \frac{\alpha^{-1} q^{-1}}{1 - \alpha^{-2} q^{-2}} \\
 & = \frac{1-q^{-2}}{(1 - \alpha q^{-1}) (1 - \alpha^{-1} q^{-1})}.
\end{align*}
This yields the desired identity.
\end{proof}

\subsection{The $1$-dimensional case}

We use the setting of \S \ref{ss:choices-inert-2}.
In particular, $B$ is ramified.
Let $\pi = \St \otimes \xi$ be a twisted Steinberg representation of $\GL_2(F)$, where $\xi$ is an unramified character of $F^\times$, and $\chi$ an unramified character of $K^\times$ such that $\xi^2 \cdot \chi|_{F^\times} = 1$.
Then $\pi^B$ is the $1$-dimensional representation $\xi \circ \nu$.
Let $\Psi$ be the matrix coefficient of $\pi^B$ such that $\Psi(1) = 1$.

\begin{lem}
\label{l:wald-1dim}
We have
\[
 \cP(\Psi, \chi) = \frac{q-1}{q} \cdot \frac{\zeta(2) L(\frac{1}{2}, \pi_K \times \chi)}{L(1, \pi, \Ad) L(1, \xi_{K/F})}.
\]
\end{lem}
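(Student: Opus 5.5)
The plan is to compute $\cP(\Psi,\chi)$ directly and compare it with the right-hand side.

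In the setting of \S\ref{ss:choices-inert-2}, $K$ is the unramified quadratic extension of $F$. Since $B$ is ramified, $\pi^B = \xi\circ\nu$ is a character, so $\Psi(h) = \xi(\nu(h)) = \xi(\N_{K/F}(h))$ for $h \in K^\times \subset B^\times$. The integrand $\Psi(h)\chi(h) = (\xi\circ\N_{K/F})\chi$ is a character of $K^\times$. It is trivial on $F^\times$ because $\xi^2\chi|_{F^\times} = 1$. It is also trivial on $\fo_K^\times$: $\chi$ is unramified, $\xi$ is unramified, and $\N_{K/F}(\fo_K^\times) = \fo_F^\times$. Since $K$ is unramified, $K^\times = F^\times \fo_K^\times$, so the integrand is identically $1$ on $F^\times\backslash K^\times$. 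Hence $\cP(\Psi,\chi) = \vol(F^\times\backslash K^\times)$. For the standard measures (volumes of $\fo_K^\times$ and $\fo_F^\times$ equal to $1$), $F^\times\backslash K^\times \simeq \fo_F^\times\backslash\fo_K^\times$, so this volume is $1$.

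It remains to show that the right-hand side equals $1$, i.e.
\[
 \frac{q-1}{q}\cdot\frac{\zeta(2)L(\frac12,\pi_K\times\chi)}{L(1,\pi,\Ad)L(1,\xi_{K/F})} = 1.
\]
We take $\zeta(2) = (1-q^{-2})^{-1}$ and $L(1,\xi_{K/F}) = (1+q^{-1})^{-1}$. For $\pi = \St\otimes\xi$ we have $L(s,\pi,\Ad) = (1-q^{-s-1})^{-1}$, so $L(1,\pi,\Ad) = (1-q^{-2})^{-1}$.

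The base change $\pi_K$ is $\St_K \otimes (\xi\circ\N_{K/F})$. Then $\pi_K\times\chi$ is the Steinberg representation of $\GL_2(K)$ twisted by the unramified character $(\xi\circ\N)\chi$, which we just showed is trivial. Hence
\[
 L(s,\pi_K\times\chi) = (1-q_K^{-s-1/2})^{-1}, \qquad L(\tfrac12,\pi_K\times\chi) = (1-q^{-2})^{-1}, \quad q_K = q^2.
\]
Substituting, the ratio is
\[
 \frac{q-1}{q}\cdot\frac{(1-q^{-2})^{-1}(1-q^{-2})^{-1}}{(1-q^{-2})^{-1}(1+q^{-1})^{-1}} = (1-q^{-1})\cdot\frac{1+q^{-1}}{1-q^{-2}} = 1.
\]
Both sides are therefore equal to $1$.

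The main obstacle is pinning down the conventions. Two points need care:
\begin{itemize}
\item that the base change of $\St\otimes\xi$ to an unramified $K$ is $\St_K\otimes(\xi\circ\N)$, with Rankin--Selberg $L$-factor $(1-q_K^{-s-1/2}\cdot\text{unramified character}(\varpi))^{-1}$;
\item that the normalization of $d\dot h$ gives volume $1$ for $\fo_F^\times\backslash\fo_K^\times$.
\end{itemize}
If a sign such as $\xi(\varpi)$ appears in the character, the earlier observation that the twisting character is trivial removes it.
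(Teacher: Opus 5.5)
Your proof is correct and follows the paper's argument. The paper's proof is the one line ``since $K$ is inert, $\cP(\Psi,\chi)=\Psi(1)=1$,'' and it leaves implicit the check that the right-hand side equals $1$. You have supplied those details: the integrand is trivial on $F^\times\backslash K^\times=\fo_F^\times\backslash\fo_K^\times$, this quotient has volume $1$, and the local $L$-factors give $1$ on the right-hand side.
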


\begin{proof}
Since $K$ is inert, we have 
\[
 \cP(\Psi, \chi) = \Psi(1) = 1.
\]
This yields the desired identity.
\end{proof}

\subsection{The real case}

Suppose that $F=\R$, $K=\C$, and $B=\mathbb{H}$.
Let $\pi$ be the discrete series representation of $\GL_2(\R)$ of weight $2n+4$ and $\chi$ a character of $\C^\times$ given by $\chi(z) = \bar{z}/z$.
Then $\pi^B$ is the $(2n+3)$-dimensional representation with trivial central character.
Let $\Psi$ be the matrix coefficient of $\pi^B$ such that 
\[
 \Psi(h_1 h h_2) = \chi(h_1 h_2)^{-1} \Psi(h)
\]
for all $h \in B^\times$ and $h_1, h_2 \in K^\times$, and such that $\Psi(1) = 1$.

\begin{lem}
\label{l:wald-real}
We have
\[
 \cP(\Psi, \chi) = 2 \pi.
\]
\end{lem}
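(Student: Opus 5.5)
The approach is to observe that the integrand is constant, so that $\cP(\Psi,\chi)$ reduces to the volume of $\R^\times \backslash \C^\times$ with respect to the quotient measure $d\dot{h} = dh/d\nu$. I would compute this volume directly in polar coordinates.

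\emph{The integrand is constant.} For $h \in K^\times = \C^\times$, apply the defining equivariance of $\Psi$ with $h_1 = h$, $h_2 = 1$, evaluated at the identity:
\[
 \Psi(h) = \Psi(h \cdot 1 \cdot 1) = \chi(h)^{-1} \Psi(1) = \chi(h)^{-1}.
\]
Hence $\Psi(h)\chi(h) = 1$ for all $h \in \C^\times$. This is compatible with the integrand being well defined on $\R^\times \backslash \C^\times$: the central character of $\pi^B$ is trivial, and $\chi(z) = \bar{z}/z$ is trivial on $\R^\times$. Therefore
\[
 \cP(\Psi,\chi) = \vol(\R^\times \backslash \C^\times, d\dot{h}).
\]

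\emph{Computing the volume.} By the definition of the quotient measure, for integrable $f$ on $\C^\times$ we have
\[
 \int_{\C^\times} f(h)\, dh = \int_{\R^\times \backslash \C^\times} \int_{\R^\times} f(\nu h)\, d\nu \, d\dot{h}.
\]
I would test this against a radial function $f(re^{\sqrt{-1}\theta}) = g(r)$, with $g \in C_c^\infty(\R_+^\times)$ and $\int_0^\infty g(r)\, dr/r = 1$.

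For the left-hand side, the standard measure is $dh = 2\, dh^{\mathrm{Leb}}/|h|$. Using the stated convention (with $|h|$ the normalized absolute value on $\C$, namely $r^2$), this becomes $2\, r\,dr\,d\theta / r^2 = 2\, (dr/r)\, d\theta$. So the left-hand side equals $2 \cdot 2\pi = 4\pi$.

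For the inner integral on the right-hand side, $\R^\times = \R_+^\times \times \{\pm 1\}$ and $d\nu = d\nu^{\mathrm{Leb}}/|\nu|$. Since $f$ is radial, $f(\nu h) = g(|\nu|\,|h|_{\R})$, so each sign contributes $\int g\, dr/r = 1$, and the inner integral equals $2$ independently of $h$. Comparing both sides gives $\vol(\R^\times \backslash \C^\times) = 4\pi/2 = 2\pi$, as claimed.

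\emph{Main point of care.} Nothing here is deep; the only place where an error could enter is the measure bookkeeping. Specifically, one must consistently use the normalized absolute value $|h| = h\bar{h}$ on $\C$ in the definition of $dh$, and one must count both components $\pm 1$ of $\R^\times$ in $d\nu$. Either mistake would change the answer by a factor of $2$, so I would double-check these two normalizations against the measure conventions fixed in \S\ref{ss:theta-Sp4} and \S\ref{ss:wald-explicit}.
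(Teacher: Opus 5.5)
Your proof is correct and follows the paper's argument: the $\chi$-equivariance of $\Psi$ makes the integrand identically $\Psi(1)=1$, and $\vol(\R^\times\backslash\C^\times, d\dot{h}) = 2\pi$ follows from $dh = 2r^{-1}\,dr\,d\theta$ together with the two components of $\R^\times$. The paper does the same bookkeeping by passing to $\C^\times/\R_+^\times$ (volume $4\pi$) and dividing by $[\R^\times:\R_+^\times]=2$, which is exactly what your radial test-function computation encodes.
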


\begin{proof}
We have
\[
 \cP(\Psi,\chi) = \vol(\C^\times/\R^\times, d\dot{h}) \cdot \Psi(1).
\]
If we write $h = r e^{\sqrt{-1} \theta}$ with $r \in \R^\times_+$ and $\theta \in \R / 2 \pi \Z$, then we have
\[
 \vol(\C^\times/\R^\times, d\dot{h}) = \frac{1}{[\R^\times : \R^\times_+]} \cdot \vol \left( \C^\times/\R^\times_+, \frac{2 r^{-1} \, dr \, d \theta}{r^{-1} \, dr} \right) = 2 \pi, 
\]
where $dr$ and $d\theta$ are the Lebesgue measures.
This completes the proof.
\end{proof}

\section{Rallis inner product formula}
\label{s:rallis}

In this section, we compute the Petersson inner product of the theta lift described in \S \ref{ss:theta-U11} explicitly.

\subsection{An abstract version}
\label{ss:rallis-abstract}

Let $F$ be a totally real number field and $B$ a totally definite quaternion algebra over $F$.
Let $E$ be a totally imaginary quadratic extension of $F$ which embeds into $B$.
Let $\bV = B$ be the $2$-dimensional hermitian $E$-space and $\bW = E^2$ the $2$-dimensional split skew-hermitian $E$-space.
Put
\begin{align*}
 \bG & = \GU(\bW) = (\GL_2(F) \times E^\times)/F^\times, &
 \bG_1 & = \U(\bW), \\
 \bH & = \GU(\bV) = (B^\times \times E^\times)/F^\times, &
 \bH_1 & = \U(\bV).
\end{align*}
Put
\[
 d(\nu) = \left[ \mat{1}{0}{0}{\nu}, 1 \right] \in \bG
\]
for $\nu \in F^\times$.

Recall the Weil representation $\omega_\psi$ (resp.~$\Omega_\psi$) of $\bG_1(\A) \times \bH_1(\A)$ (resp.~$\bG(\A) \times \bH(\A)$) on $\cS(B(\A))$ (resp.~$\cS(B(\A) \times \A^\times)$) as in \S \ref{ss:weil-u}.
We define a $\bG_1(\A)$-invariant inner product $\langle \cdot, \cdot \rangle$ on $\cS(B(\A))$ by 
\[
 \langle \varphi, \varphi' \rangle = \int_{B(\A)} \varphi(x) \overline{\varphi'(x)} \, dx,
\]
where $dx$ is the self-dual Haar measure on $B(\A)$ with respect to $\psi \circ \tr_{B/F}$.
Similarly, for each place $v$ of $F$, we define a $\bG_{1,v}$-invariant inner product $\langle \cdot, \cdot \rangle_v$ on $\cS(B_v)$, so that we have a decomposition $\langle \cdot, \cdot \rangle = \prod_v \langle \cdot, \cdot \rangle_v$.
For $h \in \bH(\A)$, we define a unitary operator $L(h)$ on $\cS(B(\A))$ by
\[
 L(h) \varphi(x) = |\nu(h)|^{-1} \cdot \varphi(h^{-1} x).
\]
Let $\pi$ be an irreducible unitary cuspidal automorphic representation of $\GL_2(\A)$ with central character $\xi_\pi$ and $\mu$ a unitary character of $\A_E^\times/E^\times$ such that $\xi_\pi \cdot \mu|_{\A^\times} = 1$.
We regard $\pi \boxtimes \mu$ as an automorphic representation of $\bG(\A)$.
Let $\langle \cdot, \cdot \rangle$ be the Petersson inner product on $\pi \boxtimes \mu$ given by
\[
 \langle f, f' \rangle = \int_{Z_\bG(\A) \bG(F) \backslash \bG(\A)} f(g) \overline{f'(g)} \, dg,
\]
where $dg$ is the Tamagawa measure on $Z_\bG(\A) \backslash \bG(\A)$.
For each place $v$ of $F$, we fix a $\bG_v$-invariant inner product $\langle \cdot, \cdot \rangle_v$ on $\pi_v \boxtimes \mu_v$.
We also fix decompositions of the Tamagawa measures $dg_1 = \prod_v dg_{1,v}$ and $d \nu = \prod_v d \nu_v$ on $\bG_1(\A)$ and $\A^\times$, respectively.

Let $\varphi = \otimes_v \varphi_v \in S(B(\A) \times \A^\times)$ be a decomposable Schwartz function and $f = \otimes_v f_v \in \pi \boxtimes \mu$ a nonzero decomposable vector.
Then we compute the Petersson inner product of the theta lift
\[
 \langle \theta_\varphi(f), \theta_\varphi(f) \rangle
 = \int_{Z_\bH(\A) \bH(F) \backslash \bH(\A)} |\theta_\varphi(f)(h)|^2 \, dh,
\]
where $dh$ is the Tamagawa measure on $Z_\bH(\A) \backslash \bH(\A)$.

\begin{prop}
\label{p:rallis-abstract}
We have
\[
 \frac{\langle \theta_\varphi(f), \theta_\varphi(f) \rangle}{\langle f, f \rangle} = \frac{1}{2} \cdot \frac{L^S(\frac{1}{2}, \pi_E \times \mu)}{L^S(1, \xi_{E/F}) \zeta^S(2)} \cdot \prod_{v \in S} Z_v(\varphi_v, f_v),
\]
where $S$ is a sufficiently large finite set of places of $F$, the superscript $S$ indicates that we take a product of local factors over all $v \notin S$, and $Z_v(\varphi_v, f_v)$ is a local zeta integral given by
\[
 Z_v(\varphi_v, f_v) = \int_{F_v^\times} \int_{F_v^\times} \int_{\bG_{1,v}} |\nu_v \nu_v'| \langle \omega_{\psi,v}(g_{1,v}) \varphi_{v, \nu_v}, \varphi_{v, \nu_v'} \rangle_v \Psi_{f_v}(d(\nu_v') g_{1,v} d(\nu_v)^{-1}) \, dg_{1,v} \, d \nu_v \, d \nu_v'
\]
with $\varphi_{v,\nu_v}(x_v) = \varphi_v(x_v,\nu_v)$ and 
\[
 \Psi_{f_v}(g_v) = \frac{\langle (\pi_v \boxtimes \mu_v)(g_v) f_v, f_v \rangle_v}{\langle f_v, f_v \rangle_v}.
\]
\end{prop}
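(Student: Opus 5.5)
The natural route is to reduce the similitude problem to the isometry dual pair $(\bG_1,\bH_1)$ and apply the Rallis inner product formula together with the doubling method, as in Gan--Ichino style arguments. First I unfold $\langle \theta_\varphi(f),\theta_\varphi(f)\rangle$. The theta function $\Theta_\varphi(g,h)$ on $\bG(\A)\times\bH(\A)$ is expanded via the $t$-variable: for $g=g_1 d(\nu)$ with $g_1\in\bG_1(\A)$, $\Omega_\psi(g)\varphi(x,t)$ is expressed through $\omega_{t\psi}(g_1)$ and the rescaled $\varphi_{\nu^{-1}t}$. Using the decomposition $Z_\bG(\A)\bG(F)\backslash\bG(\A)$ fibred over $\A^\times/F^\times\N(\A_E^\times)$ (and likewise for $\bH$, where $\nu(\bH(\A))$ is governed by norms from $E$ and the reduced norm of $B$), I rewrite the Petersson norm of $\theta_\varphi(f)$ as an integral over $\bH_1(F)\backslash\bH_1(\A)$ of the product of isometry theta lifts $\theta_{\varphi_\nu}(f(\cdot\, d(\nu)))\,\overline{\theta_{\varphi_{\nu'}}(f(\cdot\, d(\nu')))}$, integrated over the similitude variables $\nu,\nu'$. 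The precise bookkeeping of the similitude classes (which classes of $\A^\times/F^\times$ are hit, and the factor $\tfrac12$ coming from the index/volume of $F^\times\N_{E/F}(\A_E^\times)$ in $\A^\times$) is where I expect the constant $\tfrac12$ to appear.

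Next, for the isometry pair, the Siegel--Weil formula for $\U(\bV)\times \U(\bV)\subset \U(\bV\oplus -\bV)$ (or rather, the doubling seesaw $\bG_1\times\bG_1\subset \U(\bW\oplus-\bW)$) turns the inner product into the doubling zeta integral: swapping the $\bH_1$ and $\bG_1\times\bG_1$ integrations, the $\bH_1$-integral of $\Theta\overline{\Theta}$ becomes a Siegel Eisenstein series on $\U(2,2)$ evaluated at the point $s_0=0$ (since $\bV$ is anisotropic of dimension $2$ and the pair is in the convergent/first-term range, Weil's convergence and Ichino's or Kudla--Rallis' Siegel--Weil identity apply with the constant $2$ or $1$ as appropriate). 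Then the Piatetski-Shapiro--Rallis doubling unfolding gives an Euler product $\prod_v Z_v^{\mathrm{doub}}$ of local integrals $\int_{\bG_{1,v}}\langle\omega_{\psi,v}(g_1)\varphi_v,\varphi_v\rangle\Psi_{f_v}(g_1)\,dg_1$; inserting back the similitude integrals over $\nu_v,\nu'_v$ with the Jacobian $|\nu_v\nu'_v|$ gives exactly the $Z_v(\varphi_v,f_v)$ in the statement.

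Finally, at unramified places $v\notin S$ (with $\varphi_v$ the characteristic function of $\fo_{B_v}\times\fo_{F_v}^\times$, $f_v$ spherical, measures giving $\fo$-points volume one), I invoke the standard unramified computation of the doubling zeta integral for $\U(1,1)$ and the $2$-dimensional space $\bV_v$, which yields $L_v(\tfrac12,\pi_{E,v}\times\mu_v)/(L_v(1,\xi_{E/F,v})\zeta_v(2))$ — the denominator being the normalizing factor $d_v(s)$ of the $\U(2,2)$ Siegel Eisenstein series at $s=0$; the similitude integrals collapse to the single term $\nu_v,\nu'_v\in\fo^\times$. The main obstacle is the first step: correctly matching the similitude fibration of $\bG$ and $\bH$ so that the isometry Rallis formula applies fibrewise and the global constant comes out as $\tfrac12$; I would handle it by first treating the $t$-coordinate explicitly via $\Omega_\psi(d(\nu))$ and then checking the constant against Tamagawa numbers $\tau(\bH_1)=2$ in the Siegel--Weil formula.
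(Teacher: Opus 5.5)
Your overall route (unfold the $t$-variable, pass to the isometry pair $(\bG_1,\bH_1)$, Siegel--Weil at $s=0$, doubling, Lapid--Rallis at unramified places) is the paper's. However, the step you yourself flag as the main obstacle needs an idea you do not supply, and the fibrewise strategy you sketch for it does not give the stated formula. After unfolding in $t$, one has $\theta_\varphi(f)(h)=\int_{\A^\times}|\nu|\int_{\bG_1(F)\backslash\bG_1(\A)}\sum_{x\in B}\omega_\psi(g_1)L(h)\varphi_\nu(x)\,f(g_1d(\nu(h)\nu^{-1}))\,dg_1\,d\nu$. The Petersson norm is therefore an integral over $\nu,\nu'$ and over $\cH=Z_\bH(\A)\bH_1(\A)\bH(F)\backslash\bH(\A)\simeq\A^\times/F^\times\N_{E/F}(\A_E^\times)$ of the $\bH_1$-inner products; your rewriting drops the $\cH$-integral. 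If you apply Siegel--Weil and the doubling unfolding fibre by fibre, the global factor you obtain is a \emph{partial} inner product $\int_{\bG_1(F)\backslash\bG_1(\A)}f(g_2x)\overline{f(g_2y)}\,dg_2$, with $x,y$ adelic translates by elements $d(\cdot)$. This pairing is only $\bG_1(\A)$-invariant. It need not equal $\langle f,f\rangle\prod_v\Psi_{f_v}(\cdots)$: the restriction of $\pi\boxtimes\mu$ to $\bG_1(\A)$ can be reducible, and only the full $\bG$-invariant Petersson product is forced to factor. So your claim that inserting back the similitude integrals gives exactly $Z_v$ is not justified. The matrix coefficient $\Psi_{f_v}(d(\nu'_v)g_{1,v}d(\nu_v)^{-1})$ in the statement only arises from $\langle f,f\rangle$ on all of $\bG$.

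The missing device is a way to make the Eisenstein section independent of $h$, so that $\int_{\cH}$ can be transported to the $\bG$ side before factoring. The paper replaces the standard section $\cF_{h,\nu,\nu'}$ (attached to $L(h)\varphi_\nu\otimes\overline{L(h)\varphi_{\nu'}}$) by the non-standard holomorphic section $\tilde\cF_{h,\nu,\nu'}(g,s)=\cF_{1,\nu,\nu'}(d^\square(\nu(h))^{-1}g\,d^\square(\nu(h)),s)$. Since $L^\square(h)^{-1}\omega^\square_\psi(g)L^\square(h)=\omega^\square_\psi(d^\square(\nu(h))^{-1}g\,d^\square(\nu(h)))$ and $L^\square(h)$ does not change the value at $0$, the two sections agree at $s=0$ and give the same $E(0,\cdot)$. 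Unfolding with $\tilde\cF$ for $\Re(s)\gg0$ moves all $h$-dependence onto $f$ as right translation by $d(\nu(h))$. The map $h\mapsto d(\nu(h))$ identifies $\cH$ with $Z_\bG(\A)\bG_1(\A)\bG(F)\backslash\bG(\A)$, so $\int_{\cH}$ completes the $\bG_1$-integrals to $\langle f,f\rangle\prod_vJ_v(s,\nu_v,\nu'_v)$, built from the standard section $\cF_{1,\nu,\nu'}$, to which the unramified computation applies.

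Your guess for the constant is also off. The index $[\A^\times:F^\times\N_{E/F}(\A_E^\times)]=2$ does not produce the $\tfrac12$. It cancels: $\vol(\cH)=\vol(Z_\bH(\A)\bH(F)\backslash\bH(\A))/\vol(\bH_1(F)\backslash\bH_1(\A))=2/2=1$, and the matching quotient on the $\bG$ side is absorbed into $\langle f,f\rangle$. The $\tfrac12$ is the Siegel--Weil constant: the $\bH_1$-integral (Tamagawa measure) of the product of the two theta lifts is $\tfrac12$ of the doubling integral of $E(0,\cdot)$, by Ichino's Theorem 4.2.
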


\begin{proof}
Put $\varphi_\nu(x) = \varphi(x,\nu)$ and $f_\nu(g) = f(g d(\nu))$.
Recall that
\begin{align*}
 \theta_\varphi(f)(h) & = \int_{\bG(F) \backslash \bG(\A)} \Theta_\varphi(g,h) f(g) \, dg \\
 & = \int_{F^\times \backslash \A^\times} \int_{\bG_1(F) \backslash \bG_1(\A)}
 \sum_{x \in B} \sum_{t \in F^\times} \Omega_\psi(g_1 d(\nu), h) \varphi(x,t) f_\nu(g_1) \, dg_1 \, d \nu.
\end{align*}
Since
\begin{align*}
 \Omega_\psi(g_1 d(\nu), h) \varphi(x,t) 
 & = |\nu(h) \nu^{-1}| \omega_{t \psi}(g_1) L(h) \varphi_{\nu(h) \nu^{-1} t}(x) \\
 & = |\nu(h) \nu^{-1}| \omega_{\psi}(d(t)^{-1} g_1 d(t)) L(h) \varphi_{\nu(h) \nu^{-1} t}(x),
\end{align*}
we have
\begin{align*}
 \theta_\varphi(f)(h) & = \int_{F^\times \backslash \A^\times} \int_{\bG_1(F) \backslash \bG_1(\A)} \sum_{x \in B} \sum_{t \in F^\times} 
 |\nu(h) \nu^{-1}| \omega_{\psi}(d(t)^{-1} g_1 d(t)) L(h) \varphi_{\nu(h) \nu^{-1} t}(x) f_\nu(g_1) \, dg_1 \, d \nu \\
 & = \int_{F^\times \backslash \A^\times} \sum_{t \in F^\times} \int_{\bG_1(F) \backslash \bG_1(\A)} \sum_{x \in B}
 |\nu(h) \nu^{-1}| \omega_{\psi}(g_1) L(h) \varphi_{\nu(h) \nu^{-1} t}(x) f_{\nu t^{-1}}(g_1) \, dg_1 \, d \nu \\
 & = \int_{\A^\times} \int_{\bG_1(F) \backslash \bG_1(\A)} \sum_{x \in B}
 |\nu(h) \nu^{-1}| \omega_{\psi}(g_1) L(h) \varphi_{\nu(h) \nu^{-1}}(x) f_{\nu}(g_1) \, dg_1 \, d \nu \\
 & = \int_{\A^\times} |\nu| \int_{\bG_1(F) \backslash \bG_1(\A)} \sum_{x \in B}
 \omega_{\psi}(g_1) L(h) \varphi_{\nu}(x) f_{\nu(h) \nu^{-1}}(g_1) \, dg_1 \, d \nu.
\end{align*}
Hence, putting $\cH = Z_\bH(\A) \bH_1(\A) \bH(F) \backslash \bH(\A)$ and
\[
 I(h, \nu, \nu') = \int_{\bH_1(F) \backslash \bH_1(\A)} \phi(h_1 h,\nu) \overline{\phi(h_1 h,\nu')} \, dh_1
\]
with 
\[
 \phi(h,\nu) = \int_{\bG_1(F) \backslash \bG_1(\A)} \sum_{x \in B} \omega_{\psi}(g_1) L(h) \varphi_{\nu}(x) f_{\nu(h) \nu^{-1}}(g_1) \, dg_1,
\]
we have
\begin{align*}
 \langle \theta_\varphi(f), \theta_\varphi(f) \rangle
 & = \int_{\cH} \int_{\bH_1(F) \backslash \bH_1(\A)} |\theta_\varphi(f)(h_1 h)|^2 \, dh_1 \, d\dot{h} \\
 & = \int_{\A^\times} \int_{\A^\times} |\nu \nu'| \int_{\cH} I(h, \nu, \nu') \, d\dot{h} \, d \nu \, d \nu'.
\end{align*}
Here $dh_1$ is the Tamagawa measure on $\bH_1(\A)$ and $d\dot{h}$ is the Haar measure on $\cH$ such that 
\[
 \vol(\cH) = \frac{\vol(Z_\bH(\A) \bH(F) \backslash \bH(\A))}{\vol(\bH_1(F) \backslash \bH_1(\A))} = \frac{2}{2} = 1.
\]
By the Siegel-Weil formula \cite[Theorem 4.2]{ichino}, we have
\[
 I(h, \nu, \nu') = \frac{1}{2} \cdot J(0,h,\nu,\nu')
\]
with
\[
 J(s,h,\nu,\nu') = \int_{\bG_1(F) \backslash \bG_1(\A)} \int_{\bG_1(F) \backslash \bG_1(\A)} E(\iota(g_1,g_2), s, \cF_{h,\nu,\nu'}) f_{\nu(h) \nu^{-1}}(g_1) \overline{f_{\nu(h) \nu'^{-1}}(g_2)} \, d g_1 \, d g_2,
\]
where the notation is as follows.

Let $\bW^\square = \bW \oplus \bW$ be the $4$-dimensional $E$-vector space equipped with the skew-hermitian form
\[
 ((w_1,w_2), (w_1',w_2')) = (w_1,w_1') - (w_2,w_2')
\]
for $w_1,w_2,w_1',w_2' \in \bW$.
Then we have a natural embedding
\[
 \iota : \{ (g_1,g_2) \in \bG \times \bG \mid \nu(g_1) = \nu(g_2) \} \hookrightarrow \GU(\bW^\square).
\]
Let $\bW^\square = \bW^\bigtriangledown \oplus \bW^\triangle$ be a complete polarization given by 
\[
 \bW^\bigtriangledown = \{ (w, -w) \mid w \in \bW \}, \quad
 \bW^\triangle = \{ (w, w) \mid w \in \bW \}.
\]
Let $\bP$ be the Siegel parabolic subgroup of $\U(\bW^\square)$ stabilizing $\bW^\triangle$.
For $s \in \C$, we write 
\[
 \cI(s) = \Ind^{\U(\bW^\square)(\A)}_{\bP(\A)} (|{\det}_{\bW^\bigtriangledown} |^s)
\]
for the degenerate principal series representation of $\U(\bW^\square)(\A)$.
For a holomorphic section $\cF$ of $\cI(s)$, we define an Eisenstein series $E(s,\cF)$ by (the meromorphic continuation of)
\[
 E(g,s,\cF) = \sum_{\gamma \in \bP(F) \backslash \U(\bW^\square)(F)} \cF(\gamma g, s).
\]
Note that $E(s,\cF)$ is holomorphic at $s=0$.
For $\varphi^\square \in \cS((\bV \otimes \bW^\bigtriangledown)(\A))$, we may define a standard section $\cF_{\varphi^\square}$ of $\cI(s)$ (i.e.~a holomorphic section of $\cI(s)$ whose restriction to the standard maximal compact subgroup of $\U(\bW^\square)(\A)$ does not depend on $s$) such that
\[
 \cF_{\varphi^\square}(g,0) = \omega_{\psi}^\square(g) \varphi^\square(0),
\]
where $\omega_\psi^\square$ is the Weil representation of $\U(\bW^\square)(\A)$.
As in \cite[p.~182]{li92}, we define a partial Fourier transform
\[
 \delta : \cS(B(\A)) \mathbin{\hat{\otimes}} \cS(B(\A)) \rightarrow \cS((\bV \otimes \bW^\bigtriangledown)(\A))
\]
such that $\delta(\varphi_1 \otimes \overline{\varphi_2})(0) = \langle \varphi_1, \varphi_2 \rangle$ for $\varphi_1, \varphi_2 \in \cS(B(\A))$.
Note that
\begin{align*}
 \delta \circ \omega_\psi(g_1) \otimes \overline{\omega_\psi(g_2)} & = \omega_\psi^\square(\iota(g_1,g_2)) \circ \delta, \\
 \delta \circ L(h) \otimes \overline{L(h)} & = L^\square(h) \circ \delta
\end{align*}
for $g_1, g_2 \in \bG_1(\A)$ and $h \in \bH(\A)$, where $L^\square(h)$ is the unitary operator on $\cS((\bV \otimes \bW^\bigtriangledown)(\A))$ given by
\[
 L^\square(h) \varphi^\square(x) = \varphi^\square(h^{-1} x d^\square(\nu))
\]
with $d^\square(\nu) = \iota(d(\nu), d(\nu))$.
If $\varphi^\square = \delta(L(h) \varphi_\nu \otimes \overline{L(h) \varphi_{\nu'}})$, then we write $\cF_{h,\nu,\nu'} = \otimes_v \cF_{h_v,\nu_v,\nu'_v}$ for the corresponding decomposable standard section of $\cI(s)$.
Since the function $\nu \mapsto L(h) \varphi_\nu$ is compactly supported, so is the function $(\nu,\nu') \mapsto \cF_{h,\nu,\nu'}$.

Now we define another holomorphic (but not necessarily standard) section $\tilde{\cF}_{h,\nu,\nu'}$ of $\cI(s)$ by
\[
 \tilde{\cF}_{h,\nu,\nu'}(g,s) = \cF_{1,\nu,\nu'}(d^\square(\nu(h))^{-1} g d^\square(\nu(h)), s).
\]
Since 
\[
 L^\square(h)^{-1} \omega_\psi^\square(g) L^\square(h) = \omega_\psi(d^\square(\nu(h))^{-1} g d^\square(\nu(h))),
\]
we have
\begin{align*}
 \tilde{\cF}_{h,\nu,\nu'}(g,0) & = \omega_\psi(d^\square(\nu(h))^{-1} g d^\square(\nu(h))) \delta(\varphi_\nu \otimes \overline{\varphi_{\nu'}})(0) \\
 & = L^\square(h)^{-1} \omega_\psi^\square(g) L^\square(h) \delta(\varphi_\nu \otimes \overline{\varphi_{\nu'}})(0) \\
 & = \omega_\psi^\square(g) L^\square(h) \delta(\varphi_\nu \otimes \overline{\varphi_{\nu'}})(0) \\
 & = \omega_\psi^\square(g) \delta(L(h) \varphi_\nu \otimes \overline{L(h) \varphi_{\nu'}})(0) \\
 & = \cF_{h,\nu,\nu'}(g,0).
\end{align*}
Hence we have $E(0, \tilde{\cF}_{h,\nu,\nu'}) = E(0, \cF_{h,\nu,\nu'})$, so that
\[
 I(h, \nu, \nu') = \frac{1}{2} \cdot \tilde{J}(0,h,\nu,\nu')
\]
with 
\[
 \tilde{J}(s,h,\nu,\nu') = \int_{\bG_1(F) \backslash \bG_1(\A)} \int_{\bG_1(F) \backslash \bG_1(\A)} E(\iota(g_1,g_2), s, \tilde{\cF}_{h,\nu,\nu'}) f_{\nu(h) \nu^{-1}}(g_1) \overline{f_{\nu(h) \nu'^{-1}}(g_2)} \, d g_1 \, d g_2.
\]
By the doubling method of Piatetski-Shapiro and Rallis \cite{psr}, we can unfold the Eisenstein series to get 
\begin{align*}
 \tilde{J}(s,h,\nu,\nu') & = \int_{\bG_1(\A)} \tilde{\cF}_{h,\nu,\nu'}(\iota(g_1,1),s) \int_{\bG_1(F) \backslash \bG_1(\A)} f_{\nu(h) \nu^{-1}}(g_2 g_1) \overline{f_{\nu(h) \nu'^{-1}}(g_2)} \, dg_2 \, d g_1 \\
 & = \int_{\bG_1(\A)} \cF_{1,\nu,\nu'}(\iota(d(\nu(h))^{-1} g_1 d(\nu(h)),1),s) \int_{\bG_1(F) \backslash \bG_1(\A)} f_{\nu(h) \nu^{-1}}(g_2 g_1) \overline{f_{\nu(h) \nu'^{-1}}(g_2)} \, dg_2 \, d g_1 \\
 & = \int_{\bG_1(\A)} \cF_{1,\nu,\nu'}(\iota(g_1,1),s) \int_{\bG_1(F) \backslash \bG_1(\A)} f_{\nu^{-1}}(g_2 d(\nu(h)) g_1) \overline{f_{\nu'^{-1}}(g_2 d(\nu(h)))} \, dg_2 \, d g_1 
\end{align*}
for $\Re(s) \gg 0$.
On the other hand, since $\A^\times = F^\times \cdot \{ \nu(h) \mid h \in \bH(\A) \}$, the map $h \mapsto d(\nu(h))$ induces an isomorphism $\cH \simeq Z_{\bG}(\A) \bG_1(\A) \bG(F) \backslash \bG(\A)$.
Hence we have
\begin{align*}
 \int_{\cH} \tilde{J}(s,h,\nu,\nu') \, d \dot{h} 
 & = \int_{\bG_1(\A)} \cF_{1,\nu,\nu'}(\iota(g_1,1),s) \int_{Z_{\bG}(\A) \bG(F) \backslash \bG(\A)} f_{\nu^{-1}}(g g_1) \overline{f_{\nu'^{-1}}(g)} \, dg \, d g_1 \\
 & = \langle f, f \rangle \cdot \prod_v J_v(s,\nu_v, \nu'_v),
\end{align*}
where 
\[
 J_v(s,\nu_v, \nu'_v) = \int_{\bG_{1,v}} \cF_{1,\nu_v,\nu_v'}(\iota(g_{1,v},1),s) \Psi_{f_v}(d(\nu'_v) g_{1,v} d(\nu_v)^{-1}) \, dg_{1,v}.
\]
Note that the integral $J_v(s,\nu_v, \nu'_v)$ is absolutely convergent for $\Re(s) \ge 0$.
(When $\pi_v$ is tempered, this follows from \cite[Lemma 7.2]{yamana}. The general case can be proved similarly.)
Moreover, for almost all $v$, we have $J_v(s,\nu_v, \nu'_v) = 0$ unless $\nu_v, \nu_v' \in \fo_{F_v}^\times$, in which case
\[
 J_v(s,\nu_v, \nu'_v) = J_v(s,1,1) = \frac{L_v(s+\frac{1}{2}, \pi_E \times \mu)}{L_v(s+1, \xi_{E/F}) \zeta_v(s+2)}
\]
by \cite[\S 7]{lapid-rallis}.
Hence, noting that the function $(\nu_v, \nu_v') \mapsto J_v(s,\nu_v, \nu'_v)$ is compactly supported, we have
\begin{align*}
 & \int_{\A^\times} \int_{\A^\times} |\nu \nu'| \int_{\cH} \tilde{J}(s,h,\nu,\nu') \, d\dot{h} \, d\nu \, d\nu' \\ 
 & = \langle f, f \rangle \cdot \frac{L^S(s+\frac{1}{2}, \pi_E \times \mu)}{L^S(s+1, \xi_{E/F}) \zeta^S(s+2)} \cdot \prod_{v \in S} \int_{F_v^\times} \int_{F_v^\times} |\nu_v \nu_v'| J_v(s,\nu_v, \nu'_v) \, d\nu_v \, d\nu_v'.
\end{align*}
Finally, since 
\[
 \cF_{1,\nu_v,\nu_v'}(\iota(g_{1,v},1),0) = \langle \omega_{\psi, v}(g_{1,v}) \varphi_{v, \nu_v}, \varphi_{v, \nu_v'} \rangle_v, 
\]
we have
\[
 \int_{F_v^\times} \int_{F_v^\times} |\nu_v \nu_v'| J_v(0,\nu_v, \nu'_v) \, d\nu_v \, d\nu_v' = Z_v(\varphi_v, f_v).
\]
This completes the proof.
\end{proof}

\subsection{An explicit version}
\label{s:rallis-explicit}

From now on, we assume that $F=\Q$ and $\psi$ is the standard additive character of $\A/\Q$, i.e.~$\psi_\infty(x) = e^{2 \pi \sqrt{-1} x}$.
We also use the settings of \S \ref{ss:choices} and \S \ref{ss:choices-2}.
In particular, we have $\varepsilon_v(\pi_E \times \mu) = \varepsilon_v(B)$ for all $v$.
Let $\varphi = \otimes_v \varphi_v \in S(B(\A) \times \A^\times) = S(\A_K^2 \times \A^\times)$ be the Schwartz function as in \S \ref{ss:adelic_period}.
Let $f_0^\mu = \otimes_v f_v \in \pi \boxtimes \mu$ be the automorphic form on $\bG(\A)$ as in \S \ref{ss:adelic_period}, so that 
\begin{itemize}
\item $(\pi_\infty \boxtimes \mu_\infty)([k,z]) f_\infty = e^{-\sqrt{-1}(2n+4) \theta} f_\infty$ for all $k = \smat{\cos \theta}{\sin \theta}{-\sin \theta}{\cos \theta} \in \SO(2)$ and $z \in E_\infty^\times$;
\item if $v \notin \Sigma \cup \{ \infty \}$, then $f_v$ is $\bK_v$-fixed, where
\[
 \bK_v = \{ [k,z] \mid k \in \GL_2(\Z_v), \; z \in \fo_{E_v}^\times \};
\]
\item if $v \in \Sigma$, then $f_v$ is $\mathbf{I}_v$-fixed, where
\[
 \mathbf{I}_v = \left\{ [k,z] \in \bK_v \; \middle| \; k \equiv \mat{*}{*}{0}{*} \bmod v \Z_v \right\}.
\]
\end{itemize}
We write the Tamagawa measure on $\A^\times$ as $d\nu = \prod_v d\nu_v$, where $d\nu_v$ is the standard measure on $\Q_v^\times$ as in \S \ref{ss:wald-explicit}.
We also define the standard measure $dg_{1,v}$ on $\bG_{1,v}$ as follows:
\begin{itemize} 
\item if $v = \infty$, then
\[
 dg_{1,v} = \frac{dx \, dy}{y^2} \, d\kappa, \quad
 g_{1,v} = \left[ \mat{1}{x}{0}{1} \mat{y^{1/2}}{0}{0}{y^{-1/2}}, 1 \right] \cdot \kappa
\]
for $x \in \R$, $y \in \R_+^\times$, and $\kappa \in \bK_{1,v}$ with
\[
 \bK_{1,v} = \{ [k,z] \mid k \in \SO(2), \; z \in \C^1 \},
\]
where $dx, dy$ are the Lebesgue measures and $d\kappa$ is the Haar measure on $\bK_{1,v}$ such that $\vol(\bK_{1,v}) = 1$;
\item if $v \ne \infty$, then $dg_{1,v}$ is the Haar measure on $\bG_{1,v}$ such that $\vol(\bK_{1,v}) = 1$, where $\bK_{1,v} = \bK_v \cap \bG_{1,v}$.
\end{itemize}
Then the Tamagawa measure $dg_1$ on $\bG_1(\A)$ is given by the following.

\begin{lem}
\label{l:tamagawa_bG1}
We have
\[
 dg_1 = 2 \pi^2 \cdot C_E^{-1} \cdot \zeta(2)^{-1} \cdot \prod_v dg_{1,v},
\]
where $C_E = |D_E|^{1/2} \cdot L_{\fin}(1,\xi_{E/\Q})$.
\end{lem}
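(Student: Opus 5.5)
The plan is to compare the two measures by computing the total volume of $\bG_1(\Q)\backslash\bG_1(\A)$ with respect to $\prod_v dg_{1,v}$ and matching it with the Tamagawa number of $\bG_1 = \U(\bW)$, which is $2$ since $\U(1,1)$ is connected reductive with character group of rank zero but $\pi_1$ of order two. The cleaner route, however, is to use the exact sequence
\[
 1 \rightarrow \SL_2 \rightarrow \bG_1 \xrightarrow{\det} \U(1) \rightarrow 1 ,
\]
where $\bG_1 = \{[g',z] : \det(g') = \N_{E/\Q}(z)\}$ and the map is $[g',z]\mapsto \det(g') z^{-2}$ (or an equivalent normalization). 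The Tamagawa measure on $\bG_1(\A)$ then factors as the Tamagawa measure on $\SL_2(\A)$, which has volume $1$ on $\SL_2(\Q)\backslash\SL_2(\A)$, times the Tamagawa measure on $\U(1)(\A) = \A_E^1$, which has volume $2$ on $E^1\backslash \A_E^1$.

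The first step is the $\SL_2$ factor. With the standard local measures (maximal compact $\SL_2(\Z_v)$ of volume $1$ at finite $v$, and $y^{-2}\,dx\,dy\,d\kappa$ at $\infty$), one has $\vol(\SL_2(\Z)\backslash\fH_1, y^{-2}dx\,dy)=\pi/3$. Hence the Tamagawa measure is $(\pi/3)^{-1}=3/\pi = \pi^2/(2\zeta(2))\cdot(\text{normalization})$; precisely, the Tamagawa measure on $\SL_2(\A)$ equals $\zeta(2)^{-1}$ times the local measures, after converting $d\kappa$ on $\SO(2)$ of volume $1$ into the invariant-form measure, which gives the factor $\pi$.

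The second step is the $\U(1)$ factor. The Tamagawa volume of $E^1\backslash\A_E^1$ is $2$, while with local measures giving $\fo_{E_v}^1$ volume $1$ and $\C^1$ volume $1$, the class number formula gives volume $C_E/\pi$, up to the index-two subtleties. Here $h_E/w_E$ and $L(1,\xi_{E/\Q}) = 2\pi h_E/(w_E|D_E|^{1/2})$ enter. So the Tamagawa measure is $2\pi C_E^{-1}$ times the local ones.

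Finally, I will assemble the two factors and check that the local measures $dg_{1,v}$, which give $\bK_{1,v}$ volume $1$, are compatible with the product of the $\SL_2$ and $\U(1)$ local measures through the fibration. At finite $v$ this amounts to the surjectivity of $\bK_{1,v}\to\fo_{E_v}^1$ with kernel $\SL_2(\Z_v)$, which might fail at ramified $v$ and need an index correction. I expect this bookkeeping of local indices and $\pi$-factors to be the main obstacle, and I would double-check the final constant $2\pi^2 C_E^{-1}\zeta(2)^{-1}$ by a direct volume computation of $\vol(\bG_1(\Q)\backslash\bG_1(\A))=2$.
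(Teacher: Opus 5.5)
Your strategy (fibre $\bG_1$ over a torus with kernel $\SL_2$ and use global volumes) differs from the paper's, which computes no global volume at all. The paper takes the gauge form attached to an explicit $\Z$-lattice $\mathfrak{L}$ in the Lie algebra of $\bG_1$, with convergence factors $L_v(1,\xi_{E/\Q})$, and reduces the lemma to two local facts: $\vol(\bK_{1,v},dg^{\Tam}_{1,v})=L_v(1,\xi_{E/\Q})^{-1}\zeta_v(2)^{-1}$ and $dg^{\Tam}_{1,\infty}=2\pi^2|u|^{-1/2}\,dg_{1,\infty}$.

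Your route can work, but as written the torus step is wrong exactly at the ramified places you flag. Let $t$ be the number of primes ramified in $E$, and use local measures giving $\fo^1_{E_v}$ and $\C^1$ volume $1$. Then $E^1\cap\prod_v\fo^1_{E_v}=\mu_E$, and $E^1\backslash\A_E^1/\prod_v\fo^1_{E_v}\simeq\mathrm{Cl}_E/\langle[\fq]:\fq\mid D_E\rangle$, which by genus theory has order $h_E/2^{t-1}$. So $\vol(E^1\backslash\A_E^1)=h_E/(2^{t-1}w_E)=C_E/(2^t\pi)$, not $C_E/\pi$.

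On the other side, at a ramified $v$ the image of $\bK_{1,v}$ under $[g',z]\mapsto\bar z/z$ is $\{\bar w/w: w\in\fo_{E_v}^\times\}$. This has index $2$ in $\fo^1_{E_v}$ (it misses $\bar\varpi_E/\varpi_E$), so $dg_{1,v}$ is twice the fibred measure there. The two factors $2^t$ cancel. Your assembled constant matches the statement only because you drop both. If you make the index correction you anticipate while keeping your torus volume, you get $2^{-t}$ times the right answer.

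Since $t\ge1$ for every imaginary quadratic field (here $\Sigma^-\ne\varnothing$), this is not a side issue. Your proposed check against $\vol(\bG_1(\Q)\backslash\bG_1(\A))=2$ is not an independent test, because that volume is computed through the same fibration. Separately, $\vol(\SL_2(\Z)\backslash\fH_1)=\pi/3$ gives $\vol(\SL_2(\Z)\backslash\SL_2(\R),y^{-2}dx\,dy\,d\kappa)=\pi/6$, because $-1$ acts trivially on $\fH_1$. So the $\SL_2$ factor is $6/\pi=\pi\zeta(2)^{-1}$, not $3/\pi$.

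A clean repair is to project instead to $T=\Res_{E/\Q}(\mathbb{G}_m)/\mathbb{G}_m$ by $[g',z]\mapsto z\Q^\times$.
\begin{itemize}
\item For every finite $v$, ramified or not, $\bK_{1,v}\to\fo_{E_v}^\times\Q_v^\times/\Q_v^\times$ is surjective with kernel $\SL_2(\Z_v)$.
\item At infinity, $\bK_{1,\infty}\to\C^\times/\R^\times$ is surjective with kernel $\SO(2)$.
\end{itemize}
Hence $dg_{1,v}$ is exactly the fibred product of the volume-one measures, with no index corrections.

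The torus input is $\vol(\A^\times E^\times\backslash\A_E^\times)=2$ for the Tamagawa measure $C_E^{-1}\prod_v d\dot h_v$, together with $\vol(\C^\times/\R^\times,d\dot h_\infty)=2\pi$. This gives volume $C_E/\pi$ for the volume-one local measures on $T$, which is what your $C_E/\pi$ actually computes. Using surjectivity of $\bG_1\to T$ on $\Q$- and $\A$-points, $\vol(\bG_1(\Q)\backslash\bG_1(\A),\prod_v dg_{1,v})=(\pi/6)(C_E/\pi)=C_E/6$. The Tamagawa number $2$ of $\bG_1$ then gives the constant $12/C_E=2\pi^2C_E^{-1}\zeta(2)^{-1}$. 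This also spares you from justifying that Tamagawa measures multiply along the exact sequence.

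Compared with the paper, this trades local lattice bookkeeping at every place for global inputs: the Tamagawa number of $\U(1,1)$, the area of $\SL_2(\Z)\backslash\fH_1$, and the class number formula.
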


\begin{proof}
We regard $\bG_1$ as a subgroup of $\GL_2(E)$ via the map $[g,z] \mapsto z^{-1} g$.
Let $\mathfrak{G}_1$ be the Lie algebra of $\bG_1$, so that 
\[
 \mathfrak{G}_1 = \left\{ \mat{a}{b}{c}{-\bar{a}} \; \middle| \; a \in E, \; b, c \in \Q \right\}.
\]
Let $\mathfrak{L}$ be the lattice in $\mathfrak{G}_1$ spanned by 
\[
 \mat{1}{0}{0}{-1}, \quad
 \mat{0}{1}{0}{0}, \quad
 \mat{0}{0}{1}{0}, \quad
 \mat{a_0}{0}{0}{-\bar{a}_0}
\]
with $a_0 = (1+\bi)/2$ (so that $\fo_E = \Z + \Z a_0$).
For each place $v$ of $\Q$, let $dg_{1,v}^{\Tam}$ be the Haar measure on $\bG_{1,v}$ induced by the lattice $\wedge^4 \mathfrak{L}$ in $\wedge^4 \mathfrak{G}_1$ and the additive character $\psi_v$ of $\Q_v$.
Then we have
\[
 dg_1 = L(1,\xi_{E/\Q})^{-1} \cdot \prod_v L_v(1,\xi_{E/\Q}) \, dg_{1,v}^{\Tam}.
\]
On the other hand, $dg_{1,\infty}$ is the Haar measure on $\bG_{1,\infty}$ induced by the lattice $\wedge^4 \tilde{\mathfrak{L}}$ in $\wedge^4 \mathfrak{G}_{1,\infty}$ and the additive character $\psi_\infty$ of $\R$, where $\tilde{\mathfrak{L}}$ is the lattice in $\mathfrak{G}_{1,\infty}$ spanned by
\[
 \mat{0}{1}{0}{0}, \quad 
 \frac{1}{2} \cdot \mat{1}{0}{0}{-1}, \quad
 2 \pi \cdot \mat{0}{1}{-1}{0}, \quad
 \pi \cdot \mat{\sqrt{-1}}{0}{0}{\sqrt{-1}}.
\]
From this, we deduce that
\[
 dg_{1,\infty}^{\Tam} = 2 \pi^2 |u|^{-1/2} \cdot dg_{1,\infty}.
\]
Also, if $v \ne \infty$, then we have
\[
 dg_{1,v}^{\Tam} = \vol(\bK_{1,v}, dg_{1,v}^{\Tam}) \cdot dg_{1,v}.
\]
Since
\[
 \mathfrak{L} \otimes_\Z \Z_v = \left\{ \mat{a}{b}{c}{-\bar{a}} \; \middle| \; a \in \fo_{E_v}, \; b, c \in \Z_v \right\},
\]
we have
\[
 \vol(\bK_{1,v}, dg_{1,v}^{\Tam}) = L_v(1,\xi_{E/\Q})^{-1} \cdot \zeta_v(2)^{-1}.
\]
Hence we have
\[
 dg_1 = 2 \pi^2 |u|^{-1/2} \cdot L_{\fin}(1, \xi_{E/\Q})^{-1} \cdot \zeta(2)^{-1} \cdot \prod_v dg_{1,v}.
\]
This proves the lemma, noting that $u = D_E$.
\end{proof}

Now we state the main result in this section.

\begin{prop}
\label{p:rallis-explicit}
We have
\[
 \langle \theta_\varphi(f_0^\mu), \theta_\varphi(f_0^\mu) \rangle = 2^{2n-4} \pi^{-1} \cdot |D_E|^{3/2} C_E^{-2} \cdot \zeta(2)^{-2} \cdot C_{n,\Sigma^+,\Sigma^-} \cdot L_{\fin}(\tfrac{1}{2}, \pi_E \times \mu) \cdot \langle f_0^\mu, f_0^\mu \rangle,
\]
where
\[
 C_{n,\Sigma^+,\Sigma^-} = \frac{n!(n+1)!^2 (n+2)!}{(2n+3)!} \cdot \prod_{q \in \Sigma^+} \frac{1}{(q+1)^2} \cdot \prod_{q \in \Sigma^-} \frac{2}{q^2-1}.
\]
\end{prop}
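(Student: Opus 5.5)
We apply Proposition \ref{p:rallis-abstract} with $F=\Q$, $f=f_0^\mu$, and $S=\Sigma\cup\{\infty,2\}\cup\Sigma_r^K\cup\Sigma_r^E$. This gives
\[
 \frac{\langle \theta_\varphi(f_0^\mu), \theta_\varphi(f_0^\mu) \rangle}{\langle f_0^\mu, f_0^\mu \rangle} = \frac{1}{2} \cdot \frac{L^S(\frac{1}{2}, \pi_E \times \mu)}{L^S(1, \xi_{E/\Q}) \zeta^S(2)} \cdot \prod_{v \in S} Z_v(\varphi_v, f_v).
\]
The proof then reduces to computing each local zeta integral $Z_v$ with respect to the standard measures $dg_{1,v}$ and $d\nu_v$. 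The global Tamagawa constant $2\pi^2 C_E^{-1}\zeta(2)^{-1}$ of Lemma \ref{l:tamagawa_bG1} has to be inserted, since Proposition \ref{p:rallis-abstract} uses the Tamagawa measure on $\bG_1(\A)$. We then compare the result with $L_v(\tfrac12,\pi_E\times\mu)L_v(1,\xi_{E/\Q})^{-1}\zeta_v(2)^{-1}$ for $v<\infty$ in $S$, so as to complete the partial $L$-functions into $L_{\fin}(\tfrac12,\pi_E\times\mu)$, $L_{\fin}(1,\xi_{E/\Q})$ and $\zeta(2)$. Together with $C_E=|D_E|^{1/2}L_{\fin}(1,\xi_{E/\Q})$, this should produce the factors $|D_E|^{3/2}C_E^{-2}\zeta(2)^{-2}$.

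\textbf{Unramified and mildly ramified places.} At a finite place where $\varphi_v$ is the characteristic function of $\fo_{B_v}\times\fo^\times$ (or the variants of \S\S\ref{ss:choices-split-1}--\ref{ss:choices-dyadic}) and $f_v$ is $\bK_v$-fixed, the $\nu,\nu'$ integrals are supported on units. The integrand $\langle\omega(g_1)\varphi_{v,\nu},\varphi_{v,\nu'}\rangle$ is then the spherical doubling section, so $Z_v$ is the unramified doubling integral of Lapid--Rallis, up to volume factors (the dyadic shift $2^{-1}\fo^\times$, and the $k_0$-translates that only conjugate by $\bH$). Thus $Z_v$ equals the normalized local $L$-ratio. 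At $v\in\Sigma^+$ and $v\in\Sigma^-$, $f_v$ is the Iwahori-fixed vector of a twisted Steinberg representation and $\varphi_v$ is Iwahori-invariant. The same unfolding reduces $Z_v$ to a finite sum over $\mathbf{I}\backslash\bG_{1,v}/\mathbf{I}$. We evaluate it using the Iwahori matrix coefficient $\Psi$ (values $q^{-|\ell(w)|}$ with signs, as in the proof of Lemma \ref{l:wald-st}) and the action of $\omega$ on the relevant lattices. We expect $(q+1)^{-2}$ times the $L$-ratio for $v\in\Sigma^+$ and $2(q^2-1)^{-1}$ times the $L$-ratio for $v\in\Sigma^-$, where $B_v$ is ramified and $\gamma_B=-1$ enters. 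These computations are routine Iwahori--Hecke bookkeeping.

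\textbf{Archimedean place (main obstacle).} Here $\varphi_\infty=\Omega(g_0)(\ell\circ\vec\varphi_\infty)$ with $\ell\circ\vec\varphi_\infty=t\phi(t)(\varphi_{n,t}+\varphi''_{n,t})$, obtained by applying $\omega(\mathtt D)^n$ to the Kudla--Millson type form, and $f_\infty$ is the lowest weight vector of weight $2n+4$. The plan proceeds in three steps.
\begin{itemize}
\item[(i)] Since $\langle\omega(g_1)\varphi_\nu,\varphi_{\nu'}\rangle$ and $\Psi_{f_\infty}$ have $\bK_{1,\infty}$-weight $2n+4$, Iwasawa-decompose $g_1$ and reduce to the radial integral $\int_{\bG_{1,\infty}}\langle\omega(g_1)\varphi^{(t)},\varphi^{(t')}\rangle\,\overline{\Psi(g_1)}$, with $\Psi(g_1)$ the explicit function $\bigl(\tfrac{2\sqrt{y}}{1+y+\dots}\bigr)^{2n+4}$ type holomorphic discrete series coefficient.
\item[(ii)] Use the explicit form $\varphi_\infty(x,t)=(\sqrt{-1})^n t\phi(t)(\bar x_1^2-u\bar x_2^2)P(\dots)e^{-2\pi t(\cdots)}$ from Proposition \ref{p:phi-sharp}. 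Then, in the Fock (or Schr\"odinger) model of the $\U(1,1)\times\U(2)$ pair with $B=\mathbb H$, project $\varphi_\infty$ onto the $\bK_{1,\infty}$-type of weight $2n+4$ inside the $\U(\bV)$-isotypic component $\pi^B_\infty\boxtimes\mu_\infty$ of dimension $2n+3$. There the integral equals (formal degree)$^{-1}$ times $\|\text{projection}\|^2$, i.e.\ Schur orthogonality for the discrete series.
\item[(iii)] The $\nu,\nu'$ integrals factor out using $\int\phi(t)\,dt/t=1/(2\pi)$, since the scaling in $t$ is absorbed by $d(\nu)$.
\end{itemize}
The hard step is computing the norm of the projection of $\omega(\mathtt D)^n$ applied to the seed. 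We would attempt a recursion in $n$, using the commutation relations of Lemma \ref{l:D.phi} and Lemma \ref{l:schwartz-3dim-n} to express $\|\vec\varphi_{n+1}\|^2$ in terms of $\|\vec\varphi_n\|^2$ via a Casimir eigenvalue, and aiming for the factor $n!(n+1)!^2(n+2)!/(2n+3)!$ together with the power $2^{2n}$. Finally, collecting $\tfrac12\cdot 2\pi^2$, the archimedean constants, $L_\infty(\tfrac12,\pi_E\times\mu)=\Gamma_\C(\tfrac{k}{2})^2$ (which is absent from the statement, so it must cancel against part of $Z_\infty$), and the local factors gives the stated constant $2^{2n-4}\pi^{-1}$.
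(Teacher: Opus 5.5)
Your global reduction is the same as the paper's. Proposition \ref{p:rallis-abstract} together with Lemma \ref{l:tamagawa_bG1} expresses the ratio as $\pi^2|D_E|^{1/2}C_E^{-2}\zeta(2)^{-2}L_{\fin}(\tfrac12,\pi_E\times\mu)\cdot Z_\infty$ times the normalized finite local integrals. The finite-place values you anticipate are those of Lemmas \ref{l:rallis-ps}, \ref{l:rallis-dyadic}, \ref{l:rallis-st}, \ref{l:rallis-1dim}; at $v\in\Sigma^-$ the integral vanishes unless $\varepsilon_v(\pi_E\times\mu)=-1$, which the choice $\mu_v=\xi_v\circ\N_{E_v/\Q_v}$ guarantees. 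Two bookkeeping corrections:
\begin{itemize}
\item Since $\infty\in S$, $Z_\infty$ enters unnormalized, so there is no $L_\infty(\tfrac12,\pi_E\times\mu)$ to cancel.
\item The global constants give only $|D_E|^{1/2}$. The remaining factor $|D_E|=|u|$ has to come out of $Z_\infty$, because the self-dual measure on $\mathbb{H}$ is $4|u|$ times Lebesgue measure on $\C^2$.
\end{itemize}

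The genuine gap is the archimedean integral, which is the real content of the proposition. After reducing to $\frac{1}{4\pi^2}\int_{\SL_2(\R)}\langle\omega(\iota_0(g'))\varphi_n^\sharp,\varphi_n^\sharp\rangle\Psi(g')\,dg'$ (Lemmas \ref{l:rallis-real-bG1}, \ref{l:rallis-real-SL2}), you correctly see that Schur orthogonality needs the norm of the projection of $\varphi_n^\sharp=\varphi_n+\varphi_n''$ onto the weight-$(2n+4)$ discrete series constituent. But the recursion you propose relates $\|\vec\varphi_{n+1}\|^2$ to $\|\vec\varphi_n\|^2$, which is the total norm, and that is a different quantity. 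In general $\varphi_n^\sharp$ does not lie in that constituent. For $n=2$, $T\varphi_2^\sharp=-4(\phi_2\otimes\phi_0-\phi_1\otimes\phi_1+\phi_0\otimes\phi_2)$ has nonzero inner product with the $\HDS_4$-vector $\phi_2\otimes\phi_0+\phi_1\otimes\phi_1+\phi_0\otimes\phi_2$. The recursion also has no mechanism to isolate the projection:
\begin{itemize}
\item A Casimir of $\Sp_4(\R)$ acts by a scalar on the irreducible $A_\fq$ containing every $\vec\varphi_n$, so it cannot separate these constituents.
\item $\omega(\mathtt D)$ does not commute with $\iota_0(\SL_2(\R))$, so there is no reason for the projections to satisfy a simple recursion.
\end{itemize}

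What is missing is an explicit form of the decomposition, which the paper supplies in four steps.
\begin{enumerate}
\item It solves the defining recursion in closed form by conjugating the operator matrix to diagonal form: $P_n^\sharp=(\delta_1'^2+\delta_2'^2)(\delta_1'\delta_2-\delta_1\delta_2')^nP^0$. In the rotated coordinates $y_1,y_2$ this becomes an alternating sum of products of Laguerre functions $\phi_i(z)=\bar z L_i^{(1)}(\kappa_0^2 z\bar z)e^{-2\pi t z\bar z}$ (Proposition \ref{p:phi-sharp}).
\item The isometry $T$ turns $\iota_0(\SL_2(\R))$ into the diagonal of $\SL_2(\R)\times\SL_2(\R)$ acting on $\cS(\C)\mathbin{\hat\otimes}\cS(\C)$. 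There $\{\phi_i\}$ is an orthogonal basis, with known norms, of a copy of $\HDS_2$.
\item Using $\HDS_2\otimes\HDS_2=\bigoplus_m\HDS_{2m+4}$ and the explicit lowest weight vector $\varphi_n^\natural=\sum_i(-1)^i\binom{n+2}{i+1}\phi_{n-i}\otimes\phi_i$, the projection is $\frac{\langle T\varphi_n^\sharp,\varphi_n^\natural\rangle'}{\langle\varphi_n^\natural,\varphi_n^\natural\rangle'}\varphi_n^\natural$, evaluated by a binomial identity.
\item The resulting factor $\binom{2n+2}{n+1}^{-1}$, combined with $\int_{\SL_2(\R)}|\Psi|^2=4\pi/(2n+3)$, gives $Z_\infty=2^{2n-4}\pi^{-3}|u|\,n!(n+1)!^2(n+2)!/(2n+3)!$.
\end{enumerate}
Without a computation of this kind, your proposal does not establish the constant.
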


\begin{proof}
By Proposition \ref{p:rallis-abstract} and Lemma \ref{l:tamagawa_bG1}, we have
\begin{align*}
 \frac{\langle \theta_\varphi(f_0^\mu), \theta_\varphi(f_0^\mu) \rangle}{\langle f_0^\mu, f_0^\mu \rangle} & = \pi^2 \cdot |D_E|^{1/2} C_E^{-2} \cdot \zeta(2)^{-2} \cdot L_\fin(\tfrac{1}{2}, \pi_E \times \mu) \\
 & \quad \times Z_\infty(\varphi_\infty, f_\infty) \cdot \prod_{v \ne \infty} \left( \frac{L_v(\frac{1}{2}, \pi_E \times \mu)}{L_v(1, \xi_{E/\Q}) \zeta_v(2)} \right)^{-1} Z_v(\varphi_v, f_v).
\end{align*}
Hence the proposition follows from Lemmas \ref{l:rallis-ps}, \ref{l:rallis-dyadic}, \ref{l:rallis-st}, \ref{l:rallis-1dim} and Proposition \ref{p:rallis-real} below, where we compute $Z_v(\varphi_v, f_v)$ explicitly.
\end{proof}

\section{Local zeta integrals}
\label{s:local_zeta}

To finish the proof of Proposition \ref{p:rallis-explicit}, we need to compute the local zeta integrals explicitly.
We fix a place $v$ of $F$ and omit the subscript $v$ from the notation.
When $F$ is non-archimedean, we denote by $q$ the cardinality of the residue field of $F$ and by $\zeta(s) = (1-q^{-s})^{-1}$ the local zeta function of $F$.
For a Schwartz function $\varphi$ on $B \times F^\times$ and a matrix coefficient $\Psi$ of $\pi \boxtimes \mu$, we will compute the local zeta integral
\[
 Z(\varphi, \Psi) = \int_{F^\times} \int_{F^\times} \int_{\bG_1} |\nu \nu'| \langle \omega_{\psi}(g_1) \varphi_\nu, \varphi_{\nu'} \rangle \Psi(d(\nu') g_1 d(\nu)^{-1}) \, dg_1 \, d \nu \, d \nu',
\]
where $\varphi_\nu(x) = \varphi(x,\nu)$.

\subsection{The principal series case}
\label{ss:rallis-ps}

We use the setting of \S \ref{ss:choices-split-1} or \S \ref{ss:choices-inert-1} or \S \ref{ss:choices-ramified} or \S \ref{ss:choices-dyadic}.
In particular, $B$ is split and $\varphi$ is the characteristic function of $\fo_B \times \fo_F^\times$ (resp.~$\fo_B k_0 \times 2^{-1} \fo_F^\times$) if $2 \in \fo_F^\times$ (resp.~$2 \in \fp_F$), where $\fo_B$ is the maximal order in $B$ and $k_0$ is the element in $B^\times$ as before.
Note that $\vol(\fo_B) = 1$, where we take the Haar measure on $B$ as in \S \ref{ss:rallis-abstract}.
Let $\bK$ be a maximal compact subgroup of $\bG$ given by 
\[
 \bK = \{ [k,z] \mid k \in \GL_2(\fo_F), \; z \in \fo_E^\times \}
\]
and put $\bK_1 = \bK \cap \bG_1$.
We take the standard Haar measures $dg_1$ and $d\nu$ on $\bG_1$ and $F^\times$, respectively, so that $\vol(\bK_1) = \vol(\fo_F^\times) = 1$.
Let $\pi = \Ind(\xi_1 \otimes \xi_2)$ be a principal series representation of $\GL_2(F)$, where $\xi_1$ and $\xi_2$ are unramified characters of $F^\times$, and $\mu$ an unramified character of $E^\times$ such that $\xi_1 \cdot \xi_2 \cdot \mu|_{F^\times} = 1$.
Let $\Psi$ be the $\bK$-bi-invariant matrix coefficient of $\pi \boxtimes \mu$ such that $\Psi(1) = 1$.

\begin{lem}
\label{l:rallis-ps}
If $2 \in \fo_F^\times$, then we have
\[
 Z(\varphi, \Psi) = \frac{L(\frac{1}{2}, \pi_E \times \mu)}{L(1, \xi_{E/F}) \zeta(2)}.
\]
\end{lem}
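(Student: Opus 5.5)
The plan is to evaluate $Z(\varphi,\Psi)$ by unramified bookkeeping: reduce to $\nu,\nu'\in\fo_F^\times$, then to a sum over the Cartan decomposition of $\bG_1$. This should reproduce the standard unramified doubling computation of \cite[\S 7]{lapid-rallis}.

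First I reduce the $\nu,\nu'$ integrals. Since $\varphi$ is supported on $\fo_B\times\fo_F^\times$, only $\nu,\nu'\in\fo_F^\times$ contribute, and there $\varphi_\nu=\varphi_{\nu'}=\mathbb{I}_{\fo_B}$. For $\nu\in\fo_F^\times$ we have $d(\nu)\in\bK$, so by $\bK$-bi-invariance $\Psi(d(\nu')g_1d(\nu)^{-1})=\Psi(g_1)$. As $\vol(\fo_F^\times)=1$, this gives
\[
 Z(\varphi,\Psi)=\int_{\bG_1}\langle\omega_\psi(g_1)\varphi_0,\varphi_0\rangle\,\Psi(g_1)\,dg_1,\qquad \varphi_0=\mathbb{I}_{\fo_B}.
\]
Here $\varphi_0$ is $\bK_1$-fixed, because $\omega_\psi$ is unramified: $\psi$ has order zero, $\fo_B$ is self-dual and $\gamma_B=1$. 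Hence $\Phi(g_1)=\langle\omega_\psi(g_1)\varphi_0,\varphi_0\rangle$ is $\bK_1$-bi-invariant.

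Next I use the Cartan decomposition. Every element of $\bG_1$ lies in a double coset $\bK_1 a_k\bK_1$ with $a_k=\smat{\varpi^k}{}{}{\bar\varpi^{-k}}$ and $k\ge0$, where $\varpi$ is a uniformizer of $E$ in the inert/ramified case. In the split case one must parametrize $\U(\bW)\simeq\GL_2(F)$ instead, with a two-parameter family of representatives. On these representatives, $\Phi(a_k)=|\varpi^k|_E\,\vol(\fo_B\cap\fo_B\varpi^k)=q_E^{-k}\cdot q_E^{-2k}$ (up to normalization, using $\omega_\psi(\boldsymbol m(a))\varphi(x)=|a|\varphi(xa)$), so it is explicit. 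The value $\Psi(a_k)$ comes from Macdonald's formula for $\pi\boxtimes\mu$ restricted to $\bG_1$: this is the unramified $\U(1,1)$ representation whose Satake parameters are the restriction of $\xi_1\otimes\xi_2\otimes\mu$. I then take the volumes $\vol(\bK_1a_k\bK_1)$ and sum the resulting geometric series.

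The main obstacle is organizing this case by case (split, inert, ramified $E$). The cleanest route is probably to avoid separate Macdonald computations. I would use the doubling identity $Z=\int \cF_{\varphi^\square}(\iota(g_1,1),0)\Psi(g_1)\,dg_1$, note that $\delta(\varphi_0\otimes\bar\varphi_0)$ is the unramified vector, so $\cF$ is the normalized spherical section, and quote the unramified doubling computation \cite[\S 7]{lapid-rallis} (already cited in Proposition \ref{p:rallis-abstract}). That computation gives exactly $L(s+\tfrac12,\pi_E\times\mu)/(L(s+1,\xi_{E/F})\zeta(s+2))$ at $s=0$. The remaining work is to check the normalizations: $\vol(\bK_1)=1$, self-dual measure with $\vol(\fo_B)=1$, and $\cF(1)=\langle\varphi_0,\varphi_0\rangle=1$. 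With these, no extra constant appears.

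If a direct check is wanted in the split case, I would do the computation above with $\Psi(\mathrm{diag}(\varpi^i,1))$ given by Macdonald's formula, as in Lemma \ref{l:wald-ps}, and verify the rational-function identity.
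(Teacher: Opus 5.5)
Your direct route is exactly the paper's proof: reduce to $\nu=\nu'\in\fo_F^\times$ via $d(\nu)\in\bK$, then sum the $\bK_1$-bi-invariant function $\Phi(g_1)=\langle\omega_\psi(g_1)\varphi_1,\varphi_1\rangle$ against Macdonald's formula over Cartan representatives, separately for $E$ split, inert and ramified. The shortcut you prefer, however, does not prove the whole lemma. Citing the unramified doubling computation of \cite[\S 7]{lapid-rallis} is fine when $E$ is split or inert; the paper does exactly this at almost all places in Proposition~\ref{p:rallis-abstract}. But the lemma also covers the setting of \S\ref{ss:choices-split-1} with $u\in\varpi_F\fo_F^\times$, i.e.\ $E/F$ ramified. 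This case genuinely occurs, at the places in $\Sigma_s^K\cap\Sigma_r^E$, where $\pi_v$ and $\mu_v$ are still unramified. There $\U(\bW)$ is a ramified unitary group and $\bK_1$ is not hyperspecial, so the unramified computation cannot simply be quoted, and this case must be done by hand. The paper uses the representatives $g_i=[\diag(\varpi_F^i,1),\varpi_E^i]$, $i\ge 0$. For these, $\Phi(g_i)=q^{-i}$ and $\vol(\bK_1 g_i\bK_1)=q^i(1+q^{-1})$ for $i\ge 1$. Summing gives $(1-q^{-2})(1-\alpha_1\beta q^{-1/2})^{-1}(1-\alpha_2\beta q^{-1/2})^{-1}$, with $\alpha_i=\xi_i(\varpi_F)$ and $\beta=\mu(\varpi_E)$. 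This is the claimed value because $L(1,\xi_{E/F})=1$ in the ramified case.

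Your value of $\Phi$ on $a_k$ is also wrong, and not merely up to normalization. For $k\ge 0$ and $x\in\fo_B$ one has $x\varpi^k\in\fo_B$, so $\Phi(a_k)=|\varpi^k|_E\cdot\vol(\fo_B)=q_E^{-k}$. Your product $q_E^{-k}\cdot q_E^{-2k}$ combines the factor $|a|_E$ for $a=\varpi^k$ with the volume $\vol(\fo_B\cap\fo_B\varpi^k)$, which belongs to $a^{-1}$. For $a^{-1}$ the factor is $|a^{-1}|_E=q_E^{k}$, so that product is again $q_E^{-k}$. The correct value matches the paper's $\Phi(g_i)=q^{-2i}$ in the inert case (where $q_E=q^2$) and $q^{-i}$ in the ramified case. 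With $q_E^{-3k}$ the geometric series would not produce the $L$-value.
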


\begin{proof}
Define a $\bK_1$-bi-invariant function $\Phi$ on $\bG_1$ by $\Phi(g_1) = \langle \omega_{\psi}(g_1) \varphi_1, \varphi_1 \rangle$.
Then we have
\[
 Z(\varphi, \Psi) = \int_{\bG_1} \Phi(g_1) \Psi(g_1) \, dg_1.
\]
We also recall Macdonald's formula
\[
 \Psi \left( \left[ \mat{\varpi_F^i}{0}{0}{1}, 1 \right] \right) = 
 \frac{q^{-i/2}}{1+q^{-1}}
 \left( \alpha_1^i \frac{1 - \alpha_1^{-1} \alpha_2 q^{-1}}{1 - \alpha_1^{-1} \alpha_2} + \alpha_2^i \frac{1 - \alpha_1 \alpha_2^{-1} q^{-1}}{1 - \alpha_1 \alpha_2^{-1}} \right)
\]
for $i \ge 0$, where $\alpha_1 = \xi_1(\varpi_F)$ and $\alpha_2 = \xi_2(\varpi_F)$.

First assume that $E$ is split.
Fix $u' \in \fo_F^\times$ such that $(u')^2 = u$.
We identify $E$ with $F \oplus F$ via the map
\[
 a + b \bi \mapsto (a+bu', a-bu').
\]
Put $\beta_1 = \mu(\varpi_F, 1)$ and $\beta_2 = \mu(1, \varpi_F)$, so that $\alpha_1 \alpha_2 \beta_1 \beta_2 = 1$.
Put 
\[
 g_{i,j} = \left[ \mat{\varpi_F^{i+j}}{0}{0}{1}, (\varpi_F^i, \varpi_F^j) \right] \in \bG_1.
\]
Then we can take $\{ g_{i,j} \mid i, j \in \Z, \; i+j \ge 0 \}$ as a set of representatives for $\bK_1 \backslash \bG_1 / \bK_1$, so that 
\[
 Z(\varphi,\Psi) = \sum_{k=0}^\infty Z_k
\]
with 
\[
 Z_k = \sum_{\substack{i,j \in \Z \\ i+j=k}} \Phi(g_{i,j}) \Psi(g_{i,j}) \vol(\bK_1 g_{i,j} \bK_1).
\]
By a direct computation, we have
\[
 \Phi(g_{i,j}) = q^{-i-j} \times
 \begin{cases}
  1 & \text{if $i, j \ge 0$;} \\
  q^{2i} & \text{if $i < 0$ and $j \ge 0$;} \\
  q^{2j} & \text{if $i \ge 0$ and $j < 0$,}
 \end{cases}
\]
noting that $\fo_B = \fo_E + \fo_E \bj$.
We also have
\[
 \vol(\bK_1 g_{i,j} \bK_1) =
\begin{cases}
 1 & \text{if $i+j=0$;} \\
 q^{i+j} (1+q^{-1}) & \text{if $i+j \ge 1$.}
\end{cases}
\]
Hence we have
\begin{align*}
 Z_0 & = 1 + \sum_{i=1}^\infty q^{-2i} \beta_1^i \beta_2^{-i} + \sum_{i=1}^\infty q^{-2i} \beta_1^{-i} \beta_2^i \\
 & = 1 + \frac{\beta_1 \beta_2^{-1} q^{-2}}{1 - \beta_1 \beta_2^{-1} q^{-2}} + \frac{\beta_1^{-1} \beta_2 q^{-2}}{1 - \beta_1^{-1} \beta_2 q^{-2}} \\
 & = \frac{1-q^{-4}}{(1 - \beta_1 \beta_2^{-1} q^{-2})(1 - \beta_1^{-1} \beta_2 q^{-2})}
\end{align*}
and 
\begin{align*}
 Z_k & = q^{-k/2} \left( \alpha_1^k \frac{1 - \alpha_1^{-1} \alpha_2 q^{-1}}{1 - \alpha_1^{-1} \alpha_2} + \alpha_2^k \frac{1 - \alpha_1 \alpha_2^{-1} q^{-1}}{1 - \alpha_1 \alpha_2^{-1}} \right) \\
 & \quad \times \left( \sum_{i=0}^k \beta_1^{k-i} \beta_2^i + \sum_{i=1}^\infty q^{-2i} \beta_1^{k+i} \beta_2^{-i} + \sum_{i=1}^\infty q^{-2i} \beta_1^{-i} \beta_2^{k+i} \right) \\
 & = q^{-k/2} \frac{\alpha_1^k (\alpha_1 - \alpha_2 q^{-1}) - \alpha_2^k (\alpha_2 - \alpha_1 q^{-1})}{\alpha_1 - \alpha_2} \\
 & \quad \times \left( \frac{\beta_1^{k+1} - \beta_2^{k+1}}{\beta_1 - \beta_2} + \frac{\beta_1^{k+1} \beta_2^{-1} q^{-2}}{1 - \beta_1 \beta_2^{-1} q^{-2}} + \frac{\beta_1^{-1} \beta_2^{k+1} q^{-2}}{1 - \beta_1^{-1} \beta_2 q^{-2}} \right) \\
 & = q^{-k/2} \frac{\alpha_1^k (\alpha_1 - \alpha_2 q^{-1}) - \alpha_2^k (\alpha_2 - \alpha_1 q^{-1})}{\alpha_1 - \alpha_2} \\
 & \quad \times (1-q^{-2}) \frac{\beta_1^k (\beta_1 - \beta_2 q^{-2}) - \beta_2^k (\beta_2 - \beta_1 q^{-2})}{(\beta_1 - \beta_2) (1 - \beta_1 \beta_2^{-1} q^{-2}) (1 - \beta_1^{-1} \beta_2 q^{-2})}
\end{align*}
for $k \ge 1$.
Thus we obtain 
\begin{align*}
 Z(\varphi,\Psi) & = \frac{1-q^{-4}}{(1 - \beta_1 \beta_2^{-1} q^{-2})(1 - \beta_1^{-1} \beta_2 q^{-2})} \\
 & + \frac{1-q^{-2}}{(\alpha_1 - \alpha_2) (\beta_1 - \beta_2) (1 - \beta_1 \beta_2^{-1} q^{-2}) (1 - \beta_1^{-1} \beta_2 q^{-2})} \\
 & \quad \times \Bigg( \frac{\alpha_1 \beta_1 q^{-1/2} (\alpha_1 - \alpha_2 q^{-1}) (\beta_1 - \beta_2 q^{-2})}{1 - \alpha_1 \beta_1 q^{-1/2}} - \frac{\alpha_1 \beta_2 q^{-1/2} (\alpha_1 - \alpha_2 q^{-1}) (\beta_2 - \beta_1 q^{-2})}{1 - \alpha_1 \beta_2 q^{-1/2}} \\
 & \qquad - \frac{\alpha_2 \beta_1 q^{-1/2} (\alpha_2 - \alpha_1 q^{-1}) (\beta_1 - \beta_2 q^{-2})}{1 - \alpha_2 \beta_1 q^{-1/2}} + \frac{\alpha_2 \beta_2 q^{-1/2} (\alpha_2 - \alpha_1 q^{-1}) (\beta_2 - \beta_1 q^{-2})}{1 - \alpha_2 \beta_2 q^{-1/2}} \Bigg) \\
 & = \frac{(1 - q^{-1})(1 - q^{-2})}{(1 - \alpha_1 \beta_1 q^{-1/2})(1 - \alpha_1 \beta_2 q^{-1/2})(1 - \alpha_2 \beta_1 q^{-1/2})(1 - \alpha_2 \beta_2 q^{-1/2})}.
\end{align*}
This yields the desired identity.

Next assume that $E$ is inert.
Put $\beta = \mu(\varpi_F)$, so that $\alpha_1 \alpha_2 \beta = 1$.
Put
\[
 g_i = \left[ \mat{\varpi_F^{2i}}{0}{0}{1}, \varpi_F^i \right] \in \bG_1.
\]
Then we can take $\{ g_i \mid i \ge 0 \}$ as a set of representatives for $\bK_1 \backslash \bG_1 / \bK_1$, so that
\[
 Z(\varphi,\Psi) = \sum_{i=0}^\infty \Phi(g_i) \Psi(g_i) \vol(\bK_1 g_i \bK_1).
\]
By a direct computation, we have
\[
 \Phi(g_i) = q^{-2i}
\]
for $i \ge 0$, noting that $\fo_B = \fo_E + \fo_E \bj$.
We also have
\[
 \vol(\bK_1 g_i \bK_1) =
\begin{cases}
 1 & \text{if $i=0$;} \\
 q^{2i} (1+q^{-1}) & \text{if $i \ge 1$.}
\end{cases}
\]
Hence we have
\begin{align*}
 Z(\varphi,\Psi) & = 1 + \sum_{i=1}^\infty q^{-i}
 \left( \alpha_1^{2i} \frac{1 - \alpha_1^{-1} \alpha_2 q^{-1}}{1 - \alpha_1^{-1} \alpha_2} + \alpha_2^{2i} \frac{1 - \alpha_1 \alpha_2^{-1} q^{-1}}{1 - \alpha_1 \alpha_2^{-1}} \right) \beta^i \\
 & = 1 + \frac{\alpha_1^2 \beta q^{-1} (\alpha_1 - \alpha_2 q^{-1})}{(1 - \alpha_1^2 \beta q^{-1}) (\alpha_1 - \alpha_2)} + \frac{\alpha_2^2 \beta q^{-1} (\alpha_2 - \alpha_1 q^{-1})}{(1 - \alpha_2^2 \beta q^{-1}) (\alpha_2 - \alpha_1)} \\
 & = \frac{(1 + q^{-1}) (1 - q^{-2})}{(1 - \alpha_1^2 \beta q^{-1}) (1 - \alpha_2^2 \beta q^{-1})}.
\end{align*}
This yields the desired identity.

Finally, assume that $E$ is ramified.
Then we may assume that $\N_{E/F}(\varpi_E) = \varpi_F$.
Put $\beta = \mu(\varpi_E)$, so that $\alpha_1 \alpha_2 \beta^2 = 1$.
Put
\[
 g_i = \left[ \mat{\varpi_F^i}{0}{0}{1}, \varpi_E^i \right] \in \bG_1.
\]
Then we can take $\{ g_i \mid i \ge 0 \}$ as a set of representatives for $\bK_1 \backslash \bG_1 / \bK_1$, so that
\[
 Z(\varphi,\Psi) = \sum_{i=0}^\infty \Phi(g_i) \Psi(g_i) \vol(\bK_1 g_i \bK_1).
\]
By a direct computation, we have
\[
 \Phi(g_i) = q^{-i}
\]
for $i \ge 0$, noting that $\fo_B \supset \fo_E$.
We also have
\[
 \vol(\bK_1 g_i \bK_1) =
\begin{cases}
 1 & \text{if $i=0$;} \\
 q^i (1+q^{-1}) & \text{if $i \ge 1$.}
\end{cases}
\]
Hence we have
\begin{align*}
 Z(\varphi,\Psi) & = 1 + \sum_{i=1}^\infty q^{-i/2}
 \left( \alpha_1^i \frac{1 - \alpha_1^{-1} \alpha_2 q^{-1}}{1 - \alpha_1^{-1} \alpha_2} + \alpha_2^i \frac{1 - \alpha_1 \alpha_2^{-1} q^{-1}}{1 - \alpha_1 \alpha_2^{-1}} \right) \beta^i \\
 & = 1 + \frac{\alpha_1 \beta q^{-1/2} (\alpha_1 - \alpha_2 q^{-1})}{(1 - \alpha_1 \beta q^{-1/2}) (\alpha_1 - \alpha_2)} + \frac{\alpha_2 \beta q^{-1/2} (\alpha_2 - \alpha_1 q^{-1})}{(1 - \alpha_2 \beta q^{-1/2}) (\alpha_2 - \alpha_1)} \\
 & = \frac{1 - q^{-2}}{(1 - \alpha_1 \beta q^{-1/2})(1 - \alpha_2 \beta q^{-1/2})}.
\end{align*}
This yields the desired identity.
\end{proof}

\begin{lem}
\label{l:rallis-dyadic}
If $2 \in \fp_F$, then we have
\[
 Z(\varphi, \Psi) = \frac{L(\frac{1}{2}, \pi_E \times \mu)}{L(1, \xi_{E/F}) \zeta(2)}.
\]
\end{lem}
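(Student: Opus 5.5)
The plan is to reduce the dyadic case to the same computation as in Lemma \ref{l:rallis-ps}. Recall the dyadic setting of \S \ref{ss:choices-dyadic}: $K$ and $E$ are both split, $B = \M_2(F)$, $\fo_B = \M_2(\fo_F)$, and $\varphi$ is the characteristic function of $\fo_B k_0 \times 2^{-1}\fo_F^\times$.

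First, I dispose of the $\nu,\nu'$ integrals. Since $\varphi_\nu$ is supported on $\nu \in 2^{-1}\fo_F^\times$, the substitution $\nu = 2^{-1}\nu_0$, $\nu' = 2^{-1}\nu_0'$ gives $|\nu\nu'| = |2|^{-2}$. Both $d(2^{-1})$ factors in $\Psi(d(\nu')g_1 d(\nu)^{-1})$ can be absorbed by the change of variables $g_1 \mapsto d(2)^{-1} g_1 d(2)$ in $\bG_1$. This rewrites the Weil action $\omega_\psi$ as $\omega_{2^{-1}\psi}$, by the conjugation formula of \S \ref{ss:weil-u}. Note also that the $\nu_0$-dependence is trivial because $\Psi$ is $\bK$-invariant. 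Thus
\[
 Z(\varphi,\Psi) = |2|^{-2}\int_{\bG_1}\langle \omega_{2^{-1}\psi}(g_1)\varphi^0,\varphi^0\rangle \Psi(g_1)\,dg_1 ,
\]
with $\varphi^0 = \mathbb{I}_{\fo_B k_0}$.

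Next, I identify the function $\Phi(g_1) = \langle \omega_{2^{-1}\psi}(g_1)\varphi^0,\varphi^0\rangle$. The character $2^{-1}\psi$ has conductor $2\fo_F$. I expect, however, that the lattice $\fo_B k_0$ is self-dual for the pairing $\tr_{B/F}(x^*y)$ twisted by $2^{-1}\psi$. This is because $k_0$ conjugates $E$ into the diagonal, and the hermitian form $2\pr(x^*y)$ on $\fo_B k_0$, which is $E$-linear, has the shape $\fo_E$-unimodular times $2$.

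I will check this by writing $\fo_B k_0$ in $E$-coordinates. Concretely, $\fo_B k_0 \cdot k_0^{-1}\bj$-decomposition gives $\fo_B k_0 = \fo_E \mathbf e_1 \oplus \fo_E \mathbf e_2$ with $(\mathbf e_i,\mathbf e_j)$ of valuation matching $2$. Then $\varphi^0$ is $\bK_1$-fixed under $\omega_{2^{-1}\psi}$, which was already asserted in \S \ref{ss:choices-dyadic}. The values $\Phi(g_{i,j})$ on the Cartan representatives $g_{i,j}$ of Lemma \ref{l:rallis-ps} (split $E$ case) are then again $q^{-i-j}$ times $1$, $q^{2i}$, or $q^{2j}$, up to the overall constant $\langle\varphi^0,\varphi^0\rangle = \vol(\fo_B k_0) = |\det k_0|^2 = |2|^{2}$. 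Here $\det k_0 = -2/u'$ and the self-dual measure is taken with respect to $\psi$. That volume factor cancels the $|2|^{-2}$ above.

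With this identification, the sum over $k$ is literally the split computation of Lemma \ref{l:rallis-ps} and yields the same $L$-factor ratio. The main obstacle is the middle step: verifying that, after conjugating by $k_0$, the lattice is self-dual for $2^{-1}\psi$ with the correct normalization of the Weil constant. It also requires that $\Phi(g_{i,j})$ really coincides with the odd-residue formula. That formula used the decomposition $\fo_B = \fo_E + \fo_E\bj$, which fails at $2$, and is replaced here by $\fo_B k_0 = \fo_E\mathbf e_1\oplus\fo_E\mathbf e_2$.
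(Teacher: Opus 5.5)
Your proposal is correct and follows the paper's proof essentially step for step. Restricting to $\nu=\nu'=2^{-1}$ contributes the factor $|2|^{-2}$, and conjugating by $d(2)$ turns $\omega_\psi$ into $\omega_{2^{-1}\psi}$. The direct computation then gives $\Phi(g_{i,j})=|2|^2q^{-i-j}$ times $1$, $q^{2i}$ or $q^{2j}$, where the factor $|2|^2=\vol(\fo_Bk_0)=|\det k_0|^2$ cancels the $|2|^{-2}$. From there the split-$E$ computation of Lemma \ref{l:rallis-ps} applies unchanged.

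The step you flag as the main obstacle is just this direct computation, so no self-duality argument is needed. Since $k_0ak_0^{-1}=\diag(a_1,a_2)$ for $a=(a_1,a_2)\in E$, one has $\fo_Bk_0\cap\fo_Bk_0a^{-1}=(\fo_B\cap\fo_B\diag(a_1^{-1},a_2^{-1}))k_0$. Its volume can be read off column by column.
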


\begin{proof}
Define a $\bK_1$-bi-invariant function $\Phi$ on $\bG_1$ by $\Phi(g_1) = \langle \omega_{2^{-1} \psi}(g_1) \varphi_{2^{-1}}, \varphi_{2^{-1}} \rangle$.
Then we have
\begin{align*}
 Z(\varphi, \Psi) & = |2|^{-2} \int_{\bG_1} \langle \omega_{\psi}(g_1) \varphi_{2^{-1}}, \varphi_{2^{-1}} \rangle \Psi(d(2)^{-1} g_1 d(2)) \, dg_1 \\ 
 & = |2|^{-2} \int_{\bG_1} \Phi(g_1) \Psi(g_1) \, dg_1.
\end{align*}
By a direct computation, we have
\[
 \Phi(g_{i,j}) = |2|^2 q^{-i-j} \times 
 \begin{cases}
  1 & \text{if $i, j \ge 0$;} \\
  q^{2i} & \text{if $i < 0$ and $j \ge 0$;} \\
  q^{2j} & \text{if $i \ge 0$ and $j < 0$,}
 \end{cases}
\]
where $g_{i,j} \in \bG_1$ is as in the proof of Lemma \ref{l:rallis-ps}.
Hence the rest of the proof is the same as that of Lemma \ref{l:rallis-ps}.
\end{proof}

\subsection{The twisted Steinberg case I}
\label{ss:rallis-st1}

We use the setting of \S \ref{ss:choices-split-2}.
In particular, $B$ is split and $\varphi$ is the characteristic function of $\mathfrak{I}_B k_0 \times \fo_F^\times$, where $\mathfrak{I}_B$ is the Iwahori subalgebra of $B$ and $k_0$ is the element in $B^\times$ as before.
Note that $\vol(\mathfrak{I}_B) = q^{-1}$, where we take the Haar measure on $B$ as in \S \ref{ss:rallis-abstract}.
Let $\bI$ be an Iwahori subgroup of $\bG$ given by
\[
 \bI = \{ [k,z] \mid k \in I, \; z \in \fo_E^\times \}, 
\]
where
\[
 I = \left\{ k \in \GL_2(\fo_F) \; \middle| \; k \equiv \mat{*}{*}{0}{*} \bmod \fp_F \right\}, 
\]
and put $\bI_1 = \bI \cap \bG_1$.
We take the standard Haar measures $dg_1$ and $d\nu$ on $\bG_1$ and $F^\times$, respectively, so that $\vol(\bI_1) = (q+1)^{-1}$ and $\vol(\fo_F^\times) = 1$.
Let $\pi = \St \otimes \xi$ be a twisted Steinberg representation of $\GL_2(F)$, where $\xi$ is an unramified character of $F^\times$, and $\mu$ an unramified character of $E^\times$ such that $\xi^2 \cdot \mu|_{F^\times} = 1$.
Let $\Psi$ be the $\bI$-bi-invariant matrix coefficient of $\pi \boxtimes \mu$ such that $\Psi(1) = 1$.

\begin{lem}
\label{l:rallis-st}
We have
\[
 Z(\varphi,\Psi) = \frac{1}{(q+1)^2} \cdot \frac{L(\tfrac{1}{2}, \pi_E \times \mu)}{L(1, \xi_{E/F}) \zeta(2)}.
\]
\end{lem}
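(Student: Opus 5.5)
The computation follows the proof of Lemma \ref{l:rallis-ps} in the split case; the difference is that everything is now bi-invariant under the Iwahori subgroup $\bI_1$ instead of $\bK_1$. Note that $\xi_{E/F}$ is trivial here, since $E$ is split at $v\in\Sigma^+$. We identify $E=F\oplus F$ via $u'$ and set $\beta_1=\mu(\varpi_F,1)$, $\beta_2=\mu(1,\varpi_F)$. The condition $\xi^2\cdot\mu|_{F^\times}=1$ then gives $\alpha^2\beta_1\beta_2=1$ with $\alpha=\xi(\varpi_F)$.

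First we reduce to a single integral over $\bG_1$. Since $\varphi$ is supported on $\fo_F^\times$ in the $t$-variable, only $\nu,\nu'\in\fo_F^\times$ contribute. For such $\nu$ the element $d(\nu)$ lies in $\bI$ and preserves $\varphi$, by the invariance recorded in \S\ref{ss:choices-split-2}. Hence
\[
Z(\varphi,\Psi)=\int_{\bG_1}\Phi(g_1)\Psi(g_1)\,dg_1,\qquad \Phi(g_1)=\langle\omega_\psi(g_1)\varphi_1,\varphi_1\rangle ,
\]
with $\Phi$ bi-$\bI_1$-invariant.

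Next we parametrize $\bI_1\backslash\bG_1/\bI_1$. We use the elements $g_{i,j}$ from the proof of Lemma \ref{l:rallis-ps}, with $i+j$ now allowed to be any integer, together with their translates $g_{i,j}w$ by the affine Weyl element $w=\smat{0}{1}{\varpi_F}{0}$ (or $\smat{0}{-1}{1}{0}$), suitably normalized so that they lie in $\bG_1$. For each representative we need three inputs.

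\textbf{Volumes.} These are standard: $\vol(\bI_1 g\bI_1)=(q+1)^{-1}q^{\ell(g)}$, where $\ell$ is the length in the Iwahori--Weyl group.

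\textbf{Values of $\Psi$.} As in the proof of Lemma \ref{l:wald-st} (cf.\ \cite[\S 6.6.3]{periods1}), the $I$-bi-invariant matrix coefficient of $\St\otimes\xi$ equals $(-q)^{-\ell(g)}$ up to the unramified twist $\xi(\det)$. The twist by $\mu$ contributes $\beta_1^{\cdot}\beta_2^{\cdot}$.

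\textbf{Values of $\Phi$.} We compute these directly in the mixed model of Lemma \ref{l:weil-pft}. There $\omega_\psi(g)$ acts by $\hat\varphi(x)\mapsto\hat\varphi(xg)$. The partial Fourier transform of $\mathbb{I}_{\mathfrak{I}_Bk_0}$ should be $q^{-1}$ times the characteristic function of a lattice $\fo_E\oplus\fo_E$ with an Iwahori-type congruence condition. Conjugation by $k_0$ diagonalizes $E$, which is what makes $\hat\varphi$ $\bI$-invariant. So $\Phi(g)$ is $q^{-2}$ times a lattice-intersection volume, and these volumes are explicit powers of $q$, as in the proof of Lemma \ref{l:rallis-ps}.

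Finally we sum. We group the terms by $k=i+j$ and by whether $w$ is present, exactly as the $Z_k$ were grouped before. Each group is a geometric series in $q^{-1/2}\alpha\beta_1$, $q^{-1/2}\alpha\beta_2$ and $q^{-2}\beta_1^{\pm1}\beta_2^{\mp1}$. The total should simplify to
\[
\frac{(1-q^{-2})}{(q+1)^2(1-\alpha\beta_1q^{-1})(1-\alpha\beta_2q^{-1})}.
\]
This matches the target, because $L(s,\pi_E\times\mu)$ for Steinberg has the factors $(1-\alpha\beta_iq^{-s-1/2})^{-1}$, and $L(1,\xi_{E/F})\zeta(2)=(1-q^{-1})^{-1}(1-q^{-2})^{-1}$. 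Comparing these is a consistency check on the normalization.

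The main obstacle is the bookkeeping of $\Phi$ on the Iwahori double cosets, especially the cosets involving $w$. There the congruence condition coming from $k_0$ interacts with the support of $\hat\varphi$, and sign and volume errors are easy to make. As a first check we would recover the $\mu$-trivial special case by comparing with the spherical computation via the Iwahori averaging identity.
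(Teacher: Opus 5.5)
You have the right framework, and it is the paper's: reduce to $\int_{\bG_1}\Phi\Psi\,dg_1$ and sum $\Phi\cdot\Psi\cdot\vol$ over $\bI_1\backslash\bG_1/\bI_1$. Computing $\Phi$ in the mixed model of Lemma \ref{l:weil-pft} is a good choice. It is cleaner than the paper, which stays in the Schr\"odinger model and computes $\omega_\psi(w)\varphi_1$ separately. But the proposal stops before the computation that \emph{is} the proof, and two of the inputs you state would produce a wrong answer.

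\textbf{The rule for $\Psi$ has the wrong sign on odd-determinant cosets.} Your rule $(-q)^{-\ell(g')}\xi(\det g')$ is off by $(-1)^{\ord\det g'}$ at $[g',z]$. The length-zero element $\smat{0}{1}{\varpi_F}{0}$ acts on the Iwahori-fixed line of $\St\otimes\xi$ by $-\xi(\varpi_F)$. Hence $\Psi\smat{\varpi_F}{0}{0}{1}=+\xi(\varpi_F)q^{-1}$, as in Lemma \ref{l:wald-st}, whereas your rule gives $-\xi(\varpi_F)q^{-1}$. Used as stated, your rule flips the sign of every coset with $\ord\det g'$ odd. Write $a=\xi(\varpi_F)\beta_1$, so that $\xi(\varpi_F)\beta_2=a^{-1}$. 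The total then comes out proportional to $(1+aq^{-1})^{-1}(1+a^{-1}q^{-1})^{-1}$, which is the $L$-factor of the wrong unramified twist.

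\textbf{The predicted closed form is not the right-hand side.} Your value is $\frac{1-q^{-2}}{(q+1)^2(1-aq^{-1})(1-a^{-1}q^{-1})}$. Since $L(1,\xi_{E/F})\zeta(2)=(1-q^{-1})^{-1}(1-q^{-2})^{-1}$, the target has numerator $(1-q^{-1})(1-q^{-2})$. Your consistency check therefore fails by a factor $1-q^{-1}$.

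\textbf{How to complete the argument along your lines.}
\begin{itemize}
\item The partial Fourier transform of $\varphi_1$ is exactly $\mathbb{I}_L$, with $L=\{(x_1,x_2)\in E^2 : x_1\in\fp_F\oplus\fo_F,\ x_2\in\fo_E\}$. There is no factor $q^{-1}$; a sanity check is $\Phi(1)=\vol(\mathfrak{I}_B)=q^{-1}$.
\item Hence $\Phi(g)=\vol(L\cap Lg^{-1})$. Identify $\bG_1\simeq\GL_2(F)$ via $[g',(z_1,z_2)]\mapsto z_1^{-1}g'$. Then $\Phi$ becomes a product of two lattice volumes in $F^2$.
\item For example, at $[\diag(\varpi_F^{i},\varpi_F^{-i}),(\varpi_F^{j},\varpi_F^{-j})]$ one gets $\Phi=q^{-1-|i-j|-|i+j|}$. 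This reproduces the paper's table, and the $w$-cosets are handled the same way.
\item Set $w=\smat{0}{-1}{1}{0}$. On $[\diag(\varpi_F^{i+k},\varpi_F^{-i})w^l,(\varpi_F^{j+k},\varpi_F^{-j})]$ with $k,l\in\{0,1\}$, one has $\Psi\cdot\vol=(-1)^l(q+1)^{-1}a^{2j+k}$.
\item You must then actually carry out the four sums; the cancellations happen there. The result is $\frac{q^{-2}(1-q^{-1})^2}{(1+q^{-1})(1-aq^{-1})(1-a^{-1}q^{-1})}$.
\item This equals the claimed value, since $\frac{q^{-2}(1-q^{-1})^2}{1+q^{-1}}=\frac{(1-q^{-1})(1-q^{-2})}{(q+1)^2}$.
\end{itemize}
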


\begin{proof}
We identify $B$ with $\M_2(F)$ as in \S \ref{ss:choices-split-1}.
Since $E$ is split, we may identify $E$ with $F \oplus F$ as in the proof of Lemma \ref{l:rallis-ps}.
Under this identification, we have
\[
 \mathfrak{I}_B = \left\{ x \in \M_2(\fo_F) \; \middle| \; x \equiv \mat{*}{*}{0}{*} \bmod \fp_F \right\} 
\]
and 
\[
 k_0 x k_0^{-1} = \mat{x_1}{0}{0}{x_2}
\]
for $x = (x_1, x_2)  \in E$.
Define an $\bI_1$-bi-invariant function $\Phi$ on $\bG_1$ by $\Phi(g_1) = \langle \omega_{\psi}(g_1) \varphi_1, \varphi_1 \rangle$.
Then we have
\[
 Z(\varphi, \Psi) = \int_{\bG_1} \Phi(g_1) \Psi(g_1) \, dg_1.
\]
Put
\[
 g_{i,j,k,l} = \left[ \mat{\varpi_F^{i+k}}{0}{0}{\varpi_F^{-i}} w^l, (\varpi_F^{j+k}, \varpi_F^{-j}) \right] \in \bG_1,
\]
where 
\[
 w = \mat{0}{-1}{1}{0}.
\]
Then we can take $\{ g_{i,j,k,l} \mid i, j \in \Z, \; 0 \le k, l \le 1 \}$ as a set of representatives for $\bI_1 \backslash \bG_1 / \bI_1$, so that
\[
 Z(\varphi,\Psi) = \sum_{i=-\infty}^\infty \sum_{j=-\infty}^\infty \sum_{k=0}^1 \sum_{l=0}^1 \Phi(g_{i,j,k,l}) \Psi(g_{i,j,k,l}) \vol(\bI_1 g_{i,j,k,l} \bI_1).
\]
By a direct computation, we have
\begin{align*}
 \Phi(g_{i,j,0,0}) & = q^{-2i} \times
 \begin{cases}
  q^{-1} & \text{if $i \ge j \ge -i$;} \\
  q^{2i-2j-1} & \text{if $j > i \ge -j$;} \\
  q^{2i+2j-1} & \text{if $-j > i \ge j$;} \\
  q^{4i-1} & \text{if $-i > j > i$,}
 \end{cases} \\
 \Phi(g_{i,j,1,0}) & = q^{-2i-1} \times
 \begin{cases}
  q^{-1} & \text{if $i \ge j \ge -i-1$;} \\
  q^{2i-2j-1} & \text{if $j > i \ge -j-1$;} \\
  q^{2i+2j+1} & \text{if $-j-1 > i \ge j$;} \\
  q^{4i+1} & \text{if $-i-1 > j > i$,}
 \end{cases} \\
 \Phi(g_{i,j,0,1}) & = q^{-2i-1} \times
 \begin{cases}
  q^{-1} & \text{if $i \ge j \ge -i$;} \\
  q^{2i-2j} & \text{if $j > i \ge -j$;} \\
  q^{2i+2j} & \text{if $-j > i \ge j$;} \\
  q^{4i+1} & \text{if $-i > j > i$,}
 \end{cases} \\
 \Phi(g_{i,j,1,1}) & = q^{-2i-2} \times
 \begin{cases}
  q^{-1} & \text{if $i \ge j \ge -i-1$;} \\
  q^{2i-2j} & \text{if $j > i \ge -j-1$;} \\
  q^{2i+2j+2} & \text{if $-j-1 > i \ge j$;} \\
  q^{4i+3} & \text{if $-i-1 > j > i$,}
 \end{cases}
\end{align*}
noting that $\omega_\psi(w) \varphi_1$ is $q^{-1}$ times the characteristic function of
\[
 \left\{ \mat{x_1}{x_2}{x_3}{x_4} k_0 \; \middle| \; x_1, x_3, x_4 \in \fo_F, \; x_2 \in \fp_F^{-1} \right\}.
\]
As in \cite[\S 6.6.3]{periods1}, we also have
\[
 \Psi(g_{i,j,k,l}) \vol(\bI_1 g_{i,j,k,l} \bI_1) = \frac{1}{q+1} \times
 \begin{cases}
  \alpha^{2j} & \text{if $k=0$, $l=0$;} \\
  -\alpha^{2j} & \text{if $k=0$, $l=1$;} \\
  \alpha^{2j+1} & \text{if $k=1$, $l=0$;} \\
  - \alpha^{2j+1} & \text{if $k=1$, $l=1$,}
 \end{cases}
\]
where $\alpha = \xi(\varpi_F) \cdot \mu(\varpi_F, 1)$ (so that $\alpha^2 = \mu(\varpi_F, \varpi_F^{-1})$).
Hence we have 
\[
 Z(\varphi, \Psi) = \frac{1}{q+1} \times (Z_{0,0} - Z_{0,1} + Z_{1,0} - Z_{1,1})
\]
with 
\begin{align*}
 Z_{0,0} & = \sum_{i \ge j \ge -i} q^{-2i-1} \alpha^{2j} + \sum_{j > i \ge -j} q^{-2j-1} \alpha^{2j} + \sum_{j > i \ge -j} q^{-2j-1} \alpha^{-2j} + \sum_{i > j > -i} q^{-2i-1} \alpha^{2j}, \\
 Z_{0,1} & = \sum_{i \ge j \ge -i} q^{-2i-2} \alpha^{2j} + \sum_{j > i \ge -j} q^{-2j-1} \alpha^{2j} + \sum_{j > i \ge -j} q^{-2j-1} \alpha^{-2j} + \sum_{i > j > -i} q^{-2i} \alpha^{2j}, \\
 Z_{1,0} & = \sum_{i \ge j \ge -i-1} q^{-2i-2} \alpha^{2j+1} + \sum_{j > i \ge -j-1} q^{-2j-2} \alpha^{2j+1} + \sum_{j-1 > i \ge -j} q^{-2j} \alpha^{-2j+1} + \sum_{i-1 > j > -i} q^{-2i} \alpha^{2j+1}, \\
 Z_{1,1} & = \sum_{i \ge j \ge -i-1} q^{-2i-3} \alpha^{2j+1} + \sum_{j > i \ge -j-1} q^{-2j-2} \alpha^{2j+1} + \sum_{j-1 > i \ge -j} q^{-2j} \alpha^{-2j+1} + \sum_{i-1 > j > -i} q^{-2i+1} \alpha^{2j+1}.
\end{align*}
Thus we obtain
\begin{align*}
 Z_{0,0} - Z_{0,1} & = (1-q^{-1}) \sum_{i \ge j \ge -i} q^{-2i-1} \alpha^{2j} - (1-q^{-1}) \sum_{i > j > -i} q^{-2i} \alpha^{2j} \\
 & = (1-q^{-1}) \left( \sum_{i=0}^\infty q^{-2i-1} \frac{\alpha^{2i+1} - \alpha^{-2i-1}}{\alpha - \alpha^{-1}} - \sum_{i=1}^\infty q^{-2i} \frac{\alpha^{2i-1} - \alpha^{-2i+1}}{\alpha - \alpha^{-1}} \right) \\
 & = \frac{1-q^{-1}}{\alpha - \alpha^{-1}} \left( \frac{\alpha q^{-1}}{1 - \alpha^2 q^{-2}} - \frac{\alpha^{-1} q^{-1}}{1 - \alpha^{-2} q^{-2}} - \frac{\alpha q^{-2}}{1 - \alpha^2 q^{-2}} + \frac{\alpha^{-1} q^{-2}}{1 - \alpha^{-2} q^{-2}} \right) \\
 & = \frac{q^{-1}(1-q^{-1})^2(1+q^{-2})}{(1 - \alpha^2 q^{-2}) (1 - \alpha^{-2} q^{-2})}, \\
 Z_{1,0} - Z_{1,1} & = (1-q^{-1}) \sum_{i \ge j \ge -i-1} q^{-2i-2} \alpha^{2j+1} - (1-q^{-1}) \sum_{i-1 > j > -i} q^{-2i+1} \alpha^{2j+1} \\
 & = (1-q^{-1}) \left( \sum_{i=0}^\infty q^{-2i-2} \frac{\alpha^{2i+2} - \alpha^{-2i-2}}{\alpha - \alpha^{-1}} - \sum_{i=2}^\infty q^{-2i+1} \frac{\alpha^{2i-2} - \alpha^{-2i+2}}{\alpha - \alpha^{-1}} \right) \\
 & = \frac{1-q^{-1}}{\alpha - \alpha^{-1}} \left( \frac{\alpha^2 q^{-2}}{1 - \alpha^2 q^{-2}} - \frac{\alpha^{-2} q^{-2}}{1 - \alpha^{-2} q^{-2}} - \frac{\alpha^2 q^{-3}}{1 - \alpha^2 q^{-2}} + \frac{\alpha^{-2} q^{-3}}{1 - \alpha^{-2} q^{-2}} \right) \\
 & = \frac{(\alpha + \alpha^{-1})q^{-2}(1-q^{-1})^2}{(1 - \alpha^2 q^{-2}) (1 - \alpha^{-2} q^{-2})},
\end{align*}
so that
\[
 Z(\varphi, \Psi) = \frac{q^{-2} (1-q^{-1})^2}{(1+q^{-1}) (1 - \alpha q^{-1}) (1 - \alpha^{-1} q^{-1})}.
\]
This yields the desired identity.
\end{proof}

\subsection{The twisted Steinberg case II}
\label{ss:rallis-st2}

We use the setting of \S \ref{ss:choices-inert-2}.
In particular, $B$ is ramified and $\varphi$ is the characteristic function of $\fo_B \times \fo_F^\times$, where $\fo_B$ is the maximal order in $B$.
Note that $\vol(\fo_B) = q^{-1}$, where we take the Haar measure on $B$ as in \S \ref{ss:rallis-abstract}.
Let $\bI$ be an Iwahori subgroup of $\bG$ given by 
\[
 \bI = \{ [k,z] \mid k \in I, \; z \in \fo_E^\times \},
\]
where $I$ is as in \S \ref{ss:rallis-st1}, and put $\bI_1 = \bI \cap \bG_1$.
We take the standard Haar measures $dg_1$ and $d\nu$ on $\bG_1$ and $F^\times$, respectively, so that $\vol(\bI_1) = (q+1)^{-1}$ and $\vol(\fo_F^\times) = 1$.
Let $\pi = \St \otimes \xi$ be a twisted Steinberg representation of $\GL_2(F)$, where $\xi$ is an unramified character of $F^\times$, and $\mu$ an unramified character of $E^\times$ such that $\xi^2 \cdot \mu|_{F^\times} = 1$.
Let $\Psi$ be the $\bI$-bi-invariant matrix coefficient of $\pi \boxtimes \mu$ such that $\Psi(1) = 1$.

\begin{lem}
\label{l:rallis-1dim}
We have $Z(\varphi, \Psi) = 0$ if $\varepsilon(\pi_E \times \mu) = 1$ and 
\[
 Z(\varphi, \Psi) = \frac{2}{q^2-1} \cdot \frac{L(\tfrac{1}{2}, \pi_E \times \mu)}{L(1, \xi_{E/F}) \zeta(2)}
\]
if $\varepsilon(\pi_E \times \mu) = -1$.
\end{lem}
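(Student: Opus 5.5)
The plan is to follow the template of Lemma \ref{l:rallis-st}. First reduce to a single integral over $\bG_1$. Since $\varphi$ is supported on $\fo_B \times \fo_F^\times$, the integrals over $\nu,\nu'$ collapse to $\nu,\nu' \in \fo_F^\times$. Because $d(\nu)$ normalizes $\bI$ and $\Psi$ is $\bI$-bi-invariant, this gives
\[
 Z(\varphi,\Psi) = \int_{\bG_1} \Phi(g_1)\Psi(g_1)\,dg_1, \qquad \Phi(g_1) = \langle \omega_\psi(g_1)\varphi_1,\varphi_1\rangle .
\]
Here $\Phi$ is $\bI_1$-bi-invariant, since $\varphi_1 = \mathbb{I}_{\fo_B}$ is fixed by $\mathfrak{I}^\times$ (\S \ref{ss:choices-inert-2}).

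Here $E$ is ramified ($u \in \varpi_F\fo_F^\times$, $K$ inert). The torus part of $\bG_1$ is $\{[\diag(a,b),z]\}$ with norm condition $ab=\N(z)$. A set of representatives for $\bI_1\backslash\bG_1/\bI_1$ is therefore
\[
 g_{i,k,l} = \left[ \mat{\varpi_F^{i}}{0}{0}{1} w^l, \varpi_E^{i} \right], \qquad i \in \Z,\ l \in \{0,1\},
\]
taken modulo $\fo_E^\times$. This is the affine Weyl group of the rank one group $\U(1,1)$, now indexed by a single integer since $\varpi_E$ has norm $\varpi_F$.

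Next compute $\Phi$ at each representative. Write $\fo_B = \fo_E + \fo_E\bj$. The diagonal elements act by $|a|\varphi(xa)$ with $a=\varpi_E^i$, and $\fo_B\varpi_E^i = \varpi_B^i\fo_B$ because $\varpi_E$ is a uniformizer of $B$. This gives $\Phi(g_{i,0}) = q^{-1}q^{-2|i|}$, or a similar explicit power. For $l=1$, compute $\omega_\psi(w)\mathbb{I}_{\fo_B}$ as $\gamma_B=-1$ times a volume times the characteristic function of the dual lattice $\fo_B^\vee=\varpi_B^{-1}\fo_B$. The resulting factor $-1$ is the key sign.

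For $\Psi(g)\vol(\bI_1 g\bI_1)$, use the Iwahori–Steinberg formula as in \cite[\S 6.6.3]{periods1}: it equals $\pm(q+1)^{-1}$ times powers of $\alpha=\xi(\varpi_F)\mu(\varpi_E)$, with sign $(-1)^l$. Here $\alpha^2=1$ by the central character condition, and $\alpha$ determines $\varepsilon(\pi_E\times\mu)$: one expects $\varepsilon(\pi_E\times\mu) = -\alpha$, up to the convention to be checked.

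Sum the four geometric series as in Lemma \ref{l:rallis-st}. The $l=0$ and $l=1$ pieces come with the Weil sign $-1$ and the Steinberg sign $-1$. They combine into $(1+\alpha)$ times something, or into a factor $(1-\varepsilon)$, giving zero when $\varepsilon(\pi_E\times\mu)=1$ as Proposition \ref{p:howe-duality-U11} predicts. When $\varepsilon=-1$, simplify to $\frac{2}{q^2-1}(1-q^{-2})^{-1}\cdot(\text{local }L\text{-ratio})$. Note that $L(\tfrac12,\pi_E\times\mu)=(1-\alpha q^{-1})^{-1}$-type, $L(1,\xi_{E/F})=1$, and $\zeta(2)=(1-q^{-2})^{-1}$, and check that the constant is $2/(q^2-1)$.

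The main obstacle is the explicit determination of $\Phi(g_{i,l})$ for $l=1$. This requires understanding $\omega_\psi(w)$ on $\mathbb{I}_{\fo_B}$ for the ramified quaternion algebra with $E$ ramified, including the self-dual volume $\vol(\fo_B)=q^{-1}$ and the sign $\gamma_B$. A second point to pin down is the precise relation between $\alpha$ and $\varepsilon(\pi_E\times\mu)$. I would verify it using Proposition \ref{p:howe-duality-U11}: the vanishing case must occur exactly when the theta lift to the ramified $B$ is zero, which fixes the sign convention.
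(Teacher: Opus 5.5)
Your plan is the paper's proof. It reduces to $\int_{\bG_1}\Phi\Psi\,dg_1$ and sums over the Iwahori double cosets (your single index $i$ corresponds to the paper's $2i+k$). It uses $\omega_\psi(w)\varphi_1=-q^{-1}\mathbb{I}_{\varpi_E^{-1}\fo_B}$, the Steinberg values $\Psi(g_{i,l})\vol(\bI_1 g_{i,l}\bI_1)=(-1)^l\alpha^i/(q+1)$, and $\varepsilon(\pi_E\times\mu)=-\alpha$. The factor $1-\varepsilon$ then appears as you predict.

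One correction is needed. The correct value is $\Phi(g_{i,0})=q^{-|i|-1}$, not $q^{-2|i|-1}$. The factor $|\varpi_E^{i}|_E=q^{-i}$ coming from the Weil representation partially cancels the index $[\fo_B:\fp_B^{|i|}]=q^{2|i|}$. With your exponent the constant $2/(q^2-1)$ would not come out.

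With the correct value you also get $\Phi(g_{i,1})=-\Phi(g_{i+1,0})$ directly, because $\mathbb{I}_{\varpi_E^{-1}\fo_B}=q\,\omega_\psi(g_{1,0})\varphi_1$. Hence
\[
 Z(\varphi,\Psi)=\frac{1+\alpha}{q+1}\sum_{i\in\Z}\alpha^i q^{-|i|-1}.
\]
This is $0$ for $\alpha=-1$ and $2q^{-2}/(1-q^{-1})$ for $\alpha=1$. The latter is the stated value, since here $L(1,\xi_{E/F})=1$ and $L(\tfrac12,\pi_E\times\mu)=(1-q^{-1})^{-1}$.
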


\begin{proof}
Since $E$ is ramified, we may assume that $\N_{E/F}(\varpi_E) = \varpi_F$.
Define an $\bI_1$-bi-invariant function $\Phi$ on $\bG_1$ by $\Phi(g_1) = \langle \omega_{\psi}(g_1) \varphi_1, \varphi_1 \rangle$.
Then we have
\[
 Z(\varphi, \Psi) = \int_{\bG_1} \Phi(g_1) \Psi(g_1) \, dg_1.
\]
Put
\[
 g_{i,k,l} = \left[ \mat{\varpi_F^{i+k}}{0}{0}{\varpi_F^{-i}} w^l, \varpi_E^k \right] \in \bG_1,
\]
where 
\[
 w = \mat{0}{-1}{1}{0}.
\]
Then we can take $\{ g_{i,k,l} \mid i \in \Z, \; 0 \le k, l \le 1 \}$ as a set of representatives for $\bI_1 \backslash \bG_1 / \bI_1$, so that
\[
 Z(\varphi,\Psi) = \sum_{i = - \infty}^\infty \sum_{k=0}^1 \sum_{l=0}^1 \Phi(g_{i,k,l}) \Psi(g_{i,k,l}) \vol(\bI_1 g_{i,k,l} \bI_1).
\]
By a direct computation, we have
\begin{align*}
 \Phi(g_{i,k,0}) & = q^{-2i-k} \times 
  \begin{cases}
  q^{-1} & \text{if $2i + k \ge 0$;} \\
  q^{4i+2k-1} & \text{if $2i + k < 0$,}
 \end{cases} \\
 \Phi(g_{i,k,1}) & = - q^{-2i-k-1} \times 
  \begin{cases}
  q^{-1} & \text{if $2i + k \ge -1$;} \\
  q^{4i+2k+1} & \text{if $2i + k < -1$,}
 \end{cases} \\
\end{align*}
noting that $\fo_B = \fo_E + \fo_E \bj$ and $\omega_\psi(w) \varphi_1 = -q^{-1} \cdot \mathbb{I}_{\varpi_E^{-1} \fo_B}$.
As in \cite[\S 6.6.3]{periods1}, we also have
\[
 \Phi(g_{i,k,l}) \vol(\bI_1 g_{i,k,l} \bI_1) = \frac{1}{q+1} \times
 \begin{cases}
  1 & \text{if $k=0$, $l=0$;} \\
  -1 & \text{if $k=0$, $l=1$;} \\
  - \varepsilon & \text{if $k=1$, $l=0$;} \\
  \varepsilon & \text{if $k=1$, $l=1$,}
 \end{cases}
\]
where $\varepsilon = \varepsilon(\pi_E \times \mu) = - \xi(\varpi_F) \cdot \mu(\varpi_E)$.
Hence we have 
\[
 Z(\varphi, \Psi) = \frac{1}{q+1} \times (Z_{0,0} + Z_{0,1} - \varepsilon Z_{1,0} - \varepsilon Z_{1,1})
\]
with 
\begin{align*}
 Z_{0,0} & = \sum_{i=0}^\infty q^{-2i-1} + \sum_{i=1}^\infty q^{-2i-1}, \\ 
 Z_{0,1} & = \sum_{i=0}^\infty q^{-2i-2} + \sum_{i=1}^\infty q^{-2i}, \\
 Z_{1,0} & = \sum_{i=0}^\infty q^{-2i-2} + \sum_{i=1}^\infty q^{-2i} = Z_{0,1}, \\
 Z_{1,1} & = \sum_{i=-1}^\infty q^{-2i-3} + \sum_{i=2}^\infty q^{-2i+1} = Z_{0,0}.
\end{align*}
Thus we obtain $Z(\varphi,\Psi) = 0$ if $\varepsilon = 1$ and 
\begin{align*}
 Z(\varphi,\Psi) & = \frac{2}{q+1} \left( \frac{q^{-1}}{1-q^{-2}} + \frac{q^{-3}}{1-q^{-2}} + \frac{q^{-2}}{1-q^{-2}} + \frac{q^{-2}}{1-q^{-2}} \right) \\
 & = \frac{2 q^{-2}}{1-q^{-1}}
\end{align*}
if $\varepsilon = -1$.
This yields the desired identity.
\end{proof}

\subsection{The real case}

Suppose that $F=\R$, $E=\C$, and $B=\mathbb{H}$.
We take the Schwartz function $\varphi$ as in \S \ref{ss:adelic_period}.
Namely, $\varphi$ is given by 
\[
 \varphi_\nu = \nu \phi(\nu) \cdot \omega_{\nu \psi}(g_0) \varphi^\sharp_{n,\nu}
\]
with 
\[
 \varphi_{n,\nu}^\sharp = \varphi_{n,\nu} + \varphi_{n,\nu}''
\]
for $\nu \in \R^\times$, where $\varphi_{n,\nu}, \varphi_{n,\nu}'' \in \cS(\C^2)$ are as in \S \ref{ss:schwartz-derivatives} (with $t=\nu$), $\phi \in C_c^\infty(\R^\times)$ is as in \S \ref{ss:schwartz-def}, and $g_0 \in \Sp_4(\R)$ is as in \S \ref{ss:periods-adelic}.
Note that $\supp(\phi) \subset \R_+^\times$, so that $\varphi_\nu = 0$ if $\nu<0$.
Take the $\Sp_4(\R)$-invariant inner product $\langle \cdot, \cdot \rangle$ on $\cS(\C^2)$ with respect to the Haar measure on $B$ as in \S \ref{ss:rallis-abstract}, which is $4|u|$ times the Lebesgue measure on $\C^2$.
Define an embedding $\iota_0 : \bG \hookrightarrow \GSp_4(\R)$ by 
\[
 \iota_0(g) = g_0^{-1} \iota(g) g_0,
\]
where $\iota$ is as in \S \ref{ss:periods-adelic}.
Then we have 
\[
 \iota_0 \left( \left[ \mat{a}{b}{c}{d}, r e^{\sqrt{-1} \theta} \right] \right) = 
 \begin{pmatrix}
  a & 0 & b & 0 \\
  0 & a & 0 & b \\
  c & 0 & d & 0 \\
  0 & c & 0 & d
 \end{pmatrix} \cdot
 \frac{1}{r}
 \begin{pmatrix}
  \cos \theta & - \sin \theta & 0 & 0 \\
  \sin \theta & \cos \theta & 0 & 0 \\
  0 & 0 & \cos \theta & - \sin \theta \\
  0 & 0 & \sin \theta & \cos \theta
 \end{pmatrix}.
\]
We take the standard Haar measures $dg_1$ and $d\nu$ on $\bG_1$ and $\R^\times$, respectively.
Let $\pi$ be the discrete series representation of $\GL_2(\R)$ of weight $2n+4$ and $\mu$ the trivial character of $\C^\times$.
Let $\Psi$ be the matrix coefficient of $\pi \boxtimes \mu$ such that 
\[
 \Psi([k_1 g k_2, z]) = e^{-\sqrt{-1}(2n+4) (\theta_1 + \theta_2)} \Psi([g,1])
\]
for all $g \in \GL_2(\R)$, $k_i = \smat{\cos \theta_i}{\sin \theta_i}{-\sin \theta_i}{\cos \theta_i} \in \SO(2)$, and $z \in \C^\times$, and such that $\Psi(1) = 1$.

\begin{lem}
\label{l:rallis-real-bG1}
We have
\[
 Z(\varphi, \Psi) = \frac{1}{4 \pi^2} \int_{\bG_1} \langle \omega_{\psi}(\iota_0(g_1)) \varphi^\sharp_{n,1}, \varphi^\sharp_{n,1} \rangle \Psi(g_1) \, dg_1.
\]
\end{lem}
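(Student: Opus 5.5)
The plan is to start from the definition of $Z(\varphi,\Psi)$ and remove the two outer integrals over $\nu,\nu'\in\R^\times$ by reducing everything to $\nu=\nu'=1$. Since $\supp(\phi)\subset\R_+^\times$, only $\nu,\nu'>0$ contribute. For such $\nu$, we have $\varphi_\nu=\nu\phi(\nu)\,\omega_{\nu\psi}(g_0)\varphi^\sharp_{n,\nu}$. The relation $\omega_{t\psi}(g)=\omega_\psi(d(t)^{-1}gd(t))$ recorded in \S\ref{ss:weil-u}, applied to the Gaussian-type function $\varphi^\sharp_{n,\nu}$, should show that $\varphi^\sharp_{n,\nu}$ is obtained from $\varphi^\sharp_{n,1}$ by a scaling. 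Concretely, I expect $\varphi^\sharp_{n,\nu}(x)=\nu^{a}\varphi^\sharp_{n,1}(\nu^{1/2}x)$ for an explicit exponent $a$. This can be read off from Lemma~\ref{l:schwartz-table}: the operators $\omega(\mathtt{X}_\alpha)$ scale homogeneously under $x\mapsto\nu^{1/2}x$. Moreover, $x\mapsto\nu^{1/2}x$ is realized by $\omega_\psi(\boldsymbol{m}(\nu^{1/2}I_2))$ up to the factor $|\nu|$, which is central in the image of $\iota_0(\bG_1)$ up to the $d(\nu)$ twist.

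With this in hand, I would rewrite $\langle\omega_\psi(g_1)\varphi_\nu,\varphi_{\nu'}\rangle$ as a matrix coefficient of $\omega_\psi$ at a conjugated element. The form is $\langle\omega_\psi(\iota_0(d(\nu')g_1d(\nu)^{-1}))\varphi^\sharp_{n,1},\varphi^\sharp_{n,1}\rangle$, times scalars $\nu\phi(\nu)\nu'\phi(\nu')$ and powers of $\nu,\nu'$. Here we use that $\iota_0(d(\nu))$ acts, up to the similitude normalization, by the dilation above, and that $g_0$ commutes appropriately with $\iota_0(\bG)$ by construction of $\iota_0$. The argument of $\Psi$ is already $d(\nu')g_1d(\nu)^{-1}$. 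Changing variables $g_1\mapsto d(\nu')^{-1}g_1d(\nu)$ in $\bG_1$ is legitimate because $d(\nu')^{-1}\bG_1d(\nu)=\bG_1 d(\nu'^{-1}\nu)$. Using the $\bG$-invariance of both the Weil-representation coefficient, taken via $\Omega_\psi$ so that similitudes act, and $\Psi$, the $g_1$-integral becomes independent of $\nu,\nu'$ except through a factor that I expect to force $\nu=\nu'$ or to cancel. The cleanest route is to note that $d(\nu)$ with $\nu>0$ can be written as a central element times an element of $\bG_1$. Indeed $[\mathrm{diag}(1,\nu),1]=[\nu^{1/2}\mathrm{diag}(\nu^{-1/2},\nu^{1/2}),1]$, and the scalar $\nu^{1/2}$ is identified with $[1,\nu^{-1/2}]$, on which $\Psi$ is trivial since $\mu=1$. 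So $d(\nu)\in Z_\bG\cdot\bG_1$ for $\nu>0$, and the $\bG_1$-integral absorbs it.

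After this reduction, the integrand factorizes as
\[
\nu\nu'\,\nu\phi(\nu)\,\nu'\phi(\nu')\cdot(\text{power of }\nu\nu')\cdot I,
\]
where $I$ is the displayed integral $\int_{\bG_1}\langle\omega_\psi(\iota_0(g_1))\varphi^\sharp_{n,1},\varphi^\sharp_{n,1}\rangle\Psi(g_1)\,dg_1$. The remaining $\nu,\nu'$ integrals against $d\nu/\nu$ should each become $\int_0^\infty\phi(t)\,dt/t=1/(2\pi)$, by the normalization in \S\ref{ss:schwartz-def}, giving the factor $1/(4\pi^2)$.

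The main obstacle is bookkeeping the powers of $\nu$. This involves the $|\nu\nu'|$ in $Z$, the factor $\nu\phi(\nu)$, the homogeneity degree of $\varphi^\sharp_{n,\nu}$ in $\nu$ (which depends on $n$ through the $n$ applications of $\mathtt{D}$, each contributing weight), the Jacobian from $\omega(\boldsymbol{m}(a))$, and the modulus from rescaling the inner product. All of these must combine to exactly $\nu^{-1}\nu'^{-1}$ times $\phi(\nu)\phi(\nu')$ relative to $d\nu\,d\nu'$, where $d\nu$ is $dt/t$. I would check this first for $n=0$ directly from the Gaussian $\bar{x}_1^2e^{-2\pi t|x|^2}$, then argue that $\omega(\mathtt{D})$ preserves the required homogeneity, since each $\mathtt{X}_\alpha$ commutes with the dilation up to the same $t$-rescaling of $\psi$.
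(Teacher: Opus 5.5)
Your plan is essentially the paper's proof: pass to $\omega_\psi(\iota(g_1))$ via Lemma~\ref{l:weil-seesaw}, write $d(\nu)\equiv\mathbf{t}(\nu)^{-1}$ modulo $Z_{\bG}$ with $\mathbf{t}(\nu)=[\diag(\nu^{1/2},\nu^{-1/2}),1]\in\bG_1$, so that $\Psi(d(\nu')g_1d(\nu)^{-1})=\Psi(\mathbf{t}(\nu')^{-1}g_1\mathbf{t}(\nu))$ and the change of variables stays inside $\bG_1$, use that $t(\nu)=\iota(\mathbf{t}(\nu))$ commutes with $g_0$ and that $\omega_\psi(t(\nu)^{-1})\varphi^\sharp_{n,\nu}=\nu^{-2}\varphi^\sharp_{n,1}$ (i.e.\ your exponent is $a=-1$), and then integrate $\phi$ twice. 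Drop the detour through $g_1\mapsto d(\nu')^{-1}g_1d(\nu)$, which leaves $\bG_1$ when $\nu\ne\nu'$ (and nothing forces $\nu=\nu'$); also note that the net power of $\nu$ is zero relative to the multiplicative measure $d\nu$ in $Z$, not $\nu^{-1}$, so each outer integral is exactly $\int_0^\infty\phi(t)\,dt/t=1/(2\pi)$.
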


\begin{proof}
By Lemma \ref{l:weil-seesaw}, we have
\[
 Z(\varphi, \Psi) = \int_{\R_+^\times} \int_{\R_+^\times} \int_{\bG_1} \nu \nu' \langle \omega_{\psi}(\iota(g_1)) \varphi_\nu, \varphi_{\nu'} \rangle \Psi(d(\nu') g_1 d(\nu)^{-1}) \, dg_1 \, d \nu \, d \nu',
\]
noting that $\varphi_\nu = 0$ if $\nu<0$.
Put
\begin{align*}
 \mathbf{t}(\nu) & = \left[ \mat{\nu^{1/2}}{0}{0}{\nu^{-1/2}}, 1 \right] \in \bG_1, \\
 t(\nu) & = \iota(\mathbf{t}(\nu)) = \diag(\nu^{1/2}, \nu^{1/2}, \nu^{-1/2}, \nu^{-1/2}) \in \Sp_4(\R)
\end{align*}
for $\nu \in \R_+^\times$.
Since 
\[
 \Psi(d(\nu') g_1 d(\nu)^{-1}) = \Psi(\mathbf{t}(\nu')^{-1} g_1 \mathbf{t}(\nu)),
\]
we have
\[
 Z(\varphi, \Psi) = \int_{\R_+^\times} \int_{\R_+^\times} \int_{\bG_1} \nu \nu' \langle \omega_{\psi}(t(\nu') \iota(g_1) t(\nu)^{-1}) \varphi_\nu, \varphi_{\nu'} \rangle \Psi(g_1) \, dg_1 \, d \nu \, d \nu'.
\]
Moreover, since 
\[
 \varphi_\nu = \nu \phi(\nu) \cdot \omega_\psi(t(\nu) g_0 t(\nu)^{-1}) \varphi^\sharp_{n,\nu}, 
\]
we have
\begin{align*}
 Z(\varphi, \Psi) & = \int_{\R_+^\times} \int_{\R_+^\times} \int_{\bG_1} \nu^2 \nu'^2 \phi(\nu) \phi(\nu') \langle \omega_{\psi}(\iota(g_1) g_0 t(\nu)^{-1}) \varphi^\sharp_{n,\nu}, \omega_\psi(g_0 t(\nu')^{-1}) \varphi^\sharp_{n,\nu'} \rangle \Psi(g_1) \, dg_1 \, d \nu \, d \nu' \\
 & = \int_{\R_+^\times} \int_{\R_+^\times} \int_{\bG_1} \nu^2 \nu'^2 \phi(\nu) \phi(\nu') \langle \omega_{\psi}(\iota_0(g_1) t(\nu)^{-1}) \varphi^\sharp_{n,\nu}, \omega_\psi(t(\nu')^{-1}) \varphi^\sharp_{n,\nu'} \rangle \Psi(g_1) \, dg_1 \, d \nu \, d \nu'.
\end{align*}
Finally, since 
\[
 \omega_\psi(t(\nu)^{-1}) \varphi^\sharp_{n,\nu} = \nu^{-2} \varphi^\sharp_{n,1},
\]
we have
\[
 Z(\varphi, \Psi) = \int_{\R_+^\times} \phi(\nu) \, d \nu \cdot \int_{\R_+^\times} \phi(\nu') \, d \nu' \cdot \int_{\bG_1} \langle \omega_{\psi}(\iota_0(g_1)) \varphi^\sharp_{n,1}, \varphi^\sharp_{n,1} \rangle \Psi(g_1) \, dg_1.
\]
This yields the assertion.
\end{proof}

Put $G' = \SL_2(\R)$ and define the standard measure $dg'$ on $G'$ by 
\[
 dg' = \frac{dx \, dy}{y^2} \, dk, \quad
 g' = \mat{1}{x}{0}{1} \mat{y^{1/2}}{0}{0}{y^{-1/2}} k
\]
for $x \in \R$, $y \in \R_+^\times$, and $k \in \SO(2)$, where $dx, dy$ are the Lebesgue measures and $dk$ is the Haar measure on $\SO(2)$ such that $\vol(\SO(2)) = 1$.
We regard $G'$ as a subgoup of $\bG_1$ via the map $g' \mapsto [g',1]$.

\begin{lem}
\label{l:rallis-real-SL2}
We have
\[
 Z(\varphi, \Psi) = \frac{1}{4 \pi^2} \int_{G'} \langle \omega_{\psi}(\iota_0(g')) \varphi^\sharp_{n,1}, \varphi^\sharp_{n,1} \rangle \Psi(g') \, dg'.
\]
\end{lem}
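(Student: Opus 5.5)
The plan is to reduce the integral over $\bG_1 = \U(\bW)(\R)$ in Lemma \ref{l:rallis-real-bG1} to an integral over $G' = \SL_2(\R)$. I will write $\bG_1$ as a product of $G'$ and a compact piece. Since $\bG_1 = \{[g,z] \mid \det(g) = \N(z)\}$, every element can be written as $[g',1]\cdot[r_\theta, e^{\sqrt{-1}\theta}]$ with $g' \in G'$ and $r_\theta = \smat{\cos\theta}{-\sin\theta}{\sin\theta}{\cos\theta}$ times an appropriate scalar. Concretely, $[g,z]$ with $z = re^{\sqrt{-1}\theta}$ gives $\det g = r^2$, and modulo the diagonal $\R^\times$ we may take $r = 1$. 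So $\bG_1 = G' \cdot \{[1,e^{\sqrt{-1}\theta}]\}$, up to the finite intersection $\{[\pm1, \pm1]\}$, which is absorbed by the center. With the standard measures, the Iwasawa coordinates and $\vol(\bK_{1,\infty}) = 1$ together with $\vol(\SO(2)) = 1$ give $dg_1 = dg'\,d\theta/2\pi$ under this parametrization, after checking the covering degree.

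Next I will compute how each factor of the integrand transforms under $[1,e^{\sqrt{-1}\theta}]$. For $\Psi$, the defining property gives $\Psi(g_1[1,z]) = \Psi(g_1)$, since $\mu$ is trivial and $\Psi$ is insensitive to $z$. For the Weil representation factor, $\iota_0([1,e^{\sqrt{-1}\theta}])$ is the block rotation $k'(e^{-\sqrt{-1}\theta})$ of Lemma \ref{l:S(C^2)_n} (up to a sign convention in $\theta$). By Lemma \ref{l:S(C^2)_n}\eqref{S(C^2)_n-2}, $\varphi^\sharp_{n,1} = \varphi_{n,1} + \varphi''_{n,1}$ is fixed by $\omega(k'(z))$ for all $z \in \C^1$. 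Hence
\[
\langle \omega_\psi(\iota_0(g'[1,z]))\varphi^\sharp_{n,1}, \varphi^\sharp_{n,1}\rangle = \langle \omega_\psi(\iota_0(g'))\varphi^\sharp_{n,1}, \varphi^\sharp_{n,1}\rangle .
\]
The integrand is therefore independent of $\theta$, and the $\theta$-integral contributes total mass $1$, which yields the stated formula.

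The main obstacle is the measure bookkeeping. I need to confirm that the map $G' \times \C^1 \to \bG_1$ has the right kernel, namely $\{(\pm1, \pm1)\}$ modulo the identification $[-1,-1] = [1,1]$. I also need the normalization of $dg_{1,\infty}$, defined through $\bK_{1,\infty} = \{[k,z]\}$ of volume $1$, to match $dg'\,d\kappa$ with both compact groups of volume $1$. I will do this by comparing the Iwasawa decompositions: $\bK_{1,\infty}$ is the image of $\SO(2)\times\C^1$ modulo $\pm(1,1)$, and pushing forward the product probability measure gives the probability measure on $\bK_{1,\infty}$. The $N A$ part is the same in both. So the constant is exactly $1$, and the $\frac{1}{4\pi^2}$ prefactor from Lemma \ref{l:rallis-real-bG1} is unchanged.
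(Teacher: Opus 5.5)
Your proposal is correct and is essentially the paper's own argument. Both $\Psi$ and the matrix coefficient $g_1 \mapsto \langle \omega_\psi(\iota_0(g_1))\varphi^\sharp_{n,1},\varphi^\sharp_{n,1}\rangle$ are invariant under $Z_{\bG_1}=\{[1,z] \mid z\in\C^1\}$; for the latter this holds because $\iota_0([1,e^{\sqrt{-1}\theta}])=k'(e^{-\sqrt{-1}\theta})$ together with Lemma \ref{l:S(C^2)_n}\eqref{S(C^2)_n-2}. Since $\bG_1 = Z_{\bG_1}\cdot G'$, the integral in Lemma \ref{l:rallis-real-bG1} collapses to one over $G'$. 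Your extra bookkeeping, that the probability measure on $\bK_{1,\infty}$ is the pushforward of the one on $\SO(2)\times\C^1$ so no constant arises from the $2$-to-$1$ covering, just makes explicit what the paper leaves implicit.
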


\begin{proof}
Recall that $\Psi$ is $Z_{\bG_1}$-invariant, where $Z_{\bG_1} = \{ [1, z] \mid z \in \C^1 \}$ is the center of $\bG_1$.
By Lemma \ref{l:S(C^2)_n}\eqref{S(C^2)_n-2}, the function $g_1 \mapsto \langle \omega_{\psi}(\iota_0(g_1)) \varphi^\sharp_{n,1}, \varphi^\sharp_{n,1} \rangle$ is also $Z_{\bG_1}$-invariant.
Hence the assertion follows from Lemma \ref{l:rallis-real-bG1}, notingh that $\bG_1 = Z_{\bG_1} \cdot G'$.
\end{proof} 

Now we fix $t \in \R_+^\times$ and compute $\varphi_n^\sharp = \varphi_{n,t}^\sharp$ explicitly.
We use the notation of \S \ref{s:schwartz_forms}, so that
\[
 \varphi_n^\sharp = \varphi_n + \varphi_n''.
\]
Recall the Gaussian $\varphi^0 \in \cS(\C^2)$ given by
\[
 \varphi^0(x) = e^{- 2 \pi t (x_1 \bar{x}_1 + x_2 \bar{x}_2)}
\]
for $x = (x_1,x_2) \in \C^2$.
Write 
\[
 \varphi_n = P_n \cdot \varphi^0, \quad
 \varphi_n' = P_n' \cdot \varphi^0, \quad
 \varphi_n'' = P_n'' \cdot \varphi^0, 
\]
where $P_n, P_n', P_n''$ are polynomials in $x_1, \bar{x}_1, x_2, \bar{x}_2$.
Put $\kappa = - 2 \pi t$ and $\kappa_0 = 2 \sqrt{\pi t}$, so that $\kappa_0^2 = -2 \kappa$.
Define differential operators $\delta_1, \delta'_1, \delta_2, \delta'_2$ by
\[
 \delta_i = \sqrt{-1} \kappa_0 x_i + \frac{1}{\sqrt{-1} \kappa_0} \frac{\partial}{\partial \bar{x}_i}, \quad
 \delta'_i = \sqrt{-1} \kappa_0 \bar{x}_i + \frac{1}{\sqrt{-1} \kappa_0} \frac{\partial}{\partial x_i}.
\]
Note that $\C[\delta_1, \delta'_1, \delta_2, \delta'_2]$ is a commutative subalgebra of $\C[x_1, \bar{x}_1, x_2, \bar{x}_2, \frac{\partial}{\partial x_1}, \frac{\partial}{\partial \bar{x}_1}, \frac{\partial}{\partial x_2}, \frac{\partial}{\partial \bar{x}_2}]$.

\begin{lem}
\label{l:diff-diag}
We have
\[
 P_n = \delta_1'^2 (\delta'_1 \delta_2 - \delta_1 \delta'_2)^n P^0, \quad
 P_n' = \delta_1' \delta_2' (\delta'_1 \delta_2 - \delta_1 \delta'_2)^n P^0, \quad
 P_n'' = \delta_2'^2 (\delta'_1 \delta_2 - \delta_1 \delta'_2)^n P^0,
\]
where $P^0 = \frac{1}{2 \kappa}$.
\end{lem}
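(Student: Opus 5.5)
The plan is an induction on $n$ in which the action of $\fg_1$ on polynomial multiples of $\varphi^0$ is rewritten through the operators $\delta_i,\delta_i'$.

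\textbf{Conjugation to operators on $P$.} First I compute, for a polynomial $P$, the conjugated operators $\varphi^0\,\cdot\,\circ\,\omega(X)\circ\varphi^0\cdot$ for $X=\mathtt{X}_{2,0},\mathtt{X}_{1,1},\mathtt{X}_{0,2}$. From the formulas for $\omega(\mathtt{A}_i),\omega(\mathtt{B}_i),\omega(\mathtt{C}_i)$ in \S\ref{ss:schwartz-setup} and the decompositions of $\mathtt{X}_{2,0},\mathtt{X}_{1,1},\mathtt{X}_{0,2}$ used in the proof of Lemma \ref{l:phi-closed}, each $\omega(\mathtt{X}_\alpha)$ is a second order operator. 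It is built from $x_i\partial_{x_j}$, $\bar x_i\partial_{\bar x_j}$, the multiplications $2\pi t\sqrt{-1}\,x_i\bar x_j$ and $\frac{\sqrt{-1}}{2\pi t}\partial_{x_i}\partial_{\bar x_j}$. Conjugating by $\varphi^0$ sends $\partial_{x_i}\mapsto\partial_{x_i}+\kappa\bar x_i$ and $\partial_{\bar x_i}\mapsto\partial_{\bar x_i}+\kappa x_i$. I expect, and would check directly, that the resulting operators on polynomials factor as
\[
 \omega(\mathtt{X}_{2,0}) \leftrightarrow c\,\delta_1'\delta_1,\qquad
 \omega(\mathtt{X}_{1,1}) \leftrightarrow c\,(\delta_1'\delta_2+\delta_2'\delta_1),\qquad
 \omega(\mathtt{X}_{0,2}) \leftrightarrow c\,\delta_2'\delta_2
\]
for a single constant $c$. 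As a sanity check, the table in the proof of Lemma \ref{l:phi-closed} shows that $\delta_1\bar x_1^2$ has leading term $\sqrt{-1}\kappa_0x_1\bar x_1^2$.

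The constant $c$ has to be reconciled with the normalization $P^0=\frac1{2\kappa}$. For this I check the base case: $\delta_1'^2\cdot\frac{1}{2\kappa}=\frac{(\sqrt{-1}\kappa_0)^2\bar x_1^2}{2\kappa}=\frac{-\kappa_0^2}{2\kappa}\bar x_1^2=\bar x_1^2=P_0$, using $\kappa_0^2=-2\kappa$, and similarly for $P_0',P_0''$. So $n=0$ holds identically. The real work is then to show that $c=1$ in the inductive step, or at least that the constants combine to $1$. The factors $\frac{1}{\sqrt{-1}\kappa_0}$ in $\delta_i$ are presumably chosen exactly so that $\frac{\sqrt{-1}}{2\pi t}$ and $2\pi t\sqrt{-1}$ match.

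\textbf{Inductive step.} Using commutativity of $\C[\delta_1,\delta_1',\delta_2,\delta_2']$, write $D=\delta_1'\delta_2-\delta_1\delta_2'$ and $Q=D^nP^0$. By induction, $(P_n,P_n',P_n'')=(\delta_1'^2Q,\ \delta_1'\delta_2'Q,\ \delta_2'^2Q)$. Apply the matrix $\mathtt{D}$. The first entry becomes
\[
 (\delta_1'\delta_2+\delta_2'\delta_1)\delta_1'^2Q-2\delta_1'\delta_1\,\delta_1'\delta_2'Q
 =\delta_1'^2(\delta_1'\delta_2-\delta_1\delta_2')Q=\delta_1'^2D^{n+1}P^0.
\]
The second entry becomes
\[
 \delta_2'\delta_2\,\delta_1'^2Q-\delta_1'\delta_1\,\delta_2'^2Q=\delta_1'\delta_2'DQ.
\]
The third entry becomes
\[
 2\delta_2'\delta_2\,\delta_1'\delta_2'Q-(\delta_1'\delta_2+\delta_2'\delta_1)\delta_2'^2Q=\delta_2'^2DQ.
\]
This closes the induction, provided $c=1$.

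\textbf{Main obstacle.} The main obstacle is the conjugation computation, and specifically confirming that the zeroth-order terms (the constants $1$ in $\omega(\mathtt{A}_1),\omega(\mathtt{A}_4)$ and the cross terms) assemble exactly into the factored form with $c=1$. I would first verify this on a few monomials against the table in the proof of Lemma \ref{l:phi-closed}, which lists all three $\mathtt{X}_\alpha$ on $\varphi_0,\varphi_0',\varphi_0''$. That check would fix $c$ and the precise form of the factorization, for instance whether $\mathtt{X}_{2,0}$ corresponds to $\delta_1\delta_1'$ up to the commutator constant coming from $\mathtt{A}_1$'s "$+1$".
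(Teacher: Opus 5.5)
Your proposal is correct and is essentially the paper's argument. The paper conjugates $\omega(\mathtt{X}_{2,0}),\omega(\mathtt{X}_{1,1}),\omega(\mathtt{X}_{0,2})$ by $\varphi^0$ to operators on polynomials, $\nabla_1=\delta_1\delta_1'$, $\nabla_2=\delta_1\delta_2'+\delta_1'\delta_2$, $\nabla_3=\delta_2\delta_2'$; for example $\nabla_1=1+2\kappa x_1\bar{x}_1+x_1\frac{\partial}{\partial x_1}+\bar{x}_1\frac{\partial}{\partial \bar{x}_1}+\frac{1}{2\kappa}\frac{\partial^2}{\partial x_1\partial\bar{x}_1}=\delta_1\delta_1'$, so your constant is indeed $c=1$, and there is no ordering issue because $\delta_1$ and $\delta_1'$ commute. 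The paper then uses that $(\delta_1'^2,\delta_1'\delta_2',\delta_2'^2)^t$ is an eigenvector of the resulting $3\times 3$ operator matrix with eigenvalue $\delta_1'\delta_2-\delta_1\delta_2'$, which is exactly your inductive step; it packages this as $AQ=Q\diag(\dots)$ for a matrix $Q$ whose third column is your vector, but only that column is actually used.
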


\begin{proof}
Define differential operators $\nabla_1, \nabla_2, \nabla_3$ by 
\begin{align*}
 \nabla_1 & = 1 + 2 \kappa x_1 \bar{x}_1 + x_1 \frac{\partial}{\partial x_1} + \bar{x}_1 \frac{\partial}{\partial \bar{x}_1} + \frac{1}{2\kappa} \frac{\partial^2}{\partial x_1 \partial \bar{x}_1}, \\
 \nabla_2 & = 2 \kappa x_1 \bar{x}_2 + 2 \kappa \bar{x}_1 x_2 
 + x_1 \frac{\partial}{\partial x_2} + \bar{x}_1 \frac{\partial}{\partial \bar{x}_2} + x_2 \frac{\partial}{\partial x_1} + \bar{x}_2 \frac{\partial}{\partial \bar{x}_1} 
 + \frac{1}{2\kappa} \frac{\partial^2}{\partial x_1 \partial \bar{x}_2} + \frac{1}{2\kappa} \frac{\partial}{\partial \bar{x}_1 \partial x_2}, \\
 \nabla_3 & = 1 + 2 \kappa x_2 \bar{x}_2 + x_2 \frac{\partial}{\partial x_2} + \bar{x}_2 \frac{\partial}{\partial \bar{x}_2} + \frac{1}{2\kappa} \frac{\partial^2}{\partial x_2 \partial \bar{x}_2}.
\end{align*}
Then by a direct computation, we have
\[
 \omega(\mathtt{X}_{2,0}) \varphi = \nabla_1 P \cdot \varphi^0, \quad
 \omega(\mathtt{X}_{1,1}) \varphi = \nabla_2 P \cdot \varphi^0, \quad
 \omega(\mathtt{X}_{0,2}) \varphi = \nabla_3 P \cdot \varphi^0 
\]
for $\varphi = P \cdot \varphi^0$ with a polynomial $P$ in $x_1, \bar{x}_1, x_2, \bar{x}_2$.
This implies that 
\[
 \begin{pmatrix}
  P_n \\
  P'_n \\
  P''_n 
 \end{pmatrix}
 = A^n
 \begin{pmatrix}
  P_0 \\
  P'_0 \\
  P''_0 
 \end{pmatrix},
\]
where 
\[
 A = 
 \begin{pmatrix}
  \nabla_2 & -2 \nabla_1 & 0 \\
  \nabla_3 & 0 &  -\nabla_1 \\
  0 & 2 \nabla_3 & - \nabla_2
 \end{pmatrix}.
\]
On the other hand, noting that 
\[
 \frac{\partial}{\partial x_i} x_j - x_j \frac{\partial}{\partial x_i} = 
 \begin{cases}
  1 & \text{if $i = j$;} \\
  0 & \text{if $i \ne j$,}
 \end{cases}
\]
we have 
\[
 \nabla_1 = \delta_1 \delta'_1, \quad
 \nabla_2 = \delta_1 \delta'_2 + \delta'_1 \delta_2, \quad
 \nabla_3 = \delta_2 \delta'_2.
\]
Hence, putting 
\[
 Q = 
 \begin{pmatrix}
  \delta_1^2 & 2 \nabla_1 & \delta_1'^2 \\
  \delta_1 \delta_2 & \nabla_2 & \delta'_1 \delta'_2 \\
  \delta_2^2 & 2 \nabla_3 & \delta_2'^2
 \end{pmatrix},
\]
we have
\[
 AQ = Q
 \begin{pmatrix}
  \delta_1 \delta'_2 - \delta'_1 \delta_2 & 0 & 0 \\
  0 & 0 & 0 \\
  0 & 0 & \delta'_1 \delta_2 - \delta_1 \delta'_2
 \end{pmatrix}.
\]
Since
\[
 \begin{pmatrix}
  P_0 \\
  P'_0 \\
  P''_0 
 \end{pmatrix}
 = 
 \begin{pmatrix}
  \bar{x}_1^2 \\
  \bar{x}_1 \bar{x}_2 \\
  \bar{x}_2^2
 \end{pmatrix}
 = Q
 \begin{pmatrix}
  0 \\
  0 \\
  P^0 
 \end{pmatrix}, 
\]
we have
\[
 \begin{pmatrix}
  P_n \\
  P'_n \\
  P''_n 
 \end{pmatrix}
 = A^n Q
 \begin{pmatrix}
  0 \\
  0 \\
  P^0 
 \end{pmatrix} 
 = Q
 \begin{pmatrix}
  (\delta_1 \delta'_2 - \delta'_1 \delta_2)^n & 0 & 0 \\
  0 & 0 & 0 \\
  0 & 0 & (\delta'_1 \delta_2 - \delta_1 \delta'_2)^n
 \end{pmatrix}
 \begin{pmatrix}
  0 \\
  0 \\
  P^0 
 \end{pmatrix}.
\]
This completes the proof.
\end{proof}

Now we make a change of variables
\[
 y_1 = \frac{1}{\sqrt{2}} (x_1 + \sqrt{-1} x_2), \quad 
 y_2 = \frac{1}{\sqrt{2}} (x_1 - \sqrt{-1} x_2).
\]
Note that 
\[
 \varphi^0(x_1, x_2) = e^{- 2 \pi t (y_1 \bar{y}_1 + y_2 \bar{y}_2)}. 
\]

\begin{lem}
\label{l:diff-y}
We have
\[
 \delta'_1 \delta_2 - \delta_1 \delta'_2 = - \sqrt{-1} (D_1 - D_2), \quad
 \delta_1'^2 + \delta_2'^2 = 2 D_1' D_2',
\]
where 
\[
 D_i = 1 + 2 \kappa y_i \bar{y}_i + y_i \frac{\partial}{\partial y_i} + \bar{y}_i \frac{\partial}{\partial \bar{y}_i} + \frac{1}{2 \kappa} \frac{\partial^2}{\partial y_i \partial \bar{y}_i}, \quad
 D_i' = \sqrt{-1} \kappa_0 \bar{y}_i + \frac{1}{\sqrt{-1} \kappa_0} \frac{\partial}{\partial y_i}.
\]
\end{lem}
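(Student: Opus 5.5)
This is a direct substitution of variables. I will express $x_1,x_2$ and their derivatives in terms of $y_1,y_2$, and then expand the two quadratic expressions in the commuting operators $\delta_i,\delta_i'$.

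From $y_1 = (x_1+\sqrt{-1}x_2)/\sqrt2$ and $y_2=(x_1-\sqrt{-1}x_2)/\sqrt2$ we get
\[
 x_1 = \tfrac{1}{\sqrt2}(y_1+y_2), \qquad x_2 = \tfrac{1}{\sqrt{-1}\sqrt2}(y_1-y_2).
\]
The conjugates satisfy $\bar y_1 = (\bar x_1-\sqrt{-1}\bar x_2)/\sqrt2$ and $\bar y_2=(\bar x_1+\sqrt{-1}\bar x_2)/\sqrt2$. By the chain rule,
\[
 \partial_{x_1} = \tfrac{1}{\sqrt2}(\partial_{y_1}+\partial_{y_2}), \qquad \partial_{x_2} = \tfrac{\sqrt{-1}}{\sqrt2}(\partial_{y_1}-\partial_{y_2}),
\]
\[
 \partial_{\bar x_1} = \tfrac{1}{\sqrt2}(\partial_{\bar y_1}+\partial_{\bar y_2}), \qquad \partial_{\bar x_2} = -\tfrac{\sqrt{-1}}{\sqrt2}(\partial_{\bar y_1}-\partial_{\bar y_2}).
\]
Define $E_i = \sqrt{-1}\kappa_0 y_i + (\sqrt{-1}\kappa_0)^{-1}\partial_{\bar y_i}$ and $D_i'$ as in the statement. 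Substituting gives
\[
 \delta_1 = \tfrac{1}{\sqrt2}(E_1+E_2), \quad \delta_2 = -\tfrac{\sqrt{-1}}{\sqrt2}(E_1-E_2), \quad \delta_1' = \tfrac{1}{\sqrt2}(D_1'+D_2'), \quad \delta_2' = \tfrac{\sqrt{-1}}{\sqrt2}(D_1'-D_2').
\]
The main care point is checking these four linear relations, especially the signs coming from $\bar y_i$ versus $y_i$.

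The operators $E_i, D_j'$ pairwise commute for $i\neq j$, since they involve different variables. For $i=j$, the commutators are $[\partial_{\bar y},\bar y]$-type terms that cancel because of the $\sqrt{-1}\kappa_0\cdot(\sqrt{-1}\kappa_0)^{-1}$ normalization. This matches the paper's remark that $\C[\delta_1,\delta'_1,\delta_2,\delta'_2]$ is commutative, so the expansions below can be done as polynomials.

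For the second identity,
\[
 \delta_1'^2+\delta_2'^2 = \tfrac12\bigl((D_1'+D_2')^2-(D_1'-D_2')^2\bigr) = 2D_1'D_2'.
\]

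For the first identity, expand the products:
\[
 \delta_1'\delta_2 = -\tfrac{\sqrt{-1}}{2}(D_1'+D_2')(E_1-E_2), \qquad \delta_1\delta_2' = \tfrac{\sqrt{-1}}{2}(E_1+E_2)(D_1'-D_2').
\]
Subtracting, the cross terms cancel and
\[
 \delta_1'\delta_2-\delta_1\delta_2' = -\sqrt{-1}\,(E_1D_1' - E_2D_2').
\]
It remains to show $E_iD_i' = D_i$. This is the same computation as $\nabla_1=\delta_1\delta_1'$ in the proof of Lemma \ref{l:diff-diag}, now in the variables $y_i$. Expanding,
\[
 E_iD_i' = -\kappa_0^2 y_i\bar y_i + y_i\partial_{y_i} + \partial_{\bar y_i}\bar y_i - \kappa_0^{-2}\partial_{\bar y_i}\partial_{y_i}.
\]
Using $\partial_{\bar y_i}\bar y_i = 1+\bar y_i\partial_{\bar y_i}$ and $\kappa_0^2=-2\kappa$, this equals
\[
 1+2\kappa y_i\bar y_i + y_i\partial_{y_i}+\bar y_i\partial_{\bar y_i}+\tfrac{1}{2\kappa}\partial_{y_i}\partial_{\bar y_i} = D_i,
\]
which proves the first identity.
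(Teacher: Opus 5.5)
Your proposal is correct and follows the paper's route. The paper also proves this by direct computation from the same change-of-variable formulas for $x_i,\bar x_i,\partial/\partial x_i,\partial/\partial \bar x_i$. Your organization is a clean way to carry out that computation: you write $\delta_1,\delta_2,\delta_1',\delta_2'$ as linear combinations of the pairwise commuting operators $E_1,E_2,D_1',D_2'$, and then use $E_iD_i'=D_i$, the $y$-analogue of $\nabla_1=\delta_1\delta_1'$. All the signs check out.
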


\begin{proof}
The lemma follows from a direct computation, noting that 
\begin{align*}
 x_1 & = \frac{1}{\sqrt{2}} (y_1 + y_2), &
 x_2 & = - \frac{\sqrt{-1}}{\sqrt{2}} (y_1 - y_2), \\
 \bar{x}_1 & = \frac{1}{\sqrt{2}} (\bar{y}_1 + \bar{y}_2), & 
 \bar{x}_2 & = \frac{\sqrt{-1}}{\sqrt{2}} (\bar{y}_1 - \bar{y}_2), \\
 \frac{\partial}{\partial x_1} & = \frac{1}{\sqrt{2}} \left( \frac{\partial}{\partial y_1} + \frac{\partial}{\partial y_2} \right), &
 \frac{\partial}{\partial x_2} & = \frac{\sqrt{-1}}{\sqrt{2}} \left( \frac{\partial}{\partial y_1} - \frac{\partial}{\partial y_2} \right), \\
 \frac{\partial}{\partial \bar{x}_1} & = \frac{1}{\sqrt{2}} \left( \frac{\partial}{\partial \bar{y}_1} + \frac{\partial}{\partial \bar{y}_2} \right), & 
 \frac{\partial}{\partial \bar{x}_2} & = - \frac{\sqrt{-1}}{\sqrt{2}} \left( \frac{\partial}{\partial \bar{y}_1} - \frac{\partial}{\partial \bar{y}_2} \right).
\end{align*}
\end{proof}

Using this lemma, we will separate variables.
Thus we consider polynomials in $z, \bar{z}$, where $z$ is a complex variable, and differential operators $D,D'$ given by 
\[
 D = 1 + 2 \kappa z \bar{z} + z \frac{\partial}{\partial z} + \bar{z} \frac{\partial}{\partial \bar{z}} + \frac{1}{2 \kappa} \frac{\partial^2}{\partial z \partial \bar{z}}, \quad
 D' = \sqrt{-1} \kappa_0 \bar{z} + \frac{1}{\sqrt{-1} \kappa_0} \frac{\partial}{\partial z}.
\]
We also use polar coordinates
\[
 z = r e^{\sqrt{-1} \theta}.
\]

\begin{lem}
\label{l:diff-polar}
We have
\[
 D = 1 + 2 \kappa r^2 + r \frac{\partial}{\partial r} + \frac{1}{8 \kappa} \left( \frac{\partial^2}{\partial r^2} + \frac{1}{r} \frac{\partial}{\partial r} + \frac{1}{r^2} \frac{\partial^2}{\partial \theta^2}\right).
\]
In particular, if $f$ is a polynomial of the form $f(z, \bar{z}) = g(r^2)$, where $g$ is a polynomial in one variable, then we have
\[
 Df(z,\bar{z}) = (1 + 2 \kappa r^2) g(r^2) + \left( 2r^2 + \frac{1}{2 \kappa} \right) g'(r^2) + \frac{r^2}{2 \kappa} g''(r^2).
\]
\end{lem}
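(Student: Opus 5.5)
This is a direct change to polar coordinates. I would first rewrite the Wirtinger derivatives in polar form and then apply the result to radial functions.

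For the first-order terms, write $z = r e^{\sqrt{-1}\theta}$. Then
\[
 \frac{\partial}{\partial z} = \frac{e^{-\sqrt{-1}\theta}}{2}\left(\frac{\partial}{\partial r} - \frac{\sqrt{-1}}{r}\frac{\partial}{\partial \theta}\right), \quad
 \frac{\partial}{\partial \bar{z}} = \frac{e^{\sqrt{-1}\theta}}{2}\left(\frac{\partial}{\partial r} + \frac{\sqrt{-1}}{r}\frac{\partial}{\partial \theta}\right).
\]
From these, the Euler-type operator becomes $z\frac{\partial}{\partial z} + \bar z \frac{\partial}{\partial \bar z} = r\frac{\partial}{\partial r}$, since the $\theta$-derivative parts cancel. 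The factor $2\kappa z\bar z$ is just $2\kappa r^2$.

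For the second-order term, I would use the standard identity $\frac{\partial^2}{\partial z\partial\bar z} = \frac14\Delta$. In polar coordinates this reads
\[
 \frac{\partial^2}{\partial z\partial\bar z} = \frac14\left(\frac{\partial^2}{\partial r^2}+\frac1r\frac{\partial}{\partial r}+\frac1{r^2}\frac{\partial^2}{\partial\theta^2}\right).
\]
Multiplying by $\frac1{2\kappa}$ gives the factor $\frac1{8\kappa}$, and this proves the first formula.

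For the second formula, take $f = g(r^2)$ and set $s = r^2$. The $\theta$-derivative vanishes because $f$ is radial. Since $\partial_r f = 2r g'(s)$, we get $r\partial_r f = 2r^2 g'$. Next, $\partial_r^2 f = 2g' + 4r^2 g''$ and $\frac1r \partial_r f = 2g'$, so the radial Laplacian equals $4g' + 4r^2 g''$. Dividing by $8\kappa$ gives $\frac{1}{2\kappa}g' + \frac{r^2}{2\kappa}g''$. Collecting terms yields
\[
 (1+2\kappa r^2)g + \left(2r^2 + \frac1{2\kappa}\right)g' + \frac{r^2}{2\kappa}g'',
\]
which is the claimed expression.

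There is no real obstacle. The only care needed is getting the factor $\frac14$ right in the polar Laplacian, so that it combines with $\frac1{2\kappa}$ to give $\frac1{8\kappa}$.
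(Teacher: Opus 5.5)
Your proposal is correct and follows essentially the same route as the paper, which also rewrites $\frac{\partial}{\partial z}$, $\frac{\partial}{\partial \bar z}$ and $\frac{\partial^2}{\partial z\partial\bar z}$ in polar coordinates and then computes directly. Your version just writes out the cancellation $z\frac{\partial}{\partial z}+\bar z\frac{\partial}{\partial \bar z}=r\frac{\partial}{\partial r}$ and the radial specialization $f=g(r^2)$ explicitly, which the paper leaves to the reader.
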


\begin{proof}
The lemma follows from a direct computation, noting that 
\begin{align*}
 \frac{\partial}{\partial z} & = \frac{1}{2} (\cos \theta - \sqrt{-1} \sin \theta) \frac{\partial}{\partial r} - \frac{1}{2r} (\sin \theta + \sqrt{-1} \cos \theta) \frac{\partial}{\partial \theta}, \\
 \frac{\partial}{\partial \bar{z}} & = \frac{1}{2} (\cos \theta + \sqrt{-1} \sin \theta) \frac{\partial}{\partial r} - \frac{1}{2r} (\sin \theta - \sqrt{-1} \cos \theta) \frac{\partial}{\partial \theta}, \\
 \frac{\partial^2}{\partial z \partial \bar{z}} & = \frac{1}{4} \left( \frac{\partial^2}{\partial r^2} + \frac{1}{r} \frac{\partial}{\partial r} + \frac{1}{r^2} \frac{\partial^2}{\partial \theta^2}\right).
\end{align*}
\end{proof}

Let $L_n^{(\alpha)}$ be the generalized Laguerre polynomial given by 
\[
 L_n^{(\alpha)}(x) = \frac{x^{-\alpha} e^x}{n!} \frac{d^n}{dx^n} (x^{n+\alpha} e^{-x}).
\]
Write $L_n = L_n^{(0)}$.

\begin{lem}
\label{l:diff-laguerre}
Let $f^0 = 1$.
\begin{enumerate}
\item
\label{diff-f0-1}
We have
\[
 D^n f^0(z,\bar{z}) = n! L_n(\kappa_0^2 z \bar{z}).
\]
\item 
\label{diff-f0-2}
We have
\[
 D' D^n f^0(z,\bar{z}) = \sqrt{-1} n! \kappa_0 \bar{z} L_n^{(1)}(\kappa_0^2 z \bar{z}).
\]
\end{enumerate}
\end{lem}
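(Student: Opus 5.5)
Both parts go by induction on $n$, using the radial formula in Lemma \ref{l:diff-polar} and the standard recurrences for Laguerre polynomials. Throughout put $x = \kappa_0^2 r^2 = -2\kappa r^2$.

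For \eqref{diff-f0-1}, the case $n=0$ is $L_0 = 1$. Since $D$ preserves the class of radial polynomials $g(r^2)$ by Lemma \ref{l:diff-polar}, it suffices to show: if $f = g(r^2)$ with $g(r^2) = n!\,L_n(x)$, then $Df = (n+1)!\,L_{n+1}(x)$. Rewrite Lemma \ref{l:diff-polar} in the variable $x$. We have $d/d(r^2) = -2\kappa\, d/dx$, $2\kappa r^2 = -x$, and $r^2/(2\kappa)\cdot(2\kappa)^2 = -x\cdot(\ldots)$. This gives
\[
 D f = (1-x)\,G(x) + (1-2x)\,G'(x)\cdot(-1)\cdot(-1) \ldots
\]
Rather than trusting signs from memory, I would carefully compute the coefficients. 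The claimed form is
\[
 D = 1 - x + (1-2x)\frac{d}{dx}\cdot c_1 + x\frac{d^2}{dx^2}\cdot c_2
\]
with explicit constants $c_1$ and $c_2$. Concretely, $(2r^2 + \frac{1}{2\kappa})(-2\kappa) = x - 1$ and $\frac{r^2}{2\kappa}(4\kappa^2) = -x$. Hence
\[
 D = (1-x) + (x-1)\frac{d}{dx} - x\frac{d^2}{dx^2}.
\]
Then I would verify that $(1-x)L_n + (x-1)L_n' - xL_n'' = (n+1)L_{n+1}$. To do this I use the Laguerre ODE $xL_n'' = (x-1)L_n' - nL_n$, which reduces the left side to $(n+1-x)L_n - xL_n'$. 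I then use $xL_n' = nL_n - nL_{n-1}$ together with the three-term recurrence $(n+1)L_{n+1} = (2n+1-x)L_n - nL_{n-1}$. Together these give exactly $(n+1)L_{n+1}$.

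For \eqref{diff-f0-2}, apply $D' = \sqrt{-1}\kappa_0\bar z + \frac{1}{\sqrt{-1}\kappa_0}\partial_z$ to $n!\,L_n(\kappa_0^2 z\bar z)$. Since $\partial_z L_n(\kappa_0^2 z\bar z) = \kappa_0^2\bar z\, L_n'(x)$, the result is
\[
 \sqrt{-1}\kappa_0\bar z\, n!\,\bigl(L_n(x) - L_n'(x)\bigr).
\]
It remains to note the identity $L_n - L_n' = L_n^{(1)}$. This follows from $L_n' = -L_{n-1}^{(1)}$ and $L_n^{(1)} = L_n^{(0)} + L_{n-1}^{(1)}$.

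The only real obstacle is bookkeeping. One must get the constants right when converting Lemma \ref{l:diff-polar} to the variable $x$, and keep the factors $\sqrt{-1}$ and $\kappa_0^2 = -2\kappa$ straight. The Laguerre identities used are all classical.
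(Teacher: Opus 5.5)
Your strategy matches the paper's. For (1) you induct on the radial profile via Lemma \ref{l:diff-polar}, the Laguerre ODE and a recurrence; for (2) you apply $D'$ directly and use $L_n - L_n' = L_n^{(1)}$. Part (2) is correct as written. Part (1), however, contains exactly the bookkeeping error you warned about, and the chain as written does not close.

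The first-order coefficient is wrong: $(2r^2 + \frac{1}{2\kappa})(-2\kappa) = -4\kappa r^2 - 1 = 2x-1$, not $x-1$. So in the variable $x = \kappa_0^2 r^2$ the operator is
\[
 D = (1-x) + (2x-1)\frac{d}{dx} - x\frac{d^2}{dx^2}.
\]
With your coefficient $x-1$, the Laguerre ODE reduces $DL_n$ to $(n+1-x)L_n$. Your displayed identity then already fails at $n=1$: $(1-x)L_1 + (x-1)L_1' = (1-x)(2-x) = x^2-3x+2$, whereas $2L_2 = x^2-4x+2$.

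Your intermediate expression $(n+1-x)L_n - xL_n'$ is produced by neither operator. It also does not equal $(n+1)L_{n+1}$ under the identities you quote: substituting $xL_n' = nL_n - nL_{n-1}$ gives $(1-x)L_n + nL_{n-1}$, not $(2n+1-x)L_n - nL_{n-1}$.

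With the correct coefficient, the ODE gives $(n+1-x)L_n + xL_n'$. Then $xL_n' = nL_n - nL_{n-1}$ together with $(n+1)L_{n+1} = (2n+1-x)L_n - nL_{n-1}$ yields $(n+1)L_{n+1}$. This is the paper's computation, which uses the combined identity $xL_n' = (n+1)L_{n+1} - (n+1-x)L_n$. The only difference is that the paper works with $g_n(r^2)$ and the argument $\kappa_0^2 x$ rather than rescaling first.
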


\begin{proof}
Define a family of polynomials $\{ g_n \}_{n \ge 0}$ inductively by $g_0 = 1$ and 
\[
 g_{n+1}(x) = (1 - \kappa_0^2 x) g_n(x) + \left( 2x - \frac{1}{\kappa_0^2} \right) g_n'(x) - \frac{x}{\kappa_0^2} g_n''(x).
\]
To prove \eqref{diff-f0-1}, it suffices to show that
\[
 g_n(x) = n! L_n(\kappa_0^2 x)
\]
by Lemma \ref{l:diff-polar}.
We proceed by induction on $n$.
For $n=0$, this is obvious.
For general $n$, recall that 
\[
 x L_n''(x) + (1-x) L_n'(x) + nL_n(x) = 0
\]
and
\[
 x L_n'(x) = (n+1) L_{n+1}(x) - (n+1-x) L_n(x).
\]
From this and the induction hypothesis, we deduce that 
\begin{align*}
 g_{n+1}(x) & = n! ((1 - \kappa_0^2 x) L_n(\kappa_0^2 x) + (2 \kappa_0^2 x - 1) L_n'(\kappa_0^2 x) - \kappa_0^2 x L_n''(\kappa_0^2 x)) \\
 & = n! ((1 - \kappa_0^2 x) L_n(\kappa_0^2 x) + (2 \kappa_0^2 x - 1) L_n'(\kappa_0^2 x) + n L_n(\kappa_0^2 x) + (1 -\kappa_0^2 x) L_n'(\kappa_0^2 x)) \\
 & = n! ((n + 1 - \kappa_0^2 x) L_n(\kappa_0^2 x) + \kappa_0^2 x L_n'(\kappa_0^2 x)) \\
 & = (n+1)! L_{n+1}(\kappa_0^2 x).
\end{align*}
This completes the proof of \eqref{diff-f0-1}.

For \eqref{diff-f0-2}, recall that 
\[
 L_n^{(1)}(x) = - L_{n+1}'(x) = L_n(x) - L_n'(x).
\]
From this and \eqref{diff-f0-1}, we deduce that 
\begin{align*}
 D' D^n f^0(z,\bar{z}) & = n! (\sqrt{-1} \kappa_0 \bar{z} L_n(\kappa_0^2 z \bar{z}) - \sqrt{-1} \kappa_0 \bar{z} L_n'(\kappa_0^2 z \bar{z})) \\
 & = \sqrt{-1} n! \kappa_0 \bar{z} L_n^{(1)}(\kappa_0^2 z \bar{z}).
\end{align*}
This completes the proof of \eqref{diff-f0-2}.
\end{proof}

For $n \ge 0$, we define a polynomial $f_n$ by
\[
 f_n(z, \bar{z}) = \bar{z} L_n^{(1)}(\kappa_0^2 z \bar{z}).
\]

\begin{prop}
\label{p:phi-sharp}
We have $\varphi_n^\sharp = P_n^\sharp \cdot \varphi^0$, where
\[
 P_n^\sharp(y_1, \bar{y}_1, y_2, \bar{y}_2) = 2 (-\sqrt{-1})^n n! \sum_{i=0}^n (-1)^i f_{n-i}(y_1, \bar{y}_1) f_i(y_2, \bar{y}_2).
\]
\end{prop}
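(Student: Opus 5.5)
By Lemma \ref{l:diff-diag} we have
\[
 P_n + P_n'' = (\delta_1'^2 + \delta_2'^2)(\delta_1'\delta_2 - \delta_1\delta_2')^n P^0 ,
\]
because all four operators lie in the commutative algebra $\C[\delta_1,\delta_1',\delta_2,\delta_2']$. The plan is to rewrite this expression in the $y$-coordinates, where it separates into one-variable pieces that Lemma \ref{l:diff-laguerre} evaluates. Lemma \ref{l:diff-y} gives
\[
 \delta_1'\delta_2-\delta_1\delta_2' = -\sqrt{-1}(D_1-D_2), \qquad \delta_1'^2+\delta_2'^2 = 2D_1'D_2' .
\]
Hence
\[
 P_n^\sharp = 2(-\sqrt{-1})^n D_1'D_2'(D_1-D_2)^n P^0 .
\]

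The operators $D_1, D_1'$ involve only $y_1,\bar y_1$, and $D_2, D_2'$ involve only $y_2,\bar y_2$. So operators with different indices commute, and each one-variable copy $D_i, D_i'$ is an instance of the $D, D'$ of Lemma \ref{l:diff-polar}. We do need to check that $D_i, D_i'$ commute with each other, since $(\delta_1'\delta_2 - \delta_1\delta_2')$ and $\delta_i'$ commute. This follows from the commutativity of the $\delta$-algebra, and it lets us expand
\[
 (D_1-D_2)^n = \sum_{i=0}^n \binom{n}{i}(-1)^i D_1^{n-i}D_2^{i}
\]
binomially. The constant $P^0 = \tfrac{1}{2\kappa}$ factors as $\tfrac{1}{2\kappa}\cdot f^0(y_1)\, f^0(y_2)$ with $f^0 = 1$. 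Lemma \ref{l:diff-laguerre}(2) then gives
\[
 D_1' D_1^{n-i} 1 = \sqrt{-1}(n-i)!\,\kappa_0 f_{n-i}(y_1,\bar y_1), \qquad D_2' D_2^{i} 1 = \sqrt{-1}\, i!\,\kappa_0 f_i(y_2,\bar y_2).
 \]

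Assembling these pieces, the factor $\binom{n}{i}(n-i)!\,i!$ equals $n!$, and the constants reduce as
\[
 (\sqrt{-1})^2 \kappa_0^2 \cdot \tfrac{1}{2\kappa} = -\kappa_0^2/(2\kappa) = 1,
\]
using $\kappa_0^2 = -2\kappa$. This yields
\[
 P_n^\sharp = 2(-\sqrt{-1})^n n! \sum_{i=0}^n (-1)^i f_{n-i}(y_1,\bar y_1) f_i(y_2,\bar y_2),
\]
which is the assertion.

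The main obstacle is careful bookkeeping rather than any conceptual difficulty. One point is the commutation of $D_i'$ with $D_i$, which I would verify directly from $\nabla_1 = \delta_1\delta_1'$ (or from the identity $D = D''D'$ with $D''$ the analogue of $\delta$). The other is the constant and sign normalization when passing from $x$ to $y$ coordinates. Both are short direct checks.
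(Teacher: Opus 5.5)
Your proposal is correct and follows the paper's proof: combine Lemmas \ref{l:diff-diag} and \ref{l:diff-y} to get $P_n^\sharp = 2(-\sqrt{-1})^n D_1'D_2'(D_1-D_2)^n P^0$, expand binomially, apply Lemma \ref{l:diff-laguerre}(2) to each factor, and use $-\kappa_0^2/(2\kappa)=1$. The check that $D_i$ commutes with $D_i'$ is true but not needed. The binomial expansion only uses that $D_1$ and $D_2$ commute, and in each term $D_i'$ already stands to the left of $D_i^{m}$ exactly as in Lemma \ref{l:diff-laguerre}(2). Operators in different variables commute automatically.
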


\begin{proof}
By Lemmas \ref{l:diff-diag} and \ref{l:diff-y}, we have
\[
 P_n^\sharp = (\delta_1'^2 + \delta_2'^2) (\delta_1' \delta_2 - \delta_1 \delta_2')^n P^0 = 2 (- \sqrt{-1})^n D_1' D_2' (D_1 - D_2)^n P^0.
\]
Hence, by Lemma \ref{l:diff-laguerre}, we have
\begin{align*}
 P_n^\sharp(y_1, \bar{y}_1, y_2, \bar{y}_2) & = \frac{(-\sqrt{-1})^n}{\kappa} \sum_{i=0}^n \binom{n}{i} (-1)^i D_1' D_1^{n-i} f^0(y_1,\bar{y}_1) D_2' D_2^i f^0(y_2,\bar{y}_2) \\
 & = \frac{(-\sqrt{-1})^n}{\kappa} \sum_{i=0}^n \binom{n}{i} (-1)^{i+1} (n-i)! i! \kappa_0^2 \bar{y}_1 \bar{y}_2 L_{n-i}^{(1)}(\kappa_0^2 y_1 \bar{y}_1) L_i^{(1)}(\kappa_0^2 y_2 \bar{y}_2) \\
 & = 2 (-\sqrt{-1})^n n! \bar{y}_1 \bar{y}_2 \sum_{i=0}^n (-1)^i L_{n-i}^{(1)}(\kappa_0^2 y_1 \bar{y}_1) L_i^{(1)}(\kappa_0^2 y_2 \bar{y}_2).
\end{align*}
This completes the proof.
\end{proof}

To compute $Z(\varphi,\Psi)$, we consider a certain projection of $\varphi_n^\sharp$.
For this, we need to introduce the Weil representation of $G' = \SL_2(\R)$.
Let $K' = \SO(2)$ be the standard maximal compact subgroup of $G'$.
Let $\fg'$ be the complexified Lie algebra of $G'$.
Take the following basis of $\fg'$:
\[
 \mathtt{H} = \mat{0}{-\sqrt{-1}}{\sqrt{-1}}{0}, \quad
 \mathtt{R} = \frac{1}{2} \cdot \mat{1}{\sqrt{-1}}{\sqrt{-1}}{-1}, \quad
 \mathtt{L} = \frac{1}{2} \cdot \mat{1}{-\sqrt{-1}}{-\sqrt{-1}}{-1}.
\]
Let $\omega$ be the Weil representation of $G'$ on $\cS(\C)$ relative to $t \psi$.
As in \S \ref{ss:schwartz-setup}, the associated action of $\fg'$ is given by
\[
 \omega \mat{1}{0}{0}{-1} = 1 + z \frac{\partial}{\partial z} + \bar{z} \frac{\partial}{\partial \bar{z}}, \quad
 \omega \mat{0}{1}{0}{0} = 2 \pi t \sqrt{-1} z \bar{z}, \quad
 \omega \mat{0}{0}{1}{0} = \frac{\sqrt{-1}}{2 \pi t} \frac{\partial^2}{\partial z \partial \bar{z}}.
\]
Take the $G'$-invariant inner product $\langle \cdot, \cdot \rangle'$ on $\cS(\C)$ with respect to twice the Lebesgue measure on $\C$.
Let $\phi^0 \in \cS(\C)$ be the Gaussian given by
\[
 \phi^0(z) = e^{- 2 \pi t z \bar{z}}.
\]
For $n \ge 0$, we define a Schwartz function $\phi_n \in \cS(\C)$ by 
\[
 \phi_n = f_n \cdot \phi^0.
\]

\begin{lem}
\label{l:phi-1var}
\begin{enumerate}
\item We have
\label{phi-1var-1}
\[
 \omega(\mathtt{H}) \phi_n = (2n+2) \phi_n, \quad
 \omega(\mathtt{R}) \phi_n = (n+1) \phi_{n+1}, \quad
 \omega(\mathtt{L}) \phi_n = - (n+1) \phi_{n-1},
\]
where we interpret $\phi_{-1} = 0$.
\item 
\label{phi-1var-2}
We have
\[
 \langle \phi_n, \phi_m \rangle' =
 \begin{cases}
  2 \pi \kappa_0^{-4} (n+1) & \text{if $n=m$;} \\
  0 & \text{if $n \ne m$.}
 \end{cases}
\]
\end{enumerate}
\end{lem}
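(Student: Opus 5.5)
For part (1), I plan to compute the action of $\mathtt{H}, \mathtt{R}, \mathtt{L}$ directly on functions of the form $g(z,\bar z)\phi^0$. The formulas for $\omega$ on $\smat{1}{0}{0}{-1}$, $\smat{0}{1}{0}{0}$ and $\smat{0}{0}{1}{0}$ combine to give
\[
 \omega(\mathtt{H}) = -\sqrt{-1}\,\omega\smat{0}{1}{0}{0} + \sqrt{-1}\,\omega\smat{0}{0}{1}{0}, \quad
 \omega(\mathtt{R}) = \tfrac{1}{2}\bigl(\omega\smat{1}{0}{0}{-1} + \sqrt{-1}\,\omega\smat{0}{1}{0}{0} + \sqrt{-1}\,\omega\smat{0}{0}{1}{0}\bigr),
\]
and similarly for $\omega(\mathtt{L})$. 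Conjugating by $\phi^0$, exactly as in the proof of Lemma \ref{l:diff-diag}, these become operators on polynomials. On polynomials, $\omega(\mathtt{R})$ is conjugated to the one-variable operator $D$. It is the one-variable analogue of $\nabla_1 = \delta_1\delta_1'$, so $D = \delta\delta'$ with $\delta = \sqrt{-1}\kappa_0 z + (\sqrt{-1}\kappa_0)^{-1}\partial_{\bar z}$ and $\delta' = D'$.

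I will check that $\omega(\mathtt{H})$ becomes $-(z\partial_z - \bar z\partial_{\bar z}) + \text{const}$ after conjugation, or equivalently that it acts through the $K'$-weight. Then $\omega(\mathtt{H})\phi_n = (2n+2)\phi_n$ reduces to a weight count: $f_n = \bar z\, L^{(1)}_n(\kappa_0^2 z\bar z)$ has $\bar z$-degree minus $z$-degree equal to $1$. The constant and sign need care, and I would fix them from the case $n=0$, where $\phi_0 = \bar z\phi^0$.

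For $\mathtt{R}$, I use Lemma \ref{l:diff-laguerre}. It gives $D'D^n 1 = \sqrt{-1}\,n!\,\kappa_0 f_n$, so $\phi_n = (\sqrt{-1}\,n!\,\kappa_0)^{-1} D'D^n 1\cdot\phi^0$. Since $D = \delta D'$ and $D'$ commutes with $\delta$ (the algebra $\C[\delta,\delta']$ is commutative), $D$ commutes with $D'$. Hence $D\,D'D^n1 = D'D^{n+1}1$, which gives $\omega(\mathtt{R})\phi_n = (n+1)\phi_{n+1}$ once I confirm that $\omega(\mathtt{R})$ corresponds exactly to $D$, including the normalizations.

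For $\mathtt{L}$, I expect $\omega(\mathtt{L})$ to correspond to $\bar\delta\bar\delta'$, with $\bar\delta$, $\bar\delta'$ the "annihilation" combinations $-\sqrt{-1}\kappa_0 z + \ldots$. Rather than compute this directly, I would argue via $[\mathtt{R},\mathtt{L}]$ combined with the weight. Alternatively I would use the Laguerre identities $xL^{(1)\prime}_n = nL^{(1)}_n - (n+1)L^{(1)}_{n-1}$ and $L^{(1)\prime}_n = -L^{(2)}_{n-1}$, applying the explicit second-order operator to $\bar z\, g(r^2)$ as in Lemma \ref{l:diff-polar}. I expect this bookkeeping of signs and constants to be the main obstacle. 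The identity $\omega([\mathtt{L},\mathtt{R}]) = \pm\omega(\mathtt{H})$ provides a consistency check: with $c_n = -(n+1)$ it gives $(n+1)c_{n+1} - c_n(n) \cdot$ (appropriately) $= \pm(2n+2)$.

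For part (2), orthogonality for $n\ne m$ follows from (1), because $\omega(\mathtt{H})$ is self-adjoint up to sign for the invariant inner product and the eigenvalues $2n+2$ are distinct. For the norm, the invariance of $\langle\cdot,\cdot\rangle'$ gives $\langle\omega(\mathtt{R})\phi,\psi\rangle' = -\langle\phi,\omega(\bar{\mathtt{R}})\psi\rangle'$ with $\bar{\mathtt{R}} = \mathtt{L}$. Hence
\[
 (n+1)\|\phi_{n+1}\|^2 = \langle\omega(\mathtt{R})\phi_n,\phi_{n+1}\rangle' = -\langle\phi_n,\omega(\mathtt{L})\phi_{n+1}\rangle' = (n+2)\|\phi_n\|^2,
\]
so $\|\phi_n\|^2 = (n+1)\|\phi_0\|^2$. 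It remains to compute
\[
 \|\phi_0\|^2 = 2\int_\C |z|^2 e^{-4\pi t|z|^2}\,dz = 2\pi\,(4\pi t)^{-2} = 2\pi\kappa_0^{-4},
\]
which matches the claim.
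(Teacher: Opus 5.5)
Your step for $\omega(\mathtt{H})$ fails. As an operator on $\cS(\C)$, $\omega(\mathtt{H}) = 2\pi t\, z\bar z - (2\pi t)^{-1}\partial_z\partial_{\bar z}$ is the harmonic oscillator, since $\mathtt{H}$ generates $K' = \SO(2) \subset \SL_2(\R)$. Conjugating by $\phi^0$ turns it into $1 + z\partial_z + \bar z\partial_{\bar z} + \kappa^{-1}\partial_z\partial_{\bar z}$, not into $-(z\partial_z - \bar z\partial_{\bar z})$ plus a constant.

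The operator $z\partial_z - \bar z\partial_{\bar z}$ generates the rotations $z \mapsto e^{\sqrt{-1}\theta}z$, i.e.\ the action of $\C^1$ on the orthogonal side of the dual pair, and it cannot see $n$. Every monomial of every $f_n = \bar z L_n^{(1)}(\kappa_0^2 z\bar z)$ has $\bar z$-degree minus $z$-degree equal to $1$. So your count, normalized at $n=0$, would give the eigenvalue $2$ for every $n$. The $\phi_n$ share one $\C^1$-weight but have $K'$-weights $2,4,6,\dots$. What governs the $K'$-weight is total degree (the conjugated operator acts by $1+\deg$ on leading terms), and even that only identifies the eigenvalue once you know $\phi_n$ is an eigenvector. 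Your orthogonality argument in (2) rests on these eigenvalues, so it inherits the gap.

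The repair comes from the parts of your proposal that are right. Your $\mathtt{R}$-step is correct and differs from the paper, which checks all three identities directly from the conjugated operators via the Laguerre ODE and recurrences. Conjugated $\omega(\mathtt{R})$ is exactly $D = \delta D'$ with $[\delta, D'] = 0$, so $DD'D^n1 = D'D^{n+1}1$, and Lemma \ref{l:diff-laguerre} then gives $\phi_n = \frac{1}{n!}\omega(\mathtt{R})^n\phi_0$. A direct check gives $\omega(\mathtt{H})\phi_0 = 2\phi_0$, and $[\mathtt{H},\mathtt{R}] = 2\mathtt{R}$ then yields $\omega(\mathtt{H})\phi_n = (2n+2)\phi_n$.

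For $\mathtt{L}$, drop the ``commutator plus weight'' route: it would also need one-dimensionality of the joint $K' \times \C^1$ weight spaces, which you have not justified. Conjugated $\omega(\mathtt{L})$ is simply $-(2\kappa)^{-1}\partial_z\partial_{\bar z}$, with no multiplication terms. On $\bar z\, g(\kappa_0^2 z\bar z)$ it yields $\bar z\,(xg''+2g')(\kappa_0^2 z\bar z)$. For $g = L_n^{(1)}$ this equals $-(n+1)\bar z L_{n-1}^{(1)}(\kappa_0^2 z\bar z)$, by the ODE together with $xL_n^{(1)\prime} = nL_n^{(1)} - (n+1)L_{n-1}^{(1)}$.

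With (1) repaired, your part (2) is correct and is a genuine alternative to the paper, which appeals to the orthogonality and norms of the $L_n^{(1)}$ with respect to $xe^{-x}\,dx$. In your version, self-adjointness of $\omega(\mathtt{H})$ gives orthogonality, and $\omega(\mathtt{R})^* = -\omega(\mathtt{L})$ gives $\|\phi_n\|^2 = (n+1)\|\phi_0\|^2$. Only the Gaussian integral $\|\phi_0\|^2 = 2\pi\kappa_0^{-4}$ remains.
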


\begin{proof}
We have 
\begin{align*}
 \omega(\mathtt{H}) \phi & = \left( 1 + z \frac{\partial}{\partial z} + \bar{z} \frac{\partial}{\partial \bar{z}} + \frac{1}{\kappa} \frac{\partial^2}{\partial z \partial \bar{z}} \right) f \cdot \phi^0, \\
 \omega(\mathtt{R}) \phi & = \left( 1 + 2 \kappa z \bar{z} + z \frac{\partial}{\partial z} + \bar{z} \frac{\partial}{\partial \bar{z}} + \frac{1}{2 \kappa} \frac{\partial^2}{\partial z \partial \bar{z}} \right) f \cdot \phi^0, \\
 \omega(\mathtt{L}) \phi & = - \frac{1}{2 \kappa} \frac{\partial^2}{\partial z \partial \bar{z}} f \cdot \phi^0
\end{align*}
for $\phi = f \cdot \phi^0$ with a polynomial $f$ in $z, \bar{z}$.
Recall also that
\[
 x {L_n^{(1)}}''(x) + (2-x) {L_n^{(1)}}'(x) + nL_n(x) = 0
\]
and 
\begin{align*}
 x {L_n^{(1)}}'(x) 
 & = (n+1) L_{n+1}^{(1)}(x) - (n+2-x) L_n^{(1)}(x) \\
 & = n L_n^{(1)}(x) - (n+1) L_{n-1}^{(1)}(x).
\end{align*}
From this, we can deduce \eqref{phi-1var-1}.

We have
\begin{align*}
 \langle \phi_n, \phi_m \rangle' & = \int_{\C} z \bar{z} L_n^{(1)}(\kappa_0^2 z \bar{z}) L_m^{(1)}(\kappa_0^2 z \bar{z}) e^{-4 \pi t z \bar{z}} \, dz \\
 & = \int_0^\infty \int_0^{2 \pi} r^2 L_n^{(1)}(\kappa_0^2 r^2) L_m^{(1)}(\kappa_0^2 r^2) e^{- \kappa_0^2 r^2} \cdot 2r \, dr \, d \theta \\
 & = 2 \pi \kappa_0^{-4} \int_0^\infty r L_n^{(1)}(r) L_m^{(1)}(r) e^{-r} \, dr.
\end{align*}
Hence \eqref{phi-1var-2} follows from the orthogonality of the generalized Laguerre polynomials.
\end{proof}

For any integer $k \ge 2$, we denote by $\HDS_k$ the underlying Harish-Chandra module of the holomorphic discrete series representation of $G'$ of weight $k$.
By Lemma \ref{l:phi-1var}\eqref{phi-1var-1}, we may realize $\HDS_2$ on the space $\cS(\C)^\natural$ spanned by $\{ \phi_i \mid i \ge 0 \}$.

Now we consider the identification
\[
 \cS(\C^2) = \cS(\C) \mathbin{\hat{\otimes}} \cS(\C)
\]
as a representation of $G' \times G'$, where we regard $G' \times G'$ as a subgroup of $\Sp_4(\R)$ via the embedding
\[
 \left( \mat{a_1}{b_1}{c_1}{d_1}, \mat{a_2}{b_2}{c_2}{d_2} \right) \mapsto 
 \begin{pmatrix}
  a_1 & 0 & b_1 & 0 \\
  0 & a_2 & 0 & b_2 \\
  c_1 & 0 & d_1 & 0 \\
  0 & c_2 & 0 & d_2
 \end{pmatrix}.
\]
Let $\langle \cdot, \cdot \rangle'$ be an $\Sp_4(\R)$-invariant inner product on $\cS(\C^2)$ given by
\[
 \langle \phi \otimes \psi, \phi' \otimes \psi' \rangle' = \langle \phi, \phi' \rangle' \cdot \langle \psi, \psi' \rangle'
\]
for $\phi, \psi, \phi', \psi' \in \cS(\C)$, so that $\langle \cdot, \cdot \rangle = |u| \langle \cdot, \cdot \rangle'$.
Define an isometry $T$ of $\cS(\C^2)$ by
\[
 T \varphi(x_1, x_2) = \varphi\left( (x_1, x_2) \mat{\frac{1}{\sqrt{2}}}{- \frac{\sqrt{-1}}{\sqrt{2}}}{\frac{1}{\sqrt{2}}}{\frac{\sqrt{-1}}{\sqrt{2}}} \right).
\]
Note that $T$ is equivariant under the action of the diagonal subgroup $G'$ of $G' \times G'$.
Then Proposition \ref{p:phi-sharp} says that
\begin{equation}
\label{eq:Tphi-sharp}
 T \varphi_n^\sharp = 2 (-\sqrt{-1})^n n! \sum_{i=0}^n (-1)^i \phi_{n-i} \otimes \phi_i. 
\end{equation}
Put 
\[
 \cS(\C^2)^\natural = \cS(\C)^\natural \otimes \cS(\C)^\natural, 
\]
which is spanned by $\{ \phi_i \otimes \phi_j \mid i, j \ge 0 \}$.
As a $(\fg',K')$-module, $\cS(\C^2)^\natural$ is isomorphic to 
\begin{equation}
\label{eq:repka}
 \HDS_2 \otimes \HDS_2 = \bigoplus_{n=0}^\infty \HDS_{2n+4} 
\end{equation}
by \cite[Theorem 7.1]{repka}.
Let $\cS(\C^2)^\natural_n$ be the subspace of $\cS(\C^2)^\natural$ corresponding to $\HDS_{2n+4}$.
Let $\pr_n : \cS(\C^2)^\natural \rightarrow \cS(\C^2)^\natural_n$ be the associated orthogonal projection.
Define a Schwartz function $\varphi_n^\natural \in \cS(\C^2)^\natural$ by 
\[
 \varphi_n^\natural = \sum_{i=0}^n (-1)^i \binom{n+2}{i+1} \phi_{n-i} \otimes \phi_i.
\]

\begin{lem}
\label{l:schwartz-lowest-wt}
We have
\[
 \omega(\Delta \mathtt{H}) \varphi_n^{\natural} = (2n+4) \varphi_n^{\natural}, \quad 
 \omega(\Delta \mathtt{L}) \varphi_n^{\natural} = 0, 
\]
where $\Delta : \fg' \rightarrow \fg' \oplus \fg'$ is the diagonal embedding.
In particular, $\varphi_n^\natural$ is a lowest weight vector in $\cS(\C^2)^\natural_n$.
\end{lem}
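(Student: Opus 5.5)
The proof is a direct computation with the formulas of Lemma \ref{l:phi-1var}\eqref{phi-1var-1}. The diagonal action is $\omega(\Delta X) = \omega(X) \otimes 1 + 1 \otimes \omega(X)$ for $X \in \fg'$.

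For $\mathtt{H}$, each summand $\phi_{n-i} \otimes \phi_i$ has eigenvalue
\[
 (2(n-i)+2) + (2i+2) = 2n+4,
\]
which is independent of $i$. This gives the first identity at once.

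For $\mathtt{L}$, write $c_i = (-1)^i \binom{n+2}{i+1}$. Then
\[
 \omega(\Delta \mathtt{L}) \varphi_n^\natural = -\sum_{i=0}^n c_i \bigl( (n-i+1)\, \phi_{n-i-1} \otimes \phi_i + (i+1)\, \phi_{n-i} \otimes \phi_{i-1} \bigr).
\]
I would collect coefficients of $\phi_{n-1-j} \otimes \phi_j$ for $0 \le j \le n-1$. The first sum contributes with $i = j$, and the second with $i = j+1$. The boundary terms $\phi_{-1} \otimes \phi_n$ (from $i = n$ in the first sum) and $\phi_n \otimes \phi_{-1}$ (from $i = 0$ in the second) vanish by the convention $\phi_{-1} = 0$. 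So the coefficient of $\phi_{n-1-j} \otimes \phi_j$ is
\[
 -\bigl( c_j (n-j+1) + c_{j+1} (j+2) \bigr).
\]
This vanishes by the identity
\[
 \binom{n+2}{j+1}(n+1-j) = \binom{n+2}{j+2}(j+2),
\]
since both sides equal $(n+2)!/((j+1)!\,(n-j)!)$. The alternating sign in $c_i$ is exactly what makes the two contributions cancel rather than add. This index bookkeeping is the only point that needs care.

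For the final assertion, the $\mathtt{H}$-eigenvalues on $\HDS_{2m+4}$ are $2m+4, 2m+6, \dots$. In the decomposition \eqref{eq:repka}, a nonzero vector of weight $2n+4$ annihilated by $\Delta\mathtt{L}$ must therefore lie in the summand whose lowest weight is $2n+4$, namely $\cS(\C^2)^\natural_n$. There it is the lowest weight vector; equivalently, the kernel of $\Delta\mathtt{L}$ on the weight-$(2n+4)$ space is one-dimensional. The vector $\varphi_n^\natural$ is nonzero because its $i=0$ coefficient is $n+2 \ne 0$ and the $\phi_i \otimes \phi_j$ are linearly independent (they are orthogonal by Lemma \ref{l:phi-1var}\eqref{phi-1var-2}).
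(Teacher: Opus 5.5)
Your proof is correct and is the same direct computation as the paper's: you read off the $\mathtt{H}$-eigenvalue termwise from Lemma \ref{l:phi-1var}, and after the same reindexing the $\mathtt{L}$-coefficients cancel by the identity $\binom{n+2}{j+1}(n+1-j)=\binom{n+2}{j+2}(j+2)$. Your treatment of the ``in particular'' clause and of $\varphi_n^\natural \ne 0$ fills in a step the paper leaves implicit. That argument uses the weights in \eqref{eq:repka}, and implicitly the standard fact that $\mathtt{L}$ is injective away from the lowest weight of a holomorphic discrete series; it would be cleaner to state that fact explicitly rather than have it follow from ``therefore''.
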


\begin{proof}
By Lemma \ref{l:phi-1var}\eqref{phi-1var-1}, we have 
\begin{align*}
 \omega(\Delta \mathtt{H}) \varphi_n^\natural
 & = \sum_{i=0}^n (-1)^i \binom{n+2}{i+1} (2n-2i+2) \phi_{n-i} \otimes \phi_i \\
 & + \sum_{i=0}^n (-1)^i \binom{n+2}{i+1} (2i+2) \phi_{n-i} \otimes \phi_i \\
 & = (2n+4) \varphi_n^\natural, \\
 \omega(\Delta \mathtt{L}) \varphi_n^\natural
 & = \sum_{i=0}^{n-1} (-1)^{i+1} \binom{n+2}{i+1} (n-i+1) \phi_{n-i-1} \otimes \phi_i \\
 & + \sum_{i=1}^n (-1)^{i+1} \binom{n+2}{i+1} (i+1) \phi_{n-i} \otimes \phi_{i-1} \\
 & = \sum_{i=1}^n (-1)^i \left[ \binom{n+2}{i} (n-i+2) - \binom{n+2}{i+1} (i+1) \right] \phi_{n-i} \otimes \phi_{i-1} \\
 & = 0.
\end{align*}
This completes the proof.
\end{proof}

\begin{lem}
\label{l:pr(Tvarphi^sharp)}
\begin{enumerate}
\item
\label{pr(Tvarphi^sharp)-1}
We have
\[
 \pr_n(T \varphi_n^\sharp) = 2^{n+1} (-\sqrt{-1})^n \cdot 
 \frac{n! (n+1)!^2}{(2n+2)!} \cdot \varphi_n^\natural.
\]
\item
\label{pr(Tvarphi^sharp)-2}
We have
\[
 \langle \pr_n(T \varphi_n^\sharp), \pr_n(T \varphi_n^\sharp) \rangle' = 2^{2n-4} \pi^{-2} t^{-4} \cdot \frac{n! (n+1)!^2 (n+2)!}{(2n+2)!}.
\]
\end{enumerate}
\end{lem}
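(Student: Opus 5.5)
Since $\cS(\C^2)^\natural_n$ is an irreducible lowest-weight module $\HDS_{2n+4}$ whose lowest $K'$-type (weight $2n+4$) has multiplicity one, and the orthogonal decomposition \eqref{eq:repka} respects $K'$-types, the plan is to reduce the projection to the weight-$(2n+4)$ subspace of $\cS(\C^2)^\natural$. By Lemma \ref{l:phi-1var}\eqref{phi-1var-1}, $\phi_i\otimes\phi_j$ has diagonal $\mathtt{H}$-weight $2i+2j+4$. Hence $T\varphi_n^\sharp$, given by \eqref{eq:Tphi-sharp}, lies entirely in the weight-$(2n+4)$ space $V_n$ spanned by $\{\phi_{n-i}\otimes\phi_i\}_{0\le i\le n}$. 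The summands $\HDS_{2m+4}$ contributing to $V_n$ are those with $m\le n$, and $\HDS_{2n+4}\cap V_n=\C\varphi_n^\natural$ by Lemma \ref{l:schwartz-lowest-wt}. So $\pr_n$ restricted to $V_n$ is the orthogonal projection onto the line $\C\varphi_n^\natural$, and
\[
 \pr_n(T\varphi_n^\sharp)=\frac{\langle T\varphi_n^\sharp,\varphi_n^\natural\rangle'}{\langle\varphi_n^\natural,\varphi_n^\natural\rangle'}\,\varphi_n^\natural .
\]

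Both inner products are then computed with the orthogonality relations of Lemma \ref{l:phi-1var}\eqref{phi-1var-2}: $\langle\phi_{n-i}\otimes\phi_i,\phi_{n-j}\otimes\phi_j\rangle'=\delta_{ij}(2\pi)^2\kappa_0^{-8}(n-i+1)(i+1)$, with $\kappa_0^{-8}=(4\pi t)^{-4}$. Using $\binom{n+2}{i+1}(n-i+1)(i+1)=(n+2)(n+1)\binom{n}{i}$, this gives:
\begin{itemize}
\item[(a)] $\langle T\varphi_n^\sharp,\varphi_n^\natural\rangle'=2(-\sqrt{-1})^n n!\,(2\pi)^2\kappa_0^{-8}(n+1)(n+2)\sum_i\binom{n}{i}=2^{n+1}(-\sqrt{-1})^n n!(n+1)(n+2)(2\pi)^2\kappa_0^{-8}$;
\item[(b)] $\langle\varphi_n^\natural,\varphi_n^\natural\rangle'=(2\pi)^2\kappa_0^{-8}(n+1)(n+2)\sum_i\binom{n+2}{i+1}\binom{n}{i}$.
\end{itemize}
In (b), Vandermonde's identity gives $\sum_i\binom{n+2}{i+1}\binom{n}{n-i}=\binom{2n+2}{n+1}$, so $\langle\varphi_n^\natural,\varphi_n^\natural\rangle'=(2\pi)^2\kappa_0^{-8}(n+1)(n+2)\binom{2n+2}{n+1}$.

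Taking the ratio gives the coefficient $2^{n+1}(-\sqrt{-1})^n n!/\binom{2n+2}{n+1}=2^{n+1}(-\sqrt{-1})^n n!(n+1)!^2/(2n+2)!$, which is part \eqref{pr(Tvarphi^sharp)-1}. For part \eqref{pr(Tvarphi^sharp)-2}, the norm squared equals $|\langle T\varphi_n^\sharp,\varphi_n^\natural\rangle'|^2/\langle\varphi_n^\natural,\varphi_n^\natural\rangle'$, which is
\[
 2^{2n+2}n!^2(n+1)(n+2)(2\pi)^2\kappa_0^{-8}\cdot\frac{(n+1)!^2}{(2n+2)!}
 =2^{2n+4}\pi^2(4\pi t)^{-4}\frac{n!(n+1)!^2(n+2)!}{(2n+2)!},
\]
and $2^{2n+4}\pi^2(4\pi t)^{-4}=2^{2n-4}\pi^{-2}t^{-4}$, as claimed.

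The main obstacle is justifying that $\pr_n$ acts on $V_n$ as projection onto $\C\varphi_n^\natural$. This requires that each $\HDS_{2m+4}$ with $m<n$ meets $V_n$ in exactly one dimension and is orthogonal to $\HDS_{2n+4}$. Both hold because the decomposition \eqref{eq:repka} is orthogonal for the invariant inner product and the weight spaces of each $\HDS_k$ are one-dimensional. A dimension check ($\dim V_n=n+1$, one line from each of $m=0,\dots,n$) confirms that the picture is consistent.
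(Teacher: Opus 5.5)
Your proposal is correct and follows the paper's proof essentially step for step. Both arguments observe that $T\varphi_n^\sharp$ has diagonal weight $2n+4$, use Lemma \ref{l:schwartz-lowest-wt} together with orthogonality to write $\pr_n(T\varphi_n^\sharp)=C\varphi_n^\natural$ with $C=\langle T\varphi_n^\sharp,\varphi_n^\natural\rangle'/\langle\varphi_n^\natural,\varphi_n^\natural\rangle'$, and evaluate both inner products via Lemma \ref{l:phi-1var} and the Vandermonde identity (which the paper phrases as comparing coefficients of $X^{n+1}Y^{n+1}$); your extra justification that $\pr_n$ acts on the weight-$(2n+4)$ space as projection onto $\C\varphi_n^\natural$ simply makes explicit a step the paper leaves implicit.
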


\begin{proof}
By \eqref{eq:Tphi-sharp} and Lemma \ref{l:phi-1var}\eqref{phi-1var-1}, we have
\[
 \omega(\Delta \mathtt{H}) T \varphi_n^{\sharp} = (2n+4) T \varphi_n^{\sharp}, 
\]
so that
\[
 T \varphi_n^{\sharp} \in \bigoplus_{m=0}^n \cS(\C^2)^\natural_m.
\]
Hence, by Lemma \ref{l:schwartz-lowest-wt}, we have 
\[
 \pr_n(T \varphi_n^{\sharp}) = C \varphi_n^\natural
\]
for some $C \in \C$.
On the other hand, by Lemma \ref{l:phi-1var}\eqref{phi-1var-2}, we have
\begin{align*}
 \langle T \varphi_n^\sharp, \varphi_n^\natural \rangle'
 & = 2 (-\sqrt{-1})^n n! \sum_{i=0}^n \sum_{j=0}^n (-1)^{i+j} \binom{n+2}{j+1}
 \langle \phi_{n-i}, \phi_{n-j} \rangle' \langle \phi_i, \phi_j \rangle' \\
 & = 2 (-\sqrt{-1})^n n! \sum_{i=0}^n \binom{n+2}{i+1} \cdot 4 \pi^2 \kappa_0^{-8} (n-i+1) (i+1) \\
 & = 2 (-\sqrt{-1})^n n! \cdot 4 \pi^2 \kappa_0^{-8} (n+2)(n+1)
 \sum_{i=0}^n \binom{n}{i} \\
 & = 2 (-\sqrt{-1})^n n! \cdot 4 \pi^2 \kappa_0^{-8} (n+2)(n+1) \cdot 2^n
\end{align*}
and
\begin{align*}
 \langle \varphi_n^\natural, \varphi_n^\natural \rangle'
 & = \sum_{i=0}^n \sum_{j=0}^n (-1)^{i+j} \binom{n+2}{i+1} \binom{n+2}{j+1}
 \langle \phi_{n-i}, \phi_{n-j} \rangle' \langle \phi_i, \phi_j \rangle' \\
 & = \sum_{i=0}^n \binom{n+2}{i+1}^2 \cdot 4 \pi^2 \kappa_0^{-8} (n-i+1) (i+1) \\
 & = 4 \pi^2 \kappa_0^{-8} (n+2)(n+1) \sum_{i=0}^n \binom{n+2}{i+1} \binom{n}{i} \\
 & = 4 \pi^2 \kappa_0^{-8} (n+2)(n+1) \cdot \binom{2n+2}{n+1}.
\end{align*}
Here the last equality follows by comparing the coefficients of $X^{n+1} Y^{n+1}$ on both sides of 
\[
 (X+Y)^{n+2}(X+Y)^n = (X+Y)^{2n+2}.
\]
Hence we have
\[
 C = \frac{\langle \pr_n(T \varphi_n^\sharp), \varphi_n^\natural \rangle'}{\langle \varphi_n^\natural, \varphi_n^\natural \rangle'} = \frac{\langle T \varphi_n^\sharp, \varphi_n^\natural \rangle'}{\langle \varphi_n^\natural, \varphi_n^\natural \rangle'} = 2^{n+1} (-\sqrt{-1})^n n! \cdot \binom{2n+2}{n+1}^{-1}
\]
and
\[
 \langle \pr_n(T \varphi_n^\sharp), \pr_n(T \varphi_n^\sharp) \rangle' = |C|^2 \cdot \langle \varphi_n^\natural, \varphi_n^\natural \rangle' = 2^{2n+4} \pi^2 \kappa_0^{-8} n! (n+2)! \cdot \binom{2n+2}{n+1}^{-1}.
\]
This completes the proof.
\end{proof}

Finally, we obtain the following.

\begin{prop}
\label{p:rallis-real}
We have
\[
 Z(\varphi,\Psi) = 2^{2n-4} \pi^{-3} |u| \cdot \frac{n! (n+1)!^2 (n+2)!}{(2n+3)!}.
\]
\end{prop}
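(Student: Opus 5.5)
Starting from Lemma \ref{l:rallis-real-SL2}, we need
\[
 Z(\varphi,\Psi) = \frac{1}{4\pi^2}\int_{G'} \langle \omega_\psi(\iota_0(g'))\varphi^\sharp_{n,1},\varphi^\sharp_{n,1}\rangle \Psi(g')\,dg'.
\]
The plan has three steps: pass to the tensor product model, isolate the relevant discrete series summand, and evaluate one matrix-coefficient integral.

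\emph{Step 1: the tensor product model.} For $g'\in G'$, the element $\iota_0(g')$ is the diagonal image of $g'$ in $G'\times G'\subset\Sp_4(\R)$. The isometry $T$ is equivariant for this diagonal $G'$. So we may replace $\varphi^\sharp_{n,1}$ by $T\varphi^\sharp_{n,1}$ and use the decomposition \eqref{eq:repka} of $\cS(\C^2)^\natural$ into $\bigoplus_m \HDS_{2m+4}$. The matrix coefficient $\Psi$ is the one of the weight $2n+4$ discrete series in $\pi$. It transforms by $e^{-\sqrt{-1}(2n+4)(\theta_1+\theta_2)}$, so it is the conjugate of a lowest-weight matrix coefficient of $\HDS_{2n+4}$.

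\emph{Step 2: projection.} By Schur orthogonality, for $m\ne n$ the matrix coefficients of the summands $\HDS_{2m+4}$ integrate to zero against $\Psi$. Hence only $\pr_n(T\varphi^\sharp_{n,1})$ contributes. By Lemma \ref{l:pr(Tvarphi^sharp)}, this projection is a multiple of the lowest weight vector $\varphi_n^\natural$. The integral therefore reduces to
\[
 \langle \pr_n(T\varphi_n^\sharp),\pr_n(T\varphi_n^\sharp)\rangle'\cdot \int_{G'}|\Psi(g')|^2\,dg'.
\]
We then convert $\langle\cdot,\cdot\rangle = |u|\langle\cdot,\cdot\rangle'$ at $t=1$. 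Lemma \ref{l:pr(Tvarphi^sharp)}\eqref{pr(Tvarphi^sharp)-2} gives the norm as $2^{2n-4}\pi^{-2}\,n!(n+1)!^2(n+2)!/(2n+2)!$. Before using this, we must check that no cross terms survive, i.e.\ that $\Psi$ pairs only with the $K'$-type $2n+4$ in the $\HDS_{2n+4}$ component. This is exactly where the lowest weight property from Lemma \ref{l:schwartz-lowest-wt} is needed.

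\emph{Step 3: the formal degree.} This step is the main obstacle, since the constants must be tracked carefully. With the measure $dg'=y^{-2}dx\,dy\,dk$, the normalized lowest-weight matrix coefficient of $\HDS_k$ is
\[
 \Psi(g') = \left(\frac{2\sqrt{y}}{\sqrt{-1}(z-\bar z)+\ldots}\right)^k,
\]
which is explicit, with $|\Psi|^2=(4y/|z+\sqrt{-1}|^2)^k$ for $z=x+\sqrt{-1}y$. A direct computation then gives
\[
 \int_{G'}|\Psi|^2\,dg' = \frac{4\pi}{k-1} = \frac{4\pi}{2n+3}.
\]
Combining Steps 2 and 3 yields
\[
 \frac{1}{4\pi^2}\cdot|u|\cdot 2^{2n-4}\pi^{-2}\,\frac{n!(n+1)!^2(n+2)!}{(2n+2)!}\cdot\frac{4\pi}{2n+3},
\]
which simplifies to the claimed value $2^{2n-4}\pi^{-3}|u|\,n!(n+1)!^2(n+2)!/(2n+3)!$.

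Two points need care and I would verify them. First, the complex conjugate sign convention: $\Psi$ has weight $-(2n+4)$ on the right, and this must match the $K'$-weight of $\pr_n(T\varphi_n^\sharp)$, which is $+(2n+4)$. Second, the normalization $\Psi(1)=1$ must be consistent with the formula above.
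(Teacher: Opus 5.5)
Your proposal is correct and follows essentially the same route as the paper. You reduce via Lemma~\ref{l:rallis-real-SL2} and the diagonal $G'$-equivariance of $T$ to $\pr_n(T\varphi_n^\sharp)$. You then use that this projection is a lowest weight vector, so its matrix coefficient equals $\langle \pr_n(T\varphi_n^\sharp), \pr_n(T\varphi_n^\sharp)\rangle \cdot \overline{\Psi(g')}$, and multiply by $\int_{G'}|\Psi|^2\,dg' = 4\pi/(2n+3)$.

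The only cosmetic difference is in the last integral. You evaluate it on the upper half-plane via $|\Psi|^2 = (4y/|z+\sqrt{-1}|^2)^{2n+4}$; your displayed formula for $\Psi$ itself is garbled, but only the modulus is used. The paper instead uses the Cartan decomposition and $\int_0^\infty \cosh(x)^{-4n-8}\sinh(2x)\,dx$.
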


\begin{proof}
Take $t=1$.
Then by Lemma \ref{l:rallis-real-SL2} and \eqref{eq:repka}, we have
\begin{align*}
 4 \pi^2 \cdot Z(\varphi, \Psi) & = \int_{G'} \langle \omega_{\psi}(\iota_0(g')) \varphi^\sharp_n, \varphi^\sharp_n \rangle \Psi(g') \, dg' \\
 & = \int_{G'} \langle T \omega_{\psi}(\iota_0(g')) \varphi^\sharp_n, T \varphi^\sharp_n \rangle \Psi(g') \, dg' \\
 & = \int_{G'} \langle \omega_{\psi}(\iota_0(g')) T \varphi^\sharp_n, T \varphi^\sharp_n \rangle \Psi(g') \, dg' \\
 & = \int_{G'} \langle \omega_{\psi}(\iota_0(g')) \pr_n(T \varphi^\sharp_n), \pr_n(T \varphi^\sharp_n) \rangle \Psi(g') \, dg'.
\end{align*}
Moreover, by Lemmas \ref{l:schwartz-lowest-wt} and \ref{l:pr(Tvarphi^sharp)}\eqref{pr(Tvarphi^sharp)-1}, we have
\[
 \langle \omega_{\psi}(\iota_0(g')) \pr_n(T \varphi^\sharp_n), \pr_n(T \varphi^\sharp_n) \rangle = \langle \pr_n(T \varphi^\sharp_n), \pr_n(T \varphi^\sharp_n) \rangle \cdot \overline{\Psi(g')}
\]
for $g' \in G'$, so that
\[
 4 \pi^2 \cdot Z(\varphi, \Psi) = \langle \pr_n(T \varphi^\sharp_n), \pr_n(T \varphi^\sharp_n) \rangle \cdot \int_{G'} |\Psi(g')|^2 \, dg'.
\]
Hence, noting that
\begin{align*}
 \int_{G'} |\Psi(g')|^2 \, dg'
 & = 4 \pi \int_0^\infty \left| \Psi\! \mat{e^x}{0}{0}{e^{-x}} \right|^2 \sinh(2x) \, dx \\
 & = 4 \pi \int_0^\infty \cosh(x)^{-4n-8} \sinh(2x) \, dx \\
 & = \frac{4\pi}{2n+3}
\end{align*}
as in \cite[\S 12]{ichino-ikeda}, the assertion follows from Lemma \ref{l:pr(Tvarphi^sharp)}\eqref{pr(Tvarphi^sharp)-2}.
\end{proof}

\section{Proof of Theorems \ref{t:period-L-value-1} and \ref{t:period-L-value-2}}
\label{s:proof}

By Lemmas \ref{l:adelic_period} and \ref{l:kill-p-depletion}, we have
\[
 P(\delta^n f^\flat, \tilde{\bff}_\alpha, \mu')
 = \varepsilon_{\fin}(\pi) \cdot C_p \cdot  P(\delta^n f, \bff, \mu').
\]
This reduces the proof of Theorem \ref{t:period-L-value-2} to Theorem \ref{t:period-L-value-1}.

It remains to prove Theorem \ref{t:period-L-value-1}.
We first recall the formula for $\langle f_0^\mu, f_0^\mu \rangle$.

\begin{prop}
\label{p:petersson-norm-f}
We have
\[
 \langle f_0^\mu, f_0^\mu \rangle = 2^{-4n-7} \pi^{-2n-4} \cdot \zeta(2)^{-1} \cdot C_{n,\Sigma} \cdot L_{\fin}(1, \pi, \Ad), 
\]
where
\[
 C_{n,\Sigma} = (2n+3)! \cdot \prod_{q \in \Sigma} \frac{q}{q+1}.
\]
\end{prop}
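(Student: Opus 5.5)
The plan is to reduce this to the standard formula for the Petersson norm of a newform, $\|\bff\|^2$, in terms of $L(1,\pi,\Ad)$, after rewriting the adelic inner product classically. There are three steps.

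The first step disposes of $E^\times$. Since $\mu$ is unitary and $Z_\bG(\A)\backslash\bG(\A)$ fibres over $\A^\times\backslash\A_E^\times$ with fibre $\A^\times\GL_2(\Q)\backslash\GL_2(\A)$, the factor $|\mu(z)|^2=1$ drops out. The volume $\vol(\A^\times E^\times\backslash\A_E^\times)=2$ in Tamagawa measure (as used in the proof of Proposition~\ref{p:periods-classical}) gives
\[
 \langle f_0^\mu,f_0^\mu\rangle = 2\cdot\frac{1}{2}\int_{\A^\times\GL_2(\Q)\backslash\GL_2(\A)}|f_0(g)|^2\,dg .
\]
The normalization of the fibre measure must be checked against the quotient Tamagawa measure. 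Because $\vol(\A^\times\GL_2(\Q)\backslash\GL_2(\A))=2$, the result is an average of $|f_0|^2$ with total mass $2$.

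The second step passes to classical language. Strong approximation and the $\cK_0(N)$-invariance of $f_0$ turn the adelic integral into
\[
 2\,\vol(\Gamma_0(N)\backslash\fH_1)^{-1}\int_{\Gamma_0(N)\backslash\fH_1}|\bff(z)|^2 y^{k}\,d\nu(z),
\]
exactly as in the proof of Proposition~\ref{p:periods-classical}. Here $k=2n+4$. The volume is
\[
 \vol(\Gamma_0(N)\backslash\fH_1)=\frac{\pi}{3}\prod_{q\in\Sigma}(q+1),
\]
and $\zeta(2)=\pi^2/6$.

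The third step inserts the classical Rankin--Selberg evaluation for a newform of squarefree level $N$ with trivial character:
\[
 \int_{\Gamma_0(N)\backslash\fH_1}|\bff|^2y^k\,d\nu = \frac{(k-1)!}{2^{2k-1}\pi^{k+1}}\,N\prod_{q\mid N}(1+q^{-1})\cdot\prod_{q\mid N}(\text{local correction})\cdot L_{\fin}(1,\pi,\Ad).
\]
I would derive this from the unfolding $\int|\bff|^2y^kE(z,s)$ with residue $3/\pi$, together with the Dirichlet series $\sum|a_\bff(m)|^2m^{-s}$. At a Steinberg place $q$ the local Euler factor is $L_q(s,\pi\times\pi)=(1-q^{-s-1})^{-1}$, while $L_q(1,\pi,\Ad)=(1-q^{-2})^{-1}$. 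The ratio of these local factors, combined with the index factor $(q+1)$ from the volume, should yield the factor $q/(q+1)$ in $C_{n,\Sigma}$. Since $(k-1)!=(2n+3)!$, collecting the powers of $2$ and $\pi$ should then give $2^{-4n-7}\pi^{-2n-4}\zeta(2)^{-1}$.

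The main obstacle is the bookkeeping of constants. This includes the exact normalization of the classical formula at the ramified primes: one must decide whether the imprimitive Rankin series at $q\mid N$ matches $L_q(1,\pi,\Ad)$ or leaves a factor $(1-q^{-2})$, and whether a factor of $2$ enters through the residue. To settle this I would verify the $q$-adic correction by computing the Steinberg local integral directly: take the Iwahori-fixed matrix coefficient, integrated as in Lemma~\ref{l:wald-st}, and compare with the unramified Macdonald computation.
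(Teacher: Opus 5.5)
Your route is correct in substance, but it differs from the paper's. The paper does not compute anything. It notes that $\langle f_0^\mu,f_0^\mu\rangle=\int_{\A^\times\GL_2(\Q)\backslash\GL_2(\A)}|f_0(g)|^2\,dg$ and quotes the Petersson norm from \cite[Proposition 6.3.1]{periods1}, which is stated for the standard measure. It then converts using $dg^{\mathrm{std}}=(2\pi)^{-1}\zeta(2)\,dg$ from \cite[Lemma 6.1.1]{periods1}. You instead rederive the classical Rankin--Selberg formula for $\|\bff\|^2$. Your version is self-contained and makes every constant visible; the citation buys brevity and keeps the normalizations consistent with the authors' earlier work.

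Two details need fixing, although neither changes the outcome.

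First, there is no fibration over $\A^\times E^\times\backslash\A_E^\times$. The centre $Z_\bG=\{[1,z]\}$ is all of $\Res_{E/\Q}(\mathbb{G}_m)$, so $Z_\bG(\A)\backslash\bG(\A)=\A^\times\backslash\GL_2(\A)$ carries the Tamagawa measure of $\mathrm{PGL}_2$, and the identity above is immediate. The fibration you describe belongs to the smaller centre $Z$ used for $\cP^{\bG}$ in Proposition~\ref{p:periods-classical}. Taken literally, it would introduce a spurious factor $2$, which your unexplained $2\cdot\frac12$ silently cancels.

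Second, the bookkeeping you worry about resolves cleanly and needs no matrix coefficients; the integral in Lemma~\ref{l:wald-st} is a toric period and is irrelevant here. Unfold against $E_N(z,s)=\sum_{\Gamma_\infty\backslash\Gamma_0(N)}\Im(\gamma z)^s$, whose residue at $s=1$ is $\vol(\Gamma_0(N)\backslash\fH_1)^{-1}$, not $3/\pi$. For the newform, $a_\bff(q^j)=a_\bff(q)^j$ with $a_\bff(q)^2=q^{k-2}$ at each $q\in\Sigma$. Put $\lambda(m)=a_\bff(m)m^{-(k-1)/2}$. Then the $q$-Euler factor of $\sum_m\lambda(m)^2m^{-s}$ is $(1-q^{-1-s})^{-1}$, which is exactly $L_q(s,\pi,\Ad)$. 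It is not $L_q(s,\pi\times\pi)$, which carries an extra factor $(1-q^{-s})^{-1}$. Hence $\sum_m\lambda(m)^2m^{-s}=\zeta^{(N)}(s)L_\fin(s,\pi,\Ad)/\zeta^{(N)}(2s)$, where the superscript $(N)$ removes the Euler factors at $q\in\Sigma$, and
\[
 \int_{\Gamma_0(N)\backslash\fH_1}|\bff|^2y^k\,d\nu=\vol(\Gamma_0(N)\backslash\fH_1)\cdot\frac{(k-1)!}{(4\pi)^k}\cdot\frac{L_\fin(1,\pi,\Ad)}{\zeta(2)}\prod_{q\in\Sigma}\frac{q}{q+1}.
\]
So the volume cancels against the prefactor $2\vol(\Gamma_0(N)\backslash\fH_1)^{-1}$ from your second step. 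The factor $q/(q+1)=(1-q^{-1})/(1-q^{-2})$ comes from the Euler factors missing from $\zeta^{(N)}(s)/\zeta^{(N)}(2s)$, not from an index. With $k=2n+4$ you get exactly $2^{-4n-7}\pi^{-2n-4}\zeta(2)^{-1}(2n+3)!\prod_{q\in\Sigma}\frac{q}{q+1}\,L_\fin(1,\pi,\Ad)$.
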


\begin{proof}
We have
\[
 \langle f_0^\mu, f_0^\mu \rangle = \int_{\A^\times \GL_2(F) \backslash \GL_2(\A)} |f_0(g)|^2 \, dg,
\]
where $f_0$ is the automorphic form on $\GL_2(\A)$ as in \S \ref{ss:adelic_period} and $dg$ is the Tamagawa measure on $\A^\times \backslash \GL_2(\A)$.
This inner product (but with respect to the standard measure, which is $(2\pi)^{-1} \zeta(2) \, dg$ by \cite[Lemma 6.1.1]{periods1}) is computed in \cite[Proposition 6.3.1]{periods1}.
This completes the proof.
\end{proof}

By Lemmas \ref{l:adelic_period} and \ref{l:seesaw-explicit}, we have
\begin{align*}
 P(\delta^n f, \bff, \mu') & = (-1)^n 2^{-2n-2} \pi^{-n} \cdot |D_E|^{-(n+1)/2} \cdot \cP^{\bG}(\theta_{\varphi}(\chi)^\std, f_0^{\mu'}) \\
 & = (-1)^n 2^{-2n-2} \pi^{-n} \cdot |D_E|^{-(n+1)/2} \cdot C_K \cdot \cP^H(\theta_\varphi(f_0^\mu), \chi).
\end{align*}
Hence, by Propositions \ref{p:complex_conjugation}, \ref{p:wald-explicit}, \ref{p:rallis-explicit}, \ref{p:petersson-norm-f}, we have
\begin{align*}
 & P(\delta^n f, \bff, \mu')^2 \\
 & = 2^{-4n-4} \pi^{-2n} \cdot |D_E|^{-n-1} \cdot C_K^2 \cdot \cP^H(\theta_\varphi(f_0^\mu), \chi)^2 \\
 & = (-1)^{n+1} 2^{-4n-4} \pi^{-2n} \cdot |D_E|^{-n-1} \cdot C_K^2 \cdot C_{\chi,\mu} \cdot |\cP^H(\theta_\varphi(f_0^\mu), \chi)|^2 \\
 & = (-1)^{n+1} 2^{-4n-4} \pi^{-2n+1} \cdot |D_K|^{1/2} \cdot |D_E|^{-n-1} \cdot \zeta(2) \\
 & \quad \times C_{\chi,\mu} \cdot C_{\Sigma^+,\Sigma^-} \cdot \frac{L_{\fin}(\frac{1}{2}, \pi_K \times \chi)}{L_{\fin}(1, \pi, \Ad)} \cdot \langle{\theta_\varphi(f_0^\mu),\theta_\varphi(f_0^\mu)}\rangle \\
 & = (-1)^{n+1} 2^{-2n-8} \pi^{-2n} \cdot |D_K|^{1/2} \cdot |D_E|^{-n-1/2} \rho_E^{-2} \cdot \zeta(2)^{-1} \\
 & \quad \times C_{\chi,\mu} \cdot C_{\Sigma^+,\Sigma^-} \cdot C_{n,\Sigma^+,\Sigma^-} \cdot \frac{L_{\fin}(\frac{1}{2}, \pi_K \times \chi)}{L_{\fin}(1, \pi, \Ad)} \cdot L_{\fin}(\tfrac{1}{2}, \pi_E \times \mu) \cdot \langle f_0^\mu, f_0^\mu \rangle \\
 & = (-1)^{n+1} 2^{-6n-15} \pi^{-4n-4} \cdot |D_K|^{1/2} \cdot |D_E|^{-n-1/2} \rho_E^{-2} \cdot \zeta(2)^{-2} \\
 & \quad \times C_{\chi,\mu} \cdot C_{\Sigma^+,\Sigma^-} \cdot C_{n,\Sigma^+,\Sigma^-} \cdot C_{n,\Sigma} \cdot L_{\fin}(\tfrac{1}{2}, \pi_K \times \chi) \cdot L_{\fin}(\tfrac{1}{2}, \pi_E \times \mu).
\end{align*}
This yields the desired identity and completes the proof, noting that
\[
 L_\infty(\tfrac{1}{2}, \pi_K \times \chi) \cdot L_\infty(\tfrac{1}{2}, \pi_E \times \mu) = 2^{-4n-4} \pi^{-4n-8} \cdot n! (n+1)!^2 (n+2)!.
\]

\end{document}